\newtheorem{theorem}{Theorem}[section]
\newtheorem{defn}[theorem]{Definition}
\newtheorem{lemma}[theorem]{Lemma}
\newtheorem{proposition}[theorem]{Proposition}
\newenvironment{manualassumption}[1]{%
  \manualassumptioninner
}{\endmanualassumptioninner}
\theoremstyle{definition}
\newtheorem{example}[theorem]{Example}
\theoremstyle{definition}
\def\Xint#1{\mathchoice
   {\XXint\displaystyle\textstyle{#1}}%
   {\XXint\textstyle\scriptstyle{#1}}%
   {\XXint\scriptstyle\scriptscriptstyle{#1}}%
   {\XXint\scriptscriptstyle\scriptscriptstyle{#1}}%
   \!\int}
\def\XXint#1#2#3{{\setbox0=\hbox{$#1{#2#3}{\int}$}
     \vcenter{\hbox{$#2#3$}}\kern-.5\wd0}}
\def\dashint{\Xint-}
\newcommand{\ice}{^{\operatorname{ice}}}
\newcommand{\ce}{\operatorname{ce}}
\newcommand{\backface}{\operatorname{bkf}}
\newcommand{\frontface}{\operatorname{ff}}
\newcommand{\leftface}{\operatorname{lf}}
\newcommand{\rightface}{\operatorname{rf}}
\newcommand{\bottomface}{\operatorname{bf}}
\newcommand{\timeface}{\operatorname{tf}}
\newcommand{\leftbackface}{\operatorname{lbf}}
\newcommand{\rightbackface}{\operatorname{rbf}}
\newcommand{\sideface}{\operatorname{sf}}
\newcommand{\cl}{\operatorname{cl}}
\newcommand{\diag}{\operatorname{diag}}
\newcommand{\edgebackface}{\operatorname{ebkf}}
\newcommand{\cornerbackface}{\operatorname{cbkf}}
\newcommand{\cornerfrontface}{\operatorname{cff}}
\newcommand{\corneredgeface}{\operatorname{cef}}
\newcommand{\edgefrontface}{\operatorname{eff}}
\numberwithin{equation}{section}
\title{Index Theory on Incomplete Cusp Edge Spaces}
\author{Jayson Liu}
\address{School of Mathematics and Statistics, University of Melbourne, Melbourne, Victoria, Australia}
\email{jayson.liu@unimelb.edu.au}  
\begin{document}

\begin{abstract}
    We study Dirac-type operators on incomplete cusp edge spaces with invertible boundary families. In particular, we construct the heat kernel for the associated Laplace-type operator and prove that the Dirac operators are essentially self-adjoint and Fredholm on their unique self adjoint domain. Using the asymptotics of the heat kernel and a generalisation of Getzler's rescaling argument we establish an index formula for these operators including a signature formula for the Hodge-de Rham operator on Witt incomplete cusp edge spaces.
\end{abstract}

\maketitle

\tableofcontents

\section{Introduction}

In this work we study Dirac-type operators on incomplete Riemannian singular spaces with non-isolated cusp singularities. We construct, under a Witt-type assumption, the heat kernel for the Laplace-type operator given their square and we use this heat kernel to prove the essential self-adjointness of the Dirac operators themselves. We also construct the Green's operator, establishing a natural Fredholm problem. We analyse the asymptotics of the heat kernel and use them to find an index formula for a class of Dirac-type operators which includes the spin Dirac operator. This builds on the work of Gell-Redman and Swoboda \cite{ice} who constructed the heat kernel for the Hodge Laplacian on Witt incomplete cusp edge spaces. This general approach goes back to Melrose \cite{melrose} and is closely related to the heat kernel analysis in \cite{cone}\cite{leschcones}\cite{pseudo} in the conical singularities setting. See below for further discussion of related literature.

We consider singular Riemannian manifolds for which the underlying topological space is a smoothly stratified space with a single singular stratum, so locally is just a bundle of cones on a smooth manifold at the singular stratum. We work on a resolved space which is obtained by separating the collapsed fibres at the singular locus obtaining a manifold with boundary $M$ whose boundary $\partial M$ is the total space of a fibre bundle 
\begin{align}
    Z\to \partial M \to Y 
\end{align}
where $Z$, the link of the singular space, is a closed manifold. 

These spaces often come with some natural Riemannian structure which has a particular type of structured collapse in the metric approaching the singular locus. In particular, we work with an incomplete cusp edge metric which near the boundary can be written in the form
\begin{align}
    g=dx^{2}+x^{2k}g_{\partial M/Y}+\phi^{\star}g_{Y}+e
\end{align}
where $x$ is the distance to the boundary, $g_{\partial M/Y}$ restricts to a metric on each fibre, $g_{y}$ is a metric on the base and $e$ is an error term and $k\geq 2$. 

The motivation for studying these types of metrics comes from compactified Riemann moduli spaces $(\overline{\mathcal{M}}_{g},g_{\operatorname{WP}})$, in particular one of its natural metrics $g_{\operatorname{WP}}$, the Weil-Petersson metric. The singular locus of $\overline{\mathcal{M}}_{g}$ is the union of $\lfloor\frac{g}{2}\rfloor+1$ normal crossing divisors and near an intersection of divisors $D_{J}=D_{i_{0}}\cap\ldots\cap D_{i_{j}}$ the Weil-Petersson metric takes the form \cite{wolpert}\cite{yamada}
\begin{align}
    g_{\operatorname{WP}}=4\pi^{3}\sum_{i=0}^{j}(d\rho_{i}^{2}+\rho^{6}d\theta_{i}^{2})+g_{D_{J}}+e.
\end{align}
Thus, for $k=3$, an incomplete cusp edge metric models the Weil-Petersson metric near the interior of a single divisor.

As formalised by Cheeger \cite{cheegerspectral}, the study of Dirac operators on singular spaces, and their index theory first requires analysis of their self-adjoint extensions and natural Fredholm problems. We first start with the question of self-adjointness. On a closed manifold, an $n$th order symmetric elliptic differential operators is self-adjoint, Fredholm operators on their natural domain $H^{n}(M)\to L^{2}(M)$. Moreover on complete Riemannian manifolds, it is well known that the Hodge Laplacian \cite{gaffney} and spin Dirac operator are essentially self-adjoint, however on incomplete Riemannian manifolds this is not necessarily true and in general there could be many self-adjoint extensions.

We consider a Dirac-type operator which is symmetric on the dense domain $C^{\infty}_{c}(M)$ of $L^{2}$ of smooth functions compactly supported in the interior of $M$. There is a minimal closed extension $\mathcal{D}_{\text{min}}$ and a maximal closed extension $\mathcal{D}_{\text{max}}$ defined by
\begin{align}
    \begin{split}
    \mathcal{D}_{\min}&=\{u\in L^{2}:\exists u_{n}\in C^{\infty}_{c} \text{ such that } u_{n}\to u, \slashed{\partial}u_{n} \text{ converges in } L^{2}\} \\
    \mathcal{D}_{\text{max}}&=\{u\in L^{2}(M,E): \slashed{\partial}u\in L^{2}(M,E)\}.
    \end{split}
\end{align}
The operator will be essentially self-adjoint if $\mathcal{D}_{\operatorname{min}}=\mathcal{D}_{\operatorname{max}}$. For incomplete cusp edge spaces, there is an induced family of Dirac operators on the boundary given by $x^{k}\slashed{\partial}|_{\partial M}$ which we denote by $\slashed{\partial}_{Z}$ and questions of self-adjointness depend on the properties of this family. We consider the case where the boundary family is invertible for which we obtain our first main result, see Section \ref{essentialsa}.
\begin{theorem}\label{introtheoremmain1}
     If the Dirac operator $\slashed{\partial}$ has an invertible boundary family of operators $\slashed{\partial}_{Z}$ then $\slashed{\partial}$ and $\slashed{\partial}^{2}$ are essentially self-adjoint. They have a discrete spectrum of eigenvalues which satisfy Weyl asymptotics and have eigenfunctions vanishing to smooth and vanishing to infinite order at $\partial M$. 
\end{theorem}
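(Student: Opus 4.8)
The plan is to reduce every assertion to a statement about the model operator near the boundary and then quote standard elliptic/Hodge-theoretic machinery. Since the boundary family $\slashed{\partial}_Z = x^k\slashed{\partial}|_{\partial M}$ is invertible, the key quantitative input is a Hardy-type inequality: for $u$ supported in a collar $[0,\epsilon)_x \times \partial M$ one should get
\begin{align}
    \|\slashed{\partial} u\|_{L^2}^2 \geq c \int_0^\epsilon \int_{\partial M} x^{-2k} |u|^2 \, \frac{dx\, \mathrm{vol}_{\partial M}}{x^{?}} + \text{(lower order)},
\end{align}
where the weight comes from the metric $g = dx^2 + x^{2k}g_{\partial M/Y} + \phi^\star g_Y + e$ and the constant $c$ from the spectral gap of $\slashed{\partial}_Z$. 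First I would write the operator in the collar in the form $\slashed{\partial} = c(dx)(\partial_x + x^{-k}\slashed{\partial}_Z + \text{error})$ modulo zeroth-order terms, expand $u = \sum u_\lambda$ in eigensections of the (fibrewise) boundary family, and on each eigenspace compare with the scalar ODE operator $\partial_x \pm \lambda x^{-k}$. Because $k \geq 2$, the indicial/Frobenius analysis of $\partial_x \pm \lambda x^{-k}$ has no $L^2$ solution growing like a negative power — the subdominant solution decays like $\exp(-c\,\lambda\, x^{1-k}/(k-1))$, i.e. to infinite order — which simultaneously gives the coercivity estimate and pins down the asymptotics of eigenfunctions.

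The steps, in order: (1) Establish the collar normal form for $\slashed{\partial}$ and a commutator/IMS-localisation argument splitting $M$ into a compact interior piece (where ellipticity on a closed-type manifold is classical) and the collar. (2) Prove the weighted coercive estimate $\|\slashed{\partial}u\|_{L^2}^2 + \|u\|_{L^2}^2 \gtrsim \|x^{-k}u\|_{L^2}^2$ on the collar using the eigenexpansion and the invertibility of $\slashed{\partial}_Z$; by the standard Gaffney/von Neumann criterion this already forces $\mathcal{D}_{\min} = \mathcal{D}_{\max}$ (any $u \in \mathcal{D}_{\max}$ then lies in a weighted space on which $C_c^\infty$ is dense, via a cutoff-and-mollify argument where the cutoff error $[\slashed{\partial}, \chi_n]u$ is controlled precisely by the Hardy weight). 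Essential self-adjointness of $\slashed{\partial}^2$ follows from that of $\slashed{\partial}$ together with $\mathcal{D}(\slashed{\partial}^2) \subseteq \mathcal{D}(\slashed{\partial})$, or directly from the analogous (iterated) estimate. (3) For discreteness and Weyl asymptotics: the weighted estimate upgrades to showing the unit ball of $\mathcal{D}(\slashed{\partial})$ (with graph norm) embeds compactly into $L^2$ — near the boundary the factor $x^{-k}$ with $k\geq 2$ beats the degeneration of the volume form and fibre metric, giving a Rellich-type compactness; hence the resolvent is compact and the spectrum is discrete. The heat kernel constructed earlier (Gell-Redman–Swoboda, \cite{ice}, applied to $\slashed{\partial}^2$) is trace class with a short-time expansion $\mathrm{Tr}(e^{-t\slashed{\partial}^2}) \sim c\, t^{-n/2} + \ldots$; Karamata's Tauberian theorem then yields the Weyl law $N(\lambda) \sim c'\lambda^{n/2}$. (4) For the regularity of eigenfunctions: an eigenfunction $u$ with $\slashed{\partial}^2 u = \mu u$ lies in $\mathcal{D}_{\max} = \mathcal{D}_{\min}$, hence (by the estimate, iterated and bootstrapped) in every weighted Sobolev space $x^{N}H^m$ for all $N$ near $\partial M$; equivalently, expanding in the boundary family the only admissible solutions are the exponentially subdominant ones, so $u$ vanishes to infinite order at $\partial M$.

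The main obstacle is step (2), specifically handling the non-diagonal error term $e$ in the metric and the $x$-dependence of the boundary family when passing from the formal model $\partial_x \pm \lambda x^{-k}$ to the actual operator: one must show these perturbations do not destroy the coercivity, which requires that $e$ and the derivatives of $g_{\partial M/Y}$ in $x$ are lower order relative to the $x^{-k}$ weight — true because $k \geq 2$ gives enough room, but the bookkeeping (uniformity of constants in the eigenvalue $\lambda$, absorbing cross terms, and treating the finitely many small eigenvalues of $\slashed{\partial}_Z$, which are bounded below by invertibility) is where the real work lies. A secondary subtlety is that the fibrewise boundary operator $\slashed{\partial}_Z$ acts on sections of a bundle that may twist along $Y$, so the eigenexpansion must be carried out fibre-by-fibre with estimates uniform over the compact base $Y$; this is routine but must be stated carefully.
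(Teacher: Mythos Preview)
Your approach is viable but takes a genuinely different route from the paper. The paper does \emph{not} prove a Hardy-type inequality or analyse the scalar ODE $\partial_x \pm \lambda x^{-k}$ directly to establish $\mathcal{D}_{\min}=\mathcal{D}_{\max}$. Instead, it first constructs the heat kernel $H_t$ for $\slashed{\partial}^2$ as a polyhomogeneous distribution on the blown-up heat space $M^2_{\operatorname{heat}}$ (Theorem~\ref{heatkernelexistence}), shows that $H_t$ and $H_t^*$ map $L^2$ into smooth sections vanishing to infinite order at $\partial M$ (Lemma~\ref{htdmin}), and then applies an abstract lemma: if a symmetric $P$ admits a strongly continuous heat semigroup with $H_t,H_t^*\colon \mathcal{H}\to\mathcal{D}$, then $(P,\bar{\mathcal{D}})$ is essentially self-adjoint. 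Essential self-adjointness of $\slashed{\partial}^2$ is thus obtained \emph{first}, and that of $\slashed{\partial}$ is deduced afterward by restricting to eigenspaces; discreteness comes from compactness of $H_t$, and the infinite-order vanishing of eigenfunctions is immediate since they lie in the range of $H_t$. Your direct coercivity route is more elementary and avoids the heavy blow-up machinery for the self-adjointness statement alone, and your diagnosis of the main obstacle (controlling the metric error $e$ and non-constancy of the fibre family uniformly in $\lambda$) is accurate; the paper's approach sidesteps this bookkeeping entirely by packaging the model inversion into the normal-operator step of the heat parametrix, at the cost of building the full heat space. One correction: for step~(3) you cite \cite{ice} for the heat kernel of $\slashed{\partial}^2$, but that reference treats only the Hodge Laplacian (with non-trivial boundary kernel); the heat kernel for a general Dirac square with invertible boundary family is precisely what the present paper constructs, so for Weyl asymptotics you would be invoking the paper's own Theorem~\ref{heatkernelexistence} rather than an external result.
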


Having obtained the self-adjoint domain $\mathcal{D}$ we have our second main result, see Theorem \ref{greenfunction}.
 \begin{theorem}\label{introtheoremmain2}
     If the Dirac operator $\slashed{\partial}$ has an invertible boundary family then there exists $Q\in x^{k}\Psi^{-1}_{\ce}(M)$ such that $\slashed{\partial}Q=\operatorname{Id}-R$ and $Q\slashed{\partial}=\operatorname{Id}-\tilde{R}$ such that the remainders $R,\tilde{R}\in\Psi^{-\infty}(M)$ have kernels which vanish to infinite order at all faces. The operator $\slashed{\partial}$ is Fredholm on $\mathcal{D}$ and $\mathcal{D}=x^{k}H_{\ce}^{1}(M;\mathcal{S})$. 
\end{theorem}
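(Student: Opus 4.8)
The plan is to construct $Q$ directly in the cusp edge calculus by a Melrose-type parametrix argument carried out in two stages, and then to read off both the Fredholm property and the identification of $\mathcal{D}$ from the mapping properties of the calculus.

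\textbf{Stage 1: symbolic parametrix.} The operator $\slashed{\partial}$ is an elliptic element of $x^{-k}\operatorname{Diff}^{1}_{\ce}(M;\mathcal{S})$, i.e.\ $x^{k}\slashed{\partial}$ has invertible cusp edge principal symbol. Inverting the symbol iteratively and asymptotically summing in the small calculus produces $Q_{0}\in x^{k}\Psi^{-1}_{\ce}(M)$ with $\slashed{\partial}Q_{0}=\operatorname{Id}-R_{0}$, where $R_{0}\in x^{k}\Psi^{-\infty}_{\ce}(M)$ has kernel vanishing rapidly at the lifted diagonal of the cusp edge double space $M^{2}_{\ce}$ but a priori only polyhomogeneous conormal at the boundary hypersurfaces; the same construction on the left gives $Q_{0}'$ with $Q_{0}'\slashed{\partial}=\operatorname{Id}-R_{0}'$.

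\textbf{Stage 2: normal operator.} The obstruction to improving $R_{0}$ at the front face of $M^{2}_{\ce}$ is the normal operator $N(\slashed{\partial})$, which over each point of $Y$ is a model Dirac-type operator on the half-cylinder $[0,\infty)_{x}\times Z$ built from $\partial_{x}$ and $\slashed{\partial}_{Z}$ (the edge/base directions entering only at lower order in $x$, since $k\ge 2$). Because $\slashed{\partial}_{Z}$ is invertible on $L^{2}(Z;\mathcal{S})$, the indicial family of $N(\slashed{\partial})$ has no roots on the critical weight line, so $N(\slashed{\partial})$ is invertible on the relevant weighted $L^{2}$ space with inverse $N(\slashed{\partial})^{-1}$ lying in the front-face calculus; convolving $Q_{0}$ with this inverse (and matching the remaining boundary faces by the routine iteration there) removes the leading term of $R_{0}$ at the front face. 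Iterating over all orders and Borel-summing in the calculus yields $Q\in x^{k}\Psi^{-1}_{\ce}(M)$ with $\slashed{\partial}Q=\operatorname{Id}-R$, $R\in\Psi^{-\infty}(M)$ with kernel vanishing to infinite order at all faces; the analogous left construction gives $Q'$, and the algebraic comparison $Q'\equiv Q'(\slashed{\partial}Q+R)\equiv(Q'\slashed{\partial})Q+Q'R\equiv Q$ modulo $\Psi^{-\infty}(M)$ shows that $Q$ is a genuine two-sided parametrix. Set $\tilde{R}=\operatorname{Id}-Q\slashed{\partial}\in\Psi^{-\infty}(M)$.

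\textbf{Fredholmness and the domain.} Boundedness of the calculus on weighted ce Sobolev spaces gives $Q\colon L^{2}\to x^{k}H^{1}_{\ce}(M;\mathcal{S})$ bounded, and since the kernels of $R$ and $\tilde{R}$ vanish to infinite order at all faces they are compact on $L^{2}$ and on $\mathcal{D}$; hence $\slashed{\partial}\colon\mathcal{D}\to L^{2}$ is invertible modulo compact operators on both sides, so Fredholm. For the domain, Theorem~\ref{introtheoremmain1} gives $\mathcal{D}_{\min}=\mathcal{D}_{\max}=\mathcal{D}$. If $u\in\mathcal{D}_{\max}$ and $f=\slashed{\partial}u\in L^{2}$, then $u=Q\slashed{\partial}u+\tilde{R}u=Qf+\tilde{R}u\in x^{k}H^{1}_{\ce}(M;\mathcal{S})$, so $\mathcal{D}_{\max}\subseteq x^{k}H^{1}_{\ce}(M;\mathcal{S})$; conversely $\slashed{\partial}\in x^{-k}\operatorname{Diff}^{1}_{\ce}$ maps $x^{k}H^{1}_{\ce}(M;\mathcal{S})$ boundedly into $L^{2}$, so $x^{k}H^{1}_{\ce}(M;\mathcal{S})\subseteq\mathcal{D}_{\max}$. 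Therefore $\mathcal{D}=x^{k}H^{1}_{\ce}(M;\mathcal{S})$.

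The main obstacle is Stage 2: correctly identifying the normal operator in the incomplete cusp edge ($k\ge 2$) geometry, computing its indicial family, and exploiting the invertibility and spectral gap of $\slashed{\partial}_{Z}$ to invert it on the right weighted space with enough decay to run the iteration; the subsequent matching at the remaining boundary faces of $M^{2}_{\ce}$ is standard but bookkeeping-heavy.
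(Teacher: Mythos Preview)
Your overall architecture---symbolic parametrix, then invert the normal operator, then iterate and Borel-sum---is the right one, and your Fredholm and domain arguments at the end are essentially the paper's. But Stage~2 contains a genuine misidentification of the normal operator, and as written the step would not go through.

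You describe $N(\slashed{\partial})$ as a model operator on the half-cylinder $[0,\infty)_{x}\times Z$ ``built from $\partial_{x}$ and $\slashed{\partial}_{Z}$'', with an indicial family whose roots you want off a critical weight line. That is the picture for the \emph{edge} ($k=1$) calculus, where the model lives over a cone and one genuinely needs an indicial-roots analysis (and, as the paper notes, invertibility of $\slashed{\partial}_{Z}$ alone is not even enough there---one needs the geometric Witt condition). For $k\geq 2$ the cusp edge double space has an extra blow-up producing the front face $\frontface$, and the normal operator at $\frontface$ is a \emph{suspended} family: in projective coordinates it is the translation-invariant operator
\[
\mathcal{N}_{\frontface}(x^{k}\slashed{\partial})=\cl(\partial_{x})\partial_{\tilde s}+\sum_{j}\cl(\partial_{y_{j}})\partial_{\tilde\eta_{j}}+\slashed{\partial}_{Z,y'}
\]
on $\mathbb{R}_{\tilde s}\times\mathbb{R}^{b}_{\tilde\eta}\times Z_{y'}$. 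The base directions do \emph{not} drop to lower order---they sit in the model at the same level as $\partial_{\tilde s}$. There is no half-line, no indicial family, and no weight line. Inversion is by Fourier transform in $(\tilde s,\tilde\eta)$: the square has symbol $\sigma^{2}+|\xi|^{2}_{g_{Y}}+\slashed{\partial}_{Z,y'}^{2}$, which is bounded below by the smallest eigenvalue of $\slashed{\partial}_{Z,y'}^{2}$, and this is exactly where the invertibility of the boundary family enters. The inverse lies in $\Psi^{-1}_{\operatorname{sus}}(\frontface)$ with Schwartz decay in $(\tilde s,\tilde\eta)$, so extending it off $\frontface$ and iterating gives the infinite-order improvement you want.

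A smaller point: after the purely symbolic step your remainder lies in $\Psi^{-\infty}_{\ce}(M)$, not in $x^{k}\Psi^{-\infty}_{\ce}(M)$---the symbol calculus alone gives no vanishing at $\frontface$. The paper in fact reverses your order (normal operator first, then composes with a symbolic parametrix to kill the conormal singularity of the remainder, then Neumann series), but either order works once you have the correct model at $\frontface$.
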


We also prove an index theorem for Dirac-type operators on incomplete cusp edge spaces. The Dirac-type operators for which we have proven a Fredholm mapping property act on sections of a Clifford module which are $\mathbb{Z}_{2}$-graded so we can consider the restriction of $\slashed{\partial}$ to the even and odd sections of its self-adjoint domain 
\begin{align}
    \slashed{\partial}^{\pm}\colon\mathcal{D}^{\pm}=\mathcal{D}\cap L^{2}(M;E^{\pm})\to L^{2}(M;E^{\mp}).
\end{align}
We define the index of the Dirac operator to be the index of the even part $\slashed{\partial}^{+}$. 

As we have shown that $\slashed{\partial}$ is essentially self adjoint and the heat kernel $e^{-t\slashed{\partial}^{2}}$ is a trace class operator, we can use the McKean-Singer argument so we have 
\begin{align}
    \operatorname{ind}(\slashed{\partial}^{+})=\lim_{t\to 0}\operatorname{Str}(e^{-t\slashed{\partial}^2}).
\end{align}
On a singular space, the short-time limit of the supertrace can have contributions from the singular locus. For example, on an incomplete edge space \cite{iedge}, it was shown that a spin Dirac operator which satisfies the geometric Witt condition $\operatorname{Spec}(\slashed{\partial}_{Z})\cap (-\frac{1}{2},\frac{1}{2})=\varnothing$ where $\slashed{\partial}_{Z}$ is an induced family of Dirac operators on the fibers of $\partial M$ is essentially self-adjoint and Fredholm with index
\begin{align}\label{introcone}
    \operatorname{ind}(\slashed{\partial}^{+})=\int_{M}\hat{A}(M)+\int_{Y}\hat{A}(Y)\left(-\hat{\eta}({\partial_{Z}})+\int_{Z}T\hat{A}(\nabla^{v^{+}},\nabla^{pt})\right).
\end{align}
Here, we have the usual contribution by $\hat{A}(M)$ from the interior and a contribution from the boundary which is the singular locus of the incomplete edge space.
         
For the non-isolated cusp with trivial boundary kernel, we prove the following formula, see Theorems \ref{spindiracindex} and \ref{twistedtrangression}.
\begin{theorem}
        Let $(M,g)$ be incomplete cusp edge space $E$ be an ice Clifford module on M and $\nabla^{E}$ a Clifford connection which satisfies assumptions \ref{assumption2},\ref{assumption3} (found at the end of Section 3) and \ref{assumption4} (at the beginning of section 11). If the Dirac operator $\slashed{\partial}_{E}$ has a boundary family with trivial kernel, then the index of the $\slashed{\partial}_{E}^{+}$ is given by
        \begin{align}
            \operatorname{ind}(\slashed{\partial}_{E}^{+})=\int_{M}\mathcal{A}(M)\operatorname{ch}'(E)-\int_{Y}\mathcal{A}(Y)\operatorname{ch}'(E_{B})\tilde{\eta}(\slashed{\partial}_{E,\partial M/Y})
        \end{align}
        where $\tilde{\eta}(\slashed{\partial}_{E,\partial M/Y})$ is the normalised Bismut-Cheeger eta form for the family $\slashed{\partial}_{E,\partial M/Y}$ on $\partial M$.
         \end{theorem}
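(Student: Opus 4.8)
The plan is to combine the McKean--Singer formula with a careful analysis of the short-time asymptotics of the supertrace, read off from the polyhomogeneous structure of the heat kernel of $\slashed{\partial}_E^2$ on the blown-up heat space constructed in the earlier sections. Since Theorem~\ref{introtheoremmain1} gives that $\slashed{\partial}_E$ is essentially self-adjoint with discrete spectrum and $e^{-t\slashed{\partial}_E^2}$ is trace class, one has $\operatorname{ind}(\slashed{\partial}_E^+)=\lim_{t\to0}\operatorname{Str}(e^{-t\slashed{\partial}_E^2})$, so everything reduces to evaluating this limit. Writing $\operatorname{Str}(e^{-t\slashed{\partial}_E^2})=\int_M\operatorname{str}\big(e^{-t\slashed{\partial}_E^2}(p,p)\big)\,dg(p)$ and using that the short-time heat asymptotics are local (the usual finite-propagation/Duhamel argument), I would split the integral with a partition of unity adapted to the faces of the heat space: an ``interior'' piece supported where the distance $x$ to $\partial M$ stays above the natural cusp scale, on which the heat kernel agrees to infinite order with that of a Dirac Laplacian on a closed manifold, and a ``collar'' piece supported in a shrinking neighbourhood of $\partial M$ governed by the cusp edge face.

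For the interior piece I would run the version of Getzler's rescaling adapted to the cusp edge heat calculus. On any fixed compact piece of the interior this is the classical local index theorem and produces pointwise convergence of the rescaled supertrace to the Chern--Weil form $\mathcal{A}\operatorname{ch}'(E)$; the additional content, using the explicit degeneration $g=dx^2+x^{2k}g_{\partial M/Y}+\phi^\star g_Y+e$ and Assumptions~\ref{assumption2}--\ref{assumption4}, is to show that the rescaled integrand is dominated uniformly up to $\partial M$ by an integrable form and that the interior model's contribution to the shrinking collar is $o(1)$, so that dominated convergence yields exactly $\int_M\mathcal{A}(M)\operatorname{ch}'(E)$.

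The main work is the collar piece, which must contribute $-\int_Y\mathcal{A}(Y)\operatorname{ch}'(E_B)\,\tilde\eta(\slashed{\partial}_{E,\partial M/Y})$. Near $\partial M$ the operator $\slashed{\partial}_E$ is modelled, to leading order, by $c(dx)\big(\partial_x+\tfrac{k\dim Z}{2x}\big)+x^{-k}\slashed{\partial}_{E,\partial M/Y}+(\text{base Dirac part})$, so $\slashed{\partial}_E^2$ carries the strongly singular fibre potential $x^{-2k}\slashed{\partial}_{E,\partial M/Y}^2$; since the boundary family has trivial kernel this potential is uniformly positive over the compact base $Y$, which is exactly what renders the collar contribution finite. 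I would then perform a second rescaling which simultaneously implements Getzler's rescaling in the $Y$-directions and the natural cusp rescaling of $x$ and the fibre variables, so that in the limit the fibrewise family $x^{-2k}\slashed{\partial}_{E,\partial M/Y}^2$ is replaced by the square of a Bismut superconnection $\mathbb{A}_s$ for the fibration $Z\to\partial M\to Y$, with the radial variable $x$ playing the role of a fixed power of the superconnection parameter $s$. Under the corresponding change of variables in the radial integral, the collar contribution becomes precisely the defining integral of the normalised Bismut--Cheeger eta form $\tilde\eta(\slashed{\partial}_{E,\partial M/Y})$ wedged with the base index density $\mathcal{A}(Y)\operatorname{ch}'(E_B)$ produced by the Getzler rescaling along $Y$, with the minus sign appearing as in the Atiyah--Patodi--Singer boundary correction.

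The delicate, and I expect principal, obstacle is controlling all the error terms uniformly in $x$ down to $x=0$: the metric error $e$, the difference between $\slashed{\partial}_E$ and its boundary model, and the Getzler-rescaling remainders must all be shown to vanish in the limit, which is where the polyhomogeneity of the heat kernel on the heat space and the transgression identity of Theorem~\ref{twistedtrangression} enter, the latter being what identifies the limiting radial integral with the eta form rather than merely a regularised trace. Convergence of the resulting eta-form integral at the large-$s$ end is the trivial-kernel hypothesis, and at the small-$s$ end it follows from the local family index theorem; the condition $k\ge2$ is what guarantees that the cusp scale separates cleanly from the interior heat scale so that the two regimes do not interact, and it is also responsible for the exact matching between the cusp radial variable and the superconnection parameter, which is why --- in contrast with the incomplete edge formula \eqref{introcone} --- no extra fibrewise transgression term survives. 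Adding the interior and collar contributions gives the stated index formula.
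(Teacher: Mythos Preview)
Your high-level strategy (McKean--Singer, Getzler rescaling, identifying a boundary contribution with an eta form) is in the right spirit, but there are two genuine problems, and the approach differs from the paper's in a way that matters.

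First, a logical error: you invoke ``the transgression identity of Theorem~\ref{twistedtrangression}'' to identify the collar integral with the eta form, but Theorem~\ref{twistedtrangression} \emph{is} the statement you are trying to prove. The transgression argument in the paper plays a completely different and much more modest role: after the index formula has been established for a product-type metric (Theorem~\ref{nonisolatedspindiract0limit1}), one uses a Fredholm homotopy (Lemma~\ref{twistedreducetoproductlemma}) to reduce the general exact ice metric to the product case, and the transgression merely shows $\int_M\mathcal{A}(M)\operatorname{ch}'(E)$ is unchanged under this deformation since the transgression form vanishes at $\partial M$. You never describe this reduction step, and without it your uniform-in-$x$ error control near the boundary would have to absorb the full $O(x^{k-1})$ discrepancy between $\nabla$ and $\nabla^0$, which is precisely what the paper avoids.

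Second, the mechanism by which the eta form emerges from the collar is not the one you sketch. It is not that ``the radial variable $x$ plays the role of a power of the superconnection parameter''; rather, the paper works on the blown-up diagonal $\operatorname{diag}_h\subset M^2_{\operatorname{heat}}$ and pushes forward to $[0,\infty)_t$ via Lemma~\ref{lemma 3.3}, so that the contribution to the constant term from the front face is a regularised integral in the face variable $\tilde T=t^{1/2}/(x')^k$, not in $x$. The decisive calculation is the horizontal Getzler rescaling at $\frontface$ (Lemma~\ref{diracsquarelift}): the leading term of the rescaled kernel, at order $(x')^{-kn}$, has \emph{vanishing} pointwise supertrace because it carries no factor of $\operatorname{cl}(\partial_x)$, and the first contributing term sits $k-1$ orders higher, at $(x')^{-kn+k-1}$, where it produces exactly $\operatorname{cl}(\partial_x)\bigl(\slashed{\partial}_{E,Z,y}+\tfrac{c(T)}{4}\bigr)K_{\mathbb{B}^2}$. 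After multiplying by the volume weight $x^{kf+1}$ this lands precisely at the order singled out by the pushforward lemma, and the $\tilde T$-integral becomes the defining integral of $\tilde\eta$. Your proposal does not identify this ``leading term cancels, subleading term contributes'' structure, and without it the collar piece of a partition-of-unity argument would a priori produce divergent terms in $t$ rather than a finite limit.
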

The case where the induced boundary family has non-trivial kernel is more complicated and we do not consider it in this work. For the spin Dirac operator, we obtain formula similar to \eqref{introcone} without the third transgression term which vanishes for the incomplete cusp edge. For an incomplete cusp edge, one can show that this trangression form actually vanishes for the spin Dirac operator. This formula could also have been obtained using the methods of \cite{iedge} in this case.

Following \cite{iedge}, we also prove that in some casaes index of the spin Dirac operator is an obstruction to the existence of positive scalar curvature incomplete cusp edge metric, see Section \ref{pscalarcurvature}.
\begin{theorem}
    Let (M,g) be a spin incomplete cusp edge space. Suppose either
    \begin{enumerate}
        \item $\operatorname{dim}(\partial M/Y)\geq 2$, the scalar curvature of $g$ is non-negative in a neighbourhood of the boundary and positive at least at one point sufficiently close to the boundary.

        \item $\operatorname{dim}(\partial M/Y)=1$ and the spin structure on $M$ is the lift of a spin structure on the associated space $\tilde{M}$ with fibres collapsed at the boundary (given the smooth structure identifying the family of cones as a family of disks).
    \end{enumerate}
    Then the induced boundary family has trivial kernel and the index of the spin Dirac operator vanishes.
\end{theorem}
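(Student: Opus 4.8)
The plan is to pass from positivity of the scalar curvature $\kappa_{g}$ near $\partial M$ to positivity of the fibrewise scalar curvature of $g_{\partial M/Y}$, to deduce from this that the boundary family $\slashed{\partial}_{Z}$ is invertible (so that Theorems~\ref{introtheoremmain1} and~\ref{introtheoremmain2} become available), and then to obtain the vanishing of the index from a Lichnerowicz identity on all of $M$ together with the decay of harmonic spinors.

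First I would compute the leading behaviour of $\kappa_{g}$ as $x\to 0$. Treating the radial--vertical block of $g=dx^{2}+x^{2k}g_{\partial M/Y}+\phi^{\star}g_{Y}+e$ as a family over $Y$ of the warped products $dx^{2}+x^{2k}(g_{\partial M/Y}|_{Z_{y}})$, and using the warped-product formula together with O'Neill's submersion identities for $\partial M\to Y$, one finds that the two most singular terms of $\kappa_{g}$ are
\begin{equation*}
    \kappa_{g}\;=\;x^{-2k}\,\kappa_{\partial M/Y}\;-\;C_{f}\,x^{-2}\;+\;(\text{terms strictly less singular than }x^{-2}),
\end{equation*}
where $\kappa_{\partial M/Y}$ is the fibrewise scalar curvature of $g_{\partial M/Y}$, $f=\dim(\partial M/Y)$, and $C_{f}=kf\bigl(k(f+1)-2\bigr)>0$ because $k\geq 2$; the base $(Y,g_{Y})$, the second fundamental forms of the fibres in $\partial M$, and the error $e$ all contribute below order $x^{-2}$. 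Since $2k>2$, at any $z\in\partial M$ with $\kappa_{\partial M/Y}(z)=0$ the term $-C_{f}x^{-2}$ dominates and forces $\kappa_{g}<0$ along the fibre through $z$ for small $x$; hence $\kappa_{g}\geq 0$ near $\partial M$ is possible only if $\kappa_{\partial M/Y}>0$ at every point of $\partial M$. This is the reason for the case split: for $f=1$ the fibres are circles and $\kappa_{\partial M/Y}\equiv 0$, so an incomplete cusp edge metric can never have non-negative scalar curvature near the boundary, and a topological hypothesis must replace the curvature one.

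For step two, in case (1) the boundary family $\slashed{\partial}_{Z}=\{\slashed{\partial}_{Z_{y}}\}_{y\in Y}$ is the family of spin Dirac operators of $(Z_{y},g_{\partial M/Y}|_{Z_{y}})$, and the fibrewise Lichnerowicz formula gives
\begin{equation*}
    \slashed{\partial}_{Z_{y}}^{2}\;=\;\nabla^{\ast}\nabla+\tfrac14\,\kappa_{\partial M/Y}|_{Z_{y}}\;\geq\;\tfrac14\min_{Z_{y}}\kappa_{\partial M/Y}\;>\;0,
\end{equation*}
so $\ker\slashed{\partial}_{Z_{y}}=0$ for every $y$ and $\slashed{\partial}_{Z}$ is invertible. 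In case (2) the fibres $Z_{y}=\partial D_{y}^{2}$ are circles; since the spin structure on $M$ is the lift of one on $\tilde M$ in which the family of cones on $Z$ is a family of disks, the induced spin structure on each $Z_{y}$ extends over $D_{y}^{2}$, i.e.\ is the bounding spin structure, whose Dirac operator has spectrum in $\mathbb{Z}+\tfrac12$ and so trivial kernel; thus $\slashed{\partial}_{Z}$ is again invertible, which is the first assertion of the theorem.

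Invertibility of $\slashed{\partial}_{Z}$ places us under the hypotheses of Theorems~\ref{introtheoremmain1} and~\ref{introtheoremmain2}: $\slashed{\partial}$ is essentially self-adjoint and Fredholm on $\mathcal{D}=x^{k}H_{\ce}^{1}(M;\mathcal{S})$, has discrete spectrum, and every eigenspinor --- in particular every $u\in\ker\slashed{\partial}$ --- is smooth and vanishes to infinite order at $\partial M$. That infinite-order vanishing removes the boundary term from the integration by parts in the Lichnerowicz identity, so for $u\in\ker\slashed{\partial}$ one has $0=\langle\slashed{\partial}^{2}u,u\rangle=\|\nabla u\|^{2}+\tfrac14\int_{M}\kappa_{g}\,|u|^{2}$; the scalar-curvature hypotheses of the theorem then force $\nabla u=0$ and $u\equiv 0$, so $\ker\slashed{\partial}^{\pm}=0$. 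Since, by the McKean-Singer argument recalled above, $\operatorname{ind}(\slashed{\partial}^{+})=\operatorname{Str}(e^{-t\slashed{\partial}^{2}})$ for every $t$ and the heat operator is trace class, letting $t\to\infty$ yields $\operatorname{ind}(\slashed{\partial}^{+})=\dim\ker\slashed{\partial}^{+}-\dim\ker\slashed{\partial}^{-}=0$. I expect the main obstacle to be the first step: one must check carefully that the perturbation $e$ and the submersion data of $Z\to\partial M\to Y$ enter $\kappa_{g}$ only strictly below order $x^{-2}$, so that the signs of the two leading coefficients, and with them the clean dichotomy between (1) and (2), come out as claimed; granting that, the fibrewise and global Lichnerowicz arguments and the McKean-Singer step are routine consequences of the earlier results.
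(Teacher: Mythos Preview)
Your strategy coincides with the paper's: pass from $\kappa_g$ to the fibrewise scalar curvature via the asymptotics as $x\to 0$, use fibre Lichnerowicz (case~(1)) or the bounding spin structure (case~(2)) to get invertibility of $\slashed{\partial}_Z$, and then run the global Lichnerowicz argument on the self-adjoint domain to conclude $\ker\slashed{\partial}=0$.

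The one genuine difference is how you extract fibre positivity in case~(1). The paper works only with the leading term (Lemma~\ref{scalarcurvature}: $x^{2k}\kappa_g=\kappa_{\partial M/Y}+O(x^k)$): non-negativity of $\kappa_g$ near $\partial M$ gives $\kappa_{\partial M/Y}\geq 0$ on every fibre, and the separate hypothesis that $\kappa_g>0$ at a point sufficiently close to $\partial M$ forces $\kappa_{\partial M/Y}>0$ at one point of one fibre; the paper then invokes the mutual diffeomorphism of the fibres to conclude that every fibre Dirac operator has trivial kernel. You instead aim to identify the next term $-C_f x^{-2}$, which (once the base, submersion, and $e$ contributions are shown to be $o(x^{-2})$) forces $\kappa_{\partial M/Y}>0$ \emph{strictly everywhere} from $\kappa_g\geq 0$ alone. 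That is a sharper conclusion --- it renders the ``positive at one point'' hypothesis redundant for this step and gives fibre invertibility uniformly over $Y$ without any diffeomorphism argument --- but it costs you exactly the verification you flag as the main obstacle, and note that the paper's Lemma~\ref{scalarcurvature} only controls the remainder by $O(x^{-k})$, which for $k>2$ is coarser than what you need.

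For the final global step the paper reaches the identity $\lVert\slashed{\partial}\varphi\rVert^2=\lVert\nabla^{\mathcal{S}}\varphi\rVert^2+\tfrac14\int_M\kappa_g\lvert\varphi\rvert^2$ on $\mathcal{D}$ by approximation from $C_c^\infty$ (using $\mathcal{D}=\mathcal{D}_{\min}$); your appeal to the infinite-order vanishing of eigenspinors from Theorem~\ref{introtheoremmain1} accomplishes the same thing.
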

Finally, we study the signature operator on Witt incomplete cusp edge spaces. Although the Hodge-de Rham operator satisfies all the assumptions we placed on the Clifford modules for the above theorems, the boundary family always has non-trivial kernel corresponding to the Hodge cohomology of the closed link $Z$ so the above theorem does not apply. However, using results in \cite{ice} we are also able to prove the following signature formula, see Theorem \ref{signature}.
\begin{theorem}
    Let $(M,g)$ be a Witt incomplete cusp edge space with $k\geq 3$. Then the $L^{2}$-signature is given by
    \begin{align}
        \operatorname{sgn}_{L^{2}}(M,g)=\int_{M}\mathcal{L}(M)-\int_{Y}\mathcal{L}(Y)\tilde{\eta}(\slashed{\partial}_{\Lambda(\partial M/Y)}).
    \end{align}
\end{theorem}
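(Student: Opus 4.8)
The plan is to derive the signature formula as a special case of the general index machinery, handling the obstruction that the boundary family of the Hodge-de Rham operator has non-trivial kernel. Recall that on a $(4\ell)$-dimensional oriented manifold, the signature operator is the Hodge-de Rham operator $D = d + d^{\ast}$ graded by the chirality involution $\tau$ built from the Hodge star, and its index on the $L^2$ self-adjoint domain computes $\operatorname{sgn}_{L^2}(M,g)$ provided one first identifies the $L^2$-cohomology with the appropriate intermediate (Witt) cohomology. The first step is therefore to invoke the Witt condition together with the results of \cite{ice}: on a Witt incomplete cusp edge space the Hodge Laplacian is essentially self-adjoint, its heat kernel is constructed in \cite{ice}, and $\mathcal{H}^{\ast}_{L^2}(M,g) \cong IH^{\ast}(M)$, so the McKean-Singer formula $\operatorname{sgn}_{L^2}(M,g) = \lim_{t\to 0}\operatorname{Str}(\tau\, e^{-tD^2})$ is valid. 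Here one must check that $D$ satisfies the Clifford-module hypotheses (Assumptions \ref{assumption2}, \ref{assumption3}, \ref{assumption4}) — this is where, following the remark in the introduction, the Hodge-de Rham operator fits the framework even though the kernel is non-trivial, because the heat kernel construction of \cite{ice} supplies the needed mapping properties directly.

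The core step is the short-time asymptotic analysis of $\operatorname{Str}(\tau\, e^{-tD^2})$ using the generalised Getzler rescaling argument established earlier in the paper for the general index theorem. The interior contribution, by the usual local index computation (Atiyah-Bott-Patodi plus the multiplicativity of the $\hat L$-class under the signature grading), yields $\int_M \mathcal{L}(M)$. The boundary contribution comes from the face of the resolved heat space lying over $\partial M$; on the edge face the rescaled heat kernel limit is governed by the model operator built from the vertical family $\slashed{\partial}_{\Lambda(\partial M/Y)}$ on the fibre bundle $Z \to \partial M \to Y$, and the trace of the model, after the Getzler rescaling, produces $\mathcal{L}(Y)$ times the Bismut-Cheeger eta form $\tilde\eta(\slashed{\partial}_{\Lambda(\partial M/Y)})$ integrated over $Y$, with a minus sign from the orientation of the boundary face. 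The cusp factor $x^{2k}$ on the fibres (versus $x^2$ in the incomplete edge case) changes the precise scaling but, as in \cite{ice}, the Witt condition forces the relevant indicial roots away from the critical strip so the only surviving term is still the eta form; the key point is that no additional transgression term of the type appearing in \eqref{introcone} survives, because for the cusp edge the relevant transgression form $T\hat A$ (here its $\mathcal L$-analogue) integrates to zero — this should be extracted from the structure of the cusp metric exactly as in the spin case discussed above.

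The main obstacle is the non-triviality of $\ker \slashed{\partial}_{\Lambda(\partial M/Y)}$, which is precisely the Hodge cohomology of the link $Z$ in the relevant middle degrees. Because of this, the general Theorem (with trivial boundary kernel) does not apply verbatim, and one cannot directly quote the Fredholm/self-adjointness results of Theorems \ref{introtheoremmain1}–\ref{introtheoremmain2}. Instead I would argue that the Witt condition ensures the fibrewise cohomology that obstructs $L^2$-closedness occurs only in degrees where the signature pairing is trivial, so the eta form $\tilde\eta(\slashed{\partial}_{\Lambda(\partial M/Y)})$ is still well-defined (the family is invertible transversally to its kernel bundle, and the kernel bundle carries a flat signature pairing that vanishes), and the heat-kernel construction of \cite{ice} provides the replacement for the missing Fredholm theory. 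Concretely, one replaces $\slashed{\partial}_{\Lambda(\partial M/Y)}$ by its restriction to the orthogonal complement of the kernel bundle when forming the model on the edge face, checks that the contribution of the kernel bundle itself to $\operatorname{Str}(\tau\, e^{-tD^2})$ vanishes in the limit (by a parity/self-duality argument on $Z$, the link being odd-dimensional so its middle-degree Hodge cohomology carries a skew pairing with zero signature), and thereby recovers the stated formula. The remaining steps — verifying the normalisation constants in $\mathcal L$ and $\tilde\eta$, and confirming the sign — are routine bookkeeping once the structure of the short-time limit is in place.
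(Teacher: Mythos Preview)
Your overall architecture is right, and the contribution at $\timeface$ (interior) and at $\frontface$ (giving the eta form) goes as you describe. But there is a genuine structural gap in how you handle the non-trivial kernel of the boundary family.

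You treat the kernel as a perturbation to be absorbed at the front face: restrict the model there to the orthogonal complement of $\ker\slashed{\partial}_{\Lambda(\partial M/Y)}$, then argue separately that the kernel bundle contributes nothing by a parity/signature argument on $Z$. That is not how the obstruction manifests. In the heat kernel of \cite{ice}, the non-trivial kernel causes the heat kernel to have \emph{non-trivial polyhomogeneous asymptotics at a second boundary face} $\backface$ of $M^{2}_{\mathrm{heat}}$ (the back face, lying over the fibre diagonal in the corner), with leading index $-1-b-kf$. This face is absent from your analysis: you speak of ``the edge face'' as if there is only one. The pushforward formula then picks up a potential contribution to the constant term of $\operatorname{Str}(e^{-t\Delta})$ from $\backface$ as well as from $\frontface$, and this must be shown to vanish by a separate argument.

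The paper's mechanism for this vanishing is not the one you propose. It is not that $Z$ is odd-dimensional (it need not be) nor that a signature pairing on its Hodge cohomology is skew. Rather, one performs a \emph{second} Getzler rescaling, this time at $\backface$, on the fibre-harmonic part of $\Pi_{\mathcal{H}}t(\partial_t+\Delta)\Pi_{\mathcal{H}}$. The rescaled model at $\backface$ decomposes along $\ice\Lambda M = \ice\Lambda\partial M \oplus dx\wedge\ice\Lambda\partial M$ into Bessel-type operators $P_{\alpha(\mathbf{N}),\beta(\mathbf{N})}$ and $P_{\alpha(\mathbf{N}),\gamma(\mathbf{N})}$, and the key identity is $\gamma(f-i)=\beta(i)$. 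This says that, under the identifications $\ice\Lambda^{\pm}M \simeq \ice\Lambda\partial M$ induced by the chirality $\Gamma$, the model heat kernel acts \emph{identically} on the even and odd summands, so its pointwise supertrace vanishes. That is the content of Proposition~\ref{hodgederhambackfacesupertrace}, and without it the argument is incomplete.
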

We now describe the methods we use to prove our main theorems. To prove essential self-adjointness we construct the heat kernel for the generalised Laplacian $\slashed{\partial}^{2}$. The heat kernel $H$ is the fundamental solution to the heat equation $(\partial_{t}+\slashed{\partial}^{2})H=0$ and is a smooth section of the large homomorphism bundle $\operatorname{HOM}(E)$ over $M^{\circ}\times M^{\circ}\times [0,\infty)$ which satisfies
\begin{align}
    \int_{M}H(t,x,x')u(x')\operatorname{dVol}\xrightarrow[L^{2}]{t\to 0} u(x)
\end{align}
for all $u\in L^{2}$. In fact, we will construct the heat kernel on a larger space $M^{2}_{\operatorname{heat}}$ which is a manifold with corners obtained from $M\times M\times [0,\infty)$ by a sequence of three quasihomogeneous radial blowups of different submanifolds, each of which produces a new boundary hypersurface. For any smooth ice vector field $W$ on $M$, which are generated by $\partial_{x},\partial_{y_{\alpha}}$ and $x^{-k}\partial_{z_{i}}$ near the boundary, thought of as a vector field on $M^{2}\times[0,\infty)$ acting on the left factor, the lift of $t^{\frac{1}{2}}W$ to the heat space $M^{2}_{\operatorname{heat}}$ is smooth in the interior and has well defined restrictions (at the faces $\frontface$ and $\timeface$, see figure \ref{heatspace}) so the lift of the heat operator $t(\partial_{t}+\slashed{\partial}^{2})$ also has a well defined restriction to these faces, called the normal operator.

This allows us to construct a parametrix for the heat equation on $M^{2}_{\operatorname{heat}}$ that is polyhomogeneous on $M^{2}_{\operatorname{heat}}$ but not on $M^{2}\times[0,\infty)$. Removing the error term we complete the construction of the heat kernel, see Theorem \ref{heatkernelexistence} and Lemma \ref{htdmin}.
\begin{theorem}
    Let $(M,g)$ be an incomplete cusp edge space and $E$ an ice Clifford module with Clifford connection $\nabla^{E}$. If the Dirac operator $\slashed{\partial}$ has an invertible boundary family of operators $\slashed{\partial}_{Z}$ then 
    there exists a distribution $H\in\mathcal{A}_{\text{phg}}(M^{2}_{\text{heat}};\beta^{\star}(\operatorname{HOM}(\mathcal{S})))$ such that $t(\partial_{t}+\slashed{\partial}^{2})H=0$ and the operator $H_{t}$ extends to a compact linear operator on $L^{2}(M;\mathcal{S})$ such that $H_{t}s\to s$ in $L^{2}$ at $t\to 0$. For $t>0$ and $s\in L^{2}(M;\mathcal{S})$, $H_{t}^{*}s\in\mathcal{D}_{\min}$.
\end{theorem}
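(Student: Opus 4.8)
The plan is to construct $H$ as a polyhomogeneous conormal distribution on the blown-up heat space $M^{2}_{\operatorname{heat}}$ by Melrose's parametrix method, adapted to incomplete edge geometries as in \cite{ice} and \cite{iedge}: first assemble an approximate solution from the model (normal) operators at the new boundary hypersurfaces, then correct the error by a convergent Neumann series in the heat calculus. To begin, fix $M^{2}_{\operatorname{heat}}$, the manifold with corners obtained from $M^{2}\times[0,\infty)$ by the three quasihomogeneous radial blowups described above, with boundary hypersurfaces including the temporal face $\timeface$ (from the parabolic blowup of the diagonal at $t=0$) and the front face $\frontface$ (from the blowups adapted to the cusp-edge degeneration at $\partial M$), together with the lifts of $\{x=0\}$, $\{x'=0\}$ and $\{t=0\}$; see Figure \ref{heatspace}. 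Since the lift of $t^{1/2}W$ to $M^{2}_{\operatorname{heat}}$ is smooth with well-defined restrictions to $\timeface$ and $\frontface$ for every smooth ice vector field $W$, the lift of $t(\partial_{t}+\slashed{\partial}^{2})$ restricts to these faces; I expect its normal operator at $\timeface$ to be a bundle version of the Euclidean heat operator on the parabolic tangent space, and its normal operator at $\frontface$ to be a model heat operator on $[0,\infty)_{s}\times Z$, fibered over $Y$, whose spatial part is schematically $-\partial_{s}^{2}+s^{-2k}\slashed{\partial}_{Z}^{2}+\dots$.

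Next, solve these model problems. At $\timeface$ the solution is the Gaussian; it becomes the leading term of the parametrix and accounts for the approximate-identity property. The delicate step is $\frontface$: separating variables along the spectrum of $\slashed{\partial}_{Z}$ reduces the model to a family of scalar heat problems on $[0,\infty)_{s}$ indexed by the eigenvalues $\lambda$ of $\slashed{\partial}_{Z}$, and \emph{invertibility of the boundary family}---i.e.\ $|\lambda|$ bounded away from $0$---guarantees that each of these is invertible with a polyhomogeneous solution whose index sets at the lifts of $\{x=0\}$ and $\{x'=0\}$ have strictly positive real parts (in fact one arranges infinite-order vanishing there) and carry no exceptional logarithmic terms. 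This is exactly where the hypothesis is used: a non-invertible boundary family would create an indicial root on the critical line and destroy $L^{2}$-boundedness, which is why that case is excluded. Iterating over the polyhomogeneous expansions at $\frontface$ and the Taylor series at $\timeface$ and summing asymptotically produces a parametrix $H_{0}\in\mathcal{A}_{\text{phg}}(M^{2}_{\operatorname{heat}};\beta^{\star}(\operatorname{HOM}(\mathcal{S})))$ with $H_{0}|_{t=0}=\delta_{\operatorname{diag}}$ and $t(\partial_{t}+\slashed{\partial}^{2})H_{0}=\mathcal{E}_{0}$, where $\mathcal{E}_{0}$ vanishes to infinite order at $\timeface$, $\frontface$ and the side faces.

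To remove $\mathcal{E}_{0}$, exploit that the heat calculus on $M^{2}_{\operatorname{heat}}$ is closed under Volterra composition and that composing with a factor vanishing to positive order at $\timeface$ improves the order of vanishing there; the corresponding Neumann series correcting $H_{0}$ then converges in $\mathcal{A}_{\text{phg}}(M^{2}_{\operatorname{heat}})$ and produces the exact solution $H$, polyhomogeneous on $M^{2}_{\operatorname{heat}}$ but not on $M^{2}\times[0,\infty)$, with the same leading behaviour at $\timeface$ as $H_{0}$. The bookkeeping of index sets through the pull-back and push-forward theorems for polyhomogeneous distributions is routine but is where care is required. For $t>0$ the restriction of $H$ to $M^{2}$ is a polyhomogeneous section with positive index sets at $\partial M\times M$ and $M\times\partial M$---indeed it vanishes there to infinite order---so it is square-integrable against the (finite) cusp-edge volume; hence $H_{t}$ is Hilbert--Schmidt, thus compact, on $L^{2}(M;\mathcal{S})$, and $H_{t}s\to s$ in $L^{2}$ as $t\to0$ because the $\timeface$ normal operator is the Euclidean heat kernel while the other faces contribute terms of higher order in $t$.

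Finally, for $t>0$ and $s\in L^{2}$, $H_{t}^{*}s$ is the pairing of $s$ with the conjugate-transpose kernel, which is of the same polyhomogeneous type because the swap of factors lifts to $M^{2}_{\operatorname{heat}}$; hence $H_{t}^{*}s\in\dot{C}^{\infty}(M;\mathcal{S})$, smooth up to $\partial M$ and vanishing there to infinite order. It therefore suffices to show $\dot{C}^{\infty}(M;\mathcal{S})\subset\mathcal{D}_{\min}$. Given $u\in\dot{C}^{\infty}$, let $\chi_{\epsilon}$ be a cutoff equal to $1$ on $\{x>2\epsilon\}$ and supported in $\{x>\epsilon\}$; since $\chi_{\epsilon}$ depends only on $x$, the commutator $\slashed{\partial}(\chi_{\epsilon}u)-\chi_{\epsilon}\slashed{\partial}u$ (Clifford multiplication of $u$ by $\chi_{\epsilon}'(x)\,dx$) is supported in the collar $\{\epsilon<x<2\epsilon\}$ with pointwise size $O(\epsilon^{-1}|u|)$, and as $u$ vanishes to infinite order while that collar has volume $O(\epsilon^{1+k\dim Z})$ its $L^{2}$-norm tends to $0$; together with $\chi_{\epsilon}\slashed{\partial}u\to\slashed{\partial}u$ in $L^{2}$ this shows $\chi_{\epsilon}H_{t}^{*}s\in C^{\infty}_{c}(M^{\circ};\mathcal{S})$ approximates $H_{t}^{*}s$ in the graph norm of $\slashed{\partial}$, so $H_{t}^{*}s\in\mathcal{D}_{\min}$. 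The main obstacle throughout is the $\frontface$ model: establishing invertibility of that boundary model from invertibility of $\slashed{\partial}_{Z}$ and pinning down the precise index sets of its inverse, since this simultaneously controls the solvability of the parametrix construction, the $L^{2}$-compactness of $H_{t}$, and the approximate-identity property.
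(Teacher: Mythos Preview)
Your overall architecture---solve model problems at $\timeface$ and $\frontface$, assemble a parametrix, remove the error by a Neumann/Volterra series, then use a cutoff to show $\dot{C}^{\infty}\subset\mathcal{D}_{\min}$---matches the paper, and your final paragraph on $\mathcal{D}_{\min}$ is essentially Lemma~\ref{htdmin} verbatim.

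The substantive gap is your identification of the $\frontface$ model. You describe it as a heat operator on $[0,\infty)_{s}\times Z$ with spatial part of the shape $-\partial_{s}^{2}+s^{-2k}\slashed{\partial}_{Z}^{2}$, to be analysed by separation of variables and indicial roots. That is the \emph{incomplete edge} (conical) picture, and here it would correspond to the back face $\backface$, not the front face. The point of the iterated cusp-edge blowup (the quasihomogeneous blowup of order $k-1$ producing $\frontface$) is precisely that it resolves the operator further: in the projective coordinates $(\tilde{s},\tilde{\eta},\tilde{T})$ of \eqref{61} the normal operator at $\frontface$ is (Lemma~\ref{normaloperator})
\[
\mathcal{N}_{\frontface}\bigl(t(\partial_{t}+\slashed{\partial}_{E}^{2})\bigr)=\tfrac{1}{2}\tilde{T}\partial_{\tilde{T}}+\tilde{T}^{2}\bigl(-\partial_{\tilde{s}}^{2}+\Delta_{\tilde{\eta},y'}+\slashed{\partial}_{Z,y'}^{2}\bigr),
\]
a translation-invariant Euclidean-type heat operator on $\mathbb{R}_{\tilde{s}}\times\mathbb{R}^{b}_{\tilde{\eta}}\times Z_{y'}$ (no half-line, no $s^{-2k}$ weight, no Bessel analysis). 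Its fundamental solution is the product of Gaussians in $(\tilde{s},\tilde{\eta})$ with the fibre heat kernel $e^{-\tilde{T}^{2}\slashed{\partial}_{Z,y'}^{2}}$. The invertibility hypothesis enters \emph{here}, but not through indicial roots: the spectral gap of $\slashed{\partial}_{Z}$ makes $e^{-\tilde{T}^{2}\slashed{\partial}_{Z}^{2}}$ decay exponentially as $\tilde{T}\to\infty$, so the $\frontface$ model vanishes to infinite order at $\frontface\cap\backface$, and the parametrix can be taken to be trivial at $\backface$ (index set $\varnothing$). Without invertibility one would instead have to solve a genuine Bessel-type model at $\backface$, which is where self-adjoint extensions proliferate.

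A secondary point: in the paper the parametrix $K_{1}$ solves to infinite order at $\timeface$ but only to \emph{leading} order at $\frontface$; the error lives at $\frontface$ with index shifted by one, and the Neumann series (via the triple-space composition from \cite{ice}) improves the order at $\frontface$, not at $\timeface$. Your description (``$\mathcal{E}_{0}$ vanishes to infinite order at $\frontface$'' and ``improves the order at $\timeface$'') has this reversed.
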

From this theorem and a general argument (see section 6) we obtain Theorem \ref{introtheoremmain1}.

Having found a unique self-adjoint domain $\mathcal{D}$, we then construct a Green's operator for $\slashed{\partial}\colon \mathcal{D}\to L^{2}$. Although the operators we are studying are naturally described using these incomplete cusp edge structures and this suffices for the construction of the heat kernel, it is more convenient to consider a closely related class of differential operators called the cusp edge differential operators. The cusp edge tangent bundle $^{\ce}TM$ is defined by the space of smooth vector fields generated by $x^{k}\partial_{x},x^{k}\partial_{y_{\alpha}}$ and $\partial_{z_{i}}$. The main difference from the incomplete cusp edge vector fields is that this is a Lie subalgebra of the space of smooth vector fields and we can define the cusp edge differential operators as elements of its universal enveloping algebra. In our case, similar to the wedge operators in \cite{pseudo}, the incomplete cusp edge differential operators that we consider are more naturally defined in terms of the cusp edge differential operators. This allows us to use the general procedure of Melrose \cite{melrose} where we construct a double space $M^{2}_{\ce}$ by blowing up $M^{2}$ where this Lie algebra of singular vector fields is resolved and generates a subbundle of the tangent bundle of $M^{2}_{\ce}$ which is transverse to the lifted diagonal up to the blown up face. From there we can define a small cusp edge pseudodifferential calculus $\Psi_{\ce}^{\star}$ as distributions on $M^{2}_{\ce}$ conormal to the lifted diagonal for which we can prove mapping and composition properties by constructing a triple space and we can define cusp edge ellipticity using the symbol mapping.

 The operators $x^{k}\slashed{\partial}$ and $x^{2k}\slashed{\partial}^{2}$ are cusp edge elliptic operators so they have small parametrices in $\Psi^{\star}_{\ce}$ which are inverses up to a residual remainder in $\Psi^{-\infty}_{\ce}$. Residual operators are usually not compact with the normal operator at the blown up face being the obstruction. Here we again use the assumption of invertible boundary family to solve away the normal operator to establish Theorem \ref{introtheoremmain2}.

To obtain the index formula, we need to study the short time asymptotics of the heat kernel. For a trace-class integral operator $A$ with smooth kernel $K$ on a smooth manifold, the trace is equal to the integral its restriction to the diagonal
\begin{align}
    \operatorname{Tr}(A)=\int_{M}\operatorname{tr}K(x,x)\operatorname{dVol}.
\end{align}
Equivalently, this can be written as the pushforward of the pointwise trace of the heat kernel restricted to the diagonal. Similarly, we can express the supertrace of the heat kernel by the pushforward of the restriction to the lifted diagonal in the heat space of the pointwise supertrace to $[0,\infty)_{t}$. We can then determine the small time asymptotics of the supertrace from the asymptotics of the heat kernel at each of the boundary hypersurfaces of $M^{2}_{\operatorname{heat}}$ in the preimage of $t=0$, see Section \ref{pushforward}. The asymptotics at the time face $\timeface$ which is the preimage of the $t=0$ diagonal gives rise to the usual $\hat{A}$ contribution while there is another contribution at the front face $\frontface$ which lives above the fibre diagonal in the corner which gives the contribution from the singular locus.

While the contribution at $\timeface$ is standard, to determine the contribution at $\frontface$ is, we need to determine the coefficients of the asymptotic expansion $H$ at $\text{ff}$ which will allow us to find an explicit expression for the pointwise supertrace of $H$ restricted to the intersection to the diagonal with $\text{ff}$. To prove Theorem \ref{introtheoremmain2}, we use a generalisation of the rescaling argument introduced by Getzler \cite{getzler86}
 at $\frontface$ and we see that the boundary contribution comes from a higher order term in the asymptotic expansion of the rescaled heat kernel. That is, while for the standard Getzler rescaling, for example at the diagonal in the interior, the contributing term is the supertrace of the leading order term of the rescaled heat kernel. The analagous term at the boundary for an incomplete cusp appears to contribute a singular term in the asymptotic expansion of the supertrace. However, it turns out that the supertrace of the leading term vanishes and the next non-vanishing term, which is $k-1$ orders higher, is exactly the term which produces the constant term in the supertrace asymptotics.

For the signature theorem, using the heat kernel constructed in \cite{ice}, we can still use the same methods to determine the contribution to the supertrace from $\frontface$ and $\timeface$. In this case, the heat kernel has non-trivial asymptotics at another boundary hypersurface $\backface$ 
 (see figure \ref{heatspace}) which could also contribute to the supertrace. By using another Getlzer rescaling argument on the normal operator acting on fibre harmonic forms at this face, we show that in fact the contribution at $\backface$ vanishes.

\bigskip

The study of the spectral theory of elliptic operators on the Riemann moduli spaces was initiated by Ji, Mazzeo, M\"uller and Vasy in \cite{jmmv} where they proved that the scalar Laplacian for the Weil-Petersson metric is essentially self-adjoint has discrete spectrum and its eigenvalues satisfies Weyl asymptotics. Gell-Redman and Swoboda \cite{ice} proved the essential self-adjointess, discrete spectrum and Weyl asymptotics for the Hodge-Laplacian on incomplete cusp edge spaces satisfying the Witt condition, which included boundaries with circle fibres as in the case of the Riemann moduli spaces near the interior of a single divisor. They used techniques of geometric microlocal analysis, constructing the heat kernel as a polyhomogeneous distribution on a blown up heat space, which our construction of the heat kernel for Dirac operators with invertible boundary family is based on.

This geometric approach to the analysis of the heat kernel of elliptic operators is based on Melrose \cite{melrose} construction on non-compact complete Riemannian manifolds with asymptotically cylindrical ends (b-metrics). The first use of these techniques for incomplete singular spaces, where the question of self-adjointness extensions appears, is in the work of Mooers \cite{cone} on spaces with isolated conical singularities. Mazzeo and Vertman in \cite{mazzeovertman} constructed the heat kernel for the Friedrich's extension of the Laplacian for spaces with non-isolated conic singularites which are called incomplete edge spaces. See also \cite{vertmanalgebraic}\cite{vertmanbiharmonic}\cite{vertmanmapping}\cite{piazzavertman} for the construction of heat kernels on incomplete edge spaces. For the analysis of the mapping and Fredholm properties of singular differential operators using the microlocal approach generalising of the pseudodifferential calculus and for example \cite{mazzeo}\cite{grieserhunsicker2}. The boundary-valued problems for first order elliptic wedge operators, which includes Dirac operators on cone edge spaces, has been studied in \cite{krainermendoza}.

Our resolved spaces most closely mimic Gell-Redman and Swoboda \cite{ice} for the heat kernel and Mazzeo-Melrose \cite{melrosemazzeo} and Grieser-Hunsicker \cite{grieserhunsicker1} for the Green's operator. These papers also use an iterated blow-up construction needed to deal with the inhomogeneities of the operators.

Besides microlocal geometric techniques, there is a lot of previous work studying the spectral properties of elliptic operators on spaces with conical singularities. For example Cheeger \cite{cheegerspectral}\cite{cheegerspectralindex}\cite{cheegerhodge} on the Hodge and spectral theory on spaces with conical singularities, Br\"uning and Seeley \cite{bruningseeleyresolventcone}\cite{bruningseeleyresolvent2} on the heat kernel and resolvent of regular singular operators which includes the Laplacian on cones and Lesch \cite{leschcones} on the heat kernel and index theory of these operators.

The heat equation approach to index theory was used by Atiyah, Patodi and Singer in \cite{aps1} for their index theorem for Dirac operators on manifolds with boundary with APS boundary conditions where they also introduced the eta invariant of an elliptic operator. The study of index theory of Dirac operators in incomplete singular Riemannian spaces goes back to the work of Cheeger on the signature and Gauss-Bonnet theorem on spaces with conical singularities which was extended by Chou \cite{chou},\cite{chou2} to an index theorem for the spin Dirac operator. Bismut and Cheeger calculated the adiabatic limit of the reduced eta invariant of the spin Dirac operator on a fibration of compact spin manifolds with invertible vertical family as an integral involving the Bismut-Cheeger eta form. This was generalised by Dai \cite{daiadiabatic}, to families whose kernel form a vector bundle who also studied the signature operator. In 
\cite{BismutCheeger90a},\cite{BismutCheeger90b} Bismut and Cheeger proved an index theorem for families of Dirac operators on even-dimensional manifolds with boundaries by finding a formula for a family with conical singularities and taking the limit as the cone length goes to infinity. Atiyah and Lebrun \cite{atiyahlebrun} found formulas for the Euler characteristic and signature on 4-manifolds with an incomplete edge along an embedded surface. In \cite{iedge}, Albin and Gell-Redman proved an index formula for the spin Dirac operator manifolds with non-isolated conical singularities satisfying the Witt condition by considering the limit of the formula obtained on the manifold $M_{\epsilon}=\{x\geq \epsilon\}$ and using the formula for the adiabatic limit of the eta invariant. They generalised this in \cite{pseudo} to stratified spaces with iterated non-isolated conical singularities, this time using heat kernel and Getzler rescaling methods. 

There are many related works on index theory on complete spaces with fibred boundaries for example Vaillant \cite{vaillant} and Albin-Rochon \cite{albinrochonindex}. For Hodge theory on spaces with fibred boundary we mention \cite{almphodge}\cite{almpsignature}\cite{mullerhodge}\cite{hhmgravitational}.

\subsection{Acknowledgements}
I would like to thank my supervisor, Jesse Gell-Redman, for his guidance and support. This work was supported by the Australian Government Research Training Project Scholarship. This research was supported in part by the Australian Research Council grant DP21010324.

\section{Incomplete cusp edge spaces}

Let $M$ be a manifold with boundary $\partial M$ which is the total space of a fibre bundle $Z\to \partial M\to Y$. Let $x\geq 0$ be a smooth function on $M$ such that $x^{-1}(0)=\partial M$ and $dx\neq 0$ on $\partial M$ which we call a boundary defining function for $\partial M$. The existence of such a function is equivalent to the existence of a collar neighbourhood of the boundary, that is a neighbourhood $\mathcal{U}$ which is diffeomorphic to $\partial M\times [0,\epsilon)$.

Let $g_{Y}$ be a metric on $Y$ and $g_{\partial M/Y}$ a symmetric 2-tensor on $\partial M$ so that $\phi^{\star}g_{Y}+g_{\partial M/Y}$ is a submersion metric on $\partial M$ ($\phi$ gives an isometry between the horizontal distribution and $TY$). We consider metrics on $M$ which can be written in a neighbourhood of the boundary in the form
\begin{align}\label{1}
    g_{0}=dx^{2}+x^{2k}g_{\partial M/Y}+\phi^{\star}g_{Y}
\end{align}
for some boundary defining function $x$. For $k\geq 2$, we call such a metric a \textbf{product-type incomplete cusp edge metric}. 

The space of vector fields which are smooth on the interior and generated over $C^{\infty}(M)$ in a neighbourhood of the boundary by vector fields of the form
\begin{align}
    \partial_{x},\partial_{y_{\alpha}},x^{-k}\partial_{z_{i}}
\end{align}
where $y_{\alpha}$ is a lift of coordinates on the base and $z_{i}$ restrict to coordinates on the fibre is a projective $C^{\infty}(M)$ module thus defines a vector bundle called the \textbf{incomplete cusp edge (ice) tangent bundle} which we denote $\ice TM$. 

The space of smooth sections of the dual, $\ice T^{\star}M$, is identified with the space of one forms which are generated near the boundary by
\begin{align}\label{3}
    dx,dy_{\alpha},x^{k}dz_{i}.
\end{align}
In terms of these vector bundles, metrics of the form \eqref{1} define smooth metrics on the incomplete cusp edge tangent bundle, that is a smooth section the symmetric square of $^{\text{ice}}T^{\star}M$ which is non-degenerate on each fibre.

More generally we consider any metric $g$ on $^{\text{ice}}TM$ which satisfies
\begin{align}\label{4}
    g(W_{1},W_{2})-g_{0}(W_{1},W_{2})=O(x^{k})
\end{align}
for some product type metric $g_{0}$, all ice vector fields $W_{1},W_{2}$ and some polyhomogeneous (see appendix for definition) error term. Such a metric is called an \textbf{exact incomplete cusp edge metric} and the pair $(M,g)$ is called an \textbf{incomplete cusp edge space}. For any incomplete cusp edge space, we will fix a choice of boundary defining function which we denote $x$ and associated product-type metric which we will denote $g_{0}$. (If $k=1$ and the error is $O(x^2)$, we have a totally geodesic wedge metric of depth 1 \cite{pseudo}). For now, we will mostly consider product type metrics.

We define the ice differential operators $\operatorname{Diff}^{k}_{\operatorname{ice}}(M,E)$ of order $r$ to be those which locally near the boundary have the form
\begin{align}\label{icediff}
    P=\sum_{i+|\alpha|+|\beta|\leq r}a_{i\alpha\beta}(x,y,z)(x^{-k}\partial_{z})^{\beta}\partial_{y}^{\alpha}\partial_{x}^{i}
\end{align}
for some smooth sections $a_{i\alpha\beta}$ of $\operatorname{End}(E)$. Note that this is not an algebra of operators, in particular it is not the algebra of differential operators generated by the ice vector fields since for example $\partial_{x}(x^{-k}\partial_{z})=x^{-k}\partial_{z}\partial_{x}-kx^{-k-1}\partial_{z}$ is not of this form.

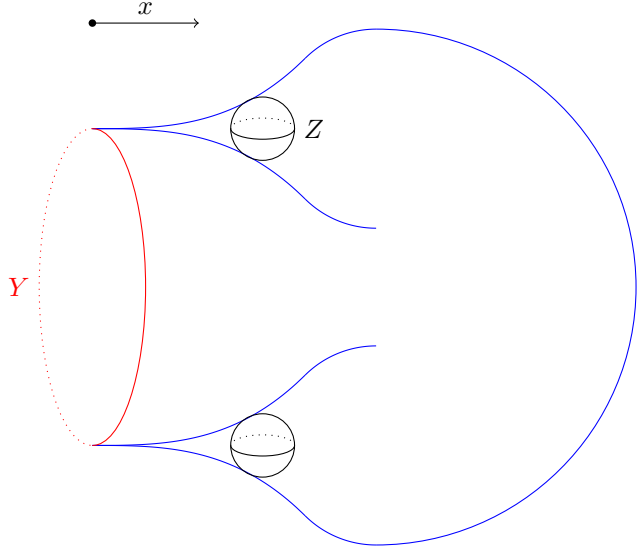
\begin{figure}[H]
\centering
    \begin{tikzpicture}[scale=1.4]
        \draw[->] (0,1) -- (1,1) node[pos=0,circle,fill,inner sep=1pt] {} node[pos=0.5,above] {$x$};
        
        \draw[scale=2, domain=0:1, smooth, variable=\x, blue] plot ({\x}, {(1/3)*\x^(3)});
        \draw[scale=2, domain=0:1, smooth, variable=\x, blue] plot ({\x}, {-(1/3)*\x^(3)});
        \draw[scale=2, domain=-135:-90, smooth, variable=\x, blue] plot ({4/3+(sqrt(2)/3)*cos(\x)}, {(sqrt(2)/3)*(sin(\x)}); 
        \draw[scale=2, domain=90:135, smooth, variable=\x, blue] plot ({4/3+(sqrt(2)/3)*cos(\x)}, {(sqrt(2)/3)*(sin(\x)});

        \draw[scale=2, domain=0:1, smooth, variable=\x, blue] plot ({\x}, {-1.5+(1/3)*\x^(3)});
        \draw[scale=2, domain=0:1, smooth, variable=\x, blue] plot ({\x}, {-1.5-(1/3)*\x^(3)});
       \draw[scale=2, domain=-135:-90, smooth, variable=\x, blue] plot ({4/3+(sqrt(2)/3)*cos(\x)}, {-1.5+(sqrt(2)/3)*(sin(\x)}); 
        \draw[scale=2, domain=90:135, smooth, variable=\x, blue] plot ({4/3+(sqrt(2)/3)*cos(\x)}, {-1.5+(sqrt(2)/3)*(sin(\x)}); 
        
         \draw[dotted,red] (0,0) arc(90:270:0.5cm and 1.5cm) node[pos=0.5,left] {$Y$};
         \draw[red] (0,-3) arc(-90:90:0.5cm and 1.5cm);

         \draw (1.9,0) arc(0:360:0.3cm) node[pos=0,right] {$Z$};
         \draw (1.3,0) arc(-180:0:0.3cm and 0.1cm);
         \draw[dotted] (1.9,0) arc(0:180:0.3cm and 0.1cm);

         \draw (1.9,-3) arc(0:360:0.3cm);
         \draw (1.3,-3) arc(-180:0:0.3cm and 0.1cm);
         \draw[dotted] (1.9,-3) arc(0:180:0.3cm and 0.1cm);

         \draw[blue] ({8/3},{-3-2*sqrt(2)/3}) arc(-90:90:{1.5+2*sqrt(2)/3}); 
    \end{tikzpicture}
    \caption{$\hat{M}=M/$\text{fibres collapsed over} $\partial M$} \label{fig:ice space}

 \end{figure}
In a neighbourhood of the boundary $\mathcal{U}=[0,\epsilon)\times\partial M$ the metric is of the form \eqref{1} where $\phi^{\star}g_{Y}+g_{\partial M/Y}$ is a submersion metric on $\partial M$. On $\partial M$ with respect to this submersion metric, given a local orthonormal frame $U_{i}$ of $Y$, there is a unique lift $\tilde{U}_{i}$ which we can extend to a local orthonormal frame on $\partial M$ with the inclusion of any choice of orthonormal frame $V_{j}$ of the vertical bundle. This then extends to a local orthonormal frame given by $\partial_{x}$, $\tilde{U}_{i}$, $x^{-k}V_{j}$ on $\mathcal{U}$ with respect the product type metric. Thus we have an orthonormal splitting 
\begin{align}\label{6}
    ^{\text{ice}}T\mathcal{U}=\langle \partial_{x}\rangle\oplus x^{-k}T\partial M/Y\oplus \phi^{\star}TY
\end{align}
each summand spanned by $\partial_{x}$, $\tilde{U}_{i}$, $x^{-k}V_{j}$ respectively. Let $\boldsymbol{v}$ and $\boldsymbol{h}$ be the orthogonal projections onto the second and third summand and $\boldsymbol{v}_{+}$ the projection onto the first two summands.

Using this orthonormal frame, we have the following commutators
\begin{align}
    [\partial_{x},\tilde{U}]&=0 \\
    [\partial_{x},x^{-k}V]&=-kx^{-k-1}V\in x^{-1}C^{\infty}(\mathcal{U},x^{-k}T(\partial M/Y))  \\
    [x^{-k}V_{1},x^{-k}V_{2}]&=x^{-2k}[V_{1}.V_{2}]\in x^{-k}C^{\infty}(\mathcal{U},x^{-k}T(\partial M/Y)) \\
    [x^{-k}V_{1},\tilde{U}]&=x^{-k}[V,\tilde{U}]\in C^{\infty}(\mathcal{U},x^{-k}T(\partial M/Y)) \\
    [\tilde{U_{1}},\tilde{U_{2}}]&\in x^{k}C^{\infty}(\mathcal{U},x^{-k}T(\partial M/Y))+C^{\infty}(\mathcal{U},\phi_{Y}^{\star}TY).
\end{align}
Note that by \eqref{4}, the difference between this orthonormal frame for $g_{0}$ and the orthonormal frame with respect to $g$ we get by parallel transporting with respect to $\partial_{x}$ or applying Gram-Schmidt is $O(x^{k})$.

We can define a connection $\nabla$ on smooth sections of $^\text{ice}TM$ by the Koszul formula which we will also call the Levi-Civita connection.
\begin{equation}\label{11}
    \begin{split}
    2g(\nabla_{W_{0}}W_{1},W_{2})=& W_{0}g(W_{1},W_{2})-W_{1}g(W_{2},W_{0})+W_{2}g(W_{0},W_{1}) \\ &+g([W_{0},W_{1}],W_{2})-g([W_{0},W_{2}],W_{1})-g([W_{1},W_{2}],W_{0}).
    \end{split}
\end{equation}

To describe the asymptotics of this connection, we make use of two other tensors. The curvature of the fibration
\begin{align}
    \mathcal{R}^{\phi}(\tilde{U}_{1},\tilde{U}_{2})=\boldsymbol{v}([\tilde{U}_{1},\tilde{U}_{2}]).
\end{align}
By  Frobenius' theorem, this is also the obstruction to finding coordinates $y_{i}$ complementary to coordinates $x,z_{j}$ where $z_{j}$ restrict to coordinates on the fibres such that the coordinate vector fields $\partial_{y_{i}}$ span the horizontal bundle. Thus in general, if
\begin{align}
    U_{i}=a_{ij}\partial_{y_{j}}
\end{align}
then its lift to the horizontal bundle will be of the form
\begin{align}
    \tilde{U}_{i}=a_{ij}\partial_{y_{j}}+b_{ij}(x^{-k}\partial_{z_{j}}).
\end{align}
Recall that $g_{0}$ is a smooth section of the symmetric product of $\ice T^{\star}M$ so in coordinates it is given by products of the terms \eqref{3}
\begin{align}
    g_{0}=dx^{2}+
    \begin{bmatrix}
        dy^{\alpha} & x^{k}dz^{i}
    \end{bmatrix}
    \begin{bmatrix}
        \left(g_{Y}\right)_{\alpha\beta} & h_{\alpha j} \\
        h_{i\beta } & \left(g_{\partial M/Y}\right)_{ij}
    \end{bmatrix}
    \begin{bmatrix}
        dy^{\beta} \\
        x^{k}dz^{j}
    \end{bmatrix}.
\end{align}
Now, if we consider the $h_{\alpha j}dy^{\alpha}(x^{k}dz^{j})$ terms in the product-type metric we have
\begin{equation}
    \begin{split}
        \phi^{\star}g_{Y}(\partial_{y_{i}},\partial_{z_{j}})&=g_{Y}(\phi_{\star}\partial_{y_{i}},\phi_{\star}\partial_{z_{j}}) \\
    &=g_{Y}(\partial_{y_{i}},0)=0.
    \end{split}
\end{equation}
Thus the only contribution to these mixed terms are $O(x^{k})$ which come from the $x^{k}g_{\partial M/Y}$ term. Now $\tilde{U}_{i}$ is orthogonal to the vertical bundle so denoting the restriction of $g_{\partial M/Y}$ to a fibre over $y$ by $g_{Z,y}$ we have
\begin{equation}
    \begin{split}
        0=g_{\text{ice}}(\tilde{U}_{i},x^{-k}\partial_{z_{l}})=b_{ij}(g_{Z,y})_{jl}+O(x^{k}).
    \end{split}
\end{equation}
Hence the term $b_{ij}(g_{Z,y})_{jl}$ is $O(x^{k})$ for all $l$. Since $g_{Z,y}$ has smooth non-vanishing coefficients we have that $b_{ij}=O(x^{k})$ so we can write
\begin{align}\label{16}
        \tilde{U}_{i}=a_{ij}\partial_{y_{j}}+\tilde{b}_{ij}x^{k}(x^{-k}\partial_{z_{j}})
\end{align}
for some different smooth coefficients $\tilde{b}_{ij}$.

We also have the second fundamental form
\begin{align}
    \mathcal{S}^{\phi}(V_{1},V_{2})=\boldsymbol{h}(\nabla^{\partial M}_{V_{1}}V_{2})
\end{align}
where $\boldsymbol{h}$ is the projection onto $\phi_{Y}^{\star}TM$ and $\nabla^{\partial M}$ is the Levi-Civita connection on the boundary induced by the submersion metric
\begin{align}
    g_{\partial M}= g_{\partial M/Y}+\phi_{Y}^{\star}g_{Y}.
\end{align}
Finally there is the family of connections $\nabla^{\partial M/Y}$ induced by the metric on each fibre and $\phi_{Y}^{\star}\nabla^{Y}$ the pullback connection of the Levi-Civita connection on $(Y,g_{Y})$. 

This, together with the Koszul formula and the fact that $[V,\cdot]$ is vertical for all vertical fields $V$, we can calculate the asymptotics of the connection as follows.

\begin{lemma} Let $V_{i}\in C^{\infty}(\mathcal{U},T\partial M/Y)$, $U_{j}\in C^{\infty})Y,TY)$ and $\tilde{U}_{j}$ their respective lifts then
\begin{equation}\label{20}
    \begin{split}
        g_{\text{ice}}(\nabla_{V_{i}}\partial_{x},x^{-k}V_{j})&=kx^{k-1}g_{\partial M/Y)}(V_{i},V_{j}) \\
        g_{\text{ice}}(\nabla_{V_{i}}x^{-k}V_{j},\partial_{x})&=-kx^{k-1}g_{\partial M/Y)}(V_{i},V_{j}).
    \end{split}
\end{equation}
The combinations with $\partial_{x}$ which do not appear above all vanish. The combinations without $\partial_{x}$ are given in the following table
\begin{center}
    \begin{tabular}{|c||c|c|}
        \hline
             $g_{\text{ice}}(\cdot,\cdot)$ & $x^{-k}V_{3}$ & $\tilde{U}_{3}$ \\ 
                \hline\hline
            $\nabla_{V_{1}}x^{-k}V_{2}$ & $g_{\partial M/Y}(\nabla^{\partial M/Y}_{V_{1}}V_{2},V_{3})$ & $x^{k}\phi_{Y}^{\star}g_{Y}(\mathcal{S}^{\phi}(V_{1},V_{2}),\tilde{U}_{3})$ \\ 
                \hline
            $\nabla_{\tilde{U}}x^{-k}V$ & $\phi_{Y}^{\star}g_{Y}(\mathcal{S}^{\phi}(V,V_{3}),\tilde{U})-g_{\partial M}([\tilde{U},\tilde{V}_{3}],V) $  & $-\frac{x^{k}}{2}g_{\partial M/Y}(\mathcal{R}^{\phi}(\tilde{U},\tilde{U}_{3}),V)$ \\ 
                \hline
            $\nabla_{V_{1}}\tilde{U}$ & $-x^{k}\phi_{Y}^{\star}g_{Y}(\mathcal{S}^{\phi}(V,V_{3}),\tilde{U})$ & $-\frac{x^{2k}}{2}g_{\partial M/Y}(\mathcal{R}^{\phi}(\tilde{U},\tilde{U}_{3}),V)$ \\ 
                \hline
            $\nabla_{\tilde{U}_{1}}\tilde{U}_{2}$ & $\frac{x^{k}}{2}g_{\partial M/Y}(\mathcal{R}^{\phi}(\tilde{U}_{1},\tilde{U}_{2}),V)$ & $g_{Y}(\nabla_{V_{1}}V_{2},V_{3})$ \\ 
        \hline
    \end{tabular}
\end{center}
\end{lemma}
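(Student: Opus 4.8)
The plan is to prove the identities by a direct expansion of the Koszul formula \eqref{11}, done entry by entry, after two preliminary reductions. First, since by \eqref{4} the difference between $g$ and a product type metric $g_{0}$, and likewise between the two associated orthonormal frames, is $O(x^{k})$, it is enough to establish every identity for a product type metric $g_{0}$ in the $g_{0}$-orthonormal frame $\partial_{x},\tilde{U}_{i},x^{-k}V_{j}$ of \eqref{6}; the general case then follows by allowing additional $O(x^{k})$ error terms everywhere. Second, for each pairing $g_{\text{ice}}(\nabla_{W_{0}}W_{1},W_{2})$ appearing in the statement I would write out all six terms on the right of \eqref{11}, immediately discard the ones that vanish by the orthogonal splitting \eqref{6} (in particular because $g_{0}(\partial_{x},\cdot)$ kills the other two summands, and because in the $g_{0}$-orthonormal frame the functions $g_{0}(\tilde{U}_{i},\tilde{U}_{j})$ and $g_{0}(x^{-k}V_{i},x^{-k}V_{j})$ are constant), and evaluate the survivors from three inputs: the block form of $g_{0}$ in the coframe $dx,dy_{\alpha},x^{k}dz_{i}$, the commutator identities for this frame computed just above, and the expansion \eqref{16} of a horizontal lift as $\tilde{U}=a\,\partial_{y}+\tilde{b}\,x^{k}(x^{-k}\partial_{z})$.

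For the pairings involving $\partial_{x}$: in the orthonormal frame the only derivative term that survives is the one differentiating $g_{0}(V_{i},x^{-k}V_{j})=x^{k}g_{\partial M/Y}(V_{i},V_{j})$ along $\partial_{x}$, and the only surviving bracket term is the one produced by $[\partial_{x},x^{-k}V]=-kx^{-k-1}V$; together these give the stated $kx^{k-1}g_{\partial M/Y}(V_{i},V_{j})$. The companion identity for $g_{\text{ice}}(\nabla_{V_{i}}x^{-k}V_{j},\partial_{x})$ then follows by metric compatibility of $\nabla$ applied to $V_{i}\,g_{0}(\partial_{x},x^{-k}V_{j})=0$. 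Any pairing of $\partial_{x}$ against frame vectors not of this shape has no surviving derivative term (the relevant $g_{0}$-inner products are constant) and no bracket with a nonzero $\partial_{x}$-component other than $[\partial_{x},x^{-k}V]$, which does not occur; hence it vanishes, which is what "the combinations with $\partial_{x}$ which do not appear above all vanish" asserts.

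For the eight entries in the table I would treat each pairing the same way, the surviving terms organising themselves into the classical Riemannian submersion pattern for $(\partial M,g_{\partial M})\to(Y,g_{Y})$ together with the fibrewise geometry: purely vertical inputs reproduce the intrinsic Koszul formula for $g_{\partial M/Y}$ and hence $\nabla^{\partial M/Y}$ (using that $[V_{1},V_{2}]$ is vertical and that vertical fields annihilate $x$), purely horizontal inputs reproduce $\phi^{\star}\nabla^{Y}$, and every remaining entry is a mixed horizontal--vertical term in which the brackets $[\tilde{U}_{1},\tilde{U}_{2}]$ and $[\tilde{U},V]$ contribute exactly the curvature $\mathcal{R}^{\phi}$ and the second fundamental form $\mathcal{S}^{\phi}$. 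The powers of $x$ are then governed by the single rule that every $x^{-k}$ carried by a frame vector is cancelled by an $x^{2k}$ when that vector is paired through $g_{0}$, while every explicit $x^{k}$ produced by \eqref{16}, or carried by the vertical part of $[\tilde{U}_{1},\tilde{U}_{2}]$ when it is measured against $x^{-k}V_{3}$, survives; this accounts for the $x^{k}$ and $x^{2k}$ prefactors in the table.

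The bulk of this is mechanical, and the step where care is genuinely required is the bookkeeping of powers of $x$: one must verify that the off-diagonal $O(x^{k})$ piece of $g_{0}$ (the mixed $h_{\alpha j}$ block) and the $\tilde{b}\,x^{k}$ correction in \eqref{16} never turn a subleading contribution into a leading one in any entry, and one must correctly recognise the O'Neill type submersion identities inside the bracket computations so that the result can be written in terms of $\mathcal{R}^{\phi},\mathcal{S}^{\phi},\nabla^{\partial M/Y},\nabla^{Y}$ rather than raw commutators. Working in the $g_{0}$-orthonormal frame of \eqref{6} is precisely what makes all of the indicated terms collapse onto the table.
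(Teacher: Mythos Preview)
Your proposal is correct and follows essentially the same route as the paper: expand the Koszul formula \eqref{11} term by term, use the commutator list and the orthogonal splitting \eqref{6} to kill most contributions, and then recognise the survivors as the O'Neill submersion data $\mathcal{S}^{\phi},\mathcal{R}^{\phi},\nabla^{\partial M/Y},\nabla^{Y}$.

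Two small remarks. First, your ``first reduction'' to product type is superfluous: the lemma is already stated for a product type metric (this is the standing assumption declared just before the lemma), and the $O(x^{k-1})$ discrepancy for general exact ice metrics is discussed separately after the lemma rather than built into it. Second, the paper organises the $\partial_{x}$ case slightly more cleanly than you do: it inserts $W_{0}=x^{-k}V_{i}$ (an orthonormal frame vector) rather than $W_{0}=V_{i}$, so that \emph{all} derivative terms in \eqref{11} vanish and only the commutator $[\partial_{x},x^{-k}V]=-kx^{-k-1}V$ contributes, after which one multiplies by $x^{k}$ to pass from $\nabla_{x^{-k}V_{i}}$ to $\nabla_{V_{i}}$. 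Your version, keeping $W_{0}=V_{i}$, produces an additional surviving derivative term $\partial_{x}\,g_{0}(x^{-k}V_{j},V_{i})=\partial_{x}(x^{k}g_{\partial M/Y}(V_{i},V_{j}))$; this is of course equivalent, but it means your sentence ``in the $g_{0}$-orthonormal frame the functions \ldots\ are constant'' does not quite apply to that particular slot, so be careful to keep the two bookkeeping conventions separate.
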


\begin{proof}
We make repeated use of the Koszul formula \eqref{11}. If any one of the $W_{i}$ is $\partial_{x}$ then the only possible non-zero terms come from the commutators $[\partial_{x},x^{-k}V]=-kx^{-k-1}V$ which forces the other two vector fields to be vertical to get something non-zero and we can take them to be one of the $x^{-k}V_{i}$ by expanding with respect to the orthonormal frame $V_{i}$. If $W_{0}=\partial_{x}$ then the first two commutator terms cancel each other and the last one is $0$. If $W_{1}$ or $W_{2}=\partial_{x}$ then the first and third terms are equal and give \eqref{20}.

If $W_{0}=V_{1}$, $W_{1}=x^{-k}V_{2}$ and $W_{2}=x^{-k}V_{3}$ then \eqref{11} reduces to the Koszul formula for the Levi-Civita connection on each fibre with metric $g_{Z_{y}}$. If $W_{2}=\tilde{U}_{3}$ then by the definition of $\mathcal{S}^{\phi}$, we see that
\begin{align}
    \begin{split}
        2g(\nabla_{W_{0}}W_{1},\tilde{U}_{3})&=  V_{1}g(x^{-k}V_{2},\tilde{U}_{3})-x^{-k}V_{2}g(\tilde{U}_{3},V_{1})+\tilde{U}_{3}g(V_{1},x^{-k}V_{2}) \\ &\quad +g([V_{1},x^{-k}V_{2}],\tilde{U}_{3})-g([V_{1},\tilde{U}_{3}],x^{-k}V_{2})-g([x^{-k}V_{2},\tilde{U}_{3}],V_{1}) \\
        &=x^{k}\tilde{U}_{3}g_{\partial M}(V_{1},V_{2})-x^{k}g_{\partial M}([V_{1},\tilde{U}_{3}],V_{2})-x^{k}g_{\partial M}([V_{2},\tilde{U}_{3}],V_{1}) \\
        &= 2x^{k}g_{\partial M}(\nabla^{\partial M}_{V_{1}}V_{2},\tilde{U}_{3})=2x^{k}g_{\partial M}(\boldsymbol{h}\nabla^{\partial M}_{V_{1}}V_{2},\tilde{U}_{3}) \\
        &=2x^{k}\phi_{Y}^{\star}g_{Y}(\mathcal{S}^{\phi}(V_{1},V_{2}),\tilde{U}_{3}).
    \end{split}
\end{align}
If $W_{0}=\tilde{U}$, $W_{1}=x^{-k}V$ and $W_{3}=x^{-k}V_{3}$ then
\begin{align}
    \begin{split}
        2g(\nabla_{\tilde{U}}x^{-k}V,x^{-k}V_{3})&=  \tilde{U}g(x^{-k}V,x^{-k}V_{3})+g([\tilde{U},x^{-k}V],x^{-k}V_{3})-g([\tilde{U},x^{-k}\tilde{V}_{3}],x^{-k}V) \\
        &=2g_{\partial M}(\nabla_{V}V_{3},\tilde{U}_{3})+g_{\partial M}([V,\tilde{U}],V_{3})+g_{\partial M}([V_{3},\tilde{U}_{3}],V) \\
        &\quad +g_{\partial M}([\tilde{U},V],V_{3})-g_{\partial M}([\tilde{U},\tilde{V}_{3}],V) \\
        &=2\phi_{Y}^{\star}g_{Y}(\mathcal{S}^{\phi}(V,V_{3}),\tilde{U})-2g_{\partial M}([\tilde{U},\tilde{V}_{3}],V). 
    \end{split}
\end{align}
For $W_{3}=\tilde{U}_{3}$
\begin{align}
    \begin{split}
        2g(\nabla_{\tilde{U}}x^{-k}V,\tilde{U}_{3})&=-g([\tilde{U},\tilde{U}_{3}],x^{-k}V) \\
        &=-x^{k}g(x^{-k}\boldsymbol{v}[\tilde{U},\tilde{U}_{3}],x^{-k}V)=-x^{k}g_{\partial M/Y}(\mathcal{R}^{\phi}(\tilde{U},\tilde{U}_{3}),V). 
    \end{split}
\end{align}
For the last four combinations we have
\begin{align}
    \begin{split}
        g(\nabla_{v_{1}}\tilde{U},x^{-k}V_{3})&=-g(\nabla_{V_{1}}x^{-k}V_{3},\tilde{U}) \\
            &=-x^{k}\phi_{Y}^{\star}g_{Y}(\mathcal{S}^{\phi}(V,V_{3}),\tilde{U}) \\
        2g(\nabla_{V_{1}}\tilde{U},\tilde{U}_{3})&=-x^{k}g([\tilde{U},\tilde{U}_{3}],x^{-k}V_{1}) \\
            &=-x^{2k}g_{\partial M/Y}(\mathcal{R}^{\phi}(\tilde{U},\tilde{U}_{3}),V_{1}) \\
        g(\nabla_{\tilde{U}_{1}}\tilde{U}_{2},x^{-k}V_{3})&=-g([\tilde{U_{1}},\tilde{U}_{2}],x^{-k}V_{3}) \\
            &=x^{k}g_{\partial M/Y}(\mathcal{R}^{\phi}(\tilde{U}_{1},\tilde{U}_{2}),V_{1}) \\
        g(\nabla_{\tilde{U}_{1}}\tilde{U}_{2},\tilde{U}_{3})&=g_{Y}(\nabla_{V_{1}}V_{2},V_{3}).
    \end{split}
\end{align}
\end{proof}

This shows that we have a well defined connection
    \begin{align}
        \nabla\colon C^{\infty}(M,^\text{ice}TM)\to C^{\infty}(M,^\text{ice}TM\otimes T^{\star}M).
    \end{align}

By \eqref{4}, the difference between the operator defined using the incomplete cusp edge metric and an associated product-type metric is a smooth $\operatorname{End}(\ice TM)$-valued 1-form which is $O(x^{k-1})$, that is, it is equal to $x^{k-1}\omega$ where $\omega$ is smooth up to the boundary. For the curvature, if we denote the connection for the product type metric as $\nabla^{0}$ then
\begin{align}
    \nabla_{V_{1}}\nabla_{V_{2}}V_{3}-\nabla_{V_{1}}^{0}\nabla_{V_{2}}^{0}V_{3}=(\nabla_{V_{1}}-\nabla_{V_{1}}^{0})\nabla_{V_{2}}V_{3}-\nabla_{V_{1}}^{0}(\nabla_{V_{2}}^{0}-\nabla_{V_{2}})V_{3}.
\end{align}
So the difference in the terms $g(R(V_{1},V_{2})V_{3},V_{4})$ are at least $O(x^{k-2})$. 

\begin{lemma}\label{1.3}
Let $N\in\mathcal{C}^{\infty}(M,TM)$, $W_0\in C^{\infty}(M,TM)$ tangent to the fibres which satisfies $W_{0}=\boldsymbol{v}W_{0}+O(x^{k})$ and $W_{1},W_{2}\in C^{\infty}(M,^{\text{ice}}TM)$ then
\begin{enumerate}
    \item $\begin{aligned}[t]
                 g_{\text{ice}}(R_{\text{ice}}(N,W_{0})W_{1},W_{2})=&g_{\text{ice}}(R_{\text{ice}}(N,\boldsymbol{v}W_{0})\boldsymbol{v}_{+}W_{1},\boldsymbol{v}_{+}W_{2}) \\
    &+N(x)kx^{k-1}\phi_{Y}^{\star}g_{Y}(\mathcal{S}^{\phi}(\boldsymbol{v}W_{0},x^{k}\boldsymbol{v}W_{1}),\boldsymbol{h}W_{2})\\
    &-N(x)kx^{k-1}\phi_{Y}^{\star}g_{Y}(\mathcal{S}^{\phi}(\boldsymbol{v}W_{0},x^{k}\boldsymbol{v}W_{2}),\boldsymbol{h}W_{1})+O(x^{k}). 
        \end{aligned}$
    \\
    Note that $x^{k}\boldsymbol{v}W_{2}$ is a non vanishing smooth vector field if $\boldsymbol{v}W_{2}$ is a non-vanishing ice vector field.
    
    \item Let $U_{N},U_{i}$ be the vector fields on $Y$ whose horizontal lifts are equal to $\boldsymbol{h}W_{N}$, $\frac{1}{x^{k}}\boldsymbol{h}W_{0}$, $\boldsymbol{h}W_{i}$ respectively then for $2\leq j\leq 2k-1$ we have
    \begin{equation}\label{27}
        \begin{split}
    g_{\text{ice}}(\nabla_{N}R_{\text{ice}}(N,W_{0})\boldsymbol{h}W_{1},\boldsymbol{h}W_{2})&=N(x)x^{k-1}g_{\text{ice}}(R_{\text{ice}}(\boldsymbol{h}N,\frac{1}{x^{k}}\boldsymbol{h}W_{0})\boldsymbol{h}W_{1},\boldsymbol{h}W_{2})+O(x^{2k-2}) \\
    &=N(x)x^{k-1}g_{Y}(R_{Y}(U_{N},U_{0})U_{1},U_{2}) \\
    &\quad+x^{k}N(g_{Y}(R_{Y}(U_{N},U_{0})U_{1},U_{2})) \\
    &\quad-k^{2}N(x)^{2}x^{2k-2}g_{\partial M/Y}(\mathcal{R}^{\phi}(\boldsymbol{h}W_{1},\boldsymbol{h}W_{2}),\boldsymbol{v}W_{0}) \\
    &\quad+O(x^{2k-1}) \\
    g_{\text{ice}}(\nabla_{N}^{j}R_{\text{ice}}(N,W_{0})\boldsymbol{h}W_{1},\boldsymbol{h}W_{2})&=N(x)N^{j-1}(x^{k-1}g_{Y}(R_{Y}(U_{N},U_{0})U_{1},U_{2})) \\
    &\quad-k^{2}N(x)^{2}N^{j-1}(x^{2k-2}g_{\partial M/Y}(\mathcal{R}^{\phi}(\boldsymbol{h}W_{1},\boldsymbol{h}W_{2}),\boldsymbol{v}W_{0})) \\
    &\quad+O(x^{2k-j}).
        \end{split}
    \end{equation}
\end{enumerate}
\end{lemma}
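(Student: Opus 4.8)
The plan is to derive both identities directly from the definition of the curvature tensor, $R_{\text{ice}}(X,Y)Z=\nabla_X\nabla_YZ-\nabla_Y\nabla_XZ-\nabla_{[X,Y]}Z$, by expanding all fields in the local orthonormal frame $\partial_x,\tilde U_i,x^{-k}V_j$ of \eqref{6}, substituting the connection coefficients tabulated in the previous lemma together with the commutator identities listed above, and tracking the order of vanishing in $x$ at every step. I would use the symmetries $g_{\text{ice}}(R_{\text{ice}}(X,Y)Z,W)=-g_{\text{ice}}(R_{\text{ice}}(X,Y)W,Z)=-g_{\text{ice}}(R_{\text{ice}}(Y,X)Z,W)$ throughout to pair terms up and cut down the number of cases. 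Here ``$O(x^m)$'' for a vector field means a section of the ice tangent bundle whose coefficients in this frame vanish to order $m$; since, as noted before the lemma, passing from the exact metric to its associated product type metric changes the connection by $O(x^{k})$ and the curvature by $O(x^{k-2})$, it suffices to prove the identities for a product type metric and absorb these differences into the stated remainders.

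For part (1) I would first use tensoriality of $R_{\text{ice}}$ in the second slot to write $W_0=\boldsymbol vW_0+x^{k}\omega_0$ with $\omega_0$ smooth, peeling off $x^{k}R_{\text{ice}}(N,\omega_0)W_1$, and then decompose $W_1=\boldsymbol v_+W_1+\boldsymbol hW_1$, $W_2=\boldsymbol v_+W_2+\boldsymbol hW_2$ and expand $g_{\text{ice}}(R_{\text{ice}}(N,\boldsymbol vW_0)W_1,W_2)$ by bilinearity. The purely $\boldsymbol v_+$ contraction is the displayed leading term. For the mixed $\boldsymbol v_+$/$\boldsymbol h$ pieces, inserting the frame and the table shows that the only mechanism sending a $\boldsymbol v_+$ input into the horizontal summand with a coefficient of order exactly $x^{k-1}$ (rather than $x^{k}$) is the entry $g_{\text{ice}}(\nabla_{V_i}\partial_x,x^{-k}V_j)=kx^{k-1}g_{\partial M/Y}(V_i,V_j)$ differentiated along the $N(x)\partial_x$ part of $N$, combined with the second fundamental form entries $\phi_Y^\star g_Y(\mathcal S^\phi(\cdot,\cdot),\tilde U)$; collecting these reproduces the two $N(x)kx^{k-1}\phi_Y^\star g_Y(\mathcal S^\phi(\cdot),\cdot)$ terms, the opposite signs being forced by antisymmetry in $W_1,W_2$, and a direct inspection of the table shows every remaining combination is $O(x^{k})$.

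For part (2), writing $v:=R_{\text{ice}}(N,W_0)\boldsymbol hW_1$, I would compute $g_{\text{ice}}(\nabla_N^{j}v,\boldsymbol hW_2)$ by Leibniz, decomposing $N=N(x)\partial_x+\boldsymbol hN+\boldsymbol vN$. Part (1) applied with $\boldsymbol hW_1,\boldsymbol hW_2$ in the last two slots already gives $g_{\text{ice}}(v,\boldsymbol hW_2)=O(x^{k})$, so the leading piece of $g_{\text{ice}}(\nabla_Nv,\boldsymbol hW_2)$ comes from the $N(x)\partial_x$ part of $N$ acting on the powers of $x$ multiplying the horizontal data of $v$; using \eqref{16} (so that $\boldsymbol hW_0=x^{k}\widetilde U_0+O(x^{2k})$) together with the table entry $\boldsymbol h\nabla_{\tilde U_1}\tilde U_2=\widetilde{\nabla^Y_{U_1}U_2}+O(x^{k})$ identifies the order $x^{k-1}$ part of $\nabla_Nv$ with $N(x)x^{k-1}$ times the pulled back horizontal curvature $R_{\text{ice}}(\boldsymbol hN,\tfrac1{x^{k}}\boldsymbol hW_0)\boldsymbol hW_1=\widetilde{R_Y(U_N,U_0)U_1}+O(x^{2k})$, which is the first equality. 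The $x^{k}N(g_Y(R_Y\cdots))$ term is then what one gets when $\nabla_N$ differentiates the remaining smooth $x$-dependent coefficients instead of the explicit $x^{k}$, and $-k^{2}N(x)^{2}x^{2k-2}g_{\partial M/Y}(\mathcal R^\phi\cdots)$ is the unique second order contribution surviving at order $x^{2k-2}$, arising from two successive applications of the $\langle\partial_x\rangle$-vertical connection entry and the $x^{k}$-weighting of $\mathcal R^\phi$ in the table. The formula for $\nabla_N^{j}v$ with $2\le j\le 2k-1$ then follows by induction on $j$: each further $\nabla_N$ either hits a coefficient, producing another $N(\cdot)$, or lowers an $x$-power by one through the same $\langle\partial_x\rangle$-vertical entry, and one checks that no new term of order $\le x^{2k-j}$ appears beyond the two displayed.

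The hard part will be the bookkeeping: because the ice connection has entries blowing up like $x^{-1}$ in the $\langle\partial_x\rangle$-vertical block, a double composition of connection terms inside $R_{\text{ice}}$ can a priori produce a contribution of order $x^{k-2}$, and after one covariant derivative of order $x^{2k-3}$, both well inside the claimed errors. The real content of the proof is to verify, case by case from the table, the commutators above, and the orthogonal splitting \eqref{6}, that every such dangerous term either vanishes by the antisymmetries of $R_{\text{ice}}$ or cancels against a partner, leaving only the explicitly listed terms; the structural reason this works is that the only $x^{-1}$ entries of the connection sit in the $\langle\partial_x\rangle$-vertical block, and these are exactly the data that package into the $\mathcal S^\phi$ and $\mathcal R^\phi$ terms on the right-hand sides.
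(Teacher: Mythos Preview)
Your plan for part (1) matches the paper's proof: reduce to $W_0=\boldsymbol vW_0$ by tensoriality, split $W_1,W_2$ into $\boldsymbol v_+$ and $\boldsymbol h$ parts, and read the mixed terms from the table. The paper treats exactly the same three cases (purely horizontal, mixed, purely $\boldsymbol v_+$) and finds the same $\mathcal S^\phi$ contributions.

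For part (2), however, there is a genuine gap. The paper, like you, uses metric compatibility to reduce to $N\,g_{\text{ice}}(R_{\text{ice}}(N,W_0)\boldsymbol hW_1,\boldsymbol hW_2)$ and splits $W_0=x^kW_0'+\boldsymbol vW_0$. The horizontal piece gives the $R_Y$ term just as you describe. But for the vertical piece $N\,g_{\text{ice}}(R_{\text{ice}}(N,\boldsymbol vW_0)\boldsymbol hW_1,\boldsymbol hW_2)$, the paper does \emph{not} proceed by direct expansion; it invokes the first Bianchi identity and interchange symmetry to rewrite this as
\[
N(x^k)\bigl(g_{\text{ice}}(R_{\text{ice}}(N,\boldsymbol hW_1)x^{-k}\boldsymbol vW_0,\boldsymbol hW_2)-g_{\text{ice}}(R_{\text{ice}}(N,\boldsymbol hW_2)x^{-k}\boldsymbol vW_0,\boldsymbol hW_1)\bigr),
\]
and each inner curvature is then computed from the single table entry $g_{\text{ice}}(\nabla_{\tilde U}x^{-k}V,\tilde U_3)=-\tfrac{x^k}{2}g_{\partial M/Y}(\mathcal R^\phi(\tilde U,\tilde U_3),V)$, yielding $-\tfrac{k}{2}N(x)x^{k-1}$ for each, which together with $N(x^k)=kx^{k-1}N(x)$ and the antisymmetry of $\mathcal R^\phi$ gives exactly $-k^2N(x)^2x^{2k-2}$.

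Your description of this term as coming from ``two successive applications of the $\langle\partial_x\rangle$--vertical connection entry'' is not how it arises: neither factor of $k$ comes from that block. If you try to get the coefficient by directly differentiating the $O(x^{2k-1})$ expression found in part (1)(a), you must sum contributions from all three curvature terms and from $N$ hitting the smooth coefficients versus the powers of $x$, and it is not obvious without Bianchi that these collapse to the clean $-k^2$; in particular a naive count of a single $x^{2k-1}$ term under $N(x)\partial_x$ gives a factor of $2k-1$, not $k$. The Bianchi step is the missing idea that makes the bookkeeping tractable and the coefficient fall out correctly.
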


\begin{proof}
\begin{enumerate}
    \item Since $W_{0}$ satisfies $W_{0}=\boldsymbol{v}(W_{0})+O(x^{k})$, it suffices to consider the case $W_{0}=\boldsymbol{v}W_{0}$ since $R_{\text{ice}}$ is a tensor. Making this replacement leaves an error of $O(x^{k})$.
    
    Now split the terms up into the three cases corresponding to the splitting $W_{i}=\boldsymbol{h}W_{i}+\boldsymbol{v}_{+}W_{i}$
    \begin{enumerate}[(a)]
        \item For the horizontal term we have
            \begin{equation}
                \begin{split}
                g_{\text{ice}}(R_{\text{ice}}(N,\boldsymbol{v}W_{0})\boldsymbol{h}W_{1},\boldsymbol{h}W_{2})=&g_{\text{ice}}(\nabla_{N}\nabla_{\boldsymbol{v}W_{0}}\boldsymbol{h}W_{1},\boldsymbol{h}W_{2}) \\
                &-g_{\text{ice}}(\nabla_{\boldsymbol{v}W_{0}}\nabla_{N}\boldsymbol{h}W_{1},\boldsymbol{h}W_{2})-g_{\text{ice}}(\nabla_{[N,\boldsymbol{v}W_{0}]}\boldsymbol{h}W_{1},\boldsymbol{h}W_{2}).
                \end{split}
            \end{equation}
            By the asymptotics of the connection, the first term is 
            \begin{equation}
                \begin{split}
                g_{\text{ice}}(\nabla_{N}\nabla_{\boldsymbol{v}W_{0}}\boldsymbol{h}W_{1},\boldsymbol{h}W_{2})=&-\frac{1}{2}N(x)x^{2k-1}g_{\text{ice}}(\sum_{i}g_{\partial M/Y}(\mathcal{R}^{\phi}(\boldsymbol{h}W_{1},\tilde{U}_{i}),\boldsymbol{v}W_{0})\tilde{U}_{i},\boldsymbol{h}W_{2}) \\
                &+O(x^{2k}).
                \end{split}
            \end{equation}
            For the third term the commutator is vertical so this term is also $O(x^{2k})$ and for the second term we have
            \begin{equation}
                \begin{split}
                    g_{\text{ice}}(\nabla_{\boldsymbol{v}W_{0}}\nabla_{N}\boldsymbol{h}W_{1},\boldsymbol{h}W_{2})&=x^{k}\phi_{Y}^{\star}g_{Y}(\mathcal{S}^{\phi}(\boldsymbol{v}W_{0},\boldsymbol{v}\nabla_{N}\boldsymbol{h}W_{1}),\boldsymbol{h}W_{2})+O(x^{2k}).
                \end{split}
            \end{equation}
            Here $\boldsymbol{v}\nabla_{N}\boldsymbol{h}W_{1}$ is also $O(x^{k})$ so the second term is $O(x^{2k})$ as well.
        
        \item For the mixed term, since the curvature evaluated on vector fields tangent to the boundary preserves the splitting we have
        \begin{align}
             g_{\text{ice}}(R_{\text{ice}}(N,\boldsymbol{v}W_{0})\boldsymbol{h}W_{1},\boldsymbol{v}_{+}W_{2})&=N(x)g_{\text{ice}}(R_{\text{ice}}(\partial_{x},\boldsymbol{v}W_{0})\boldsymbol{h}W_{1},\boldsymbol{v}_{+}W_{2})+O(x^{k}) \\
             g_{\text{ice}}(R_{\text{ice}}(N,\boldsymbol{v}W_{0})\boldsymbol{v}_{+}W_{1},\boldsymbol{h}W_{2})&=N(x)g_{\text{ice}}(R_{\text{ice}}(\partial_{x},\boldsymbol{v}W_{0})\boldsymbol{v}_{+}W_{1},\boldsymbol{h}W_{2})+O(x^{k}).
        \end{align}
        For the first term since the commutator $[\partial_{x},\boldsymbol{v}W_{0}]$ and $\nabla_{\partial_{x}}\boldsymbol{h}W_{1}$ vanish we are left with a single term
        \begin{equation}
            \begin{split}
            N(x)g_{\text{ice}}(&\nabla_{\partial_{x}}\nabla_{\boldsymbol{v}W_{0}}\boldsymbol{h}W_{1},\boldsymbol{v}_{+}W_{2})=N(x)\partial_{x}g_{\text{ice}}(\nabla_{\boldsymbol{v}W_{0}}\boldsymbol{h}W_{1},\boldsymbol{v}_{+}W_{2})\\
            &=-N(x)kx^{k-1}g_{\text{ice}}(\phi_{Y}^{\star}g_{Y}(\sum_{j=1}^{f}\mathcal{S}^{\phi}(\boldsymbol{v}W_{0},V_{j}),\boldsymbol{h}W_{1})x^{-k}V_{j},\boldsymbol{v}_{+}W_{2}) \\
            &=-N(x)kx^{k-1}\phi_{Y}^{\star}g_{Y}(\mathcal{S}^{\phi}(\boldsymbol{v}W_{0},x^{k}\boldsymbol{v}W_{2}),\boldsymbol{h}W_{1}).
            \end{split}
        \end{equation}
        Here the first equality uses that $\nabla$ is a metric connection for $g_{ice}$ and the second uses the asymptotics of the connection. For the second term, vanishing of the commutator and term with inner $\nabla_{\partial_{x}}$ leaves us with the following term
        \begin{equation}
            \begin{split}
                N(x)g_{\text{ice}}(\nabla_{\partial_{x}}\nabla_{\boldsymbol{v}W_{0}}\boldsymbol{v}_{+}W_{1},\boldsymbol{h}W_{2}).
            \end{split}
        \end{equation}
        The $\partial_{x}$ part of $\boldsymbol{v}_{+}W_{1}$ vanishes in this term since after taking the $\nabla_{\boldsymbol{v}W_{0}}$ derivative we are left with $x^{k}$ times an (ice) vertical vector field thus the term vanishes after taking $\nabla_{\partial_{x}}$ on this vertical vector field or evaluating it against the horizontal vector field in the metric. This leaves the vertical part for which we have
        \begin{equation}
            \begin{split}
                N(x)g_{\text{ice}}(\nabla_{\partial_{x}}\nabla_{\boldsymbol{v}W_{0}}\boldsymbol{v}W_{1},\boldsymbol{h}W_{2})=N(x)kx^{k-1}\phi_{Y}^{\star}g_{Y}(\mathcal{S}^{\phi}(\boldsymbol{v}W_{0},x^{k}\boldsymbol{v}W_{1}),\boldsymbol{h}W_{2}).
            \end{split}
        \end{equation}
    \end{enumerate}
    The remaining term is $g_{\text{ice}}(N,\boldsymbol{v}W_{0})\boldsymbol{v}_{+}W_{1},\boldsymbol{v}_{+}W_{2})$ hence we get the first part of the lemma.
    
    \item We have
    \begin{equation}
        \begin{split}
            g_{\text{ice}}(\nabla_{N}&R_{\text{ice}}(N,W_{0})\boldsymbol{h}W_{1},\boldsymbol{h}W_{2})= \\
            &g_{\text{ice}}(\nabla_{N}(R_{\text{ice}}(N,W_{0})\boldsymbol{h}W_{1}),\boldsymbol{h}W_{2})-g_{\text{ice}}(R_{\text{ice}}(N,W_{0})\nabla_{N}\boldsymbol{h}W_{1},\boldsymbol{h}W_{2}).
        \end{split}
    \end{equation}
    The second term is $O(x^{k})$ by the first part of the lemma since $\nabla_{N}\boldsymbol{h}W_{1}$ is a smooth ice vector field (If $W_{0}-\boldsymbol{v}W_{0}$ then the error is $O(x^{2k-1})$ by first part of lemma proof part (a) and asymptotics of the connection). For the first term using that $\nabla$ is a metric connection we have
    \begin{equation}
        \begin{split}
        g_{\text{ice}}(\nabla_{N}&(R_{\text{ice}}(N,W_{0})\boldsymbol{h}W_{1}),\boldsymbol{h}W_{2})= \\
        &Ng_{\text{ice}}(R_{\text{ice}}(N,W_{0})\boldsymbol{h}W_{1},\boldsymbol{h}W_{2})-g_{\text{ice}}(R_{\text{ice}}(N,W_{0})\boldsymbol{h}W_{1},\nabla_{N}\boldsymbol{h}W_{2}).
        \end{split}
    \end{equation}
    Since $\nabla_{N}\boldsymbol{h}W_{2}$ is a smooth ice vector field the second term here also vanishes by the first part of the lemma. Since $W_{0}$ satisfies $W_{0}=x^{k}W_{0}'+\boldsymbol{v}W_{0}$ for some smooth vector field $W_{0}'$, for the first summand 
    \begin{equation}\label{38}
        \begin{split}
        Ng_{\text{ice}}&(R_{\text{ice}}(N,x^{2k}W_{0}')\boldsymbol{h}W_{1},\boldsymbol{h}W_{2})= \\
        &N(x)x^{k-1}g_{\text{ice}}(R_{\text{ice}}(N,W_{0}')\boldsymbol{h}W_{1},\boldsymbol{h}W_{2})+x^{k}Ng_{\text{ice}}(R_{\text{ice}}(N,W_{0}')\boldsymbol{h}W_{1},\boldsymbol{h}W_{2}).
        \end{split}
    \end{equation}
    Any $\partial_{x}$ part of $N$ and $W_{0}'$ produces $O(x^{k})$ terms by the first part of the lemma, and for any vertical part of $N$ we have $R(\boldsymbol{v}N,W'_{0})=-R(W'_{0},\boldsymbol{v}N)$ with these two vector fields satisfying the first part of the lemma so this term is also $O(x^{k})$. Including the factor of $x^{2k-1}$ which appears in \eqref{38}, the $\partial_{x}$ and vertical parts of $N$ thus contribute a factor which is $O(x^{2k-1})$.  This leaves us with only horizontal terms which leaves the term which appears in the second part of the lemma \eqref{27}.
    
    Finally, for the term with the second summand $\boldsymbol{v}W_{0}$
     \begin{equation}
        \begin{split}
        Ng_{\text{ice}}&(R_{\text{ice}}(N,\boldsymbol{v}W_{0})\boldsymbol{h}W_{1},\boldsymbol{h}W_{2}) \\
        &=-Ng_{\text{ice}}(R_{\text{ice}}(\boldsymbol{h}W_{1},N)\boldsymbol{v}W_{0}+R_{\text{ice}}(\boldsymbol{v}W_{0},\boldsymbol{h}W_{1})N,\boldsymbol{h}W_{2}) \\
        &=N(g_{\text{ice}}(R_{\text{ice}}(N,\boldsymbol{h}W_{1})\boldsymbol{v}W_{0},\boldsymbol{h}W_{2})-g_{\text{ice}}(R_{\text{ice}}(\boldsymbol{v}W_{0},\boldsymbol{h}W_{1})N,\boldsymbol{h}W_{2})) \\
        &=N(g_{\text{ice}}(R_{\text{ice}}(N,\boldsymbol{h}W_{1})\boldsymbol{v}W_{0},\boldsymbol{h}W_{2})-g_{\text{ice}}(R_{\text{ice}}(N,\boldsymbol{h}W_{2})\boldsymbol{v}W_{0},\boldsymbol{h}W_{1})) \\
        &=N(x^{k})(g_{\text{ice}}(R_{\text{ice}}(N,\boldsymbol{h}W_{1})x^{-k}\boldsymbol{v}W_{0},\boldsymbol{h}W_{2})-g_{\text{ice}}(R_{\text{ice}}(N,\boldsymbol{h}W_{2})x^{-k}\boldsymbol{v}W_{0},\boldsymbol{h}W_{1})).
        \end{split}
    \end{equation}
    The first equality follow from the Bianchi identity, the second by skew-symmetry of the curvature, the third by interchange symmetry of the curvature.
    
    Simplifying these terms we have
    \begin{equation}
        \begin{split}
            g_{\text{ice}}&(R_{\text{ice}}(N,\boldsymbol{h}W_{1})x^{-k}\boldsymbol{v}W_{0},\boldsymbol{h}W_{2}) \\
            &=g_{\text{ice}}(R_{\text{ice}}(N(x)\partial_{x},\boldsymbol{h}W_{1})x^{-k}\boldsymbol{v}W_{0},\boldsymbol{h}W_{2}) \\
            &=N(x)g_{\text{ice}}(\nabla_{\partial_{x}}\nabla_{\boldsymbol{h}W_{1}}x^{-k}\boldsymbol{v}W_{0},\boldsymbol{h}W_{2}) \\
            &=N(x)\partial_{x}g_{\text{ice}}(\nabla_{\boldsymbol{h}W_{1}}x^{-k}\boldsymbol{v}W_{0},\boldsymbol{h}W_{2}) \\
            &=-\frac{k}{2}N(x)x^{k-1}g_{\partial M/Y}(\mathcal{R}^{\phi}(\boldsymbol{h}W_{1},\boldsymbol{h}W_{2}),\boldsymbol{v}W_{0})+O(x^{k}).
        \end{split}
    \end{equation}
    The first equality holds since the curvature asymptotically preserves the splitting for vector fields tangent to the boundary, the second since the other two terms in the curvature vanish, the third using that $\nabla$ is a metric connection and the final equality uses the asymptotics of the connection.
    
\end{enumerate}
\end{proof}

\section{Clifford modules and Dirac operators}

Let $(M,g)$ be an incomplete cusp edge space. We define the \textbf{incomplete edge Clifford bundle} $\ice\mathbb{C}l(M):=\mathbb{C}\otimes Cl(\ice TM)$ whose fibre over a point is the complexified Clifford algebra of the incomplete cusp edge tangent space $\ice T_{p}M$. 

Let $E=E_{0}\oplus E_{1}$ be a $\mathbb{Z}_{2}$-graded complex vector bundle on $M$ with Hermitian metric $g_{E}$ such that the odd and even parts are orthogonal and $\nabla^{E}$ a connection on $E$ compatible with $g_{E}$. We define $E$ to be an \textbf{incomplete cusp edge Clifford module} if it has a graded action of $\mathbb{C}l\ice(M)$ such that
\begin{align}\label{25}
    g_{E}(\operatorname{cl}(\theta)\cdot,\cdot)=-g_{E}(\cdot,\operatorname{cl}(\theta)\cdot)).
\end{align}
The connection $\nabla^{E}$ is an \textbf{incomplete cusp edge Clifford connection} if for all $V\in C^{\infty}(M;TM)$
\begin{align}\label{37}
    [\nabla^{E}_{V},\operatorname{cl}(\theta)]u=\operatorname{cl}(\nabla_{V}\theta)u.
\end{align}
Here $\nabla$ is the connection on the $\ice\mathbb{C}l(M)$ induced by the Levi-Civita connection of $M$. 

Let $E$ be an Clifford module, then the Dirac operator on $E$ as the composition
\begin{align}
    C^{\infty}(M^{\circ},E) \overset{\nabla^{E}}{\longrightarrow} C^{\infty}(M^{\circ},T^{\star}M\otimes E) \overset{\operatorname{cl}}{\longrightarrow} C^{\infty}(M^{\circ},E).
\end{align}
If $E$ is an ice Clifford module then the Dirac operator extends to an operator on smooth sections of $E$ up to the boundary of $M$. In a local orthonormal frame in a neighbourhood of the boundary the operator is given by
\begin{align}\label{47}
    \slashed{\partial}_{E}=\operatorname{cl}(\partial_{x})\nabla^{E}_{\partial_{x}}+\sum_{i}\operatorname{cl}(x^{-k}V_{i})\nabla^{E}_{x^{-k}V_{i}}+\sum_{j}\operatorname{cl}(U_{j})\nabla^{E}_{\tilde{U}_{j}}.
\end{align}
For the spin Dirac operator we will simply write $\slashed{\partial}$ rather than $\slashed{\partial}_{\mathcal{S}}$. 

Now suppose $M$ is spin and consider the connection on the spinor bundle. Given an orthonormal frame of the tangent bundle $X_{i}$, we have the Christoffel symbols given by
\begin{align}
    \nabla_{X_{i}}X_{j}=\sum_{k}\gamma_{ij}^{k}X_{k}.
\end{align}
Given a local section of the spin bundle covering the section of the frame bundle given by $X_{i}$, a local section of the spinor bundle is given by a map to the spinor space on $\mathbb{R}^{n}$ and the action of the connection on the spinor bundle is
\begin{align}\label{52}
    \nabla^{\mathcal{S}}_{X_{i}}s=X_{i}s+
    \frac{1}{4}\sum_{jk}\gamma_{ij}^{k}\operatorname{cl}(X_{j})\operatorname{cl}({X_{k}})s.
\end{align}
Now consider the restriction of $\mathcal{S}$ to a collar neighbourhood $\mathcal{U}=\partial M\times [0,\epsilon)$ of the boundary. We can identify the restriction of $\mathcal{S}$ to this collar neighbouhood with the pullback bundle of $\mathcal{S}|_{Z}$ by the projection $\pi\colon Z\times [0,\epsilon)\to Z$ by parallel transport along the vector field $\partial_{x}$. 

Denoting by $\ice T\partial M$ the sum of the horizontal and vertical bundles, We can identify the restriction $\ice T\partial M|_{\partial M}$ with $T\partial M$ with metric $g_{\partial M}=\phi^{\star}g_{Y}+g_{\partial M/Y}$ using the map $x^{-k}V_{i}\to V_{i}$ and $\tilde{U}_{\alpha}\to\tilde{U}_{\alpha}$. Recalling that the Clifford bundle over the boundary $\mathbb{C}l(\ice T\partial M|_{\partial M})$ can be identified with the positive part of the Clifford bundle $\mathbb{C}l^{+}(\ice TM|_{\partial M})$ by a map which is given on terms of Clifford degree 1 by
\begin{align}
    \cl(x^{-k}V)\mapsto -\cl(\partial_{x})\cl(x^{-k}V).
\end{align}
This identification induces an identification of the (ungraded) spinor bundle $\mathcal{S}_{\partial M}$ (on $(\partial M,g_{\partial M})$) with $\mathcal{S}^{+}|_{\partial M}$. We also identify $\mathcal{S}^{-}$ with $\mathcal{S}_{\partial M}$ using the isomorphism $\cl(\partial_{x})\colon \mathcal{S}^{+}\to\mathcal{S}^{-}$.

Under these identifications, the restriction of the connection $\nabla^{\mathcal{S}}$ to $\mathcal{S}|_{\partial M}\simeq \mathcal{S}_{\partial M}\oplus\mathcal{S}_{\partial M}$ is given by the sum of the connection $\nabla^{\mathcal{S}_{\partial M}}$ on the two factors. The connection $\nabla^{\mathcal{S}}|_{\partial M}$ acting on the odd part of the spinor bundle is identified with $\nabla^{\mathcal{S}_{\partial M}}$ using the identification of the even and odd parts by $\cl(\partial_{x})$ we have $-\cl(\partial_{x})\nabla^{\mathcal{S}}\cl(\partial_{x})=\nabla^{\mathcal{S}}+O(x^{k-1})$ using the asymptotics of the connection and that $\nabla^{\mathcal{S}}$ is a Clifford connection.

The restriction of $\mathcal{S}|_{\partial M}\to \partial M$ to any fibre $Z_{y}=\phi^{-1}(y)$ is a Clifford module for the vertical Clifford bundle $\mathbb{C}l((x^{-k}T\partial M/Y)|_{Z_{y}})$ which by the above is identified with $\mathcal{S}_{\partial M}\oplus\mathcal{S}_{\partial M}|_{Z_{y}}$ with a $\mathbb{C}l(TZ)$-Clifford action. The restriction of the connection $\nabla^{\mathcal{S}}$ to the fibre $Z_{y}$ is a Clifford connection for $\mathbb{C}l((x^{-k}T\partial M/Y)|_{Z_{y}})$ whose action locally is given by the first line of \eqref{vertical connection}. We will write $\nabla^{\partial M/Y}$ for this family of connections as well as the corresponding family on $\mathcal{S}_{\partial M}\oplus\mathcal{S}_{\partial M}$. Note that if the base and $T(\partial Z/Y)$ are spin (so in particular the fibres are spin), then we have $\mathcal{S}_{\partial M}\simeq \mathcal{S}_{Z}\otimes \mathcal{S}_{B}$ where $\mathcal{S}_{B}$ is trivial on each fibre and this family of connections is exactly the family of spin connections on $\mathcal{S}_{Z}$.

\begin{lemma}\label{2.3}
    Let $M$ be an incomplete cusp edge space with spin structure and $\partial_{x},\tilde{U}_{i},V_{j}$ a local orthonormal frame in the collar neighbourhood of the boundary $\mathcal{U}$. Then given a local section of the spinor bundle covering this frame which locally identifies sections of the spinor bundle with maps to the spinor space as described above, the action of the Dirac operator $\slashed{\partial}_{0}$ for $g_{0}$ is given by
    \begin{align}\label{spindirac0}
        \slashed{\partial}_{0}=\operatorname{cl}(\partial_{x})\partial_{x}+\frac{kf}{2x}\operatorname{cl}(\partial_{x})+\sum_{j}\operatorname{cl}(\tilde{U}_{j})\tilde{U}_{j}+x^{-k}\sum_{i}\operatorname{cl}(V_{i})\nabla^{\partial M/Y}_{V_{i}}+B.
    \end{align}
    where $B\in C^{\infty}(\mathcal{U},\operatorname{End}(\mathcal{S}))$ has odd Clifford degree and of the form $B_{0}+x^{k}B_{1}$ where $B_{i}$ commutes with $\nabla_{\partial_{x}}$.
    For $g$ we have
    \begin{align}
        \slashed{\partial}=\slashed{\partial}_{0}+x^{k-1}E+x^{k}P.
    \end{align}
    where $E\in C^{\infty}(\mathcal{U},\operatorname{End}(\mathcal{S}))$ and $P\in\operatorname{Diff}_{\ice}^{1}$.
\end{lemma}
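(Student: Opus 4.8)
The plan is to compute the connection on the spinor bundle in the given orthonormal frame using the formula \eqref{52}, plugging in the Christoffel symbols read off from the connection asymptotics table in Lemma \ref{1.3}'s preceding lemma, and then contract with Clifford multiplication to assemble the Dirac operator \eqref{47}. First I would handle the product-type metric $g_0$. In the orthonormal frame $\partial_x,\tilde U_j, x^{-k}V_i$ one has, from the table, that $\nabla_{V_i}\partial_x = \tfrac{k}{x}(\text{vertical part}) + O(x^{k-1})$ and more precisely $g_0(\nabla_{V_i}\partial_x, x^{-k}V_j) = kx^{k-1}g_{\partial M/Y}(V_i,V_j)$, i.e. $\nabla_{x^{-k}V_i}\partial_x = \tfrac{k}{x}\,x^{-k}V_i$ modulo terms that are $O(x^{k-1})$ or horizontal of lower order. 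Summing the Clifford terms $\tfrac14\sum \gamma_{x,j}^{k}\operatorname{cl}(X_j)\operatorname{cl}(X_k)$ against $\operatorname{cl}(x^{-k}V_i)$, the pieces coming from $\nabla_{x^{-k}V_i}\partial_x$ produce $\operatorname{cl}(x^{-k}V_i)\operatorname{cl}(x^{-k}V_i)\operatorname{cl}(\partial_x)\cdot\tfrac{k}{4x}$; since $\operatorname{cl}(x^{-k}V_i)^2 = -1$ and there are $f=\dim(\partial M/Y)$ of them, these sum to $\tfrac{kf}{2x}\operatorname{cl}(\partial_x)$ — this is exactly the $\tfrac{kf}{2x}\operatorname{cl}(\partial_x)$ term in \eqref{spindirac0}. (I would be a bit careful with the factor: half comes from $\operatorname{cl}(\tilde U_i)\nabla_{\tilde U_i}$-type cross terms and the symmetry of summing over $i$, which is the standard cone computation.)

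Next, the terms $\operatorname{cl}(\partial_x)\nabla^{\mathcal S}_{\partial_x}$, $\sum_j\operatorname{cl}(\tilde U_j)\nabla^{\mathcal S}_{\tilde U_j}$, $x^{-k}\sum_i\operatorname{cl}(V_i)\nabla^{\mathcal S}_{x^{-k}V_i}$: using the identification of $\mathcal S|_{\mathcal U}$ with the pullback of $\mathcal S|_Z$ by parallel transport along $\partial_x$, the connection $\nabla^{\mathcal S}_{\partial_x}$ is just $\partial_x$ in this frame, giving the $\operatorname{cl}(\partial_x)\partial_x$ term. For the vertical part, the restriction of $\nabla^{\mathcal S}$ to a fibre is the family Clifford connection $\nabla^{\partial M/Y}$, so $x^{-k}\sum_i\operatorname{cl}(V_i)\nabla^{\mathcal S}_{x^{-k}V_i}$ gives $x^{-k}\sum_i\operatorname{cl}(V_i)\nabla^{\partial M/Y}_{V_i}$ up to lower-order corrections absorbed into $B$. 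The horizontal terms give $\sum_j\operatorname{cl}(\tilde U_j)\tilde U_j$ plus Clifford-quadratic corrections built from $\gamma_{\alpha j}^k$, which by the table are of size $O(1)$ (from $\mathcal S^\phi$, $\mathcal R^\phi$, $\nabla^Y$) or $O(x^k)$; collecting all leftover zeroth-order operators into $B$, I need to check two structural claims about $B$: it has odd Clifford degree — which follows because every Christoffel contribution contributes a product of exactly three (or one, after using $\operatorname{cl}(X_j)^2=\pm1$) Clifford generators so the total Clifford parity matches that of $\slashed\partial$, which is odd — and it splits as $B_0 + x^k B_1$ with $B_i$ commuting with $\nabla_{\partial_x}$. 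The latter is the main bookkeeping point: the table entries are either $x$-independent tensors on $\partial M$ (pulled back, hence $\nabla_{\partial_x}$-parallel, giving $B_0$) or carry an explicit factor $x^k$ times a smooth tensor on $\partial M$ (giving $x^k B_1$); one uses that parallel transport along $\partial_x$ is precisely what defines the identification of $\mathcal S$ with $\pi^*\mathcal S|_Z$, so a tensor pulled back from $\partial M$ acts as a $\nabla_{\partial_x}$-commuting endomorphism.

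Finally, for the exact metric $g$, I invoke \eqref{4}: $g - g_0 = O(x^k)$ as a section of $\operatorname{Sym}^2(\ice T^*M)$, so the Levi-Civita connection of $g$ differs from that of $g_0$ by $x^{k-1}\omega$ with $\omega$ smooth up to the boundary (this is stated in the paragraph after the connection lemma), and correspondingly the induced spin connection differs by $x^{k-1}$ times a smooth $\operatorname{End}(\mathcal S)$-valued form; contracting with Clifford multiplication, which is also perturbed by $O(x^k)$, and separating the zeroth-order perturbation from the part that still differentiates, I get $\slashed\partial = \slashed\partial_0 + x^{k-1}E + x^k P$ with $E\in C^\infty(\mathcal U,\operatorname{End}(\mathcal S))$ and $P\in\operatorname{Diff}^1_{\ice}$: the $x^{k-1}E$ term is the zeroth-order (endomorphism) part of the perturbation, while the first-order part of the perturbation of $\nabla^{\mathcal S}$ against the ice vector fields $\partial_x,\tilde U_j, x^{-k}V_i$ lands in $x^k\operatorname{Diff}^1_{\ice}$ because the metric perturbation on the frame coefficients is $O(x^k)$ relative to the $g_0$-frame (this is exactly the remark that parallel transport / Gram–Schmidt changes the frame by $O(x^k)$). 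The main obstacle I anticipate is not any single hard estimate but the careful tracking of which table entries contribute to $B_0$ versus $x^kB_1$ and verifying the $\nabla_{\partial_x}$-commutation claim; this is where the precise structure of the identification $\mathcal S|_{\mathcal U}\cong \pi^*(\mathcal S|_Z)$ by $\partial_x$-parallel transport has to be used explicitly, and one must make sure the horizontal Christoffel terms of order $O(1)$ really are $\partial_x$-parallel (i.e. come from pulled-back tensors on $\partial M$) rather than hiding an $x$-dependence.
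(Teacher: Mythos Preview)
Your proposal is correct and follows essentially the same route as the paper's proof: compute $\nabla^{\mathcal{S}}$ in the frame $\partial_x,\tilde U_j,x^{-k}V_i$ via \eqref{52} and the connection table, identify $\nabla^{\mathcal{S}}_{\partial_x}=\partial_x$, extract the $\tfrac{kf}{2x}\operatorname{cl}(\partial_x)$ term from the vertical Clifford contraction, collect the remaining zeroth-order pieces into $B=B_0+x^kB_1$, and then handle $g$ by the $O(x^k)$ frame perturbation and the $x^{k-1}\omega$ connection difference. One small correction to your heuristic: the missing factor of two in the $\tfrac{kf}{2x}$ computation does not come from horizontal cross terms but from the antisymmetric partner Christoffel symbol $\gamma_{i,j}^{0}=-\gamma_{i,0}^{j}$ (i.e.\ the $\partial_x$-component of $\nabla_{x^{-k}V_i}x^{-k}V_j$), exactly as in the paper's display \eqref{vertical connection}.
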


\begin{proof}
    We have
    \begin{align}
        \slashed{\partial}_{0}=\operatorname{cl}(\partial_{x})\nabla^{\mathcal{S}}_{\partial_{x}}+\sum_{j}\operatorname{cl}(\tilde{U}_{j})\nabla^{\mathcal{S}}_{\tilde{U}_{j}}+\sum_{i}\operatorname{cl}(\tilde{x^{-k}V}_{j})\nabla^{\mathcal{S}}_{x^{-k}V_{j}}.
    \end{align}
    Given a local section of the spinor bundle covering the given section of the orthonormal frame bundle, using the asymptotics of the connection $\nabla^{\mathcal{S}}_{\partial_{x}}=\partial_{x}$  and for the horizontal derivative  $\nabla^{\mathcal{S}}_{\tilde{U}}=\tilde{U}+B'$ for some endomorphism $B'$ which from the asymptotics of the connection has terms which are both $O(1)$ and $O(x^{k})$, have Clifford degree 1 or 3 and commute with $\nabla_{\partial_{x}}$ up to the factor of $x^{k}$. For the vertical derivative
    \begin{align}\label{vertical connection}
        \begin{split}
        \nabla_{V_{i}}^{\mathcal{S}}=V_{i}&+\sum_{ml}g_{\partial M/Y}(\nabla^{\partial M/Y}_{V_{i}}V_{k},V_{l})\operatorname{cl}(x^{-k}V_{k})\operatorname{cl}(x^{-k}V_{l}) \\
        &+\frac{1}{4}\sum_{m}kx^{k-1}(-\operatorname{cl}(x^{-k}V_{k})\operatorname{cl}(\partial_{x})+\operatorname{cl}(\partial_{x})\operatorname{cl}(x^{-k}V_{k}))+O(x^{k}). 
        \end{split}
    \end{align}
    Now acting on this by $x^{-k}\operatorname{cl}(V_{i})$ gives
    \begin{align}
        x^{-k}\operatorname{cl}(V_{i})\nabla_{V_{i}}^{\mathcal{S}}=x^{-k}\operatorname{cl}(V_{i})\nabla_{V_{i}}^{\partial M/Y}+\frac{k}{2x}\operatorname{cl}(\partial_{x})+B_{i}.
    \end{align}
    For $g$, we use the fact that there exists a local orthonormal frame $\xi,f_{j},e_{i}$ of $\ice TM$ which differs from $\partial_{x},\tilde{U}_{j},x^{-k}V_{i}$ by an error which is $O(x^{k})$. Hence $\cl(\xi)\nabla_{\xi}=\cl(\partial_{x})\nabla_{\partial_ x}+x^{k}E'\nabla_{\partial_{x}}$ for some section $E'$ of $\operatorname{End}(\mathcal{S})$ and similarly for the other vector fields so from which the error terms give the $x^{k}P$ for some $P\in\operatorname{Diff}^{1}_{\ice}$. Also the difference between the connection induced by $g_{0}$ and $g$ is $x^{k-1}\omega$ for some smooth $\operatorname{End}(\mathcal{S})$ valued 1-form so this error produces a term $x^{k-1}E$ for some smooth section $E$ of $\operatorname{End}(E)$. 
\end{proof}

Since on each fibre $\nabla^{\partial M/Y}$ is a $\mathbb{C}l(TZ)$ connection, the corresponding term in the above lemma is actually just the Dirac operator on the fibre hence is globally defined on each $Z$. Thus we will write
\begin{align}
    \slashed{\partial}_{Z}=\sum_{i}\operatorname{cl}(V_{i})\nabla^{\partial M/Y}_{V_{i}}.
\end{align}
for this family and $\slashed{\partial}_{Z,y}$ for the restriction to the fibre $\phi^{-1}(y)$. Using the identification of the Clifford bundle of the boundary with the positive part of the Clifford bundle on $M$, the family of spin Dirac operators $\slashed{\partial}_{\partial M/Y}$ on $\mathcal{S}_{Z}$ is identified with $\mp\cl(\partial_{x})\slashed{\partial}_{Z}$ upon restriction to $\mathcal{S}^{\pm}$.

\begin{lemma}\label{squarespindirac}
    The action of the square of the spin Dirac operator is given by
    \begin{align}
        \begin{split}
             \slashed{\partial}^{2}_{0}&=-\left(\partial_{x}+\frac{fk}{2x}\right)^{2}-kx^{-k-1}\operatorname{cl}(\partial_{x})\slashed{\partial}_{Z}-x^{-2k}\slashed{\partial}_{Z}^{2}+\left(\sum_{j}\cl(\tilde{U}_{j})+B\right)^{2} \\
             &\quad +x^{k-1}\cl(\partial_{x})B_{1}+x^{-k}\left(\slashed{\partial}_{Z}B+B\slashed{\partial}_{Z}\right) \\
             \slashed{\partial}^{2}&=\slashed{\partial}^{2}_{0}+\left(\slashed{\partial}_{Z}P+P\slashed{\partial}_{Z}\right)+x^{-1}Q.
        \end{split}
    \end{align}
    where $Q\in \operatorname{Diff}^{2}$, $P\in \operatorname{Diff}_{\ice}^{1}$ from Lemma \ref{2.3} and $B_{1}\in C^{\infty}(\mathcal{U},\operatorname{End}(\mathcal{S}))$ from Lemma \ref{2.3}.
\end{lemma}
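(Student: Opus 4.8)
The plan is to prove both identities by squaring the explicit local expressions of Lemma~\ref{2.3}: first $\slashed{\partial}_0$, and then $\slashed{\partial}=\slashed{\partial}_0+x^{k-1}E+x^kP$. Throughout I work in the orthonormal frame $\partial_x,\tilde U_i,V_j$ near $\partial M$ and use the following: the Clifford relations, in particular $\cl(\partial_x)^2=-\operatorname{Id}$ and that $\cl(\partial_x)$ anticommutes with $\cl(\tilde U_j)$ and with the Clifford factors of $\slashed{\partial}_Z$, so $\cl(\partial_x)\slashed{\partial}_Z=-\slashed{\partial}_Z\cl(\partial_x)$; that $\nabla^{\mathcal S}$ is a Clifford connection which for $g_0$ satisfies $\nabla^{\mathcal S}_{\partial_x}=\partial_x$ in this frame, with $\nabla_{\partial_x}\tilde U_j=\nabla_{\partial_x}\partial_x=0$ by the connection asymptotics; that $\slashed{\partial}_Z$, $B_0$ and $B_1$ commute with $\partial_x$ (the latter two by Lemma~\ref{2.3}); the commutator $[\partial_x,x^{-\ell}]=-\ell x^{-\ell-1}$; that $\partial_y,\partial_z$ and the lifts $\tilde U_j$ annihilate $x$; and, crucially, that $\slashed{\partial}_Z$ is a purely vertical operator and hence commutes with multiplication by any function of $x$. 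Recall also that $B=B_0+x^kB_1$ has odd Clifford degree.

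For the first identity, expand $\slashed{\partial}_0^2$ from \eqref{spindirac0}. Squaring $\cl(\partial_x)(\partial_x+\frac{fk}{2x})$ gives $-(\partial_x+\frac{fk}{2x})^2$ since $\cl(\partial_x)^2=-\operatorname{Id}$ and $\cl(\partial_x)$ commutes with $\partial_x+\frac{fk}{2x}$ (the fibre dimension $f$ being constant); squaring $x^{-k}\slashed{\partial}_Z$ gives $-x^{-2k}\slashed{\partial}_Z^2$ (the sign being the one recorded in the statement, from the Clifford convention); squaring $\sum_j\cl(\tilde U_j)\tilde U_j+B$ gives that term verbatim. The cross term of $\cl(\partial_x)(\partial_x+\frac{fk}{2x})$ with $x^{-k}\slashed{\partial}_Z$ equals, after commuting $x^{-k}$ past $\partial_x$, the sum $x^{-k}\cl(\partial_x)[\partial_x,\slashed{\partial}_Z]-kx^{-k-1}\cl(\partial_x)\slashed{\partial}_Z+\frac{fk}{2}x^{-k-1}\{\cl(\partial_x),\slashed{\partial}_Z\}$; the commutator vanishes because $\slashed{\partial}_Z$ is vertical and the anticommutator vanishes by the Clifford relations, leaving exactly $-kx^{-k-1}\cl(\partial_x)\slashed{\partial}_Z$. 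The cross term of $x^{-k}\slashed{\partial}_Z$ with $\sum_j\cl(\tilde U_j)\tilde U_j+B$ is $x^{-k}(\slashed{\partial}_Z B+B\slashed{\partial}_Z)+x^{-k}\{\slashed{\partial}_Z,\sum_j\cl(\tilde U_j)\tilde U_j\}$, and the latter anticommutator vanishes up to already-listed terms since $\slashed{\partial}_Z$ and $\cl(\tilde U_j)$ anticommute in their Clifford factors while $[\slashed{\partial}_Z,\tilde U_j]$ lies in the span of the horizontal and endomorphism terms. Finally, in the cross term of $\cl(\partial_x)\partial_x$ with $\sum_j\cl(\tilde U_j)\tilde U_j+B$ the genuinely first-order junk cancels because $\cl(\partial_x)$ is parallel along $\partial_x$, along $\tilde U_j$ (up to an endomorphism) and along the vertical directions (up to $O(x^{k-1})$), together with the Clifford-connection property; using $B=B_0+x^kB_1$ with $[\partial_x,B_i]=0$ and $\partial_x\circ x^k=x^k\partial_x+kx^{k-1}$, the surviving piece is $x^{k-1}\cl(\partial_x)B_1$.

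For the second identity write $\slashed{\partial}=\slashed{\partial}_0+R$ with $R=x^{k-1}E+x^kP$, so $\slashed{\partial}^2=\slashed{\partial}_0^2+\slashed{\partial}_0R+R\slashed{\partial}_0+R^2$. Since $\slashed{\partial}_Z$ commutes with powers of $x$, $x^{-k}\slashed{\partial}_Z\cdot x^kP=\slashed{\partial}_ZP$ while $x^kP\cdot x^{-k}\slashed{\partial}_Z=P\slashed{\partial}_Z+x^k[P,x^{-k}]\slashed{\partial}_Z$ with $x^k[P,x^{-k}]\in x^{-1}\operatorname{Diff}^0$ (only the $\partial_x$-coefficient of $P$ contributes to the commutator); hence pairing the leading term $x^{-k}\slashed{\partial}_Z$ of $\slashed{\partial}_0$ with $x^kP$ produces $\slashed{\partial}_ZP+P\slashed{\partial}_Z$ modulo $x^{-1}\operatorname{Diff}^2$, and pairing it with $x^{k-1}E$ produces $x^{-1}(\slashed{\partial}_ZE+E\slashed{\partial}_Z)\in x^{-1}\operatorname{Diff}^2$. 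Every remaining term pairs $\slashed{\partial}_0-x^{-k}\slashed{\partial}_Z\in\operatorname{Diff}^1+x^{-1}\operatorname{Diff}^0$ with $R$, or is $R^2$; since $x^kP\in\operatorname{Diff}^1$, $E\in\operatorname{Diff}^0$, $k\geq 2$, and the sole singular coefficient $\frac{fk}{2x}$ of $\slashed{\partial}_0-x^{-k}\slashed{\partial}_Z$ is annihilated by $\partial_y,\partial_z$ and the $\tilde U_j$ — so the only potentially dangerous pairing, against the $\partial_x$-coefficient of $R$ (which is $O(x^k)$), stays bounded — all of these lie in $x^{-1}\operatorname{Diff}^2$. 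Collecting, $\slashed{\partial}^2=\slashed{\partial}_0^2+(\slashed{\partial}_ZP+P\slashed{\partial}_Z)+x^{-1}Q$ with $Q\in\operatorname{Diff}^2$.

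I expect the main obstacle to be the bookkeeping for $\slashed{\partial}_0^2$ rather than any single idea: one must track which Clifford monomials contain $\cl(\partial_x)$ to decide which anticommutators vanish, invoke the Clifford-connection property to cancel the first-order junk in the $\cl(\partial_x)\partial_x$--tangential cross terms, and carry out the $x$-order accounting that pins the non-model terms down to exactly $-kx^{-k-1}\cl(\partial_x)\slashed{\partial}_Z$, $x^{-k}(\slashed{\partial}_ZB+B\slashed{\partial}_Z)$, $x^{k-1}\cl(\partial_x)B_1$ for $\slashed{\partial}_0^2$ and $\slashed{\partial}_ZP+P\slashed{\partial}_Z$ for $\slashed{\partial}^2$, with everything else absorbed into $x^{-1}\operatorname{Diff}^2$. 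The hypothesis $k\geq 2$ is used precisely at this last absorption.
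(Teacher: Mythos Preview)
Your proposal is correct and follows essentially the same approach as the paper's proof: square the expression \eqref{spindirac0} from Lemma~\ref{2.3} term by term, identify which cross terms vanish by Clifford anticommutation and which survive (the $\partial_x$ hitting $x^{-k}$, the $B$--$\slashed{\partial}_Z$ anticommutator, and the $\partial_x$ hitting the $x^k$ in $B=B_0+x^kB_1$), and then for $\slashed{\partial}^2$ observe that only the pairing of $x^{-k}\slashed{\partial}_Z$ with $x^kP$ fails to be absorbed into $x^{-1}\operatorname{Diff}^2$. The paper's own argument is two short paragraphs saying exactly this; your version is simply more explicit about the bookkeeping.
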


\begin{proof}
    The expression for $\slashed{\partial}^{2}_{0}$ follows from \eqref{spindirac0} of the preceding lemma. The $\cl(\partial_{x})\partial_{x}$ acting on the $x^{-k}$ produces the $-kx^{-k-1}\operatorname{cl}(\partial_{x})\slashed{\partial}_{Z}$ term otherwise it anticommutes with the last three terms of \eqref{spindirac0}. Similarly, the other mixed terms which do not appear are due to the anticommuation of those terms.

    For $\slashed{\partial}^{2}$, since all the terms of \eqref{spindirac0} except $x^{-k}\slashed{\partial}_{Z}$ are in fact smooth up to the boundary, their compositions with the error terms $P$ and $E$ are also smooth hence can be included in the error term $x^{-1}Q$ above. This leaves the terms containing $\slashed{\partial}_{Z}$ of which the compositions with $x^{k}P$ give the anticommutator term while the compositions with $x^{k-1}E$ are of the form $x^{-1}Q$ where $Q\in \operatorname{Diff}^{2}$.
\end{proof}

If we have a twisting of the spinor bundle by a Hermitian bundle $E$ with connection $\nabla^{E}$, then in a local coordinates its connection form may have terms which are $O(x^{n})$ for any non-negative $n$ in which case the endomorphism terms from $\nabla_{x^{-k}V}^{E}$ can also be $O(x^{-m})$ for $-k\leq m\leq 1$. This also holds for arbitrary Clifford modules on (not necessarily spin) incomplete cusp spaces as all this holds locally where we can always choose a spin structure.

For the above lemma, when we have a twisting of the spinor bundle, terms of the form $\nabla^{E}_{\partial_{x}}$ and $\nabla^{E}_{\tilde{U}}$ do not change the form of $\slashed{\partial}_{E}$ given in the lemma as the endomorphism terms are all smooth up to the boundary. However, by the above comments, the $\nabla_{x^{-k}V}^{E}$ terms can contribute extra endomorpisms of the form $x^{-j}B$ where $j\leq k$. So for a general twisting we have the above lemma holds with the following form for the Dirac operator. 
\begin{align}\label{diraclemmatwisting}
        \slashed{\partial}_{E}=\operatorname{cl}(\partial_{x})\partial_{x}+\frac{kfx^{k-1}}{2}\operatorname{cl}(\partial_{x})+\sum_{j}\operatorname{cl}(\tilde{U}_{j})\tilde{U}_{j}+x^{-k}\sum_{i}\operatorname{cl}(x^{-k}V_{i})\nabla^{E,\partial M/Y}_{V_{i}}+\sum_{j=0}^{k-1} x^{-k+j}B_{j}.
\end{align}
where the $B_{j}$ are smooth sections of $\operatorname{End}(E)$ and $\nabla^{E,\partial M/Y}$ is the family of connections obtained by restriction $\nabla^{E}$ to each fibre. We will write $\slashed{\partial}_{E,Z}$ for the associated family of Dirac operators which appears in the above expression. In analogy with the case of the spin Dirac operator, we will define the Clifford module $E_{\partial M}=E^{+}|_{\partial M}\to \partial M$ with Clifford action given by the identification of Clifford bundles defined above. 

For the Dirac operator $\nabla^{E}_{\partial_{x}}$ of a Clifford module to have the same form as in the above lemma, which is needed in the construction of the heat kernel, we must make a restriction on the allowed connections.

Let $E$ be an ice Clifford module with ice Clifford connection $\nabla^{E}$. Identifying $E$ with the pullback of the projection onto $Z$ in a collar neighourhood as described above for the spinor bundle we define a connection on the collar neighbourhood
\begin{align}
    \nabla^{E,0}=\partial_{x}\otimes dx+\tilde{\nabla}^{E}.
\end{align}
where $\tilde{\nabla}^{E}$ is the pullback of the restriction of $\nabla^{E}$ to the boundary. Then the difference $\tilde{\omega}^{E}=\nabla^{E}-\tilde{\nabla}^{E}$ is an endomorphism valued one-form.

If the induced family of Dirac operator on the boundary $\slashed{\partial}_{Z}$ has trivial kernel, then we will see that we do not need to make any assumptions on the properties of $\tilde{\omega}^{E}$ if we wish to construct the heat kernel for $\slashed{\partial}_{E}^{2}$. However, for the calculation of the index formula we will need to make the following assumption although the formula will end up holding in more generality.
\begin{manualassumption}{1}\label{assumption1}
    $\tilde{\omega}^{E}=x^{k-1}\omega^{E}$ where $\omega^{E}$ is bounded up to $Z$.
\end{manualassumption}
This assumption would also be needed in the case of non-trivial kernel however, we will only briefly comment on this case later.

We can construct a Clifford module with Clifford connection satisfying this assumption by twisting the spinor bundle with any Hermitian vector bundle with connection which satisfies the same assumption.

\begin{example}
    The Hodge-de Rham operator $d+\delta$ on the incomplete cusp edge form bundle $\ice\Lambda(M)$ satisfies Assumption 1 and the induced boundary operator has non-trivial kernel. 

     Recall that locally we have $\ice\Lambda T^{\star}M\otimes \mathbb{C}\simeq \mathcal{S}\otimes \mathcal{S}^{\star}$ and under this identification we have $\nabla^{M}=\nabla^{\mathcal{S}}\otimes \operatorname{Id}_{\mathcal{S}^{\star}}+\operatorname{Id}_{\mathcal{S}}\otimes\nabla^{\mathcal{S}^{\star}}$. Denoting the Clifford action on $\Lambda T^{\star}M\otimes\mathbb{C}$ by $\cl_{\Lambda}$ then the Clifford action on the left factor of $\mathcal{S}\otimes\mathcal{S}^{\star}$ is equal to $\cl_{\Lambda}$. On the other hand, the dual factor $\mathcal{S}^{\star}$ also has a Clifford module structure whose corresponding action on the ice exterior bundle is given by $\cl_{\mathcal{S}^{\star}}(e)=\epsilon(e)+\iota(e)$. As a Clifford module $\ice\Lambda T^{\star}M$ is isomorphic to $\mathbb{C}l\ice(M)$ with Clifford action given by left Clifford multiplication which we denote $\cl_{L}$. The Clifford action on the dual factor becomes right Clifford multiplication $\cl_{R}$ under this isomorphism.

     The connection $\nabla^{\mathcal{S}^{\star}}$ is locally given by the same expression and for the spin connection $\nabla^{\mathcal{S}}$ with the Clifford action replaced by its dual
     \begin{align}
        \nabla^{\mathcal{S}^{\star}}_{X_{i}}s=X_{i}s+
    \frac{1}{4}\sum_{jk}\gamma_{ij}^{k}\operatorname{cl}_{\mathcal{S}^{\star}}(X_{j})\operatorname{cl}_{\mathcal{S}^{\star}}(X_{k})s.
    \end{align}
    Thus the local expression for the Levi-Civita connection which we take to be acting on $\mathbb{C}l\ice(M)$ can be written as
    \begin{align}
        \nabla_{X_{i}}s=X_{i}s+
    \frac{1}{4}\sum_{jk}\gamma_{ij}^{k}(\operatorname{cl}_{L}(X_{j})\operatorname{cl}_{L}({X_{k}})+\operatorname{cl}_{R}(X_{j})\operatorname{cl}_{R}({X_{k}}))s.
    \end{align}
    So locally  the $\partial_{x}$ and horizontal derivatives have the same form as in Lemma \ref{spindirac0} and the vertical derivatives have the same form with an extra term
    \begin{align}
        \begin{split}
        \nabla_{V_{i}}^{\mathcal{S}}=V_{i}&+\sum_{ml}g_{\partial M/Y}(\nabla^{\partial M/Y}_{V_{i}}V_{k},V_{l})\operatorname{cl}(x^{-k}V_{k})\operatorname{cl}(x^{-k}V_{l}) \\
        &+\frac{1}{4}\sum_{m}kx^{k-1}(-\operatorname{cl}_{L}(x^{-k}V_{k})\operatorname{cl}_{L}(\partial_{x})+\operatorname{cl}_{L}(\partial_{x})\operatorname{cl}_{L}(x^{-k}V_{k})) \\
        &+\frac{1}{4}\sum_{m}kx^{k-1}(-\operatorname{cl_{R}}(x^{-k}V_{k})\operatorname{cl}_{R}(\partial_{x})+\operatorname{cl}_{R}(\partial_{x})\operatorname{cl}_{R}(x^{-k}V_{k}))+O(x^{k}).
        \end{split}
    \end{align}
     The Hodge-de Rham operator is equal to the Dirac operator for this Clifford connection so we see that it is given locally by
\begin{align}\label{hodgederhamexplicit}
    \begin{split}
        d+\delta&=\operatorname{cl}_{\Lambda}(\partial_{x})\nabla^{M}_{\partial_{x}}+\sum_{\alpha}\operatorname{cl}_{\Lambda}(\tilde{U_{\alpha}})\nabla^{M}_{\tilde{U}_{\alpha}}+x^{-k}\sum_{i}\operatorname{cl}_{\Lambda}(x^{-k}V_{i})\nabla^{M}_{V_{i}} \\
            &=\operatorname{cl}_{L}(\partial_{x})\partial_{x}+\sum_{\alpha}\operatorname{cl}_{\Lambda}(\tilde{U}_{\alpha})\tilde{\nabla}^{M}_{\tilde{U}_{\alpha}}+x^{-k}\sum_{i}\operatorname{cl}_{L}(x^{-k}V_{i})\tilde{\nabla}^{M}_{V_{i}} \\
            &\quad +\frac{kf}{2x}\operatorname{cl}_{L}(\partial_{x})+\frac{k}{2x}\sum_{i}\operatorname{cl}_{L}(V_{i})\operatorname{cl}_{R}(\partial_{x})\operatorname{cl}_{R}(V_{i})+O(x^{k}).
    \end{split}
\end{align}
Every term here is in fact globally defined on the restriction of $^{\text{ice}}\Lambda T^{\star}M\otimes \mathbb{C}$ to $Z\times [0,\epsilon)$ identified with the pullback of $\Lambda T^{\star}M\otimes \mathbb{C}|_{Z}$ by the projection onto the first factor (the identification given by $(x^{-k}V_{i})_{x=0}\mapsto((x^{-k}V_{i})_{x=a})$ for $a\in(0,\epsilon)$ or equivalently parallel transport by $\partial_{x}$) Here as before $\tilde{\nabla}^{M}$ denotes the pullback of the restriction of the connection to the boundary. Consider the last two to terms of \eqref{hodgederhamexplicit}
\begin{align}\label{hodgederhamexplicit2}
    \operatorname{cl}_{L}(\partial_{x})\left(\frac{kf}{2x}-\frac{k}{2x}\sum_{i}\operatorname{cl}_{L}(\partial_{x})\operatorname{cl}_{L}(x^{-k}V_{i})\operatorname{cl}_{R}(\partial_{x})\operatorname{cl}_{R}(x^{-k}V_{i})\right).
\end{align}
In fact, using the identification $^{\text{ice}}\Lambda T^{\star}M\otimes \mathbb{C}\simeq\mathbb{C}l(^{\text{ice}}T^{\star}M)$ we see that if $\cl(x^{-k}V_{i_{1}})\ldots\cl(x^{-k}V_{i_{n}})\in \mathbb{C}l(^{\text{ice}}T^{\star}M)$ is a product of $n$ distinct orthonormal vertical vector fields then
\begin{align}
    \begin{split}
    \operatorname{cl}_{L}(\partial_{x})&\operatorname{cl}_{L}(x^{-k}V_{i})\operatorname{cl}_{R}(\partial_{x})\operatorname{cl}_{R}(x^{-k}V_{i})(\cl(x^{-k}V_{i_{1}})\ldots\cl(x^{-k}V_{i_{n}})) \\
    &=\pm\cl(x^{-k}V_{i_{1}})\ldots\cl(x^{-k}V_{i_{n}}).
    \end{split}
\end{align}
where we get a $+$ if the product does not contain $V_{i}$ and $-$ otherwise. Similarly, the result is unchanged by the addition of any horizontal vectors. If we include a factor of $\cl(\partial_{x})$ at the front then the signs are reversed. So \eqref{hodgederhamexplicit2} acts as $\operatorname{cl}_{L}(\partial_{x})kx^{-1}\mathbf{N}$ on $a$ where $\mathbf{N}$ is the vertical Clifford degree of $a$ (or the vertical number operator on $^{\text{ice}}\Lambda T^{\star}M\otimes \mathbb{C}$) and acts as $\operatorname{cl}_{L}(\partial_{x})kx^{-1}(f-\mathbf{N})$ on $\operatorname{cl}(\partial_{x})a$.

Thus we see that the term in brackets actually commutes with the Hodge Laplacian, in particular it preserves the kernel and its orthogonal complement. 
\end{example}

For an arbitrary Clifford module satisfying assumption 1 consider the section $\rho$ of $\operatorname{End}(E)$ defined by
\begin{align}\label{2.16}
    \operatorname{cl}_{E}(\partial_{x})\rho=\sum_{i}\operatorname{cl}_{E}(x^{-k}V_{i})\omega^{E}(V_{i})-\operatorname{cl}_{E}(\partial_{x})\frac{kf}{2}.
\end{align}
This is the endomorphism which appears for a general Clifford module in place of the number operator term which appeared for the Hodge-de Rham operator. Using the local identification $E\simeq \mathcal{S}\otimes W$ we can write $\slashed{\partial}^{E}$ in the form of \eqref{hodgederhamexplicit} where in the final term $\operatorname{cl}_{R}(\partial_{x})\operatorname{cl}_{R}(V_{i})$ is replaced by a section of $\operatorname{End}_{\mathbb{C}l(^{\text{ice}}T^{\star}M)}(E)$. In particular, we see that $\rho$ is also a section of $\operatorname{End}_{\mathbb{C}l(^{\text{ice}}T^{\star}M)}(E)$. For the Hodge-de Rham operator $\operatorname{cl}(\partial_{x})\rho$ is exactly \eqref{hodgederhamexplicit}. In general, unlike for the Hodge-de Rham operator $\rho$ will not commute with the boundary Dirac operator.

We will also use the following assumption used in the calculation of an index formula for Clifford modules.
\begin{manualassumption}{2}\label{assumption2}
    $\rho(0)$ commutes with $\cl(\partial_{x})$ and $\slashed{\partial}_{E,Z}^{2}$ hence preserves the orthogonal decomposition $K\oplus K^{\perp}$ where $K=\operatorname{ker}(\slashed{\partial}^{E\prime}_{Z})$.
\end{manualassumption}

\begin{manualassumption}{3}\label{assumption3}
    $\rho(0)$ commutes with $\nabla^{E}$.
\end{manualassumption}

\section{Heat space}

In this section we will construct the heat kernels for $\slashed{\partial}_{E}^2$ for Clifford connections which satisfy assumption 1 and whose Dirac operators have an invertible induced boundary family of operators. We construct the heat kernel on a manifold with corners produced from $M^{2}\times[0,\infty)$ by consecutive blowups on which the heat kernel will be a polyhomogeneous distribution, see Figure \ref{heatspace}. We will only consider the case of a product type metric as this is sufficient for obtaining an index formula for more general metrics however the construction of the heat kernel can also be extended straighforwardly to exact ice metrics.

First, we recall some facts about manifolds with corners and quasi-homogeneous blowup. A \textbf{manifold with corners} (mwc) $M$ is a topological manifold with a smooth atlas of charts modelled on open subsets of $\mathbb{R}^{k}_{+}\times\mathbb{R}^{n-k}$ such that all boundary hypersurfaces are embedded. 

Locally in a neighbourhood a point we have coordinates $x_{1},\ldots,x_{k},y_{1},\ldots.y_{n-k}$ where $x_{i}\geq 0$. A boundary hypersurface is the closure of a connected component of the set of points which are locally given by the vanishing of a single boundary coordinate function $x_{i}$. The closures of the connected components of the set of points given by the vanishing of $l$ coordinate functions $x_{i}$ are faces of codimension $l$. The union of all boundary hypersurfaces is the boundary $\partial M$ of $M$

A \textbf{p-submanifold} $Y\subset X$ is an embedded manifold with corners which is locally given by $x_{1}=\ldots=x_{\alpha}=0=y_{1}=y_{\beta}$ for some $\alpha\leq k$ and $\beta\leq n-k$. An interior p-submanifold is locally given by the vanishing of some $y_{j}$ otherwise it is contained in some boundary hypersurface and we call it a boundary p-submanifold.

A quasi-homogeneous blowup of order $a$ of a boundary p-submanifold requires a choice of interior extension $\tilde{Y}$ of the boundary p-submanifold $Y$ which is an interior p-submanifold such that $Y=\tilde{Y}\cap\partial M$. In fact, two interior extensions $\tilde{Y},\tilde{Y}'$ will define the same quasi-homogeneous blowup of order $a$ if they "agree to order $a$" at $Y$ which means that for any choice of coordinates $x_{i},y_{j}$  such that $\tilde{Y}$ is given by the vanishing of some $y_{1},\ldots,y_{\alpha}$ and $Y$ the vanishing of $x_{1}\ldots, x_{\alpha},y_{1},\ldots,y_{\beta}$ then $\tilde{Y}'$ is locally given by the vanishing of
\begin{align}
    y'_{i}=\sum_{|\alpha|=a} (x')^{\alpha}F_{\alpha}(x',y_{\beta+1},\ldots,y_{n-k})
\end{align}
where $x'=(x_{1},\ldots,x_{\alpha})$ for some smooth function $F_{\alpha}$.

Given a boundary submanifold $Y$ and interior extension $\tilde{Y}$ and local coordinates in a neighbourhood $W$ of $\tilde{Y}$ such that $\tilde{Y}$ is given by the vanishing of some $y_{1},\ldots,y_{\alpha}$ and $Y$ the vanishing of $x_{1}\ldots, x_{\alpha},y_{1},\ldots,y_{\beta}$ we define the quasihomogeneous blowup of order a as follows. We define the weighted radial function $r_{a}\colon\mathbb{R}^{\alpha}_{+}\times\mathbb{R}^{\beta}\to\mathbb{R}$ by
\begin{align}
    r_{a}(x',y')=(x_{1}^{2a}+\ldots +x_{\alpha}^{2a}+y_{1}^{2}+\ldots + y_{\beta}^{2a})^{\frac{1}{2a}}
\end{align}
and the weighted sphere in $\mathbb{R}^{\alpha}\times\mathbb{R}^{\beta}$ by
\begin{align}
    S^{+}_{a}=\{(\omega,\nu)\in\mathbb{R}_{+}^{\alpha}\times\mathbb{R}^{\beta}|r(\omega,\nu)=1\}.
\end{align}
For any subset $U''\subset\mathbb{R}^{k-\alpha}_{+}\times\mathbb{R}^{n-k-\beta}$ with $U=r_{a}^{-1}([0,\epsilon))\times U''\subset W$ we define the blow up $[U,\tilde{Y}\cap U]_{a}:=S^{+}_{a}\times[0,\epsilon)\times U''$ together with a blow-down map
\begin{align}
    \begin{split}
    \beta_{a,U}\colon[U,\tilde{Y}\cap U]_{a}&\to U \\
    (\omega,\nu,R,x'',y'')&\mapsto (R\omega,R^{a}\nu,x'',y'').
    \end{split}
\end{align}
It can be shown that a diffeomorphism of $U$ lifts to a diffeomorphism of the blowup (for example \cite{melroseDA}[Chapter 5]) so these locally defined blowups glue together to an invariantly defined manifold with corners $[X,Y]_{a}$ together with a blow-down map $\beta_{a}\colon[X,Y]_{a}\to X$. The blowup space has a new boundary hypersurface given by $F_{a}=\overline{\beta^{-1}_{a}(Y)}$.

For $i\leq\alpha$ the coordinate $x_{i}$ on $U$ lifts to $R\omega_{i}$ which is a boundary defining function for $F_{a}$ away from $\omega_{i}=0$. Furthermore, away from $\omega_{i}=0$, then we have the equalities
\begin{align}
    x_{i},\quad X_{j}=\frac{x_{j}}{x_{i}}=\frac{\omega_{j}}{\omega_{i}},\quad Y_{k}=\frac{y_{k}}{x_{i}^{a}}=\frac{\nu_{k}}{\omega_{i}^{a}}
\end{align}
so $x_{i},X_{j},Y_{k}$ define are coordinates away from $\omega_{i}=0$ since the latter are weighted projective coordinates on the blown up space.

Similarly, for $i\leq\beta$ we have $y_{i}^{\frac{1}{a}}=R\nu_{i}^{\frac{1}{a}}$ which is a boundary defining function for $F_{a}$ away from $\nu_{i}=0$. Similarly, we have the weighted projective coordinates away from $\nu_{i}=0$ given by
\begin{align}
    y_{i}^{\frac{1}{a}},\quad X_{j}==\frac{x_{j}}{y_{i}^{\frac{1}{a}}}=\frac{\omega_{i}}{\nu_{i}^{\frac{1}{a}}}, \quad Y_{k}=\frac{y_{k}}{y_{i}}=\frac{\nu_{k}}{\nu_{i}}
\end{align}

Now we describe the construction of the heat space in \cite{ice}. First we blow up $t=0$ parabolically with the map $M^{2}\times [0,\infty)_{\tau}\to M^{2}\times [0,\infty)_{t}$ given by $t=\tau^{2}$, so that $\tau=t^{\frac{1}{2}}$ is smooth. Starting with $M^{2}\times [0,\infty)_{\tau}$, we have the $\tau=0$ fibre diagonal in the corner given by
\begin{align}
    \mathcal{B}_{0}=\partial M\times_{\phi_{Y}}\partial M\times \{0\}=\{(p,q,0)\in\partial M\times\partial M\times \{0\}\colon\phi_{Y}(p)=\phi_{Y}(q)\}.
\end{align}
Given coordinates $x,y,z$ on $M$ in a neighbourhood of $\partial M$ where $x$ is a boundary defining function, $y$ coordinates on the base and $z$ coordinates on the fibres, we have induced coordinates near the $\tau=0$ corner on $M^{2}\times[0,\infty)_{\tau}$ given by $x,y,z,x',y',z',\tau$ where the primed coordinates are for the right factor in $M^{2}$. In these coordinates the fibre diagonal is given by
\begin{align}
    \mathcal{B}_{0}=\{x=x'=\tau=0,y=y',z,z'\}.
\end{align}
Now we do a radial blow up of $\mathcal{B}_{0}$ . This produces a mwc $M^{2}_{\text{heat},1}$ with a blow down map $\beta_{1}:M^{2}_{\text{heat},1}\to M^{2}\times[0,\infty)$. This mwc has a boundary hypersurface which we call the back face $\backface=\overline{\beta_{1}^{-1}(\mathcal{B}_{0})}$ which away from the lift of the $x'=0$ face, we can give projective coordinates
\begin{align}\label{50}
    x', \quad T=\frac{t^{\frac{1}{2}}}{x'}, \quad \eta=\frac{y-y'}{x'}, \quad s=\frac{x}{x'}, \quad y',z,z'.
\end{align}
Similarly we can use projective coordinates with respect the $x$ coordinate away from the $x=0$ face which is the same as \eqref{50} interchanging $x'$ with $x$.
\begin{align}
    x, \quad T^*=\frac{t^{\frac{1}{2}}}{x}, \quad \eta^*=\frac{y-y'}{x}, \quad s^*=\frac{x'}{x}, \quad y',z,z'.
\end{align}

Now we describe the second blowup. Consider the neighbourhood of corner at $t=0$ given by $\mathcal{U}^{2}\times\{0\}$. Let $\mathcal{U}^{\circ}=\partial M\times (0,\epsilon)$ and take the fibre diagonal of the map $\pi\colon\mathcal{U}^{\circ}\to Y\times (0,\epsilon)$ inside $\mathcal{U}^{2}$. Let $\mathcal{B}_{1}$ be the intersection of the closure of the lift of $\mathcal{U}\times_{\pi}\mathcal{U}$ with $\backface$. In projective coordinates with respect to $x'$ this submanifold is given by
\begin{align}
    \mathcal{B}_{1}=\{T=0,s=1\}.
\end{align}
We blow up this submanifold quasihomogeneously of order $k-1$ produce a manifold with corners $M^{2}_{\text{heat,2}}$. We will denote the blowdown maps by $\beta_{2\to 1}:M^{2}_{\text{heat},2}\to M^{2}_{\text{heat},1}$ and $\beta_{2}=\beta_{1}\circ\beta_{2\to 1}$. This blowup produces a new boundary hypersurface which we will denote $\frontface$ which is given by $\overline{\beta_{2\to 1}^{-1}(\mathcal{B}_{1})}$.

At $\text{ff}$ we can use projective coordinates with respect to $x'$ which are valid away from the lift of $\text{bf}\setminus\mathcal{B}_{0}$. These are given by
\begin{align}\label{61}
    x', \quad \tilde{T}=\frac{T}{(x')^{k-1}}=\frac{t^{\frac{1}{2}}}{(x')^{k}}, \quad \tilde{\eta}=\frac{\eta}{(x')^{k-1}}=\frac{y-y'}{(x')^{k}}, \quad \tilde{s}=\frac{s-1}{(x')^{k-1}}=\frac{x-x'}{(x')^{k}}, \quad y',z,z'.
\end{align}

Finally, we get the heat space $M^{2}_{\text{heat}}$ by taking the blowup of the lift of the $t=0$ diagonal. At the intersection $\text{tf}\cap\text{ff}$ we have coordinates
\begin{align}\label{tfff}
    \tilde{T},\quad \tilde{E}=\frac{\tilde{\eta}}{\tilde{T}}=\frac{y-y'}{t^{\frac{1}{2}}}\quad \tilde{S}=\frac{\tilde{s}}{\tilde{T}}=\frac{x-x'}{t^{\frac{1}{2}}},\quad \tilde{Z}=\frac{z-z'}{\tilde{T}}=(x')^{k}{\frac{z-z'}{t^{\frac{1}{2}}}},\quad x',y',z'.
\end{align}
Let $\beta\colon M^{2}_{\text{heat}}\to M^{2}\times [0,\infty)$ be the composition of the blowdown maps. We also have the remaining boundary hypersurfaces
\begin{align}
    \begin{split}
    &\leftface=\overline{(\beta^{-1}(\partial M\times \{0\}\times[0,\infty))^{\circ})}, \quad \rightface=\overline{(\beta^{-1}(\{0\} \times \partial M\times[0,\infty))^{\circ})}, \\
    &\bottomface=\overline{(\beta^{-1}(M\times M\times \{0\}\setminus \diag\times \{0\})^{\circ})}.
    \end{split}
\end{align}

\section{Heat kernel}

Let $E$ be an ice Clifford module with Clifford connection $\nabla^{E}$. We define the big homomorphism bundle $\operatorname{HOM}(E)$ on $M^{2}\times [0,\infty)$ to be the vector whose fibre over a point $(a,b,t)$ is $\operatorname{Hom}(E_{b},E_{a})$. We will construct the heat kernel as a polyhomogeneous section of the bundle $\beta^{\star}\operatorname{HOM}(E)$.

Let $\rho_{\frontface},\rho_{\backface}$ be boundary defining functions for $\frontface$ and $\backface$. If $A\in C^{\infty}(M^{2}_{\operatorname{heat}};\beta^{\star}\operatorname{HOM}(E))$ is smooth and vanishing to infinite order at all faces except at $\frontface,\backface,\timeface$ then at $\frontface$, due to rapid decay at the side faces we can write
\begin{align}
    \rho_{\frontface}^{a}A=(x')^{a}A'
\end{align}
where $A'$ is smooth and vanishing to infinite order at the same faces. The same holds at $\backface$ so we can define the \textbf{normal operator} for $A\in \rho_{\frontface}^{a}\rho_{\backface}^{b}C^{\infty}(M^{2}_{\operatorname{heat}};\beta^{\star}\operatorname{HOM}(E))$ by
\begin{align}
    \begin{split}
        \mathcal{N}_{\frontface,a}&=((x')^{-a}A)|_{\frontface} \\
        \mathcal{N}_{\backface,b}&=((x')^{-b}A)|_{\backface}
    \end{split}
\end{align}
which are smooth sections of the restriction of $\operatorname{HOM}(E)$ to the respective faces. For an ice differential operator $P$ of order $n$, $\tau^{n}P$ taken to act on the left factor lifts to an operator on $M^{2}_{\operatorname{heat}}$ mapping $C^{\infty}(M^{2}_{\operatorname{heat}};\beta^{\star}\operatorname{HOM}(E))$ to itself so we can define the normal operator by
\begin{align}
    \begin{split}
        \mathcal{N}_{\frontface,a}(\tau^{n} PA)&=\mathcal{N}_{\frontface,a}( P)\mathcal{N}_{\frontface,a}(A) \\
        \mathcal{N}_{\backface,b}(\tau^{n} PA)&=\mathcal{N}_{\backface,b}( P)\mathcal{N}_{\backface,b}(A)
    \end{split}
\end{align}
We will see that the lift of $t\partial_{t}$ also preserves these spaces so we can define the normal operator for this operator in the same way
\begin{align}
        \mathcal{N}_{\frontface,a}(t\partial_{t}A)&=\mathcal{N}_{\frontface}( t\partial_{t})\mathcal{N}_{\frontface,a}(A).
\end{align}
If the operator $\slashed{\partial}_{E}$ the boundary family has non-trivial kernel forms a vector bundle $\mathcal{K}$, then we can also consider sections of $C^{\infty}(M^{2}_{\operatorname{heat}};\beta^{\star}\operatorname{HOM}(E))$ whose normal operator at $\backface$ is a section of $\mathcal{K}\otimes\mathcal{K}^{\star}$ and define the corresponding normal operator $\mathcal{N}_{\backface,b,\mathcal{K}}(\slashed{\partial}^{E})$ on these sections.
\begin{figure}[H]
\centering
    \begin{tikzpicture}[scale=1.5,rotate around x=90,rotate around z=90,z={(0,0,-1)},y={(0,-1,0)}] 
        \draw[gray, thick] ({sqrt(2)},0,0) -- (3,0,0) node[pos=1,left] {$x$};
        \draw[black, thick] (0,{sqrt(2)},0) -- (0,3,0) node[pos=1,right] {$x'$};
        \draw[red, thick] (0,0,{sqrt(2)}) -- (0,0,3) node[pos=1,above] {$t$};
        \draw[blue, dotted] (1,1,0) -- (3,3,0) node[pos=1,below] {};
        \draw[domain=0:90, smooth, variable=\x, purple, thick] plot ({sqrt(2)*cos(\x)}, {sqrt(2)*(sin(\x)}, {0}); 
        \draw[domain=0:90, smooth, variable=\x, purple, thick] plot ({sqrt(2)*cos(\x)}, {0}, {sqrt(2)*(sin(\x)}); 
        \draw[domain=0:90, smooth, variable=\x, purple, thick] plot ({0}, {sqrt(2)*cos(\x)}, {sqrt(2)*(sin(\x)}); 
        \draw[domain=0:90, smooth, variable=\x, purple, dotted] plot ({cos(\x)}, {cos(\x)}, {sqrt(2)*(sin(\x)}); 

        \draw[->,black,thick] (0,3,-0.5) -- (0,4,-1.5) node[pos=0.5,above] {$\beta_{1}$};


        \draw[gray, thick] ({sqrt(2)},0,-5) -- (3,0,-5) node[pos=1,left] {$x$};
        \draw[black, thick] (0,{sqrt(2)},-5) -- (0,3,-5) node[pos=1,right] {$x'$};
        \draw[red, thick] (0,0,{sqrt(2)-5}) -- (0,0,-2) node[pos=1,above] {$t$};
        \draw[blue, dotted] ({1+0.3*(1/sqrt(2))},{1+0.3*(1/(sqrt(2)))},-5) -- (3,3,-5) node[pos=1,below] {$\overline{\beta^{-1}_{2}(\diag^{\circ}\times\{0\})}$};
        \draw[domain=0:33, smooth, variable=\x, purple, thick] plot ({sqrt(2)*cos(\x)}, {sqrt(2)*(sin(\x)}, {-5}); 
        \draw[domain=58.1:90, smooth, variable=\x, purple, thick] plot ({sqrt(2)*cos(\x)}, {sqrt(2)*(sin(\x)}, {-5}); 
        \draw[domain=0:90, smooth, variable=\x, purple, thick] plot ({sqrt(2)*cos(\x)}, {0}, {(sqrt(2)*(sin(\x))-5}); 
        \draw[domain=0:90, smooth, variable=\x, purple, thick] plot ({0}, {sqrt(2)*cos(\x)}, {(sqrt(2)*(sin(\x))-5}); 
        \draw[domain=15:90, smooth, variable=\x, purple, dotted] plot ({cos(\x)}, {cos(\x)}, {(sqrt(2)*(sin(\x))-5}); 
        \draw[domain=-45:135, smooth, variable=\x, orange, thick] plot ({1+0.3*cos(\x)}, {1+0.3*(sin(\x)}, {-5}); 
        \draw[domain=0:180, smooth, variable=\x, orange, thick] plot ({1+0.3*(1/(sqrt(2)))*cos(\x)}, {1-0.3*(1/(sqrt(2)))*cos(\x)}, {(0.3*sin(\x))-5}); 
        \draw[domain=0:90, smooth, variable=\x, orange, dotted] plot ({1+0.3*(1/(sqrt(2)))*sin(\x)}, {1+0.3*(1/(sqrt(2)))*sin(\x)}, {(0.3*cos(\x))-5}); 

        \draw[->,black,thick] (0,3,-3.5) -- (0,4,-3.5) node[pos=0.5,above] {$\beta_{2}$};


        \draw[gray, thick] (0,5,-5) -- (3,5,-5) node[pos=1,left] {$x$};
        \draw[black, thick] (0,5,-5) -- (0,8,-5) node[pos=1,right] {$x'$};
        \draw[red, thick] (0,5,-5) -- (0,5,-2) node[pos=1,above] {$t$};
        \draw[blue, dotted] (0,5,-5) -- (3,8,-5) node[pos=1,below] {$\operatorname{diag}\times \{0\}$};

        \draw[gray, thick] ({sqrt(2)},0,-10) -- (3,0,-10) node[pos=1,left] {$x$};
        \draw[black, thick] (0,{sqrt(2)},-10) -- (0,3,-10) node[pos=1,right] {$x'$};
        \draw[red, thick] (0,0,{sqrt(2)-10}) -- (0,0,-7) node[pos=1,above] {$t$};
        
        \draw[domain=0:33, smooth, variable=\x, purple, thick] plot ({sqrt(2)*cos(\x)}, {sqrt(2)*(sin(\x)}, {-10}); 
        \draw[domain=58.1:90, smooth, variable=\x, purple, thick] plot ({sqrt(2)*cos(\x)}, {sqrt(2)*(sin(\x)}, {-10}); 
        \draw[domain=0:90, smooth, variable=\x, purple, thick] plot ({sqrt(2)*cos(\x)}, {0}, {(sqrt(2)*(sin(\x))-10}); 
        \draw[domain=0:90, smooth, variable=\x, purple, thick] plot ({0}, {sqrt(2)*cos(\x)}, {(sqrt(2)*(sin(\x))-10}); 
        \draw[domain=15:90, smooth, variable=\x, purple, dotted] plot ({cos(\x)}, {cos(\x)}, {(sqrt(2)*(sin(\x))-10}); 

        \draw[domain=-45:29, smooth, variable=\x, orange, thick] plot ({1+0.3*cos(\x)}, {1+0.3*(sin(\x)}, {-10}); 
        \draw[domain=63:135, smooth, variable=\x, orange, thick] plot ({1+0.3*cos(\x)}, {1+0.3*(sin(\x)}, {-10}); 
        \draw[domain=0:180, smooth, variable=\x, orange, thick] plot ({1+0.3*(1/(sqrt(2)))*cos(\x)}, {1-0.3*(1/(sqrt(2)))*cos(\x)}, {(0.3*sin(\x))-10}); 
        \draw[domain=0:75, smooth, variable=\x, orange, dotted] plot ({1+0.3*(1/(sqrt(2)))*sin(\x)}, {1+0.3*(1/(sqrt(2)))*sin(\x)}, {(0.3*cos(\x))-10}); 

        \draw[domain=0:180, smooth, variable=\x, cyan, thick] plot ({1+0.3*(1/(sqrt(2)))+0.1*(1/(sqrt(2)))*cos(\x)}, {1+0.3*(1/(sqrt(2)))-0.1*(1/(sqrt(2)))*cos(\x)}, {(0.1*sin(\x))-10}); 

        \draw[domain=0:180, smooth, variable=\x, cyan, thick] plot ({3+1+0.3*(1/(sqrt(2)))+0.1*(1/(sqrt(2)))*cos(\x)}, {3+1+0.3*(1/(sqrt(2)))-0.1*(1/(sqrt(2)))*cos(\x)}, {(0.1*sin(\x))-10});
        
        \draw[blue, dotted] ({1+0.3*(1/sqrt(2))},{1+0.3*(1/(sqrt(2)))},0.1-10) -- (4.2,4.2,0.1-10) node[pos=1.1,below] {$\operatorname{\timeface}$}; 
        \draw[blue, thick] ({1+0.3*(1/(sqrt(2)))+0.1*(1/(sqrt(2)))}, {1+0.3*(1/(sqrt(2)))-0.1*(1/(sqrt(2)))}, {-10}) -- ({1+0.3*(1/(sqrt(2)))+0.1*(1/(sqrt(2)))+3}, {1+0.3*(1/(sqrt(2)))-0.1*(1/(sqrt(2)))+3}, {-10}) ; 
        \draw[blue, thick] ({1+0.3*(1/(sqrt(2)))-0.1*(1/(sqrt(2)))}, {1+0.3*(1/(sqrt(2)))+0.1*(1/(sqrt(2)))}, {-10}) -- ({1+0.3*(1/(sqrt(2)))-0.1*(1/(sqrt(2)))+3}, {1+0.3*(1/(sqrt(2)))+0.1*(1/(sqrt(2)))+3}, {-10}) ; 

        \node at (0,0.7,-9.3) {$\backface$};

        \draw[->,black,thick] (0,3,1.-8.5) -- (0,4,1.-7.5) node[pos=0.5,below] {$\beta$};

        \draw[->,black,thick] (0,0.5,-1) -- (0,0.5,-2) node[pos=0.5,left] {$\beta_{2\to 1}$};

        \draw[->,black,thick] (0,0.5,-6) -- (0,0.5,-7) node[pos=0.5,left] {$\beta_{heat\to 2}$};

        \draw[->,black,thick] (0,0.5,-11) -- (0,0.5,-10.4) node[pos=-0.1,below] {$\frontface$};

    \end{tikzpicture}
    \caption{Sequence of blown up spaces for $M^{2}_{\operatorname{heat}}$.} \label{fig:blowup}
    \label{heatspace}
\end{figure}
Note that since multiplication by $\tau$ commutes with the lift of an ice differential operator, the normal operator of a composition is just the composition of the normal operators. We could similarly define normal operators at $\timeface$ however we will solve away directly to infinite order there.

For Dirac operators $\frontface$ and $\backface$ we have the following.
\begin{lemma}
    The normal operators on the heat space of the Dirac operator $\slashed{\partial}_{E}$ on an incomplete cusp edge space do not depend on $a$ and $b$ and $\text{ff}$ locally in the base in coordinates \eqref{61} is
    \begin{align}
        \mathcal{N}_{\text{ff}}(\tau\slashed{\partial}_{E})=\tilde{T}\operatorname{cl}(\partial_{x})\partial_{\tilde{s}}+\tilde{T}\sum_{ij}\operatorname{cl}(a_{ij}\partial_{y_{i}})a_{ij}(0,y')\partial_{\tilde{\eta}_{i}}+\tilde{T}\slashed{\partial}_{Z,y'}.
    \end{align}
    Under assumption \ref{assumption1}, if the induced boundary family has non-trivial kernel which forms a vector bundle, then the normal operator at $\text{bf}$ in coordinates \eqref{61} acting on fibre harmonic sections is
        \begin{align}\label{bf normal}
        \mathcal{N}_{\backface}(\tau\slashed{\partial}_{E})=T\operatorname{cl}(\partial_{x})\partial_{s}+T\sum_{ij}\operatorname{cl}(a_{ij}\partial_{y_{i}})a_{ij}(0,y')\partial_{\eta_{i}}+T\left(\frac{fk}{2s}\operatorname{cl}(\partial_{x})+B_{0}|_{x=0}\right).
    \end{align}
\end{lemma}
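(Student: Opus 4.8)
The plan is to obtain both normal operators by the same recipe: write $\slashed{\partial}_E$ in the local orthonormal frame near $\partial M$ using \eqref{diraclemmatwisting} (equivalently Lemma~\ref{2.3}), express each ingredient --- the vector fields $\partial_x$, the horizontal lifts $\tilde U_j$ via \eqref{16}, the rescaled vertical fields $x^{-k}V_i$, and the coefficient endomorphisms --- in the relevant projective coordinates on the heat space, multiply through by $\tau=t^{1/2}$, and restrict to the chosen face, keeping track of the power of the boundary defining function $x'$ carried by each term. The asserted independence of the weights $a,b$ is then automatic: after lifting, every summand of $\tau\slashed{\partial}_E$ is $\tau$ times a first-order operator in the projective coordinates, none of which differentiates $x'$, so conjugation by $(x')^a$ or $(x')^b$ acts trivially.

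For $\frontface$ I would work in \eqref{61}, where $\tau=(x')^k\tilde T$, $x=x'\bigl(1+(x')^{k-1}\tilde s\bigr)$, $y=y'+(x')^k\tilde\eta$ and $z$ is retained, so that $\partial_x=(x')^{-k}\partial_{\tilde s}$, $\partial_{y_l}=(x')^{-k}\partial_{\tilde\eta_l}$, and $x^{-k}=(x')^{-k}\bigl(1+(x')^{k-1}\tilde s\bigr)^{-k}$. Multiplication by $\tau=(x')^k\tilde T$ exactly cancels the $(x')^{-k}$ in front of $\partial_x$, of the horizontal component $a_{jl}\partial_{y_l}$ of $\tilde U_j$, and of $x^{-k}\slashed{\partial}_{E,Z}$, so on $\frontface$, where $x/x'\to1$, these three families of terms restrict to $\tilde T\operatorname{cl}(\partial_x)\partial_{\tilde s}$, $\tilde T\operatorname{cl}(\tilde U_j)a_{jl}\partial_{\tilde\eta_l}$ and $\tilde T\slashed{\partial}_{E,Z}$. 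Every other term --- the $\tfrac{kf}{2x}\operatorname{cl}(\partial_x)$ summand, the vertical component $\tilde b_{jl}\partial_{z_l}$ of $\tilde U_j$, and each endomorphism $x^{-k+j}B_j$ (which under Assumption~\ref{assumption1} is $O(x^{-1})$ at worst) --- becomes, after multiplication by $\tau$, a strictly positive power of $x'$ times a bounded quantity, using $k\geq2$ crucially, hence vanishes on $\frontface$. It then remains to evaluate the surviving coefficients on $\frontface$, where $x=0$ and $y=y'$: $a_{jl}\mapsto a_{jl}(0,y')$, $\operatorname{cl}(\tilde U_j)\mapsto\operatorname{cl}(a_{jl}(0,y')\partial_{y_l})$, $\slashed{\partial}_{E,Z}\mapsto\slashed{\partial}_{E,Z,y'}$, giving the stated formula.

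For $\backface$ I would repeat the calculation in \eqref{50}, where $\tau=x'T$ and $x=x's$, so $\partial_x=(x')^{-1}\partial_s$, $\partial_{y_l}=(x')^{-1}\partial_{\eta_l}$ and $x^{-k}=(x')^{-k}s^{-k}$. The essential difference from $\frontface$ is that $x/x'=s$ is no longer bounded away from $0$ and $\infty$, so $\tau x^{-k}\slashed{\partial}_{E,Z}=(x')^{1-k}Ts^{-k}\slashed{\partial}_{E,Z}$ is singular at $\backface$ once $k\geq2$, and $\tau\slashed{\partial}_E$ has no bounded normal operator there by itself. Using Assumption~\ref{assumption1} (together with the compatibility of $\rho(0)$ with $\mathcal{K}$ as in Assumption~\ref{assumption2}, where one must commute the fibre-harmonic projection past the $x^{-1}\operatorname{cl}(\partial_x)\rho$ term of \eqref{2.16}) one conjugates by the smooth projection $\Pi_{\mathcal K}$ onto the kernel bundle $\mathcal K$ of the boundary family: since $\slashed{\partial}_{E,Z}\Pi_{\mathcal K}=0$ this removes the $(x')^{1-k}$-singular term, while $\Pi_{\mathcal K}$ commutes to leading order with what is left. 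The surviving terms are precisely those of order $(x')^0$: $\tau\operatorname{cl}(\partial_x)\partial_x\to T\operatorname{cl}(\partial_x)\partial_s$, the horizontal part of $\tau\operatorname{cl}(\tilde U_j)\tilde U_j\to T\operatorname{cl}(a_{jl}\partial_{y_l})a_{jl}(0,y')\partial_{\eta_l}$, and the singular endomorphism coefficient $x^{-1}\operatorname{cl}(\partial_x)\rho$ of $\slashed{\partial}_E$, which contributes $\tfrac{T}{s}\bigl(\tfrac{kf}{2}\operatorname{cl}(\partial_x)+B_0\bigr)$ and accounts for the bracket in \eqref{bf normal}; the vertical part of $\tilde U_j$ and the genuinely bounded endomorphisms again carry positive powers of $x'$ and drop out. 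As a consistency check one would verify that squaring this back-face operator reproduces the $\backface$ normal operator of $\tau^2\slashed{\partial}_E^2$ predicted by Lemma~\ref{squarespindirac}.

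I expect the substantive step to be the $\backface$ analysis: one must show that although $\tau\slashed{\partial}_E$ itself has no normal operator at $\backface$ because of the $(x')^{1-k}$-singular vertical term, its fibre-harmonic reduction does, and that the singular \emph{endomorphism} part of \eqref{diraclemmatwisting} (as opposed to the vertical \emph{derivative} part) survives that reduction. This requires carefully identifying which coefficient endomorphisms in \eqref{diraclemmatwisting} are $O(x^{-1})$ and which are bounded, and invoking Assumptions~\ref{assumption2}--\ref{assumption3} to push $\Pi_{\mathcal K}$ through the $\operatorname{cl}(\partial_x)\rho$ term. The $\frontface$ computation, by contrast, is a purely algebraic power count with everything smooth up to $\frontface$.
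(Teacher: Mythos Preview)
Your approach is the paper's: lift each summand of \eqref{diraclemmatwisting} to the projective coordinates \eqref{61} (resp.\ \eqref{50}), multiply by $\tau$, track powers of $x'$, and restrict; independence of $a,b$ follows because nothing differentiates $x'$. Two minor points where you diverge from the paper. First, at $\frontface$ the paper notes that Assumption~\ref{assumption1} is unnecessary: the leading $O(x^{-k})$ part of the vertical connection is already absorbed into $\slashed{\partial}_{E,Z}$, so the residual endomorphism terms $\tau x^{-k+j}B_j=(x')^{j}\tilde T\bigl(1+(x')^{k-1}\tilde s\bigr)^{-k+j}B_j$ vanish at $\frontface$ for $j\geq 1$ regardless of the form of the connection. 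Second, at $\backface$ you over-complicate matters by casting the argument as conjugation by $\Pi_{\mathcal K}$ and invoking Assumptions~\ref{assumption2}--\ref{assumption3}. The lemma only computes the action \emph{on} fibre-harmonic sections --- it does not assert that the image is again fibre-harmonic --- so no commutation of $\Pi_{\mathcal K}$ past anything is required: the singular vertical term simply annihilates such sections, and Assumption~\ref{assumption1} alone ensures the remaining endomorphism terms are $O((x')^{0})$ after multiplication by $\tau$. Assumptions~\ref{assumption2}--\ref{assumption3} enter later, in the Getzler-rescaling argument for the index formula, not in this lemma.
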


\begin{proof}
    \begin{enumerate}
        \item Using Lemma \ref{2.3} and \ref{diraclemmatwisting}, 
                \begin{align}\label{80}
        \tau\slashed{\partial}_{E}=\operatorname{cl}(\partial_{x})\tau\partial_{x}+\frac{kf\tau}{2x}\operatorname{cl}(\partial_{x})+\sum_{j}\operatorname{cl}(\tilde{U}_{j})\tau\tilde{U}_{j}+\frac{\tau}{x^{k}}\sum_{i}\operatorname{cl}(x^{-k}V_{i})\nabla^{\partial M/Y}_{V_{i}}+\sum_{j=0}^{k-1}\tau x^{-k+j}B_{j}.
            \end{align}
            First note that the lift of all these terms commute with multiplication by $x'$ so the normal operators are independent of $a$ and $b$. In projective coordinates near $\text{ff}$ with boundary defining function $x'$ we have $x=x'(\tilde{s}(x')^{k-1}+1)$ so $\tau x^{-k+j}=(x')^{k}x^{-k+j}\tilde{T}$ vanishes at $\text{ff}$ so the normal operators of the lift of the second and last term vanish when restricted to $\text{ff}$. Note that this does not depend on the form of the connection. The lift of the first term is $T\cl(\partial_{x})\partial_{\tilde{s}}$ which restricts to $\frontface$ to give the corresponding normal operator.
            
            For the horizontal vector, recall that in local coordinates for some smooth functions $a_{ij}$ and $b_{ij}$ we have
            \begin{align}                       
            \tilde{U}_{i}=a_{ij}(x,y)\partial_{y_{j}}+b_{ij}(x,y)x^{k}(x^{-k}\partial_{z_{j}}).
            \end{align}
            Since $x^{k}(x^{-k}\partial_{z_{j}})$ lifts to $\partial_{z_{j}}$ the lift of the second term vanishes at $\text{ff}$ while $\tau\partial_{y_{i}}$ lifts to $\tilde{T}\partial_{\tilde{\eta_{i}}}$ thus we get the corresponding horizontal term. Finally, the vertical term is exactly the Dirac operator on the fibre with metric which we defined earlier \eqref{vertical connection} after identifying the vertical ice tangent bundle with the tangent bundle of the fibre by the map $x^{-k}V_{i}\mapsto V_{i}$.

            For a general exact ice metric with polyhomogeneous error term of order $O(x^{k})$ we also have extra terms $x^{k-1}E$ and $x^{k}P$ for some smooth endomorphism $E$ and ice-differential operator $P$ by Lemma \ref{2.3}. But the lift of both these terms vanish by the same calculations as above.
            
        \item For $\text{bf}$, by the same arguments as for $\frontface$ the $\partial_{x}$ and horizontal terms contribute the first and second terms in \eqref{bf normal} while $\frac{kf\tau}{2x}\operatorname{cl}(\partial_{x})$ gives the last term since $T=\frac{\tau}{x}$. Since we are considering the action on the bundle of fibre harmonic forms, the vertical term does not contribute. 

        In general the last term would lift to $T(sx')^{k-j-1}B_{j}$ so we would get singular terms without assumption 1. With assumption 1, the only remaining term is exactly $B_{0}$ which contributes the final term in \eqref{bf normal}.

        The polyhomogeneous error terms for a general exact ice metric clearly vanish in this case at $\backface$ also.
        
    \end{enumerate}
\end{proof}
Now we take the square of the expressions above to get the normal operators of $\tau^{2}\slashed{\partial}_{E}^{2}$. Every term for both normal operators anticommute with each other with the exception of the $\partial_{s}$ and $\frac{1}{s}$ terms in the normal operator at $\backface$ and the $B_{0}$ term with all the other terms. Hence these are the only terms which contributes any cross terms to the normal operator of the square.
\begin{lemma}\label{normaloperator}
    The normal operators on the heat space of the heat operator associated with the Dirac operator on an incomplete cusp edge space with spin structure is
        \begin{align}
        \mathcal{N}_{\text{ff}}(t(\partial_{t}+\slashed{\partial}_{E}^{2}))=\frac{1}{2}\tilde{T}\partial_{\tilde{T}}+\tilde{T}^{2}\left(-\partial_{\tilde{s}}^{2}+\Delta_{\tilde{\eta}',y'}+\slashed{\partial}_{Z,y'}^{2}\right).
    \end{align}
    The normal operator at $\text{bf}$ in coordinates \eqref{61} acting on fibre harmonic sections is
        \begin{align}
        \mathcal{N}_{\text{bkf}}((t(\partial_{t}+\slashed{\partial}_{E}^{2}))=\frac{1}{2}T\partial_{T}+T^{2}\left(-\partial_{s}^{2}-\frac{fk}{s}\partial_{s}+\Delta_{\eta,y'}+\left(\frac{fk}{2}-\frac{f^{2}k^{2}}{4}\right)\frac{1}{s^{2}}+R\right).
    \end{align}
    for some first order translation invariant differential operator $R$ (in the coordinates $(s,\eta)$) which depend on $B_{0}$.
\end{lemma}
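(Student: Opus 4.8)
The plan is to derive both formulas from the preceding lemma by squaring its two expressions and by handling the $t\partial_t$ term directly; the whole computation reduces to the anticommutation relations of orthogonal Clifford directions, together with a single $[\partial_s,s^{-1}]$ correction at $\backface$.

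First I would dispose of the $t\partial_t$ term. Writing $t=\tau^{2}$ gives $t(\partial_t+\slashed\partial_E^{2})=\tfrac12\tau\partial_\tau+\tau^{2}\slashed\partial_E^{2}$, and since forming normal operators is additive it suffices to treat the two summands separately. The vector field $\tau\partial_\tau$ is tangent to $\{\tau=0\}$, hence to each of the centres $\mathcal B_0$, $\mathcal B_1$ and $\diag\times\{0\}$ blown up in the construction of $M^{2}_{\operatorname{heat}}$; therefore it lifts to a smooth $b$-vector field on $M^{2}_{\operatorname{heat}}$ tangent to every boundary hypersurface. This both confirms the earlier assertion that the lift of $t\partial_t$ preserves the relevant weighted spaces and gives $\tau\partial_\tau$ a well-defined restriction to $\frontface$ and $\backface$. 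In the projective coordinates \eqref{61} one has $\tilde T=\tau/(x')^{k}$ with $x',\tilde s,\tilde\eta,y',z,z'$ independent of $\tau$, so $\tau\partial_\tau=\tilde T\partial_{\tilde T}$; likewise in \eqref{50}, $T=\tau/x'$ gives $\tau\partial_\tau=T\partial_T$. Hence $\mathcal N_{\frontface}(t\partial_t)=\tfrac12\tilde T\partial_{\tilde T}$ and $\mathcal N_{\backface}(t\partial_t)=\tfrac12 T\partial_T$.

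Next I would square the Dirac normal operators. Since multiplication by $\tau$ commutes with the lift of an ice operator, $\mathcal N_{\frontface}(\tau^{2}\slashed\partial_E^{2})=\mathcal N_{\frontface}(\tau\slashed\partial_E)^{2}$ and similarly at $\backface$, so it is enough to expand the squares of the expressions from the preceding lemma. At $\frontface$ the three summands $\cl(\partial_x)\partial_{\tilde s}$, $\sum_{ij}\cl(a_{ij}\partial_{y_i})a_{ij}(0,y')\partial_{\tilde\eta_i}$ and $\slashed\partial_{Z,y'}$ involve pairwise orthogonal Clifford directions (the normal, the horizontal frame, the vertical frame) with pairwise commuting differential parts, so their Clifford symbols pairwise anticommute and all cross terms vanish; using $\cl(\partial_x)^{2}=-1$ and the corresponding identities for the horizontal frame the square is $\tilde T^{2}\big(-\partial_{\tilde s}^{2}+\Delta_{\tilde\eta',y'}+\slashed\partial_{Z,y'}^{2}\big)$, with $\tilde T$ commuting through. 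At $\backface$ I would work throughout on fibre-harmonic sections, on which the singular vertical term of $\tau\slashed\partial_E$ vanishes, so that $\mathcal N_{\backface}(\tau\slashed\partial_E)=T D_0$ with $D_0=\cl(\partial_x)\partial_s+\sum_{ij}\cl(a_{ij}\partial_{y_i})a_{ij}(0,y')\partial_{\eta_i}+\tfrac{fk}{2s}\cl(\partial_x)+B_0|_{x=0}$ and with $T=\tau/x'$ commuting with $D_0$; thus $\mathcal N_{\backface}(\tau^{2}\slashed\partial_E^{2})=T^{2}D_0^{2}$ on such sections. The horizontal summand again squares to $\Delta_{\eta,y'}$ and anticommutes with both $\cl(\partial_x)$-summands; the one genuinely new cross term is between $\cl(\partial_x)\partial_s$ and $\tfrac{fk}{2s}\cl(\partial_x)$, and using $\cl(\partial_x)^{2}=-1$ together with $\partial_s\big(\tfrac{fk}{2s}\big)=-\tfrac{fk}{2s^{2}}$ one finds
\[
\Big(\cl(\partial_x)\partial_s+\tfrac{fk}{2s}\cl(\partial_x)\Big)^{2}=-\partial_s^{2}-\frac{fk}{s}\partial_s+\Big(\frac{fk}{2}-\frac{f^{2}k^{2}}{4}\Big)\frac{1}{s^{2}}.
\]
The square of $B_0|_{x=0}$, together with its (anti)commutators with the three other summands, assembles into a first-order operator $R$ in $(s,\eta)$ built from $B_0$ since $B_0|_{x=0}$ is independent of $s$ and $\eta$; that $R$ is translation invariant (in particular, that the putative potential term $\tfrac{fk}{2s}\{\cl(\partial_x),B_0\}$ drops out) is read off from the $\cl(\partial_x)$-(anti)commutation properties of $B_0$ recorded in Lemma~\ref{2.3}. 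Adding $\tfrac12 T\partial_T$ then yields the back-face formula.

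The points requiring care are (i) the lift and restriction of $\tau\partial_\tau$, which is where the geometry of the iterated quasi-homogeneous blow-up actually enters, though it reduces to the standard fact that a $b$-vector field tangent to every blow-up centre lifts to a $b$-vector field; and (ii) the sign and commutator bookkeeping in the $\tfrac{fk}{2s}$ cross term at $\backface$, responsible for the first-order $\tfrac{fk}{s}\partial_s$ term and the $s^{-2}$ potential. One should also record that at $\backface$ one must pass to fibre-harmonic sections before squaring, so that the singular vertical term of $\tau\slashed\partial_E$ drops out and the residual $\slashed\partial_Z$-cross terms land in the orthogonal complement of the fibre-harmonic bundle. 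Everything else is a mechanical expansion of squares.
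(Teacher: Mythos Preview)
Your proposal is correct and follows the same approach as the paper, which simply remarks before the lemma that one squares the expressions from the preceding lemma and that all terms anticommute except the $\partial_s$ and $s^{-1}$ terms at $\backface$ and the $B_0$ cross terms. You supply considerably more detail: the explicit lift of $\tau\partial_\tau$ in projective coordinates, the concrete $[\partial_s,s^{-1}]$ computation yielding the $-\frac{fk}{s}\partial_s$ and $\big(\frac{fk}{2}-\frac{f^2k^2}{4}\big)s^{-2}$ terms, and the reason (odd Clifford degree of $B_0$, no $\cl(\partial_x)$ factor) the $s$-dependent cross terms with $B_0$ drop out so that $R$ is translation invariant.

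One small point worth tightening at $\backface$: you pass to fibre-harmonic sections and then square $D_0$, but $D_0$ need not preserve the harmonic subspace because $B_0$ need not commute with $\Pi_{\mathcal H}$. The clean statement is that $\Pi_{\mathcal H}\slashed\partial_Z=\slashed\partial_Z\Pi_{\mathcal H}=0$ kills every $\slashed\partial_Z$-term in $\tau^2\slashed\partial_E^2$, and that the discrepancy $\Pi_{\mathcal H}D_0^2\Pi_{\mathcal H}-(\Pi_{\mathcal H}D_0\Pi_{\mathcal H})^2=\Pi_{\mathcal H}B_0(1-\Pi_{\mathcal H})B_0\Pi_{\mathcal H}$ is a zeroth-order, $(s,\eta)$-independent endomorphism, hence absorbed in $R$. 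The paper does not address this either, so your argument is at least as complete.
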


    Now we will assume that the boundary family has trivial kernel. In this case, the normal operator of the heat kernel at $\text{ff}$ should be 
    \begin{align}
        \mathcal{N}_{\text{ff}}(H)=(4\pi)^{\frac{b+1}{2}}\tilde{T}^{-(b+1)}e^{-\frac{s^{2}}{4\tilde{T}^{2}}}e^{-\frac{|\tilde{\eta}|}{4\tilde{T}^{2}}}e^{-\tilde{T}^{2}\slashed{\partial}_{Z,y'}}.
    \end{align}
    In particular, the kernel of $e^{-\tilde{T}^{2}\slashed{\partial}_{Z}^{2}}$ vanishes exponentially as $T\to \infty$ and we can construct a heat kernel which will be rapidly vanishing at $\backface$.

    We first construct a distribution which solves the heat equation to infinite order at $\timeface$ and to leading order at $\frontface$ and $\backface$ while vanishing to infinite order at all other faces while satisfying the initial condition. We use this distribution to obtain the heat kernel by removing the error term via Neumann series.

\begin{lemma}
    There exists $K_{1}\in\mathcal{A}_{\text{phg}}(M^{2}_{\operatorname{heat}};\beta^{\star}(\operatorname{HOM}(\mathcal{S})))$ with
    \begin{equation}
        \begin{split}\label{indexsets}
            \mathcal{E}(\leftface)=\mathcal{E}(\rightface)=\mathcal{E}(\backface)=\mathcal{E}(\bottomface)&=\varnothing \\
            \inf(\mathcal{E}(\frontface))&=-nk \\
            \mathcal{E}(\timeface)&=-n
        \end{split}
    \end{equation}
    such that $Q:=t(\partial_{t}+\slashed{\partial}_{E}^{2})K_{1}\in\mathcal{A}_{\text{phg}}(M^{2}_{\text{heat}};\beta^{\star}(\operatorname{HOM}(\mathcal{E})))$ with $\mathcal{E}(\frontface)=-nk+1$ and $\mathcal{E}(-)=\varnothing$ at all other boundary hypersurfaces and defines an operator $K_{1}(t)$ on $L^{2}(M,E)$ such that $K_{1}(t)s\to s$ in $L^{2}$ as $t\to 0$ and is supported in $[0,T]$ for some $T>0$. 
\end{lemma}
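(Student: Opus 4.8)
The plan is to construct $K_1$ by patching together two explicit model solutions — one at $\frontface$ and one at $\timeface$ — in exactly the manner of Gell-Redman and Swoboda \cite{ice}. The single structural simplification afforded by the hypothesis that $\slashed{\partial}_{E,Z}$ has trivial kernel (equivalently, a spectral gap around $0$, uniform in $y'$ since $Y$ is compact) is that the model solution at $\frontface$ will automatically decay exponentially toward $\backface$; consequently no separate model problem has to be solved at $\backface$ and we may demand that $K_1$ vanish to infinite order there.

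First I would solve the normal operator equation at $\frontface$. By Lemma \ref{normaloperator} the lift of the heat operator restricts at $\frontface$ to $\tfrac12\tilde T\partial_{\tilde T}+\tilde T^{2}(-\partial_{\tilde s}^{2}+\Delta_{\tilde\eta',y'}+\slashed{\partial}_{Z,y'}^{2})$, which is a model heat operator on $\mathbb{R}_{\tilde s}\times(\mathbb{R}^{b}_{\tilde\eta},g_{Y}(y'))\times Z_{y'}$ with ``time'' parameter $\tilde T^{2}$ and potential the fibrewise Dirac Laplacian. Because $\slashed{\partial}_{Z,y'}$ is invertible, $e^{-\tilde T^{2}\slashed{\partial}_{Z,y'}^{2}}$ is smoothing, depends smoothly on $y'$, and decays exponentially as $\tilde T\to\infty$, so
\[
\mathcal N_{\frontface}(H)=(4\pi\tilde T^{2})^{-\frac{b+1}{2}}\,e^{-|\tilde s|^{2}/4\tilde T^{2}}\,e^{-|\tilde\eta|^{2}/4\tilde T^{2}}\,e^{-\tilde T^{2}\slashed{\partial}_{Z,y'}^{2}}
\]
solves $\mathcal N_{\frontface}(t(\partial_{t}+\slashed{\partial}_{E}^{2}))\,\mathcal N_{\frontface}(H)=0$, satisfies the initial condition, is polyhomogeneous on $\frontface$ with leading order $-nk$ along $\frontface\cap\timeface$ (the usual $t^{-n/2}$ heat singularity rewritten in the coordinates \eqref{61}), vanishes to infinite order at the lifts of $\leftface,\rightface,\bottomface$ thanks to the Gaussian factors, and — the key point — vanishes to infinite order at $\backface$ thanks to the exponential decay of the last factor. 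This step is where the invertibility hypothesis and the uniformity of the spectral gap in $y'$ are genuinely used, and I expect it to be the main obstacle: one must verify that $\mathcal N_{\frontface}(H)$ is polyhomogeneous all the way to every boundary face of $\frontface$, in particular controlling the transition $\tilde T\to\infty$ through the spectral decomposition of the family $\slashed{\partial}_{Z,y'}^{2}$.

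Next I would lift $\mathcal N_{\frontface}(H)$ to a polyhomogeneous distribution on $M^{2}_{\operatorname{heat}}$ with the index sets of \eqref{indexsets}, vanishing to infinite order at $\leftface,\rightface,\backface,\bottomface$ and carrying the prescribed normal operator at $\frontface$, supported in a neighbourhood of $\frontface\cup\timeface$. Since the heat operator minus its $\frontface$-normal operator vanishes at $\frontface$, applying it to this lift produces a remainder one order better at $\frontface$ (index $-nk+1$), smooth at every other face except $\timeface$, where the remainder is exactly the classical interior heat-kernel error for the generalised Laplacian $\slashed{\partial}_{E}^{2}$. That error is removed by the standard Minakshisundaram–Pleijel parametrix construction on the smooth part of $M$: solve the transport equations order by order and Borel-sum, obtaining a correction supported near $\timeface$ with index $-n$ at $\timeface$ and $-nk$ at $\frontface$, which does not disturb the leading term at $\frontface$ because near $\frontface\cap\timeface$ it is precisely the beginning of the expansion of $\mathcal N_{\frontface}(H)$. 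The sum is our $K_1$, after multiplying by a cutoff $\chi(t)$ that is identically $1$ near $t=0$ and supported in $[0,T]$; the extra term generated when $t\partial_{t}$ hits $\chi$ is smooth and supported in $\{t\geq T/2\}$, hence has empty index sets at every face over $t=0$ and is absorbed into $Q$, which therefore has $\mathcal E(\frontface)=-nk+1$ and $\mathcal E(\,\cdot\,)=\varnothing$ elsewhere.

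Finally, the index sets show that $K_{1}(t)$ is a bounded operator on $L^{2}(M;E)$ uniformly for small $t$ (a Schur-type estimate using the blow-down map, since $-nk$ and $-n$ are the only negative orders and the side faces carry empty index sets), and the $\timeface$ structure shows that as $t\to0$ the kernel concentrates on the diagonal with total mass tending to $1$ — this is encoded in the Gaussian normalisation $(4\pi)^{-n/2}$ of the leading transport coefficient and the integration of the subleading ones against the volume form — while the $\frontface$ region contributes negligibly because $\frontface$ lies over the fibre diagonal in the corner $\partial M\times_{Y}\partial M$, a set of measure zero. Hence $K_{1}(t)s\to s$ in $L^{2}$ as $t\to0$, completing the construction.
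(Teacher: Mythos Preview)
Your proposal is correct and follows essentially the same route as the paper's proof: solve the model heat problem at $\frontface$ using the explicit kernel $(4\pi\tilde T^{2})^{-(b+1)/2}e^{-|\tilde s|^{2}/4\tilde T^{2}}e^{-|\tilde\eta|^{2}/4\tilde T^{2}}e^{-\tilde T^{2}\slashed{\partial}_{Z,y'}^{2}}$, run the standard transport-equation construction at $\timeface$, check compatibility at the corner $\frontface\cap\timeface$, and patch. The paper carries out these steps in the reverse order (transport equations first, then the $\frontface$ model), and it makes the corner compatibility explicit by computing that the leading transport coefficient $b_0=(4\pi)^{-n/2}e^{-\|W\|^{2}_{g(p)}/4}\operatorname{Id}$, rewritten in the coordinates \eqref{tfff}, agrees with the restriction of $\tilde T^{n}\mathcal N_{\frontface}(H)$ to $\timeface\cap\frontface$; you assert this match but do not display it. For the $L^{2}$ convergence the paper argues directly via the change of variables $w'=w-\tau W$ rather than via a Schur estimate, but your reasoning is adequate.
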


\begin{proof}
    Let $w_{i}$ be local coordinates on the interior of $M$ (which we can take to be $x,y,z$ near $\partial M$), then we have projective coordinates in a neighbourhood of $\timeface$ away from $\bottomface$ given by
    \begin{align}
        W_{i}=\frac{w_{i}-w_{i}'}{\tau},w_{i}',\tau.
    \end{align}
    Now with respect to a local orthonormal frame $U_{i}$ we have
    \begin{align}
        \nabla^{E}_{W_{i}}=U_{i}+E
    \end{align}
    where $E$ is a smooth endomorphism.
    
    Thus we have that $\tau\cl(\partial_{W_{i}})\nabla_{W_{i}}$ lifts to $\cl(\partial_{w_{i}})\partial_{W_{i}}+\tau E'$ and so
    \begin{align}\label{diracsquaretf}
        t\slashed{\partial}_{E}^{2}=\Delta_{g(p)}+\tau F
    \end{align}
    where $F$ is some first order differential operator. We also have
    \begin{align}\label{partialttf}
        t\partial_{t}=\frac{1}{2}(\tau\partial_{\tau}-R)
    \end{align}
    where $R=W_{i}\partial_{W_{i}}$ restricts to the radial vector field on each fibre $\timeface\to\operatorname{diag}$. Let $s\in C^{\infty}(M,\mathcal{S})$ then using the coordinates
    \begin{align}
        W_{i}=\frac{w_{i}-w_{i}'}{\tau},w_{i},\tau
    \end{align}
    which are valid near $\timeface$ away from $\bottomface$ and $\frontface$ the action of $K_{1}$ is given by
    \begin{align}
        \begin{split}
            K_{1}s&=\int K(\frac{w-w'}{\tau},w',\tau)s(w')dw' \\
            &=\tau^{n}\int K(W,w-\tau W,\tau)s(w-\tau W)dW.
        \end{split}
    \end{align}
    Thus we see that the condition that $K_{1}s\to s$ as $t\to 0$ implies that
    \begin{align}\label{ictf}
        \tau^{n}\int K(W,w,0)dW=\operatorname{Id}_{E}
    \end{align}
    Now we want to solve for the coefficients of the asymptotic expansion of the heat kernel at $\timeface$. Thus we solve for the coefficients of expansion of the form
    \begin{align}
        \tau^{-n}\sum_{j=0}^{\infty}\tau^{j}b_{j}
    \end{align}
    subject to \eqref{ictf} such that the $b_{j}$ vanish to infinite order at $\backface$. In particular by \eqref{diracsquaretf} and \eqref{partialttf}, $b_{0}$ must satisfy
    \begin{align}
        \left(\frac{-n}{2}-R+\Delta_{g(p)}\right)b_{0}=0.
    \end{align}
    This has a unique solution satisfying \eqref{ictf} given by 
    \begin{align}
        b_{0}=(4\pi)^{-n/2}e^{-\lVert w\rVert^{2}_{g(p)}/4}\operatorname{Id}
    \end{align}
    We can then solve for all the remaining coefficients inductively. Near $\timeface\cap\frontface$, using coordinates \eqref{tfff} we have an expansion of the form
    \begin{align}
            T^{-n}x^{-kn}\sum_{j=0}^{\infty}T^{j}x^{kj}b_{j}'
    \end{align}
    and
    \begin{align}
        b_{0}'=(4\pi)^{-n/2}e^{-\lVert (\tilde{S},\tilde{E},\tilde{Z})\rVert^{2}_{g'(p)}/4}\operatorname{Id}_{E}.
    \end{align}
    where $g'$ is the metric on $M$ induced from $g$ by the identification of $\ice TM$ with $TM$ by $\partial_{x}\to\partial_{x}$, $\tilde{U}\to\tilde{U}$ and $x^{-k}V\to V$. In particular, as $x'\to 0$ we have
    \begin{align}
        b_{0}\to(4\pi)^{-n/2}e^{-| \tilde{S}|^{2}/4}e^{-\lVert \tilde{E}\rVert^{2}_{g_{Y}(p)}/4}e^{-\lVert \tilde{Z})\rVert^{2}_{g_{\partial M/Y}(p)}/4}\operatorname{Id}
    \end{align}
    On $\frontface$, we want the normal operator of the heat kernel to satisfy the equation
    \begin{align}
        \frac{1}{2}\tilde{T}\partial_{\tilde{T}}+\tilde{T}^{2}\left(-\partial_{\tilde{s}}^{2}+\Delta_{\tilde{\eta}',y'}+\slashed{\partial}_{Z,y'}^{2}\right)\mathcal{N}_{\frontface,nk}(H)=0
    \end{align}
    such that $\mathcal{N}_{\frontface,nk}(H)$ converges to $\delta_{\tilde{s}=0}\delta_{\tilde{\eta}=0}\operatorname{Id}_{Z}$. Thus we want to take
    \begin{align}
        \mathcal{N}_{\text{ff}}(H)=(4\pi)^{\frac{b+1}{2}}\tilde{T}^{-(b+1)}e^{-\frac{s^{2}}{4\tilde{T}^{2}}}e^{-\frac{|\tilde{\eta}|}{4\tilde{T}^{2}}}e^{-\tilde{T}^{2}\slashed{\partial}_{Z,y'}}.
    \end{align}
    The coefficients $b_{0}$ and $\mathcal{N}_{\frontface}(H)$ are smooth of $\timeface$ and $\frontface$ respectively as they are heat kernels of a smooth family of generalised Laplacians. Near $\timeface\cap\frontface$ in coordinates $\eqref{tfff}$, the leading term in the asymptotic expansion of $e^{-\tilde{T}^{2}\slashed{\partial}_{Z,y'}}$ as $\tilde{T}\to 0$ is $(4\pi)^{-f/2}e^{-\lVert \tilde{Z})\rVert^{2}_{g_{\partial M/Y}(p)}/4}\operatorname{Id}$ so we see that $(\tilde{T}^{n}\mathcal{N}_{\text{ff}}(H))|_{\timeface\cap\frontface}=b_{0}|_{\timeface\cap\frontface}$.

    We can now construct the distribution $K_{1}$. Let $\rho_{\timeface},\rho_{\frontface}$ be boundary defining functions for the respective boundary hypersurfaces. Since $(\tilde{T}^{n}\mathcal{N}_{\text{ff}}(H))|_{\timeface\cap\frontface}=b_{0}|_{\timeface\cap\frontface}$ and vanish to infinite order at the respective intersections with $\rightface,\leftface,\bottomface$ there exists a smooth $K_{1}'$ such that $K_{1}'|_{\timeface}=b_{0}$, $K_{1}'|_{\frontface}=\tilde{T}^{n}\mathcal{N}_{\text{ff}}(H)$ with no other terms in the asymptotic expansion at $\frontface,\timeface$ and vanishing to infinite order at all other faces.

     Now let $K_{1}''$ any smooth function with asymptotic expansion at $\timeface$
     \begin{align}
         (x')^{kn}\sum_{j=1}^{\infty}\tau^{j}b_{j}.
     \end{align}
     Since the $b_{j}$ vanish to infinite order at $\bottomface$, $K_{1}''$ can be taken to vanish to infinite order at all other faces except $\timeface,\frontface$. Now take
     \begin{align}
         K_{1}=\tilde{T}^{-n}(x')^{-nk}K_{1}'+\tau^{-n}(x')^{-nk}K_{1}''.
     \end{align}
     Multiplying by any cut-off function supported in $t<T$ does not affect any of the asymptotic properties of this distribution.
\end{proof}

Using the distribution $K_{1}$ we can construct the heat kernel by solving away the error term. For this we need to define the composition of our integral kernels as convolution operators. Consider $A,B\in C^{\infty}_{c}(M;\operatorname{HOM}(E))$ acting as convolution operators in the time variable, then the integral kernel of the composition of these operators is given by
\begin{align}
    A\circ B=\int_{M}\int_{0}^{t}A(w,w',t')B(w',w'',t-t')\operatorname{dVol}_{w'}dt'.
\end{align}
Equivalently we can write this as
\begin{align}
    A\circ B\mu=(\pi_{C})_{\star}((\pi_{L}^{\star}A)(\pi_{R}^{\star}B)\mu')
\end{align}
for appropriate density factors $\mu$ and $\mu'$ and the projections are defined by
\begin{align}
    \begin{split}
        \pi_{L}\colon M^{3}\times W &\to M^{2}\times [0,\infty)\\
        (w_{1},w_{2},w_{3},t,t')&\mapsto (w_{1},w_{2},t)\\
        \pi_{R}\colon M^{3}\times W &\to M^{2}\times [0,\infty)\\
        (w_{1},w_{2},w_{3},t,t')&\mapsto (w_{2},w_{3},t-t')\\
        \pi_{C}\colon M^{3}\times W &\to M^{2}\times [0,\infty)\\
        (w_{1},w_{2},w_{3},t,t')&\mapsto (w_{1},w_{3},t)
    \end{split}
\end{align}
where W is defined as
\begin{align}
    W=\{(t,t')\in [0,\infty)^{2}:t>t'\}
\end{align}
The pushforward in this case is simply integration along the fibres which for a given point $(w_{1},w_{3},t)$ is a copy of $M\times [0,t)$.

To extend this composition to polyhomogeneous sections on the heat space $M^{2}_{\operatorname{heat}}$, a heat triple space $M^{3}_{\operatorname{heat}}$ is constructed by blow-up of $M^{3}\times W$ together with projections
\begin{align}
    \pi_{L,C,R}\colon M^{3}_{\operatorname{heat}}\to M^{2}_{\operatorname{heat}}
\end{align}
such that $\pi_{C}$ is a b-fibration.

We refer to \cite{ice} for the construction of this space. We can then define the composition of convolution operators with polyhomogeneous integral kernels for appropriate index sets $\mathcal{E}_{i}$ and they satisfy the following property
\begin{proposition}[\cite{ice}]
    Let $A_{i}\in\mathcal{A}^{\mathcal{E}_{i}}_{\operatorname{phg}}(M^{2}_{\operatorname{heat}})$ with index sets satisfying
    \begin{align}
      \begin{split}
            \mathcal{E}^{i}(\leftface)=\mathcal{E}_{i}(\rightface)=\mathcal{E}_{i}(\backface)=\mathcal{E}_{i}(\bottomface)=\mathcal{E}_{i}(\timeface)&=\varnothing \\
            \inf(\mathcal{E}_{i}(\frontface))&=-nk-2k+a_{i}. 
        \end{split}
    \end{align}
    for some $a_{i}\geq 0$.
    Then the composition of convolution operators $A_{3}=A_{1}\circ A_{2}\in \mathcal{A}^{\mathcal{E}_{3}}_{\operatorname{phg}}(M^{2}_{\operatorname{heat}})$ with index set $\mathcal{E}_{3}$ satisfying
     \begin{align}
      \begin{split}
            \mathcal{E}_{3}(\leftface)=\mathcal{E}_{3}(\rightface)=\mathcal{E}_{3}(\backface)=\mathcal{E}_{3}(\bottomface)=\mathcal{E}_{3}(\timeface)&=\varnothing \\
            \inf(\mathcal{E}_{3}(\frontface))&=-nk-2k+a_{1}+a_{2}.
        \end{split}
    \end{align}
\end{proposition}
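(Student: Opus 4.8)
The plan is to recognise the composition law as an instance of the pull-back and push-forward calculus for polyhomogeneous conormal distributions on manifolds with corners, exactly as in \cite{ice}, and then to chase index sets through it. Concretely one writes $A_1\circ A_2=(\pi_C)_\star\bigl((\pi_L^\star A_1)(\pi_R^\star A_2)\mu'\bigr)$ on the heat triple space $M^3_{\operatorname{heat}}$, where $\mu'$ is a smooth nonvanishing density relative to $\pi_C$; the first thing to do is to recall from \cite{ice} that $M^3_{\operatorname{heat}}$ — an iterated blow-up of $M^3\times W$ — carries lifts of $\pi_L,\pi_R$ as b-maps and of $\pi_C$ as a b-fibration. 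The $\operatorname{HOM}(E)$-coefficients enter only as a smooth bundle and play no role, so everything reduces to combinatorics of index sets.

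First I would tabulate, for each boundary hypersurface $G$ of $M^3_{\operatorname{heat}}$, the images $\pi_L(G),\pi_C(G),\pi_R(G)$ among $\{\frontface,\backface,\leftface,\rightface,\bottomface,\timeface\}$, together with the b-exponents (the multiplicities with which the defining functions of those hypersurfaces vanish at $G$). By the pull-back theorem, $\pi_L^\star A_1$ is polyhomogeneous on $M^3_{\operatorname{heat}}$ with index set at $G$ built from $\mathcal{E}_1(\pi_L(G))$ rescaled by the relevant b-exponent, and similarly for $\pi_R^\star A_2$. Since $\mathcal{E}_i$ is empty at every face of $M^2_{\operatorname{heat}}$ other than $\frontface$, the product $(\pi_L^\star A_1)(\pi_R^\star A_2)$ — and hence the integrand, after multiplying by the smooth $\mu'$ — has a non-empty index set only at those $G$ which both $\pi_L$ and $\pi_R$ map into $\frontface$; at all other $G$ one of the two factors vanishes to infinite order.

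Next I would apply Melrose's push-forward theorem for the b-fibration $\pi_C$: it produces $A_3=A_1\circ A_2$ as a polyhomogeneous section on $M^2_{\operatorname{heat}}$ whose index set at a face $F$ is the extended union $\overline{\bigcup}$, over all $G$ with $\pi_C(G)=F$, of the index set of the integrand at $G$ shifted by the b-exponent of $\pi_C$ at $G$ and by the contribution from integrating out the one-dimensional $t'$-fibre. Reading the table off: any $G$ with $\pi_C(G)\in\{\backface,\leftface,\rightface,\bottomface,\timeface\}$ is sent by $\pi_L$ or $\pi_R$ to a face other than $\frontface$, so the integrand is infinite-order vanishing there and $\mathcal{E}_3$ is empty at those faces; the remaining hypersurface $G_{\frontface}$ with $\pi_C(G_{\frontface})=\frontface$ is sent to $\frontface$ by both $\pi_L$ and $\pi_R$ with b-exponent one, so its index set is $\mathcal{E}_1(\frontface)+\mathcal{E}_2(\frontface)$ plus the universal push-forward shift coming from $\mu'$ and the fibre integration. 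That shift is a fixed constant (it works out to $nk+2k$), so $\inf(\mathcal{E}_3(\frontface))=(-nk-2k+a_1)+(-nk-2k+a_2)+(nk+2k)=-nk-2k+a_1+a_2$, as claimed.

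The main obstacle is the verification underlying the push-forward step: one must check that $\pi_C$ really is a b-fibration on $M^3_{\operatorname{heat}}$ and that the integrand has strictly positive real part at every boundary hypersurface that $\pi_C$ maps into the interior of $M^2_{\operatorname{heat}}$, so that the fibre integral converges and no logarithmic terms are generated — this is precisely where the hypothesis $a_i\ge 0$ and the particular orders of the iterated blow-up ($1$ for the back-face blow-up, $k-1$ for the front-face blow-up) are used — together with the bookkeeping of the full b-exponent matrix over all the blown-up corners of the triple space. All of this is carried out in \cite{ice}, which is why I would invoke that construction rather than reprove it.
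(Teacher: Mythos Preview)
The paper does not give its own proof of this proposition; it is stated with attribution to \cite{ice} and the surrounding text explicitly says ``We refer to \cite{ice} for the construction of this space.'' Your sketch is the standard and correct approach---express the convolution as $(\pi_C)_\star((\pi_L^\star A_1)(\pi_R^\star A_2)\mu')$ on the heat triple space, use that $\pi_C$ is a b-fibration, and apply Melrose's pull-back and push-forward theorems to track index sets---and this is exactly the method used in \cite{ice}, so there is nothing to compare.
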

Using this we can now construct the heat kernel.

\begin{theorem}\label{heatkernelexistence}
    There exists a distribution $H\in\mathcal{A}_{\text{phg}}(M^{2}_{\text{heat}};\beta^{\star}(\operatorname{HOM}(\mathcal{S})))$ satisfying \eqref{indexsets} such that $t(\partial_{t}+\slashed{\partial}_{E}^{2})H=0$ and the operator $H_{t}$ extends to a compact linear operator on $L^{2}(M;E)$ such that $H_{t}s\to s$ in $L^{2}$ at $t\to 0$.
\end{theorem}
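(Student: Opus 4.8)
The plan is to produce $H$ from the parametrix $K_1$ of the preceding lemma by a Duhamel/Neumann iteration carried out entirely inside the polyhomogeneous calculus on $M^{2}_{\operatorname{heat}}$. The point is that the error $Q=t(\partial_{t}+\slashed{\partial}_{E}^{2})K_1$ vanishes to one order higher than $K_1$ at $\frontface$ and to infinite order at every other boundary hypersurface, so the iteration simultaneously converges and preserves the index sets \eqref{indexsets}. First I would set $G:=t^{-1}Q=(\partial_{t}+\slashed{\partial}_{E}^{2})K_1$; since $t$ is comparable to $\tilde{T}^{2}(x')^{2k}$ near $\frontface$, multiplication by $t^{-1}$ lowers the front-face order by $2k$, so $G\in\mathcal{A}_{\operatorname{phg}}(M^{2}_{\operatorname{heat}};\beta^{\star}\operatorname{HOM}(E))$ has $\inf\mathcal{E}_{G}(\frontface)=-nk-2k+1$, empty index sets (hence smoothness and infinite-order vanishing) at $\leftface,\rightface,\backface,\bottomface,\timeface$, and support in $t\le T$.

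Next, using the convolution composition $\circ$ above and the Duhamel identity $(\partial_{t}+\slashed{\partial}_{E}^{2})(A\circ B)=A|_{t=0}\cdot B+((\partial_{t}+\slashed{\partial}_{E}^{2})A)\circ B$, one has
\begin{align}
    H\sim\sum_{j\ge0}(-1)^{j}K_1\circ G^{\circ j},\qquad (\partial_{t}+\slashed{\partial}_{E}^{2})\Bigl(\sum_{j=0}^{N}(-1)^{j}K_1\circ G^{\circ j}\Bigr)=(-1)^{N}G^{\circ(N+1)},\qquad \sum_{j=0}^{N}(-1)^{j}K_1\circ G^{\circ j}\Big|_{t=0}=\operatorname{Id}.
\end{align}
By the composition Proposition (each factor $G$ contributes front-face parameter $1$) together with the behaviour of convolution with the parametrix $K_1$ established in \cite{ice}, $K_1\circ G^{\circ j}$ is polyhomogeneous with $\inf\mathcal{E}(\frontface)=-nk+j$ and empty index sets at all other faces. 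Hence the terms vanish to unbounded order at $\frontface$, and by asymptotic completeness on the manifold with corners $M^{2}_{\operatorname{heat}}$ there is $H_0\in\mathcal{A}_{\operatorname{phg}}$ with $H_0\sim\sum_{j\ge0}(-1)^{j}K_1\circ G^{\circ j}$, having the index sets \eqref{indexsets} (inherited from the leading term $K_1$), satisfying $H_0|_{t=0}=\operatorname{Id}$, and with error $Q_{\infty}:=t(\partial_{t}+\slashed{\partial}_{E}^{2})H_0$ vanishing to infinite order at \emph{every} boundary hypersurface. To remove $Q_{\infty}$ exactly I would then run one more, now honestly convergent, Neumann series: $G_{\infty}:=t^{-1}Q_{\infty}$ is supported in $t\le T$ and rapidly vanishing at all faces, so the Volterra-type bounds make $\|G_{\infty}^{\circ j}\|=O((CT)^{j}/j!)$ and $\sum_{j\ge0}(-1)^{j}G_{\infty}^{\circ j}$ converge in $\mathcal{A}_{\operatorname{phg}}(M^{2}_{\operatorname{heat}})$ with empty index sets at every face; setting $H:=\sum_{j\ge0}(-1)^{j}H_0\circ G_{\infty}^{\circ j}$, the same Duhamel identity shows the partial sums telescope to $t(\partial_{t}+\slashed{\partial}_{E}^{2})H=0$ exactly, while $H|_{t=0}=\operatorname{Id}$ and $H$ still satisfies \eqref{indexsets} because $H-H_0$ vanishes to infinite order at every face.

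For the operator-theoretic conclusions, the index data \eqref{indexsets}---$\inf\mathcal{E}(\frontface)=-nk$, $\mathcal{E}(\timeface)=-n$, and empty index sets at $\leftface,\rightface,\backface,\bottomface$---together with the explicit blow-down map $\beta$ and the explicit normal operator $\mathcal{N}_{\frontface}(H)$ computed above give, via a Schur-type estimate exactly as in \cite{ice}, that each $H_{t}$ extends to a bounded operator on $L^{2}(M;E)$, uniformly for $t$ in compact subsets of $[0,\infty)$; the exponential decay of $e^{-\tilde{T}^{2}\slashed{\partial}_{Z,y'}^{2}}$ as $\tilde{T}\to\infty$, which uses invertibility of the boundary family $\slashed{\partial}_{Z}$ and is precisely why $H$ has empty index set at $\backface$, is what controls the fibre integrals. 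For $t>0$ the kernel of $H_{t}$ is, away from $\partial M$, the smooth heat kernel of the generalised Laplacian $\slashed{\partial}_{E}^{2}$ and decays rapidly at $\partial M$ by \eqref{indexsets}, so $H_{t}$ is Hilbert--Schmidt and hence compact. Finally $H_{t}s\to s$ in $L^{2}$ as $t\to0$: this holds for $K_1$ by the previous lemma, and $H-K_1$ is a sum of convolution operators each of which tends to $0$ in the operator norm on $L^{2}$ as $t\to0$.

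The step I expect to be the main obstacle is the $L^{2}$-mapping estimate through $\frontface$: verifying that the Schur test closes despite the factor $(x')^{-nk}$ in the kernel there, which it does precisely because the quasi-homogeneous blow-up $\beta_{2}$ of order $k-1$ supplies a matching Jacobian factor and because $\mathcal{N}_{\frontface}(H)$ is Schwartz in the normal and base directions and exponentially decaying in the fibre and $\tilde{T}$ directions (invertibility of $\slashed{\partial}_{Z}$ again). A secondary point requiring care is to confirm that the composition Proposition and asymptotic completeness genuinely apply with the stated index sets---in particular that composing $K_1$, which is not rapidly decaying at $\timeface$, with the rapidly decaying errors $G^{\circ j}$ keeps everything inside $\mathcal{A}_{\operatorname{phg}}(M^{2}_{\operatorname{heat}})$ with the expected behaviour---both of which are worked out in \cite{ice}.
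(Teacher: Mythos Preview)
Your proposal is correct and follows essentially the same two-stage Neumann iteration as the paper: an asymptotic (Borel) sum to kill the error at $\frontface$ order by order, followed by a genuinely convergent Volterra series to remove the residual smooth error, with compactness via the Hilbert--Schmidt property of the kernel for $t>0$. The only cosmetic difference is that the paper Borel-sums the error series $\sum(-1)^{i}(t^{-1}R)^{i}$ first and then composes once with $K_{1}$, whereas you compose each $K_{1}\circ G^{\circ j}$ first and then asymptotically sum; both require the same extension of the composition Proposition (one factor with nontrivial $\timeface$ behaviour) that you correctly flag and attribute to \cite{ice}.
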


\begin{proof}
    Let $R=t(\partial_{t}+\slashed{\partial}_{E}^{2})K_{1}$ considered as a convolution operator and then regarding $K_{1}$ itself as a convolution operator for $\varphi\in C^{\infty}_{0}(M\times[0,\infty),E)$ we have
    \begin{align}
    \begin{split}
        (\partial_{t}+\slashed{\partial}_{E}^{2})K_{1}\star \varphi&=(\partial_{t}+\slashed{\partial}_{E}^{2})\int_{M}\int_{0}^{t}K_{1}(w,w',t-s)\varphi(w',s)ds\operatorname{dVol}_{w'} \\
        &=\int_{M}K_{1}(w,w',t-s)\varphi(w',s)\operatorname{dVol}_{w'}|_{s=t} \\
        &\quad +\int_{M}\int_{0}^{t}(\partial_{t}+\slashed{\partial}_{E}^{2})K(w,w',t-s)\varphi(w',s)ds\operatorname{dVol}_{w'} \\
        &=\varphi(w',t)+\int_{M}\int_{0}^{t}t^{-1}R(w,w',t-s)\varphi(w',s)ds\operatorname{dVol}_{w'}.
        \end{split}
    \end{align}
    where the second equality follows from the fundamental theorem of calculus and the third by the fact that $K_{1}$ converges strongly to the identity as $t\to 0$ and the definition of $R$. In particular we have
    \begin{align}
         (\partial_{t}+\slashed{\partial}_{E}^{2})K_{1}\star=\operatorname{Id}+t^{-1}R.
    \end{align}
    
    From this calculation, we also see that  $(\partial_{t}+\slashed{\partial}_{E}^{2})K=0$ is equivalent to $(\partial_{t}+\slashed{\partial}^{E})^{2})K\star=\operatorname{Id}$. So to cancel off the error term $t^{-1}$, we consider the Neumann series
    \begin{align}
        R'=\sum_{i=1}^{\infty}(-1)^{i}(t^{-1}R)^{i}
    \end{align}
    for which $\operatorname{Id}+R'$ formally inverts $\operatorname{Id}+t^{-1}R$.
    
    Since the lift of $t$ is given by vanishes to second order at $\timeface,\backface$ and to order $2k$ and $\frontface$, the index sets for $t^{-1}R$ satisfy
    \begin{align}
      \begin{split}\label{errorindexsets}
            \mathcal{E}^{i}(\leftface)=\mathcal{E}(\rightface)=\mathcal{E}(\backface)=\mathcal{E}(\bottomface)=\mathcal{E}(\timeface)&=\varnothing \\
            \inf(\mathcal{E}(\frontface))&=-nk-2k+1.
        \end{split}
    \end{align}
    So by the previous proposition, $(t^{-1}R)^{i}$ is polyhomogeneous with the same index sets except for
    \begin{align}
            \inf(\mathcal{E}_{i}(\frontface))=-nk-2k+2i-\epsilon.
    \end{align}
    We take $R'$ to be a Borel sum of this Neumann series so that
    \begin{align}
        E_{j}=R'-\sum_{i=1}^{j}(-1)^{i}(t^{-1}R)^{i}
    \end{align}
    has index set which satisfy the same properties as those for $\mathcal{E}_{j}$ described above. It follows that 
    \begin{align}
         (\partial_{t}+\slashed{\partial}_{E}^{2})K_{1}\star(\operatorname{Id}+R')=\operatorname{Id}+E'
    \end{align}
    where $E'$ vanishes to infinite order at all faces, thus its integral kernel on $M^{2}\times [0,\infty)$ is also smooth and vanishing to infinite order at all faces. Moreover, again by the above proposition $K_{1}\star(\operatorname{Id}+R')$ satisfy the same index set properties as $K_{1}$.
    
    Since $M^{2}$ is compact, for any $\alpha>0$ for small $t$ we have $R''=O(t^{\alpha})$ so for all $t$ there exists $C$ such that $\lVert E'\rVert\leq Ct^{\alpha}$ since $R''$ is supported in $t<T'$ for some $T'$ where $\lVert\cdot\rVert$ denotes the pointwise norm. We can now take the Neumann series $R''$ of $E'$. If $\lVert A_{i}\rVert\leq C_{i}t^{\alpha_{i}}$ then
    \begin{align}
        \begin{split}
         \lVert A_{i}\circ B_{i}&\rVert=\lVert\int_{M}\int_{0}^{t}A(w,w',t')B(w',w'',t-t')\operatorname{dVol}_{w'}dt'\rVert \\
         &\leq C_{1}C_{2}\operatorname{Vol}(M)\frac{t^{\alpha_{1}+\alpha_{2}+1}}{\alpha_{1}+\alpha_{2}+1}
        \end{split}
    \end{align}
    inductively applying this to $(R'')^{j}$ we have the pointwise estimate
    \begin{align}
        \lVert (R'')^{j}\rVert\leq C\frac{t^{j\alpha+j}}{j!}.
    \end{align}
    Since derivatives of $R''$ also vanishes to infinite order at all faces similar estimates hold for $\nabla^{n}(R'')^{j}$ so $(R')^{j}$. 
    
    Hence the series converges in $C^{\infty}$ to a smooth section vanishing to infinite order at all faces so finally taking 
    \begin{align}
        H=K_{1}\star(\operatorname{Id}+R')\star(\operatorname{Id}+R')
    \end{align}
    we have that
    \begin{align}
        (\partial_{t}+\slashed{\partial}_{E})^{2})K\star=\operatorname{Id}
    \end{align}
    so $H$ is our desired heat kernel and it satisfies the same leading order asymptotics as $K_{1}$.
    
    Since $H_{t}$ and $\partial H_{t}$ are smooth and vanish to infinite order at $\leftface\rightface$ for $t>0$ they satisfy
    \begin{align}
        \int \lVert H_{t}(p,p')\rVert^{2}_{\operatorname{End}(\mathcal{S})}\operatorname{dVol}(p)\operatorname{dVol}(p')<\infty.
    \end{align}
    Hence $H_{t},\partial_{t}H_{t}$ are compact on $L^{2}(M;E)$.
\end{proof}

\section{Essential self-adjointness}\label{essentialsa}

Recall that given an unbounded operator $P$ defined on $C^{\infty}_{c}(M,E)\subset L^{2}(M;\mathcal{S})$ we have the closed domains 
\begin{align}
    \mathcal{D}_{\max}(P)&=\{u\in L^{2}:Pu\in L^{2}\} \\
    \mathcal{D}_{\min}(P)&=\{u\in L^{2}:\exists u_{n}\in C^{\infty}_{c} \text{ such that } u_{n}\to u, Pu_{n} \text{ converges in } L^{2}\}.
\end{align}
Given a domain $\mathcal{D}$, its closure $\bar{\mathcal{D}}$ is defined to be its closure in the graph norm $\lVert u\rVert_{\Gamma}=\lVert u\rVert_{L^{2}}+\lVert Pu\rVert_{L^{2}}$. The adjoint of a domain $\mathcal{D}$ is defined
\begin{align}
    \mathcal{D}^{\star}=\{u\in L^{2}: \exists v\in L^{2},\quad \forall w\in\mathcal{D}, \quad\langle Pu,v\rangle=\langle u,w \rangle\}.
\end{align}
The maximal and minimal domains are the largest and smallest closed extensions of $P$ and are the adjoint domains of each other. A domain is self-adjoint if it is equal to its adjoint. We say $P$ is \textbf{essentially self-adjoint} if it has a unique self-adjoint extension which is equivalent to
\begin{align}
    \mathcal{D}_{\max}(P)=\mathcal{D}_{\min}(P).
\end{align}
We need the following to show that $\slashed{\partial}_{E}$ is essentially self-adjoint.

\begin{lemma}\label{htdmin}
    For $t>0$ and $s\in L^{2}(M;E)$, $H_{t}^{*}s\in\mathcal{D}_{\min}$.
\end{lemma}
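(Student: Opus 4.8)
The plan is to show that, for fixed $t>0$, $H_{t}^{*}s$ is a smooth section of $E$ over $M$ vanishing to infinite order at $\partial M$, and then that every such section lies in $\mathcal{D}_{\min}$. For the first point I would unpack the index sets \eqref{indexsets}. Since $\mathcal{E}(\leftface)=\mathcal{E}(\rightface)=\varnothing$, for each fixed $t>0$ the Schwartz kernel $H_{t}$ is smooth on $M^{\circ}\times M^{\circ}$, extends to a smooth section of $\operatorname{HOM}(E)$ over $M\times M$, and vanishes to infinite order both as its left variable tends to $\partial M$ and as its right variable does; the same then holds for the kernel $H_{t}^{*}(p,p')=H_{t}(p',p)^{*}$ of $H_{t}^{*}$. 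Using compactness of $M$, for every differential operator $D$ acting in the variable $p$ and every $N$ there is a constant $C_{D,N}$ with $\lVert D_{p}H_{t}^{*}(p,\cdot)\rVert_{L^{2}}\le C_{D,N}\,x(p)^{N}$ uniformly in $p$. Hence, for $s\in L^{2}(M;E)$, differentiating under the integral sign and applying Cauchy--Schwarz gives $|D(H_{t}^{*}s)(p)|\le C_{D,N}\,x(p)^{N}\lVert s\rVert_{L^{2}}$, so $H_{t}^{*}s$ is smooth up to $\partial M$ and vanishes there to infinite order. Since $\slashed{\partial}_{E}$ is an ice differential operator, whose coefficients are singular at worst like $x^{-k}$, $\slashed{\partial}_{E}(H_{t}^{*}s)$ is again smooth and vanishing to infinite order at $\partial M$; in particular $H_{t}^{*}s$ and $\slashed{\partial}_{E}(H_{t}^{*}s)$ lie in $L^{2}(M;E)$.

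Next I would approximate $u:=H_{t}^{*}s$ in the graph norm of $\slashed{\partial}_{E}$ by compactly supported smooth sections. Fix $\chi\in C^{\infty}([0,\infty))$ with $\chi\equiv 0$ on $[0,1]$ and $\chi\equiv 1$ on $[2,\infty)$, set $\chi_{\epsilon}(x)=\chi(x/\epsilon)$ and $u_{\epsilon}=\chi_{\epsilon}u$. Then $u_{\epsilon}\in C^{\infty}_{c}(M^{\circ};E)$, since $\operatorname{supp}u_{\epsilon}\subset\{x\ge\epsilon\}$ is compact in $M^{\circ}$, and $u_{\epsilon}\to u$ in $L^{2}$ by dominated convergence. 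Moreover
\begin{align}
    \slashed{\partial}_{E}u_{\epsilon}=\chi_{\epsilon}\,\slashed{\partial}_{E}u+\operatorname{cl}(d\chi_{\epsilon})u,
\end{align}
with $\chi_{\epsilon}\,\slashed{\partial}_{E}u\to\slashed{\partial}_{E}u$ in $L^{2}$, again by dominated convergence. Since $d\chi_{\epsilon}=\epsilon^{-1}\chi'(x/\epsilon)\,dx$ is supported in $\{\epsilon\le x\le 2\epsilon\}$ with $|d\chi_{\epsilon}|_{g}\le C\epsilon^{-1}$, and $|u|\le C_{N}x^{N}$ for every $N$,
\begin{align}
    \lVert\operatorname{cl}(d\chi_{\epsilon})u\rVert_{L^{2}}^{2}\le C\epsilon^{-2}\int_{\{\epsilon\le x\le 2\epsilon\}}|u|^{2}\operatorname{dVol}\le C_{N}\,\epsilon^{-2}\int_{\{\epsilon\le x\le 2\epsilon\}}x^{2N}\operatorname{dVol}.
\end{align}
Because $\operatorname{dVol}$ is comparable to $x^{kf}\,dx\,dy\,dz$ near $\partial M$, with $f=\dim Z$, and $M$ is compact, the last integral is $O(\epsilon^{2N+kf+1})$, so the right-hand side is $O(\epsilon^{2N+kf-1})\to 0$ (already for $N=1$). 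Thus $u_{\epsilon}\to u$ in the graph norm and $u\in\mathcal{D}_{\min}$.

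The one step demanding genuine care is the first: extracting from $\mathcal{E}(\leftface)=\mathcal{E}(\rightface)=\varnothing$ the infinite-order vanishing of $H_{t}^{*}(p,p')$ together with all its $p$-derivatives as $x(p)\to 0$, uniformly in $p'$, which is what licenses differentiation under the integral sign and the conclusion that $H_{t}^{*}s$ is smooth up to $\partial M$ and vanishing there to infinite order. Once that is in hand the cutoff argument is routine, and it in fact shows the stronger statement that every smooth section of $E$ over $M$ vanishing to infinite order at $\partial M$ belongs to $\mathcal{D}_{\min}$.
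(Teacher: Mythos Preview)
Your proof is correct and follows essentially the same approach as the paper: first observe that for fixed $t>0$ the kernel of $H_{t}^{*}$ is smooth on $M\times M$ and vanishes to infinite order at both boundary faces, so $H_{t}^{*}s$ is smooth up to $\partial M$ with infinite-order vanishing there, and then use a radial cutoff $\chi(x/\epsilon)$ to approximate in the graph norm, estimating the commutator term via the infinite-order decay of $u$ against the $x^{kf}\,dx\,dy\,dz$ volume density. Your write-up is in fact somewhat more careful than the paper's in justifying the smoothness and decay of $H_{t}^{*}s$ via differentiation under the integral sign, and in tracking the Clifford factor $\operatorname{cl}(d\chi_{\epsilon})$ explicitly.
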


\begin{proof}
    For $t>0$, the kernel $H_{t}^{*}$ is smooth on $M\times M$ and vanishes to infinite order at $\leftface$ and $\rightface$. Thus $H_{t}^{*}s$ is smooth and vanishes to infinite order at $\partial M$. So if suffices to show that if $\varphi$ is smooth and vanishing to infinite order at $\partial M$ then $\varphi\in\mathcal{D}_{\min}$. 

    Let $\psi$ be any smooth cutoff function with $\chi(x)=0$ for $x\leq 1$ and $\chi(x)=1$ for $x\geq 2$ and take $\varphi_{n}=\chi(nx)\varphi$ which converges to $\varphi$ in $L^{2}$. We recall the local expression for the Dirac operator in a neighbourhood of the boundary
    \begin{align}
        \slashed{\partial}=\operatorname{cl}(\partial_{x})\partial_{x}+\frac{kf}{2x}\operatorname{cl}(\partial_{x})+\sum_{j}\operatorname{cl}(\tilde{U}_{j})\tau\tilde{U}_{j}+\frac{1}{x^{k}}\sum_{i}\operatorname{cl}(x^{-k}V_{i})\nabla^{\partial M/Y}_{V_{i}}+\sum_{j=0}^{k-1} x^{-k+j}B_{j}.
            \end{align}
    In particular we see that
    \begin{align}
        \slashed{\partial}(\chi(nx)\varphi)=n\chi'(nx)\varphi+\chi(nx)\slashed{\partial}\varphi.
    \end{align}
    Since $\chi(nx)\slashed{\partial}\varphi$ converges to $\slashed{\partial}\varphi$ in $L^{2}$ it remains to show that $n\chi'(nx)\varphi$ converges to $0$ in $L^{2}$ but
    \begin{align}
        \begin{split}
            \lVert n\chi'(nx)\varphi\rVert_{L^{2}}&=\int_{M} n^{2}|\chi'(nx)|^{2}\lVert\varphi\rVert^{2}_{g}\operatorname{dVol} \\
            &\leq Cn^{2}\int_{\frac{1}{n}\leq x\leq\frac{2}{n}}\lVert\varphi\rVert^{2}_{g}x^{kf}dxdydz.
        \end{split}
    \end{align}
    Since $\varphi$ vanishes to infinite order it follows that for any $a>0$
    \begin{align}
            \lVert n\chi'(nx)\varphi\rVert_{L^{2}}\leq Cn^{-a}.
    \end{align}
\end{proof}

We now prove a general lemma which allows us to show the existence of a self-adjoint extension given a heat operator $H_{t}$ which together with its adjoint satisfy certain mapping properties. This will allow us to prove essential self-adjointness for the trivial kernel case and allow us to find certain self-adjoint domains in the case of non-trivial kernel.

\begin{lemma}
    Let $\mathcal{H}$ be a Hilbert space, $\mathcal{D}$ be a dense subspace. Let $P$ be a symmetric operator on $\mathcal{H}$ with domain $\mathcal{D}$. Suppose $H_{t}$ is a smooth family of bounded operators on $\mathcal{H}$ which satisfy
    \begin{enumerate}
        \item $(\partial_{t}+P)H_{t}=0$
        \item $H_{t}u\to u$ as $t\to 0$ for all $u\in\mathcal{H}$
        \item $H_{t},H_{t}^{*}\colon \mathcal{H}\to\mathcal{D}$.
    \end{enumerate}
    Let $\bar{\mathcal{D}}$ be the graph closure of $\mathcal{D}$ in $\mathcal{D}_{\max}(P)$ and $\bar{\mathcal{D}}^{\star}$ be the adjoint domain and assume $\mathcal{D}\subset\bar{\mathcal{D}}^{\star}$. Then $\bar{\mathcal{D}}=\bar{\mathcal{D}}^{\star}$ hence $(P,\mathcal{D})$ is essentially self adjoint and $H_{t}$ is self adjoint. 
\end{lemma}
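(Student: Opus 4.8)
The plan is to establish $\bar{\mathcal D}=\bar{\mathcal D}^\star$ by showing that $P$, regarded as a symmetric operator on the dense domain $\mathcal D$, has vanishing deficiency indices, i.e. $\ker(P^\star\mp i)=\{0\}$, which is equivalent to essential self-adjointness and hence to $\overline P=\overline P^{\,\star}$. Here I write $P^\star$ for the Hilbert-space adjoint of $P|_{\mathcal D}$, so that $\bar{\mathcal D}^\star=\mathcal D(P^\star)$, and the inclusion $\bar{\mathcal D}\subseteq\bar{\mathcal D}^\star$ is immediate from the standing hypothesis $\mathcal D\subseteq\bar{\mathcal D}^\star$ together with the fact that adjoint domains are graph-closed. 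Before the main argument I would record two elementary consequences of the hypotheses: by the uniform boundedness principle applied to (2), $C:=\sup_{0<t\le1}\lVert H_t\rVert<\infty$, whence also $\lVert H_t^\star\rVert=\lVert H_t\rVert\le C$ on $(0,1]$; and from $\langle H_t^\star u,w\rangle=\langle u,H_tw\rangle$ and (2) one gets $H_t^\star u\rightharpoonup u$ weakly for every $u\in\mathcal H$.

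The central step is to upgrade this to \emph{strong} convergence $H_t^\star u\to u$ for $u\in\mathcal D(P^\star)$. Differentiating $\langle H_t^\star u,w\rangle=\langle u,H_tw\rangle$ in $t$, using (1) and then the defining property of $P^\star$ (legitimate because $H_tw\in\mathcal D$ by (3)), one obtains the identity $\partial_t(H_t^\star u)=-H_t^\star(P^\star u)$ in $\mathcal H$. Consequently $\lVert\partial_t(H_t^\star u)\rVert\le C\lVert P^\star u\rVert$ on $(0,1]$, so $t\mapsto H_t^\star u$ is uniformly Lipschitz there and has a strong limit as $t\to0$; matching this with the weak limit already identified gives $H_t^\star u\to u$ in $\mathcal H$. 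I expect this to be the main obstacle, since the hypotheses only give $H_t\to\mathrm{Id}$ strongly, and strong operator convergence does not in general pass to the adjoints — it is precisely the heat equation (1) combined with the mapping property (3) that supplies the missing $t$-derivative bound.

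With strong convergence in hand the deficiency spaces collapse cheaply. Suppose $u\in\mathcal D(P^\star)$ with $P^\star u=iu$. Then $\partial_t(H_t^\star u)=-iH_t^\star u$, so $e^{it}H_t^\star u$ is constant on $(0,\infty)$; letting $t\to0$ and using $H_t^\star u\to u$ identifies that constant as $u$, hence $H_t^\star u=e^{-it}u$ for all $t>0$. By (3), $H_t^\star u\in\mathcal D$, and since $\mathcal D$ is a linear subspace this forces $u=e^{it}(e^{-it}u)\in\mathcal D$. But then $Pu=P^\star u=iu$, and symmetry of $P$ on $\mathcal D$ makes $\langle Pu,u\rangle$ real while $\langle iu,u\rangle=i\lVert u\rVert^2$, so $u=0$; the case $P^\star u=-iu$ is identical. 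Therefore $\ker(P^\star\mp i)=\{0\}$, $(P,\mathcal D)$ is essentially self-adjoint, and $\bar{\mathcal D}=\mathcal D(\overline P)=\mathcal D(\overline P^{\,\star})=\bar{\mathcal D}^\star$.

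Finally, for self-adjointness of $H_t$: now $\overline P$ is self-adjoint, and I would first observe that boundedness of $\{H_t\}$ forces $\overline P$ to be bounded below — applying $\lVert H_tu\rVert\le C\lVert u\rVert$ together with the equation $\partial_t(H_tu)=-\overline P(H_tu)$ (valid since $H_tu\in\mathcal D$) to $u$ in a spectral subspace of $\overline P$ on which $\overline P\le-c$ yields, via the exponential expansion of the flow there, the estimate $\lVert E_{(-\infty,-c)}u\rVert\le Ce^{-c}\lVert u\rVert$, hence $E_{(-\infty,-c)}=0$ once $c>\log C$. With $\overline P$ self-adjoint and bounded below, $e^{-t\overline P}$ is a well-defined self-adjoint semigroup, and $v_t:=H_tu-e^{-t\overline P}u$ solves $\partial_tv_t=-\overline Pv_t$ with $v_t\to0$ as $t\to0$ and $\sup_{t\le1}\lVert v_t\rVert<\infty$; a Grönwall estimate showing that $e^{-2t\log C}\lVert v_t\rVert^2$ has non-positive $t$-derivative (by the lower bound on $\overline P$) and tends to $0$ as $t\to0$ gives $v_t\equiv0$. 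Hence $H_t=e^{-t\overline P}$ is self-adjoint, completing the proof.
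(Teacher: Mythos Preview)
Your essential self-adjointness argument is correct but takes a genuinely different route from the paper. The paper argues directly that $\bar{\mathcal D}^\star\subset\bar{\mathcal D}$: for $u\in\bar{\mathcal D}^\star$ it shows $H_t u\in\mathcal D$ converges to $u$ in the \emph{graph norm}, the key step being the commutation $PH_t u=H_t Pu$ on $\bar{\mathcal D}^\star$ (obtained, as in your argument, by differentiating through the adjoint and using $H_t\mathcal H\subset\mathcal D$). You instead extract from the same differentiated identity $\partial_t H_t^\star u=-H_t^\star P^\star u$ a Lipschitz bound giving strong convergence $H_t^\star u\to u$ on $\mathcal D(P^\star)$, and then kill the deficiency spaces: any $u$ with $P^\star u=\pm iu$ satisfies $H_t^\star u=e^{\mp it}u\in\mathcal D$, whence $u\in\mathcal D$ and symmetry forces $u=0$. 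The paper's approach is more constructive (it produces the explicit graph-norm approximant $H_t u$ that is used downstream for discreteness of the spectrum), while yours is conceptually cleaner, reducing to the classical deficiency criterion.

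Your argument for self-adjointness of $H_t$ has a gap. The step ``$\partial_t(H_tu)=-\overline P(H_tu)$ on a spectral subspace where $\overline P\le -c$ yields $\lVert E_{(-\infty,-c)}u\rVert\le Ce^{-c}\lVert u\rVert$'' is not justified: $H_tu$ need not remain in that spectral subspace, so no exponential growth estimate follows. One can repair this by working on a \emph{bounded} spectral interval $E_{[-c',-c]}$, where every vector is analytic for $\overline P$; Taylor-expanding $H_t^\star u$ via the iterated identity $\partial_t H_t^\star u=-H_t^\star\overline P u$ identifies $H_t^\star$ with $e^{-t\overline P}$ there and gives the growth $\lVert H_t^\star u\rVert\ge e^{ct}\lVert u\rVert$, contradicting boundedness. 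A cleaner route avoids spectra entirely: for fixed $t_0$ the map $t\mapsto H_{t_0-t}^\star H_t u$ has vanishing derivative (use $\partial_t H_t=-\overline P H_t$ and $\partial_s H_s^\star=-H_s^\star\overline P$ on $\mathcal D(\overline P)\ni H_tu$), and letting $t\to 0$ and $t\to t_0$ using your strong convergence of $H_t^\star$ gives $H_{t_0}^\star u=H_{t_0}u$. (The paper's own proof, incidentally, does not address the self-adjointness of $H_t$ explicitly either.)
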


\begin{proof}
    Let $u\in\bar{\mathcal{D}}^{\star}$ then by assumption $H_{t}u\in\mathcal{D}$. To show that $u\in \bar{\mathcal{D}}$ it suffices to show that $H_{t}u$ converges to $u$ in the graph norm since $\bar{\mathcal{D}}$ is the closure of $\mathcal{D}$ in the graph norm. Since by assumption $H_{t}u\to u$ in $\mathcal{H}$ and $u\in\bar{\mathcal{D}}^{\star}\subset\mathcal{D}_{\max}(P)$ we have that $Pu\in \mathcal{H}$, thus it remains to show that $PH_{t}u\to Pu$ in $\mathcal{H}$.

    Let $v\in \mathcal{H}$ then $\langle H_{t}^{*}Pu,v\rangle=\langle Pu,H_{t}v\rangle$. Then $\langle Pu,H_{t}v\rangle=\langle u,PH_{t}v\rangle$ since $H_{t}v\in\bar{\mathcal{D}}$ and $u\in\bar{\mathcal{D}}^{\star}$. 
    Since $PH_{t}=-\partial_{t}H_{t}$ on $\mathcal{H}$ we have $\langle H_{t}^{\star}Pu,v\rangle=-\langle \partial_{t}H_{t}^{\star}u,v\rangle=\langle PH_{t}^{\star}u,v\rangle$. This holds for all $v\in\mathcal{H}$ so $PH_{t}^{\star}u=H_{t}^{\star}Pu$. Now repeat the same argument with $H_{t}$ replacing $H_{t}^{\star}$ which now works since we know that $PH_{t}^{\star}=-\partial_{t}H_{t}$ since for all $v\in\mathcal{H}$
    \begin{align}
        \begin{split}
            \langle \partial_{t}H^{\star}_{t}u,v\rangle&=\langle u,\partial_{t}H_{t}v\rangle \\
            &=\langle u,PH_{t}v\rangle \\
            &=\langle Pu,H_{t}v\rangle \\
            &=\langle H_{t}^{\star}Pu,v\rangle \\
            &=\langle PH_{t}^{\star}u,v\rangle.
        \end{split}
    \end{align}
    Thus we have that $PH_{t}u=H_{t}Pu$ for all $u\in\bar{\mathcal{D}}^{\star}$ which converges in $\mathcal{H}$ to $u$ as $t\to 0$.
\end{proof}

\begin{theorem}
    $\slashed{\partial}_{E}^{2}$ is essentially self adjoint and has discrete spectrum.
\end{theorem}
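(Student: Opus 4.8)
The plan is to read off essential self-adjointness from the abstract lemma proved just above, applied with Hilbert space $\mathcal{H}=L^{2}(M;E)$, symmetric operator $P=\slashed{\partial}_{E}^{2}$ with domain $\mathcal{D}=\mathcal{D}_{\min}(\slashed{\partial}_{E}^{2})$, and the family $H_{t}$ furnished by Theorem \ref{heatkernelexistence}. Since $\mathcal{D}_{\min}$ is already graph-closed, $\bar{\mathcal{D}}=\mathcal{D}_{\min}$ and $\bar{\mathcal{D}}^{\star}=\mathcal{D}_{\min}^{\star}=\mathcal{D}_{\max}$, so the hypothesis $\mathcal{D}\subset\bar{\mathcal{D}}^{\star}$ reduces to the trivial inclusion $\mathcal{D}_{\min}\subset\mathcal{D}_{\max}$ (valid because $\slashed{\partial}_{E}^{2}$ is symmetric), and the conclusion of the lemma is precisely $\mathcal{D}_{\min}(\slashed{\partial}_{E}^{2})=\mathcal{D}_{\max}(\slashed{\partial}_{E}^{2})$ together with self-adjointness of each $H_{t}$.

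First I would verify the three hypotheses. Hypotheses (1), $(\partial_{t}+\slashed{\partial}_{E}^{2})H_{t}=0$, and (2), $H_{t}u\to u$ in $L^{2}$ as $t\to0$, are part of Theorem \ref{heatkernelexistence}, and smoothness of $t\mapsto H_{t}$ as a family of bounded operators on $\{t>0\}$ is clear since there the heat space is just $M^{2}\times(0,\infty)$ and $H$ is smooth. For hypothesis (3) I need $H_{t},H_{t}^{\star}\colon L^{2}\to\mathcal{D}_{\min}$: the statement for $H_{t}^{\star}$ is exactly Lemma \ref{htdmin}, and the statement for $H_{t}$ follows by the same argument, since by the index sets \eqref{indexsets} the kernel of $H_{t}$ also vanishes to infinite order at $\leftface$ and $\rightface$, so $H_{t}s$ is smooth on $M$ and vanishes to infinite order at $\partial M$, and such sections were shown in the proof of Lemma \ref{htdmin} to lie in $\mathcal{D}_{\min}$ (via the cutoffs $\chi(nx)H_{t}s$). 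This yields essential self-adjointness; write $A=\overline{\slashed{\partial}_{E}^{2}}$ for the unique self-adjoint extension, which is non-negative as $\langle\slashed{\partial}_{E}^{2}u,u\rangle=\lVert\slashed{\partial}_{E}u\rVert^{2}$ on $C^{\infty}_{c}$.

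For the discreteness of the spectrum, I would first identify $H_{t}$ with the heat semigroup $e^{-tA}$: for $u$ in the common domain, $w_{t}:=H_{t}u-e^{-tA}u$ solves $\partial_{t}w_{t}=-Aw_{t}$ with $w_{0}=0$, so $\frac{d}{dt}\lVert w_{t}\rVert^{2}=-2\langle Aw_{t},w_{t}\rangle\le0$ and hence $w_{t}\equiv0$; density of the domain and boundedness of both families then give $H_{t}=e^{-tA}$. Since $H_{t}$ is compact for every $t>0$ by Theorem \ref{heatkernelexistence}, $e^{-tA}$ is compact, which is equivalent to $A$ having compact resolvent, i.e. to a discrete spectrum of finite-multiplicity eigenvalues accumulating only at $+\infty$. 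The essential analytic work has already been carried out in Theorem \ref{heatkernelexistence} and Lemma \ref{htdmin}, so no serious obstacle remains here; the only new ingredients are the elementary energy estimate identifying $H_{t}=e^{-tA}$, the standard equivalence between compactness of the heat semigroup and discreteness of the spectrum, and the minor remark that the proof of Lemma \ref{htdmin} applies equally to $H_{t}$ and to $H_{t}^{\star}$.
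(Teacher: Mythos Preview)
Your proof is correct and follows essentially the same approach as the paper: apply the abstract lemma with $\mathcal{D}=\mathcal{D}_{\min}$, citing Theorem \ref{heatkernelexistence} and Lemma \ref{htdmin} for the hypotheses, and then use compactness of the $H_{t}$ to obtain discrete spectrum. The paper's proof is just a two-line sketch of exactly this; your version is simply more explicit about why hypothesis (3) holds for $H_{t}$ as well as $H_{t}^{\star}$ and about the passage from compactness of $H_{t}$ to discreteness of $\operatorname{Spec}(\slashed{\partial}_{E}^{2})$ via the identification $H_{t}=e^{-tA}$, which the paper leaves implicit.
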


\begin{proof}
    By Theorem \ref{heatkernelexistence} and Lemma \ref{htdmin}, the heat kernel satisfies all the properties of the previous lemma with $\mathcal{D}=\mathcal{D}_{\min}$. Since $H_{t}$ are compact self-adjoint operators, they have discrete spectrum hence $\slashed{\partial}_{E}^{2}$ also has discrete spectrum.
\end{proof}

\begin{theorem}
    $\slashed{\partial}_{E}$ is essentially self adjoint and has discrete spectrum.
\end{theorem}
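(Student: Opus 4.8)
The plan is to derive essential self-adjointness of the first order operator $\slashed{\partial}_{E}$ from that of its square $\slashed{\partial}_{E}^{2}$, which has just been established, rather than running a separate heat-kernel argument. Since $\slashed{\partial}_{E}$ is formally self-adjoint on $C^{\infty}_{c}(M^{\circ};E)$ by \eqref{25}, its adjoint is the maximal extension, so it suffices to check the deficiency condition $\ker(\slashed{\partial}_{E}^{*}-i)=\ker(\slashed{\partial}_{E}^{*}+i)=\{0\}$, where $\slashed{\partial}_{E}^{*}$ acts as $\slashed{\partial}_{E}$ in the distributional sense on the domain $\mathcal{D}_{\max}(\slashed{\partial}_{E})$. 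So I would take $u\in\mathcal{D}_{\max}(\slashed{\partial}_{E})$ with $\slashed{\partial}_{E}u=\pm iu$ and aim to show $u=0$.

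The key step is to square the eigenvalue equation. Applying $\slashed{\partial}_{E}$ once more distributionally to $\slashed{\partial}_{E}u=\pm iu$ gives $\slashed{\partial}_{E}^{2}u=\pm i\,\slashed{\partial}_{E}u=-u$, so $\slashed{\partial}_{E}^{2}u=-u\in L^{2}$ and hence $u\in\mathcal{D}_{\max}(\slashed{\partial}_{E}^{2})$. Since $\slashed{\partial}_{E}^{2}$ is essentially self-adjoint by the preceding theorem, $\mathcal{D}_{\max}(\slashed{\partial}_{E}^{2})=\mathcal{D}(\overline{\slashed{\partial}_{E}^{2}})$ and $\overline{\slashed{\partial}_{E}^{2}}u=-u$; but $\overline{\slashed{\partial}_{E}^{2}}$ is non-negative, because $\langle\slashed{\partial}_{E}^{2}\varphi,\varphi\rangle=\lVert\slashed{\partial}_{E}\varphi\rVert_{L^{2}}^{2}\geq 0$ for $\varphi\in C^{\infty}_{c}(M^{\circ};E)$ and this inequality persists under graph closure. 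Hence $-\lVert u\rVert_{L^{2}}^{2}=\langle\overline{\slashed{\partial}_{E}^{2}}u,u\rangle\geq 0$, which forces $u=0$; this proves essential self-adjointness of $\slashed{\partial}_{E}$.

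For the discreteness of the spectrum I would first observe that $(\overline{\slashed{\partial}_{E}})^{2}$ is self-adjoint, non-negative, and extends $\slashed{\partial}_{E}^{2}$ on $C^{\infty}_{c}(M^{\circ};E)$; as $\overline{\slashed{\partial}_{E}^{2}}$ is its minimal closed extension and a self-adjoint operator admits no proper symmetric extension, $(\overline{\slashed{\partial}_{E}})^{2}=\overline{\slashed{\partial}_{E}^{2}}$, which has discrete spectrum by the preceding theorem, equivalently $((\overline{\slashed{\partial}_{E}})^{2}+1)^{-1}$ is compact. Diagonalising this compact resolvent, its eigenspaces are finite-dimensional and invariant under $\overline{\slashed{\partial}_{E}}$, on each of which $\overline{\slashed{\partial}_{E}}$ restricts to a self-adjoint operator on a finite-dimensional space; choosing eigenvectors there yields an orthonormal eigenbasis for $\overline{\slashed{\partial}_{E}}$ with eigenvalues $\mu_{j}$ satisfying $\mu_{j}^{2}\to\infty$, so $\overline{\slashed{\partial}_{E}}$ has purely discrete spectrum. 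Finally, any eigenfunction $u$ with $\slashed{\partial}_{E}u=\mu u$ equals $e^{t\mu^{2}}H_{t}u$ for $t>0$, so by the infinite-order vanishing of the kernel of $H_{t}$ at $\leftface$ and $\rightface$ it is smooth on $M$ and vanishes to infinite order at $\partial M$. I do not expect a genuine obstacle in this final step: all the hard analysis already sits in the heat-kernel construction and in the essential self-adjointness of $\slashed{\partial}_{E}^{2}$, and the only points that need a little care are the identification $u\in\mathcal{D}_{\max}(\slashed{\partial}_{E}^{2})$ (immediate from $\slashed{\partial}_{E}^{2}u=-u$) and the inheritance of non-negativity by the closed extension of $\slashed{\partial}_{E}^{2}$.
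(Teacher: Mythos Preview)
Your argument is correct and takes a genuinely different route from the paper. You use von Neumann's deficiency-index criterion: if $\slashed{\partial}_{E}u=\pm iu$ for $u\in\mathcal{D}_{\max}(\slashed{\partial}_{E})$, then $u\in\mathcal{D}_{\max}(\slashed{\partial}_{E}^{2})=\mathcal{D}_{\min}(\slashed{\partial}_{E}^{2})$ with $\overline{\slashed{\partial}_{E}^{2}}u=-u$, contradicting non-negativity unless $u=0$. The paper instead proceeds constructively: from the eigenbasis $(\varphi_{n})$ of $\overline{\slashed{\partial}_{E}^{2}}$ it observes that each eigenspace consists of sections in $\dot{C}^{\infty}(M;E)$ and is invariant under $\slashed{\partial}_{E}$, diagonalises $\slashed{\partial}_{E}$ there, and so obtains one specific self-adjoint extension $\mathcal{D}(\slashed{\partial}_{E})$; uniqueness is then argued by noting that any self-adjoint domain $\mathcal{D}'$ for $\slashed{\partial}_{E}$ induces a self-adjoint domain $\mathcal{D}'(\slashed{\partial}_{E}^{2})$, which must coincide with the unique one, forcing $\mathcal{D}'=\mathcal{D}(\slashed{\partial}_{E})$. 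Your approach is cleaner for the self-adjointness claim itself and avoids the somewhat ad~hoc uniqueness step; the paper's approach has the advantage of producing the eigenbasis and the explicit description $\mathcal{D}(\slashed{\partial}_{E})=\{u:\sum\mu_{n}\langle u,\psi_{n}\rangle\psi_{n}\in L^{2}\}$ along the way, and makes the regularity of eigenfunctions immediate from that of the $\varphi_{n}$. Your identification $(\overline{\slashed{\partial}_{E}})^{2}=\overline{\slashed{\partial}_{E}^{2}}$ for the discrete-spectrum part is the same mechanism the paper uses implicitly, and your final regularity remark via $u=e^{t\mu^{2}}H_{t}u$ recovers what the paper gets from the $\varphi_{n}$.
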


\begin{proof}
    Since $\slashed{\partial}_{E}^{2}$ is essentially self-adjoint with domain $\mathcal{D}(\slashed{\partial}_{E}^{2})=\mathcal{D}_{\min}=\mathcal{D}_{\max}$ and has discrete spectrum, $L^{2}(M;\mathcal{S})$ has an orthonormal basis of eigenfunctions $\varphi_{n}$ with eigenvalues $\lambda_{n}^{2}$ of finite multiplicity which vanish to infinite order at $\partial M$. Since $\slashed{\partial}_{E}\varphi_{n}\in\dot{C}(M,\mathcal{S})$ we have $\slashed{\partial}_{E}^{2}(\slashed{\partial}_{E}\varphi_{n})=\slashed{\partial}_{E}(\slashed{\partial}_{E}^{2}\varphi_{n})=\lambda_{n}^{2}\slashed{\partial}_{E}\varphi_{n}$ so $\slashed{\partial}_{E}$ has a well defined restriction to $\lambda_{n}^{2}$-eigenspace which is self adjoint. Hence there is an orthonormal basis of eigenfunctions $\psi_{n}$ with eigenvalues $\mu_{n}$ of $\slashed{\partial}_{E}$ so we get a unique self adjoint extension of $\slashed{\partial}_{E}$ with domain containing $\mathcal{D}(\slashed{\partial}_{E}^{2})$ given by
    \begin{align}
        \mathcal{D}(\slashed{\partial}_{E})=\left\{u\in L^{2}(M;E)|\sum_{n}\mu_{n}\langle u,\psi_{n}\rangle\psi_{n}\in L^{2}(M;E)\right\}.
    \end{align}
    Now supposed $\mathcal{D}'$ is a self-adjoint extension for $\slashed{\partial}_{E}$. Then there is an induced self-adjoint domain for $\slashed{\partial}^{2}$ given by $\mathcal{D}'(\slashed{\partial}_{E}^{2})=\{u\in \mathcal{D}'|\slashed{\partial}_{E}u\in\mathcal{D}'\}$. But $\slashed{\partial}_{E}^{2}$ is essentially self adjoint so $\mathcal{D}'(\slashed{\partial}_{E}^{2})=\mathcal{D}(\slashed{\partial}_{E}^{2})$. Thus $\mathcal{D}'=\mathcal{D}'(\slashed{\partial}_{E})$ since it is the unique self adjoint extension of $\mathcal{D}'(\slashed{\partial}_{E})$ containing $\mathcal{D}(\slashed{\partial}_{E}^{2})$.
\end{proof}

Now we briefly describe what happens in the case of non-trivial kernel in the case of an isolated cusp. We will also take the spin Dirac operator for simplicity. In this case the boundary family reduces to a single boundary operator which we denote $\slashed{\partial}_{Z}$ and we denote the kernel of this operator by $\mathcal{K}$.

In this case the construction proceeds as usual at $\timeface,\frontface$ however the normal operator at $\frontface$ will no longer vanish to infinite order near $\backface$ and we need to consider what happens there. The spin Dirac operator will also not be essentially self- adjoint and this is reflected in the non-essential self-adjointness to the normal operator on $\backface$.

From Lemma \ref{normaloperator} for the spin Dirac operator
 \begin{align}
      \frac{1}{2}T\partial_{T}+T^{2}\left(-\partial_{s}^{2}-\frac{fk}{s}\partial_{s}+\left(\frac{fk}{2}-\frac{f^{2}k^{2}}{4}\right)\frac{1}{s^{2}}\right)\mathcal{N}_{\text{bf}}(H)=0.
\end{align}
Conjugating by $s^{-\frac{kf}{2}}$ this reduces to 
 \begin{align}
      (\frac{1}{2}T\partial_{T}-T^{2}\partial_{s}^{2})A=0.
\end{align}
The operator $-\partial_{s}^{2}$ on $[0,\infty)$ is not essentially self adjoint. Taking values in a one-dimensional complex vector space it has many self-adjoint extensions corresponding to Dirichlet, Neumann and Robin boundary conditions at $s=0$ while taking values in a finite dimensional vector space there are many more self-adjoint extensions.

For our case, where the normal operator takes values in the kernel $\mathcal{K}$ only some choices of boundary conditions at $s=0$ lead to a heat kernel for which we can use the same arguments as above to construct a self-adjoint domain for $\slashed{\partial}$.

\section{Green's operator and Fredholm property}

In this section we construct the Green's operator for $\slashed{\partial}$ and show that it is a Fredholm operator. All assumptions will be the same as in the previous section.

First we construct the double space on which the Green's operator will be a polyhomogeneous distribution. Let $\mathcal{B}_{0}\subset M^{2}$ be the fibre diagonal in the corner
\begin{align}
    \mathcal{B}_{0}=\partial M\times_{\phi_{Y}}\partial M=\{(p,q)\in\partial M\times\partial M\times \colon\phi_{Y}(p)=\phi_{Y}(q)\}.
\end{align}
Blowing up $\mathcal{B}_{0}$ radially, we get a manifold with corners $M^{2}_{1}$ with boundary hypersurfaces $\leftface_{1},\rightface_{1}$ and $\backface_{1}$. In a neighbourhood of $\backface$ and away from $\rightface$ we have projective coordinates
\begin{align}\label{x'coordM21}
    x',\quad s=\frac{x}{x'},\quad \eta=\frac{y-y'}{x'},\quad z,z'
\end{align}
and similarly swapping $x'$ with $x'$ away from $\leftface$.
As in the construction of the heat space, we blow up once more at $B_{1}\subset M^{2}_{1}$, the intersection of the closure of $\mathcal{U}\times_{\pi}\mathcal{U}$ with $\backface$ which in the above projective coordinates is given by
\begin{align}
    \mathcal{B}_{1}=\{s=1,x'=0\}.
\end{align}
We obtain a manifold with corners $M^{2}_{\ce}$ with blowdown map $\beta\colon M^{2}_{\ce}\to M^{2}$ and boundary hypersurfaces $\leftface=\overline{\beta^{-1}(\leftface_{1})},\leftface=\overline{\beta^{-1}(\leftface_{1})}$ and using coordinates \eqref{x'coordM21}
\begin{align}\label{doublespacedirac}
    \begin{split}
        \leftbackface&=\overline{\beta^{-1}(\{s>1,x'=0\})} \\
        \rightbackface&=\overline{\beta^{-1}(\{s<1,x'=0\})} \\
        \frontface&=\overline{\beta^{-1}(\{s=1,x'=0\})}.
    \end{split}
\end{align}
In addition we define $\operatorname{diag}_{\ce}=\overline{\beta^{-1}(\operatorname{diag})}$ where $\operatorname{diag}$ is the diagonal in $M^{2}$.

We define the cusp edge ($\ce$-)vector fields
\begin{align}
    \mathcal{V}_{ce}=\{W\in C^{\infty}(M,TM):Wf=O(x^{k}) \text{ for all } f\in C^{\infty}_{\Phi}(M)\}
\end{align}
where 
\begin{align}
    C^{\infty}_{\Phi}(M)=\{f\in C^{\infty}(M):f|_{\partial M}=\phi_{Y}^{\star}g \text{ for some } g\in C^{\infty}(Y)\}.
\end{align}
This is a Lie algebra of vector fields which is related to the ice-vector fields by $\mathcal{V}_{\ice}=x^{-k}\mathcal{V}_{\ce}$ which is not a Lie algebra with respect to the usual bracket. As for the ice-vector fields, we define the cusp edge tangent bundle $^{ce}TM$ whose smooth sections are canonically identified with the $\ce$-vector fields. Locally near the boundary, it is generated over $C^{\infty}(M)$ by 
\begin{align}
    x^{k}\partial_{x},x^{k}\partial_{y_{i}},\partial_{z}.
\end{align}
We define the cusp edge cotangent bundle $^{\ce}T^{\star}M$ to be the dual which is locally generated near the boundary by $x^{-k}dx,x^{-k}dy_{i},dz$.
\begin{figure}[H]
\centering
    \begin{tikzpicture}[scale=1.5,rotate around x=90,rotate around z=90,z={(0,0,-1)},y={(0,-1,0)}]
        \draw[black, thick] (0,{sqrt(2)},0) -- (0,3,0) node[pos=1,right] {$x$};
        \draw[red, thick] (0,0,{sqrt(2)}) -- (0,0,3) node[pos=1,above] {$x'$};
         \draw[domain=0:33, smooth, variable=\x, purple, thick] plot ({0}, {sqrt(2)*cos(\x)}, {sqrt(2)*(sin(\x)}); 
         \draw[domain=57:90, smooth, variable=\x, purple, thick] plot ({0}, {sqrt(2)*cos(\x)}, {sqrt(2)*(sin(\x)}); 
         \draw[domain=-50:140, smooth, variable=\x, orange, thick] plot ({0}, {(sqrt(2)*cos(45))+(0.3*cos(\x))}, {(sqrt(2)*(sin(45))+(0.3*sin(\x)}); 


        \draw[->,black,thick] (0,3,2.5) -- (0,4,3) node[pos=0.5,above] {$\beta$};


        \draw[black, thick] (0,{sqrt(2)},4) -- (0,3,4) node[pos=1,right] {$x$};
        \draw[red, thick] (0,0,{sqrt(2)+4}) -- (0,0,7) node[pos=1,above] {$x'$};
        \draw[domain=0:90, smooth, variable=\y, purple, thick] plot ({0}, {(sqrt(2)*cos(\y))}, {(sqrt(2)*(sin(\y))+4}); 


        \draw[->,black,thick] (0,3,5.5) -- (0,4,5.5) node[pos=0.5,above] {$\beta_{1}$};


        \draw[black, thick] (0,5,4) -- (0,8,4) node[pos=1,right] {$x$};
        \draw[red, thick] (0,5,4) -- (0,5,7) node[pos=1,above] {$x'$};

        \draw[->,black,thick] (0,1,2.5) -- (0,1,3) node[pos=0.5,right] {$\beta_{\ce\to 1}$};

        \node at (0,1.35,1.35) {$\frontface$};
        \node at (0,0.5,1) {$\backface$};
        \node at (0,-0.2,{0.5*(sqrt(2)+3)}) {$\leftface$};
        \node at (0,{0.5*(sqrt(2)+3)},-0.2) {$\rightface$};
        
    \end{tikzpicture}
    \caption{Sequence of blown up spaces for $M^{2}_{\ce}$.} \label{fig:blowup2}
\end{figure}
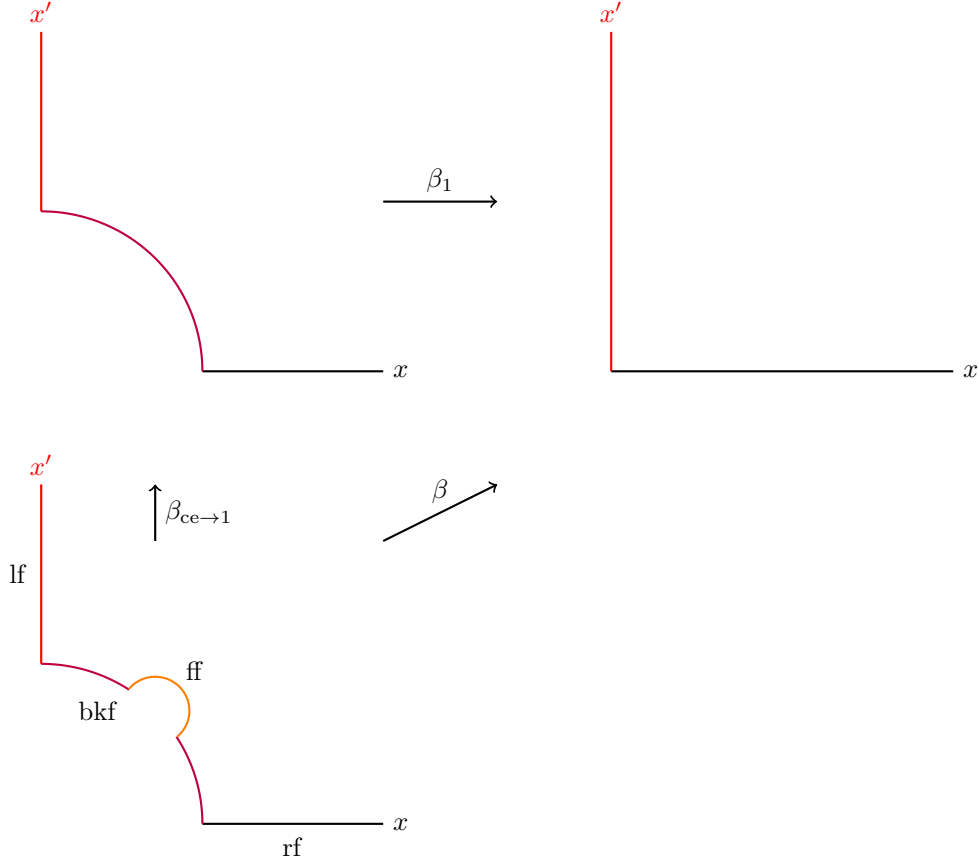
We define the cusp edge differential operators $\operatorname{Diff}_{\ce}^{\star}(M;E)$ as those locally generated near the boundary by $\mathcal{V}_{\ce}$, that is of the form
\begin{align}\label{cediff}
    P=\sum_{i+|\alpha|+|\beta|<r}a_{i\alpha\beta}(x,y,z)(x^{k}\partial_{x})^{i}(x^{k}\partial_{y})^{\alpha}\partial_{z}^{\beta}
\end{align}
where $a_{i\alpha\beta}(x,y,z)\in C^{\infty}(M;\operatorname{End}(E))$. We define the incomplete cusp edge differential operators by $\operatorname{Diff}^{l}_{\operatorname{ice}}(M,E)=x^{-l}\operatorname{Diff}_{\ce}^{\star}(M;E)$.

In a neighbourhood of $\frontface$ we have projective coordinates with respect to $x'$
\begin{align}\label{projectivedouble}
    x',\quad \tilde{s}=\frac{s-1}{(x')^{k-1}}=\frac{x-x'}{(x')^{k}},\quad \tilde{\eta}_{j}=\frac{y_{j}-y'_{j}}{(x')^{k}},\quad y_{j}',z_{i},z'_{i}.
\end{align}
The double space $M^{2}_{\ce}$ resolves this Lie algebra of singular vector fields as follows.
\begin{lemma}
    The lift of the $\ce$-vector fields $\beta^{\star}\mathcal{V}_{\ce}(M)$ to $M^{2}_{\ce}$ are tangent to the front face $\frontface$ and the kernel of the restriction to the front face is the $\ce$-vector fields which vanish at $\partial M$. Moreover, we have canonical isomorphisms
    \begin{align}
        \begin{split}
            N\diag_{\ce}\simeq {^{\ce}TM} \\
            N^{\star}\diag_{\ce}\simeq {^{\ce}T^{\star}M}.
        \end{split}
    \end{align}
\end{lemma}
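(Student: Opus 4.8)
The plan is to work in local projective coordinates near $\frontface$ and verify each assertion by direct computation, exploiting that $M^2_{\ce}$ is obtained from $M^2$ by the two radial (resp.\ quasihomogeneous) blowups described above. First I would compute the lift of each generator $x^k\partial_x$, $x^k\partial_{y_i}$, $\partial_{z_i}$ of $\mathcal{V}_{\ce}$ to the coordinates \eqref{projectivedouble}, i.e.\ in terms of $x'$, $\tilde s=(x-x')/(x')^k$, $\tilde\eta_j=(y_j-y'_j)/(x')^k$, $y'_j$, $z_i$, $z'_i$. Since these coordinates describe a neighbourhood of $\frontface$ with $\frontface=\{x'=0\}$, showing tangency amounts to checking that after lifting, the coefficient of $\partial_{x'}$ in each vector field vanishes at $x'=0$ (in fact it should vanish identically, since $x'$ depends only on the right factor and the $\ce$-vector fields act on the left). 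Writing $x=x'+(x')^k\tilde s$ and $y_j=y'_j+(x')^k\tilde\eta_j$, one gets $x^k\partial_x = x^k\cdot (x')^{-k}\partial_{\tilde s}$, and $x^k = (x'+(x')^k\tilde s)^k = (x')^k(1+(x')^{k-1}\tilde s)^k$, so $x^k\partial_x$ lifts to $(1+(x')^{k-1}\tilde s)^k\partial_{\tilde s}$, which is smooth and tangent to $\frontface$; similarly $x^k\partial_{y_j}$ lifts to $(1+(x')^{k-1}\tilde s)^k\partial_{\tilde\eta_j}$, and $\partial_{z_i}$ lifts to $\partial_{z_i}$. This establishes tangency; combined with the fact that these lifts span a rank-$n$ bundle transverse to $\operatorname{diag}_{\ce}$ near $\frontface$ (the diagonal being $\{\tilde s=0,\tilde\eta=0,z=z'\}$), it shows $\beta^\star\mathcal{V}_{\ce}$ generates a subbundle of $TM^2_{\ce}$.

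Next I would identify the kernel of the restriction map $\beta^\star\mathcal{V}_{\ce}|_{\frontface}$. From the explicit lifts above, restricting to $x'=0$ gives $\partial_{\tilde s}$, $\partial_{\tilde\eta_j}$, $\partial_{z_i}$ — a full-rank collection — so on the open dense part of $\frontface$ covered by \eqref{projectivedouble} the restriction is injective at interior points of the front face fibres. The kernel appears only where the lift of a $\ce$-vector field degenerates, namely when the original vector field $W=\sum a_i\, x^k\partial_{x} + \cdots$ has coefficients vanishing at $\partial M$; the general statement is that $W\in\mathcal{V}_{\ce}$ lifts to a vector field vanishing on $\frontface$ if and only if $W$ vanishes at $\partial M$, which follows by tracking the coefficient functions $a_{i\alpha\beta}(x,y,z)$ through the blowdown: a factor of $x$ (or $x'$, equal to $x$ up to the blowup-preserved order) in a coefficient becomes a boundary defining function for $\frontface$. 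I would phrase this carefully using that $x$ and $x'$ both lift to $x'\cdot(\text{smooth nonvanishing})$ near $\frontface$ away from $\leftbackface\cup\rightbackface$, and handle the region near $\leftbackface,\rightbackface$ by the analogous coordinates with $x'$ replaced by $x$.

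For the normal bundle isomorphisms $N\operatorname{diag}_{\ce}\simeq {}^{\ce}TM$ and $N^\star\operatorname{diag}_{\ce}\simeq {}^{\ce}T^\star M$: I would use that $\operatorname{diag}_{\ce}$ is a p-submanifold of $M^2_{\ce}$ and that $\beta^\star\mathcal{V}_{\ce}$ restricted to $\operatorname{diag}_{\ce}$ surjects onto the normal bundle, with kernel exactly the vector fields tangent to $\operatorname{diag}_{\ce}$. Concretely, along $\operatorname{diag}_{\ce}=\{\tilde s=0,\tilde\eta=0,z=z'\}$ the lifts computed above restrict to $\partial_{\tilde s},\partial_{\tilde\eta_j},\partial_{z_i}$, which form a frame for $N\operatorname{diag}_{\ce}$; pulling back along the blowdown $\beta$ identifies $\tilde s, \tilde\eta_j, z_i-z'_i$ (the defining functions of $\operatorname{diag}_{\ce}$) with $x^{-k}(x-x'), x^{-k}(y-y'), z-z'$ on $M^2$, whose differentials restricted to the diagonal are precisely a basis of ${}^{\ce}T^\star M$ (the duals of $x^k\partial_x, x^k\partial_{y_j},\partial_{z_i}$). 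The map is canonical because it is induced by the symbol map / the universal property of the blowup and does not depend on the coordinate choice; I would cite the standard argument (e.g.\ \cite{melrose}) that for a Lie structure at infinity the blown-up double space carries this canonical identification.

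The main obstacle I expect is the second assertion — precisely characterizing the kernel of the restriction to $\frontface$ globally, including the corner regions where $\frontface$ meets $\leftbackface$ and $\rightbackface$, and making the statement "$\ce$-vector fields which vanish at $\partial M$" precise as an equality of subbundles rather than merely an inclusion. This requires a clean invariant description of how the blowdown map interacts with the filtration of $\mathcal{V}_{\ce}$ by order of vanishing at $\partial M$, so that one can check surjectivity of $\beta^\star\mathcal{V}_{\ce}/x'\beta^\star\mathcal{V}_{\ce}$ onto $T\frontface/(\text{fibre-tangent})$ has the expected kernel; the coordinate computations are routine but keeping track of the two different projective charts and verifying the gluing is where care is needed.
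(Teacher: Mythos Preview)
Your proposal is correct and follows essentially the same approach as the paper: compute the lifts of the generators $x^k\partial_x$, $x^k\partial_{y_j}$, $\partial_{z_i}$ in the projective coordinates \eqref{projectivedouble}, obtaining $(1+(x')^{k-1}\tilde s)^k\partial_{\tilde s}$, $(1+(x')^{k-1}\tilde s)^k\partial_{\tilde\eta_j}$, $\partial_{z_i}$, and read off tangency, the kernel description, and the normal bundle identification from these formulas. The paper's own proof is in fact considerably terser than yours---it simply records these lifts and asserts that the first two statements follow, then writes down the map $W_p\mapsto [(\beta^\star W_p)|_{\diag_{\ce}}]$ and says it is an isomorphism by inspection---so your worries about corner charts and global gluing, while not wrong, go beyond what the paper actually does.
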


\begin{proof}
    Using the projective coordinates \eqref{projectivedouble} we have the following lifts
    \begin{align}\label{liftdouble}
        \begin{split}
            \beta^{\star}x^{k}\partial_{x}&=[(x')^{k-1}\tilde{s}+1]^{k}\partial_{\tilde{s}} \\
            \beta^{\star}x^{k}\partial_{y_{j}}&=[(x')^{k-1}\tilde{s}+1]^{k}\partial_{\tilde{\eta_{j}}} \\
            \beta^{\star}\partial_{z_{i}}&=\partial_{z_{i}}
        \end{split}
    \end{align}
    from which the first two statements follow.   The isomorphism $^{\ce}TM\to N\diag_{\ce}$ is given by the composition
    \begin{align}
        W_{p}\mapsto (\beta^{\star}W_{p})|_{\diag_{\ce}}\mapsto [(\beta^{\star}W_{p}|_{\diag_{\ce}}]\in N\diag_{\ce}
    \end{align}
    which we see is an isomorphism from \eqref{liftdouble}.
\end{proof}
Now by analogous calculations as for the heat space we find that lift of the operators of $\slashed{\partial}$ and $\slashed{\partial}^{2}$ in a neighbourhood of $\frontface$ in these coordinates are given by
\begin{align}
    \begin{split}
        \slashed{\partial}&=(x')^{-k}\left(\cl(\partial_{x})\partial_{s}+\sum_{ij}\cl(a_{ij}\partial_{y_{i}})a_{ij}(0,y')\partial_{\tilde{\eta}_{i}}+(\tilde{s}(x')^{k-1}+1)^{k}\slashed{\partial}^{Z,y'} \right. \\
        &\quad \left. +(x')^{k-1}(\tilde{s}(x')^{k-1}+1)^{-1}\frac{kf}{2}\cl(\partial_{x})+(x)^{k}R\right) \\
        \slashed{\partial}^{2}&=(x')^{-2k}\left(-\partial_{s}^{2}+\Delta_{\tilde{\eta},y'}+(\slashed{\partial}^{Z,y'})^{2} +x^{k-1}R'\right).
    \end{split}
\end{align}
In these coordinates we also have
\begin{align}\label{deltaff}
    \begin{split}
        \delta(x-x')\delta(y-y')\delta(z-z')&=(x')^{-k(b+1)}\delta(\tilde{s})\delta(\tilde{\eta})\delta(z-z').
    \end{split}
\end{align}
Let $\rho_{\frontface}$ be a boundary defining function for $\frontface$ and $\rho_{\sideface}$ the product of boundary defining functions for the remaining faces. Set $\Omega_{R,\ce}=\rho_{\frontface}^{-k(b+1)}\beta^{\star}_{R}\Omega$ then we define the small ice calculus of pseudodifferential operators acting between sections of vector bundles $E$ and $F$ by
\begin{align}
    \Psi^{r}_{\ce}(M)=\rho_{\sideface}^{\infty}I^{r}(M^{2}_{\ice},\diag_{ice};\operatorname{HOM}(E,F)\otimes\Omega_{R,\ce})
\end{align}
We include the weighted density factor so that the operators $\operatorname{Diff}_{ce}(M)$ are contained in $\Psi_{\ce}^{r}$ as we have the factor $(x')^{k(b+1)}$ in the kernel \eqref{deltaff} of the identity. 

We define the residual operators
\begin{align}
    \Psi^{-\infty}_{\ce}(M)=\bigcap_{r}\Psi^{r}_{\ce}(M)=\rho_{\sideface}^{\infty}C^{\infty}(M^{2}_{\ice};\operatorname{HOM}(E,F)\otimes\Omega_{R,\ce}).
\end{align}
From the conormal singularity at $\diag_{ice}
$, we have the symbol map
\begin{align}
    \sigma_{r,\ce}\colon\Psi^{r}_{\ce}(M)\to S^{\{r\}}(N^{\star}\diag_{\ce}; \Omega_{\text{fibre}}(N^{\star}\diag_{\ice})\otimes p^{\star}(\Omega_{R,\ce}|_{\diag_{\ce}})\otimes p^{\star}(\operatorname{HOM}(E,F)|_{\diag_{\ice}})).
\end{align}
Here we have $p\colon N^{\star}\diag_{\ice}\to\diag_{\ice}$ and $\Omega_{\text{fibre}}$ is the bundle of densities which are translation invariant along the fibres. Now $N^{\star}\diag_{\ice}$ is canonically isomorphic to $^{\ce}T^{\star}M$ and the restriction $\omega_{R,\ce}$ is canonically isomorphic to the cusp edge density bundle $^{\ce} \Omega$. Hence we see that
\begin{align}
    \Omega_{\text{fibre}}(N^{\star}\diag_{\ice})\otimes p^{\star}(\Omega_{R,\ce}|_{\diag_{\ce}})\simeq \Omega_{\text{fibre}}(^{\ce} T^{\star}M)\otimes p^{\star}(^{\ce}\Omega)
\end{align}
This is a vector bundle over $^{\ce} T^{\star}M$ which has a canonical trivialisation given in local coordinates near the boundary by
\begin{align}
    d\xi d\upsilon d\zeta\frac{dxdydz}{x^{k}x^{kb}}
\end{align}
where $\xi,\upsilon,\zeta$ are the coordinates on $^{\ce} T^{\star}M$ induced by $x,y,z$. We also have the canonical isomorphism $\operatorname{HOM}(E,F)|_{\diag_{\ice}}\simeq \operatorname{Hom}(E,F)\to M$.

Thus we can canonically take the symbol map as
\begin{align}
    \sigma_{r,\ce}\colon\Psi^{r}_{\ce}(M)\to S^{\{r\}}(^{\ce}T^{\star}M;\pi^{\star}\operatorname{Hom}(E,F)).
\end{align}
In particular, if $\tilde{\sigma},\tilde{E}_{j},\zeta_{i}$ are coordinates on $^{\ce}T^{\star}M$ dual to the coordinates on $^{\ce}TM$ induced by $x,y,z$ then the principal symbol of a $\ce$-differential operator of the form \eqref{cediff} is
\begin{align}
    \sigma_{r}(P)=\sum_{i+|\alpha|+|\beta|=r}a_{i\alpha\beta}(x,y,z)(i\tilde{\sigma})^{i}(i\tilde{E})^{\alpha}(i\zeta)^{\beta}.
\end{align}
We say that $A\in\Psi^{r}_{\ce}(M)$ is $\ce$-elliptic if $\sigma_{r}(A)$ is invertible. That is, there exists $b\in S^{r}$ such that $\sigma_{r}(A)b-1\in S^{-1}$.
\begin{lemma}\label{principalsymbol}
    $x^{k}\slashed{\partial}$ is an elliptic ce-differential operator.
\end{lemma}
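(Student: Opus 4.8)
The plan is to compute the principal symbol of $x^k\slashed\partial$ as a $\ce$-differential operator directly from the local form of the Dirac operator near the boundary, and then check invertibility using the fact that Clifford multiplication by a nonzero covector is invertible. First I would recall from \eqref{diraclemmatwisting} (or \eqref{47} together with Lemma \ref{2.3}) that near $\partial M$
\begin{align*}
    x^k\slashed\partial_E = \cl(\partial_x)\,x^k\partial_x + \sum_j \cl(\tilde U_j)\,x^k\tilde U_j + \sum_i \cl(x^{-k}V_i)\nabla^{E,\partial M/Y}_{V_i} + \sum_{j=0}^{k-1} x^j B_j,
\end{align*}
and observe that $x^k\tilde U_j = x^k(a_{j\ell}\partial_{y_\ell} + \tilde b_{j\ell}x^k(x^{-k}\partial_{z_\ell}))$ is a genuine $\ce$-vector field, while $\nabla^{E,\partial M/Y}_{V_i}$ differentiates only in the $\partial_z$ directions, so also lies in $\mathcal{V}_{\ce}$; the zeroth-order terms $x^jB_j$ do not affect the principal symbol. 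Away from the boundary, $x^k\slashed\partial$ is just $x^k$ times the ordinary Dirac operator, which is a differential operator with the standard elliptic Clifford symbol.

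Next I would identify the principal symbol. Using the coordinates $\tilde\sigma,\tilde E_j,\zeta_i$ on $^{\ce}T^\star M$ dual to the $\ce$-frame $x^k\partial_x, x^k\partial_y, \partial_z$ (as set up just before the statement), the symbol formula
\begin{align*}
    \sigma_1(P) = \sum_{i+|\alpha|+|\beta|=1} a_{i\alpha\beta}(x,y,z)(i\tilde\sigma)^i(i\tilde E)^\alpha(i\zeta)^\beta
\end{align*}
gives, for $P = x^k\slashed\partial_E$, that $\sigma_1(x^k\slashed\partial_E)$ at a covector $\xi\in {}^{\ce}T^\star_pM$ equals $i\,\cl_E(\xi^\flat)$, where $\xi^\flat$ is the image of $\xi$ under the metric identification $^{\ce}T^\star M \cong {}^{\ce}TM \cong {}^{\ice}TM$ (up to the harmless rescaling $x^{-k}$ built into passing between the $\ce$- and ice-frames). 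Concretely, writing $\xi = \tilde\sigma\,\tfrac{dx}{x^k} + \tilde E_j\,\tfrac{dy_j}{x^k} + \zeta_i\,dz_i$, one gets $\sigma_1(x^k\slashed\partial_E)(\xi) = i\big(\tilde\sigma\,\cl(\partial_x) + \sum_j \cl(x^k\tilde U_j^{\,\text{lead}})\tilde E_j\text{-part} + \sum_i \zeta_i\,\cl(x^{-k}V_i)\big)$, which after collecting terms is precisely $i$ times Clifford multiplication by the ice-covector dual to $\xi$.

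Finally I would conclude ellipticity: for $\xi\neq 0$ the dual vector $\xi^\flat$ is a nonzero element of $^{\ice}T_pM$, and by the Clifford relation $\cl(v)^2 = -|v|^2_g\operatorname{Id}$ (which holds for the ice metric since $E$ is an ice Clifford module, see \eqref{25} and the definition of $^{\ice}\mathbb{C}l(M)$), the operator $\cl_E(\xi^\flat)$ on the fibre $E_p$ is invertible with inverse $-|\xi^\flat|^{-2}\cl_E(\xi^\flat)$. Hence $\sigma_1(x^k\slashed\partial)(\xi)$ is invertible for all $\xi\neq 0$, so $x^k\slashed\partial$ is $\ce$-elliptic; one may then take $b(\xi) = \chi(\xi)\,\sigma_1(x^k\slashed\partial)(\xi)^{-1}$ for a cutoff $\chi$ vanishing near $\xi=0$ to exhibit the parametrix symbol in $S^{-1}$ required by the definition of $\ce$-ellipticity. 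The only mildly delicate point is bookkeeping the metric rescalings relating the $\ce$-cotangent coordinates, the ice-cotangent coordinates, and the orthonormal frame $\partial_x,\tilde U_j,x^{-k}V_i$, so that the Clifford symbol is seen to be exactly $i\,\cl(\xi^\flat)$ with respect to the ice metric; this is routine but is where one must be careful, and it is the main thing to verify carefully.
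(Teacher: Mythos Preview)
Your proposal is correct and follows essentially the same approach as the paper: compute the principal symbol of $x^k\slashed\partial$ in the $\ce$-frame, recognize it as Clifford multiplication, and invoke the Clifford relation for invertibility. The only difference is that the paper handles the bookkeeping you flag as delicate by explicitly introducing the isometry ${}^{\ice}TM\ni W\mapsto x^kW\in{}^{\ce}TM$ for the metric $x^{2k}g$ on ${}^{\ce}TM$, which induces a $\ce$-Clifford action on $\mathcal{S}$ via $\cl_{\ce}(W)=\cl_{\ice}(x^{-k}W)$ and lets one write $\sigma_1(x^k\slashed\partial)(\xi)=i\,\cl_{\ce}(\xi)$ cleanly.
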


\begin{proof}
    At any fixed point $p$ near the boundary, choose coordinates $y,z$ such that $x^{k}\partial_{X},\partial_{y_{j}},\partial_{z_{i}}$ are an orthonormal basis of $^{\ce}TM$ at $p$. Then the principal symbol at $p$ in these coordinates of $x^{k}\slashed{\partial}$ is
    \begin{align}
        \sigma_{1}(x\slashed{\partial})=i\cl(\partial_{x})\tilde{\sigma}+\sum_{i}i\cl(\partial_{y_{j}})\tilde{E}_{j}+\sum_{i}i\cl(x^{-k}\partial_{z_{i}})\zeta_{i}.
    \end{align}
    Note that at this point, it is not equal Clifford multiplication by a $\ce$-(co)tangent vector since the Clifford multiplications which appear here are $\operatorname{ice}$-(co)tangent vectors. However, we can make the identification $\ice TM \ni W\mapsto x^{k}W\in {^{\ce}TM}$ which is an isometry if we take the metric $x^{2k}g$ on $^{\ce}TM$. This induces an isomorphism of their Clifford bundles thus defines a $\ce$-Clifford action on the ($\ice$)-spinor bundle $\mathcal{S}$ where $\cl_{ce}(W)\mapsto\cl_{\ice}(x^{-k}W)$. Then identifying tangent and cotangent bundle using the metric, we can write the principal symbol as
    \begin{align}
        \sigma_{1}(x\slashed{\partial})(\xi)=i\cl_{\ce}(\xi)
    \end{align}
    where $\xi\in {^{\ce}T^{\star}M}$. 
\end{proof}

This pseudodifferential calculus satisfies the usual properties which we briefly recall. There is a short exact sequence involving the symbol map
\begin{align}
    0\to \Psi^{r-1}_{\ce}(M)\to\Psi^{r}_{\ce}(M)\to S^{\{r\}}(^{\ce}T^{\star}M;\pi^{\star}\operatorname{Hom}(E,F))\to 0
\end{align}
and composition
\begin{align}
    \Psi^{r}_{\ce}(M)\circ \Psi^{s}_{\ce}(M)&\subset \Psi^{r+s}_{\ce}(M) \\
    \sigma_{r+s}(A\circ B)&=\sigma_{r}(A)\sigma_{s}(B)
\end{align}
Given a sequence of operators $P_{i}\in\Psi_{\ce}^{r-1}$ we have an asymptotic sum $P\in\Psi^{r}_{\ce}$ unique modulo $\Psi^{\infty}_{\ce}$ which satisfies
\begin{align}
    P-\sum_{j=0}^{k-1}P_{i}\in\Psi_{\ce}^{r-k}.
\end{align}

From these three properties, it follows that if $A\in\Psi^{r}_{\ce}(M)$ is elliptic then there exists $B\in\Psi^{-r}_{\ce}(M)$ such that
\begin{align}
    A\circ B-I, B\circ A-I\in \Psi^{-\infty}_{\ce}(M).
\end{align}
We call $B$ a small elliptic parametrix.

On a closed manifold, the residual operators are smoothing operators and thus compact on $L^{2}$, whereas on a manifold with boundary there is an obstruction to the compactness of the residual terms given by the non-vanishing of the kernel of $A$ to $\frontface$.

Consider again the lift of the kernel of the identity to a neighbourhood of the front face with the right density factor. Here we use projective coordinates with $x$ as the boundary defining function
\begin{align}\label{deltaffdens}
        \beta^{\star}(\delta(x-x')\delta(y-y')\delta(z-z')dx'dy'dz')&=x^{-k(b+1)}\delta(\tilde{s})\delta(\tilde{\eta})\delta(z-z')\beta^{\star}(dx'dy'dz').
\end{align}
Multiplying by the lift of a non-vanishing left density factor $\mu_{L}=adxdydz$ where $a$ is some non-vanishing smooth function and using the canonical identification $\beta^{\star}\Omega_{L}\otimes\beta^{\star}\Omega_{R}\simeq\beta^{\star}\Omega(M^{2})$ we have
\begin{align}\label{deltaffdens2}
        \beta^{\star}(\delta(x-x')\delta(y-y')\delta(z-z')dx'dy'dz')\beta^{\star}\mu_{L}&=\delta(\tilde{s})\delta(\tilde{\eta})\delta(z-z')d\tilde{s}d\tilde{\eta}dz'adxdyzdz.
\end{align}
Thus multiplying by $\beta^{\star}\mu_{L}$ we can identify the resulting distribution as taking values in $\Omega(M^{2}_{\ce})$. Using these coordinates, Identifying $\frontface^{\circ}\to Y$ as with the bundle $\mathbb{R}_{\tilde{s}}\times\mathbb{R}_{\tilde{\eta}}^{b}\times\partial M\times_{\phi_{Y}}\partial M\to Y$ (invariantly we would identify the first two factors with the fibre of the bundle $^{\ce}N\partial M\to Y$ of $\ce$-vector fields which vanish as standard vector fields over $\partial M$), the $d\tilde{s}d\tilde{\eta}dz'$ can be identified with a right (in the $\partial M\times_{\phi_{Y}}\partial M$ factor) fibre density over $Y$ which we will denote as $\Omega_{\operatorname{fib},R}(\frontface)$.

Thus we identify the restriction of $\Omega(M^{2}_{\ce})|_{\frontface}$ with $\Omega_{\operatorname{fib},R}(\frontface)\otimes \beta^{\star}\Omega_{L}$ so we can divide by $\beta^{\star}\mu_{L}$ to get a distributional section $\delta(\tilde{s})\delta(\tilde{\eta})\delta(z-z')d\tilde{s}d\tilde{\eta}dz'$ of $\Omega_{\operatorname{fib},R}(\frontface)$ over $\frontface$ which is conormal as $\diag_{\ce}\cap\frontface$. As the restriction of the identity operator to the $\frontface$, the action of this operator is given by convolution in the $\tilde{s}.\tilde{\eta}$ variables.

We define the space of suspended pseudodifferential operators at $\frontface$ by
\begin{align}
    \Psi^{r}_{\operatorname{sus}}(\frontface)=\rho^{\infty}_{\sideface}I^{r}(\frontface,\diag_{\ce}\cap\frontface;\operatorname{HOM}(E,F)\otimes\Omega_{\operatorname{fib},R}(\frontface)).
\end{align}
which act as convolution operators in the $\tilde{s},\tilde{\eta}$ variables. Thus in general, if $A\in \Psi_{\ce}^{r}(M)$ we define the normal operator $\mathcal{N}_{\frontface}(A)$ at $\frontface$ to the element of $\Psi^{r}_{\operatorname{sus}}(\frontface)$ whose Schwartz kernel is given by
\begin{align}
    (K_{A}\beta^{\star}\mu_{L})|_{\frontface}/\beta^{\star}\mu_{L}.
\end{align}
Since in projective coordinates with $x$ as the boundary defining function we have
\begin{align}\label{xdefiningfunctionlifts}
    \begin{split}
        \beta^{\star}x^{k}\partial_{x}&=\partial_{\tilde{s}}-kx^{k-1}\tilde{s}\partial_{\tilde{s}}+x^{k}\partial_{x} \\
        \beta^{\star}x^{k}\partial_{y_{j}}&=\partial_{\tilde{\eta_{j}}}+x^{k}\partial_{y_{j}} \\
        \beta^{\star}\partial_{z_{i}}&=\partial_{z_{i}}.
    \end{split}
\end{align}
we see from the expression of the kernel of the identity operator that the normal operator of the left lift of a differential operator on $M$ is given by the restriction of that operator to $\frontface$. The normal operator satisfies
\begin{align}
    \mathcal{N}_{\frontface}(A\circ B)=\mathcal{N}_{\frontface}(A)\circ \mathcal{N}_{\frontface}(B).
\end{align}
There is a short exact sequence
\begin{align}
    0\to x\Psi^{r}_{\ce}(M)\to\Psi^{r}_{\ce}(M)\to\Psi^{r}_{\operatorname{sus}}(\frontface)\to 0
\end{align}
If $A\in x^{a}\Psi^{r_{1}}$ and $A\in x^{b}\Psi^{r_{2}}$  then their composition $A\circ B\in x^{a+b}\Psi^{r_{1}+r_{2}}$. Finally we can define weighted Sobolev spaces $x^{a}H_{\ce}^{l}(M,E)$ such that if 
$A\in x^{b}\Psi^{r}_{\ce}$ then
\begin{align}
    A\colon x^{a}H_{\ce}^{l}(M,E)\to x^{a+b}H_{\ce}^{l+r}(M,E)
\end{align}
defines a bounded map for all $a,b,r,l$. We elaborate more on the cusp edge pseudodifferential calculus and some of its properties in the appendix.

\begin{lemma}\label{solvingnormaloperator}
    The normal operator of $x^{k}\slashed{\partial}_{E}$ at $\frontface$ is 
    \begin{align}
        \mathcal{N}_{\frontface,\ce}(x^{k}\slashed{\partial}_{E})=\cl(\partial_{x})\partial_{\tilde{s}}+\sum_{ij}\cl(a_{ij}\partial_{y_{i}})a_{ij}(0,y')\partial_{\tilde{\eta}_{i}}+\slashed{\partial}_{Z,y'}.
    \end{align}
    There exists $\mathcal{N}_{\frontface,\ce}(Q')\in \Psi^{-1}_{\operatorname{sus}}(\frontface)$ such that
    \begin{align}\label{doublespaceffinverse}
         \mathcal{N}_{\frontface,\ce}(x^{k}\slashed{\partial}_{E})\mathcal{N}_{\frontface,\ce}(Q')=\mathcal{N}_{\frontface,\ce}(Q')\mathcal{N}_{\frontface,\ce}(x^{k}\slashed{\partial}_{E})=\delta(\tilde{s})\delta(\tilde{\eta})\delta(z-z')d\tilde{s}d\tilde{\eta}dz'.
    \end{align}
\end{lemma}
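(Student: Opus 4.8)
The plan is to establish the two assertions in turn.

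\emph{The normal operator at $\frontface$.} Recall from the discussion above that the normal operator at $\frontface$ of the left lift of a $\ce$-differential operator on $M$ is simply its restriction to $\frontface$. So I would take the lift of $\slashed{\partial}_{E}$ to a neighbourhood of $\frontface$ in the projective coordinates \eqref{projectivedouble} computed just above, namely the expression $\slashed{\partial}_{E}=(x')^{-k}(\,\cdots\,)$, multiply through by $\beta^{\star}x^{k}=(x')^{k}\bigl((x')^{k-1}\tilde{s}+1\bigr)^{k}$, and restrict to $\{x'=0\}$. Then $\beta^{\star}(x^{k}\slashed{\partial}_{E})$ equals $\bigl((x')^{k-1}\tilde{s}+1\bigr)^{k}(\,\cdots\,)$, whose restriction to $x'=0$ keeps $\cl(\partial_{x})\partial_{\tilde{s}}$, keeps the horizontal term $\sum_{ij}\cl(a_{ij}\partial_{y_{i}})a_{ij}(0,y')\partial_{\tilde\eta_{i}}$, and sends the vertical term $\bigl((x')^{k-1}\tilde{s}+1\bigr)^{k}\slashed{\partial}_{Z,y'}$ to $\slashed{\partial}_{Z,y'}$, while every remaining term carries a positive power of $x'$ or of $x=x'\bigl((x')^{k-1}\tilde{s}+1\bigr)$: the curvature term $\tfrac{kf}{2x}\cl(\partial_{x})$ picks up $(x')^{k-1}$, the endomorphism terms $x^{-k+j}B_{j}$ pick up at worst $x^{k-1}$ (using Assumption \ref{assumption1} and $k\ge 2$), and the $\ce$-error $x^{k}R$ carries $x^{k}$ — all of which vanish on $\frontface$. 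This yields the stated formula, and the same power count disposes of the polyhomogeneous error terms of an exact ice metric.

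\emph{Inverting the normal operator.} Since $\mathcal{N}_{\frontface,\ce}(x^{k}\slashed{\partial}_{E})$ is translation invariant in $(\tilde{s},\tilde\eta)$ it acts by convolution there, and conjugating by the partial Fourier transform with dual variables $(\xi,\upsilon)\in\mathbb{R}^{b+1}$ would turn it into a smooth family, parametrised by $(\xi,\upsilon)$ and $y'\in Y$, of operators on the closed fibres $Z_{y'}$,
\begin{align}
    \widehat{\mathcal{N}}(\xi,\upsilon,y')=i\xi\,\cl(\partial_{x})+i\,\cl\bigl(\zeta(\upsilon,y')\bigr)+\slashed{\partial}_{Z,y'},
\end{align}
where $\zeta(\upsilon,y')\in T^{\star}_{y'}Y$ is the covector determined by the horizontal Clifford term and satisfies $|\zeta|_{g_{Y}(y')}=|\upsilon|$. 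The three groups of terms pairwise anticommute — this is precisely the observation recorded just before Lemma \ref{normaloperator} — and $\cl(\partial_{x})^{2}=-1$, $\cl(\zeta)^{2}=-|\upsilon|^{2}_{g_{Y}(y')}$, so
\begin{align}
    \widehat{\mathcal{N}}(\xi,\upsilon,y')^{2}=\xi^{2}+|\upsilon|^{2}_{g_{Y}(y')}+\slashed{\partial}_{Z,y'}^{2}.
\end{align}
This is where the invertibility of the boundary family $\slashed{\partial}_{Z}$ enters: by compactness of $Y$ and $Z$ there is $\lambda_{0}>0$ with $\slashed{\partial}_{Z,y'}^{2}\ge\lambda_{0}^{2}$ for all $y'$, so $\widehat{\mathcal{N}}^{2}\ge\xi^{2}+|\upsilon|^{2}+\lambda_{0}^{2}$ is invertible for every $(\xi,\upsilon,y')$ and hence $\widehat{\mathcal{N}}$ is invertible with $\widehat{\mathcal{N}}^{-1}=\widehat{\mathcal{N}}\,(\widehat{\mathcal{N}}^{2})^{-1}$. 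I would then set $\mathcal{N}_{\frontface,\ce}(Q')$ to be the inverse Fourier transform of $\widehat{\mathcal{N}}^{-1}$ in $(\xi,\upsilon)$; since Fourier conjugation carries composition to composition and $\widehat{\mathcal{N}}^{-1}$ is a two-sided inverse of $\widehat{\mathcal{N}}$, this produces exactly the identity of \eqref{doublespaceffinverse}.

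\emph{The main obstacle.} What remains — and what I expect to be the hard part — is to verify that this inverse Fourier transform genuinely belongs to $\Psi^{-1}_{\operatorname{sus}}(\frontface)$: one must show $\widehat{\mathcal{N}}^{-1}$ obeys order-$(-1)$ symbol estimates jointly in $(\xi,\upsilon)$ and the fibre cotangent variable, uniformly in $y'$, equivalently that its inverse Fourier transform is conormal of order $-1$ to $\diag_{\ce}\cap\frontface$ and Schwartz at the side faces. The mechanism is that $\slashed{\partial}_{Z,y'}^{2}+\xi^{2}+|\upsilon|^{2}$ is parameter-elliptic of order $2$ on the closed manifold $Z_{y'}$ with parameter $(\xi,\upsilon)$, so its inverse lies in the parameter-dependent pseudodifferential class of order $-2$, uniformly in $y'$ by the uniform spectral gap; composing with the order-$1$ family $\widehat{\mathcal{N}}$ then gives the required order-$(-1)$ element. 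As an alternative one could first construct a parametrix $\mathcal{N}_{\frontface,\ce}(Q'_{0})\in\Psi^{-1}_{\operatorname{sus}}(\frontface)$ by the $\ce$-elliptic symbol calculus (Lemma \ref{principalsymbol} gives ellipticity), with remainder in $\Psi^{-\infty}_{\operatorname{sus}}(\frontface)$, and then upgrade it to a genuine two-sided inverse using the explicit invertibility of $\widehat{\mathcal{N}}^{2}$ just obtained.
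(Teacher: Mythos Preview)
Your argument is correct and close in spirit to the paper's, but the execution for the inversion differs. Both of you square the normal operator to obtain $-\partial_{\tilde s}^{2}+\Delta_{\tilde\eta,y'}+\slashed{\partial}_{Z,y'}^{2}$ and use the spectral gap of $\slashed{\partial}_{Z,y'}^{2}$ to invert it, then compose with $\mathcal{N}_{\frontface,\ce}(x^{k}\slashed{\partial}_{E})$ to get the order~$-1$ inverse. Where you Fourier-transform in $(\tilde s,\tilde\eta)$ at the outset and appeal to parameter-dependent pseudodifferential theory on $Z$ to place $\widehat{\mathcal N}^{-1}$ in the right symbol class, the paper instead works in position space: it takes a standard elliptic parametrix $\tilde A$ for the squared operator on $\mathbb{R}_{\tilde s}\times\mathbb{R}^{b}_{\tilde\eta}\times Z$ with a smooth, compactly supported remainder $\tilde R$, then expands $\tilde R$ in the eigenbasis $\{\varphi_{i}\}$ of $\slashed{\partial}_{Z,y'}^{2}$ and solves the scalar equations $(-\partial_{\tilde s}^{2}+\Delta_{\tilde\eta}+\lambda_{i}^{2})b_{ij}=a_{ij}$ explicitly by Fourier transform, the positivity $\lambda_{i}^{2}>0$ guaranteeing Schwartz solutions $b_{ij}$. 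This directly exhibits both the conormal singularity at the diagonal (from $\tilde A$) and the rapid decay at the side faces (from the Schwartz correction $\tilde B$), so membership in $\Psi^{-1}_{\operatorname{sus}}(\frontface)$ is manifest without invoking a black-box parameter calculus; smoothness in $y'$ is handled by taking a smooth family of parametrices. Your closing ``alternative'' --- build a $\Psi^{-1}_{\operatorname{sus}}$ parametrix first and then upgrade it to an exact inverse using the explicit invertibility of $\widehat{\mathcal N}^{2}$ --- is in fact the route the paper takes.
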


\begin{proof}
    The expression for the normal operator follows from \eqref{xdefiningfunctionlifts} and doing the analogous calculation as for the normal operator for the heat space.

    Denote the second term by $\slashed{\partial}_{\tilde{\eta},y'}$ and $\Delta_{\tilde{\eta},y'}=\slashed{\partial}_{\tilde{\eta},y'}^{2}$ which is just the Laplacian on $\mathbb{R}^{b}_{\tilde{\eta}}$  with respect to the metric $g_{Y}(y')$ tensored with the identity $\operatorname{Id}_{E}$. Then the square of the normal operator is
    \begin{align}
        \mathcal{N}_{\frontface,\ce}(x^{k}\slashed{\partial}_{E})^{2}=-\partial_{\tilde{s}}^{2}+\Delta_{\tilde{\eta},y'}+\slashed{\partial}_{Z,y'}^{2}.
    \end{align}
    First we will find $A$ such that
    \begin{align}
        \mathcal{N}_{\frontface,\ce}(x^{k}\slashed{\partial})_{E}^{2}A=\delta(\tilde{s})\delta(\tilde{\eta})\delta(z-z').
    \end{align}
    Since $\mathcal{N}_{\frontface,\ce}(x^{k}\slashed{\partial})^{2}$ is elliptic in $\Psi^{2}(\mathbb{R}_{\tilde{s}}\times\mathbb{R}_{\tilde{\eta}}\times Z;E)$ we take $A'\in \Psi^{-2}(\mathbb{R}_{\tilde{s}}\times\mathbb{R}_{\tilde{\eta}}\times Z;E)$ which using a smooth cutoff function near the diagonal we can take to have kernel that supported in a neighbourhood of the lifted diagonal
    \begin{align}
       \mathcal{N}_{\frontface,\ce}(x^{k}\slashed{\partial}_{E})^{2}A'=\operatorname{Id}-R'
    \end{align}
     where $R'$ has smooth kernel and is also supported in a neighbourhood of the lifted diagonal. Restricting the kernel of $A'$ to $\tilde{s}'=0,\tilde{\eta}'=0$ we get a distribution $\tilde{A}$ on $\mathbb{R}_{\tilde{s}}\times\mathbb{R}_{\tilde{\eta}}\times Z^{2}$ compactly supported in a neighbourhood of $\{\tilde{s}=0,\tilde{\eta}=0\}$ which a conormal singularity of the correct order at $\{\tilde{s}=0,\tilde{\eta}=0,z=z'\}$ such that 
     \begin{align}
        \mathcal{N}_{\frontface,\ce}(x^{k}\slashed{\partial}_{E})^{2}\tilde{A}=\delta(\tilde{s})\delta(\tilde{\eta})\delta(z-z')-\tilde{R}
    \end{align}
    where $R$ is smooth and compactly supported in a neighbourhood of $\{\tilde{s}=0,\tilde{\eta}=0,z=z'\}$. 
    
     By assumption $\slashed{\partial}_{Z,y'}^{2}$ has trivial kernel, so taking $\varphi_{i,y'}$ to be a basis on eigenfunctions of $\slashed{\partial}_{Z,y'}^{2}$ with eigenvalue $\lambda^{2}>0$, we can expand the kernel $R_{y'}$ of $R$ restricted to the fibre above $y'$
     \begin{align}
         \begin{split}
             R_{y'}&=\sum_{ij}a_{ij}(\tilde{s},\tilde{\eta})\varphi_{i}(z)\varphi_{j}(z') \\
             a_{ij}(\tilde{s},\tilde{\eta})&=\int\langle\tilde{R}(\tilde{s},\tilde{\eta})\varphi_{j},\varphi_{i}(z)\rangle_{y'}dVol_{y'}(z).
        \end{split}
     \end{align}
     This series converges in $\mathcal{S}(\mathbb{R}_{\tilde{s}}\times\mathbb{R}^{b}_{\tilde{\eta}}\colon C^{\infty}(Z^{2}\colon \operatorname{HOM(\mathcal{S})}))$. and the coefficients $a_{ij}$ are compactly supported in a fixed neighbourhood of $\tilde{s}=0,\tilde{\eta}=0$. Thus to cancel away this error term we must solve
      \begin{align}
        \mathcal{N}_{\frontface,\ce}(x^{k}\slashed{\partial}_{E})^{2}\tilde{B}=\tilde{R}.
    \end{align}
    Expanding the kernel of $\tilde{B}$
    \begin{align}\label{errorcancel}
             B_{y'}&=\sum_{ij}b_{ij}(\tilde{s},\tilde{\eta})\varphi_{i}(z)\varphi_{j}(z').
     \end{align}
     We must solve
     \begin{align}
         (-\partial_{\tilde{s}}^{2}+\Delta_{\eta,y'}+\lambda_{i}^{2})b_{ij}(\tilde{s},\tilde{\eta})=a_{ij}(\tilde{s},\tilde{\eta}).
     \end{align}
     This has a unique solution in $\mathcal{S}(\mathbb{R}_{\tilde{s}}\times\mathbb{R}_{\tilde{\eta}}^{b})$ given by
     \begin{align}
         b_{ij}(\tilde{s},\tilde{\eta})=(2\pi)^{-b-1}\int\frac{e^{i(\tilde{\sigma}\tilde{s}+\tilde{E}\cdot\tilde{\eta})}\tilde{a}_{ij}(\tilde{\sigma},\tilde{E})}{\tilde{\sigma}^{2}+\tilde{E}^{2}+\lambda_{i}^{2}}d\tilde{\sigma}d\tilde{E}.
     \end{align}
     This converges to a smooth function which vanishes rapidly in $\tilde{s},\tilde{\eta}$.

     Now $\tilde{A}+\tilde{B}$ is a right inverse for the operator  $\mathcal{N}_{\frontface,\ce}(x^{k}\slashed{\partial}_{E})^{2}$ on the fibre over $y'$. Since $  \mathcal{N}_{\frontface,\ce}(x^{k}\slashed{\partial}_{E})^{2}$ is a smooth family of elliptic operators there is a smooth family of parametrices $A_{y'}$ such that
     \begin{align}
         \mathcal{N}_{\frontface,\ce}(x^{k}\slashed{\partial}_{E})^{2}A_{y'}=\operatorname{Id}-R_{y'}
     \end{align}
     where $R_{y'}$ is a smooth family of smoothing operators. In particular, the family of kernels $B_{y'}$ constructed using this smooth family of remainders is also smooth so that we can take $\tilde{A}+\tilde{B}$ is a smooth over $Y$. We also note that $\tilde{A}+\tilde{B}$ is a left inverse since $\mathcal{N}_{\frontface,\ce}(x^{k}\slashed{\partial}_{E})^{2}$ is self adjoint. Now we define
     \begin{align}
             \mathcal{N}_{\frontface,\ce}(Q'):=\mathcal{N}_{\frontface,\ce}(x^{k}\slashed{\partial}_{E})(\tilde{A}+\tilde{B})=(\tilde{A}+\tilde{B})  \mathcal{N}_{\frontface,\ce}(x^{k}\slashed{\partial}_{E})^{2}\tilde{B}
     \end{align}
     which by construction is in $\Psi^{-1}_{\operatorname{sus}}(\frontface)$ and satisfies \eqref{doublespaceffinverse}.
\end{proof}

Note that for $k=1$, we construct the Green's operator on the edge double space with a single blowup which in our case is $\backface$ blowup. In this case, the normal operator is given by
\begin{align}
    \mathcal{N}_{\backface,\operatorname{e}}(\slashed{\partial})=\cl(\partial_{x})\partial_{s}+\frac{f}{2s}\cl(\partial_{x})+\sum_{ij}\cl(a_{ij}\partial_{y_{i}})a_{ij}(0,y')\partial_{\tilde{\eta}_{i}}+\frac{1}{s}\slashed{\partial}_{Z,y'}.
\end{align}
In this case, having a trivial kernel is not sufficient for this operator to be essentially self-adjoint. The spectrum of the family of operators must be disjoint from $(-\frac{1}{2},\frac{1}{2})$ which is called the geometric Witt condition and the Green's operator will have non-trivial asymptotics at both $\rightface$ and $\leftface$ depending on the spectrum of the family.

Using the inverted normal operator and elliptic parametrix, we can now construct a parametrix for $\slashed{\partial}_{E}$.

\begin{theorem}\label{greenfunction}
    There exists $Q\in x^{k}\Psi^{-1}_{\ce}(M)$ such that $\slashed{\partial}_{E}Q=\operatorname{Id}-R$ and $Q\slashed{\partial}_{E}=\operatorname{Id}-\tilde{R}$ such that the remainders $R,\tilde{R}\in\Psi^{-\infty}_{\ce}(M)$ have kernels which vanish to infinite order at all faces. The operator $\slashed{\partial}_{E}$ is Fredholm on $\mathcal{D}$ and $\mathcal{D}=x^{k}H_{\ce}^{1}(M;E)$. 

    There exists a generalised inverse $G\in \Psi^{-1}_{\ce}(M;E)$ such that
    \begin{align}\label{generalisedinverse}
        G\slashed{\partial}_{E}=\operatorname{Id}-\pi_{K^{H}},\quad \slashed{\partial}_{E}G=\operatorname{Id}-\pi_{K^{L}}
    \end{align}
    where $\pi_{K^{H}},\pi_{K^{L}}\in\Psi^{-\infty}_{\ce}(M;\mathcal{S})$ are the orthogonal projections onto the kernel on $\mathcal{D}$ and $L^{2}$ respectively which have kernels which vanish to infinite order at all faces.
\end{theorem}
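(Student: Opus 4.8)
The plan is to build a two-sided parametrix $Q\in x^{k}\Psi^{-1}_{\ce}(M)$ for $\slashed{\partial}_{E}$ whose remainders are residual and vanish to infinite order at every boundary face of $M^{2}_{\ce}$, and then to read off the Fredholm property, the identification of $\mathcal{D}$, and the generalised inverse. Write $\slashed{\partial}_{E}=x^{-k}(x^{k}\slashed{\partial}_{E})$ with $x^{k}\slashed{\partial}_{E}\in\operatorname{Diff}^{1}_{\ce}(M;E)$. By Lemma \ref{principalsymbol} this operator is $\ce$-elliptic, so the symbol calculus gives a small parametrix $P_{0}\in\Psi^{-1}_{\ce}(M)$ with $(x^{k}\slashed{\partial}_{E})P_{0}=\operatorname{Id}-R_{0}$, $R_{0}\in\Psi^{-\infty}_{\ce}(M)$, and the only obstruction to $R_{0}$ vanishing to infinite order at all faces is its restriction to $\frontface$. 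I remove it by iteration using Lemma \ref{solvingnormaloperator}: $\mathcal{N}_{\frontface,\ce}(x^{k}\slashed{\partial}_{E})$ is invertible in $\Psi^{0}_{\operatorname{sus}}(\frontface)$ with inverse $\mathcal{N}_{\frontface,\ce}(Q')\in\Psi^{-1}_{\operatorname{sus}}(\frontface)$ --- this is where the triviality of $\ker\slashed{\partial}_{Z,y'}$ enters. Given $(x^{k}\slashed{\partial}_{E})P_{j}=\operatorname{Id}-R_{j}$ with $R_{j}\in x^{j}\Psi^{-\infty}_{\ce}$, I form the rescaled normal operator $\mathcal{N}_{\frontface}(x^{-j}R_{j})\in\Psi^{-\infty}_{\operatorname{sus}}(\frontface)$, compose it with $\mathcal{N}_{\frontface,\ce}(Q')$, lift the result through the surjection $\Psi^{-\infty}_{\ce}(M)\twoheadrightarrow\Psi^{-\infty}_{\operatorname{sus}}(\frontface)$ to $C_{j+1}\in x^{j}\Psi^{-\infty}_{\ce}$, and check (using that $\ce$-differential operators act at $\frontface$ by restriction, as in \eqref{xdefiningfunctionlifts}, and that the normal operator is multiplicative across the weight filtration) that $(x^{k}\slashed{\partial}_{E})(P_{j}+C_{j+1})=\operatorname{Id}-R_{j+1}$ with $R_{j+1}\in x^{j+1}\Psi^{-\infty}_{\ce}$. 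Borel-summing the $C_{j+1}$ yields $\hat{Q}\in\Psi^{-1}_{\ce}(M)$ with $(x^{k}\slashed{\partial}_{E})\hat{Q}=\operatorname{Id}-R'$, $R'\in\Psi^{-\infty}_{\ce}$ vanishing to infinite order at all faces; running the same construction on the left and comparing (Lemma \ref{solvingnormaloperator} gives that $\mathcal{N}_{\frontface,\ce}(Q')$ is also a left inverse) lets me take $\hat{Q}$ two-sided.

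Now I rebalance weights. Set $Q:=\hat{Q}\,x^{k}$; since $d(x^{k})$ is $kx^{2k-1}$ times a smooth $\ce$-covector and $2k-1\geq k$, the commutator $[\hat{Q},x^{k}]$ lies in $x^{2k-1}\Psi^{-2}_{\ce}\subset x^{k}\Psi^{-1}_{\ce}$, so $Q=x^{k}\hat{Q}+[\hat{Q},x^{k}]\in x^{k}\Psi^{-1}_{\ce}(M)$. Then $\slashed{\partial}_{E}Q=x^{-k}(x^{k}\slashed{\partial}_{E})\hat{Q}\,x^{k}=x^{-k}(\operatorname{Id}-R')x^{k}=\operatorname{Id}-R$ with $R:=x^{-k}R'x^{k}$; the factors $x^{\pm k}$ cancel at $\frontface$ and are dominated by the infinite-order vanishing of $R'$ at $\leftface$ and $\rightface$, so $R\in\Psi^{-\infty}_{\ce}$ vanishes to infinite order at all faces, and similarly $Q\slashed{\partial}_{E}=\hat{Q}(x^{k}\slashed{\partial}_{E})=\operatorname{Id}-\tilde{R}$ with $\tilde{R}$ of the same type.

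For the Fredholm property and the domain: $\slashed{\partial}_{E}\colon x^{k}H^{1}_{\ce}(M;E)\to L^{2}(M;E)$ is bounded (as $\slashed{\partial}_{E}\in x^{-k}\Psi^{1}_{\ce}$) and so is $Q\colon L^{2}(M;E)\to x^{k}H^{1}_{\ce}(M;E)$, while $R$ and $\tilde{R}$ send $L^{2}$ into $\dot{C}^{\infty}(M;E)$, hence are compact both on $L^{2}$ and into $x^{k}H^{1}_{\ce}$; thus $Q$ is a two-sided parametrix modulo compacts and $\slashed{\partial}_{E}$ is Fredholm. Since $\slashed{\partial}_{E}$ is essentially self-adjoint (previous section), $\mathcal{D}=\mathcal{D}_{\max}(\slashed{\partial}_{E})$; if $u\in\mathcal{D}_{\max}$ then $u=Q\slashed{\partial}_{E}u+\tilde{R}u\in x^{k}H^{1}_{\ce}+\dot{C}^{\infty}(M)=x^{k}H^{1}_{\ce}$, while any $u\in x^{k}H^{1}_{\ce}$ has $\slashed{\partial}_{E}u\in L^{2}$, so $\mathcal{D}=x^{k}H^{1}_{\ce}(M;E)$.

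Finally, by self-adjointness and discreteness of the spectrum, $K:=\ker\slashed{\partial}_{E}\subset\mathcal{D}$ is finite-dimensional with $\operatorname{ran}\slashed{\partial}_{E}=K^{\perp}$, so $\pi_{K^{H}}=\pi_{K^{L}}$ is the orthogonal projection onto $K$; by Theorem \ref{introtheoremmain1} its range consists of smooth sections vanishing to infinite order at $\partial M$, so it has kernel in $\dot{C}^{\infty}(M\times M)$, hence lies in $\Psi^{-\infty}_{\ce}(M;\mathcal{S})$ with kernel vanishing to infinite order at all faces. Taking $G$ to be the inverse of $\slashed{\partial}_{E}\colon\mathcal{D}\cap K^{\perp}\to K^{\perp}$ extended by zero on $K$ gives \eqref{generalisedinverse} by construction; moreover $\slashed{\partial}_{E}(G-Q)=R-\pi_{K^{L}}$ and $(G-Q)\slashed{\partial}_{E}=\tilde{R}-\pi_{K^{H}}$ are residual with $\dot{C}^{\infty}$ kernels, so the parametrix identities give $G-Q=Q(R-\pi_{K^{L}})+\tilde{R}(G-Q)$ and $G-Q=(\tilde{R}-\pi_{K^{H}})Q+(G-Q)R$, and substituting the second into the first writes $G-Q$ as a sum of compositions each involving a residual operator with $\dot{C}^{\infty}$ kernel; since such operators form a two-sided ideal, $G-Q\in\Psi^{-\infty}_{\ce}$ and $G\in\Psi^{-1}_{\ce}(M;E)$. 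The one genuinely new analytic input, invertibility of the suspended normal operator at $\frontface$ (equivalently, solving $(-\partial_{\tilde{s}}^{2}+\Delta_{\tilde{\eta},y'}+\slashed{\partial}_{Z,y'}^{2})$ with $\slashed{\partial}_{Z,y'}$ having trivial kernel), is already supplied by Lemma \ref{solvingnormaloperator}, so the main work --- and the main place to be careful --- is bookkeeping: verifying that the rescaled $\frontface$-normal operators are multiplicative across the weight filtration $x^{j}\Psi^{\bullet}_{\ce}$ so the iteration closes, and tracking the various $x^{k}$ weights so that $Q$ lands in exactly $x^{k}\Psi^{-1}_{\ce}$ and $x^{-k}R'x^{k}$ stays residual.
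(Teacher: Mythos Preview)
Your argument is correct and follows essentially the same route as the paper's proof. The only cosmetic differences are in the order of operations: the paper first chooses $Q'_{1}$ with the right normal operator at $\frontface$, then corrects the conormal singularity using a small parametrix $P$, and finishes with a single Neumann series in $R_{3}\in x\Psi^{-\infty}_{\ce}$; you instead start from the small parametrix and iterate the normal-operator correction term by term before Borel summing. Both are standard and equivalent. For the weight rebalancing you use a commutator argument to place $\hat{Q}x^{k}$ in $x^{k}\Psi^{-1}_{\ce}$, whereas the paper simply notes that $\slashed{\partial}_{E}x^{k}=x^{-k}(x^{k}\slashed{\partial}_{E})x^{k}$ is again $\ce$-elliptic and runs the construction once more; either way one lands in $x^{k}\Psi^{-1}_{\ce}$. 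Your sandwich argument $G-Q=Q(R-\pi_{K^{L}})+\tilde{R}(\tilde{R}-\pi_{K^{H}})Q+\tilde{R}(G-Q)R$ for $G\in\Psi^{-1}_{\ce}$ is the same as the paper's, just written slightly differently.
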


\begin{proof}
    Let $Q'_{1}\in\Psi^{-1}_{\ce}(M)$ be such that its normal operator is $\mathcal{N}_{\frontface,\ce}(Q')$ then 
    \begin{align}
        R_{1}=\operatorname{Id}-(x^{k}\slashed{\partial})Q'_{1}\in \Psi^{0}_{\ce}(M), \quad R_{1}'=\operatorname{Id}-Q'_{1}(x^{k}\slashed{\partial})\in \Psi^{0}_{\ce}(M)
    \end{align}
    vanish to first order at $\frontface$. Now we cancel off the conormal singularity of the remainder. Let $P\in \Psi^{-1}_{\ce}(M)$ be an elliptic parametrix for $x^{k}\slashed{\partial}_{E}$ so that
    \begin{align}
        R_{2}=\operatorname{Id}-(x^{k}\slashed{\partial}_{E})P,\quad R_{2}'=\operatorname{Id}-P(x^{k}\slashed{\partial}_{E})\in \Psi^{-\infty}_{\ce}(M).
    \end{align}
    Then $PR_{1}\in \Psi^{-1}_{\ce}(M)$ satisfies
    \begin{align}
        (x^{k}\slashed{\partial}_{E})(Q_{1}'+PR_{1})=\operatorname{Id}-R_{2}R_{1}
    \end{align}
    In particular the remainder $R_{3}=R_{2}R_{1}\in\Psi^{-\infty}_{\ce}(M)$ and vanishes to first order at $\frontface$ since $R_{1}$ does. Finally we take the asymptotic sum
    \begin{align}
        R_{4}\sim \sum_{i=1}^{\infty}R_{3}^{i}
    \end{align}
    then
    \begin{align}
         (x^{k}\slashed{\partial}_{E})(Q_{1}'+PR_{1})(\operatorname{Id}+R_{4})=\operatorname{Id}+R
    \end{align}    
    where this final remainder $R\in\Psi^{-\infty}_{\ce}(M)$ and vanishes to infinite order at all faces. Thus we have right parametrix $Q=(Q_{1}'+PR_{1})(\operatorname{Id}+R)\in\Psi^{-1}_{\ce}(M)$. Similarly, we can construct a left parametrix $Q_{L}$ which differs from $Q_{L}$ by an element in $\Psi^{-\infty}_{\ce}(M)$ which vanishes to all orders at all faces. In particular, any one of these can be taken to be a left and right parametrix.

    Since $(x)^{k}x^{-k}$ is bounded and non-vanishing in a neighbourhood of $\frontface\cap\diag_{\ce}$, the operator $\slashed{\partial}_{E}x^{k}=x^{-k}(x^{k}\slashed{\partial}_{E})x^{k}$ is also elliptic hence has a left and right parametrix $\tilde{Q}$ as above. Thus we have
    \begin{align}
        \slashed{\partial}_{E}x^{k}\tilde{Q}=\operatorname{Id}-\tilde{R}\quad Qx^{k}\slashed{\partial}_{E}=\operatorname{Id}-R
    \end{align}
    Thus the operator $\slashed{\partial}$ has left and right parametrices themselves given by $x^{k}\tilde{Q}$ and $Qx^{k}$ which again differ by a residual term vanishing to infinite order at $\frontface$. 

    Since $Q\colon x^{a}H^{s}_{\ce}\to x^{a}H^{s+1}_{\ce}$ is bounded and $R$ is bounded between any weighted Sobolev space since it has smooth kernel which vanishes to infinite order at all faces, if $u\in\mathcal{D}$ then $f=\slashed{\partial}_{E}u\in L^{2}$ so
    \begin{align}
        u=Q(x^{k}f)+Ru\in x^{k}H_{\ce}^{1}
    \end{align}
    so we have a bounded inclusion $\mathcal{D}\subset x^{k}H_{\ce}^{1}$. On the other hand, since $x^{k}\slashed{\partial}_{E}\colon x^{k}H_{\ce}^{1}\to x^{k}L^{2}$ we have that $\slashed{\partial}$ is a bounded map from $x^{k}H^{1}_{\ce}\to L^{2}$ hence $x^{k}H^{1}_{\ce}\subset\mathcal{D}$. So we have the equality $\mathcal{D}=x^{k}H_{\ce}^{1}$. The remainders are compact operators $\mathcal{D}\to L^{2}$ hence we have that $\slashed{\partial}$ is Fredholm.

    If $K,R$ are the kernel and range of $\slashed{\partial}_{E}$ then $\slashed{\partial}_{E}$ is invertible from $K^{\perp}\to R$ so we can define the generalised inverse $G$ to be the inverse of this map on $R$ extended to $0$ on $R^{\perp}$ so $G$ satisfies \eqref{generalisedinverse}. Since the kernel is smooth, vanishing to infinite order and $\slashed{\partial}$ is symmetric $\dot{C}^{\infty}(M;\mathcal{S})$ we have the orthogonal decomposition
    \begin{align}
        \dot{C}^{\infty}(M;\mathcal{S})=K\oplus (\operatorname{ran}(\slashed{\partial}_{E})\cap \dot{C}^{\infty}(M;\mathcal{S}))
    \end{align}
    Since this domain is dense in $L^{2}$ and the range is closed, it follows that $K=R^{\perp}$. Since $\dim{\ker}=n<\infty$, if $\varphi_{i}$ is an orthonormal basis for the kernel then the kernel of the projection is
    \begin{align}
        K_{\pi_{K}}(x,x')=\sum_{i=1}^{n}\varphi_{i}(x)\varphi_{i}(x')
    \end{align}
    which is smooth and vanishing to infinite order at all faces.

    It remains to show that $G\in\Psi_{\ce}^{-1}(M;\mathcal{S})$. Using $\operatorname{Id}=\slashed{\partial}Q+R=Q\slashed{\partial}+R'$ can write
    \begin{align}
        \begin{split}
             G&=Q\slashed{\partial}_{E}G+R'G=Q(\operatorname{Id}+\pi_{K^{L}})+R'G \\
             &=Q(\operatorname{Id}+\pi_{K^{L}})+R'(G\slashed{\partial}_{E}Q+GR) \\
             &=Q(\operatorname{Id}+\pi_{K^{L}})+R'(\operatorname{Id}+\pi_{K_{H}})Q+R'GR.
        \end{split}
    \end{align}
    The first two terms are in $\Psi_{\ce}^{-1}(M;\mathcal{S})$ and the $R'GR$ term maps smooth functions to smooth functions hence has smooth kernel and since both $R$ and $R'$ vanish to infinite order at all faces so does $R'GR$ hence is also in $\Psi_{\ce}^{-1}(M;\mathcal{S})$.
\end{proof}

Note that the domain $x^{k}H^{1}_{\ce}(M;E)$ consists of distributions of the form $x^{k}u$ such that $x^{k}\partial_{x}u\in L^{2}$. To understand why the maximal domain should be of this form, consider the simplified case of $L^{2}([0,\infty);x^{k}dx)$ with the operator
\begin{align}
    P=\partial_{x}+\frac{\lambda}{x^{k}}
\end{align}
If $u\in L^{2}$ then $u\in \mathcal{D}_{\operatorname{max}}(P)$ if $Pu\in L^{2}$. Now the general solution to the equation $Pu=f$ is given by
\begin{align}
    u=e^{-\frac{\lambda}{-k+1}x^{-k+1}}\left(\int_{1}^{x}e^{\frac{\lambda}{-k+1}y^{-k+1}}f(y)dy+c\right).
\end{align}
Then
\begin{align}
    x^{k}\partial_{x}(x^{-k}u)=-kx^{-1}u+\partial_{x}u.
\end{align}
Now
\begin{align}
    \partial_{x}u=-\lambda x^{-k}e^{-\frac{\lambda}{-k+1}x^{-k+1}}\int_{1}^{x}e^{\frac{\lambda}{-k+1}y^{-k+1}}f(y)dy+f(x).
\end{align}
So $ x^{k}\partial_{x}(x^{-k})u\in L^{2}$ if $u\in x^{k}L^{2}$. First consider $\lambda<0$ then if $u$ is $L^{2}$ for some $c$, then it is in $L^{2}$ for all $c$ so set $c=0$. Note that if $f$ is compactly supported and $f(y)=y^{-k}$ for small $y$ then $u=-\lambda$ is in $L^{2}$ so for all $\alpha\geq -k$ we have that $f(y)=y^{\alpha}\in L^{2}$. In particular for small $x$
\begin{align}
    \begin{split}
    |u|&=|e^{-\frac{\lambda}{-k+1}x^{-k+1}}\int_{1}^{x}e^{\frac{\lambda}{-k+1}y^{-k+1}}y^{-k}y^{\alpha+k}dy \\
    &\leq \lambda x^{\alpha+k}.
    \end{split}
\end{align}
So $f\in L^{2}$ if and only if $x^{-k}u\in L^{2}$. This still holds for any smooth function in $L^{2}$ compactly supported in a neghbourhood of $0$. Since any $f\in L^{2}$ compactly supported in a neighbourhood of $0$ is in the closure the subset of smooth functions, the bound also shows that $x^{-k}u\in L^{2}$ in this case as well.

If $\lambda>0$ then at most one solution is in $L^{2}$ given by
\begin{align}
    u=e^{-\frac{\lambda}{-k+1}x^{-k+1}}\int_{0}^{x}e^{\frac{\lambda}{-k+1}y^{-k+1}}f(y)dy.
\end{align}
The same arguments go through in this case.

Since $\slashed{\partial}_{E}$ is Fredholm on its self-adjoint domain, the index of $\slashed{\partial}^{+}$
\begin{align}
        \operatorname{ind}(\slashed{\partial}^{+})=\dim(\ker{\slashed{\partial}^{+}})-\dim(\operatorname{coker}{\slashed{\partial}^{+}})=\dim(\ker{\slashed{\partial}^{+}})-\dim(\ker{\slashed{\partial}^{-}}).
\end{align}
is well defined. Similar to the case of a closed manifold, the index satisfies the McKean-Singer formula.

\begin{theorem}
    For all $t>0$, $\operatorname{ind}(\slashed{\partial}^{+})=\operatorname{Str}(e^{-t\slashed{\partial}^{2}})$
\end{theorem}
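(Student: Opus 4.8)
The plan is to prove this in the standard McKean--Singer manner, using the spectral decomposition of $\slashed{\partial}$ together with the $\mathbb{Z}_{2}$-grading. By the essential self-adjointness and discreteness of the spectrum of $\slashed{\partial}$ and $\slashed{\partial}^{2}$ established above, $L^{2}(M;E)$ admits an orthonormal basis of smooth eigenfunctions $\varphi_{n}$ of $\slashed{\partial}^{2}$, vanishing to infinite order at $\partial M$, with eigenvalues $\mu_{n}\geq 0$ obeying Weyl asymptotics; in particular $\mu_{n}\to\infty$ polynomially, so $\sum_{n}e^{-t\mu_{n}}<\infty$ for every $t>0$ and $e^{-t\slashed{\partial}^{2}}$ is trace class. (Equivalently, $e^{-t\slashed{\partial}^{2}}=e^{-t\slashed{\partial}^{2}/2}\circ e^{-t\slashed{\partial}^{2}/2}$ is a composition of Hilbert--Schmidt operators, as their Schwartz kernels on $M^{2}$ are smooth and vanish to infinite order at $\leftface$ and $\rightface$ for $t>0$ by Theorem \ref{heatkernelexistence}.) By the functional calculus for the self-adjoint operator $\slashed{\partial}$, the operator $e^{-t\slashed{\partial}^{2}}$ commutes with $\slashed{\partial}$ and preserves the orthogonal splitting $L^{2}(M;E^{\pm})$, so the supertrace $\operatorname{Str}(e^{-t\slashed{\partial}^{2}})=\operatorname{Tr}\big(e^{-t\slashed{\partial}^{2}}|_{E^{+}}\big)-\operatorname{Tr}\big(e^{-t\slashed{\partial}^{2}}|_{E^{-}}\big)$ is well-defined and absolutely convergent.

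Next I would decompose $L^{2}(M;E)$ into the finite-dimensional eigenspaces $V_{\mu}=\ker(\slashed{\partial}^{2}-\mu)$, each graded as $V_{\mu}=V_{\mu}^{+}\oplus V_{\mu}^{-}$. For $\mu>0$ the operator $\mu^{-1/2}\slashed{\partial}$ maps $V_{\mu}$ to itself (eigenfunctions lie in $\mathcal{D}(\slashed{\partial})$ since they are smooth and vanish to infinite order at $\partial M$), squares to the identity on $V_{\mu}$, and anticommutes with the grading operator; hence it restricts to an isomorphism $V_{\mu}^{+}\to V_{\mu}^{-}$, so $\dim V_{\mu}^{+}=\dim V_{\mu}^{-}$. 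Therefore every positive eigenvalue cancels in the supertrace and
\begin{align}
    \operatorname{Str}(e^{-t\slashed{\partial}^{2}})=\sum_{\mu\geq 0}e^{-t\mu}\left(\dim V_{\mu}^{+}-\dim V_{\mu}^{-}\right)=\dim V_{0}^{+}-\dim V_{0}^{-}.
\end{align}

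Finally I would identify $V_{0}=\ker\slashed{\partial}^{2}=\ker\slashed{\partial}$: the inclusion $\supseteq$ is immediate, and if $\slashed{\partial}^{2}u=0$ then $\lVert\slashed{\partial}u\rVert^{2}=\langle\slashed{\partial}^{2}u,u\rangle=0$ because $u\in\mathcal{D}(\slashed{\partial}^{2})\subset\mathcal{D}(\slashed{\partial})$. Thus $V_{0}^{\pm}=\ker\slashed{\partial}\cap L^{2}(M;E^{\pm})=\ker\slashed{\partial}^{\pm}$. Combining this with $\operatorname{coker}\slashed{\partial}^{+}\cong\ker(\slashed{\partial}^{+})^{*}=\ker\slashed{\partial}^{-}$, which follows from the Fredholm property and self-adjointness of $\slashed{\partial}$ on $\mathcal{D}$ (Theorem \ref{greenfunction}), yields $\operatorname{Str}(e^{-t\slashed{\partial}^{2}})=\dim\ker\slashed{\partial}^{+}-\dim\ker\slashed{\partial}^{-}=\operatorname{ind}(\slashed{\partial}^{+})$ for all $t>0$. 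The only points needing care are the trace-class property and the legitimacy of rearranging the spectral sum, both controlled by the Weyl asymptotics established earlier; the remainder of the argument is formal.
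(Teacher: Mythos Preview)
Your proof is correct and follows essentially the same approach as the paper: both use the standard McKean--Singer argument that for nonzero eigenvalues $\mu$ of $\slashed{\partial}^{2}$, the operator $\slashed{\partial}$ restricts to an isomorphism $V_{\mu}^{+}\to V_{\mu}^{-}$, so all nonzero eigenvalues cancel in the supertrace. Your version is somewhat more detailed, explicitly treating the trace-class property, the identification $\ker\slashed{\partial}^{2}=\ker\slashed{\partial}$, and the cokernel identification, whereas the paper simply cites the closed-manifold argument.
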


\begin{proof}
    This follows from the standard arguments used when $M$ is a closed manifold. Let $\varphi\in L^{2}(M;R^{\pm})$ be an eigenfunction of $\slashed{\partial}^{2}$ with eigenvalue $\lambda^{2}\neq 0$ which is smooth and vanishing to infinite order at $\partial M$. Then $\slashed{\partial}^{2}(\slashed{\partial}\varphi)=\lambda^{2}(\slashed{\partial})\varphi\in L^{2}(M;E^{\mp})$ so the even and odd $\lambda^{2}_{i}$ eigenspaces $V^{\mp}_{i}$ which have the same finite dimension. So
    \begin{align}
        \operatorname{Str}(e^{-t\slashed{\partial}^{2}})=\sum_{i}e^{-t\lambda_{i}^{2}}(\dim(V^{+}_{i})-\dim(V^{-}_{i}))=\dim(\ker{\slashed{\partial}^{+}})-\dim(\ker{\slashed{\partial}^{-}}).
    \end{align}
\end{proof}
Now consider the family of metrics $g_{t}=g_{0}+th$ where $h$ is the $O(x^{k})$ polyhomogenous error term of $g$ with respect to the product type metric $g_{0}$. Consider the odd dimensional manifold with corner $\tilde{M}=M\times [0,1]$ with metric $\tilde{g}=dt^{2}+g_{t}$. If $M$ is spin then $\tilde{M}$ is spinnable and we can choose a spin structure on $\tilde{M}$ such that its restriction to $M\times\{1\}$ is the spin structure on $M$. Since $\tilde{M}$ has odd dimension, the restriction of the associated spinor bundle $\tilde{\mathcal{S}}\to\tilde{M}$ to $M\times\{1\}$ can be identified with the spinor bundle on $(M,g)$. Similarly the restriction to $M\times\{t\}$ can be identified with the spinor bundle of the induced spin structure on $(M,g_{t})$ which we denote $\mathcal{S}_{t}\to M$.

Now all these bundles are diffeomorphic, in particular we can identify them all with $\mathcal{S}_{0}\to M$ by parallel transport by the vector field $\partial_{t}$  with respect to the connection $\tilde{\nabla}$ of $\tilde{g}$. This gives us an identification of the smooth sections $C^{\infty}(M;\mathcal{S}_{0})\simeq C^{\infty}(M;\mathcal{S}_{t})$ and we can regard the associated spin Dirac operators $\slashed{\partial}_{t}$ as a smooth family of operators on $C^{\infty}(M;\mathcal{S}_{0})$ in the sense that the family of endomorphisms $\nabla^{\mathcal{S},t}-\nabla^{\mathcal{S},0}$ and Clifford multiplication $\cl_{\ice,t}$ depend smoothly on $t$.

Let $\partial_{x},\tilde{U}_{j},V_{i}$ be a local orthonormal frame of $\ice TM|_{\partial M}$ with respect to $g_{0}$, hence for all $g_{t}$ since the error vanishes at the boundary. Furthermore, these vector fields are all parallel with respect to $\tilde{\nabla}_{\partial_{t}}$ so, identified as operators on $\mathcal{S}_{0}$, all the $\slashed{\partial}_{t}$ have the same normal operator as that of $\slashed{\partial}_{0}$. 

If $F_{t}\colon\ice TM_{0}\to \ice TM_{t}$ and $G_{t}\colon\mathcal{S}_{0}\to\mathcal{S}_{t}$ the isomorphisms induced by parallel transport then, making the identifications of $\ice TM$ with $^{\ce}TM$ described in \ref{principalsymbol}, the principal symbol is the endomorphism
\begin{align}
    \begin{split}
        \sigma_{1,\ce}(x^{k}\slashed{\partial}_{t})(\xi)(s)&=G^{-1}_{t}(i\cl_{\ce,t}(F_{t}(\xi))(G_{t}s)) \\
        \sigma_{2,\ce}((x^{k}\slashed{\partial}_{t})^{2})(\xi)(x)&=\lvert F_{t}(\xi)\rvert_{x^{2k}g_{t}}s
    \end{split}
\end{align}
where $\cl_{t}$ is Clifford multiplication with respect to the metric $g_{t}$ on $\mathcal{S}_{t}$. In particular, the $\slashed{\partial}_{t}$ are a family of $\ce$-elliptic first order differential operators with smoothly varying principal symbol.

Since $\slashed{\partial}_{t}$ have the same normal operators and smoothly varying principal symbol we can construct a smooth family of parametrices for $\slashed{\partial}_{t}$ just following the proof of Theorem \ref{greenfunction} and using a smooth family of small elliptic parametrices.

\begin{lemma}\label{deformtoproducttype}
    The operators $\slashed{\partial}_{t}$ are a smooth family of Fredholm operators on $x^{k}H_{\ce}^{1}(M;\mathcal{S})\to L^{2}$ so
    \begin{align}
        \operatorname{ind}(\slashed{\partial})=\operatorname{ind}(\slashed{\partial}_{0}).
    \end{align}
\end{lemma}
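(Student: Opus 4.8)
The plan is to present $\{\slashed{\partial}_{t}\}_{t\in[0,1]}$ as a norm-continuous family of Fredholm operators acting between two fixed Banach spaces, and then apply the homotopy invariance of the Fredholm index. The work splits into identifying the function spaces uniformly in $t$, assembling a smooth family of parametrices, and invoking the functional-analytic conclusion.

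First I would fix the ambient spaces once and for all. Since $g_{t}-g_{0}=O(x^{k})$, the volume densities $\operatorname{dVol}_{g_{t}}$ and the Hermitian metrics on $\mathcal{S}_{t}$ differ from those of $g_{0}$ by factors of the form $1+O(x^{k})$, which are bounded above and below in a neighbourhood of $\partial M$. Combined with the parallel-transport isomorphisms $G_{t}\colon\mathcal{S}_{0}\to\mathcal{S}_{t}$ (isometric for the metric connection $\tilde{\nabla}$ of $\tilde{g}$), this identifies $L^{2}(M;\mathcal{S}_{t})$ and $x^{k}H^{1}_{\ce}(M;\mathcal{S}_{t})$ with the fixed spaces $L^{2}(M;\mathcal{S}_{0})$ and $x^{k}H^{1}_{\ce}(M;\mathcal{S}_{0})$, with $t$-uniformly equivalent norms. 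Under these identifications $t\mapsto\slashed{\partial}_{t}$ is a smooth family of bounded operators $x^{k}H^{1}_{\ce}(M;\mathcal{S}_{0})\to L^{2}(M;\mathcal{S}_{0})$, because, as already observed before the statement, the Clifford multiplications and the connection differences $\nabla^{\mathcal{S},t}-\nabla^{\mathcal{S},0}$ depend smoothly on $t$ with the correct $\ce$-orders.

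Next I would use the smooth family of parametrices constructed in the discussion preceding the lemma. Since each $g_{t}$ is an exact incomplete cusp edge metric with the same product-type model $g_{0}$, its restriction to the boundary structure — hence the induced boundary family $\slashed{\partial}_{Z}$ (still invertible) and the normal operator at $\frontface$ of Lemma \ref{solvingnormaloperator} — is independent of $t$, so the inverted normal operator $\mathcal{N}_{\frontface,\ce}(Q')$ built there serves for all $t$. Running the proof of Theorem \ref{greenfunction} with this fixed $\mathcal{N}_{\frontface,\ce}(Q')$ and a smooth family of small elliptic parametrices for the $\ce$-elliptic operators $x^{k}\slashed{\partial}_{t}$ yields $Q_{t}\in x^{k}\Psi^{-1}_{\ce}(M;\mathcal{S})$ depending smoothly on $t$ with
\begin{align}
    \slashed{\partial}_{t}Q_{t}=\operatorname{Id}-R_{t},\qquad Q_{t}\slashed{\partial}_{t}=\operatorname{Id}-\tilde{R}_{t},
\end{align}
where $R_{t},\tilde{R}_{t}\in\Psi^{-\infty}_{\ce}(M;\mathcal{S})$ have kernels vanishing to infinite order at all boundary hypersurfaces and vary smoothly in $t$; such residual operators are Hilbert--Schmidt, hence compact, as maps between the fixed weighted Sobolev spaces.

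Finally I would conclude. The parametrix identities show that each $\slashed{\partial}_{t}\colon x^{k}H^{1}_{\ce}(M;\mathcal{S}_{0})\to L^{2}(M;\mathcal{S}_{0})$ is Fredholm, and $t\mapsto\slashed{\partial}_{t}$ is continuous in operator norm, so by the \emph{stability of the Fredholm index under norm-small and compact perturbations} the integer $\operatorname{ind}(\slashed{\partial}_{t})$ is locally constant, whence $\operatorname{ind}(\slashed{\partial})=\operatorname{ind}(\slashed{\partial}_{1})=\operatorname{ind}(\slashed{\partial}_{0})$. Since Theorem \ref{greenfunction} applies verbatim to each $\slashed{\partial}_{t}$ (the boundary family being invertible and $t$-independent), the unique self-adjoint domain of $\slashed{\partial}_{t}$ equals $x^{k}H^{1}_{\ce}(M;\mathcal{S})$ for every $t$, so the Banach-space index computed above coincides with the index of $\slashed{\partial}_{t}^{+}$ on its self-adjoint domain, which is what the statement asserts. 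The only point demanding genuine care — everything else being formal once the family of parametrices is in hand — is the uniform equivalence of the function spaces near the singular stratum: one must check that the $O(x^{k})$ smallness of $h$, together with the weight $x^{k}$ built into the domain, really does make the norms and the $L^{2}$ pairings $t$-uniformly comparable, and this is precisely where the exact cusp edge condition $g_{t}-g_{0}=O(x^{k})$ enters.
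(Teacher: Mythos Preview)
Your proposal is correct and follows essentially the same approach as the paper: the paper's argument (given in the paragraph immediately preceding the lemma rather than as a formal proof) is that the $\slashed{\partial}_{t}$ share a common normal operator at $\frontface$ and have smoothly varying $\ce$-principal symbols, so one may run the construction of Theorem~\ref{greenfunction} with a smooth family of small elliptic parametrices to produce a smooth family of parametrices, whence Fredholmness and equality of indices follow. Your write-up makes explicit the identification of the function spaces and the homotopy-invariance step that the paper leaves implicit, but the underlying strategy is the same.
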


The situation is similar for a Clifford connection $\nabla^{E}$ on $\ice\mathbb{C}l(M)$-module $E$ for an arbitrary exact ice metric. Again consider $M\times[0,1]$ with metric $g_{0}+tg$ and take a Clifford module $\tilde{E}$ whose restriction $E_{M\times\{1\}}$ is isomorphic to $E$ as a $\mathbb{C}l\ice(M)$-module. Take any Clifford connection $\nabla^{\tilde{E}}$ on $\tilde{E}$ then its restriction to $M\times[0,1]$ is also a Clifford connection on $E$ is the difference
\begin{align}
    A=\nabla^{E}-\nabla^{\tilde{E}}|_{M\times[0,1]}
\end{align}
is a section of $\operatorname{End}_{\ice\mathbb{C}l(M)}$ which can be regarded as a section of $\operatorname{End}_{\ce\mathbb{C}l(M\times[0,1])}(\tilde{E})|_{M\times 1}$. Thus we can extend to a section $\tilde{A}$ of $\operatorname{End}_{\ice\mathbb{C}l(M\times[0,1])}(\tilde{E})$. Then the connection
\begin{align}
    \nabla^{\tilde{E}\prime}=\nabla^{\tilde{E}}+\tilde{A}
\end{align}
is also a $\ice\mathbb{C}l (M\times[0,1])$ connection on $E$ whose restriction to $M\times [0,1]$ is equal to $\nabla^{E}$. Now we are in the same situation as above and we can define a family of operator $\ce$-elliptic operators $x^{k}\slashed{\partial}_{E,t}$ on $C^{\infty}(M,E_{0})$ such that $x^{k}\slashed{\partial}_{E,1}$ can be identified with $x^{k}\slashed{\partial}_{E}$ acting on $C^{\infty}(M,E_{0})$ and $x^{k}\slashed{\partial}_{E,0}$ is a Dirac operator associated to a Clifford connection on a space with product-type metric. Repeating the same arguments, $\slashed{\partial}_{E,t}$ defines a smooth family of Fredholm operators on $x^{k}H_{\ce}^{1}(M;E)\to L^{2}(M,E)$ so have equal indices.

Now we have reduced the calculation of the index to Clifford modules on spaces with product type metrics and we can simplify the situation further. Recall that in this case we have defined a connection $\nabla^{E,0}=\partial_{x}\otimes dx+\tilde{\nabla}^{E}$ such and a section of $\operatorname{End}(E)$ near the boundary
\begin{align}
    \tilde{\omega}=\nabla^{E}-\nabla^{E,0}
\end{align}
which vanishes at the boundary and is polyhomogeneous. Locally, using an identification $E\simeq \mathcal{S}\otimes W$ and the asymptotics of the Levi-Civita connection which implies the action of the connection $\nabla^{S}$ is of the form
\begin{align}
        \nabla^{\mathcal{S}}&=  d+\tilde{A}+x^{k-1}B 
\end{align}
it follows that every term in the asymptotic expansion of $\tilde{\omega}$ of order less than $x^{k-1}$ is a section of $\operatorname{End}_{\ice\mathbb{C}l(M)}(E)$. Take any polyhomogeneous section $\tilde{\Omega}$ of this bundle whose asymptotic expansion matches that of $\tilde{\omega}$ up to order $x^{k-1}$ so that the endomorphism
\begin{align}
    \omega'=\tilde{\omega}-\tilde{\Omega}
\end{align}
is polyhomogenous and vanishes to order $O(x^{k-1})$. Moreover, the connection
\begin{align}
    \nabla^{E_{0}}=\nabla^{E,0}+\omega'
\end{align}
is a Clifford connection since it differs from $\nabla^{E}$ by a section of $\operatorname{End}_{\mathbb{C}l\ice(M)}(E)$.

Now consider the family of connections defined by
\begin{align}
    \nabla^{E,t}=\nabla^{E_{0}}+t\tilde{\Omega}
\end{align}
 which is equal to $\nabla^{E}$ at $t=1$. This gives us a family of Dirac operators $\nabla^{E}_{t}$ which have equal normal operators as $\frontface$. In the same way as for the family of operators $\slashed{\partial}_{t}$ we have the following.

\begin{lemma}\label{twistedreducetoproductlemma}
    Let $(M,g)$ be incomplete cusp edge space $E$ be an ice Clifford module on M and $\nabla^{E}$ a Clifford connection. Then if the boundary family has trivial kernel then $\slashed{\partial}_{E}$ is a Fredholm operator from $x^{k}H^{1}_{\ce}(M,E)\to L^{2}(M,E)$. 
    
    Moreover, there exists a Clifford module $E_{0}$ on the $(M,g_{0})$ for an associated product type metric $g_{0}$ with $E_{0}|_{\partial M}\simeq E|_{\partial M}$ as boundary Clifford modules. There is a Clifford connection $\nabla^{E_{0}}$ on $E_{0}$ which satisfies assumption \ref{assumption1} such that the Dirac operator $\slashed{\partial}_{E_{0}}$ associated to the connection $\nabla^{E_{0}}$ has boundary family which can be identified with that of $\slashed{\partial}_{E}$. 
    
    The operator $\slashed{\partial}_{E_{0}}$ is Fredholm on $x^{k}H^{1}_{\ce}(M,E_{0})\to L^{2}(M,E_{0})$ and  
    \begin{align}
        \operatorname{ind}(\slashed{\partial}_{E})=\operatorname{ind}(\slashed{\partial}_{E,0}).
    \end{align}
\end{lemma}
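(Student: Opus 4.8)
The plan is to read off the three assertions from the Green's operator construction of Theorem \ref{greenfunction}, the two deformation constructions set up in the paragraphs immediately preceding the statement, and homotopy invariance of the Fredholm index.

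First I would treat the Fredholm claim for $\slashed{\partial}_E$ itself. The only feature of the boundary geometry used in the proof of Theorem \ref{greenfunction} is that the normal operator $\mathcal{N}_{\frontface,\ce}(x^k\slashed{\partial}_E)$ is invertible inside $\Psi^{-1}_{\operatorname{sus}}(\frontface)$; by Lemma \ref{solvingnormaloperator} this operator equals $\cl(\partial_x)\partial_{\tilde{s}}+\sum_{ij}\cl(a_{ij}\partial_{y_i})a_{ij}(0,y')\partial_{\tilde{\eta}_i}+\slashed{\partial}_{Z,y'}$ for \emph{any} Clifford connection, since the vertical component of $\nabla^E-\nabla^{E,0}$ vanishes at $\partial M$ and hence contributes nothing at $\frontface$ after multiplication by $x^k$. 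So the hypothesis that $\slashed{\partial}_{E,Z}$ has trivial kernel is exactly what is needed (through Lemma \ref{solvingnormaloperator}) to invert $\mathcal{N}_{\frontface,\ce}$, and the construction of Theorem \ref{greenfunction} runs verbatim, producing $Q\in x^k\Psi^{-1}_{\ce}(M)$ with $\slashed{\partial}_E Q=\operatorname{Id}-R$, $Q\slashed{\partial}_E=\operatorname{Id}-\tilde R$ and $R,\tilde R\in\Psi^{-\infty}_{\ce}(M)$ vanishing to infinite order at all faces; such remainders are compact on the weighted Sobolev scale, so $\slashed{\partial}_E\colon x^kH^1_{\ce}(M,E)\to L^2(M,E)$ is Fredholm.

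Next I would invoke the construction of $E_0$ and $\nabla^{E_0}$ carried out above: passing to $\tilde M=M\times[0,1]$ with metric $dt^2+g_t$, $g_t=g_0+th$, and extending $E$ with a Clifford connection gives a Clifford module $E_0$ over $(M,g_0)$ with $E_0|_{\partial M}\simeq E|_{\partial M}$, and one then replaces its connection by $\nabla^{E_0}=\nabla^{E,0}+\omega'$, where $\omega'=\tilde\omega-\tilde\Omega$, $\tilde\omega=\nabla^E-\nabla^{E,0}$, and $\tilde\Omega$ is a polyhomogeneous section of $\operatorname{End}_{\ice\mathbb{C}l(M)}(E)$ agreeing with $\tilde\omega$ modulo $O(x^{k-1})$. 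Since, as established above, every term of $\tilde\omega$ of order below $x^{k-1}$ is already a section of $\operatorname{End}_{\ice\mathbb{C}l(M)}(E)$, the endomorphism $\tilde\Omega$ can be chosen in that bundle, which makes $\nabla^{E_0}$ again a Clifford connection for $g_0$ and forces $\tilde\omega^{E_0}=\omega'=x^{k-1}\omega^{E_0}$, i.e.\ Assumption \ref{assumption1}. Because $h=O(x^k)$, $\omega'=O(x^{k-1})$ and $k\geq 2$, the metrics and connections agree on $\partial M$, so $\slashed{\partial}_{E_0}$ restricts to the same fibrewise operator over $\partial M$ as $\slashed{\partial}_E$; its boundary family is therefore the trivial-kernel family of $\slashed{\partial}_E$, and $\slashed{\partial}_{E_0}\colon x^kH^1_{\ce}(M,E_0)\to L^2(M,E_0)$ is Fredholm by the previous paragraph (now genuinely under Assumption \ref{assumption1}).

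Finally I would prove $\operatorname{ind}(\slashed{\partial}_E)=\operatorname{ind}(\slashed{\partial}_{E_0})$ by homotopy. Realizing the operators of $(g_t,\nabla^E)$ on $C^\infty(M,E_0)$ via $\partial_t$-parallel transport yields a one-parameter family from $\slashed{\partial}_E$ at $t=1$ to a Dirac operator on the product-type space $(M,g_0)$ at $t=0$, and the connections $\nabla^{E_0}+t\tilde\Omega$ on $(M,g_0)$ yield a second family from $\slashed{\partial}_{E_0}$ at $t=0$ which at $t=1$ is that same product-type operator; concatenating, $\slashed{\partial}_E$ and $\slashed{\partial}_{E_0}$ are joined by a path of operators. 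In each family the $t$-dependent correction ($th$ for the metric, $(1-t)\tilde\Omega$ or $t\tilde\Omega$ for the connection) vanishes at $\partial M$ to an order that, after multiplication by $x^k$ and lifting to $M^2_{\ce}$, vanishes at $\frontface$; so by the computation behind Lemma \ref{solvingnormaloperator} the normal operator at $\frontface$ is independent of $t$ and equal to the invertible operator above. The principal symbols vary smoothly with $t$ (they are $i\cl_{\ce,t}(F_t(\,\cdot\,))$ in the earlier notation), so a smooth family of small elliptic parametrices exists, and running the proof of Theorem \ref{greenfunction} with $t$ a parameter produces a smooth family of parametrices, hence a norm-continuous family of Fredholm operators $x^kH^1_{\ce}(M,\,\cdot\,)\to L^2(M,\,\cdot\,)$; invariance of the index then gives the claim. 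The one genuine obstacle is the $t$-uniformity of this parametrix construction — that the successive remainders can be chosen smooth in $t$ and uniformly compact — which should follow, exactly as in Lemma \ref{deformtoproducttype}, from the fact that invertibility of $\mathcal{N}_{\frontface}$ and symbolic ellipticity are open, smoothly varying conditions.
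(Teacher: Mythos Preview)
Your proposal is correct and follows essentially the same route as the paper. The paper does not supply an explicit proof for this lemma; it is stated as following ``in the same way as for the family of operators $\slashed{\partial}_t$'' from the two deformations described in the paragraphs immediately preceding it (first the metric deformation via $M\times[0,1]$, then the connection deformation $\nabla^{E,t}=\nabla^{E_0}+t\tilde{\Omega}$), together with the observation that both families have constant normal operator at $\frontface$ and smoothly varying principal symbol so that the parametrix construction of Theorem \ref{greenfunction} runs with $t$ as a smooth parameter. Your write-up is a faithful and somewhat more explicit fleshing-out of exactly this argument; the one phrasing I would tighten is your justification that the normal operator at $\frontface$ is independent of Assumption \ref{assumption1}: the cleaner way to say this (and the way the paper puts it in the heat-space normal operator lemma) is that after multiplying by $x^k$ each extra endomorphism term $x^{-k+j}B_j$ with $j\geq 1$ acquires a positive power of $x$ and hence restricts to zero at $\frontface$, while the $j=0$ piece is already absorbed into $\slashed{\partial}_{E,Z}$.
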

Thus Lemma allows us to reduce the calculation of an index formula to the case of product type metrics and Clifford connections satisfying assumption 1. We will say that the Clifford connection $\nabla^{E}$ satisfy assumptions 2 and 3 if the associated connection $\nabla^{E_{0}}$ constructed above satisfies these assumptions.
 
If we had an actual twisting of the spinor bundle $\mathcal{S}\otimes W$ then the situation can be made even simpler. We can modify the twisting connection as described above
\begin{align}
    \nabla^{W,0}=dx\otimes\partial_{x}+\tilde{\nabla}^{W}.
\end{align}
Then the connection
\begin{align}
    \nabla^{\tilde{S}}\otimes\operatorname{Id}+\operatorname{Id}\otimes\nabla^{W,0}
\end{align}
is a Clifford connection since only the twisting connection has been modified and the only vanishing endomorphism terms which appear in the local expression for this connection are those that come from the spin connection. So formally, the calculation of an index formula in this case is the same as for the spin Dirac operator.

Note that since the heat kernel construction also goes through for these operators, the McKean-Singer formula also holds for them. Thus, in principle, one can directly find an index formula for them by considering the small time asymptotics of the heat kernel however the above lemmas allow us to just consider the operators $\slashed{\partial}_{0}$ and $\slashed{\partial}_{E,0}$ whose heat kernels have much simpler small time asymptotics.

We also note that although we could have taken any connection which is equal to $dx\otimes\partial_{x}+\tilde{\nabla}^{E}$ in a neighbourhood of the boundary and still obtained a Dirac-type operator, that is one that squares to a generalised Laplacian, which has the same index, this connection would no longer be a Clifford connection so would no longer satisfy the Lichnerowicz formula and other properties which are needed in the rescaling proof of a local index formula.

\section{Pushforward formula}\label{pushforward}

We now calculate the contribution from $\text{ff}$ to the asymptotic expansion of $\operatorname{Str}(e^{-t\slashed{\partial}^{2}})$ at $t=0$ for a product-type metric. Let $\text{diag}_{h}$$=\overline{\beta^{-1}(\text{diag}\times (0,\infty))}$ then the composition
\begin{align}\label{3.18}
    \sigma: \text{diag}_{h}\to \text{diag}\times [0,\infty)\to [0,\infty)
\end{align}
is a b-map. Let $\mu$ be the pullback of $\operatorname{dVol}$ to $\text{diag}_{h}$ then heat trace is given by
\begin{align}\label{3.19}
    \operatorname{Tr}(e^{-t\slashed{\partial}_{M}^{2}})\frac{d\tau}{\tau}=\sigma_{\star}(H_{t}|_{\text{diag}_{h}}\mu\sigma^{\star}(\frac{d\tau}{\tau})).
\end{align}
Let $\text{sf}, \text{bf}^{d}, \text{ff}^{d}, \text{tf}^{d}$ be the boundary hypersurfaces of $\text{diag}_{h}$ which are its intersections with $\text{lf}\cap\text{rf}, \text{bf}, \text{ff}, \text{tf}$ respectively. At $\text{ff}^{d}$ and $\text{tf}^{d}$, away from $\text{bf}^{d}$, we have coordinates
\begin{align}\label{tracespacecoord}
    \tilde{T}=\frac{t^{\frac{1}{2}}}{x^{k}},x,y,z.
\end{align}
The pullback of the volume form as a b-density in these coordinates is then
\begin{align}\label{3.21}
    \mu\sigma^{\star}(\frac{d\tau}{\tau})=x^{kf+1}(1+O(x^{2k}))\frac{dx\operatorname{dVol_{Y}}\operatorname{dVol}_{Z_{y}}d\tilde{T}}{x\tilde{T}}
\end{align}
where $\operatorname{dVol}_{Z_{y}}:=\sqrt{\det(g_{Z_{y}})}dz$ restricts to the volume form on each fibre $(Z_{y},g_{Y,y})$ and $\operatorname{dVol}_{Y}:=\sqrt{\det(g_{Y})}dy$ is the pullback of the volume form from the base $(Y,g_{Y})$ and the $O(x^{2k})$ term is a smooth error which comes from the off block diagonal terms in the metric in these coordinates (i.e. between $\partial_{y},\partial_{z}$), so in particular, this error term does not appear when the base is a point.

Directly from the index sets of the heat kernel we have that the index sets of $\tilde{H}_{t}|_{\text{diag}_{h}}$ are
\begin{align}\label{3.22}
    \mathcal{E}(\text{bf}^{d})=\varnothing, \quad \mathcal{E}(\text{ff}^{d})=-nk+\mathbb{N}, \quad \mathcal{E}(\text{tf})=-n+\mathbb{N}.
\end{align}
Then considering $H_{t}|_{\text{diag}_{h}}\text{dVol}\frac{dt}{t}$ as a b-density, by \eqref{3.21} it has the following index sets.
\begin{align}\label{3.23}
    \mathcal{E}(\text{bf}^{d})=\varnothing, \quad \mathcal{E}(\text{ff}^{d})=-k(b+1)+1+\mathbb{N}, \quad \mathcal{E}(\text{tf})=-n+\mathbb{N}.
\end{align}
For the following Lemma, we define the finite part of integral the integral of $u(x)\frac{dx}{x}$ denoted by
\begin{align}
    \dashint_{0}^{\infty}u(x)\frac{dx}{x}
\end{align}
to be the constant term in the asymptotic expansion of $\int_{\epsilon}^{\infty}u(x)\frac{dx}{x}$ and $\epsilon\to 0$.

\begin{figure}[H]
\centering
    \begin{tikzpicture}[scale=1.5,rotate around x=90,rotate around z=90,z={(0,0,-1)},y={(0,-1,0)}]
        \draw[gray, thick] ({sqrt(2)},0,0) -- (3,0,0) node[pos=1,left] {$x$};
        \draw[black, thick] (0,{sqrt(2)},0) -- (0,3,0) node[pos=1,right] {$x'$};
        \draw[red, thick] (0,0,{sqrt(2)}) -- (0,0,3) node[pos=1,above] {$t$};

        \draw[domain=0:33, smooth, variable=\x, purple, thick] plot ({sqrt(2)*cos(\x)}, {sqrt(2)*(sin(\x)}, {0}); 
        \draw[domain=58.1:90, smooth, variable=\x, purple, thick] plot ({sqrt(2)*cos(\x)}, {sqrt(2)*(sin(\x)}, {0}); 
        \draw[domain=0:90, smooth, variable=\x, purple, thick] plot ({sqrt(2)*cos(\x)}, {0}, {sqrt(2)*(sin(\x)}); 
        \draw[domain=0:90, smooth, variable=\x, purple, thick] plot ({0}, {sqrt(2)*cos(\x)}, {sqrt(2)*(sin(\x)}); 
        \draw[domain=15:90, smooth, variable=\x, purple, dotted] plot ({cos(\x)}, {cos(\x)}, {sqrt(2)*(sin(\x)}); 

        \draw[domain=-45:29, smooth, variable=\x, orange, thick] plot ({1+0.3*cos(\x)}, {1+0.3*(sin(\x)}, {0}); 
        \draw[domain=63:135, smooth, variable=\x, orange, thick] plot ({1+0.3*cos(\x)}, {1+0.3*(sin(\x)}, {0}); 
        \draw[domain=0:180, smooth, variable=\x, orange, thick] plot ({1+0.3*(1/(sqrt(2)))*cos(\x)}, {1-0.3*(1/(sqrt(2)))*cos(\x)}, {0.3*sin(\x)}); 
        \draw[domain=0:75, smooth, variable=\x, orange, dotted] plot ({1+0.3*(1/(sqrt(2)))*sin(\x)}, {1+0.3*(1/(sqrt(2)))*sin(\x)}, {0.3*cos(\x)}); 

        \draw[domain=0:180, smooth, variable=\x, cyan, thick] plot ({1+0.3*(1/(sqrt(2)))+0.1*(1/(sqrt(2)))*cos(\x)}, {1+0.3*(1/(sqrt(2)))-0.1*(1/(sqrt(2)))*cos(\x)}, {0.1*sin(\x)}); 

        \draw[domain=0:180, smooth, variable=\x, cyan, thick] plot ({3+1+0.3*(1/(sqrt(2)))+0.1*(1/(sqrt(2)))*cos(\x)}, {3+1+0.3*(1/(sqrt(2)))-0.1*(1/(sqrt(2)))*cos(\x)}, {0.1*sin(\x)});
        
        \draw[blue, dotted] ({1+0.3*(1/sqrt(2))},{1+0.3*(1/(sqrt(2)))},0.1) -- (4.2,4.2,0.1) node[pos=1.1,below] {}; 
        \draw[blue, thick] ({1+0.3*(1/(sqrt(2)))+0.1*(1/(sqrt(2)))}, {1+0.3*(1/(sqrt(2)))-0.1*(1/(sqrt(2)))}, {0}) -- ({1+0.3*(1/(sqrt(2)))+0.1*(1/(sqrt(2)))+3}, {1+0.3*(1/(sqrt(2)))-0.1*(1/(sqrt(2)))+3}, {0}) ; 
        \draw[blue, thick] ({1+0.3*(1/(sqrt(2)))-0.1*(1/(sqrt(2)))}, {1+0.3*(1/(sqrt(2)))+0.1*(1/(sqrt(2)))}, {0}) -- ({1+0.3*(1/(sqrt(2)))-0.1*(1/(sqrt(2)))+3}, {1+0.3*(1/(sqrt(2)))+0.1*(1/(sqrt(2)))+3}, {0}) ; 

        \draw[->,black,thick] (0,4,1.5) -- (0,3,1.5) node[pos=0.5,above] {};

        \draw[black, thick] (0,{5+(cos(30))+(0.75)},0) -- (0,8,0) node[pos=1,right] {$x$};
        \draw[red, thick] (0,5,1.5) -- (0,5,3) node[pos=1,above] {$t$};
        \draw[domain=30:90, smooth, variable=\x, orange, thick] plot ({0}, {5+(cos(\x)}, {1.5*sin(\x)}); 
        \draw[domain=0:90, smooth, variable=\x, orange, thick] plot ({0}, {5+(cos(30))+(0.75*cos(\x)}, {0.75*sin(\x)}); 


        \filldraw[draw=red,fill=red!20, variable=\x, domain=0:78]
            ({cos(12)}, {cos(12)}, {sqrt(2)*(sin(12)}) plot ({cos(12+\x)}, {cos(12+\x)}, {sqrt(2)*(sin(12+\x)}) 
            -- (0,0,{sqrt(2)}) 
            -- (0,0,3)
            -- (3,3,3)
            -- (3,3,0.1)
            -- ({1+0.3*(1/sqrt(2))},{1+0.3*(1/(sqrt(2)))},0.1) 
            -- ({1+0.3*(1/(sqrt(2)))*sin(70)}, {1+0.3*(1/(sqrt(2)))*sin(70)}, {0.3*cos(70)}) 
            plot ({1+0.3*(1/(sqrt(2)))*sin(70-\x)}, {1+0.3*(1/(sqrt(2)))*sin(70-\x)}, {0.3*cos(70-\x)})
            -- (1, 1, 0.3);

    \end{tikzpicture}
    \caption{$\diag_{h}\subset M^{2}_{\operatorname{heat}}$} \label{fig:blowup3}
\end{figure}

\begin{lemma}\label{lemma 3.3}
    Let $u=\tilde{u}\mu\sigma^{\star}(\frac{d\tau}{\tau})=:u'x^{kf+1}\frac{dx\operatorname{dVol_{Y}}\operatorname{dVol}_{Z_{y}}d\tilde{T}}{x\tilde{T}}$ be a polyhomogeneuous b-density on $\operatorname{diag}_{h}$ with index sets
    \begin{align}\label{3.24}
        \mathcal{E}(\text{bf}^{d})=\varnothing, \quad \mathcal{E}(\text{ff}^{d})=-k(b+1)+1+\mathbb{N}, \quad \mathcal{E}(\text{tf})=-n+\mathbb{N}.
    \end{align}
    Then (for $k>1$) if $\sigma_{\star}u=v\frac{d\tau}{\tau}$, then $v$ has asymptotic expansion at $\tau=0$
    \begin{align}\label{3.25}
        \sum_{i=0}^{n-1}a_{i}\tau^{-n+i}+\sum_{j=1}^{k(b+1)-1}b_{j}\tau^{-(b+1)+\frac{j}{k}}+c_{0}+\sum_{i=f+1}^{n-1}d_{i}\tau^{-n+i}\log(\tau)+c_{1}\log(\tau)+O(\tau^{\frac{1}{k}}).
    \end{align}
    The coefficients in \eqref{3.25} are given by
    \begin{align}\label{3.26}
        \begin{split}
            a_{i}&=\dashint_{M}u_{i,\operatorname{tf}^{h}}(x,y,z)\operatorname{dVol} \\
            b_{j}&=\frac{1}{k}\dashint_{0}^{\infty}\int_{\partial M}\tilde{T}^{b+1-\frac{j}{k}}u_{j,\operatorname{ff}^{h}}(\tilde{T},y,z)\operatorname{dVol_{Y}}\operatorname{dVol}_{Z_{y}}\frac{d\tilde{T}}{\tilde{T}} \\
            c_{0}&=\dashint_{M}u_{n,\operatorname{tf}^{h}}(x,y,z)\operatorname{dVol}+\frac{1}{k}\dashint_{0}^{\infty}\int_{\partial M}u_{k(b+1),\operatorname{ff}^{h}}(\tilde{T},y,z)\operatorname{dVol_{Y}}\operatorname{dVol}_{Z_{y}}\frac{d\tilde{T}}{\tilde{T}} \\
            c_{1}&=-\frac{1}{k}\int_{\partial M}u_{0,-kf-1}(y,z)\operatorname{dVol_{Y}}\operatorname{dVol}_{Z_{y}} \\
            d_{i}&=-\frac{1}{k}\int_{\partial M}u_{-n+i,-k(n+f-i)-1}(y,z)\operatorname{dVol_{Y}}\operatorname{dVol}_{Z_{y}} .
        \end{split}
    \end{align}
    Here $u_{i,\operatorname{tf}^{h}}(x,y,z)$ is the $-n+i$ coefficient in the asymptotic expansion of $u'$ at $\operatorname{tf}^{h}$ and $u_{j,\operatorname{ff}^{h}}(\tilde{T},y,z)$ is $-kf-k(b+1)+j-1=-kn-1+j$ ($u'$ is shifted by $-kf-1$ at $\operatorname{ff}^{d}$ by $\mu$) term in the asymptotic expansion of $u'$ in the coordinate $x$ in \eqref{3.18}. The $u_{\alpha,\beta}$ are the coefficients of $\tilde{T}^{\alpha}x^{\beta}$ in the mixed asymptotic expansion of $u'$ at $\frontface^{h}\cap\timeface^{h}$ in the coordinates.

    Moreover, for $0\leq i\leq f$ the integral for $a_{i}$ converges.
\end{lemma}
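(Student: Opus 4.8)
The plan is to derive \eqref{3.25}--\eqref{3.26} from Melrose's pushforward theorem applied to the b-fibration $\sigma$, supplemented by an explicit fibrewise integration that pins down the coefficients, and then to read off the convergence of the $a_i$ for $i\le f$ from the index set of the heat kernel at $\frontface$.

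\textbf{Step 1: polyhomogeneity and the list of exponents.} Either from the pushforward theorem for the b-fibration $\sigma$ of \eqref{3.18} \cite{melrose}, or directly from the fibrewise integral of Step 2, $\sigma_\star u=v\,\frac{d\tau}{\tau}$ with $v$ polyhomogeneous on $[0,\infty)_\tau$, and its index set at $\tau=0$ is the extended union $\overline{\bigcup}_{G}\,e_G^{-1}\mathcal E(G)$ over the boundary hypersurfaces $G$ of $\operatorname{diag}_h$ lying above $\tau=0$, with $e_G$ the order of vanishing of $\sigma^\star\tau$ at $G$. From $\tau=\tilde T x^k$ in \eqref{tracespacecoord} one reads $e_{\operatorname{ff}^{d}}=k$ and $e_{\operatorname{tf}^{d}}=1$, while $\mathcal E(\operatorname{sf})=\mathcal E(\operatorname{bf}^{d})=\varnothing$, so only $\operatorname{ff}^{d}$ and $\operatorname{tf}^{d}$ contribute. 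Substituting $\mathcal E(\operatorname{ff}^{d})=-k(b+1)+1+\mathbb N$ and $\mathcal E(\operatorname{tf}^{d})=-n+\mathbb N$, the plain powers are the $\tau^{-n+i}$ and the $\tau^{-(b+1)+j/k}$, and a logarithm occurs precisely at the exponents common to $\tfrac1k\mathcal E(\operatorname{ff}^{d})$ and $\mathcal E(\operatorname{tf}^{d})$, i.e.\ at $-n+i=-(b+1)+(i-f)$ with $i\ge f+1$; since $k>1$ the smallest positive exponent is $1/k$, and truncating there leaves exactly \eqref{3.25}. (At the exponents $\tau^{-n+i}$ with $f+1\le i\le n-1$ both sums of \eqref{3.25} contribute, the total coefficient being $a_i+b_{k(i-f)}$.)

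\textbf{Step 2: identification of the coefficients.} I would compute $v(\tau)=\operatorname{Tr}(e^{-t\slashed\partial^2})$ directly. By \eqref{3.21} the b-density is $u=u'\operatorname{dVol}_M\frac{d\tau}{\tau}(1+O(x^{2k}))$, and over a fixed $\tau>0$ the fibre of $\sigma$ is a copy of $M$ carrying $\tilde T=\tau x^{-k}$; with a partition of unity on $\operatorname{diag}_h$ subordinate to a collar of $\operatorname{tf}^{d}$ away from $\operatorname{ff}^{d}$, a collar of $\operatorname{ff}^{d}$, and $\{\tau\ge\tau_0\}$, write $v(\tau)$ as a sum of three integrals. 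On the $\operatorname{tf}^{d}$-piece, with $\tau$ a boundary defining function for $\operatorname{tf}^{d}$, substitute $u'\sim\sum_i u_{i,\operatorname{tf}^{h}}(x,y,z)\,\tau^{-n+i}$ and integrate term by term over $\{x\ge\epsilon\}\subset M$; letting $\epsilon\to 0$ converts the divergent integrals into finite parts and yields $a_i=\dashint_M u_{i,\operatorname{tf}^{h}}\operatorname{dVol}$ and the $\operatorname{tf}^{d}$-part of $c_0$. On the $\operatorname{ff}^{d}$-piece change variables by $\xi=\tilde T=\tau x^{-k}$, under which $\operatorname{dVol}_M\sim\tfrac1k\tau^{f+1/k}\xi^{-f-1/k-1}\,d\xi\,\operatorname{dVol}_Y\operatorname{dVol}_{Z_y}$, and insert the expansion $u'\sim\sum_j u_{j,\operatorname{ff}^{h}}(\xi,y,z)\,x^{-kn-1+j}$ at $\operatorname{ff}^{d}$; the convergent part of the resulting $\xi$-integral over $(0,\infty)$ gives the $b_j$ of \eqref{3.26} and the $\operatorname{ff}^{d}$-part of $c_0$, while the divergence of that integral at $\xi=0$ ($=\operatorname{tf}^{d}\cap\operatorname{ff}^{d}$), which sits at the resonant $x$-exponent $-k(n+f-i)-1$, produces the $\log\tau$ terms with residue $-\tfrac1k\int_{\partial M}u_{-n+i,-k(n+f-i)-1}\operatorname{dVol}_Y\operatorname{dVol}_{Z_y}$, i.e.\ $d_i$ for $f+1\le i\le n-1$ and $c_1$ for $i=n$. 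The contributions depending on the cut-off parameters $\epsilon,\tau_0$ are smooth and rapidly decaying in $\tau$, cancel between the two collar pieces, and, together with every monomial of positive exponent, are absorbed into the $O(\tau^{1/k})$ remainder.

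\textbf{Step 3: convergence of $a_i$ for $0\le i\le f$, and the main obstacle.} The coefficient $u_{i,\operatorname{tf}^{h}}$ is smooth on $M^{\circ}$ and polyhomogeneous at $\partial M$, and its order of growth at $\partial M$ is controlled by the index set of the heat kernel at $\frontface$: in the coordinates \eqref{tracespacecoord} $u'$ has index set bounded below by $-nk$ at $\operatorname{ff}^{d}$, and passing from the defining function $\tilde T$ of $\operatorname{tf}^{d}$ used there to the defining function $\tau=\tilde T x^k$ used on the $\operatorname{tf}^{d}$-piece shows that the $\tau^{-n+i}$-coefficient can carry only powers $x^\beta$ with $\beta\ge -nk+k(n-i)=-ki$. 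Hence $u_{i,\operatorname{tf}^{h}}=O(x^{-ki})$, so $u_{i,\operatorname{tf}^{h}}\operatorname{dVol}_M=O(x^{k(f-i)})\,dx\,\operatorname{dVol}_Y\operatorname{dVol}_{Z_y}$ near $\partial M$, and since $k(f-i)\ge 0>-1$ for $0\le i\le f$ the integral $\int_M u_{i,\operatorname{tf}^{h}}\operatorname{dVol}$ converges absolutely; at $i=f+1$ the exponent is $-k\le -1$ and integrability first fails, consistently with the $d_i\log\tau$ terms beginning at $i=f+1$. The delicate point throughout is the weight- and density-bookkeeping in Step 2 — tracking the $x^{kf+1}$ factor, the $O(x^{2k})$ off-block-diagonal error of \eqref{3.21}, and the b-density normalisation — and in particular verifying that the residue at each resonance is exactly the stated boundary integral of the mixed-expansion coefficient $u_{-n+i,-k(n+f-i)-1}$ and that the $\epsilon$-dependent remainders of the two collar regions cancel so as to leave precisely the finite parts $\dashint$; once Step 1 has located the exponents, this is mechanical but requires care, and the convergence claim of Step 3 then drops out as a byproduct.
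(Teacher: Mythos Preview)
The paper states this lemma without proof (there is no \texttt{proof} environment following the statement; after the lemma the text simply continues with ``Note that\dots'' and a remark on the extension to nontrivial $\backface$ asymptotics). So there is nothing to compare against directly, and your task reduces to checking that the proposal stands on its own.

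Your argument is correct and is the standard one. Step~1 is exactly the pushforward theorem applied to the b-fibration $\sigma$ of \eqref{3.18}: in the coordinates \eqref{tracespacecoord} one has $\tau=\tilde T\,x^{k}$, so the exponent matrix is $e_{\timeface^{d}}=1$, $e_{\frontface^{d}}=k$, and the resulting index set on $[0,\infty)_\tau$ is $(-n+\mathbb N)\,\overline\cup\,\tfrac1k(-k(b+1)+1+\mathbb N)$, which is precisely the list of powers in \eqref{3.25}, with logarithms at the coincidences $-n+i=-(b+1)+(i-f)$ for $i\ge f+1$. Step~2 is the usual identification of coefficients by direct fibrewise integration, splitting $M$ into a collar $\{x<\epsilon\}$ (where one substitutes $x=(\tau/\tilde T)^{1/k}$ and expands at $\frontface^{d}$) and its complement (where one expands at $\timeface^{d}$); the finite parts $\dashint$ are exactly what remain after the $\epsilon$-dependent divergent pieces cancel between the two regions, and the residues at the coincident exponents give the $d_i$ and $c_1$. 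Step~3 is also correct: the $\tau^{-n+i}$ coefficient $u_{i,\timeface^{h}}$ inherits the index set $-kn+k(n-i)+\mathbb N=-ki+\mathbb N$ at $\partial M$ from the $\frontface^{d}$ expansion, so $u_{i,\timeface^{h}}\operatorname{dVol}=O(x^{k(f-i)})\,dx\,\operatorname{dVol}_Y\operatorname{dVol}_{Z_y}$ is integrable for $i\le f$.

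One small wording issue: you assert $\sigma$ is a b-fibration, whereas the paper only calls it a b-map in \eqref{3.18}. It is in fact a b-fibration (the target is one-dimensional and the restriction to each face interior is a fibration over $\{0\}$ or over $(0,\infty)$), so this is harmless, but strictly speaking you should justify it rather than quote the paper.
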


Note that the terms in the formulas for the coefficients are coefficients in the asymptotic expansion of $u'$ which includes the error term arising from the volume form.

We will need an extension of the above result in the case where $\mathcal{E}(\backface^{d})=\mathcal{E}(\frontface^{d})=\mathbb{N}$ for the heat kernel of the signature operator on a Witt incomplete cusp edge space. Near $\backface^{d}$ away from $\frontface^{d}$ using the coordinates
\begin{align}
    T=\frac{t^{\frac{1}{2}}}{x},x,y,z
\end{align}
and in a neighbourhood of the intersection $\backface^{d}\cap\frontface^{d}$ the coordinates
\begin{align}
    X=\frac{x}{T^{\frac{1}{k-1}}},S=T^{\frac{1}{k-1}},y,z
\end{align}
we can calculate similarly as we have done above and we see that
\begin{align}
    \begin{split}
     c_{0}=&\dashint_{M}u_{n,\operatorname{tf}^{h}}(x,y,z)\operatorname{dVol}+\frac{1}{k}\dashint_{0}^{\infty}\int_{\partial M}u_{k(b+1),\operatorname{ff}^{h}}(\tilde{T},y,z)\operatorname{dVol_{Y}}\operatorname{dVol}_{Z_{y}}\frac{d\tilde{T}}{\tilde{T}} \\
     &+\dashint_{0}^{\infty}\int_{\partial M}u_{0,\backface^{h}}(T,y,z)\operatorname{dVol_{Y}}\operatorname{dVol}_{Z_{y}}\frac{dT}{T}
     \end{split}
\end{align}
where $u_{0,\backface^{h}}(T,y,z)$ is the leading coefficient in the asymptotic expansion in $x$ of $u'$ at $\backface$ where $u=:u'x^{kf+1}\frac{dx\operatorname{dVol_{Y}}\operatorname{dVol}_{Z_{y}}dT}{xT}$ (which in this case is the restriction $u'x^{kf+1}$ to $\backface^{d}$.

\section{Index of the spin Dirac operator on non-isolated cusps}

We calculate the contribution at $\operatorname{ff}$ for the spin Dirac operator of a spin incomplete cusp edge space. For now we take a product type metric and discuss the more general case at the end.

We now fix a point $p\in \partial M$ with $\pi(p)=y$. Take a local trivialisation $U\times Z$ of the boundary fibration with $y\in U$ and let $y^{i}$ be normal coordinates about $y$ in $Y$ which we extend to coordinates $x,y^{i},z^{j}$ in a neighbourhood of $p$. Let $\partial_{y_{\alpha}}$ be a local orthonormal frame about $p\in Z_{y}$ and extend $\partial_{y_{\alpha}}$ by parallel transport along geodesics to a frame $U_{\alpha}$ in a neighbourhood of $y$. Extend this to a frame in a neighbourhood of $p$ given by $\partial_{x},\tilde{U}_{\alpha},x^{-k}V_{j}$. We denote the restriction of $x^{k}V_{j}$ to $\pi^{-1}(y)$ by $x^{k}V_{j}^{y}$ We also identify the restrictions $\mathcal{S}_{y'}=\mathcal{S}|_{\pi^{-1}(y')}$ to the fibres in a for $y'$ in neighbourhood of $y$ to $\mathcal{S}_{y}$ by parallel transport along geodesics in $U$ with respect to the metric $g_{Y}$.

\begin{lemma}\label{6.2}
    Let $M$ be an incomplete cusp edge space with spin structure and $\partial_{x},\tilde{U}_{\alpha},V_{j}$ a local orthonormal frame described above. Then given a local section of the spin bundle covering this frame which locally identifies sections of the associated spinor bundle with maps to the spinor space as described above, the action of the lift of $\tau\nabla_{W_{i}}$ to $M_{\text{heat}}^{2}$ at $(y^{\prime})^{i}=0$ is given by
    \begin{align}
        \begin{split}
        \tau&\nabla_{\partial_{x}}=\tilde{T}\partial_{\tilde{s}} \\
        \tau&\nabla_{\tilde{U}_{\gamma}}=\tilde{T}\sigma(\tilde{U}_{\gamma})((x')^{k}\tilde{\eta})+(x')^{k}V'_{\gamma}\\
        &+\frac{\tilde{T}}{4}(x')^{k}\sum_{jl}\left[g_{\partial M/Y}([\tilde{U}_{\gamma},V_{j}],V_{l})-\phi_{Y}^{\star}g_{Y}(\mathcal{S}^{\phi}(V_{j},V_{l}),\tilde{U}_{\gamma})\right]((x')^{k}\tilde{\eta},z)\cl(x^{-k}V_{j}^{y})\cl(x^{-k}V_{l}^{y}) \\
        &+\frac{\tilde{T}}{4}(x')^{2k}[(x')^{k-1}\tilde{s}+1]^{k}\sum_{\alpha j}[g_{\partial M/Y}(\mathcal{R}^{\phi}(\tilde{U}_{\gamma},\tilde{U}_{\alpha}),V_{j})(y^{i},z^{j})]((x')^{k}\tilde{\eta},z)\cl(\tilde{U}_{\alpha})\cl(x^{-k}V_{j}^{y}) \\
        &-\frac{\tilde{T}}{8}(x')^{2k}\sum_{\alpha\beta l}g_{Y}(R_{Y}(U_{\gamma},\tilde{\eta}^{l}\partial_{y^{l}}),U_{\alpha},U_{\beta})((x')^{k}\tilde{\eta})\cl(\tilde{U}_{\alpha})\cl(\tilde{U}_{\beta}) \\
        &+\sum_{j\alpha}O((x')^{2k})\cl(\tilde{U}_{\alpha})\cl(x^{-k}V_{i}^{y})+\sum_{\alpha\beta}O((x')^{3k})\cl(\tilde{U}_{\alpha})\cl(\tilde{U}_{\beta}) \\
        \tau&\nabla_{x^{-k}V_{i}}=\tilde{T}[(x')^{k-1}\tilde{s}+1]^{-k}\nabla_{V_{i}}^{\mathcal{S},y} \\
        &+\tilde{T}(x')^{k-1}[(x')^{k-1}\tilde{s}+1]\frac{1}{4}k(-\cl(x^{-k}V_{i}^{y})\cl(\partial_{x})+\cl(\partial_{x})\cl(x^{-k}V_{i}^{y})) \\
        &\quad+\sum_{\alpha}O((x')^{2k-1})\cl(\tilde{U}_{\alpha})\cl(\partial_{x})\\
        &+\frac{T}{2}(x')^{k}\sum_{j\alpha}\phi_{Y}^{\star}g_{Y}(\mathcal{S}^{\phi}(V_{i},V_{j}),\tilde{U}_{\alpha})((x')^{k}\tilde{\eta},z)\cl(x^{-k}V_{j}^{y})\cl(\tilde{U}_{\alpha}) \\
        &-\frac{\tilde{\tilde{T}}}{8}(x')^{2k}[(x')^{k-1}\tilde{s}+1]^{k}\sum_{\alpha\beta}g_{\partial M/Y}(\mathcal{R}^{\phi}(\tilde{U}_{\alpha},\tilde{U}_{\beta}),V_{i})((x')^{k}\tilde{\eta},z)\cl(\tilde{U}_{\alpha})\cl(\tilde{U}_{\beta}) \\
        &+\sum_{j\alpha}O((x')^{2k})\cl(\tilde{U}_{\alpha})\cl(x^{-k}V_{i}^{y})+\sum_{\alpha\beta}O((x')^{3k})\cl(\tilde{U}_{\alpha})\cl(\tilde{U}_{\beta}).
        \end{split}
    \end{align}
    Here, the $V'_{\gamma}$ are some smooth vertical vector fields. For $\tau\nabla_{x^{-k}V_{i}}$, when acted on by $\operatorname{cl}(x^{-k}V_{i})$ and summed over $i$, the sum in line 7 and 8 reduces to
    \begin{align}
        \tilde{T}(x')^{k-1}[(x')^{k-1}\tilde{s}+1]^{-1}\frac{kf}{2}\cl(\partial_{x})    \end{align}
    and in line 10 to
    \begin{align}
        -\frac{T}{2}(x')^{k}\sum_{\alpha}k(\tilde{U}_{\alpha})\operatorname{cl}(\tilde{U}_{\alpha}).
    \end{align}
    All the error terms $O((x')^{ak})$ are smooth and have expansions in powers of $(x')^{k}$.
\end{lemma}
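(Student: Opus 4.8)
The plan is to compute $\tau\nabla^{\mathcal{S}}_{W_{i}}$ term by term in the synchronous orthonormal frame $\partial_{x},\tilde{U}_{\alpha},x^{-k}V_{j}$ and then push the result through the three blow-downs defining $M^{2}_{\operatorname{heat}}$, using three inputs: the orthonormal-frame formula \eqref{52} for the spin connection, the asymptotics of the Levi-Civita connection of $g_{0}$ computed in the Lemma following \eqref{11}, and the coordinate relations near $\frontface$ from \eqref{61}, namely
\begin{align*}
    x=x'\big((x')^{k-1}\tilde{s}+1\big),\qquad y=y'+(x')^{k}\tilde{\eta},\qquad t^{1/2}=(x')^{k}\tilde{T}.
\end{align*}
From these one reads off $\tau\partial_{x}\mapsto\tilde{T}\partial_{\tilde{s}}$, $\tau\partial_{y^{l}}\mapsto\tilde{T}\partial_{\tilde{\eta}^{l}}$, $\tau\partial_{z^{j}}\mapsto(x')^{k}\tilde{T}\partial_{z^{j}}$, and $\tau x^{m}\mapsto\tilde{T}(x')^{k+m}\big((x')^{k-1}\tilde{s}+1\big)^{m}$. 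Writing $\tau\nabla^{\mathcal{S}}_{W_{i}}=\tau W_{i}+\tfrac{\tau}{4}\sum_{jk}\gamma^{k}_{ij}\cl(X_{j})\cl(X_{k})$ with $\nabla_{X_{i}}X_{j}=\gamma^{k}_{ij}X_{k}$, and using $\tilde{U}_{\gamma}=a_{\gamma l}\partial_{y^{l}}+O(x^{k})\partial_{z^{l}}$ from \eqref{16}, the derivation parts already produce the $\partial_{\tilde{s}}$ and $\partial_{\tilde{\eta}}$ terms of the statement: here $\nabla^{\mathcal{S}}_{\partial_{x}}=\partial_{x}$ because the $\partial_{x}$-entries of the connection table all vanish, and the $V'_{\gamma}$ is the $O(x^{k})$ vertical remainder of $\tilde{U}_{\gamma}$.

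Next I would insert the connection table into the correction term. For $W_{i}=\tilde{U}_{\gamma}$ the nonzero Christoffel symbols split, by that table, into horizontal--horizontal symbols (the base Levi-Civita symbols), mixed horizontal--vertical symbols carrying a factor $x^{k}$ built from $\mathcal{S}^{\phi}$, $\mathcal{R}^{\phi}$ and $g_{\partial M/Y}([\tilde{U},V],V)$, and vertical--vertical symbols of order $x^{0}$ and $x^{2k}$. After multiplying by $\tau=(x')^{k}\tilde{T}$ and lifting, the $x^{0}$ vertical--vertical symbols give the $\mathcal{S}^{\phi}$-and-$g_{\partial M/Y}([\tilde{U},V],V)$ line, and the $x^{k}$- and $x^{2k}$-weighted symbols give the $\mathcal{R}^{\phi}$ line, the $R_{Y}$ line, and the $O((x')^{2k}),O((x')^{3k})$ remainders. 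For $W_{i}=x^{-k}V_{i}$ I would pull out the scalar, $\tau\nabla^{\mathcal{S}}_{x^{-k}V_{i}}=\tau x^{-k}\nabla^{\mathcal{S}}_{V_{i}}$, and feed \eqref{vertical connection} (and the connection table through \eqref{52}) into the lift: the first line of \eqref{vertical connection} lifts with the factor $\tau x^{-k}=\tilde{T}\big((x')^{k-1}\tilde{s}+1\big)^{-k}$ to the fibre-spin-connection term $\tilde{T}\big((x')^{k-1}\tilde{s}+1\big)^{-k}\nabla^{\mathcal{S},y}_{V_{i}}$; the $\tfrac14 kx^{k-1}$ term of \eqref{vertical connection} becomes the $(x')^{k-1}$-weighted $\big(-\cl(x^{-k}V_{i}^{y})\cl(\partial_{x})+\cl(\partial_{x})\cl(x^{-k}V_{i}^{y})\big)$ term; and the $\mathcal{S}^{\phi}$- and $\mathcal{R}^{\phi}$-pieces of $\nabla^{\mathcal{S}}_{V_{i}}$ give the remaining displayed terms and errors.

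The one step that needs genuine geometry rather than bookkeeping is the $R_{Y}$ term. Here I would use that $y^{i}$ are Riemannian normal coordinates for $g_{Y}$ centred at $y$ and $U_{\alpha}$ is the radially parallel frame, so the base connection one-form vanishes at $y$ with first Taylor coefficient $\tfrac12 R_{Y}$; since at $(y')^{i}=0$ the left-hand base point of the lifted configuration is $y=(x')^{k}\tilde{\eta}$, the base symbols have size $(x')^{k}\tilde{\eta}\,R_{Y}+O((x')^{2k})$, and multiplying by $\tau=(x')^{k}\tilde{T}$ and contracting against $\cl(\tilde{U}_{\alpha})\cl(\tilde{U}_{\beta})$ — the curvature symmetries fixing the constant $\tfrac18$ — yields exactly the stated term. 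For the claim that every error is $O((x')^{ak})$ with an expansion in powers of $(x')^{k}$: for a product-type metric each of $a_{\gamma l}$, $g_{\partial M/Y}$, $\mathcal{S}^{\phi}$, $\mathcal{R}^{\phi}$, $R_{Y}$ is independent of $x$ and smooth up to $\partial M$, so after substituting $x=x'((x')^{k-1}\tilde{s}+1)$, $y=(x')^{k}\tilde{\eta}$ each coefficient becomes a smooth function of $(x')^{k}\tilde{\eta}$ and $z$ times a power series in $(x')^{k-1}\tilde{s}$, while $\tau$ and $\tau x^{-k}$ supply the advertised powers of $(x')^{k}$. Finally I would record the two reductions after post-composing with $\cl(x^{-k}V_{i})$ and summing over $i$: for the $\cl(\partial_{x})$-valued term, $\sum_{i}\cl(V_{i})\big(-\cl(V_{i})\cl(\partial_{x})+\cl(\partial_{x})\cl(V_{i})\big)=2f\,\cl(\partial_{x})$ (using $\cl(V_{i})^{2}=-1$ and anticommutativity), giving $\tfrac{kf}{2}\cl(\partial_{x})$ times the common prefactor; and for the $\mathcal{S}^{\phi}$-term, $\mathcal{S}^{\phi}(V_{i},V_{j})$ is symmetric in $i,j$ since $[V_{i},V_{j}]$ is vertical, so $\sum_{ij}\cl(V_{i})\cl(V_{j})\,\mathcal{S}^{\phi}(V_{i},V_{j})=-\sum_{i}\mathcal{S}^{\phi}(V_{i},V_{i})$, which is the fibre mean-curvature term $\sum_{\alpha}(\cdots)\cl(\tilde{U}_{\alpha})$. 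The main obstacle is not any single step but the volume of the computation: one must track powers of $x'$ through the three nested blow-downs simultaneously with the Clifford-degree accounting, and keep the frame identifications (the vertical bundle with $TZ_{y}$ via $x^{-k}V_{i}\mapsto V_{i}$, and the spinor bundles via parallel transport along $\partial_{x}$ and along base geodesics) consistent so that the operator written $\nabla^{\mathcal{S},y}_{V_{i}}$ is genuinely the intrinsic fibre spin connection to the required order.
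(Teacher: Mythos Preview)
Your proposal is correct and follows essentially the same route as the paper's own proof: both compute $\nabla^{\mathcal{S}}_{W_i}$ via \eqref{52} with Christoffel symbols read off the connection-asymptotics table, lift through the projective coordinates \eqref{61}, invoke the radially-parallel normal-coordinate expansion of the base connection for the $R_Y$ term, feed \eqref{vertical connection} into the vertical piece, and then use the Clifford relations and the symmetry of $\mathcal{S}^{\phi}$ for the two post-summation reductions. The paper's proof is organized slightly differently (it writes out the full $\nabla^{\mathcal{S}}_{\tilde{U}_\gamma}$ and $\nabla^{\mathcal{S}}_{V_i}$ first and then lifts, rather than splitting by Clifford type as you do), but the ingredients and logic are identical.
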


\begin{proof}
   Since we are using a product type metric, using the asymptotics of the connection $\nabla^{\mathcal{S}}_{\partial_{x}}=\partial_{x}$. So $\tau\nabla_{\partial_{x}}$ lifts to $\tilde{T}\partial_{\tilde{s}}$. For the horizontal derivatives we have
    \begin{align}
        \begin{split}
            \nabla^{\mathcal{S}}_{\tilde{U}_{\gamma}}=\tilde{U}_{\gamma}(y^{i})&+\frac{1}{4}\sum_{jl}\left[g_{\partial M/Y}([\tilde{U}_{\gamma},V_{j}],V_{l})-\phi_{Y}^{\star}g_{Y}(\mathcal{S}^{\phi}(V_{j},V_{l}),\tilde{U}_{\gamma})\right](y^{i},z^{j})\operatorname{cl}(x^{-k}V_{j})\operatorname{cl}(x^{-k}V_{l}) \\
            &+\frac{x^{k}}{4}\sum_{\alpha j}g_{\partial M/Y}(\mathcal{R}^{\phi}(\tilde{U}_{\gamma},\tilde{U}_{\alpha}),V_{j})(y^{i},z^{j})\operatorname{cl}(\tilde{U}_{\alpha})\operatorname{cl}(x^{-k}V_{j}) \\
            &+\frac{1}{4}\sum_{\alpha\beta}g_{Y}(\nabla_{\tilde{U}_{\gamma}}\tilde{U}_{\alpha},\tilde{U}_{\beta})(y^{i})\operatorname{cl}(\tilde{U}_{\alpha})\operatorname{cl}(\tilde{U}_{\beta}).
        \end{split}
    \end{align}
    Here $\tau\tilde{U}_{\gamma}(y^{i})$ lifts to $\tilde{T}\sigma(\tilde{U}_{\gamma})((x')^{k}\eta+y')+(x')^{k}V'_{\gamma}$ for some vertical vector field $V'_{\gamma}$, independent of $x'$ which gives us the second term in the first line. First, parallel transport locally identifies a section $\cl(W)$ with a smooth map $C^{\infty}(U,C^{\infty}(Z_{y},\mathbb{C}l(\ice TM|_{Z_{y}}$)) which is equal to $\cl(W^{y})+O(y^{i})$ and using $y^{i}=(x')^{k}\eta^{i}+y^{i\prime}$ we can replace $\cl(W)$ with $\cl(W^{y})$ getting errors of the form in line 6 and each of the coefficients, when taking $(y')=0$ are evaluated at $(x')^{k}\eta^{i}$. Using $\tau=\tilde{T}(x')^{k}$ and $x=x'[(x')^{k-1}\tilde{s}+1]$, produce the factors in front of the line 3 and 4. For the final term, we use that the $U_{\alpha}$ are a local frame about $y\in Y$ extending an orthonormal frame at $y$ by radial parallel transport and the $y^{i}$ are normal coordinates about $y$ giving 
    \begin{align}
        g_{Y}(\nabla_{\tilde{U}_{\gamma}}\tilde{U}_{\alpha},\tilde{U}_{\beta})(y)=\sum_{j}g(R(\partial_{y_{\gamma}},\partial_{y_{j}})_{p}y^{j}\partial_{y_{\alpha}},\partial_{y_{\beta}})+O(\vert y\vert^{2}).
    \end{align}
    Now using $y^{i}=(x')^{k}\eta^{i}+y^{i\prime}$, we get the fourth line and an error terms which is contained in the line 6.
    
    For the vertical derivative, we have
    \begin{align}\label{verticalderivativeconnection}
        \begin{split}
        \nabla_{V_{i}}^{\mathcal{S}}=V_{i}&+\frac{1}{4}\sum_{jl}g_{\partial M/Y}(\nabla^{\partial M/Y}_{V_{i}}V_{k},V_{l})\operatorname{cl}(x^{-k}V_{j})\operatorname{cl}(x^{-k}V_{l}) \\
        &+\frac{1}{4}\sum_{l}kx^{k-1}(-\operatorname{cl}(x^{-k}V_{l})\operatorname{cl}(\partial_{x})+\operatorname{cl}(\partial_{x})\operatorname{cl}(x^{-k}V_{l})) \\
        &+\frac{1}{2}x^{k}\sum_{j\alpha}\phi_{Y}^{\star}g_{Y}(\mathcal{S}^{\phi}(V_{i},V_{j}),\tilde{U}_{\alpha})(y,z)\operatorname{cl}(x^{-k}V_{j})\operatorname{cl}(\tilde{U}_{\alpha})  \\
        &-\frac{x^{2k}}{8}\sum_{\alpha\beta}g_{\partial M/Y}(\mathcal{R}^{\phi}(\tilde{U}_{\alpha},\tilde{U}_{\beta}),V_{i})(y,z)\operatorname{cl}(\tilde{U}_{\alpha})\operatorname{cl}(\tilde{U}_{\beta}).
        \end{split}
    \end{align}
  Now acting on the first two lines by $x^{-k}\operatorname{cl}(x^{-k}V_{i})$ gives
    \begin{align}
        x^{-k}\operatorname{cl}(V_{i})\nabla_{V_{i}}^{\mathcal{S}}=x^{-k}\operatorname{cl}(V_{i})\nabla_{V_{i}}^{\partial M/Y}+\frac{k}{2x}\operatorname{cl}(\partial_{x}).
    \end{align}
    So summing over $i$ and multiplying by $\tau$, we see that the lift of the resulting expression is exactly line 7 and gives us the fact about line 8 and 9.

    For the third line in \eqref{verticalderivativeconnection}, since $\mathcal{S}^{\phi}$ is symmetric in $V_{i},V_{j}$, after summing over $i$ all the terms with $j\neq i$ cancel leaving only terms of the form
    \begin{align}
        \sum_{j\alpha}\phi_{Y}^{\star}g_{Y}(\mathcal{S}^{\phi}(V_{i},V_{i}),\tilde{U}_{\alpha})(y,z)\operatorname{cl}(\tilde{U}_{\alpha})
    \end{align}
    Summing over $i$, for any fixed $\alpha$, by definition we get $k(\tilde{U}_{\alpha})\operatorname{cl}(\tilde{U}_{\alpha})$ so we get the line 10 after summing over $\alpha$. Finally, line 4 in \eqref{verticalderivativeconnection} gives the second last line.

    All errors have expansion in powers of $(x')^{k}$ because all the coefficients are smooth and $y^{i}=(x')^{k}\eta^{i}+y^{i\prime}$ and $x=x'[(x')^{k-1}\tilde{s}+1]$.
\end{proof}

We define a Getzler rescaling $\delta_{(x')}{\omega}=(x')^{-mk}\omega$ for $\omega\in\Lambda^{m}T_{p}^{H}Z\otimes\mathbb{C}$. Note that under conjugation by the horizontal symbol map that the lift of $\tau\slashed{\partial}$ is singular as $(x')\to 0$ under this Getzler rescaling. However, recall the Lichnerowicz formula for the square of the spin Dirac operator
\begin{align}
    \slashed{\partial}^{2}=\Delta^{\mathcal{S}}+\frac{S}{4}
\end{align}
where $\Delta^{\mathcal{S}}$ is the rough Laplacian of $\nabla^{\mathcal{S}}$ given in a local frame by
\begin{align}
    -\left(\sum_{i}(\nabla^{\mathcal{S}}_{e_{i}})^{2}-\nabla^{\mathcal{s}}_{\nabla_{e_{i}}e_{i}}\right).
\end{align}
From the asymptotics of the connection, we can see that $\tau\nabla^{\mathcal{S}}$ has a limit at $(x')\to 0$ under the rescaling, thus the square $\tau^{2}\slashed{\partial}^{2}$ is in fact not singular and we can calculate the limit as $(x')\to 0$. We can also see from the above lemma that the next highest order terms of the rescaling of $\tau^{2}\slashed{\partial}^{2}$ is at order $k-1$. Recall that under the horizontal symbol map $\operatorname{cl}(\tilde{U}_{i})$ acts as $\epsilon(\tilde{U}_{i}))-\iota(\tilde{U}){i})$.

For the scalar curvature we have the following asymptotics.
\begin{lemma}\label{scalarcurvature}
    We have $x^{2k}S=S_{g_{Z_{y}}}+O(x^{k})$ where $S_{g_{Z_{y}}}$ at a point $(y,z)$ on the boundary is the scalar curvature at $z$ of the fibre with metric $g_{Z,y}$. In particular, $\tau^{2}S$ lifts to $\tilde{T}S_{g_{Z_{y}}}+O((x')^{k})$.
\end{lemma}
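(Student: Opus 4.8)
The plan is to compute the Riemannian scalar curvature $S$ of the product-type metric $g_{0}$ directly in the $g_{0}$-orthonormal moving frame $\partial_{x},\tilde{U}_{\alpha},x^{-k}V_{j}$, writing $S=\sum_{a\neq b}g_{\mathrm{ice}}(R_{\mathrm{ice}}(e_{a},e_{b})e_{b},e_{a})$, and sorting the sectional-curvature terms by their order in $x$ using the connection table containing \eqref{20}, the frame commutators, and Lemma \ref{1.3}. The first step is the order count. Among the connection one-forms $g_{\mathrm{ice}}(\nabla_{(\cdot)}e_{b},e_{c})$, only the vertical--vertical ones carry the intrinsic fibre Christoffel symbols $g_{\partial M/Y}(\nabla^{\partial M/Y}_{V_{i}}V_{j},V_{l})$, which become $O(x^{-k})$ in the orthonormal coframe; the pairings with $\partial_{x}$ are at worst $O(x^{-1})$ (namely $g_{\mathrm{ice}}(\nabla_{x^{-k}V_{i}}\partial_{x},x^{-k}V_{j})=kx^{-1}\delta_{ij}$ and the fibre's $\partial_{x}$-second fundamental form $g_{\mathrm{ice}}(\nabla_{x^{-k}V_{i}}(x^{-k}V_{j}),\partial_{x})=-kx^{-1}\delta_{ij}$); all remaining pairings are $O(1)$. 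Consequently, in the curvature two-forms $\Omega^{a}_{b}=d\omega^{a}_{b}+\omega^{a}_{c}\wedge\omega^{c}_{b}$, the only $\theta^{a}\wedge\theta^{b}$-components of order $x^{-2k}$ are the purely vertical ones $\Omega^{i}_{j}(e_{i},e_{j})$; the planes $\langle\partial_{x},x^{-k}V_{j}\rangle$ contribute only the warping terms $O(x^{-2})$, and every plane containing a horizontal vector contributes $O(1)$. (The genuinely singular off-diagonal components such as $\Omega^{0}_{j}(e_{p},e_{q})\sim x^{-k-1}$ sit in slots carrying a free $\theta^{0}$, hence never enter the trace defining $S$.)

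The second step identifies the surviving term. The $O(x^{-2k})$ part of the vertical sectional curvature $\Omega^{i}_{j}(e_{i},e_{j})$ is $x^{-2k}$ times the sectional curvature of $(Z_{y},g_{Z,y})$ in the $V$-frame, so $\sum_{i\neq j}\Omega^{i}_{j}(e_{i},e_{j})=x^{-2k}S_{g_{Z,y}}+O(x^{-2})$; equivalently this is the Gauss equation for the fibre $\{x\}\times Z_{y}\subset(M,g_{0})$ with induced metric $x^{2k}g_{Z,y}$, the extrinsic correction $\sum_{\nu}\big((\operatorname{tr}\operatorname{II}^{\nu})^{2}-|\operatorname{II}^{\nu}|^{2}\big)$ being $O(x^{-2})$ for $\nu=\partial_{x}$ (since $\operatorname{II}^{\partial_{x}}_{ij}=-kx^{-1}\delta_{ij}$) and $O(1)$ for $\nu=\tilde{U}_{\alpha}$ (since $\operatorname{II}^{\tilde{U}_{\alpha}}_{ij}=\phi_{Y}^{\star}g_{Y}(\mathcal{S}^{\phi}(V_{i},V_{j}),\tilde{U}_{\alpha})$). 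Adding the warping and horizontal planes gives $S=x^{-2k}S_{g_{Z,y}}+O(x^{-2})$, i.e. $x^{2k}S=S_{g_{Z,y}}+O(x^{2k-2})$; since $k\geq 2$ forces $2k-2\geq k$, this is $S_{g_{Z,y}}+O(x^{k})$, as claimed. (The $O(x^{-2})$ coefficient can be cross-checked against the standard warped-product scalar-curvature formula applied fibrewise to $dx^{2}+x^{2k}g_{Z,y}$.) Finally, inserting the projective coordinates \eqref{61}, $\tau=\tilde{T}(x')^{k}$ and $x=x'[(x')^{k-1}\tilde{s}+1]$, into $\tau^{2}S=\tilde{T}^{2}(x'/x)^{2k}\,(x^{2k}S)$ gives the stated lift, whose leading term at $\frontface$ is $\tilde{T}^{2}S_{g_{Z,y'}}$ and whose remainder vanishes there.

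The main (if modest) obstacle is the order bookkeeping of the first step: one must verify that no product of two connection one-forms, and no term of $d\omega$, produces a genuine sectional-curvature coefficient more singular than $x^{-2}$ apart from the vertical fibre term. The two $O(x^{-1})$ entries involving $\partial_{x}$ enter curvature only quadratically (through $\omega^{0}_{i}\wedge\omega^{i}_{j}$), hence contribute $O(x^{-2})$; the candidate $O(x^{-k-1})$ cross terms between the vertical connection form and the $\partial_{x}$-warping form live in the $\theta^{0}\wedge(\cdot)$ slots and so drop out of $\operatorname{Ric}$ and of $S$; and $[\partial_{x},x^{-k}V]=-kx^{-1}(x^{-k}V)$ forces $\nabla_{\partial_{x}}(x^{-k}V)$ to have no $x^{-1}$ part. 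Alternatively, the entire argument can be organised as a single application of the O'Neill/Gauss formulas to the fibered-cusp structure of $g_{0}$; either way the computation is routine once the orders are sorted, with Lemma \ref{1.3} streamlining the identification of the surviving terms.
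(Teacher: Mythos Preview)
Your proposal is correct and follows essentially the same strategy as the paper: both compute $S$ in the orthonormal frame $\partial_{x},\tilde{U}_{\alpha},x^{-k}V_{j}$, sort the sectional-curvature contributions by their order in $x$, and identify the leading $O(x^{-2k})$ piece as the intrinsic fibre scalar curvature. The paper carries this out by direct Koszul-formula manipulation (writing out $\nabla_{V_{i}}(x^{-k}V_{j})$ explicitly as in \eqref{97} and differentiating once more), whereas you package the same order count in Cartan structure-equation language and invoke the Gauss equation for the fibre to identify the surviving term; your intermediate bound $O(x^{2k-2})$ is slightly sharper than the paper's $O(x^{k})$, but both suffice once $k\geq 2$.
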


\begin{proof}
    Recall the scalar curvature is given by
    \begin{align}
        S=\sum_{ij}g(R(e_{i},e_{j})e_{j},e_{i})
    \end{align}
    where $e_{i}$ is any orthonormal frame. We use an orthonormal frame $\partial_{x},\tilde{U}_{i}$ and $x^{-k}V_{j}$ as above. By the asymptotics of the curvature the only terms which are not $O(x^{k})$ are those which contain at least one vertical term. If only one of the terms is vertical we have
    \begin{equation}
         \begin{split}
            g(R(e_{i},x^{-k}V_{j})x^{-k}V_{j},e_{i})&=x^{-k} g(R(e_{i},V_{j})x^{-k}V_{j},e_{i}) \\
            &=O(x^{-k}).
        \end{split}
    \end{equation}
    So these only contribute $O(x^{k})$ terms to $x^{2k}S$. When there are only vertical terms we have
    \begin{align}
         g(R(x^{-k}V_{i},x^{-k}V_{j})x^{-k}V_{j},x^{-k}V_{i})&=x^{-2k} g(R(V_{i},V_{j})x^{-k}V_{j},x^{-k}V_{i}).
    \end{align}
    By asymptotics of the connection, we have
    \begin{equation}\label{97}
        \begin{split}
            \nabla_{V_{i}}x^{-k}V_{j}=x^{k-1}g_{\partial M/Y}(V_{i},V_{j})\partial_{x}&+\sum_{m}g_{\partial M/Y}(\nabla^{\partial M/Y}_{V_{1}}V_{2},V_{m})x^{-k}V_{m}\\
            &+\sum_{l}x^{k}\phi_{Y}^{\star}g_{Y}(\mathcal{S}^{\phi}(V_{i},V_{j}),\tilde{U}_{l})\tilde{U}_.
        \end{split}
    \end{equation}
    If we take $\nabla_{V_{k}}$ of this, the last term contributes terms that are $O(x^{k})$. The first term, there is a non-zero vertical component only when the $\nabla_{V_{k}}$ hits the $\partial_{x}$ which produces another factor of $x^{k}$ so this term only gives $O(x^{2k-1})$ factors. This leaves the second term whose contribution is exactly the scalar curvature of the fibre with metric $g_{Z}$.
\end{proof}
We now recall some facts about the Bismut superconnection and Bismut-Cheeger eta form. Let $\psi\colon N\to Y$ be a fibre bundle with a metric $g_{T(N/Y)}$ on the vertical tangent bundle $T(N/Y)$ and a splitting $TN=T(N/Y)\oplus T_{H}M$ where we have $T_{H}M\simeq \psi^{\star}TY$. For example, given a submersion metric $g_{N}=g_{N/Y}+\psi^{\star}g_{Y}$ we can take $g_{T(N/Y)}$ to be the restriction of $g_{N/Y}$ to the vertical bundle and $T_{H}M$ to be the orthogonal complement to the vertical bundle.

Choosing a metric on the base $g_{Y}$ we define the family of metrics $g=g_{T(N/Y)}\oplus \psi^{\star}g_{Y}$ on $TN=T(N/Y)\oplus T_{H}M$. Let $\nabla^{g}$ be the Levi-Civita connection for $g_{u}$ then we can define a connection on the vertical bundle which is independent of $g_{Y}$ given by $\nabla^{T(N/Y)}=\boldsymbol{v}\nabla^{g}\boldsymbol{v}$ where $\boldsymbol{v}$ is projection onto the vertical bundle. We can then define a product connection on $TN$ by $\nabla^{\oplus}=\nabla^{T(N/Y)}\oplus \psi^{\star}\nabla^{Y}$ where $\nabla^{Y}$ is the Levi-Civita connection on $(Y,g_{Y})$. This defines a metric connection on $N$ which is in general non-torsion free and we can define
\begin{align}
    \omega=\nabla^{g}-\nabla^{\oplus}.
\end{align}
This is an $\operatorname{End}(TM)$-valued one form which via the metric $g$ can be identified with a $T^{\star}M\otimes T^{\star}M$-valued one form which is antisymmetric thus can be identified with a $\Lambda^{2}T^{\star}M$-valued one form which satisfies
\begin{align}\label{differencetensor}
    \begin{split}
    \omega&(W_{1})(W_{2},W_{3})=-\mathcal{S}^{\phi}(\boldsymbol{v}W_{1},\boldsymbol{v}W_{3})(\boldsymbol{h}W_{2})+\mathcal{S}^{\phi}(\boldsymbol{v}W_{1},\boldsymbol{v}W_{2})(\boldsymbol{h}W_{3}) \\
    &-\frac{1}{2}g_{\partial M/Y}(\mathcal{R}^{\phi}(\boldsymbol{h}W_{1},\boldsymbol{h}W_{3}),\boldsymbol{v}W_{2})+\frac{1}{2}g_{\partial M/Y}(\mathcal{R}^{\phi}(\boldsymbol{h}W_{1},\boldsymbol{h}W_{2}),\boldsymbol{v}W_{3}) \\
    &-\frac{1}{2}g_{\partial M/Y}(\mathcal{R}^{\phi}(\boldsymbol{h}W_{2},\boldsymbol{h}W_{3}),\boldsymbol{v}W_{1}).
    \end{split}
\end{align}
We comment that \cite{bgv} uses the opposite sign convention for $\mathcal{S}^{\phi}$ and both \cite{bgv},\cite{BismutCheegerEta} use the opposite sign convention for $\mathcal{R}^{\phi}$.

Now let $E\to N$ be a vertical Clifford module with vertical Clifford connection $\nabla^{E}$ and denote the vertical Clifford multiplication by $\cl_{E}$. We define $\mathbb{E}=E\otimes \psi^{\star}\Lambda T^{\star}Y$ and define a (degenerate) Clifford action on $\mathbb{E}$ by
\begin{align}
    \cl_{0}(W)(s\otimes\omega)=(\cl_{E}(\boldsymbol{v}W)s)\otimes\omega+(-1)^{\deg{s}}s\otimes(\epsilon(\boldsymbol{h}W^{\musFlat{}})\omega)
\end{align}
where $\epsilon$ is exterior multiplication and $s\otimes\omega$ is a section of $E^{\pm}\otimes \psi^{\star}\Lambda T^{\star}Y$. Then we can define a Clifford connection on $\mathbb{E}$ with respect to the Clifford action $\cl_{0}$ by
\begin{align}
    \nabla^{\mathbb{E}}=\nabla^{E}\otimes\operatorname{Id}+\operatorname{Id}\otimes \psi^{\star}\nabla^{Y}+\frac{1}{2}\cl_{0}(\omega)
\end{align}
where the action of $\cl_{0}(\omega)$ with respect to an orthonormal frame $W_{i}$ is given by
\begin{align}
    \cl_{0}(\omega)(W)=\sum_{i<j}\omega(W)(W_{i},W_{j})\cl_{0}(W_{i}\wedge W_{j})=\frac{1}{2}\sum_{ij}\omega(W)(W_{i},W_{j})\cl_{0}(W_{i})\cl_{0}(W_{j}).
\end{align}

From \eqref{differencetensor}, the restriction of this connection to each fibre is independent of $g_{b}$ and in a local orthonormal frame $U_{\alpha},V_{i}$ the action of a vertical derivative is given explicitly by
\begin{align}
    \begin{split}
    \nabla^{\mathbb{E}}_{V}&=\nabla^{E}_{V}+\frac{1}{4}\sum_{ij}\omega(V)(W_{i},W_{j})\cl_{0}(W_{i})\cl_{0}(W_{j}) \\
    &=\nabla^{E}_{V}+\frac{1}{2}\sum_{ij}S(V,V_{i})(\tilde{U}_{\alpha})\cl_{0}(V_{i})\cl_{0}(\tilde{U}_{\alpha}) \\
    &\qquad -\frac{1}{8}\sum_{ij}g_{\partial M/Y}(R(\tilde{U}_{\alpha},\tilde{U}_{\beta}),V)\cl_{0}(\tilde{U}_{\alpha})\cl_{0}(\tilde{U}_{\beta}).
    \end{split}
\end{align}

\begin{defn}
    The Bismut superconnection $\mathbb{B}$ is the Dirac operator on the Clifford module $\mathbb{E}$ with Clifford connection $\nabla^{\mathbb{E}}$
    \begin{align}
        \mathbb{B}=\sum_{i}\cl(W_{i})\nabla^{\mathbb{E}}_{W_{i}}.
    \end{align}
\end{defn}
Define $\psi_{\star}E\to Y$ to be the infinite dimensional $\mathbb{Z}_{2}$-graded Hermitian vector bundle whose fibre over $y$ is the space of smooth sections $C^{\infty}(\psi^{-1}(y),E|_{\psi^{-1}(y)})$. The Bismut superconnection is a Clifford superconnection on $\psi_{\star}E$.

Explicitly, the Bismut superconnection is given by
\begin{align}\label{bismutsc}
    \mathbb{B}=\sum_{i}\cl(V_{i})\nabla^{E}_{V_{i}}+\sum_{\alpha}\epsilon(\tilde{U}_{\alpha})\left(\nabla^{E}_{\tilde{U}_{\alpha}}-\frac{1}{2}k(\tilde{U}_{\alpha})\right)+\frac{1}{4}\sum_{i,\alpha<\beta}g_{\partial M/Y}(R(\tilde{U}_{\alpha},\tilde{U}_{\beta}),V_{i})\epsilon(\tilde{U}_{\alpha})\epsilon(\tilde{U}_{\beta})\cl(V_{i})
\end{align}
where $k(V)$ is the mean curvature
\begin{align}
    k(\tilde{U})=\sum_{i}S^{\phi}(V_{i},V_{i})(\tilde{U}).
\end{align}

From this expression, one can see that the Bismut superconnection depends on the vertical family of metrics and the horizontal bundle and not the metric $g_{Y}$. 

We briefly describe each of the terms which appear in the Bismut superconnection. The first term in \eqref{bismutsc} is exactly the family of Dirac operators on the fibres which we denote $\slashed{\partial}_{Z,y}$. To describe the second term which we will denote $\tilde{\nabla}^{\mathbb{E},u}=\nabla^{E}-\frac{1}{2}k$, we first note that the Hermitian metric on $\psi_{\star}E\to Y$ at $y\in Y$ is given by
\begin{align}
    \langle \tilde{s}_{1},\tilde{s}_{2}\rangle_{\psi_{\star}E,y}=\int_{Z_{y}}\langle s_{1},s_{2}\rangle_{E}\operatorname{dVol}_{Z_{y}}
\end{align}
where $\operatorname{dVol}$ is the $f$-form which restricts to the volume form on each fibre and vanishes on horizontal vectors. Since $\nabla^{E}$ is a Hermitian connection and $\mathcal{L}_{\tilde{U}}\operatorname{dVol}|_{Z_{y}}=-k(\tilde{U}\operatorname{dVol}$, we have
\begin{align}
    \tilde{U}\langle \tilde{s}_{1},\tilde{s}_{2}\rangle_{\psi_{\star}E,y}&=i_{\tilde{U}}d\int_{Z_{y}}\langle s_{1},s_{2}\rangle_{E}\operatorname{dVol}_{Z_{y}} \\
    &=\int_{Z_{y}}i_{\tilde{U}}d\left(\langle s_{1},s_{2}\rangle_{E}\operatorname{dVol}_{Z_{y}}\right) \\
    &=\int_{Z_{y}}\tilde{U}\langle s_{1},s_{2}\rangle_{E}\operatorname{dVol}_{Z_{y}}+\int_{Z_{y}}\langle s_{1},s_{2}\rangle_{E}\mathcal{L}_{\tilde{U}}\left(\operatorname{dVol}_{Z_{y}}\right) \\
    &=\int_{Z_{y}}\langle \nabla_{\tilde{U}}^{E}s_{1},s_{2}\rangle_{E}+\langle s_{1},\nabla_{\tilde{U}}^{E}s_{2}\rangle_{E}-k(\tilde{U})\langle s_{1},s_{2}\rangle_{E}\operatorname{dVol}_{Z_{y}}.
\end{align} 
In the second line, we used that $d$ and $\iota_{\tilde{U}}$ commute with fibre integration up to a factor of $(-1)^{f}$. From this we see that $\nabla^{\mathbb{E},u}$ is a Hermitian connection on $\psi_{\star}E$ naturally induced by the connection $\nabla^{E}$, which we see is not Hermitian considered as a metric on $\psi^{\star}E$.

For the third term, note that, since $\nabla^{g}$ is torsion free, the torsion of $\nabla^{\oplus}$ is given by
\begin{align}
    T^{\oplus}(W_{1},W_{2})^{\musFlat{}}&=-\omega(W_{1})(W_{2},\cdot)+\omega(W_{2})(W_{1},\cdot).
\end{align}
In particular, if we evaluate the torsion of horizontal vectors we have
\begin{align}\label{beforect}
    g_{\partial M/Y}(T^{\oplus}(\tilde{U}_{\alpha},\tilde{U}_{\beta}),V)=-g_{\partial M/Y}(R(\tilde{U}_{\alpha},\tilde{U}_{\beta}),V).
\end{align}
So we see that the third term is also given by the coefficients of the torsion of $\nabla^{\oplus}$ hence we will also denote it by $-c(T)$ (following \cite{BismutCheeger90a}).

The restriction of $\nabla^{\mathbb{E}}$ to each fibre $E|_{\psi^{-1}(y)}$ defines a rough Laplacian $\Delta^{y}=(\nabla^{\mathbb{E}})^{\star}\nabla^{\mathbb{E}}$ and we denote the smooth family of operators by $\Delta^{N/Y}$. Then the Bismut superconnection satisfies a Lichnerowicz formula
\begin{align}\label{bismutlichnerowics}
    \mathbb{B}^{2}=\Delta^{N/Y}+\frac{1}{4}S_{y}+\frac{1}{2}\sum_{ij}F^{E/S}(W_{i},W_{j})\cl(W_{i})\cl(W_{j})
\end{align}
where $S_{y}$ is the scalar curvature of the fibre and $F^{E/S}$ is the twisting curvature of $E$ (relative to the vertical Clifford action).

Let $\delta^{Y}_{t}$ be the rescaling on $\Lambda T^{\star}Y$ by $\delta_{t}^{Y}\omega=t^{-\frac{i}{2}}\omega$ for $\omega\in \Lambda^{i}T^{\star}Y$ then we define the rescaled Bismut superconnection
\begin{align}
    \mathbb{B}_{t}=t^{\frac{1}{2}}\delta_{t}^{Y}\circ\mathbb{B}\circ(\delta^{Y}_{y})^{-1}=t^{\frac{1}{2}}\mathbb{B}_{0}+\mathbb{B}_{1}+t^{-\frac{1}{2}}\mathbb{B}_{2}.
\end{align}
Let $K(E)$ be the infinite dimensional vector bundle on $E$ whose fibre over $y\in Y$ is given by smoothing operators on $E_{y}$, that is, have smooth integral kernel. Since we have compact fibres, these are trace class so there is a fibre supertrace map $\operatorname{Str}_{K(E)}\colon C^{\infty}(Y;K(E))\to C^{\infty}(Y)$ given by $\operatorname{Str}_{K(E)}(K)(y)=\operatorname{Str}_{E_{y}}(K(y))$. This extends to a map $C^{\infty}(Y;K(E)\otimes\Lambda T^{\star}M)\to C^{\infty}(Y;\Lambda T^{\star} M)$ given by
\begin{align}
    \operatorname{Str}_{K(E)}(\omega\otimes K)=\omega\operatorname{Str}_{K(E)}(K).
\end{align}
We also define
\begin{align}
    \operatorname{Tr}^{\text{even}}_{K(E)}(\omega\otimes K)=\omega^{\text{even}}\operatorname{Tr}_{K(E)}(K).
\end{align}
where $\omega^{\text{even}}$ is the projection of $\omega$ onto its even degree component.

It can be shown that when $\operatorname{dim}(N/Y)$ is even or odd respectively then as $t\to 0$ we have
\begin{align}
    \begin{split}
        \operatorname{Str}_{K(E)}\left(\frac{\partial\mathbb{B}_{t}}{\partial t}e^{-\mathbb{B}_{t}^{2}}\right)&=O(t^{\frac{1}{2}}) \\
        \operatorname{Tr}_{K(E)}^{\text{even}}\left(\frac{\partial\mathbb{B}_{t}}{\partial t}e^{-\mathbb{B}_{t}^{2}}\right)&=O(t^{\frac{1}{2}}).
    \end{split}
\end{align}
If the vertical family of Dirac operators is invertible, then both of them also decay exponentially as $t\to\infty$ otherwise they are $O(t^{-\frac{3}{2}})$ thus we have the following definition.

\begin{defn}
    Let $F\to N\to Y$ with odd dimensional total space. If $\dim{N/Y}$ is even the Bismut-Cheeger eta form is
    \begin{align}
        \hat{\eta}=\int_{0}^{\infty}\operatorname{Str}_{K(E)}\left(\frac{\partial\mathbb{B}_{t}}{\partial t}e^{-\mathbb{B}_{t}^{2}}\right)dt.
    \end{align}
    If $\dim{N/Y}$ is odd
    \begin{align}
        \hat{\eta}=\frac{1}{\sqrt{\pi}}\int_{0}^{\infty}\operatorname{Tr}_{K(E)}^{\text{even}}\left(\frac{\partial\mathbb{B}_{t}}{\partial t}e^{-\mathbb{B}_{t}^{2}}\right)dt.
    \end{align}
    The normalised Bismut-Cheeger eta form $\tilde{\eta}$ is 
    \begin{align}
        \tilde{\eta}=
        \begin{dcases}
            \sum_{j}\frac{1}{(2\pi i)^{j}}[\hat{\eta}]_{2j-1}  & \text{ if $\operatorname{dim}(Z/Y)$ even}  \\
            \sum_{j}\frac{1}{(2\pi i)^{j}}[\hat{\eta}]_{2j}  & \text{ if $\operatorname{dim}(Z/Y)$ odd} 
        \end{dcases}
    \end{align}
    where $[\hat{\eta}]_{j}$ is the degree $j$ component of $\hat{\eta}$.
\end{defn}
Explicitly, we have
\begin{align}
    \frac{\partial\mathbb{B}_{t}}{\partial t}e^{-\mathbb{B}_{t}^{2}}=\left(\slashed{\partial}_{Z,y}+\frac{c(T)}{4t}\right)e^{-\mathbb{B}_{t}^{2}}\frac{1}{2t^{\frac{1}{2}}}.
\end{align}
Note that when $Y$ is a point and $\operatorname{dim}(Z)$ is odd $[\hat{\eta}]_{0}$ is the function whose value at $y$ is half the $\eta$ invariant of $\slashed{\partial}_{Z,y}$. Now we determine the asymptotics of the rescaled heat kernel at $\frontface$.

If $N$ is spin the one of $Y$ or $T(N/Y)$ being spin gives the other a unique spin structure compatible with that of $N$. In this case, if $\mathcal{S}_{N},\mathcal{S}_{Y},\mathcal{S}_{Z/Y}$ are their respective spinor bundle then we have a canonical identification $\mathcal{S}_{N}\simeq \psi^{\star}\mathcal{S}_{Y}\otimes \mathcal{S}_{N/Y}$. In this case the vertical spinor bundle $\mathcal{S}_{N/Y}$ inherits a connection $\nabla^{S_{N/Y}}$ from the connection on the vertical tangent bundle $\nabla^{T(N/Y)}=\boldsymbol{v}\nabla^{g}\boldsymbol{v}$ where $\nabla^{g}$ is the Levi-Civita connection on $Z$ ($\nabla^{T(N/Y)}$ defines a connection on the vertical frame bundle which lifts to the vertical spin bundle). Moreover, this connection restricts to the spin connection on each fibre and defines a vertical Clifford connection on $\mathcal{S}_{N/Y}$ so we can define the $\hat{\eta}$ form for this vertical family of operators. Note that in this case that the kernel of $\frac{\partial\mathbb{B}_{t}}{\partial t}e^{-\mathbb{B}_{t}^{2}}$ is a section of $\psi^{\star}\Lambda^{\star}Y\otimes\operatorname{HOM}(\mathcal{S}_{N/Y})$ on the fibre diagonal and restricted to the diagonal is a section of $\psi^{\star}\Lambda^{\star}Y\otimes\operatorname{End}(\mathcal{S}_{N/Y})\simeq\psi^{\star}\Lambda^{\star}Y\otimes\mathbb{C}l(N/Y)$.

Now recall if $(M,g)$ is a spin incomplete cusp edge space then $\partial M$ has an induced spin structure and $\mathcal{S}^{+}|_{\partial M}\to \partial M$ can be identified with the spinor bundle $\mathcal{S}_{\partial M}$ on $\partial M$ with $\mathbb{C}l(\partial M)$ structure given by
\begin{align}\label{boundaryclidentification}
    \cl_{\partial M}(W):=-\cl(\partial_{x})\cl(W')
\end{align}
where $W'=W$ for horizontal vectors and $w'=x^{-k}W$ for vertical vectors. The restriction of the connection $\nabla^{\tilde{S}}|_{\partial M}$ defines a $\mathbb{C}l(\partial M)$ connection on $\mathcal{S}^{+}|_{\partial M}$ and we denote the vertical family of connections defined by the restriction to each fibre by $\nabla^{\mathcal{S},y}$. If the base or fibres are spin then $\mathcal{S}^{+}|_{\partial M}\simeq \mathcal{S}_{Y}\otimes\mathcal{S}_{\partial M/Y}$ and $\nabla^{\mathcal{S},y}$ can be identified with the vertical family of spin connections on $\mathcal{S}_{\partial M/Y}$. In particular, $\nabla^{\mathcal{S},y}$ extends to the $\mathbb{C}l(\partial M/Y)$ connection $\nabla^{\mathcal{S}_{\partial M/Y}}$ on $\mathcal{S}_{\partial M/Y}$ described in the previous paragraph (which is different from the connection $\nabla^{\mathcal{S}}|_{\partial M}$). The associated Bismut superconnection $\mathbb{B}$ acts of the left $\operatorname{HOM}(\mathcal{S}_{\partial M/Y})$ factor of $\psi^{\star}\Lambda^{\star}Y\otimes\operatorname{HOM}(\mathcal{S}_{\partial M/Y})\simeq \psi^{\star}\mathbb{C}l(Y)\otimes\operatorname{HOM}(\mathcal{S}_{\partial M/Y})$ which can be identified with the restriction on $\operatorname{HOM}(\mathcal{S}^{+}|_{\partial M})$ to the fibre diagonal. The action of $\mathbb{B}$ is given by \eqref{bismutsc} where vertical Clifford multiplication acts as \eqref{boundaryclidentification}. In particular, we have 
\begin{align}\label{derivativebismut}
    \frac{\partial\mathbb{B}_{t}}{\partial t}=-\cl(\partial_{x})\left(\slashed{\partial}_{Z,y}+\frac{c(T)}{4t}\right)
\end{align} 
and $\mathbb{B}^{2}$ is given by \eqref{bismutlichnerowics} using the connection $\nabla^{\mathcal{S}_{\partial M/Y}}$. Writing $\slashed{\partial M/Y}$ for the family of Dirac operators on $\mathcal{S}_{\partial M/Y}$, under the identification of Clifford bundle on $\partial M$ with the $\mathbb{C}l(M)^{+}$, \eqref{derivativebismut} is identified with
\begin{align}\label{derivativebismut1}
    \slashed{\partial}_{\partial M/Y,y}+\frac{c(T)}{4t}.
\end{align} 

We can now calculate the rescaling of $\tau\slashed{\partial}$ near $\frontface$. The restriction of $\operatorname{HOM}(\mathcal{S})$ to the fibre diagonal can be identified with $\pi^{\star}\mathbb{C}l(Y)\otimes \operatorname{HOM}_{\mathbb{C}l(Y)}(\mathcal{S})$ and that we have identified $\mathcal{S}_{y'}$ with $\mathcal{S}_{y}$ by parallel transport along geodesics in a neighbourhood $U'$ of $y\in Y$. Thus if we consider $\pi^{-1}(y)$ in the fibre diagonal, it has a neighbourhood in $M\times M$ and thus and its preimage on $M_{\text{heat}^{2}}$ on which we can identify $\operatorname{HOM}(\mathcal{S})$ with $\pi^{\star}\mathbb{C}l(Y)_{y}\otimes \operatorname{HOM}_{\mathbb{C}l(Y)|_{y}}(\mathcal{S}|_{y})\simeq \pi^{\star}\mathbb{C}l(Y)_{y}\otimes \mathbb{C}l(Z_{y})$. Thus in this neighbourhood we can define a horizontal, vertical and total symbol map which maps the factors $\pi^{\star}\mathbb{C}l(Y)_{y}\to\pi^{\star}(\Lambda Y)_{y}$ and similarly for the vertical factor. The lift of the connection by the left projection acts on the left factor of $\operatorname{HOM}(S)$ with its action after these identification given by the above Lemmas. Under the symbol map, left Clifford multiplication by $W$ acts by $\epsilon(W)-\iota(W)$.

Using this together with the expressions for the connection in Lemma \ref{6.2} we get
\begin{lemma}\label{diracsquarelift1}
    Let $M$ be an incomplete cusp edge space with spin structure with the base/fibres of $\partial M$ with compatible spin structures and $\partial_{x},\tilde{U}_{\alpha},V_{j}$ a local orthonormal frame. Then given a local section of the spinor bundle covering this frame and locally identifying sections with maps to the spinor space, the action of the lift of $\tau^{2}\slashed{\partial}^{2}$ to $M_{\text{heat}}^{2}$ at $y^{i}=0$ under the horizontal symbol map and rescaling is given by
    \begin{align}\label{nonisolatedrescaledoperator}
        \begin{split}
            \delta_{(x')}(\tau^{2}\slashed{\partial}^{2})\delta_{(x')}^{-1}&=-\tilde{T}^{2}\partial_{\tilde{s}}^{2}-\tilde{T}^{2}\mathcal{H}+\tilde{T}^{2}\mathbb{B}^{2} \\
             &\quad -x^{k-1}\tilde{T}^{2}k\operatorname{cl}(\partial_{x})\left(\slashed{\partial}_{Z,y}-\frac{1}{8}\sum_{i\alpha\beta}\operatorname{cl}(V_{i})g_{\partial M/Y}(\mathcal{R}^{\phi}(\tilde{U}_{\alpha},\tilde{U}_{\beta}),V)(0,z)\epsilon(\tilde{U}_{\alpha})\epsilon(\tilde{U}_{\beta})\right) \\
             &\quad +(x')^{k-1}\tilde{s}E+O((x')^{k})
        \end{split}     
    \end{align}
    where $\mathbb{B}$ is the Bismut superconnection associated to the $\mathbb{C}l(\partial M/Y)$ connection $\nabla^{\mathcal{S}_{\partial M/Y}}$ on $\mathcal{S}_{\partial M/Y}$ and $E$ is a first order differential operator on $\mathcal{S}_{y}$ restricted to the fibre over $y^{i}=0$ in $\frontface$ and
    \begin{align}
        \mathcal{H}=\sum_{\gamma}\left(\partial_{\tilde{\eta}^{\gamma}}-\frac{1}{8}\sum_{jkl}g(R(\partial_{x^{\gamma}},\tilde{\eta}^{\gamma}\partial_{y^{\alpha}})_{p}\partial_{y^{\alpha}},\partial_{y^{\beta}})\epsilon(\partial_{y^{\alpha}})\epsilon(\partial_{y^{\beta}})\right)^{2}.
    \end{align}
\end{lemma}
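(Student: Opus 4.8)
The plan is to combine the Lichnerowicz formula with the explicit lifts of the rescaled covariant derivatives in Lemma \ref{6.2} and to extract the two lowest orders in $x'$ after applying the horizontal symbol map and the Getzler rescaling $\delta_{(x')}$. Concretely, write $\tau^{2}\slashed{\partial}^{2}=\tau^{2}\Delta^{\mathcal{S}}+\tfrac14\tau^{2}S$ and, using that $\tau$ and $x'$ are invariant under the left‑factor vector fields, expand $\tau^{2}\Delta^{\mathcal{S}}=-\sum_{e_{i}}(\tau\nabla^{\mathcal{S}}_{e_{i}})^{2}+\sum_{e_{i}}\tau^{2}\nabla^{\mathcal{S}}_{\nabla_{e_{i}}e_{i}}$ over the orthonormal frame $e_{i}\in\{\partial_{x},\tilde{U}_{\alpha},x^{-k}V_{j}\}$. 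Each $\tau\nabla^{\mathcal{S}}_{e_{i}}$ is given by Lemma \ref{6.2}; one then applies the horizontal symbol map (so $\operatorname{cl}(\tilde{U}_{\alpha})\mapsto\epsilon(\tilde{U}_{\alpha})-\iota(\tilde{U}_{\alpha})$ while $\operatorname{cl}(\partial_{x})$ and the vertical $\operatorname{cl}(x^{-k}V_{j})$ remain Clifford) and conjugates by $\delta_{(x')}$, which multiplies a horizontal exterior monomial of degree $m$ by $(x')^{-mk}$. The point — already noted in the text for $\tau\nabla^{\mathcal{S}}$ — is that the coefficients of the horizontal Clifford monomials in Lemma \ref{6.2} carry exactly the compensating powers: the $(x')^{k}$ in front of $\operatorname{cl}(x^{-k}V_{j})\operatorname{cl}(\tilde{U}_{\alpha})$ and the $(x')^{2k}$ in front of $\operatorname{cl}(\tilde{U}_{\alpha})\operatorname{cl}(\tilde{U}_{\beta})$ are absorbed by the $(x')^{-k}$ and $(x')^{-2k}$ from $\delta_{(x')}$, so the conjugated operators have finite limits as $x'\to 0$, whence so does $\delta_{(x')}(\tau^{2}\slashed{\partial}^{2})\delta_{(x')}^{-1}$.

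The order $(x')^{0}$ part is assembled as follows. The $\partial_{x}$ direction contributes $-(\tilde{T}\partial_{\tilde{s}})^{2}=-\tilde{T}^{2}\partial_{\tilde{s}}^{2}$, and $\nabla_{\partial_{x}}\partial_{x}=0$ for the product metric. For the horizontal directions, $\delta_{(x')}(\tau\nabla^{\mathcal{S}}_{\tilde{U}_{\gamma}})\delta_{(x')}^{-1}\to\tilde{T}\big(\partial_{\tilde{\eta}^{\gamma}}-\tfrac18\sum_{\alpha\beta l}g_{Y}(R_{Y}(U_{\gamma},\tilde{\eta}^{l}\partial_{y^{l}})U_{\alpha},U_{\beta})\epsilon(\tilde{U}_{\alpha})\epsilon(\tilde{U}_{\beta})\big)$, since the vertical‑ and mixed‑Clifford terms of $\tau\nabla^{\mathcal{S}}_{\tilde{U}_\gamma}$ have prefactors too small to survive, and the connection‑of‑connection correction $\tau^2\nabla^{\mathcal S}_{\nabla_{\tilde U_\gamma}\tilde U_\gamma}$ is $O((x')^{k})$ in normal coordinates; hence $-\sum_{\gamma}(\tau\nabla^{\mathcal{S}}_{\tilde{U}_{\gamma}})^{2}\to-\tilde{T}^{2}\mathcal{H}$. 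For the vertical directions, the surviving terms of $\delta_{(x')}(\tau\nabla^{\mathcal{S}}_{x^{-k}V_{i}})\delta_{(x')}^{-1}$ are exactly $\tilde{T}\nabla^{\mathbb{E}}_{V_{i}}$, where $\nabla^{\mathbb{E}}$ is the Bismut connection on $\mathbb{E}=\mathcal{S}_{\partial M/Y}\otimes\psi^{\star}\Lambda T^{\star}Y$: the leading $\nabla^{\mathcal{S},y}_{V_{i}}$, the rescaled $\mathcal{S}^{\phi}$‑term $\tfrac12\phi_{Y}^{\star}g_{Y}(\mathcal{S}^{\phi}(V_{i},V_{j}),\tilde{U}_{\alpha})\operatorname{cl}(V_{j})\epsilon(\tilde{U}_{\alpha})$ and the rescaled $\mathcal{R}^{\phi}$‑term $-\tfrac18 g_{\partial M/Y}(\mathcal{R}^{\phi}(\tilde{U}_{\alpha},\tilde{U}_{\beta}),V_{i})\epsilon(\tilde{U}_{\alpha})\epsilon(\tilde{U}_{\beta})$ reproduce precisely the local formula for $\nabla^{\mathbb{E}}_{V}$ recorded before the definition of the Bismut superconnection. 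Together with $\sum_{i}\tau^{2}\nabla^{\mathcal{S}}_{\nabla_{x^{-k}V_{i}}(x^{-k}V_{i})}$, whose limit is $\tilde{T}^{2}\nabla^{\mathbb{E}}_{\nabla^{\partial M/Y}_{V_{i}}V_{i}}$ (the $\partial_{x}$‑component of $\nabla_{x^{-k}V_{i}}(x^{-k}V_{i})$ being $O(x^{k-1})$ and its horizontal component $O(x^{k})$), this gives $\tilde{T}^{2}\Delta^{N/Y}$; adding $\tfrac14\tau^{2}S\to\tfrac14\tilde{T}^{2}S_{g_{Z_{y}}}$ from Lemma \ref{scalarcurvature} and invoking the Bismut–Lichnerowicz formula \eqref{bismutlichnerowics} with vanishing twisting curvature (the spin case) yields $\tilde{T}^{2}\mathbb{B}^{2}$.

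For the order $(x')^{k-1}$ part I would track three sources. First, the explicit $(x')^{k-1}$ Clifford‑mixing term $\tfrac14 k(-\operatorname{cl}(x^{-k}V_{i}^{y})\operatorname{cl}(\partial_{x})+\operatorname{cl}(\partial_{x})\operatorname{cl}(x^{-k}V_{i}^{y}))$ in $\tau\nabla^{\mathcal{S}}_{x^{-k}V_{i}}$ cross‑multiplies, in $-\sum_{i}(\tau\nabla^{\mathcal{S}}_{x^{-k}V_{i}})^{2}$, with the leading $\tilde{T}\nabla^{\mathcal{S},y}_{V_{i}}$ to give $-k\tilde{T}^{2}(x')^{k-1}\operatorname{cl}(\partial_{x})\slashed{\partial}_{Z,y}$ — this is simply $\tau^{2}$ times the $-kx^{-k-1}\operatorname{cl}(\partial_{x})\slashed{\partial}_{Z}$ term of Lemma \ref{squarespindirac} after substituting $x=x'[(x')^{k-1}\tilde{s}+1]$. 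Second, the cross of that mixing term with the $\mathcal{R}^{\phi}$‑curvature term survives at order $(x')^{k-1}$ after rescaling of its $\operatorname{cl}(\tilde{U}_{\alpha})\operatorname{cl}(\tilde{U}_{\beta})$ factor and produces the $-\tfrac18\sum\operatorname{cl}(V_{i})g_{\partial M/Y}(\mathcal{R}^{\phi}(\tilde{U}_{\alpha},\tilde{U}_{\beta}),V)\epsilon(\tilde{U}_{\alpha})\epsilon(\tilde{U}_{\beta})$ correction inside the parenthesis. Third, the first‑order Taylor corrections of the factors $[(x')^{k-1}\tilde{s}+1]^{\pm k}$ and $[(x')^{k-1}\tilde s+1]^{-1}$ (and of $x^{2k}S$) multiplying the leading operators are all of the form $(x')^{k-1}\tilde{s}$ times a differential operator of order $\le 1$, collecting into $(x')^{k-1}\tilde{s}E$. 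All remaining contributions — the $O((x')^{2k})$ and $O((x')^{3k})$ remainders of Lemma \ref{6.2} (still $O((x')^{k})$ after rescaling their two Clifford factors), the higher Taylor terms, the $O(x^{k})$ remainder in $x^{2k}S$, and the $O(\lvert y\rvert^{2})$ term in the normal‑coordinate expansion of $g_{Y}$ — lie in $O((x')^{k})$.

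The main obstacle is the combinatorics of the order $(x')^{k-1}$ terms: one must check, with the sign conventions of \cite{bgv} and \cite{BismutCheegerEta} for $\mathcal{S}^{\phi}$ and $\mathcal{R}^{\phi}$, that the $\operatorname{cl}(\partial_{x})$ contributions really organise into $-x^{k-1}\tilde{T}^{2}k\operatorname{cl}(\partial_{x})$ times the stated operator, and that every other $(x')^{k-1}$ contribution genuinely carries a factor of $\tilde{s}$ — in particular that the $\operatorname{cl}(\tilde{U}_{\alpha})\operatorname{cl}(\partial_{x})$‑type error terms of Lemma \ref{6.2}, which a priori also land at order $(x')^{k-1}$ after rescaling, are either absorbed into $(x')^{k-1}\tilde s E$ or pushed into $O((x')^{k})$. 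Conceptually the crux is recognising the order $(x')^{0}$ vertical part as exactly $\nabla^{\mathbb{E}}$ rather than $\nabla^{\mathbb{E}}$ plus some Clifford‑module endomorphism; this is forced by direct comparison with the explicit local expression for $\nabla^{\mathbb{E}}_{V}$ and by $F^{E/S}=0$ for the spinor bundle, so no extra curvature term can appear.
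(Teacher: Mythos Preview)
Your treatment of the leading order is essentially the paper's: both identify the rescaled vertical covariant derivative with $\nabla^{\mathbb{E}}_{V_i}$ and invoke the Bismut--Lichnerowicz formula together with Lemma~\ref{scalarcurvature} to produce $-\tilde T^{2}\partial_{\tilde s}^{2}-\tilde T^{2}\mathcal H+\tilde T^{2}\mathbb B^{2}$.

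For the $(x')^{k-1}$ terms the paper does \emph{not} continue with the rough Laplacian. It instead writes out the full rescaled lift of $\tau\slashed\partial$ (the three blocks $\cl(\partial_x)\tilde T\partial_{\tilde s}$, the horizontal block, and the vertical block containing $\tilde T[(x')^{k-1}\tilde s+1]^{-k}\slashed\partial_{Z,y}$, $\tfrac{kf}{2}(x')^{k-1}\cl(\partial_x)$, and the $\mathcal R^\phi$ term) and squares it. The $(x')^{k-1}$ contributions then have a single transparent source: $\tilde T\cl(\partial_x)\partial_{\tilde s}$ differentiating the $\tilde s$ in the Taylor expansion of the factors $[(x')^{k-1}\tilde s+1]^{\pm k}$, which immediately produces the $\cl(\partial_x)\slashed\partial_{Z,y}$ piece (from the $\slashed\partial_Z$ factor) and the $\mathcal R^\phi$ piece (from the combined contributions of the horizontal line and the last vertical line, with coefficients $\tfrac14$ and $-\tfrac18$ adding to $\tfrac18$). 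Everything else either cancels by Clifford anticommutation or retains a factor of $\tilde s$.

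Your Lichnerowicz route for the $(x')^{k-1}$ layer has a concrete gap. You assert that the $\partial_x$-component of $\nabla_{x^{-k}V_i}(x^{-k}V_i)$ is $O(x^{k-1})$; but from \eqref{20} one has $g(\nabla_{V_i}x^{-k}V_i,\partial_x)=-kx^{k-1}$, and since $\nabla_{x^{-k}V_i}=x^{-k}\nabla_{V_i}$ this component is $-kx^{-1}$, not $O(x^{k-1})$. After multiplying by $\tau^{2}$ and lifting, the correction term $\tau^{2}\nabla^{\mathcal S}_{\nabla_{x^{-k}V_i}(x^{-k}V_i)}$ therefore contributes $-kf\,\tilde T^{2}(x')^{k-1}\partial_{\tilde s}$ at this order --- a term with no $\tilde s$ factor and no $\cl(\partial_x)$, which your three ``sources'' do not account for. (Your first source also leaves behind a residual endomorphism $-\tfrac{k}{2}\tilde T^{2}(x')^{k-1}\cl(\partial_x)\sum_i\cl(\nabla^{\partial M/Y}_{V_i}V_i)$ that you have not tracked.) These extra pieces are ultimately harmless for the index --- applied to the Gaussian factor $h(\tilde T^{2},\tilde s,0)$ they produce something vanishing at $\tilde s=0$ --- but at the level of the operator identity in the lemma they are genuine $(x')^{k-1}$ terms that your argument misses. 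The paper's direct-squaring approach sidesteps this bookkeeping because the $\partial_{\tilde s}$ there sits inside $\cl(\partial_x)\partial_{\tilde s}$ and the anticommutation structure does the sorting.
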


\begin{proof}
    From lemma \ref{6.2} we see that the rescaled connection (everything below is after conjugation by the horizontal symbol map) is given by
    \begin{align}
        \begin{split}
         \delta_{(x')}(\tau\nabla_{\partial_{x}})\delta_{(x')}^{-1}&=\tilde{T}\partial_{\tilde{s}} \\
          \delta_{(x')}(\tau\nabla_{\tilde{U}_{\gamma}})\delta_{(x')}^{-1}&=\tilde{T}\sigma(\tilde{U}_{\gamma})((x')^{k}\tilde{\eta})-\frac{\tilde{T}}{8}\sum_{\alpha\beta l}g_{Y}(R_{Y}(\tilde{U}_{\gamma},\eta^{l}\partial_{\eta^{l}}),\tilde{U}_{\alpha},\tilde{U}_{\beta})((x')^{k}\eta)\epsilon(\tilde{U}_{\alpha})\epsilon(\tilde{U}_{\beta}) \\
          &\quad +O((x')^{k}) \\
          \delta_{(x')}(\tau\nabla_{x^{-k}V_{i}})\delta_{(x')}^{-1}&=\tilde{T}\nabla_{V_{i}}^{\mathcal{S},y}+\frac{\tilde{T}}{2}\sum_{j\alpha}\phi_{Y}^{\star}g_{Y}(\mathcal{S}^{\phi}(V_{i},V_{j}),\tilde{U}_{\alpha})(y,z)\operatorname{cl}(x^{-k}V_{j})\epsilon(\tilde{U}_{\alpha})\\
        &\quad -\frac{\tilde{T}}{8}\sum_{\alpha\beta}g_{\partial M/Y}(\mathcal{R}^{\phi}(\tilde{U}_{\alpha},\tilde{U}_{\beta}),V_{i})(y,z)\epsilon(\tilde{U}_{\alpha})\epsilon(\tilde{U}_{\beta})+O((x')^{k-1}). 
        \end{split}
    \end{align}
    The first two terms are the same as in the case of an isolated cusp. The last term is exactly the restriction to the fibre of the connection used to define the Bismut superconnection associated to the $\mathbb{C}l(\partial M/Y)$ connection $\nabla^{\mathcal{S}_{\partial M/Y}}$ so the contribution of these term to the rough Laplacian term in the Lichnerowicz formula is exactly the Laplacian term in the Lichnerowicz formula for the square of the Bismut superconnection. Together with the scalar curvature contribution from the previous lemma, we get the square of the Bismut superconnection $\tilde{T}^{2}\mathbb{B}^{2}$.

    For the higher order term, first we see that the $O((x')^{k-1})$ terms come from the vertical derivative terms in the rough Laplacian. We look at the lift of the Dirac operator
    \begin{align}  \tau\slashed{\partial}=&\cl(\partial_{x})\nabla_{\tau\partial_{x}}+\sum_{j}\cl\tilde{U}_{j})\nabla_{\tilde{U}_{j}}+\sum_{i}\cl(x^{-k}V_{i})\nabla_{x^{-k}V_{i}}
    \end{align}
    where each of the terms lifts at $\frontface$ to
    {\allowdisplaybreaks
    \begin{align*}
            &\cl(\partial_{x})\nabla_{\tau\partial_{x}}=\tilde{T}\operatorname{cl}(\partial_{x})\partial_{\tilde{s}} \\
            \sum_{j}&\cl(\tilde{U}_{j})\nabla_{\tilde{U}_{j}}=\tilde{T}\sum_{i}\operatorname{cl}(\tilde{U}_{i})(\sigma(\tilde{U}_{i})((x')^{k}\tilde{\eta})+(x')^{k}V_{i}')\\
        &+\frac{\tilde{T}}{4}(x')^{k}\sum_{ijl}\operatorname{cl}(\tilde{U}_{i})\left[g_{\partial M/Y}([\tilde{U}_{i},V_{j}],V_{k})-g_{Y}(\mathcal{S}^{\phi}(V_{j},V_{l}),\tilde{U}_{i})\right]((x')^{k}\tilde{\eta},z)\operatorname{cl}(x^{-k}V_{j})\operatorname{cl}(x^{-k}V_{l}) \\
        &+\frac{\tilde{T}}{4}(x')^{2k}[(x')^{k-1}\tilde{s}+1]^{k}\sum_{i\alpha j}\epsilon(\tilde{U}_{i})[g_{\partial M/Y}(\mathcal{R}^{\phi}(\tilde{U}_{i},\tilde{U}_{\alpha}),V_{j})(y^{i},z^{j})]((x')^{k}\tilde{\eta},z)\operatorname{cl}(\tilde{U}_{\alpha})\operatorname{cl}(x^{-k}V_{j}) \\
        &-\frac{\tilde{T}}{8}(x')^{2k}\sum_{i\alpha\beta\gamma}\operatorname{cl}(\tilde{U}_{i})g_{Y}(R_{Y}(\tilde{U}_{i},\eta^{\gamma}\partial_{y^{\gamma}})\tilde{U}_{\alpha},\tilde{U}_{\beta})((x')^{k}\tilde{\eta})\operatorname{cl}(\tilde{U}_{\alpha})\operatorname{cl}(\tilde{U}_{\beta}) \\        &+\sum_{\alpha\beta\gamma}O((x')^{3k})\cl(\tilde{U}_{\alpha})\cl(\tilde{U}_{\beta})\cl(\tilde{U}_{\gamma})+\sum_{j\alpha\beta}O((x')^{2k})\cl(\tilde{U}_{\alpha})\cl(\tilde{U}_{\beta})\cl(x^{-k}V_{i}^{y}) \\      
        \sum_{i}&\cl(x^{-k}V_{i})\nabla_{x^{-k}V_{i}}=\tilde{T}[(x')^{k-1}\tilde{s}+1]^{-k}\slashed{\partial}_{Z,y'}\\
        &+\tilde{T}(x')^{k-1}[(x')^{k-1}\tilde{s}+1]\frac{kf}{2}\operatorname{cl}(\partial_{x})-\frac{T}{2}(x')^{k}\sum_{\alpha}k(\tilde{U}_{\alpha})\operatorname{cl}(\tilde{U}_{\alpha})  \\
        &-\frac{\tilde{T}}{8}(x')^{2k}[(x')^{k-1}\tilde{s}+1]^{k}\sum_{i\alpha\beta}\operatorname{cl}(V_{i})g_{\partial M/Y}(\mathcal{R}^{\phi}(\tilde{U}_{\alpha},\tilde{U}_{\beta}),V)(y,z)\operatorname{cl}(\tilde{U}_{\alpha})\operatorname{cl}(\tilde{U}_{\beta}) \\
        &+\sum_{ij\alpha}O((x')^{2k})\cl(\tilde{U}_{\alpha})\cl(x^{-k}V_{i}^{y})\cl(x^{-k}V_{j}^{y})+\sum_{j\alpha\beta}O((x')^{3k})\cl(\tilde{U}_{\alpha})\cl(\tilde{U}_{\beta})\cl(x^{-k}V_{i}^{y})
    \end{align*}
    }
    where each of the error terms are smooth with expansion in $(x')^{k}$. We write $E_{1}, E_{2}, E_{3}$ for the first three error terms above, the final one being absorbed into $E_{2}$.
    
     First, expanding each of the coefficients at $x'=0$ produces higher order terms which are of the form of the error terms $E_{i}$ except for the term in line 4 since this term has an $(x')^{k-1}$ factor while the error terms are smooth expansions in $(x')^{k}$. Thus we may set $(x')^{k}\tilde{\eta}=0$ for all these coefficients and absorb all the higher order terms into the errors. For the term in line 4, expanding produces an error with leading terms $(x')^{3k}A+(x')^{3k+k-1}\tilde{s}B+O((x')^{4k}$ for some endomorphisms $A$ and $B$. Thus we have the following expression for the lift of the Dirac operator.
     {\allowdisplaybreaks
    \begin{align*}
            \tau\slashed{\partial}=&\tilde{T}\operatorname{cl}(\partial_{x})\partial_{\tilde{s}}+\tilde{T}\sum_{i}\operatorname{cl}(\tilde{U}_{i})(\sigma(\tilde{U}_{i})((x')^{k}\tilde{\eta})\rVert_{(x')^{k}\tilde{\eta}=0}+(x')^{k}V_{i}')\\
        &+\frac{\tilde{T}}{4}(x')^{k}\sum_{ijl}\operatorname{cl}(\tilde{U}_{i})\biggl[g_{\partial M/Y}([\tilde{U}_{i},V_{j}],V_{l}) \\
        &\qquad\qquad-g_{Y}(\mathcal{S}^{\phi}(V_{j},V_{l}),\tilde{U}_{i})\biggr]((x')^{k}\tilde{\eta},z)\rvert_{(x')^{k}\tilde{\eta}=0}\operatorname{cl}(x^{-k}V_{j})\operatorname{cl}(x^{-k}V_{l}) \\
        &+\frac{\tilde{T}}{4}(x')^{2k}[(x')^{k-1}\tilde{s}+1]^{k}\sum_{i\alpha j}\cl(\tilde{U}_{i})g_{\partial M/Y}(\mathcal{R}^{\phi}(\tilde{U}_{i},\tilde{U}_{\alpha}),V_{j})((x')^{k}\tilde{\eta},z)\operatorname{cl}(\tilde{U}_{\alpha})\operatorname{cl}(x^{-k}V_{j}) \\
        &-\frac{\tilde{T}}{8}(x')^{2k}\sum_{i\alpha\beta\gamma}\operatorname{cl}(\tilde{U}_{i})g_{Y}(R_{Y}(\tilde{U}_{i},\eta^{\gamma}\partial_{\tilde{\eta}^{\gamma}}),\tilde{U}_{\alpha},\tilde{U}_{\beta})((x')^{k}\tilde{\eta})\rVert_{(x')^{k}\tilde{\eta}=0}\operatorname{cl}(\tilde{U}_{\alpha})\operatorname{cl}(\tilde{U}_{\beta}) \\
        &+\tilde{T}[(x')^{k-1}\tilde{s}+1]^{-k}\slashed{\partial}_{Z,y'}+\tilde{T}(x')^{k-1}[(x')^{k-1}\tilde{s}+1]\frac{kf}{2}\operatorname{cl}(\partial_{x})-\frac{T}{2}(x')^{k}\sum_{\alpha}k(\tilde{U}_{\alpha})\operatorname{cl}(\tilde{U}_{\alpha})  \\
        &-\frac{\tilde{T}}{8}(x')^{2k}[(x')^{k-1}\tilde{s}+1]^{k}\sum_{i\alpha\beta}\operatorname{cl}(V_{i})g_{\partial M/Y}(\mathcal{R}^{\phi}(\tilde{U}_{\alpha},\tilde{U}_{\beta}),V)(y,z)\operatorname{cl}(\tilde{U}_{\alpha})\operatorname{cl}(\tilde{U}_{\beta}) \\
        &+\sum_{\alpha\beta\gamma}O((x')^{3k})\cl(\tilde{U}_{\alpha})\cl(\tilde{U}_{\beta})\cl(\tilde{U}_{\gamma})+\sum_{j\alpha\beta}O((x')^{2k})\cl(\tilde{U}_{\alpha})\cl(\tilde{U}_{\beta})\cl(x^{-k}V_{i}^{y}) \\
        &+\sum_{ij\alpha}O((x')^{2k})\cl(\tilde{U}_{\alpha})\cl(x^{-k}V_{i}^{y})\cl(x^{-k}V_{j}^{y}).
    \end{align*}
    }
      We only need to look at the terms in square of the above expression whose rescaling will have coefficient $(x')^{k-1}$. Using the fact that $[(x')^{k-1}\tilde{s}+1]^{\pm k}=1+(x')^{k-1}\tilde{s}+O((x')^{2(k-1)})$ we see that these terms come from the terms in lines 3, 5 and 6 where we have used.
    
    In the square, we see the terms containing factors from these three lines come in three types, first we have pairs which cancel as they are the product of two terms which anticommute, which include the terms where $\tilde{T}\operatorname{cl}(\partial_{x})\partial_{\tilde{s}}$ does not act on an $\tilde{s}$ as the $\operatorname{cl}(\partial_{x})$ anticommutes with all the endomorphisms which appear here. This also includes any terms which come from the second term in the line 5 since it also anticommutes with every other endomorphism which appears here.

    The second are those which come from the first term $\tilde{T}\operatorname{cl}(\partial_{x})\partial_{\tilde{s}}$ where the $\partial_{\tilde{s}}$ acts on an $\tilde{s}$. There are three contributions to this type from the third line which produces $\frac{1}{4}$ times the curvature term in line 2 of \eqref{nonisolatedrescaledoperator} while the $(x')^{3k+k-1}$ error term becomes an $O((x')^{k})$ error after rescaling. Next we have contributions from the first term line 5 which give the $\slashed{\partial}_{Z,y'}$ term in \eqref{nonisolatedrescaledoperator} while the second term in line 5 is $O((x')^{2k-2})$ after rescaling. Finally, line 6 contributes $-\frac{1}{8}$ times the curvature in line 2 of \eqref{nonisolatedrescaledoperator} which together with the contribution from line 3 gives exactly the $\frac{1}{8}$ that we have in line 2 of \eqref{nonisolatedrescaledoperator}.
    
    Finally, are those of the form $(x')^{k-1}\tilde{s}B$ for some endomorphism $B$ which include every other term not of the first or second type.
    \end{proof}

Now we calculate the $t=0$ contribution to the supertrace from $\frontface$.

\begin{lemma}
    On the fibre above $y$ in $\frontface$ the rescaled heat kernel, under the horizontal symbol map has asymptotic expansion
    \begin{align}
        \delta_{(x')}\sigma_{B}H=u_{-kn}(x')^{-kn}+u_{-kn+k-1}(x')^{-kn+k-1}+O((x')^{-kn+k})
    \end{align}
    where
    \begin{align}\label{rescaledffasymp1}
        \begin{split}
            u_{-kn}&=h(\tilde{T}^{2},\tilde{s},0)K_{\mathcal{H}}(\tilde{T}^{2},\tilde{\eta})K_{\mathbb{B}^{2}}(\tilde{T}^{2},z,z') \\
             u_{-kn+k-1}&=k\tilde{T}^{2}h(\tilde{T}^{2},\tilde{s},0)K_{\mathcal{H}}(\tilde{T}^{2},\tilde{\eta},0)\cl(\partial_{x})\left(\slashed{\partial}_{Z,y}+\frac{c(T)}{4}\right)K_{\mathbb{B}^{2}}(\tilde{T}^{2},z,z') \\
             &\quad +\int_{0}^{t}\int_{Z_{y'}}\int_{\mathbb{R}^{b}_{\tilde{\eta}}}\int_{-\infty}^{\infty}\biggl[h(t-t',x,x')K_{\mathcal{H}}(t-t',\tilde{\eta},\tilde{\eta}')K_{\mathbb{B}^{2}}(t-t',z,\zeta), \\
            &\quad L_{k-1}\mathcal{N}_{-kn}(\delta_{\tau}H)(t^{'\frac{1}{2}},x',\tilde{\eta}',\zeta,z')\biggr]dx'd\tilde{\eta}'d\zeta dt' +E 
        \end{split}
    \end{align}
    where $h(t,x,x')$ is the Euclidean heat kernel, $K_{\mathcal{H}},K_{\mathbb{B}^{2}}$ are the heat kernels of $\mathcal{H}$ and $\mathbb{B}^{2}$ respectively and $E$ vanishes at $\tilde{s}=0$ and
    \begin{align}
        L_{k-1}=k\operatorname{cl}(\partial_{x})\left(\slashed{\partial}_{Z,y}-\frac{1}{8}\sum_{i\alpha\beta}\operatorname{cl}(V_{i})g_{\partial M/Y}(\mathcal{R}^{\phi}(\tilde{U}_{\alpha},\tilde{U}_{\beta}),V)(0,z)\operatorname{cl}(\tilde{U}_{\alpha})\operatorname{cl}(\tilde{U}_{\beta})+\tilde{s}E\right).
    \end{align}
\end{lemma}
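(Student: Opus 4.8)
The plan is to compute the rescaled heat kernel near $\frontface$ by solving the heat equation $t(\partial_t+\slashed\partial^2)H=0$ order by order in a boundary defining function $x'$ of $\frontface$, exploiting the fact, established in Lemma~\ref{diracsquarelift1} and the discussion preceding it, that the horizontal-symbol-conjugated, Getzler-rescaled lift $\delta_{(x')}\sigma_B(\tau^2\slashed\partial^2)\delta_{(x')}^{-1}$ is \emph{non-singular} as $x'\to 0$ and has an $x'$-expansion whose only terms below order $(x')^k$ are the order-$(x')^0$ term $\tilde T^2\hat P_0$ with $\hat P_0:=-\partial_{\tilde s}^2-\mathcal H+\mathbb B^2$, and the order-$(x')^{k-1}$ term $\tilde T^2(-L_{k-1})$ read off from \eqref{nonisolatedrescaledoperator} (signs absorbed into the error operator $E$). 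By the same non-singularity, $\delta_{(x')}\sigma_B H$ is polyhomogeneous at $\frontface$ with index set $-kn+\mathbb N$, so on the interior of the fibre of $\frontface$ over $y$ — working in the frame and identifications of Lemma~\ref{6.2}, with $y^i=0$ — we may write $\delta_{(x')}\sigma_B H=\sum_{j\geq 0}u_{-kn+j}(x')^{-kn+j}$. Substituting into the heat equation, dividing by $\tilde T^2$ and using $t=\tilde T^2$ so that $\tfrac12\tilde T\partial_{\tilde T}=t\partial_t$, we obtain the recursive system $(\partial_t+\hat P_0)u_{-kn+j}=-L_{k-1}u_{-kn+j-k+1}$ (the right side being $0$ unless $j\geq k-1$), with initial data at $t=0$ prescribed by the known $\timeface\cap\frontface$-asymptotics of $H$ from the parametrix construction (Theorem~\ref{heatkernelexistence} and the $K_1$-lemma).

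For the leading coefficient, $u_{-kn}$ solves $(\partial_t+\hat P_0)u_{-kn}=0$ with $\delta$-type initial data equal to $\operatorname{Id}$ on the $Z$-factor, inherited from the leading term $b_0'$ of the parametrix. Since $-\partial_{\tilde s}^2$, $-\mathcal H$ and $\mathbb B^2$ mutually commute and act on disjoint sets of variables ($\tilde s$; $\tilde\eta$; the vertical and horizontal-form variables), the heat kernel of their sum factors, yielding $u_{-kn}=h(\tilde T^2,\tilde s,0)K_{\mathcal H}(\tilde T^2,\tilde\eta)K_{\mathbb B^2}(\tilde T^2,z,z')$. For $1\leq j\leq k-2$ the operator contributes no term of order $(x')^j$, so $u_{-kn+j}$ solves a homogeneous heat equation; moreover the $\timeface$-asymptotics of $H$ near $\timeface\cap\frontface$ only contain powers of $x'$ of the form $-kn+m(k-1)+\ell k$ (obtained by expanding the factors $x^{-kn}$ and $x^{kj'}$ of the $\timeface$-expansion in $x'$ via $x=x'[1+(x')^{k-1}\tilde s]$), which miss the range $1,\dots,k-2$; hence the initial data vanish and $u_{-kn+j}=0$, so that the first correction sits at order $k-1$.

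At order $k-1$, $u_{-kn+k-1}$ solves $(\partial_t+\hat P_0)u_{-kn+k-1}=-L_{k-1}u_{-kn}$, and by Duhamel's principle $u_{-kn+k-1}(t)=e^{-t\hat P_0}u_{-kn+k-1}(0)-\int_0^t e^{-(t-t')\hat P_0}L_{k-1}u_{-kn}(t')\,dt'$; the integral converges at $t'\to 0$ by the leading diagonal asymptotics of $u_{-kn}$ and, on a fixed fibre of $\frontface$, by the off-diagonal decay of the three model heat kernels. Writing $L_{k-1}=L_{k-1}^{(0)}+\tilde s E$ for its $\tilde s$-independent and $\tilde s$-linear parts, the piece of $L_{k-1}^{(0)}$ coming from $\slashed\partial_{Z,y}$ and the curvature term $c(T)$ equals, up to the factor $k$, the operator $\cl(\partial_x)(\slashed\partial_{Z,y}+\tfrac{c(T)}{4})$ — which by \eqref{derivativebismut1} is essentially $\partial_t\mathbb B_t$ under the boundary Clifford identification — and acts only on the vertical and form variables; using the semigroup property in $\tilde s$ and $\tilde\eta$ and pulling this operator through the $\mathbb B^2$-evolution modulo commutators whose heat convolution vanishes on $\{\tilde s=0\}$ by parity (the kernel $h(t-t',\tilde s,\tilde s')$ is even in $\tilde s'$ while those terms carry an odd factor of $\tilde s'$), the Duhamel integral of this piece collapses to the closed-form first term $k\tilde T^2\,h\,K_{\mathcal H}\,\cl(\partial_x)(\slashed\partial_{Z,y}+\tfrac{c(T)}{4})K_{\mathbb B^2}$. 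The genuine Duhamel integral of the remaining part of $L_{k-1}u_{-kn}$ is then rewritten, using $\mathcal N_{-kn}(\delta_\tau H)=u_{-kn}$, as the displayed convolution over $Z_{y'}\times\mathbb R^b_{\tilde\eta}\times\mathbb R\times[0,t]$, and all leftover contributions — the $\tilde s E$ part, the evolution of the order-$(x')^{k-1}$ initial data (itself proportional to $\tilde s$), and the commutator terms — assemble into the error $E$, which vanishes at $\tilde s=0$.

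The main obstacle will be the bookkeeping in the last step: correctly sorting the order-$(x')^{k-1}$ contributions into the three named pieces, i.e. tracking which terms of $L_{k-1}u_{-kn}$ and of the initial data are proportional to $\tilde s$ (hence enter $E$), and recognizing the $\slashed\partial_{Z,y}$-plus-$c(T)$ part of $L_{k-1}$ as (a multiple of) the $t$-derivative of the rescaled Bismut superconnection so that the closed-form term emerges with the right coefficient. A secondary technical point is verifying, via the index-set arithmetic of the heat-kernel construction (the composition proposition and Theorem~\ref{heatkernelexistence}), that passing from the parametrix $K_1$ to the actual heat kernel $H$ introduces no $\frontface$-contributions below order $(x')^{-kn+k-1}$, so that the recursion truly terminates where claimed and the $O((x')^{-kn+k})$ remainder is justified.
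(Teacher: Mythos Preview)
Your approach is essentially the same as the paper's: solve the leading model equation by factoring the commuting heat kernels, then use Duhamel's principle for the order-$(x')^{k-1}$ correction and rearrange. The one place where your write-up diverges is the mechanism by which the closed-form term $k\tilde T^2\,h\,K_{\mathcal H}\,\cl(\partial_x)(\slashed\partial_{Z,y}+\tfrac{c(T)}{4})K_{\mathbb B^2}$ emerges. You invoke a parity argument (``commutators whose heat convolution vanishes on $\{\tilde s=0\}$ by parity''), but this is not what happens and is not needed. The paper's argument is more direct: in the Duhamel integral, commute $L_{k-1}\mathcal N_{-kn}(\delta_\tau H)$ past the product kernel $h\cdot K_{\mathcal H}\cdot K_{\mathbb B^2}$. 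This produces exactly the displayed commutator integral (which is \emph{retained} as one of the three named pieces, not discarded) plus the explicit term
\[
h(t,\tilde s,0)\int_0^t L_{k-1}^{(0)}\,e^{-t'\mathcal H}e^{-t'\mathbb B^2}e^{-(t-t')\mathcal H}e^{-(t-t')\mathbb B^2}\,dt'
= t\,h(t,\tilde s,0)\,L_{k-1}^{(0)}\,e^{-t\mathcal H}e^{-t\mathbb B^2},
\]
which collapses because $\mathcal H$ and $\mathbb B^2$ commute and $L_{k-1}^{(0)}$ commutes with $\mathcal H$; this is the first term. The $\tilde sE$ part of $L_{k-1}$ contributes the piece vanishing at $\tilde s=0$. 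So the commutator integral in the statement is not something that vanishes at $\tilde s=0$ --- it is simply what is left after the commutation, and your description that it arises from the ``remaining part of $L_{k-1}u_{-kn}$'' mislabels its origin.

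Your extra care about the order-$(x')^{k-1}$ initial data and about the parametrix-to-heat-kernel correction not spoiling the leading orders is reasonable and consistent with the paper's index-set bookkeeping, though the paper simply takes ``vanishing initial condition'' at this order.
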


\begin{proof}
The lift of $t\partial_{t}$ to $\frontface$ is $\frac{1}{2}\tilde{T}\partial_{\tilde{T}}$ which is unchanged after rescaling so the normal operator of the rescaled heat kernel satisfies
\begin{align}
            \left(\frac{1}{2}\tilde{T}\partial_{\tilde{T}}-\tilde{T}^{2}\partial_{\tilde{s}}^{2}-\tilde{T}^{2}\mathcal{H}^{2}+\tilde{T}^{2}\mathbb{B}^{2}\right)\mathcal{N}_{-kn}(\delta_{(x')}H)=0
         \end{align}
         where $\mathcal{N}_{-kn}(\delta_{\tau}H)$ vanishes to infinite order as $\tilde{\eta},\tilde{s}\to\infty$ with all $\tilde{\eta}_{i},\tilde{s}$ derivatives and satisfies the initial condition (this is the initial condition for $H$ under the rescaling, there are no terms at order lower than $-kn$ since initial condition for the coefficients below $-kn$ have $0$ on the RHS)
         \begin{align}
             \int_{\beta^{-1}(p)} \left[\mathcal{N}_{-kn}(\delta_{\tau}H)\right]_{0}d\tilde{s}d\tilde{\eta}=\delta(z-z')\otimes\operatorname{Id}_{\mathcal{S}}.
         \end{align}
         This has a unique solution which vanishes to infinite order as $\tilde{s},\tilde{\eta}\to\infty$ given by
         \begin{align}
             \mathcal{N}_{-kn}(\delta_{\tau}H)(\tilde{T},\tilde{s},\tilde{\eta},z,z')=h(\tilde{T}^{2},\tilde{s},0)K_{\mathcal{H}}(\tilde{T}^{2},\tilde{\eta})K_{\mathbb{B}^{2}}(\tilde{T}^{2},z,z').
         \end{align}
         This follows from uniqueness of solutions of the heat equation for $\mathbb{B}^{2}$ on $Z_{y'}$ and $\mathcal{H}^{2}$ on $L^{2}(\mathbb{R}^{b}_{\tilde{\eta}})$ and the fact that these two operators commute. In particular, the kernels themselves commute with each other and $\mathbb{B}^{2}$ and $\mathcal{H}^{2}$.

         From Lemma \ref{diracsquarelift1}, we see the next non-zero term is at order $-kn+k+1$ which is given by the solution to the equation
        \begin{align}\label{rescaledffequationk-1}
            \begin{split}
             \biggl(&\frac{1}{2}\tilde{T}\partial_{\tilde{T}}-\tilde{T}^{2}\partial_{\tilde{s}}^{2}-\tilde{T}^{2}\mathcal{H}^{2}+\tilde{T}^{2}\mathbb{B}^{2}\biggr)\mathcal{N}_{-knk+1}(\delta_{(x')}H) \\
             &=\tilde{T}^{2}k\operatorname{cl}(\partial_{x})\left(\slashed{\partial}_{Z,y}-\frac{1}{8}\sum_{i\alpha\beta}\operatorname{cl}(V_{i})g_{\partial M/Y}(\mathcal{R}^{\phi}(\tilde{U}_{\alpha},\tilde{U}_{\beta}),V)(0,z)\operatorname{cl}(\tilde{U}_{\alpha})\operatorname{cl}(\tilde{U}_{\beta})+\tilde{s}E\right)\mathcal{N}_{-kn}(\delta_{\tau}H)
             \end{split}
         \end{align}
        with vanishing initial condition. The second term in brackets on the second line is exactly $\frac{c(T)}{4}$. Following the case of an isolated cusp, by Duhamel's principle we have
        \begin{align}
            w(t,x,\tilde{\eta}',z,z')=\int_{0}^{t}\int_{Z_{y'}}\int_{\mathbb{R}^{b}_{\tilde{\eta}}}\int_{-\infty}^{\infty}&h(t-t',x,x')K_{\mathcal{H}}(t-t',\tilde{\eta},\tilde{\eta}')K_{\mathbb{B}^{2}}(t-t',z,\zeta) \\
            &L_{k-1}\mathcal{N}_{-kn}(\delta_{\tau}H)(t^{'\frac{1}{2}},x',\tilde{\eta}',\zeta,z')dt'dx'd\tilde{\eta}'d\zeta
        \end{align}
        where $w(\tilde{T}^{2})$ solves equation \eqref{rescaledffequationk-1}. As in the isolated case, the term arising $\tilde{s}E$ vanishes at $x=0$.

        Commuting the $L_{k-1}\mathcal{N}_{-kn}(\delta_{\tau}H)$ to the front of the integrand produces the commutator term in \eqref{rescaledffasymp1} plus $h(t,x,x')$ times the kernel of the operator
        \begin{align}
            \int_{0}^{t}L_{k-1}e^{t'\mathcal{H}}e^{t'\mathbb{B}^{2}}e^{(t-t')\mathcal{H}}e^{(t-t')\mathbb{B}^{2}}dt'=tL_{k-1}e^{t\mathcal{H}}e^{t\mathbb{B}^{2}}
        \end{align}
        since $\mathcal{H}$ and $\mathbb{B}^{2}$ commute. Since $L_{k-1}$ also commutes with $\mathcal{H}$ we get the first term in \eqref{rescaledffasymp1}.
\end{proof}

\begin{proposition}
    The pointwise supertrace $x^{kf+1}\operatorname{str}(H|_{\operatorname{diag}_{H}})$ is smooth up to $\frontface_{h}\subset \operatorname{diag}_{h}$ and its restriction to $\frontface_{h}$ is given by
    \begin{align}\label{leadingsupertraceff}
        \operatorname{str}(H)&x^{kf+1}\operatorname{dVol}_{Y}\operatorname{dVol}_{Z} \\
        &=-2k\tilde{T}\left[a(2\pi i)^{-\lceil\frac{b}{2}\rceil}\tilde{T}^{-b}\mathcal{A}(\tilde{T}^{2}R_{B})\operatorname{str}_{\mathcal{S}_{\partial M/Y}}\left(\left(\slashed{\partial}_{\partial M/Y,y}+\frac{c(T)}{4}\right)e^{-\tilde{T}^{2}\mathbb{B}^{2}}\right)\right]_{b}\operatorname{dVol}_{Z}
    \end{align}
    where $a=\frac{1}{2},\frac{1}{2\sqrt{\pi}}$ is $\operatorname{dim}(Z/Y)$ is even/odd and $\operatorname{str}_{\mathcal{S}_{\partial M/Y}}$ denotes the pointwise supertrace/trace on $\mathcal{S}_{\partial M/Y}$ even/odd.
\end{proposition}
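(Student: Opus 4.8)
The plan is to read off the restriction to $\frontface_{h}$ directly from the expansion of $\delta_{(x')}\sigma_{B}H$ at $\frontface$ established in the preceding lemma, using that the supertrace over $\mathcal{S}$ only sees the top Clifford degree. Recall the standard fact that $\operatorname{str}_{\mathcal{S}}$ of an element of $\mathbb{C}l(\ice TM)$ vanishes unless it contains the full volume element $\cl(\partial_{x})\cl(V_{1})\cdots\cl(V_{f})\cl(\tilde U_{1})\cdots\cl(\tilde U_{b})$; after the horizontal symbol map $\sigma_{B}$ (which turns $\cl(\tilde U_{\alpha})$ into $\epsilon(\tilde U_{\alpha})-\iota(\tilde U_{\alpha})$), this says that only the component of form degree $b=\dim Y$ survives, and on it $\operatorname{str}_{\mathcal{S}}$ equals a universal Getzler--Berezin constant $C_{b}$ times the supertrace/trace on $\mathcal{S}_{\partial M/Y}$ paired against the remaining normal and vertical Clifford factors. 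I would combine this with: (i) $\delta_{(x')}$ multiplies a top-degree horizontal form by $(x')^{-bk}$; and (ii) along $\operatorname{diag}_{h}$ one has $\tilde s=0$ near the corner, so $x=x'$ and $x^{kf+1}=(x')^{kf+1}$. Since $kf+1+bk-kn=1-k$, feeding the expansion $\delta_{(x')}\sigma_{B}H=u_{-kn}(x')^{-kn}+u_{-kn+k-1}(x')^{-kn+k-1}+O((x')^{-kn+k})$ through (i) and (ii) gives, on $\operatorname{diag}_{h}$ near $\frontface_{h}$,
\[
x^{kf+1}\operatorname{str}_{\mathcal{S}}(H|_{\operatorname{diag}_{h}})=C_{b}\Big((x')^{1-k}\operatorname{str}_{\mathcal{S}}[u_{-kn}]_{b}+\operatorname{str}_{\mathcal{S}}[u_{-kn+k-1}]_{b}+O((x'))\Big),
\]
where $[\,\cdot\,]_{b}$ denotes the degree-$b$ form component, evaluated on the lifted diagonal $\tilde s=\tilde\eta=0$, $z=z'$.

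The key point is then that the apparent $(x')^{1-k}$ singularity is absent: $u_{-kn}=h(\tilde T^{2},\tilde s,0)K_{\mathcal{H}}(\tilde T^{2},\tilde\eta)K_{\mathbb{B}^{2}}(\tilde T^{2},z,z')$ is built from the scalar Euclidean heat kernel, the $\Lambda T^{\star}Y$-valued Mehler kernel of $\mathcal{H}$, and the heat kernel of $\mathbb{B}^{2}$ acting on $\mathcal{S}_{\partial M/Y}\otimes\psi^{\star}\Lambda T^{\star}Y$ (whose endomorphism part is generated by the vertical $\cl(V_{i})$ and the exterior $\epsilon(\tilde U_{\alpha})$ occurring in the Bismut--Lichnerowicz formula \eqref{bismutlichnerowics}); in particular no factor of $\cl(\partial_{x})$ appears, so $\operatorname{str}_{\mathcal{S}}[u_{-kn}]_{b}=0$ by the fact above. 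Hence $x^{kf+1}\operatorname{str}_{\mathcal{S}}(H|_{\operatorname{diag}_{h}})$ extends smoothly up to $\frontface_{h}$ and its restriction there is $C_{b}\operatorname{str}_{\mathcal{S}}[u_{-kn+k-1}]_{b}$ evaluated at $\tilde s=\tilde\eta=0$, $z=z'$.

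It remains to evaluate $\operatorname{str}_{\mathcal{S}}[u_{-kn+k-1}]_{b}$ on $\frontface_{h}$ from \eqref{rescaledffasymp1}. On $\{\tilde s=0\}$ the term $E$ vanishes, and the Duhamel commutator term contributes nothing after taking $\operatorname{str}_{\mathcal{S}}$ and extracting the degree-$b$ part: this is the usual mechanism by which a Duhamel integral collapses to $t\operatorname{str}(L_{k-1}e^{-t\mathcal{L}_{0}})$ in Getzler's argument (the commutator of the heat semigroup with $L_{k-1}$ integrates to zero over the fibre and over the Euclidean factors by the trace property / as a total divergence), exactly as in the isolated-cusp case. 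What survives is $k\tilde T^{2}h(\tilde T^{2},0,0)K_{\mathcal{H}}(\tilde T^{2},0)\cl(\partial_{x})\big(\slashed{\partial}_{Z,y}+\tfrac{c(T)}{4}\big)K_{\mathbb{B}^{2}}(\tilde T^{2},z,z)$. I would then use: $h(\tilde T^{2},0,0)=(4\pi\tilde T^{2})^{-1/2}$; the Mehler formula, since $\mathcal{H}$ is precisely the generalized harmonic oscillator of the local index theorem, giving $K_{\mathcal{H}}(\tilde T^{2},0)=(4\pi\tilde T^{2})^{-b/2}\mathcal{A}(\tilde T^{2}R_{B})$; the identification of the boundary Clifford bundle with $\mathbb{C}l^{+}(\ice TM|_{\partial M})$, under which $\cl(\partial_{x})\big(\slashed{\partial}_{Z,y}+\tfrac{c(T)}{4}\big)$ becomes $-\big(\slashed{\partial}_{\partial M/Y,y}+\tfrac{c(T)}{4}\big)$ on $\mathcal{S}_{\partial M/Y}$ as in \eqref{derivativebismut1}, the remaining $\cl(\partial_{x})$ reducing $\operatorname{str}_{\mathcal{S}}$ to the fibrewise supertrace on $\mathcal{S}_{\partial M/Y}$ if $\dim(Z/Y)$ is even and the fibrewise trace if odd (this is where $a=\tfrac12$, resp.\ $\tfrac1{2\sqrt\pi}$, and the $\lceil b/2\rceil$ in the Berezin constant come in); and $K_{\mathbb{B}^{2}}(\tilde T^{2},z,z)=e^{-\tilde T^{2}\mathbb{B}^{2}}(z,z)$. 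Collecting the powers $\tilde T^{2}\cdot\tilde T^{-1}\cdot\tilde T^{-b}=\tilde T^{1-b}$, absorbing $(4\pi)^{-(b+1)/2}C_{b}$ into $a(2\pi i)^{-\lceil b/2\rceil}$, with the overall sign coming from $\cl(\partial_{x})\slashed{\partial}_{Z,y}=-\slashed{\partial}_{\partial M/Y,y}$, and taking the degree-$b$ component of $\mathcal{A}(\tilde T^{2}R_{B})\operatorname{str}_{\mathcal{S}_{\partial M/Y}}\big((\slashed{\partial}_{\partial M/Y,y}+\tfrac{c(T)}{4})e^{-\tilde T^{2}\mathbb{B}^{2}}\big)$, yields \eqref{leadingsupertraceff}.

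The main obstacle will be the constant bookkeeping in the final step: tracking the Getzler--Berezin normalization, the $(4\pi)$- and $\tilde T$-powers from the Euclidean and Mehler kernels, and the parity-dependent constant $a$, so as to land on the precise coefficient $-2k\tilde T\,a(2\pi i)^{-\lceil b/2\rceil}\tilde T^{-b}$. A secondary delicate point is the careful justification that the Duhamel commutator term contributes nothing to the pointwise supertrace on $\frontface_{h}$ once the degree-$b$ component is extracted.
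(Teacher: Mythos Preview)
Your plan is essentially the paper's: both arguments use the preceding lemma's expansion of $\delta_{(x')}\sigma_{B}H$, observe that $u_{-kn}$ carries no $\cl(\partial_{x})$ factor so its supertrace vanishes (killing the would-be $(x')^{1-k}$ singularity), and then reduce $u_{-kn+k-1}$ on the diagonal to the Mehler kernel times $\cl(\partial_{x})(\slashed\partial_{Z,y}+c(T)/4)K_{\mathbb{B}^{2}}$, with the boundary-Clifford identification and Berezin constants producing the stated coefficient. Your explicit power-count $kf+1+bk-kn=1-k$ is equivalent to the paper's organisation via the filtration $\alpha_{i},\beta_{i}\in\psi^{\star}\mathbb{C}l^{i}(Y)\otimes\operatorname{HOM}(\mathcal{S}_{\partial M/Y})$.

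The one place your sketch diverges from the paper is the justification that the Duhamel commutator term drops out. You phrase this as ``integrates to zero over the fibre and over the Euclidean factors by the trace property / as a total divergence''; that would only give vanishing after integrating over $Z_{y}$, whereas the proposition is a \emph{pointwise} statement on $\frontface_{h}$. The paper's mechanism is different: one interchanges $\operatorname{tr}_{\mathcal{S}_{\partial M/Y}}$ with the Duhamel integrals and observes that, because the first factor $hK_{\mathcal{H}}K_{\mathbb{B}^{2}}$ has \emph{even horizontal form degree}, the graded commutator in the integrand is an ordinary (ungraded) commutator of endomorphisms at each integration point, whose pointwise trace therefore vanishes before any fibre integration. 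This is the sharpening you will need; since you already flagged this step as delicate, it is a matter of replacing the operator-trace heuristic by the even-parity pointwise argument.
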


\begin{proof}
    First we consider the case that $\operatorname{dim}(Z/Y)$ is odd. We recall that the pointwise supertrace on $\mathcal{S}$ on all endomorphisms not of full Clifford degree and for     
    \begin{align}
        \begin{split}
            \gamma&=\cl(\partial_{x})\cl(\tilde{U_{1}})\ldots\cl(\tilde{U}_{b})\cl(x^{-k}V_{1})\ldots\cl(x^{-k}V_{f}) \\
            \gamma_{\partial M}&=\cl(\tilde{U_{1}})\ldots\cl(\tilde{U}_{b})\cl(x^{-k}V_{1})\ldots\cl(x^{-k}V_{f}) 
        \end{split}
    \end{align}
    where $\partial_{x},\tilde{U}_{\alpha},x^{-k}V_{i}$ is a an oriented orthonormal frame and restricted to $\mathcal{S}|_{\partial M}$ we have from \eqref{supertraceseplitoddfibre}
    \begin{align}
        \operatorname{str}(\gamma)=-2(-2 i)^{\frac{b}{2}}T_{B}(\operatorname{tr}_{\mathcal{S}_{\partial M/Y}}(\sigma_{B}\gamma_{\partial M})).
    \end{align}
    Thus by the previous Lemma, $H$ has asymptotic expansion
    \begin{align}
        H=\sum_{i=0}^{b} (x')^{-kn+ki}\alpha_{i}+\sum_{i=0}^{b}\alpha_{i} (x')^{-kn+ki+k-1}\beta_{i}+O((x'))^{-kn+kb+k}
    \end{align}
    such that $\alpha_{i},\beta_{i}$ is a section of $\psi^{\star}\mathbb{C}l^{i}(Y)\otimes\operatorname{HOM}(\mathcal{S}_{\partial M/Y})$ and
    \begin{align}
        \begin{split}
            u_{-kn}&=\sum_{i=0}^{b}\sigma_{B}^{i}(\alpha_{i}) \\
            u_{-kn+k-1}&=\sum_{i=0}^{b}\sigma_{B}^{i}(\beta_{i}).
        \end{split}
    \end{align}
    In particular, restricted to the diagonal, the $\alpha_{i}$ do not have full Clifford degree, as they do not have a factor of $\cl(\partial_{x})$ hence the supertrace vanishes on these terms. We see that the first non-vanishing term with comes from the $\beta_{b}$ term so the leading order asymptotics of the pointwise supertrace of the restriction of $H$ to the diagonal is given by
    \begin{align}
        \begin{split}
            \operatorname{str}(H|_{\diag_{h}})&=(x')^{-kn+kb+k-1}\operatorname{str}(\beta_{b}|_{z=z',\tilde{s}=0,\tilde{\eta}=0})+O((x'))^{-kn+kb+k} \\
            &=(x')^{-kf-1}(-2(-2i)^{\frac{b}{2}}T_{B}(\operatorname{tr}_{\mathcal{S}_{\partial M/Y}}(j(u_{-kn-k-1}|_{z=z',\tilde{s}=0,\tilde{\eta}=0}))))+O((x'))^{-kn+kb+k}
        \end{split}
    \end{align}
    where $j$ is the map taking $\cl(\partial_{x})\cl(x^{-k}V_{i}n)\ldots\cl(x^{-k}V_{f})$ to $\cl(x^{-k}V_{i}n)\ldots\cl(x^{-k}V_{f})$ and $0$ on anything of lower vertical Clifford degree.
    
    The coefficient simplifies to
    \begin{align}
        \begin{split}
            -2&(-2i)^{\frac{b}{2}}T_{B}(\operatorname{tr}_{\mathcal{S}_{\partial M/Y}}(u_{-kn-k-1}|_{z=z',\tilde{s}=0,\tilde{\eta}=0}))= \\
            & -2(-2i)^{\frac{b}{2}}k\tilde{T}^{2}h(\tilde{T}^{2},0,0)T_{B}(\operatorname{tr}_{\mathcal{S}_{\partial M/Y}}(K_{\mathcal{H}}(\tilde{T}^{2},\tilde{\eta},0)j(\cl(\partial_{x})\left(\slashed{\partial}_{Z,y}+\frac{c(T)}{4}\right)K_{\mathbb{B}^{2}}(\tilde{T}^{2},z,z))) \\
             &\quad +T_{B}\circ\operatorname{tr}_{\mathcal{S}_{\partial M/Y}}(\int_{0}^{t}\int_{Z_{y'}}\int_{\mathbb{R}^{b}_{\tilde{\eta}}}\int_{-\infty}^{\infty}j(\biggl[h(t-t',0,x')K_{\mathcal{H}}(t-t',0,\tilde{\eta}')K_{\mathbb{B}^{2}}(t-t',z,\zeta), \\
            &\quad L_{k-1}\mathcal{N}_{-kn}(\delta_{\tau}H)(t^{'\frac{1}{2}},x',\tilde{\eta}',\zeta,z)\biggr])dx'd\tilde{\eta}'d\zeta dt' ) \\
            &=-2(-2i)^{\frac{b}{2}}k\tilde{T}^{2}(4\pi\tilde{T}^{2})^{-\frac{1}{2}}T_{B}((4\pi\tilde{T}^{2})^{-\frac{b}{2}}\hat{\mathcal{A}}(\tilde{T}^{2}R_{B})\operatorname{tr}_{\mathcal{S}_{\partial M/Y}}\left(\left(\slashed{\partial}_{Z,y}+\frac{c(T)}{4}\right)K_{\mathbb{B}^{2}}(\tilde{T}^{2},z,z)\right)).
        \end{split}
    \end{align}
    The second term with the integral vanishes interchanging the $\operatorname{tr}$ with the integral since the first term in the commutator has even horizontal form degree so is the pointwise trace of a commutator. For the first term, since $K_{\mathcal{H}}$ has even horizontal form degree, we can move the $\cl(\partial_{x})$ past it without changing sign. Also, by our convention identifying the boundary Clifford bundle with the positive part of the Clifford bundle on $M$, for $A$ so that $j(A)\neq 0$ we have that $j(A)$ acts as $-\cl(\partial_{x})A$. As described before \eqref{derivativebismut1}, the same identification identifies 
    \begin{align}
        -\cl(\partial_{x})\left(\slashed{\partial}_{Z,y}+\frac{c(T)}{4}\right)K_{\mathbb{B}^{2}}(\tilde{T}^{2},z,z).
    \end{align}
    acting on $\psi^{\star}\Lambda^{\star}Y\otimes\operatorname{End}(\mathcal{S}_{\partial M/Y})$ with
    \begin{align}
        \left(\slashed{\partial}_{\partial M/Y,y}+\frac{c(T)}{4}\right)K_{\mathbb{B}^{2}}(\tilde{T}^{2},z,z).
    \end{align}
    Since $T_{B}(a\operatorname{dVol}_{Y})=a$ this gives \eqref{leadingsupertraceff}.
    
    The only difference for the $\operatorname{dim}(Z/Y)$ even case is
    \begin{align}
         \operatorname{str}(\gamma)=(-2 i)^{\frac{b+1}{2}}T_{B}(\operatorname{str}_{\mathcal{S}_{\partial M/Y}}(\sigma_{B}\gamma_{\partial M})).
    \end{align}
    Following through the same steps, leads to the different coefficient in \eqref{leadingsupertraceff}.
\end{proof}

\begin{theorem}\label{nonisolatedspindiract0limit}
    For a spin ice manifold $(M,g)$ with $g$ product type near the boundary such that $Z\to\partial M\to Y$ has a spin structure on the base and fibre compatible with that on $M$, the $t\to 0$ limit of the supertrace of the heat kernel of $\slashed{\partial}^{2}$ is given by
    \begin{align}\label{nonisolatedt0str}
        \lim_{t\to 0}\operatorname{str}(e^{-t\slashed{\partial}^{2}})=\int_{M}\mathcal{A}(M)-\int_{Y}\mathcal{A}(Y)\tilde{\eta}(\slashed{\partial}_{\partial M/Y})
    \end{align}
    where $\tilde{\eta}(\slashed{\partial}_{Z})$ is the Bismut-Cheeger eta form for the family (make explicit somewhere) on $\partial M$.
\end{theorem}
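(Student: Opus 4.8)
The plan is to combine the McKean--Singer formula proven above with the pushforward computation of Section~\ref{pushforward}. Since $\operatorname{Str}(e^{-t\slashed{\partial}^{2}})$ is independent of $t>0$ by McKean--Singer, the limit on the left of \eqref{nonisolatedt0str} exists automatically and equals $\operatorname{ind}(\slashed{\partial}^{+})$, so it suffices to read off the constant term in the $t\to 0$ asymptotic expansion of $\operatorname{Str}(e^{-t\slashed{\partial}^{2}})$ and identify it with the right hand side. First I would write $\operatorname{Str}(e^{-t\slashed{\partial}^{2}})\,\tfrac{d\tau}{\tau}=\sigma_{\star}\big(\operatorname{str}(H_{t}|_{\diag_{h}})\,\mu\,\sigma^{\star}(\tfrac{d\tau}{\tau})\big)$ as in \eqref{3.19} and feed the b-density $u=\operatorname{str}(H_{t}|_{\diag_{h}})\,\mu\,\sigma^{\star}(\tfrac{d\tau}{\tau})$ into Lemma~\ref{lemma 3.3}. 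Because the boundary family is invertible the heat kernel vanishes to infinite order at $\backface$ (so $\mathcal{E}(\backface^{d})=\varnothing$ and there is no back-face contribution), and the Getzler rescaling miracle---the pointwise supertrace of the leading rescaled terms at $\timeface$ and $\frontface$ vanishes---removes all negative-order terms of $u$ at $\timeface^{h}$ and $\frontface^{h}$. Lemma~\ref{lemma 3.3} then yields
\begin{align*}
    \lim_{t\to 0}\operatorname{Str}(e^{-t\slashed{\partial}^{2}})=\dashint_{M}u_{n,\timeface^{h}}\operatorname{dVol}+\frac{1}{k}\dashint_{0}^{\infty}\int_{\partial M}u_{k(b+1),\frontface^{h}}(\tilde{T},y,z)\operatorname{dVol}_{Y}\operatorname{dVol}_{Z_{y}}\frac{d\tilde{T}}{\tilde{T}},
\end{align*}
and the remaining task is to evaluate the two summands.

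For the $\timeface$-summand I would run the usual fibrewise Getzler rescaling at the interior lifted diagonal $\timeface^{h}$ exactly as on a closed spin manifold: the Lichnerowicz formula, Mehler's formula for the rescaled normal operator, and triviality of the twisting curvature of the untwisted spinor bundle give $u_{n,\timeface^{h}}\operatorname{dVol}=\mathcal{A}(M)$ pointwise, with $\operatorname{ch}'(\mathcal{S})=1$. To conclude this summand equals $\int_{M}\mathcal{A}(M)$ I would show the finite part is the honest integral: the $\mathcal{A}$-form is polyhomogeneous, its curvature asymptotics are controlled by Lemma~\ref{1.3}, and together with the weight $x^{kf}$ in $\operatorname{dVol}$ this makes $\mathcal{A}(M)$ integrable up to $\partial M$---the same mechanism by which the coefficients $a_{i}$, $0\le i\le f$, converge in Lemma~\ref{lemma 3.3}.

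For the $\frontface$-summand I would invoke the preceding Proposition, which computes the restriction of $x^{kf+1}\operatorname{str}(H|_{\diag_{h}})$ to $\frontface_{h}$---equivalently $u_{k(b+1),\frontface^{h}}$ up to the volume normalisation---as in \eqref{leadingsupertraceff}. Pushing this forward along the fibres $Z_{y}$ turns the fibrewise supertrace into $\operatorname{Str}_{K(\mathcal{S}_{\partial M/Y})}$ and produces a form on $Y$; extracting its degree-$b$ component and integrating over $Y$, the $\tfrac{1}{k}$ in Lemma~\ref{lemma 3.3} cancels the factor $k$ from \eqref{leadingsupertraceff} and leaves
\begin{align*}
    -2\dashint_{0}^{\infty}\int_{Y}\left[a\,(2\pi i)^{-\lceil b/2\rceil}\,\tilde{T}^{-b}\,\mathcal{A}(\tilde{T}^{2}R_{B})\,\operatorname{Str}_{K}\Big(\big(\slashed{\partial}_{\partial M/Y,y}+\tfrac{c(T)}{4}\big)e^{-\tilde{T}^{2}\mathbb{B}^{2}}\Big)\right]_{b}d\tilde{T}.
\end{align*}
I would then substitute $t=\tilde{T}^{2}$ and use the two identities already recorded in the excerpt, $\tfrac{\partial\mathbb{B}_{t}}{\partial t}e^{-\mathbb{B}_{t}^{2}}=\big(\slashed{\partial}_{\partial M/Y,y}+\tfrac{c(T)}{4t}\big)e^{-\mathbb{B}_{t}^{2}}\tfrac{1}{2t^{1/2}}$ and $e^{-\mathbb{B}_{t}^{2}}=\delta_{t}^{Y}e^{-t\mathbb{B}^{2}}(\delta_{t}^{Y})^{-1}$: the $\tilde{T}$-powers supplied by $\tilde{T}^{-b}\mathcal{A}(\tilde{T}^{2}R_{B})$ and by $\delta_{t}^{Y}$ combine so that $\mathcal{A}(\tilde{T}^{2}R_{B})$ reassembles $\mathcal{A}(Y)$, the prefactors $a$ and $(2\pi i)^{-\lceil b/2\rceil}$ become the normalisation $\sum_{j}(2\pi i)^{-j}[\,\cdot\,]_{\bullet}$ defining $\tilde{\eta}$, and the remaining integrand becomes $\mathcal{A}(Y)\wedge\operatorname{Str}_{K}\big(\tfrac{\partial\mathbb{B}_{t}}{\partial t}e^{-\mathbb{B}_{t}^{2}}\big)$ (respectively $\operatorname{Tr}^{\mathrm{even}}_{K}$ in the odd-fibre case). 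Integrating in $t$ and applying the definition of the normalised Bismut--Cheeger eta form identifies this summand with $-\int_{Y}\mathcal{A}(Y)\,\tilde{\eta}(\slashed{\partial}_{\partial M/Y})$. Invertibility of the boundary family enters here to guarantee exponential decay of the integrand as $\tilde{T}\to\infty$, and the transgression estimate $\operatorname{Str}_{K}(\tfrac{\partial\mathbb{B}_{t}}{\partial t}e^{-\mathbb{B}_{t}^{2}})=O(t^{1/2})$ guarantees convergence at $\tilde{T}\to 0$ and that the finite-part integral is the honest one.

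The hard part will be this last identification: carefully reconciling the Getzler rescaling $\delta_{(x')}$ on the $Y$-form degrees used in the preceding Proposition with the Bismut rescaling $\delta_{t}^{Y}$ in the definition of $\mathbb{B}_{t}$, under the substitution $t=\tilde{T}^{2}$, and tracking all normalisation constants so that $\mathcal{A}(\tilde{T}^{2}R_{B})$ genuinely recombines into $\mathcal{A}(Y)$ and the prefactors match $\sum_{j}(2\pi i)^{-j}[\hat{\eta}]_{\bullet}$. The interior $\timeface$ computation, the vanishing of the back-face contribution, and the convergence statements are all either standard or immediate consequences of the lemmas already proved.
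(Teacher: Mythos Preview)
Your proposal is correct and follows essentially the same approach as the paper: apply Lemma~\ref{lemma 3.3} to split the constant term into the $\timeface$ and $\frontface$ contributions, handle $\timeface$ by the standard interior Getzler argument, invoke the preceding Proposition for the $\frontface$ restriction, substitute $t=\tilde{T}^{2}$, and use the rescaling identities $[(2\pi i t)^{-b/2}\mathcal{A}(tR_{B})\omega]_{b}=[\mathcal{A}(R_{B}/2\pi i)\,\delta_{2\pi i t}\omega]_{b}$ and $\delta_{2\pi i t}\big((\slashed{\partial}_{\partial M/Y,y}+\tfrac{c(T)}{4})e^{-t\mathbb{B}^{2}}\big)=\delta_{2\pi i}\big((\slashed{\partial}_{\partial M/Y,y}+\tfrac{c(T)}{4t})e^{-\mathbb{B}_{t}^{2}}\big)$ to recognise the eta-form integrand. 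Your additional remarks on why the finite-part integrals are honest integrals (integrability of $\mathcal{A}(M)$ at $\partial M$, and the $O(t^{1/2})$ decay and exponential large-$t$ decay of the eta integrand) are useful elaborations the paper leaves implicit.
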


\begin{proof}
    The contribution from the interior of $M$ is the same as in the isolated case. By Lemma \ref{lemma 3.3}, the contribution from $\frontface$ is
    \begin{align}
        \frac{1}{k}\dashint_{0}^{\infty}\int_{\partial M}u_{k(b+1),\operatorname{ff}^{h}}(\tilde{T},y,z)\operatorname{dVol_{Y}}\operatorname{dVol}_{Z_{y}}\frac{d\tilde{T}}{\tilde{T}}
    \end{align}
    where the integrand is given by the previous proposition. Taking $t=T^{2}$, this contribution becomes
    \begin{align}
        -2\dashint_{0}^{\infty}\int_{\partial M}\left[a(2\pi i)^{-\lceil\frac{b}{2}\rceil}t^{-\frac{b}{2}}\mathcal{A}(tR_{B})\operatorname{str}_{\mathcal{S}_{\partial M/Y}}\left(\left(\slashed{\partial}_{\partial M/Y,y}+\frac{c(T)}{4}\right)e^{-t\mathbb{B}^{2}}\right)\right]_{b}\frac{dt}{2t^{\frac{1}{2}}}.
    \end{align}
    Now note that for any form $\omega$ on $B$ we have
    \begin{align}
        \left[(2\pi i t)^{-\frac{b}{2}}\mathcal{A}(tR_{N})\omega\right]_{b}=\left[\mathcal{A}\left(\frac{R_{B}}{2\pi i}\right)\delta_{2\pi i t}\omega\right]_{b}
    \end{align}
    and that
    \begin{align}
        \delta_{2\pi i t}\left(\left(\slashed{\partial}_{\partial M/Y,y}+\frac{c(T)}{4}\right)e^{-t\mathbb{B}^{2}}\right)=\delta_{2\pi i}\left(\left(\slashed{\partial}_{\partial M/Y,y}+\frac{c(T)}{4t}\right)e^{-\mathbb{B}_{t}^{2}}\right).
    \end{align}
    Thus this $t$ dependent part of the integrand is exactly the eta form integrand so the $t$ integral exists and we arrive at \eqref{nonisolatedt0str}.
\end{proof}

Now we consider the case where $g$ has polyhomogeneous error term. Recall that given an invariant formal power series $P\colon \mathfrak{so}(k)\to \mathbb{C}$, if $R^{E}$ is the curvature of a connection $\nabla^{E}$ then $P$ determines a closed differential form $P(R^{E})$ whose cohomology class is independent of the connection. In particular if $\nabla^{E,0},\nabla^{E,1}$ are connections on $E$ then 
\begin{align}
    P(R^{E,1})-P(R^{E,0})=dTP(\nabla^{E,1},\nabla^{E,0})
\end{align}
where $TP(\nabla^{E,1},\nabla^{E,0})$ is the transgression form defined by
\begin{align}
    TP(\nabla^{E,1},\nabla^{E,0})=\int_{0}^{1}\partial_{s}|_{s=0}P(R^{E,t}+s\theta)dt
\end{align}
where $R^{E,t}$ is the curvature of the connection $t\nabla^{E,1}+(1-t)\nabla^{E,0}$ and $\theta=\nabla^{E,1}-\nabla^{E,0}$.

\begin{lemma}
    Let $\hat{\mathcal{A}}_{0}$ and $\hat{\mathcal{A}}_{1}$ be the $\hat{A}$-forms on $M$ for the metric $g_{0}$ and $g$ then
    \begin{align}
        \int_{M}\hat{\mathcal{A}}_{0}=\int_{M}\hat{\mathcal{A}}_{1}.
    \end{align}
\end{lemma}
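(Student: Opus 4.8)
The plan is to apply the transgression identity for characteristic forms recalled immediately before the lemma, together with Stokes' theorem on the manifold with boundary $M$, and to exploit that the difference of the two Levi--Civita connections on $\ice TM$ vanishes at $\partial M$ to order $k-1\geq 1$.

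First I would fix the forms. Let $P$ be the invariant polynomial defining the $\hat{A}$-genus, let $\nabla^{0},\nabla^{1}$ be the Levi--Civita connections of $g_{0}$ and $g$ on the smooth bundle $\ice TM$ over the compact manifold with boundary $M$, with curvatures $R^{0},R^{1}$, so that $\hat{\mathcal{A}}_{i}=P(R^{i})$. By the computations of Section 2 --- the connection $\nabla$ on $\ice TM$ is well defined up to $\partial M$, and its curvature has the smooth-up-to-$\partial M$ asymptotics of Lemma \ref{1.3} --- both $R^{i}$, hence both $\hat{\mathcal{A}}_{i}$, are smooth closed forms on all of $M$, so the integrals $\int_{M}\hat{\mathcal{A}}_{i}$ are finite and Stokes' theorem applies. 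With $\nabla^{t}=(1-t)\nabla^{0}+t\nabla^{1}$ and $\theta=\nabla^{1}-\nabla^{0}$, the transgression identity (applied to $E=\ice TM$) gives $\hat{\mathcal{A}}_{1}-\hat{\mathcal{A}}_{0}=d\,TP(\nabla^{1},\nabla^{0})$ with $TP(\nabla^{1},\nabla^{0})$ a smooth form on $M$, so Stokes' theorem yields
\begin{align}
\int_{M}\hat{\mathcal{A}}_{1}-\int_{M}\hat{\mathcal{A}}_{0}=\int_{\partial M}\iota^{\star}TP(\nabla^{1},\nabla^{0}),
\end{align}
where $\iota\colon\partial M\hookrightarrow M$ is the inclusion.

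It then remains to see the boundary term vanishes. Since $g-g_{0}$ is an $O(x^{k})$ polyhomogeneous error with $k\geq 2$, the discussion after \eqref{4} gives $\theta=\nabla^{1}-\nabla^{0}=x^{k-1}\omega$ for some $\operatorname{End}(\ice TM)$-valued one-form $\omega$ smooth up to $\partial M$; moreover $\nabla^{t}=\nabla^{0}+t\theta$, so each $R^{t}$ is smooth up to $\partial M$. Every term of $TP(\nabla^{1},\nabla^{0})=\int_{0}^{1}\partial_{s}|_{s=0}P(R^{t}+s\theta)\,dt$ is a wedge monomial in $\theta$ and the $R^{t}$ containing exactly one factor of $\theta$, so $TP(\nabla^{1},\nabla^{0})\in x^{k-1}C^{\infty}(M;\Lambda^{\star}T^{\star}M)$. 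As $k\geq 2$ we have $x^{k-1}|_{\partial M}=0$, hence $\iota^{\star}TP(\nabla^{1},\nabla^{0})=0$ and the right-hand side above vanishes.

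I expect the only genuine work to be the regularity bookkeeping near $\partial M$: checking that the Levi--Civita connections, their curvatures, the forms $\hat{\mathcal{A}}_{i}$, and the transgression form are all smooth sections of the relevant bundles over $M$ and not merely over $M^{\circ}$, which is exactly what the Section 2 computations (well-definedness of $\nabla$, Lemma \ref{1.3}, and the estimate $\theta=O(x^{k-1})$) provide. One should also keep fixed the convention for ``$\hat{\mathcal{A}}(M)$'' used in the index formula --- the $\hat{A}$-form of $(\ice TM,\nabla^{\mathrm{LC}})$ --- so that the lemma really asserts that the interior contribution is insensitive to the $O(x^{k})$ error term; the same argument applies verbatim to the other characteristic forms ($\operatorname{ch}'(E)$, $\mathcal{L}(M)$, etc.) entering the index and signature theorems.
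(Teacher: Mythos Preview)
Your proof is correct and follows essentially the same approach as the paper: both apply the transgression identity together with Stokes' theorem to reduce to a boundary integral, then use that $\theta=\nabla^{1}-\nabla^{0}=O(x^{k-1})$ to conclude that $\iota^{\star}TP(\nabla^{1},\nabla^{0})=0$. The paper phrases the vanishing slightly differently (pulling $\iota^{\star}$ through the integral and $s$-derivative so that $\iota^{\star}\theta=0$ kills the integrand), but this is just a cosmetic variation of your observation that every term of the transgression form contains a factor of $\theta$ and hence lies in $x^{k-1}C^{\infty}$.
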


\begin{proof}
    The difference $\hat{\mathcal{A}}_{1}-\hat{\mathcal{A}}_{0}$ is given by $dTP(\nabla,\nabla^{0})$ so we have
    \begin{align}
        \int_{M}\hat{\mathcal{A}}_{1}-\int_{M}\hat{\mathcal{A}}_{0}=\int_{\partial M}TP(\nabla,\nabla^{0}).
    \end{align}
    From the asymptotics of the connection the form $\theta$ is polyhomogeneous and $O(x^{k-1})$ at the boundary. In particular, the restriction of $\theta$ to the boundary vanishes. Since the pullback by the inclusion $\iota\colon \partial M\to M$ commutes with integration in $t$ and differentiation in $s$ we have
    \begin{align}
        \begin{split}
        \iota^{\star}TP(\nabla^{E,1},\nabla^{E,0})&=\int_{0}^{1}\partial_{s}|_{s=0}\iota^{\star}P(R^{E,t}+s\theta)dt \\
        &=\int_{0}^{1}\partial_{s}|_{s=0}P(\iota^{\star}R^{E,t}+0)dt=0. \\
        \end{split}
    \end{align}
    \end{proof}

    \begin{theorem}\label{spindiracindex}
        Let $(M,g)$ be a spin incomplete cusp edge space such that the base $Z$ or fibres $Y$ of the boundary fibration $Z\xhookrightarrow{}\partial M\to Y$ admit a spin structure. Then the index of the spin Dirac operator is given by
        \begin{align}
            \operatorname{ind}(\slashed{\partial}^{+})=\int_{M}\hat{\mathcal{A}}(M)-\int_{Y}\hat{\mathcal{A}}(Y)\tilde{\eta}({\slashed{\partial}_{\partial M/Y}}).
        \end{align}
        where $\tilde{\eta}({\slashed{\partial}_{\partial M/Y}})$ is the normalised eta form of the family of Dirac operators on the fibres for any choice of compatible spin structures.
         \end{theorem}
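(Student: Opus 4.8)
The plan is to assemble the formula from four ingredients already in place: the McKean--Singer identity, the deformation invariance of the index given by Lemma~\ref{deformtoproducttype}, the short-time heat supertrace limit for product-type metrics of Theorem~\ref{nonisolatedspindiract0limit}, and the transgression argument showing $\int_{M}\hat{\mathcal A}$ is deformation invariant. Throughout this section the boundary family $\slashed{\partial}_{Z}$ of the spin Dirac operator is assumed to have trivial kernel, as elsewhere in the paper, which is what makes the heat kernel, essential self-adjointness and Fredholm property available.

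First I would use McKean--Singer, which since $\slashed{\partial}$ and $\slashed{\partial}^{2}$ are essentially self-adjoint with discrete spectrum and $e^{-t\slashed{\partial}^{2}}$ is the trace-class operator built in Theorem~\ref{heatkernelexistence}, gives
\begin{align}
    \operatorname{ind}(\slashed{\partial}^{+})=\operatorname{Str}(e^{-t\slashed{\partial}^{2}})=\lim_{t\to 0}\operatorname{Str}(e^{-t\slashed{\partial}^{2}}).
\end{align}
Next I would deform to the product-type metric $g_{0}$ associated to $g$: writing $g_{t}=g_{0}+th$ with $h$ the polyhomogeneous $O(x^{k})$ error of $g$ with respect to $g_{0}$, Lemma~\ref{deformtoproducttype} furnishes a smooth path of Fredholm operators $x^{k}H^{1}_{\ce}(M;\mathcal S)\to L^{2}(M;\mathcal S)$, so $\operatorname{ind}(\slashed{\partial}^{+})=\operatorname{ind}(\slashed{\partial}_{0}^{+})$. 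Applying Theorem~\ref{nonisolatedspindiract0limit} to $g_{0}$, whose hypotheses (a compatible spin structure on the base or on the fibre of $Z\to\partial M\to Y$, and $g_{0}$ product type near $\partial M$) are exactly those of the present statement, gives
\begin{align}
    \operatorname{ind}(\slashed{\partial}_{0}^{+})=\int_{M}\hat{\mathcal A}(M,g_{0})-\int_{Y}\hat{\mathcal A}(Y)\,\tilde{\eta}(\slashed{\partial}_{\partial M/Y}).
\end{align}

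It then remains to replace $g_{0}$ by $g$ on the right-hand side. For the interior term this is the preceding lemma: $\hat{\mathcal A}(M,g)-\hat{\mathcal A}(M,g_{0})=d\,TP(\nabla,\nabla^{0})$ and the transgression form pulls back to zero on $\partial M$ because $\theta=\nabla-\nabla^{0}$ is $O(x^{k-1})$ and hence vanishes at the boundary, so Stokes gives $\int_{M}\hat{\mathcal A}(M,g)=\int_{M}\hat{\mathcal A}(M,g_{0})$. For the eta-form term there is nothing to replace: since $h$ vanishes at $\partial M$, the metrics $g$ and $g_{0}$ induce the same submersion metric on $\partial M$, the same horizontal distribution and vertical metric family, and hence the same vertical Clifford connection $\nabla^{\mathcal S_{\partial M/Y}}$ on the fibres, so the Bismut superconnection and its normalised eta form $\tilde{\eta}(\slashed{\partial}_{\partial M/Y})$ are literally unchanged. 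I would also note that $\tilde{\eta}(\slashed{\partial}_{\partial M/Y})$ does not depend on the auxiliary choice of compatible spin structure on $Y$ or on the fibres, the family $\slashed{\partial}_{\partial M/Y}$ being intrinsically the Dirac family on $\mathcal S^{+}|_{\partial M}\simeq\mathcal S_{\partial M}$ under the canonical identification of the boundary Clifford bundle with $\mathbb{C}l^{+}(\ice TM|_{\partial M})$, which is blind to that choice. Substituting then yields the asserted formula.

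Everything genuinely analytic lies upstream of this point — the Getzler rescaling at $\frontface$, the identification in the preceding proposition of the order-$(k-1)$ coefficient of the rescaled heat kernel with the eta-form integrand, and the pushforward formula of Lemma~\ref{lemma 3.3}; the proof here is pure assembly. The one thing to watch is the bookkeeping in the last step: one must be sure that passing to $g_{0}$ moves $\int_{M}\hat{\mathcal A}$ only by an exact boundary term and leaves the eta-form contribution completely untouched, which is exactly where the hypothesis that the cusp edge error is $O(x^{k})$ — so that the connection error is $O(x^{k-1})$ and restricts to zero on $\partial M$ — does the work.
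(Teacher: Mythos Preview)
Your proposal is correct and follows essentially the same approach as the paper's own proof: reduce to the product-type metric via Lemma~\ref{deformtoproducttype}, apply McKean--Singer together with Theorem~\ref{nonisolatedspindiract0limit}, and then invoke the preceding transgression lemma to replace $\int_{M}\hat{\mathcal{A}}(M,g_{0})$ by $\int_{M}\hat{\mathcal{A}}(M,g)$. Your additional remarks that the eta-form term is unchanged because the boundary data coincide, and that the formula is independent of the auxiliary choice of compatible spin structure, are correct elaborations that the paper leaves implicit.
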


\begin{proof}
    By Lemma \ref{deformtoproducttype}, the index of the $\slashed{\partial}^{+}$ is equal to the index of the spin Dirac operator with the associated product-type metric. By the McKean-Singer formula and Theorem \ref{nonisolatedspindiract0limit}, the index of this operator is given by
        \begin{align}
        \int_{M}\mathcal{A}_{0}(M)-\int_{Y}\mathcal{A}(Y)\tilde{\eta}(\slashed{\partial}_{\partial M/Y})
    \end{align}
    where the subscript $0$ denotes the forms with respect to the product type metric. But the above calculation shows that the first integral is unchanged after including the error term.
\end{proof}

\section{Positive scalar curvature}\label{pscalarcurvature}

We can extend some results from \cite{iedge} on the index of the spin Dirac operator as the obstruction to the existence of positive scalar curvature metrics to spin incomplete cusp edge spaces.

\begin{theorem}
    Let (M,g) be a spin incomplete cusp edge space. Suppose either
    \begin{enumerate}
        \item $\operatorname{dim}(\partial M/Y)\geq 2$, the scalar curvature of $g$ is non-negative in a neighbourhood of the boundary and positive at least one point sufficiently close to the boundary.

        \item $\operatorname{dim}(\partial M/Y)=1$ and the spin structure on $M$ is the lift of a spin structure on the associated space $\tilde{M}$ with fibres collapsed at the boundary (given the smooth structure identifying the family of cones as a family of disks).
    \end{enumerate}
    Then the induced boundary family has trivial kernel and the index of the spin Dirac operator vanishes.
\end{theorem}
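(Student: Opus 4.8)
\section*{Proof proposal}

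The plan is to establish the two assertions in turn: first that the induced boundary family $\slashed{\partial}_{Z,y}$ has trivial kernel, and then, granting this, that $\operatorname{ind}(\slashed{\partial}^{+})=0$. The organising principle for the first part is that everything is controlled by the fibrewise Lichnerowicz formula together with Lemma \ref{scalarcurvature}.

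For the first assertion I would start from the observation that, on each fibre $Z_{y}$ with its induced metric $g_{Z_{y}}$, the operator $\slashed{\partial}_{Z,y}$ is a Dirac operator whose square satisfies the Lichnerowicz identity $\slashed{\partial}_{Z,y}^{2}=\nabla^{*}\nabla+\tfrac{1}{4}S_{g_{Z_{y}}}$, the twisting by $\mathcal{S}_{B}$ being flat along the fibres. By Lemma \ref{scalarcurvature} the function $S_{g_{Z_{y}}}$ is exactly the restriction to $\partial M$ of $x^{2k}S$, so the hypothesis that $S\ge 0$ on a collar $\{x<\epsilon\}$ forces $S_{g_{Z_{y}}}\ge 0$ on every fibre; integrating the identity against a hypothetical harmonic spinor over the closed fibre then shows that spinor is parallel and that $(Z_{y},g_{Z_{y}})$ is scalar-flat, hence Ricci-flat. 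In case (2), $f=\operatorname{dim}(\partial M/Y)=1$, the fibres are circles, and the hypothesis that the spin structure on $M$ descends to the space $\tilde M$ in which the family of cones is realised as a family of disks means precisely that each circle fibre bounds and carries the bounding spin structure; the Dirac operator on $S^{1}$ with the bounding spin structure is invertible, so $\ker\slashed{\partial}_{Z,y}=0$ and the first assertion is settled in that case. In case (1), $f\ge 2$, I would exclude the Ricci-flat alternative following \cite{iedge}: the point is that $S\ge 0$ throughout the collar together with $S>0$ at a point of the collar is, given the explicit expansion of $x^{2k}S$ near $\partial M$ coming from Lemma \ref{scalarcurvature} and the lemmas on the asymptotics of the curvature, incompatible with all fibres being scalar-flat. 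This last step is the one I expect to require the most care, and where I would mimic \cite{iedge} most closely.

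Granting that $\slashed{\partial}_{Z,y}$ is invertible, Theorem \ref{greenfunction} gives that $\slashed{\partial}$ is Fredholm on $\mathcal{D}=x^{k}H_{\ce}^{1}(M;\mathcal{S})$, and Theorem \ref{introtheoremmain1} gives that elements of $\ker\slashed{\partial}\subset\mathcal{D}$ are smooth and vanish to infinite order at $\partial M$; in particular the Green's identity underlying the Lichnerowicz formula carries no boundary contribution, so for $\psi\in\ker\slashed{\partial}$ one has $\lVert\nabla^{\mathcal{S}}\psi\rVert_{L^{2}}^{2}+\tfrac{1}{4}\int_{M}S\lvert\psi\rvert^{2}=0$. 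To convert this into $\operatorname{ind}(\slashed{\partial}^{+})=0$ I would use the index formula of Theorem \ref{spindiracindex} together with the fact that the index is insensitive to changes of the metric away from a collar of $\partial M$, which is the mechanism underlying Lemma \ref{deformtoproducttype}. In case (2) this lets one cap off the collapsed circle fibres by disks to obtain a closed spin manifold $\tilde M$ with $\operatorname{ind}(\slashed{\partial}^{+}_{M})=\int_{\tilde M}\hat{\mathcal{A}}(\tilde M)=\operatorname{ind}(\slashed{\partial}^{+}_{\tilde M})$, the disk-bundle contribution matching $\int_{Y}\hat{\mathcal{A}}(Y)\tilde{\eta}(\slashed{\partial}_{\partial M/Y})$; the positive-scalar-curvature hypothesis on the collar, boosted by a Kazdan--Warner conformal change and extended by torpedo metrics on the fibre disks, then produces a positive-scalar-curvature metric on $\tilde M$, so $\operatorname{ind}(\slashed{\partial}^{+}_{\tilde M})=0$ by classical Lichnerowicz vanishing. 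In case (1) the same bookkeeping applies with the fibre disks replaced by the appropriate caps on the cone bundle, again following \cite{iedge}.

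To summarise, the routine part is the reduction to the fibrewise Lichnerowicz formula and the Fredholm and regularity input from Theorems \ref{greenfunction} and \ref{introtheoremmain1}. The genuinely delicate steps are (a) excluding harmonic spinors on the fibres in case (1), which uses the positivity hypothesis in an essential way through the subleading behaviour of $x^{2k}S$, and (b) promoting ``positive scalar curvature near the boundary'' to an honest positive-scalar-curvature metric on the capped-off manifold so that the classical vanishing applies; these are precisely the points at which I would follow the template of \cite{iedge}.
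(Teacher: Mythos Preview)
Your approach diverges from the paper's, particularly in the index-vanishing step, and your case~(2) argument has a gap.

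For the boundary family in case~(1), the paper is more direct than you propose. From Lemma~\ref{scalarcurvature} one has $x^{2k}S=S_{g_{Z_y}}+O(x^{k})$; non-negativity of $S$ on the collar gives $S_{g_{Z_y}}\ge 0$ on every fibre, and $S>0$ at a point sufficiently close to $\partial M$ forces $S_{g_{Z_y}}>0$ at the corresponding fibre point, whence fibrewise Lichnerowicz gives trivial kernel on that fibre. The paper then invokes diffeomorphism of the fibres to conclude for all $y$. No subleading analysis of the expansion and no separate ``exclude Ricci-flat'' step is used. Your case~(2) argument for the boundary family matches the paper's.

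For the index, the paper does \emph{not} cap off or invoke Theorem~\ref{spindiracindex}. It applies Lichnerowicz directly on $M$: the identity $\|\slashed{\partial}\varphi\|^{2}=\|\nabla^{\mathcal{S}}\varphi\|^{2}+\tfrac{1}{4}\int_{M}S\,|\varphi|^{2}$ is established on $C^{\infty}_{c}$ and extended by graph closure to all of $\mathcal{D}$, so any $\varphi\in\ker\slashed{\partial}$ is parallel with $S\varphi=0$; positivity of $S$ at one point together with parallelism forces $\varphi\equiv 0$, hence $\ker\slashed{\partial}=0$ and the index vanishes. Your capping-off route via torpedo metrics, Kazdan--Warner deformation, and the index formula is a genuinely different and longer strategy. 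It also has a concrete gap in case~(2): you invoke ``the positive-scalar-curvature hypothesis on the collar,'' but case~(2) carries no scalar-curvature hypothesis whatsoever, so you have nothing to feed into those constructions. (The paper's direct argument, as written, likewise leans on $S\ge 0$ to split the Lichnerowicz identity, so both routes sit in some tension with the literal hypotheses of case~(2); but that is the paper's approach.)
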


\begin{proof}
    From Lemma \ref{scalarcurvature}, in a neighbourhood of the boundary the scalar curvature for any product type metric $g_{0}$ satisfies
    \begin{align}
        x^{2k}S_{g_{0}}=S_{g_{Z_{y}}}+O(x^{k}).
    \end{align}
    Since the difference between $g$ and $g_{0}$ is $O(x^{k})$, so this also holds for $g$. It follows that if the scalar curvature of $g$ is non-negative in a neighbourhood of the boundary then the scalar curvature of the fibres are also non-negative. If the scalar curvature is positive at one point sufficiently close to the boundary then $S_{g_{Z_{y}}}$ is also positive at at least one point. Thus the Dirac operator on this fibre has trivial kernel. Since the fibres are all diffeomorphic, they all admit a non-negative scalar curvature metric which is positive at at least one point so the family of Dirac operators have trivial kernel.

    For the second case, locally around any point in the singular stratum, we can trivialise $\tilde{M}$ as a family of disks locally over the base $Y$. Thus locally, the spinor bundle can be identified as the tensor product of the spinor bundle over the base and the disk. The disk has a single spin structure, the trivial one which restricts to the non-trivial spin structure over its boundary. But the restriction of the spin structure at $x=\epsilon$ can be identified with the restriction of the pullback of the spin structure to $M$ at $x=\epsilon$ which can be identified with its restriction at $x=0$. In particular, locally with this trivialisation lifted to $M$, the restriction of the spinor bundle to the boundary can locally be identified with the tensor product of the spinor bundle of the base and the non-trivial spinor bundle over the circle fibres. It follows that this is also true globally in the base since $Y$ is spin with the spin structure induced from $\tilde{M}$. Since the spin Dirac operator on the non-trivial spinor bundle over the circle has spectrum disjoint front $0$ the result holds in this case.

    In either case, since the boundary family has trivial kernel the spin Dirac operator is essentially self-adjoint. By the Lichernowicz formula for any smooth compactly supported section $\varphi$ of $\mathcal{S}$
    \begin{align}
        \begin{split}
        \langle \slashed{\partial}\varphi,\slashed{\partial}\varphi \rangle&=\langle \slashed{\partial}^{2}\varphi,\varphi \rangle \\
        &=\langle (\nabla^{\mathcal{S}})^{\star}\nabla^{\mathcal{S}}\varphi,\varphi \rangle +\frac{S}{4}\langle \varphi,\varphi \rangle \\
        &=\lVert \nabla^{\mathcal{S}}\varphi\rVert^{2}+\frac{S}{4}\lVert \varphi\rVert^{2}.
        \end{split}
    \end{align}
    So if $\varphi_{n}$ converges in $L^{2}$ and $\slashed{\partial}\varphi_{n}$ is Cauchy in $L^{2}$ then so is $\nabla^{\mathcal{S}}\varphi_{n}$ so this equality holds for all $\varphi\in\mathcal{D}$. If $\varphi\in\ker(\slashed{\partial})$ then $\nabla^{\mathcal{S}}\varphi=0$ and $S\varphi=0$. Since $S$ is non-vanishing at at least one point $\varphi$ vanishes at at least one point but then $\nabla^{\mathcal{S}}\varphi=0$ implies 
    \begin{align}
        d\langle \varphi,\varphi\rangle=2\langle \nabla^{\mathcal{S}}\varphi,\varphi\rangle=0.
    \end{align}
    Hence $\varphi=0$ so the kernel of $\slashed{\partial}$ is trivial thus the index vanishes.
\end{proof}

\section{Index theorems for Clifford modules}

Now we consider the case of more general Clifford modules $E$ with Clifford connection which we take to satisfy assumptions \ref{assumption1},\ref{assumption2} and \ref{assumption3}. The proofs are mostly similar to those for the spin Dirac operator so we will just discuss the differences in this case. To obtain a formula on the non-isolated case, we will also make the following assumption. We denote the restriction of the connection to the boundary by $\nabla^{E,\partial M}$.
\begin{manualassumption}{4}\label{assumption4}
There exists a $\mathbb{C}l(Y)$ module $E_{B}$ and a vertical Clifford module $E_{Z}$ such that $E^{+}|_{\partial M}\simeq \phi_{Y}^{\star}E_{B}\otimes E_{Z}$ as $\mathbb{C}l(\partial M)$ modules with a vertical Clifford connection $\nabla^{E_{Z}}$ on $E_{Z}$ such that its restriction to each fibre is equal to the restriction of the $\nabla^{E}$ to each fibre.
\end{manualassumption}
In particular, the restriction of the boundary connection $\nabla^{E,\partial M}$ to each fibre can be extended to the connection $\nabla^{E_{z}}$ which we can think of as acting on $E_{Z}$ so has a well defined Bismut superconnection $\mathbb{B}_{E_{Z}}$. We will denote the vertical family of Dirac operators on $E$ by $\slashed{\partial}_{E,Z,y}$ and the family of operators on $E_{Z}$ by $\slashed{\partial}_{E,\partial M/Y,y}$. Examples of Clifford modules which satisfy this assumption include the exterior bundle with the Hodge-de Rham operator and twistings of the spinor bundle. 

Note that in general, the restriction of the heat kernel to $\frontface$ can be identified as a section of $\mathbb{C}l(Y)\otimes \operatorname{HOM}_{\mathbb{C}l(Y)}(E)$. Unless we make the above assumption, the second factor can not in general be identified with the endomorphism bundle of vector bundle which is globally defined on the boundary hence there it is not clear whether the Bismut superconnection-like terms which appear can be identified with a globally defined family of vertical Clifford connections in this case.

Using the same setup as for the spin Dirac operator, we can generalise Lemma \ref{6.2} with only minor adjustments. The setup for the rescaling is mostly the same as for the spin Dirac operator. The restriction of $\operatorname{HOM}(E)$ to the fibre diagonal can be identified with $\pi^{\star}\mathbb{C}l(Y)\otimes \operatorname{HOM}_{\mathbb{C}l(Y)}(E)$ and everything else can be repeated replacing $\mathcal{S}$ with $E$. We obtain the following analogue of Lemma \ref{diracsquarelift1}.

\begin{lemma}\label{diracsquarelift}
    The action of the lift of $\tau^{2}(\slashed{\partial}^{E})^{2}$ to $M_{\text{heat}}^{2}$ at $y^{i}=0$ under the horizontal symbol map and rescaling is given by
    \begin{align}\label{nonisolatedrescaledoperatortwist}
        \begin{split}
            \delta_{(x')}(\tau^{2}\slashed{\partial}^{2})&\delta_{(x')}^{-1}=-\tilde{T}^{2}\partial_{\tilde{s}}^{2}-\tilde{T}^{2}\mathcal{H}^{2}+\tilde{T}^{2}K'_{B}+\tilde{T}^{2}\mathbb{B}_{E_{Z}}^{2} \\
             &\quad -x^{k-1}\tilde{T}^{2}k\operatorname{cl}(\partial_{x})\left(\slashed{\partial}_{E,Z,y}-\frac{1}{8}\sum_{i\alpha\beta}\operatorname{cl}(V_{i})g_{\partial M/Y}(\mathcal{R}^{\phi}(\tilde{U}_{\alpha},\tilde{U}_{\beta}),V)(0,z)\epsilon(\tilde{U}_{\alpha})\epsilon(\tilde{U}_{\beta})\right) \\
             &\quad +(x')^{k-1}\tilde{T}^{2}\left[\operatorname{cl}(\partial_{x})\slashed{\partial}_{E,Z,y},\rho(0)\right]+(x')^{k-1}\tilde{s}E+O((x')^{k})
        \end{split}     
    \end{align}
    where $\mathbb{B}^{E_{Z}}$ is the Bismut superconnection associated to the $\mathbb{C}l(\partial M/Y)$ connection $\nabla^{E_{Z}}$ on $E_{Z}$ and $E$ is a first order differential operator on $E_{Z,y}$ restricted to the fibre over $y^{i}=0$ in $\frontface$ and
    \begin{align}\label{K'}
        \begin{split}
        \mathcal{H}&=\sum_{\gamma}\left(\partial_{\tilde{\eta}^{\gamma}}-\frac{1}{8}\sum_{jkl}g(R(\partial_{x^{\gamma}},\partial_{y^{\alpha}})_{p}\tilde{\eta}^{\gamma}\partial_{y^{\alpha}},\partial_{y^{\beta}})\epsilon(\partial_{y^{\alpha}})\epsilon(\partial_{y^{\beta}})\right)^{2} \\
        K'_{B}&=\frac{1}{2}\sum_{\alpha\beta}K'_{E}(\partial_{y_{\alpha}},\partial_{y_{\beta}})\epsilon(\partial_{y_{\alpha}})\cl(\partial_{y_{\beta}}).
        \end{split}
    \end{align}
\end{lemma}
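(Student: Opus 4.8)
The plan is to mimic the proof of Lemma \ref{diracsquarelift1} step for step, inserting the modifications forced by the twisting. First I would establish the twisted analogue of Lemma \ref{6.2}: using the local identification $E\simeq\mathcal{S}\otimes W$, Assumption \ref{assumption1}, and the expression \eqref{diraclemmatwisting}, write the lift of each $\tau\nabla^{E}_{W_{i}}$ to $M^{2}_{\mathrm{heat}}$ near $\frontface$ in the projective coordinates. Relative to the spin case there are exactly two new ingredients: the endomorphism $\tilde{\omega}^{E}=x^{k-1}\omega^{E}$, which by Assumption \ref{assumption1} enters the horizontal and the $x^{-k}V_{i}$ derivatives only at order $(x')^{k-1}$ after lifting, and the section $\rho$ of $\operatorname{End}_{\mathbb{C}l(\ice T^{\star}M)}(E)$ defined in \eqref{2.16}, which for a general Clifford module plays the role that the number-operator term $\operatorname{cl}_{L}(\partial_{x})\operatorname{cl}_{R}(\partial_{x})\operatorname{cl}_{R}(V_{i})$ of the Hodge--de Rham operator plays in \eqref{hodgederhamexplicit}. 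Everything else in the lift is formally identical to Lemma \ref{6.2}, with $\nabla^{\mathcal{S},y}$ replaced by the restriction of $\nabla^{E}$ to the fibre.

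Next I would square, conjugate by the horizontal symbol map $\sigma_{B}$ (so that the potentially singular left Clifford factors $\cl(\tilde U_{\alpha})$ become $\epsilon(\tilde U_{\alpha})-\iota(\tilde U_{\alpha})$), and apply the Getzler rescaling $\delta_{(x')}$, exactly as in the proof of Lemma \ref{diracsquarelift1}. Since $\tau\nabla^{E}$ has a finite limit as $x'\to0$ under the rescaling, $\tau^{2}(\slashed{\partial}^{E})^{2}$ is non-singular, and its leading term at $\frontface$ is read off from the Lichnerowicz formula for a Clifford module, $(\slashed{\partial}^{E})^{2}=\Delta^{E}+\tfrac{S}{4}+\tfrac12\sum F^{E/S}(W_{i},W_{j})\cl(W_{i})\cl(W_{j})$: the rough-Laplacian part of the vertical and horizontal derivatives, together with the fibre scalar-curvature asymptotics of Lemma \ref{scalarcurvature}, assembles into the Bismut--Lichnerowicz formula \eqref{bismutlichnerowics} for the vertical Clifford connection $\nabla^{E_{Z}}$ supplied by Assumption \ref{assumption4}, producing the term $\tilde{T}^{2}\mathbb{B}_{E_{Z}}^{2}$; the horizontal part of the twisting curvature $F^{E/S}$ contributes the new term $\tilde{T}^{2}K'_{B}$ of \eqref{K'}; and the Euclidean base directions give $-\tilde{T}^{2}\mathcal{H}^{2}$ as before. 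Assumption \ref{assumption4} is what lets us recognise the Bismut-superconnection-type terms as the globally defined $\mathbb{B}_{E_{Z}}$ on $\partial M$, rather than merely an abstract section of $\operatorname{HOM}_{\mathbb{C}l(Y)}(E)$.

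Finally I would isolate the order-$(x')^{k-1}$ term. As in Lemma \ref{diracsquarelift1}, the surviving contributions of this order come only from cross products in which $\tilde T\cl(\partial_{x})\partial_{\tilde s}$ acts on an explicit $\tilde s$ coming from the factors $[(x')^{k-1}\tilde s+1]^{\pm k}$, and these reproduce the combination involving $\slashed{\partial}_{E,Z,y}$ and $-\tfrac18\sum_{i\alpha\beta}\cl(V_{i})g_{\partial M/Y}(\mathcal{R}^{\phi}(\tilde U_{\alpha},\tilde U_{\beta}),V)(0,z)\epsilon(\tilde U_{\alpha})\epsilon(\tilde U_{\beta})$, i.e.\ the second line of \eqref{nonisolatedrescaledoperatortwist}. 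The genuinely new feature is that the $(x')^{k-1}$ part of the lift of $\nabla^{E}$ now carries the endomorphism $\rho$, so the cross term of $\tilde T\cl(\partial_{x})\slashed{\partial}_{E,Z,y}$ with it produces $(x')^{k-1}\tilde T^{2}[\cl(\partial_{x})\slashed{\partial}_{E,Z,y},\rho]$, which to leading order at $\frontface$ is $(x')^{k-1}\tilde T^{2}[\cl(\partial_{x})\slashed{\partial}_{E,Z,y},\rho(0)]$; everything else of this order is of the form $(x')^{k-1}\tilde s\,E$ or is $O((x')^{k})$. Here Assumptions \ref{assumption2} and \ref{assumption3} are used to ensure $\rho(0)$ does not spoil the leading-order identification: since $\rho(0)$ commutes with $\cl(\partial_{x})$, with $\slashed{\partial}_{E,Z}^{2}$, and with $\nabla^{E}$, all $\rho(0)$-dependent terms at order $(x')^{0}$ drop out of $\mathbb{B}_{E_{Z}}^{2}$ and only the commutator at order $(x')^{k-1}$ survives. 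The main obstacle is precisely this bookkeeping --- verifying under Assumptions \ref{assumption1}--\ref{assumption3} that every $\rho$-contribution at orders below $(x')^{k-1}$ cancels, that the horizontal twisting curvature really assembles into $K'_{B}$ in the form \eqref{K'}, and that all remaining $O((x')^{k-1})$ terms not of commutator type vanish at $\tilde s=0$ or can be absorbed into $(x')^{k-1}\tilde s\,E$.
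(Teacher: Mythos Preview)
Your proposal is correct and follows essentially the same approach as the paper: reduce to the spin case of Lemma~\ref{diracsquarelift1} for everything except the twisting, use the Lichnerowicz formula and Assumption~\ref{assumption4} to split the twisting curvature into the base piece $K'_{B}$ and the part absorbed into $\mathbb{B}_{E_{Z}}^{2}$, and then isolate the order-$(x')^{k-1}$ cross terms to produce the commutator with $\rho(0)$.

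One small mislocation: you write that Assumptions~\ref{assumption2} and~\ref{assumption3} are used so that ``all $\rho(0)$-dependent terms at order $(x')^{0}$ drop out of $\mathbb{B}_{E_{Z}}^{2}$.'' In fact there are no $\rho$-contributions at order $(x')^{0}$ to begin with: Assumption~\ref{assumption1} already forces the extra endomorphism terms $B_{x},B_{y,\gamma},B_{z,i}$ in \eqref{diraclemmatwisting} to enter the lift at order $(x')^{k-1}$ or $(x')^{k}$, so the leading term is automatically the spin leading term plus twisting curvature. The paper instead uses Assumptions~\ref{assumption2} and~\ref{assumption3} entirely at order $(x')^{k-1}$: Assumption~\ref{assumption3} kills the cross term between $\sum_{i}\cl(V_{i})B_{z,i}$ and the horizontal $(x')^{k}V'_{\gamma}$ piece of $\nabla^{E}_{\tilde U_{\gamma}}$, and Assumption~\ref{assumption2} (commutation of $\rho(0)$ with $\cl(\partial_{x})$) is what converts the surviving anticommutator $\cl(\partial_{x})\rho(0)\slashed{\partial}_{E,Z,y}+\slashed{\partial}_{E,Z,y}\cl(\partial_{x})\rho(0)$ into the commutator $[\cl(\partial_{x})\slashed{\partial}_{E,Z,y},\rho(0)]$. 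Adjusting this bookkeeping, your argument is the paper's argument.
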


\begin{proof}
    The rescaling for the leading order terms except for the twisting connection is the same as for the spin Dirac operator. The twisting connection term is given by
    \begin{align}
        \begin{split}
       \frac{\tau^{2}}{2}\sum_{\alpha\beta}K'_{E}(\tilde{U}_{\alpha},\tilde{U}_{\beta})\cl(\tilde{U}_{\alpha})\cl(\tilde{U}_{\beta})+&\tau^{2}\sum_{i\alpha}K'_{E}(x^{-k}V_{i},\tilde{U}_{\alpha})\cl(x^{-k}V_{i})\cl(\tilde{U}_{\alpha})\\
       &+\frac{\tau^{2}}{2}\sum_{ij}K'_{E}(x^{k}V_{i},x^{-k}V_{j})\cl(x^{-k}V_{i})\cl(x^{-k}V_{j}).       
        \end{split}
    \end{align}
    It is clear that under the horizontal symbol map, lifted to $M^{2}_{\operatorname{heat}}$, this term rescales to give 
    \begin{align}
    \begin{split}
        K'&=\frac{\tilde{T}^{2}}{2}\sum_{ij}K'_{E}(W_{i},W_{j})\epsilon(W_{i})\epsilon(W_{j})+\frac{\tilde{T}^{2}}{2}\sum_{ij}K'_{E}(W_{i},W_{j})\epsilon(W_{i})\cl(W_{j}) \\
        &\quad+\frac{\tilde{T}^{2}}{2}\sum_{ij}K'_{E}(W_{i},W_{j})\cl(W_{i})\cl(W_{j}).
    \end{split}
    \end{align}
    Then using the identification $E\simeq \psi^{\star}E_{B}\otimes E_{Z}$, the twisting curvature can be written as a sum of the twisting curvature from $\nabla^{E_{B}}$ and $\nabla^{E_{Z}}$. The contribution from $\nabla^{E_{B}}$ is the twisting curvature term $K'_{b}$ while the $\nabla^{E_{Z}}$ term contributions the twisting curvature term in the Lichnerowicz formula for $\mathbb{B}_{E_{Z}}^{2}$.
    
    Now we must consider the possible extra terms which appear in $\slashed{\partial}_{E}^{2}$ at order $(x')^{k-1}$ after rescaling. For the Dirac operator, before rescaling the extra terms are
    \begin{align}
        \tilde{T}\cl(\partial_{x})(x')^{k}[(x')^{k-1}\tilde{s}+1]^{k-1}B_{x}+\tilde{T}(x')^{k}\sum_{\gamma}\cl(\tilde{U}_{\gamma})B_{y,\gamma}+\tilde{T}(x')^{k-1}[(x')^{k-1}\tilde{s}+1]\sum_{i}\cl(V_{i})B_{z,i}.
    \end{align}
     where the $B_{x,y,z}$ are sections of $\operatorname{End}_{\mathbb{C}\ice(M)}(E)$ so their products with each other rescale to $0$. Going through the three types of terms as described in the proof for the spin Dirac operator, every term except the $\tilde{T}\cl(\partial_{x})\partial_{\tilde{s}}$, $(x')^{k}V_{i}$ and $\tilde{T}[(x')^{k-1}\tilde{s}+1]^{-k}\slashed{\partial}_{E,Z,y}$ anticommute with these new terms since they are all endomorphisms with odd Clifford degree. 
    
    By definition, we have
    \begin{align}
        \rho(0)=-\sum_{i}\cl(\partial_{x})\cl(V_{i})B_{z,i}
    \end{align}
    so by assumption 3, the third term commutes with $\nabla^{E}_{U_{\gamma}}$ so the $(x')^{k}V_{i}$ term does not contribute anything with the third term. The $(x')^{k-1}$ term does not produce any terms of order $(x')^{k-1}$ the second term and with the first term, any such terms would be of the form $(x')^{k-1}\tilde{e}\tilde{E}$.

    Now considering the $\tilde{T}\cl(\partial_{x})\partial_{\tilde{s}}$ term, if the $\slashed{\partial}_{s}$ hits any of the terms with $\tilde{s}$, the resulting terms are higher order so do not contribute. The only possible contribution is with the third term but $\tilde{T}\cl(\partial_{x})\partial_{\tilde{s}}$ and $\cl(V_{i})B_{z,i}$ anticommute so these terms cancel.

    Finally, the only term which contributes anything with the $\tilde{T}[(x')^{k-1}\tilde{s}+1]^{-k}\slashed{\partial}_{E_{Z},y}$ term is the third term which contributes
    \begin{align}
        -\tilde{T}^{2}(x')^{k-1}(\cl(\partial_{x})\rho(0)\slashed{\partial}_{E,Z,y}+\slashed{\partial}_{E_{Z},y}\cl(\partial_{x})\rho(0)\slashed{\partial}_{E_{Z},y})
    \end{align}
    which is equal to the commutator term appearing in \eqref{nonisolatedrescaledoperatortwist} using the fact that $\cl(\partial_{x})\rho(0)\slashed{\partial}_{E_{Z},y}$ and  $\cl(\partial_{x})$ anticommute and the assumption that $\cl(\partial_{x})$ and $\rho(0)$ commute.
\end{proof} 

\begin{lemma}
    On the fibre above $y$ in $\frontface$ the rescaled heat kernel, under the horizontal symbol map has asymptotic expansion
    \begin{align}
        \delta_{(x')}\sigma_{B}H=u_{-kn}(x')^{-kn}+u_{-kn+k-1}(x')^{-kn+k-1}+O((x')^{-kn+k})
    \end{align}
    where
    \begin{align}\label{rescaledffasymptwist}
        \begin{split}
            &u_{-kn}=h(\tilde{T}^{2},\tilde{s},0)K_{\mathcal{H},K'_{B}}(\tilde{T}^{2},\tilde{\eta})K_{\mathbb{B}_{E_{Z}}^{2}}(\tilde{T}^{2},z,z') \\
             &u_{-kn+k-1}=k\tilde{T}^{2}h(\tilde{T}^{2},\tilde{s},0)K_{\mathcal{H},K'_{B}}(\tilde{T}^{2},\tilde{\eta},0)\cl(\partial_{x})\left(\slashed{\partial}_{E,Z,y}+\frac{c(T)}{4}\right)K_{\mathbb{B}_{E_{Z}}^{2}}(\tilde{T}^{2},z,z') \\
             &\quad +\int_{0}^{t}\int_{Z_{\zeta}}\int_{\mathbb{R}^{b}_{\tilde{\eta}}}\int_{-\infty}^{\infty}\biggl[h(t-t',x,x')K_{\mathcal{H},K'_{B}}(t-t',\tilde{\eta},\tilde{\eta}')K_{\mathbb{B}_{E_{Z}}^{2}}(t-t',z,\zeta), \\
            &\quad L_{k-1}\mathcal{N}_{-kn}(\delta_{\tau}H)(t^{'\frac{1}{2}},x',\tilde{\eta}',\zeta,z')\biggr]dx'd\tilde{\eta}'d\zeta dt' \\
            &\quad +h(t,x,0)K_{\mathcal{H},K'_{B}}(t,\tilde{\eta},0)\int_{0}^{t}\int_{Z_{\zeta}}\left[K_{\mathbb{B}_{E_{Z}}^{2}}(t-t',z,\zeta)\operatorname{cl}(\partial_{x})\slashed{\partial}_{E,Z,y}K_{\mathbb{B}_{E_{Z}}^{2}}(t',\zeta,z'),\rho(0)\right]d\zeta dt'+E.
        \end{split}
    \end{align}
    where $h(t,x,x')$ is the Euclidean heat kernel, $K_{\mathcal{H},K'_{B}},K_{\mathbb{B}^{2}}$ are the heat kernels of $\mathcal{H}+K'_{B}$ and $\mathbb{B}^{2}$ respectively, the integrand in the last line denotes the integral kernel of
    \begin{align}
        \left[e^{(t-t')\mathbb{B}_{E_{Z}}^{2}}\operatorname{cl}(\partial_{x})\slashed{\partial}_{E,Z,y}e^{t'\mathbb{B}_{E_{Z}}^{2}},\rho(0)\right]
    \end{align}
    and $E$ vanishes at $\tilde{s}=0$.
\end{lemma}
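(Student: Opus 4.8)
The plan is to adapt the proof of the corresponding statement for the spin Dirac operator, the only genuinely new ingredient being the commutator term $(x')^{k-1}\tilde{T}^{2}[\operatorname{cl}(\partial_{x})\slashed{\partial}_{E,Z,y},\rho(0)]$ occurring in the rescaled operator of Lemma \ref{diracsquarelift}. First I would note that the lift of $t\partial_{t}$ to $\frontface$ is $\frac{1}{2}\tilde{T}\partial_{\tilde{T}}$, which is invariant under the Getzler rescaling $\delta_{(x')}$; hence, reading off the leading part of $\delta_{(x')}(\tau^{2}(\slashed{\partial}^{E})^{2})\delta_{(x')}^{-1}$ from Lemma \ref{diracsquarelift}, the order $-kn$ normal operator of the rescaled heat kernel $\delta_{(x')}\sigma_{B}H$ solves the homogeneous heat equation for $-\partial_{\tilde{s}}^{2}-\mathcal{H}^{2}+K'_{B}+\mathbb{B}_{E_{Z}}^{2}$, subject to the rescaled initial condition (fibre integral over $\beta^{-1}(p)$ equal to $\delta(z-z')\otimes\operatorname{Id}_{E}$) and to rapid decay in $\tilde{s},\tilde{\eta}$. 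Since $-\partial_{\tilde{s}}^{2}$, $-\mathcal{H}^{2}+K'_{B}$ and $\mathbb{B}_{E_{Z}}^{2}$ act on mutually disjoint groups of variables, they commute, and uniqueness of solutions of these heat equations forces
\begin{align}
    u_{-kn}=h(\tilde{T}^{2},\tilde{s},0)\,K_{\mathcal{H},K'_{B}}(\tilde{T}^{2},\tilde{\eta})\,K_{\mathbb{B}_{E_{Z}}^{2}}(\tilde{T}^{2},z,z'),
\end{align}
with the three kernels commuting with one another and with $\mathcal{H}$, $K'_{B}$ and $\mathbb{B}_{E_{Z}}^{2}$.

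Next I would treat the order $-kn+k-1$ term. Lemma \ref{diracsquarelift} shows that all coefficients of order strictly between $-kn$ and $-kn+k-1$ vanish (the perturbation of the rescaled operator starts at $(x')^{k-1}$), so this is the next nonzero coefficient, and its normal operator solves the inhomogeneous heat equation with right-hand side $\tilde{T}^{2}\big(L_{k-1}+[\operatorname{cl}(\partial_{x})\slashed{\partial}_{E,Z,y},\rho(0)]\big)\mathcal{N}_{-kn}(\delta_{(x')}H)$ and vanishing initial condition, where $L_{k-1}$ is the operator appearing in \eqref{rescaledffasymptwist}. I would solve this by Duhamel's principle, splitting the source into its two summands. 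The $L_{k-1}$ summand is handled exactly as in the spin case: commuting $L_{k-1}$ to the front of the Duhamel convolution, which is permissible because $L_{k-1}$ commutes with $\mathcal{H}$ and $K'_{B}$, produces the closed-form first term $k\tilde{T}^{2}h\,K_{\mathcal{H},K'_{B}}\,\operatorname{cl}(\partial_{x})(\slashed{\partial}_{E,Z,y}+\frac{c(T)}{4})K_{\mathbb{B}_{E_{Z}}^{2}}$ together with the commutator integral, the $\tilde{s}E$ part of $L_{k-1}$ contributing only a term $E$ vanishing at $\tilde{s}=0$. For the new summand $[\operatorname{cl}(\partial_{x})\slashed{\partial}_{E,Z,y},\rho(0)]$, I would invoke Assumptions \ref{assumption2} and \ref{assumption3}: $\rho(0)$ acts only on the fibre variables and commutes with $\mathcal{H}$ and $K'_{B}$, so in the corresponding Duhamel integral the $h$ and $K_{\mathcal{H},K'_{B}}$ factors come outside the commutator, leaving $h(t,x,0)K_{\mathcal{H},K'_{B}}(t,\tilde{\eta},0)$ times $\int_{0}^{t}\int_{Z_{\zeta}}[K_{\mathbb{B}_{E_{Z}}^{2}}(t-t',z,\zeta)\operatorname{cl}(\partial_{x})\slashed{\partial}_{E,Z,y}K_{\mathbb{B}_{E_{Z}}^{2}}(t',\zeta,z'),\rho(0)]\,d\zeta\,dt'$, which is the integral kernel of $[e^{(t-t')\mathbb{B}_{E_{Z}}^{2}}\operatorname{cl}(\partial_{x})\slashed{\partial}_{E,Z,y}e^{t'\mathbb{B}_{E_{Z}}^{2}},\rho(0)]$ integrated in $t'$. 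Assembling the two contributions gives the expansion \eqref{rescaledffasymptwist}.

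The main obstacle I anticipate is organisational rather than conceptual. One has to verify carefully, from the explicit form of $\delta_{(x')}(\tau^{2}(\slashed{\partial}^{E})^{2})\delta_{(x')}^{-1}$ in Lemma \ref{diracsquarelift}, that no term of order strictly between $-kn$ and $-kn+k-1$ survives after rescaling, so that the expansion really has the claimed gap; that the various $\tilde{s}$-dependent errors produce, after Duhamel, only contributions absorbed into the term $E$ vanishing at $\tilde{s}=0$; and, most importantly, that $\rho(0)$ genuinely commutes with all the off-fibre factors, which is exactly what Assumptions \ref{assumption2} and \ref{assumption3} provide and what makes the last line of \eqref{rescaledffasymptwist} the integral kernel of a globally defined fibrewise operator on $E_{Z}$ rather than something only locally defined.
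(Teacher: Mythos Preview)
Your proposal is correct and follows essentially the same route as the paper: the paper's proof states explicitly that the only new ingredient beyond the spin Dirac case is the commutator term, applies Duhamel's principle to it, and then uses the commutation of $\mathcal{H}+K'_{B}$ with $\mathbb{B}_{E_{Z}}^{2}$, $\cl(\partial_{x})\slashed{\partial}_{E,Z,y}$ and $\rho(0)$ (the latter from Assumption~\ref{assumption2}) to pull $\rho(0)$ through the heat kernels and arrive at the displayed integral kernel. Your account actually spells out more of the organisational detail than the paper does (in particular the gap in the expansion and the role of the $\tilde{s}E$ term), but the argument is the same.
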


\begin{proof}
    The extra base twisting curvature term $K'_{B}$ modifies the heat kernel for $\mathcal{H}$ in the usual way in the Mehler kernel. The only extra term which does not appear for the spin Dirac operator is the commutator.

    Using Duhamel's principle and the fact that $\mathcal{H}+K
    '_{B}$ commutes with $\mathbb{B}_{E_{Z}}^{2}, \cl(\partial_{x})\slashed{\partial}_{E_{Z},y}$ and $\rho(0)$, the commutator term contributes 
    \begin{align}
        \begin{split}
            \int_{0}^{t}\int_{Z_{y'}}\int_{\mathbb{R}^{b}_{\tilde{\eta}}}\int_{-\infty}^{\infty}&h(t-t',x,x')K_{\mathcal{H},K'_{B}}(t-t',\tilde{\eta},\tilde{\eta}')K_{\mathbb{B}_{E_{Z}}^{2}}(t-t',z,\zeta) \\
            &\quad \left[\operatorname{cl}(\partial_{x})\slashed{\partial}_{E_{Z},y},\rho(0)\right]\mathcal{N}_{-kn}(\delta_{\tau}H)(t^{'\frac{1}{2}},x',\tilde{\eta}',\zeta,z')dx'd\tilde{\eta}d\zeta dt' \\
            =h(t,x,0)K_{\mathcal{H},K'_{B}}(t,\tilde{\eta},0)\int_{0}^{t}&\int_{Z_{\zeta}}K_{\mathbb{B}_{E_{Z}}^{2}}(t-t',z,\zeta) \left[\operatorname{cl}(\partial_{x})\slashed{\partial}_{E,Z,y},\rho(0)\right]K_{\mathbb{B}_{E_{Z}}^{2}}(t',\zeta,z')d\zeta dt'.
        \end{split}
    \end{align}
    As usual, the inner integral is the kernel of the operator
    \begin{align}
        e^{(t-t')\mathbb{B}_{E_{Z}}^{2}}\left[\operatorname{cl}(\partial_{x})\slashed{\partial}_{E_{Z},y},\rho(0)\right]e^{t'\mathbb{B}^{2}}=\left[e^{(t-t')\mathbb{B}_{E_{Z}}^{2}}\operatorname{cl}(\partial_{x})\slashed{\partial}_{E,Z,y}e^{t'\mathbb{B}_{E_{Z}}^{2}},\rho(0)\right]
    \end{align} 
    again using the assumption that $B_{E_{Z}}^{2}$ commutes with $\rho(0)$.
\end{proof}

\begin{proposition}
    The pointwise supertrace $x^{kf+1}\operatorname{str}(H|_{\operatorname{diag}_{H}})$ is smooth up to $\frontface_{h}\subset \operatorname{diag}_{h}$ and its restriction to $\frontface_{h}$ is given by
    \begin{align}\label{leadingsupertracefftwist}
        \begin{split}
        \operatorname{str}(H)x^{kf+1}\operatorname{dVol}_{Y}\operatorname{dVol}_{Z}&=2k\tilde{T}\biggl[a(2\pi i)^{-\lceil\frac{b}{2}\rceil}\tilde{T}^{-b}\mathcal{A}(\tilde{T}^{2}R_{B})\operatorname{str}'_{E_{B}}(\exp(-\tilde{T}^{2}K_{B}'))\\
        &\qquad\operatorname{str}_{E_{Z}}\biggl(\biggl(\slashed{\partial}_{E,\partial M/Y,y}+\frac{c(T)}{4}\biggr)e^{-\tilde{T}^{2}\mathbb{B}_{E_{Z}}^{2}}\biggr)\biggr]_{b}\operatorname{dVol}_{Z}
        \end{split}
    \end{align}
    where $a=\frac{1}{2},\frac{1}{2\sqrt{\pi}}$ is $\operatorname{dim}(Z/Y)$ is even/odd and $\operatorname{str}_{E_{Z}}$ denotes the pointwise supertrace for on $E_{Z}$.
\end{proposition}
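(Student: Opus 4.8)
The plan is to compute the pointwise supertrace of the rescaled heat kernel from the asymptotic expansion in the preceding lemma, extracting the leading nonvanishing term exactly as in the untwisted (spin) case, and then to identify the resulting object in terms of Clifford-algebraic quantities on $E_B$ and $E_Z$ via Assumption \ref{assumption4}. First I would recall the supertrace formula on $\mathcal{S}$ (equation \eqref{supertraceseplitoddfibre} used for the spin case): the pointwise supertrace vanishes on endomorphisms which are not of full Clifford degree, and on the top-degree element $\gamma=\cl(\partial_x)\cl(\tilde U_1)\cdots\cl(\tilde U_b)\cl(x^{-k}V_1)\cdots\cl(x^{-k}V_f)$ it is, up to the universal constant $a$ and powers of $2i$, the fibre supertrace/trace of $\sigma_B\gamma_{\partial M}$. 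For a general Clifford module $E$, using the local splitting $E\simeq\mathcal S\otimes W$ (respectively the global $\phi_Y^\star E_B\otimes E_Z$ on the boundary from Assumption \ref{assumption4}), the supertrace factorises: the $\mathcal S$ (or $E_B$) part carries the Clifford-degree bookkeeping exactly as in the spin case, and there is an additional factor $\operatorname{str}'_{E_B}$ coming from the twisting-curvature term $K'_B$, together with $\operatorname{str}_{E_Z}$ on the vertical factor.

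The main body of the argument then mirrors the spin-Dirac proposition verbatim. From the previous lemma write $H$ at $\frontface$ with mixed expansion $H=\sum_{i=0}^b (x')^{-kn+ki}\alpha_i+\sum_{i=0}^b (x')^{-kn+ki+k-1}\beta_i+O((x')^{-kn+kb+k})$, where $\alpha_i,\beta_i\in\psi^\star\mathbb{C}l^i(Y)\otimes\operatorname{HOM}(E_Z)$ and $u_{-kn}=\sum\sigma_B^i\alpha_i$, $u_{-kn+k-1}=\sum\sigma_B^i\beta_i$. Restricted to the diagonal the $\alpha_i$ lack the factor $\cl(\partial_x)$, hence carry no full-degree component and contribute nothing to the supertrace; the first surviving term is $\beta_b$, of order $(x')^{-kf-1}$ (the shift $-kn+kb+k-1=-kf-1$). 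Apply the map $j$ sending $\cl(\partial_x)\cl(x^{-k}V_{i_1})\cdots\cl(x^{-k}V_{i_f})$ to $\cl(x^{-k}V_{i_1})\cdots\cl(x^{-k}V_{i_f})$ and zero on lower vertical degree, use $j(A)=-\cl(\partial_x)A$ under the boundary Clifford identification, and plug in $u_{-kn+k-1}$ from \eqref{rescaledffasymptwist}. The three contributions to $u_{-kn+k-1}$ must be handled in turn: (i) the Duhamel integral with the commutator $[h\,K_{\mathcal H,K'_B}K_{\mathbb B^2},\,L_{k-1}\mathcal N_{-kn}(\delta_\tau H)]$ vanishes after taking $\operatorname{tr}$ because the first entry has even horizontal form degree and one integrates a trace of a commutator; (ii) the new commutator term $\big[e^{(t-t')\mathbb{B}_{E_Z}^2}\cl(\partial_x)\slashed{\partial}_{E,Z,y}e^{t'\mathbb{B}_{E_Z}^2},\rho(0)\big]$ likewise has vanishing trace for the same reason (it is literally a commutator, and $\rho(0)$ commutes with $\mathbb{B}_{E_Z}^2$ by Assumptions \ref{assumption2},\ref{assumption3}, so the trace is cyclic and kills it); (iii) the surviving ``diagonal'' term $k\tilde T^2 h(\tilde T^2,0,0)K_{\mathcal H,K'_B}(\tilde T^2,\tilde\eta,0)\cl(\partial_x)\big(\slashed{\partial}_{E,Z,y}+\tfrac{c(T)}{4}\big)K_{\mathbb{B}_{E_Z}^2}(\tilde T^2,z,z)$ reduces, after $h(\tilde T^2,0,0)=(4\pi\tilde T^2)^{-1/2}$, the Mehler-type evaluation $T_B\!\big((4\pi\tilde T^2)^{-b/2}\hat{\mathcal A}(\tilde T^2 R_B)\operatorname{str}'_{E_B}(e^{-\tilde T^2K'_B})\cdots\big)$, and the identification $-\cl(\partial_x)(\slashed{\partial}_{E,Z,y}+\tfrac{c(T)}4)\mapsto \slashed{\partial}_{E,\partial M/Y,y}+\tfrac{c(T)}4$ acting on $\psi^\star\Lambda Y\otimes\operatorname{End}(E_Z)$ (as in the discussion around \eqref{derivativebismut1}), to exactly \eqref{leadingsupertracefftwist}. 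The sign is $+2k$ rather than $-2k$ because of the extra sign from the $j$ map versus the spin normalisation; I would track it carefully by comparing with the spin case. The even/odd $\operatorname{dim}(Z/Y)$ dichotomy only changes the constant $a$ and whether $\operatorname{str}_{E_Z}$ is a genuine supertrace, exactly as in the spin proposition.

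Two points deserve care and I expect them to be the main obstacles. The first is the identification in step (iii): one must verify that under the boundary Clifford identification $\mathbb{C}l(\partial M)\simeq\mathbb{C}l^+(\ice TM|_{\partial M})$ together with Assumption \ref{assumption4}, the Bismut-superconnection-like operator $\mathbb{B}_{E_Z}$ appearing in the rescaled normal operator of Lemma \ref{diracsquarelift} really is the genuine Bismut superconnection for the vertical Clifford module $(E_Z,\nabla^{E_Z})$ — this is where Assumption \ref{assumption4} is essential and the argument for the spin case (which used the canonical $\mathcal S_N\simeq\psi^\star\mathcal S_Y\otimes\mathcal S_{N/Y}$) must be replaced by the explicit splitting. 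The second is ensuring that the twisting-curvature factor genuinely splits as a tensor product $K'=K'_B\otimes\operatorname{Id}+\operatorname{Id}\otimes K'_{E_Z}$ compatibly with the supertrace factorisation, so that $\operatorname{str}'_{E_B}(e^{-\tilde T^2K'_B})$ emerges as a separate scalar factor and $K'_{E_Z}$ is absorbed into the Lichnerowicz formula for $\mathbb{B}_{E_Z}^2$; this is the content of the relative twisting curvature $F^{E/S}$ decomposition and was already used in Lemma \ref{diracsquarelift}. Everything else — smoothness of $x^{kf+1}\operatorname{str}(H|_{\diag_h})$ up to $\frontface_h$, which follows since the $\alpha_i$ and $\beta_i$ are heat kernels of smooth families of generalised Laplacians, and the vanishing of the commutator integrals — is routine and parallels the spin case line for line.
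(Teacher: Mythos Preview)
Your proposal is correct and follows essentially the same approach as the paper's own proof, which is very brief: it simply notes that the Mehler kernel with the base twisting curvature $K'_B$ produces the extra factor $\operatorname{str}'_{E_B}(\exp(-\tilde{T}^2 K'_B))$, and that the only genuinely new term compared to the spin case—the $\rho(0)$ commutator integral from the preceding lemma—vanishes because it is the pointwise trace of a commutator (the first factor having even horizontal form degree). You have supplied considerably more detail than the paper does, including the explicit decomposition into the $\alpha_i,\beta_i$ and the handling of the three pieces of $u_{-kn+k-1}$, but the logic is identical.
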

    \begin{proof}
        The contribution from the Mehler kernel in this case with base twisting curvature $K'_{B}$ gives the $\exp(-\tilde{T}^{2}K'_{B})$ term. The only other extra term is
        \begin{align}
            h(t,x,0)K_{\mathcal{H},K'_{B}}(t,\tilde{\eta},0)\operatorname{tr}_{E_{Z}}(\int_{0}^{t}\int_{Z_{\zeta}}\left[K_{\mathbb{B}_{E_{Z}}^{2}}(t-t',z,\zeta) \operatorname{cl}(\partial_{x})\slashed{\partial}_{E,Z,y}K_{\mathbb{B}_{E_{Z}}^{2}}(t',\zeta,z'),\rho(0)\right]d\zeta dt).
        \end{align}
        This term also vanishes since the first term is even with even horizontal form degree so we have the pointwise trace of a commutator. (Note that this is a commutator of a smooth integral kernel with an endomorphisms so the pointwise trace makes sense.)
    \end{proof}

    The formula for the $t\to 0$ limit of the supertrace of the heat kernel then follows exactly as for the spin Dirac operator.

\begin{theorem}\label{nonisolatedspindiract0limit1}
    For an ice Clifford module $E$ on an ice manifold $(M,g)$ with $g$ product type near the boundary with Clifford connection that satisfies assumptions \ref{assumption1},\ref{assumption2},\ref{assumption3} and \ref{assumption4}, the $t\to 0$ limit of the supertrace of the heat kernel of $\slashed{\partial}^{2}_{E}$ is given by
    \begin{align}\label{nonisolatedt0str2}
        \lim_{t\to 0}\operatorname{str}(e^{-t\slashed{\partial}_{E}^{2}})=\int_{M}\mathcal{A}(M)\operatorname{ch}'(E)-\int_{Y}\mathcal{A}(Y)\operatorname{ch}'(E_{B})\tilde{\eta}(\slashed{\partial}_{E,\partial M/Y})
    \end{align}
    where $\tilde{\eta}(\slashed{\partial}_{E,\partial M/Y})$ is the normalised Bismut-Cheeger eta form for the family $\slashed{\partial}_{E,\partial M/Y}$ on $\partial M$.
\end{theorem}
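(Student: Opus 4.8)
The plan is to follow the proof of Theorem~\ref{nonisolatedspindiract0limit} essentially verbatim, replacing $\mathcal{S}$ by $E$ and carrying the twisting data along. By the pushforward formula \eqref{3.19}, $\operatorname{Str}(e^{-t\slashed{\partial}_E^2})$ is recovered from $\sigma_\star\big(H_t|_{\diag_{h}}\,\mu\,\sigma^\star(\tfrac{d\tau}{\tau})\big)$, so its $t\to0$ asymptotics are governed by the boundary hypersurfaces of $\diag_{h}$ lying over $t=0$: $\timeface^{h}$, $\frontface^{h}$ and $\backface^{h}$. Since the boundary family has trivial kernel, the heat kernel satisfies the index sets \eqref{indexsets} with $\mathcal{E}(\backface)=\varnothing$, so $\backface^{h}$ contributes nothing and Lemma~\ref{lemma 3.3} applies. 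First I would record that all potentially singular and logarithmic coefficients $a_i$ ($i<n$), $b_j$, $c_1$, $d_i$ of the expansion \eqref{3.25} vanish: this is the usual supertrace cancellation behind the local index theorem, and at $\frontface^{h}$ it is exactly the content of the Proposition preceding this theorem, where $\operatorname{str}$ of the leading rescaled coefficient $u_{-kn}$ vanishes (the $\alpha_i$ lack full Clifford degree) and only the $(k-1)$-higher coefficient $u_{-kn+k-1}$ survives, so the $t\to0$ limit equals the constant term $c_0$.

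Next I would compute the two surviving pieces of $c_0$. The $\timeface^{h}$ piece $\dashint_M u_{n,\timeface^{h}}\operatorname{dVol}$ is the standard interior heat coefficient: by Getzler rescaling at the interior diagonal together with the Lichnerowicz formula for the Clifford connection $\nabla^E$ (which contributes the twisting curvature $F^{E/S}$), the leading rescaled restriction of the supertrace is the local index density, so this contributes $\int_M\mathcal{A}(M)\operatorname{ch}'(E)$; the finite part is a convergent integral because the local index density extends smoothly to a form on all of $M$, consistent with the last clause of Lemma~\ref{lemma 3.3}. For the $\frontface^{h}$ piece, Lemma~\ref{lemma 3.3} gives
\begin{align}
    \frac{1}{k}\dashint_0^\infty\int_{\partial M} u_{k(b+1),\frontface^{h}}(\tilde T,y,z)\,\operatorname{dVol}_Y\operatorname{dVol}_{Z_y}\,\frac{d\tilde T}{\tilde T},
\end{align}
whose integrand is the restriction of $x^{kf+1}\operatorname{str}(H|_{\diag_{h}})$ to $\frontface_{h}$ computed in \eqref{leadingsupertracefftwist}. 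Substituting $t=\tilde T^2$ turns this into
\begin{align}
    -2\dashint_0^\infty\!\int_{\partial M}\!\left[a(2\pi i)^{-\lceil b/2\rceil}t^{-b/2}\mathcal{A}(tR_B)\operatorname{str}'_{E_B}\!\big(e^{-tK'_{B}}\big)\operatorname{str}_{E_Z}\!\left(\Big(\slashed{\partial}_{E,\partial M/Y,y}+\tfrac{c(T)}{4}\Big)e^{-t\mathbb{B}_{E_Z}^2}\right)\right]_b\!\frac{dt}{2t^{1/2}}.
\end{align}

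Then I would invoke the two scaling identities already used in the spin case: $[(2\pi i t)^{-b/2}\mathcal{A}(tR_B)\omega]_b=[\mathcal{A}(R_B/2\pi i)\,\delta_{2\pi i t}\omega]_b$ for $\omega$ a form on $Y$, and $\delta_{2\pi i t}$ applied to $\operatorname{str}'_{E_B}(e^{-tK'_{B}})\big(\slashed{\partial}_{E,\partial M/Y,y}+\tfrac{c(T)}{4}\big)e^{-t\mathbb{B}_{E_Z}^2}$ equals $\delta_{2\pi i}$ applied to the same expression in the rescaled Bismut superconnection $\mathbb{B}_{E_Z,t}$ with $\tfrac{c(T)}{4t}$. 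The $t$-dependent factor is then precisely the Bismut--Cheeger transgression integrand $\operatorname{Str}_{K(E_Z)}\!\big(\tfrac{\partial\mathbb{B}_{E_Z,t}}{\partial t}e^{-\mathbb{B}_{E_Z,t}^2}\big)$ wedged with the curvature polynomial that the usual identification turns into $\mathcal{A}(Y)\operatorname{ch}'(E_B)$; in particular the $t$-integral converges. Matching the normalisation constant $a$ and the powers of $2\pi i$ with those in $\tilde\eta$, the $\frontface^{h}$ contribution becomes $-\int_Y\mathcal{A}(Y)\operatorname{ch}'(E_B)\,\tilde\eta(\slashed{\partial}_{E,\partial M/Y})$, and summing with the interior term yields \eqref{nonisolatedt0str2}.

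All the genuinely new difficulties have already been handled in Lemma~\ref{diracsquarelift}, the subsequent heat-kernel expansion lemma, and the Proposition giving \eqref{leadingsupertracefftwist}: the extra $\rho(0)$-commutator contribution vanishes because it occurs under the pointwise trace of a commutator of a smooth family with an endomorphism (Assumptions~\ref{assumption2},~\ref{assumption3}), and Assumption~\ref{assumption4} identifies the superconnection-type operator appearing at $\frontface$ with the globally defined $\mathbb{B}_{E_Z}$ attached to $\nabla^{E_Z}$ on $E_Z$, while the splitting $\operatorname{str}_E=\operatorname{str}'_{E_B}\otimes\operatorname{str}_{E_Z}$ is forced by the structure of the Clifford-algebra supertrace and the parity of $\dim(Z/Y)$. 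The main remaining obstacle is thus purely organisational: carefully reconciling all normalisation constants with the definition of the normalised Bismut--Cheeger eta form in both the even and odd $\dim(Z/Y)$ cases, and confirming convergence of the finite-part integrals — for $\frontface^{h}$ this rests on the $O(t^{1/2})$ behaviour of the eta integrand as $t\to0$ (the rescaled Lichnerowicz cancellation) together with its exponential decay as $t\to\infty$, which holds because $\slashed{\partial}_{E,\partial M/Y,y}$ is invertible.
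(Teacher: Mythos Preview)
Your proposal is correct and follows precisely the paper's own approach: the paper simply states that the result ``follows exactly as for the spin Dirac operator,'' and you have spelled out in detail how the proof of Theorem~\ref{nonisolatedspindiract0limit} carries over, invoking the Proposition giving \eqref{leadingsupertracefftwist} for the $\frontface$ contribution and the standard twisted local index density for the $\timeface$ contribution. Your additional remarks on the vanishing of the $\rho(0)$-commutator and the role of Assumption~\ref{assumption4} accurately summarise how the preceding lemmas absorb the new complications.
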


Finally, we have our index theorem.

    \begin{theorem}\label{twistedtrangression}
        Let $(M,g)$ be incomplete cusp edge space $E$ be an ice Clifford module on M and $\nabla^{E}$ a Clifford connection which satisfies assumptions \ref{assumption2},\ref{assumption3} and \ref{assumption4}. If the Dirac operator $\slashed{\partial}_{E}$ has a boundary family with trivial kernel, then the index of the $\slashed{\partial}_{E}^{+}$ is given by
        \begin{align}
            \operatorname{ind}(\slashed{\partial}_{E}^{+})=\int_{M}\mathcal{A}(M)\operatorname{ch}'(E)-\int_{Y}\mathcal{A}(Y)\operatorname{ch}'(E_{B})\tilde{\eta}(\slashed{\partial}_{E,\partial M/Y})
        \end{align}
        where $\tilde{\eta}(\slashed{\partial}_{E_{Z}})$ is the normalised Bismut-Cheeger eta form for the family $\slashed{\partial}_{E_{Z}}$ on $\partial M$.
         \end{theorem}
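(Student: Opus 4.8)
The plan is to imitate the proof of Theorem~\ref{spindiracindex}, carrying the twisting bundle along. First I would apply Lemma~\ref{twistedreducetoproductlemma}: since $\slashed{\partial}_{E}$ has boundary family with trivial kernel it is Fredholm on $x^{k}H^{1}_{\ce}(M;E)\to L^{2}$, and there is a product-type metric $g_{0}$, a Clifford module $E_{0}\to(M,g_{0})$ with Clifford connection $\nabla^{E_{0}}$ satisfying Assumption~\ref{assumption1}, whose boundary Clifford module and boundary Dirac family (hence also the vertical family, the Clifford modules $E_{B},E_{Z}$ of Assumption~\ref{assumption4}, and the Bismut superconnection $\mathbb{B}_{E_{Z}}$) are identified with those of $\slashed{\partial}_{E}$, and with $\operatorname{ind}(\slashed{\partial}_{E}^{+})=\operatorname{ind}(\slashed{\partial}_{E_{0}}^{+})$. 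By the convention adopted after Lemma~\ref{twistedreducetoproductlemma}, $\nabla^{E_{0}}$ satisfies Assumptions~\ref{assumption2} and \ref{assumption3}, and Assumption~\ref{assumption4} is a statement about the boundary which carries over verbatim. Hence Theorem~\ref{nonisolatedspindiract0limit1} applies to $(M,g_{0},E_{0})$, so by the McKean–Singer formula
\begin{align*}
\operatorname{ind}(\slashed{\partial}_{E}^{+})=\operatorname{ind}(\slashed{\partial}_{E_{0}}^{+})=\lim_{t\to0}\operatorname{Str}(e^{-t\slashed{\partial}_{E_{0}}^{2}})=\int_{M}\mathcal{A}_{0}(M)\operatorname{ch}'_{0}(E_{0})-\int_{Y}\mathcal{A}(Y)\operatorname{ch}'(E_{B})\tilde{\eta}(\slashed{\partial}_{E,\partial M/Y}),
\end{align*}
where the subscript $0$ denotes characteristic forms built from $g_{0}$ and $\nabla^{E_{0}}$, and the boundary term is literally the one in the statement since all the boundary data coincide.

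It then remains to show the interior contribution is unaffected by the reduction, i.e. $\int_{M}\mathcal{A}_{0}(M)\operatorname{ch}'_{0}(E_{0})=\int_{M}\mathcal{A}(M)\operatorname{ch}'(E)$. For this I would run the transgression argument of the Lemma preceding Theorem~\ref{spindiracindex}, now for the product $\mathcal{A}(M)\operatorname{ch}'(E)$, using $M\times[0,1]$ with the metric $g_{0}+th$ ($h$ the $O(x^{k})$ error of $g$) together with the family $\nabla^{E,t}=\nabla^{E_{0}}+t\tilde{\Omega}$ from the construction of $E_{0}$, which interpolates between $\nabla^{E_{0}}$ at $t=0$ and $\nabla^{E}$ at $t=1$. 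Fibre integration over $[0,1]$ produces a transgression form $\mathcal{T}$ with $d\mathcal{T}=\mathcal{A}(M)\operatorname{ch}'(E)-\mathcal{A}_{0}(M)\operatorname{ch}'_{0}(E_{0})$, so by Stokes
\begin{align*}
\int_{M}\mathcal{A}(M)\operatorname{ch}'(E)-\int_{M}\mathcal{A}_{0}(M)\operatorname{ch}'_{0}(E_{0})=\int_{\partial M}\iota^{\star}\mathcal{T},
\end{align*}
and the claim is $\iota^{\star}\mathcal{T}=0$. Since $\mathcal{T}$ is assembled from the time-$t$ curvatures together with the interpolating one-form $\theta=(\nabla^{TM}-\nabla^{TM,0})\oplus\tilde{\Omega}$ wedged in the $[0,1]$ direction, it suffices that $\iota^{\star}\theta=0$. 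The Levi-Civita part is $O(x^{k-1})$ hence restricts to zero; for the bundle part, the components of $\tilde{\Omega}$ along $\partial M$ vanish at $x=0$ because they match those of $\tilde{\omega}=\nabla^{E}-\nabla^{E,0}$, which agree with $\tilde{\nabla}^{E}$ at the boundary, while the only surviving component is a multiple of $dx$ and $\iota^{\star}dx=0$ since $x\equiv0$ on $\partial M$. Thus $\iota^{\star}\theta=0$, so $\iota^{\star}\mathcal{T}=0$, the two interior integrals agree, and the stated formula follows.

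The main obstacle I expect is precisely this last step: one must be careful that, although $\tilde{\Omega}$ need not vanish identically at $\partial M$ (unlike the Levi-Civita deformation form it may carry an $O(1)$ term coming from $\nabla^{E}_{\partial_{x}}$), its pullback to the boundary nevertheless vanishes because every surviving term is proportional to $dx$. One should also confirm that $\nabla^{E,t}$ is a Clifford connection for every $t$, which holds since $\tilde{\Omega}$ is a section of $\operatorname{End}_{\ice\mathbb{C}l(M)}(E)\otimes T^{\star}M$, so that the Chern–Weil and transgression identities — all stated for Clifford connections — apply without change, and that $\operatorname{ch}'_{0}(E_{0})$ and $\operatorname{ch}'(E)$ are indeed the relative Chern characters of these connections as used in Lemma~\ref{diracsquarelift} and the supertrace computation that produced Theorem~\ref{nonisolatedspindiract0limit1}.
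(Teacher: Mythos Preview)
Your proof is correct and follows the same architecture as the paper: reduce via Lemma~\ref{twistedreducetoproductlemma} to the product-type situation, apply Theorem~\ref{nonisolatedspindiract0limit1} together with McKean--Singer, then transgress the interior integrand back. The difference is in this last step. The paper only transgresses the $\mathcal{A}$-form: using that $\operatorname{ch}'(E)$ is closed it writes $\int_M(\mathcal{A}_1-\mathcal{A}_0)\operatorname{ch}'(E)=\int_{\partial M}TP(\nabla,\nabla^{0})\operatorname{ch}'(E)$ and observes that $TP$ restricts to zero on $\partial M$. It does not explicitly address the passage from $\operatorname{ch}'_{0}(E_{0})$ (which is what Theorem~\ref{nonisolatedspindiract0limit1} actually yields for the product-type data) to $\operatorname{ch}'(E)$. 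Your simultaneous transgression of both factors along the family $(g_t,\nabla^{E,t})$ handles this in one stroke and is the cleaner argument. One minor point: your worry about an $O(1)$ contribution to $\tilde{\Omega}(\partial_x)$ is unnecessary, since in the paper's construction $E$ is trivialized over the collar by $\nabla^{E}$-parallel transport along $\partial_x$, so $\tilde{\omega}(\partial_x)\equiv 0$ and hence $\tilde{\Omega}$ (chosen to match the expansion of $\tilde{\omega}$) vanishes at $\partial M$ in every direction, not just the tangential ones.
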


         \begin{proof}
             By Lemma \ref{twistedreducetoproductlemma}, $\slashed{\partial}_{E}$ is a Fredholm map $x^{k}H^{1}_{\ce}(M;E)\to L^{2}(M,E)$ and there exists a Clifford module $E_{0}$ on $(M,g_{0})$ for some $g_{0}$ with Clifford connection $\nabla^{E_{0}}$ satisfying assumption 1 (and 2,3,4) with isomorphic restrictions to the boundary and identified boundary families such that the Dirac operator $\slashed{\partial}_{E_{0}}$ is Fredholm on the corresponding domain with equal index. So by the McKean Singer formula and the previous theorem
             \begin{align}
                 \operatorname{ind}(\slashed{\partial}_{E}^{+})=\operatorname{ind}(\slashed{\partial}_{E_{0}}^{+})=\int_{M}\mathcal{A}_{0}(M)\operatorname{ch}'(E)-\int_{Y}\mathcal{A}(Y)\operatorname{ch}'(E_{B})\tilde{\eta}(\slashed{\partial}_{E,\partial M/Y}).
     \end{align}
     Since $\operatorname{ch}'(E)$ is closed
     \begin{align}
        \begin{split}
         \int_{M}(\mathcal{A}_{1}(M)-\mathcal{A}_{0}(M))\operatorname{ch}'(E)&=\int_{M}dTP(\nabla,\nabla^{0})\operatorname{ch}'(E) \\
         &=\int_{M}d(TP(\nabla,\nabla^{0})\operatorname{ch}'(E)) \\
         &=\int_{\partial M}TP(\nabla,\nabla^{0})\operatorname{ch}'(E).
         \end{split}
     \end{align}
     But the transgression form vanishes at the boundary so we have proven our formula.
         \end{proof}
         For a twisting of the spin Dirac operator, as has been mentioned previously, we do not need to make the assumptions 2,3 and 4 and we get the formula
         \begin{align}
            \operatorname{ind}(\slashed{\partial}_{\mathcal{S}\otimes W}^{+})=\int_{M}\mathcal{A}(M)\operatorname{ch}(W)-\int_{Y}\mathcal{A}(Y)\tilde{\eta}(\slashed{\partial}_{W,\partial M/Y})
        \end{align}
        where $\slashed{\partial}_{\partial M/Y,W}$ is the induced family of the spin Dirac operator twisted by $W$. If $W|_{\partial M}$ happens to be of the form $\psi^{\star}W_{B}\otimes W_{Z}$ like we have assumed for our Clifford modules above then this is equal to
        \begin{align}
            \operatorname{ind}(\slashed{\partial}_{\mathcal{S}\otimes W}^{+})=\int_{M}\mathcal{A}(M)\operatorname{ch}(W)-\int_{Y}\mathcal{A}(Y)\operatorname{ch}(W_{B})\tilde{\eta}(\slashed{\partial}_{W_{Z},\partial M/Y})
        \end{align}
        but this assumption is not necessary in this case to get a boundary family globally defined on $Y$.

\section{Signature operator} 

Now we consider the signature operator on a Witt incomplete cusp edge space with $k\geq 3$. First we consider the contribution to the supertrace at $\frontface$. In \cite{ice} the heat kernel for the Hodge Laplacian was constructed and it was shown that the Hodge Laplacian essentially self-adjoint. Although the boundary family in this case has non-trivial kernel and the above calculations for the rescaled heat kernel at $\frontface$ still go through and we get the following.
\begin{proposition}\label{hodgederhamfrontfacesupertrace}
    On a Witt incomplete cusp edge space with product type metric near the boundary, the pointwise supertrace $x^{kf+1}\operatorname{str}(H|_{\operatorname{diag}_{H}})$ of the heat kernel of the signature operator is smooth up to $\frontface_{h}\subset \operatorname{diag}_{h}$ and its restriction to $\frontface_{h}$ is given by
     \begin{align}\label{leadingsupertracesignature}
        \begin{split}
        \operatorname{str}&(H)x^{kf+1}\operatorname{dVol}_{Y}\operatorname{dVol}_{Z} \\
            &=2k\tilde{T}\left[a(\pi i)^{-\lceil\frac{b}{2}\rceil}\tilde{T}^{-b}\mathcal{L}(\tilde{T}^{2}R_{B})\operatorname{str}_{\Lambda(\partial M/Y)}\left(\left(\slashed{\partial}_{\Lambda(\partial M/Y),y}+\frac{c(T)}{4}\right)e^{-\tilde{T}^{2}\mathbb{B}_{\Lambda(\partial M/Y)}^{2}}\right)\right]_{b}\operatorname{dVol}_{Z}
        \end{split}
    \end{align}
     where $a=\frac{1}{2},\frac{1}{2\sqrt{\pi}}$ is $\operatorname{dim}(Z/Y)$ is even/odd and $\operatorname{str}_{E_{Z}}$ denotes the pointwise supertrace for on $E_{Z}$ and $c(T)$ is the third term in the Bismut superconnection \ref{bismutsc} defined earlier (after \ref{beforect}).
\end{proposition}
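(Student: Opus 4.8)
The plan is to obtain Proposition~\ref{hodgederhamfrontfacesupertrace} by running the general Clifford-module argument (Lemma~\ref{diracsquarelift}, the subsequent lemma giving the rescaled heat kernel expansion at $\frontface$, and the proposition producing \eqref{leadingsupertracefftwist}) for the Clifford module $E={}\ice\Lambda T^{\star}M\otimes\mathbb{C}$ with the Clifford connection induced by the Levi--Civita connection, whose Dirac operator is $d+\delta$ and whose $\mathbb{Z}_{2}$-grading is the chirality grading defining the signature operator. As explained in the Example, this Clifford connection satisfies Assumption~\ref{assumption1}; one checks directly that it also satisfies Assumptions~\ref{assumption2}, \ref{assumption3} and~\ref{assumption4}, with $E_{B}=\Lambda T^{\star}Y\otimes\mathbb{C}$ and $E_{Z}={}\ice\Lambda(\partial M/Y)\otimes\mathbb{C}$ the vertical exterior bundle carrying the vertical Clifford connection induced by the submersion metric $g_{\partial M}$ (the relevant commutation properties of $\operatorname{cl}(\partial_{x})\rho$ with $\slashed{\partial}_{Z}^{2}$ are precisely the computation in the Example). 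The heat kernel for $\slashed{\partial}^{2}=\Delta$ on a Witt incomplete cusp edge space with $k\geq 3$ was constructed in~\cite{ice}, and its index sets at $\frontface$ and $\timeface$ are exactly those in \eqref{indexsets}, since the non-triviality of the boundary kernel only alters the behaviour at $\backface$. Hence Lemma~\ref{diracsquarelift} and the subsequent lemma apply verbatim, giving the rescaled heat kernel expansion at $\frontface$ with $\mathbb{B}_{E_{Z}}$ the Bismut superconnection $\mathbb{B}_{\Lambda(\partial M/Y)}$ of the vertical signature family.

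The one genuine difference from the general Clifford-module case is the passage from the coefficients $u_{-kn},u_{-kn+k-1}$ of the rescaled heat kernel to the pointwise supertrace, because for the signature operator the supertrace is taken with respect to the chirality grading rather than the splitting $E=E_{0}\oplus E_{1}$ used in Lemma~\ref{diracsquarelift}. I would redo the computation of $\operatorname{str}(\gamma)$ for a top-vertical-degree monomial $\gamma$ using the Berezin-integral identity appropriate to the chirality grading on $\Lambda\simeq\mathbb{C}l$: this contributes an extra power of $2$ relative to the $\hat{A}$-normalisation, whose combined effect is (i) to replace the normalisation constant $(2\pi i)^{-\lceil b/2\rceil}$ by $(\pi i)^{-\lceil b/2\rceil}$, and (ii) to fuse the factor $\mathcal{A}(\tilde{T}^{2}R_{B})\operatorname{str}'_{E_{B}}(\exp(-\tilde{T}^{2}K'_{B}))$ of \eqref{leadingsupertracefftwist} into the single $\hat{L}$-form $\mathcal{L}(\tilde{T}^{2}R_{B})$, since for $E_{B}=\Lambda T^{\star}Y$ the curvature $K'_{B}$ is the horizontal Riemann curvature and $2^{\lceil b/2\rceil}\hat{A}\cdot\operatorname{ch}'(\Lambda T^{\star}Y)=\hat{L}$ up to the $\pi i$ versus $2\pi i$ rescaling just mentioned. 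Exactly as before, the supertrace kills the $\alpha_{i}$ coefficients (no factor of $\operatorname{cl}(\partial_{x})$), the first non-vanishing contribution comes from $\beta_{b}$, and the $\rho(0)$-commutator term vanishes because its integrand is the pointwise trace of a commutator with even horizontal form degree. The factor $\slashed{\partial}_{E,\partial M/Y,y}+\tfrac{c(T)}{4}$, identified via \eqref{derivativebismut1} with $\partial\mathbb{B}_{t}/\partial t$, is here the vertical signature family $\slashed{\partial}_{\Lambda(\partial M/Y),y}$, yielding \eqref{leadingsupertracesignature}.

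The main obstacle is the careful bookkeeping of the chirality grading under the identifications used in the rescaling argument. One must verify that under the fibre-diagonal identification $\operatorname{HOM}(E)|_{\frontface}\simeq\pi^{\star}\mathbb{C}l(Y)\otimes\operatorname{HOM}_{\mathbb{C}l(Y)}({}\ice\Lambda(\partial M/Y))$ together with the boundary Clifford identification \eqref{boundaryclidentification}, the chirality operator of $(M,g)$ decomposes, up to a $\operatorname{cl}(\partial_{x})$-twist, as the product of the chirality operator on $\Lambda T^{\star}Y$ and that on the vertical exterior bundle, so that the relevant boundary supertrace is genuinely $\operatorname{str}_{\Lambda(\partial M/Y)}$ and the associated family is $\mathbb{B}_{\Lambda(\partial M/Y)}$; this compatibility is what makes the integrand in \eqref{leadingsupertracesignature} the Bismut--Cheeger eta-form integrand for the vertical signature operator. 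Once this is established, the rest is a routine transcription of the proof of the general Clifford-module proposition with the substitution $\mathcal{A}\operatorname{ch}'\rightsquigarrow\mathcal{L}$ and $2\pi i\rightsquigarrow\pi i$.
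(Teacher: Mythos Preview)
Your proposal is correct and follows essentially the same approach as the paper: verify that the exterior bundle with the Levi--Civita Clifford connection and chirality grading satisfies Assumptions~\ref{assumption1}--\ref{assumption4} with $E_{B}=\Lambda T^{\star}Y$ and $E_{Z}=\Lambda(\partial M/Y)$, apply the general Clifford-module machinery at $\frontface$, and then identify $\mathcal{A}(\tilde{T}^{2}R_{B})\operatorname{str}'_{\Lambda Y}(\exp(-\tilde{T}^{2}K'_{B}))$ with the $\hat{L}$-form via the explicit computation of the relative supertrace of $\exp(-\tilde{T}^{2}R'_{B})$ as a power of $2$ times $\det^{1/2}(\cosh(R_{B}/2))$. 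The paper also carries out exactly the chirality-grading bookkeeping you flag as the main obstacle, making the identification $\ice\Lambda^{+}M|_{\partial M}\simeq\Lambda\partial M$ explicit via the decomposition $\omega=\omega_{1}+dx\wedge\omega_{2}$.
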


\begin{proof}
    As we have seen in section 2, the Hodge-de Rham for a product type ice metric satisfies assumption 1 and
    \begin{align}
        \rho(0)=\frac{k}{x}\mathbf{N}+\frac{kf}{2x}
    \end{align} 
    where $\mathbf{N}$ is the vertical number operator so satisfies assumption 2 and 3. The grading for the signature operator is defined by the action of the chirality operator $\Gamma$ which is equal to $i^{k(n-k)}\star$ where $\star$ is the Hodge star operator, $k$ is the form degree and $n$ is the dimension and we write $\ice\Lambda^{\pm}M$ for the odd and even subbundles with respect to this grading.

    Now we work in a neighbourhood of the boundary with the fixed boundary defining function $x$. Given an even form $\omega\in {}\ice\Lambda M$ defining $\omega_{2}=\iota(\partial_{x})\omega$ and $\omega_{1}=\omega-dx\wedge\omega_{2}$ we can write $\omega=\omega_{1}+dx\wedge\omega_{2}$ where $\iota(\partial_{x})\omega_{1,2}=0$. Now given $\eta$ such that $\iota(\partial_{x})\eta=0$, there is a unique section of $\ice\Lambda^{+}M$ given by $\omega=\eta+\Gamma\eta$ such that $\omega_{1}=\eta$. Thus near the boundary, we can make an identification of $\ice\Lambda^{+}M$ with the subbundle of forms with $\iota(\partial_{x})\omega=0$, i.e. generated by $dy^{\alpha},x^{k}dz^{i}$ which we will denote $\ice\Lambda{\partial M}$. Now restricted to the boundary, we can identify $\ice\Lambda\partial M|_{\partial M}$ with $\Lambda\partial M\to\partial M$ via the map
    \begin{align}
        dy^{\alpha}\mapsto dy^{\alpha} \quad x^{k}dz^{i}\to dz^{i}
    \end{align}
    So altogether, we have an identification $\ice\Lambda^{+}M|_{\partial M}\simeq\Lambda\partial M$. Moreover with $\partial M$ given the metric $g_{\partial M}=g_{\partial M/Y}+\phi^{\star}g_{Y}$ so that we have the decomposition $\Lambda\partial M=\Lambda(\partial M/Y)\otimes\phi^{\star}\Lambda Y$ where $\Lambda(\partial M)$ is the exterior bundle on the vertical cotangent bundle, the above map defines an isometry of the vertical ice bundle with $\Lambda(\partial M/Y)$ and similarly for the horizontal bundles. Thus $\ice\Lambda M$ satisfies assumption 4 with $E_{B}=\Lambda Y$ and $E_{Z}=\Lambda(\partial M/Y)$ with the vertical family of connections given by $\boldsymbol(v)\nabla^{\partial M}\boldsymbol{v}$ where $\nabla^{\partial M}$ is the Levi-Civita connection with respect to $g_{\partial M}$.

    We can then repeat the calculation of the asymptotics of the supertrace and the same expression as in proposition \ref{leadingsupertraceff}. We recall, for the Levi-Civita connection, the twisting curvature contribution from $\nabla^{Y}$ is just right Clifford multiplication by Riemann curvature of the base
    \begin{align}
        K_{B}'=R_{B}'=\frac{1}{4}\sum_{\alpha\beta\gamma\delta}g_{Y}(R(U_{\alpha},U_{\beta})U_{\gamma},U_{\delta})\cl_{R}(U_{\gamma})\cl_{R}(U_{\delta}).
    \end{align}
    The relative supertrace $\operatorname{str}'_{\Lambda Y}$ is for the signature just the trace. If the base has even dimension then this becomes
    \begin{align}
        \operatorname{str}_{\Lambda Y}'(\exp(\tilde{T}^{2}R'_{B}))=2^{\frac{b}{2}}{\det}^{\frac{1}{2}}\left(\cosh\left(\frac{R_{B}}{2}\right)\right)
    \end{align}
    which gives us the desired expression in that case while if the dimension of the base is odd 
    \begin{align}
        \operatorname{str}_{\Lambda Y}'(\exp(\tilde{T}^{2}R'_{B}))=2^{\frac{b+1}{2}}{\det}^{\frac{1}{2}}\left(\cosh\left(\frac{R_{B}}{2}\right)\right).
    \end{align}
    \end{proof}

We now determine the contribution to the asymptotics of the supertrace at $\backface$. We first work in the non-isolated case to show the main ideas again without the extra complications of rescaling.

We recall some of the properties of the heat kernel constructed in \cite{ice}. At the back face, the heat kernel vanishes to infinite order away from the fibre harmonic forms and the model problem at $\backface$ restricted the fibre harmonic forms is
\begin{align}
    \mathcal{N}_{\backface}(\Pi_{\mathcal{H}}t(\partial_{t}+\Delta)\Pi_{\mathcal{H}})=T\left(\partial_{T}+
    \begin{bmatrix}
        P_{\alpha(\mathbf{N}),\beta(\mathbf{N}))}+\Delta_{\eta} & 0\\
        0 & P_{\alpha(\mathbf{N}),\gamma(\mathbf{N})}+\Delta_{\eta}
    \end{bmatrix}
    \right)
\end{align}
with respect to the decomposition $\ice\Lambda M=\ice\Lambda\partial M\oplus dx\wedge\ice\Lambda\partial M$ where
\begin{align}
    P_{A,B}=\partial_{s}^{2}-\frac{A}{s}\partial_{s}+\frac{B}{s^{2}}
\end{align}
with
\begin{align}
    \alpha(\mathbf{N})=-kf,\quad \beta(\mathbf{n})=k\mathbf{N}(1-k(f-\mathbf{N})),\quad \gamma(\mathbf{N})=k(f-\mathbf{N})(1-k\mathbf{N}).
\end{align}
For $\mathbf{N}\neq\frac{f}{2}$, the fundamental solution on $L^{2}(\mathbb{R}^{+};s^{A}ds)$ to this heat equation is given by
\begin{align}
    H_{A,B}(s\sigma)=(s\sigma)^{\frac{-(A-1)}{2}}\frac{1}{2t}e^{-\frac{(s^{2}+\sigma^{2})}{4t}}I_{\nu}(\frac{s\sigma}{2t}).
\end{align}
The index set of the heat kernel at the back face satisfies $\inf\mathcal{E}(\backface)=-1-b-kf$ and the normal operator is given by
\begin{align}
    \mathcal{N}_{\backface,-1-b-kf}=\kappa_{\backface,y}\colon=
    \begin{bmatrix}
        H_{\alpha,\beta}(s,1,T) & 0 \\
        0 & H_{\alpha,\gamma}(s,1,T)
    \end{bmatrix}
    (4\pi T)^{-\frac{b}{2}}e^{-\frac{\lvert\eta\rvert_{y^{2}}}{4T}}K_{\Pi_{\mathcal{H}}}(y,z,z')
\end{align}
where $K_{\Pi_{\mathcal{H}}}$ is the kernel of the projection onto the fibre harmonic forms (which in particular is smooth).

When the base is a point, the second factor is just $1$ and $\inf\mathcal{E}(\backface)=-1-kf$ so it is exactly the pointwise supertrace of the restriction of this term to the lifted diagonal which contributes to the $t\to 0$ limit of the supertrace. Thus, we need to consider the form of $\kappa_{\backface,y}$ with respect to the grading for the signature operator.

Suppose that $\omega$ is a section of $\ice\Lambda^{i}\partial M$ in the kernel of the boundary operator $\Delta_{\partial M}$. Since $\star\omega=\pm dx\wedge\star_{\partial M}\omega$ and $\Delta_{\partial M}$ commutes with $\star_{\partial M}$ we have that $\star\omega$ is also in the kernel of $\Delta_{\partial M}$ hence so is $\Gamma\omega$.

Now $\mathbf{N}(\Gamma\omega)=f-i$ and $\iota(\partial_{x})\Gamma\omega\neq 0$ so the action of $\kappa_{\backface,y}$ on $\Gamma\omega$ is given by $H_{\alpha,\gamma}$ where $\gamma(f-i)=ki(1-k(f-i))=\beta(i)$ In particular, the action of $\kappa_{\backface,y}$ is also diagonal with respect to the grading defined by $\Gamma$ and on any element of the kernel with $\mathbf{N}(\omega_{1})=i$, it acts as $H_{\alpha(i),\beta(i)}$.

So we see that $\kappa_{\backface,y}$ has the same action on $\ice\Lambda^{\pm}M$ under their identification with $\ice\Lambda\partial M$. In particular, we see that the pointwise supertrace of the endomorphism $\kappa_{\backface,y}|_{s=1,z=z'}$ on $\mathcal{H}$ vanishes. But the contribution to the constant term in the $t\to 0$ asymptotics of the supertrace is 
\begin{align}
    \dashint_{0}^{\infty}\int_{\partial M}\operatorname{str}_{\ice\Lambda M}(\kappa_{\backface,y}|_{s=1,z=z'})\operatorname{dVol}_{\partial M}\frac{dT}{T}.
\end{align}
So $\backface$ does not contribute to the $t\to 0$ limit of the supertrace and we get the following result.
\begin{proposition}
The limit $\lim_{t\to 0}\operatorname{str}(e^{-t\Delta})$ exists and is given by
    \begin{align}
        \lim_{t\to 0}\operatorname{str}(e^{-t\Delta})=\int_{M}\mathcal{L}(M)-\eta(\slashed{\partial}_{\Lambda,\partial M}).
    \end{align}
\end{proposition}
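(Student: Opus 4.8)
The statement to be proved asserts that on a Witt incomplete cusp edge space with $k\geq 3$, the $t\to 0$ limit of $\operatorname{str}(e^{-t\slashed{\partial}^{2}})$ for the signature operator exists and equals $\int_{M}\mathcal{L}(M)-\int_{Y}\mathcal{L}(Y)\tilde{\eta}(\slashed{\partial}_{\Lambda(\partial M/Y)})$. The plan is to apply the pushforward formula of Section \ref{pushforward} (Lemma \ref{lemma 3.3} together with its $\backface$-extension) to the heat kernel of the signature operator constructed in \cite{ice}. The supertrace is the pushforward of the pointwise supertrace of $H|_{\diag_{h}}$ to $[0,\infty)_{t}$, so the constant term in the $t\to 0$ expansion collects a contribution from each boundary hypersurface of $\diag_{h}$ lying over $t=0$: namely $\timeface^{d}$, $\frontface^{d}$ and $\backface^{d}$. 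The $\timeface^{d}$ contribution is the standard local index density, which for the signature operator gives $\int_{M}\mathcal{L}(M)$ by the usual Getzler rescaling argument at the interior diagonal (identical to the closed-manifold case). The $\frontface^{d}$ contribution is computed by Proposition \ref{hodgederhamfrontfacesupertrace}: feeding its formula \eqref{leadingsupertracesignature} into the coefficient $c_{0}$ of Lemma \ref{lemma 3.3}, the $\tilde{T}$-dependent part of the integrand is recognized (after the substitution $t=\tilde{T}^{2}$ and the rescaling identities used in the proof of Theorem \ref{nonisolatedspindiract0limit}) as exactly the Bismut--Cheeger eta form integrand for the vertical family $\slashed{\partial}_{\Lambda(\partial M/Y)}$, weighted by $\mathcal{L}(Y)$; this yields the $-\int_{Y}\mathcal{L}(Y)\tilde{\eta}(\slashed{\partial}_{\Lambda(\partial M/Y)})$ term.

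The new ingredient, and the main obstacle, is showing that the $\backface^{d}$ contribution vanishes. Unlike in the trivial-kernel case, the signature heat kernel has nontrivial polyhomogeneous asymptotics at $\backface$, supported on fibre harmonic forms, with leading normal operator $\kappa_{\backface,y}$ as recalled from \cite{ice}. By the $\backface$-extension of Lemma \ref{lemma 3.3}, this face contributes
\begin{align}
    \dashint_{0}^{\infty}\int_{\partial M}\operatorname{str}_{\ice\Lambda M}\!\left(u_{0,\backface^{h}}(T,y,z)|_{z=z'}\right)\operatorname{dVol}_{\partial M}\frac{dT}{T}
\end{align}
to the constant term. The plan is to prove that the pointwise supertrace of the leading $\backface$ coefficient vanishes identically. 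For the isolated case this is the argument sketched just before the final Proposition: one uses that on fibre harmonic forms the operator $\kappa_{\backface,y}$ is block-diagonal with blocks $H_{\alpha,\beta}$ and $H_{\alpha,\gamma}$ with respect to $\ice\Lambda\partial M\oplus dx\wedge\ice\Lambda\partial M$, and that the chirality operator $\Gamma$ intertwines these two blocks: if $\omega\in\ker\Delta_{\partial M}$ has vertical number $\mathbf{N}(\omega)=i$ then $\Gamma\omega$ also lies in $\ker\Delta_{\partial M}$, has $\iota(\partial_{x})\Gamma\omega\neq 0$, and one checks the index parameters match, $\gamma(f-i)=\beta(i)$ (using the Witt condition to exclude $\mathbf{N}=f/2$, which is where $k\geq 3$ and invertibility on the relevant weighted $L^{2}$ space enter). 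Hence $\kappa_{\backface,y}$ acts identically on $\ice\Lambda^{+}M$ and $\ice\Lambda^{-}M$ under their common identification with fibre harmonic forms, so its supertrace is zero.

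For the non-isolated case I would first carry out a Getzler rescaling argument at $\backface$ on the normal operator acting on fibre harmonic forms, analogous to Lemma \ref{diracsquarelift1}, to identify the rescaled leading coefficient (and the next nonvanishing term) at $\backface$ — a Mehler-type kernel in the horizontal variables tensored with the harmonic-form kernel $\kappa_{\backface,y}$. Then the same chirality-symmetry argument shows the pointwise supertrace of the relevant coefficient at $z=z'$ vanishes: the $\Gamma$-symmetry is a fibrewise statement and is unaffected by the horizontal Mehler factor, which carries even horizontal form degree. Care is needed to confirm that the only coefficient that can contribute to the $\tau^{0}$ term of the pushforward (given $\inf\mathcal{E}(\backface)=-1-b-kf$ and the density shift from the volume form) is one whose supertrace is killed by this symmetry; the potentially dangerous higher-order terms in the rescaling are either of too high order or again commutators/odd-degree, hence supertrace-free. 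Assembling the three contributions and invoking the McKean--Singer identity $\operatorname{sgn}_{L^{2}}(M,g)=\lim_{t\to0}\operatorname{str}(e^{-t\slashed{\partial}^{2}})$ (valid since $\slashed{\partial}$ is essentially self-adjoint and Fredholm on Witt spaces by \cite{ice}, with the signature of the Hodge cohomology computing the $L^{2}$-signature) gives the claimed formula. The deformation-to-product-type argument of Lemma \ref{deformtoproducttype}, together with the transgression argument showing $\int_{M}\mathcal{L}$ is unchanged under $O(x^{k})$ perturbations (as in the lemma preceding Theorem \ref{spindiracindex}, applied with $\mathcal{L}$ in place of $\hat{\mathcal{A}}$), removes the product-type hypothesis.
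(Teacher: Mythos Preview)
Your core argument for the $\backface$ vanishing is exactly the paper's: the chirality $\Gamma$ maps a fibre-harmonic form $\omega$ with $\mathbf{N}(\omega)=i$ to one with $\iota(\partial_x)\Gamma\omega\neq 0$ and the parameter identity $\gamma(f-i)=\beta(i)$ shows $\kappa_{\backface,y}$ acts identically on $\ice\Lambda^{\pm}M$, so the pointwise supertrace at $s=1$, $z=z'$ vanishes. Together with the standard $\timeface$ contribution and the $\frontface$ computation of Proposition \ref{hodgederhamfrontfacesupertrace}, this yields the formula.

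However, you have misread the scope. This Proposition is specifically the \emph{isolated} case (base $Y$ a point), as the surrounding text states: ``When the base is a point, the second factor is just $1$ and $\inf\mathcal{E}(\backface)=-1-kf$\ldots''. The right-hand side is $\int_M\mathcal{L}(M)-\eta(\slashed{\partial}_{\Lambda,\partial M})$, the ordinary eta invariant of the boundary operator, not $\int_Y\mathcal{L}(Y)\tilde{\eta}$. In the isolated case no horizontal Getzler rescaling at $\backface$ is needed, because the leading $\backface$ term already sits at the correct order $-1-kf$ to contribute directly to $c_0$. Your third and fourth paragraphs --- the $\backface$ rescaling for non-isolated singularities, McKean--Singer, and the deformation-to-product-type argument --- belong to the subsequent Lemma, Proposition \ref{hodgederhambackfacesupertrace}, Theorem \ref{nonisolatedspindiract0limit2}, and Theorem \ref{signature}, not to this Proposition. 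For the statement actually at hand, your second paragraph alone is the proof, and it matches the paper's.
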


Now we consider the case of a non-isolated singularity. Since the index set of the heat kernel satisfies $\inf\mathcal{E}(\backface)=-1-b-kf$ where for a finite contribution to the $t\to 0$ supertrace, we would need $\operatorname{str}(H|_{\diag_{H}})$ to satisfy $\inf\mathcal{E}(\backface_{h})=-1-kf$, this suggests we need to do a Getzler rescaling at $\backface$ in the base defined by $\delta_{(x'),\backface}\omega=(x')^{-m}\omega$ for $\omega$ a section of $\Lambda^{M}T^{H}_{p}Z$.

Since the heat kernel vanishes to infinite order away from the fibre harmonic forms, we must consider the rescaling of the operator $\Pi_{\mathcal{H}}t\Delta\Pi_{\mathcal{H}}$ acting on fibre harmonic forms.

We note that if $P$ is some differential operator which maps smooth section of the exterior bundle (with $\iota(\partial_{x})\omega=0$) with fibre degree $i$ to itself then $E_{P}:=\Pi_{\mathcal{H}}P\Pi_{\mathcal{H}}$ is an endomorphism of the kernel bundle which maps fibre harmonic forms with fibre degree $i$ to itself. Moreover if $P$ commutes with $\Gamma$ (equivalently $\star$) then $E(\omega\pm\Gamma\omega)=(E\omega)\pm\tau(E\omega)$ so it has the same action on the even and odd parts of the kernel bundle under the identifications of $\ice\Lambda^{\pm}M$ with $\ice\Lambda\partial M$ for each $i$.
\begin{lemma}
    The action of the lift of $\Pi_{\mathcal{H}}t\Delta\Pi_{\mathcal{H}}$ to near $\backface$ under the rescaling $\delta_{(x')\backface}$ is given by
    \begin{align}
         \delta_{(x')\backface}\Pi_{\mathcal{H}}t(\partial_{t}+\Delta\Pi_{\mathcal{H}} \delta_{(x')\backface}^{-1}=\frac{1}{2}T\partial_{T}+T^{2}(P_{\alpha(\mathbf{N}),\beta(\mathbf{N}))}+\mathcal{H}+R_{B}'+E_{\mathcal{H}}')+O(x')
    \end{align}
    where $P_{\alpha(\mathbf{N}),\beta(\mathbf{N}))}$ acts on $\ice\Lambda^{\pm}M$ under the identification with $\ice\Lambda\partial M$ and $E_{\mathcal{H}}'$ is an endomorphism of the kernel bundle of the form described above. The error is a differential operator which commutes with $x'$ and has coefficients which are $O(x')$.
\end{lemma}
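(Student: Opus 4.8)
The plan is to run the argument of Lemmas~\ref{diracsquarelift1} and \ref{diracsquarelift}, but in the projective coordinates \eqref{50} near $\backface$ and in the presence of the projection $\Pi_{\mathcal{H}}$ onto the vertical harmonic forms. First I would record the local form of the signature Laplacian $\Delta=(d+\delta)^{2}$ near $\partial M$ from the computation following \eqref{hodgederhamexplicit} and Lemma~\ref{squarespindirac}, using that for the Hodge--de Rham operator $\rho(0)=\frac{k}{x}\mathbf{N}+\frac{kf}{2x}$ (see \eqref{2.16}): $\Delta$ has a radial part of the shape $-\big(\partial_{x}+\tfrac{kf}{2x}+\tfrac{k}{x}\mathbf{N}\big)^{2}$, a vertical part $-x^{-2k}\Delta_{\partial M/Y}$ and the cross term $-kx^{-k-1}\cl(\partial_{x})\slashed{\partial}_{\Lambda(\partial M/Y)}$, a horizontal Laplacian, and lower-order endomorphisms built from $\mathcal{S}^{\phi}$ and $\mathcal{R}^{\phi}$. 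The essential structural point, already exploited in \cite{ice}, is that the Witt condition gives a uniform spectral gap for $\Delta_{\partial M/Y}$ on $(\ker)^{\perp}$, so that $\Pi_{\mathcal{H}}\slashed{\partial}_{\Lambda(\partial M/Y)}=\slashed{\partial}_{\Lambda(\partial M/Y)}\Pi_{\mathcal{H}}=0$, the commutators $[\Pi_{\mathcal{H}},\,\cdot\,]$ are smooth in the base with values in fibrewise smoothing operators, and every a priori singular term produced by the vertical factors $x^{-k}$ and $x^{-2k}$ is killed by $\Pi_{\mathcal{H}}$ up to errors that are $O(x')$ on the lifted space.

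Next I would lift $\tau^{2}(\partial_{t}+\Delta)$ to $M^{2}_{\operatorname{heat}}$ near $\backface$ in the coordinates $x',\,T=\tau/x',\,\eta=(y-y')/x',\,s=x/x',\,y',z,z'$, where $x'$ is the boundary defining function for $\backface$, and apply the base Getzler rescaling $\delta_{(x')\backface}$, under which $\cl(\tilde{U}_{\alpha})$ becomes $(x')^{-1}\epsilon(\tilde{U}_{\alpha})-(x')\iota(\tilde{U}_{\alpha})$. Exactly as in Lemma~\ref{6.2}, the horizontal contributions $\tau\nabla_{\tilde{U}_{\alpha}}$ rescale to $T\partial_{\eta^{\alpha}}$ plus the curvature term $-\tfrac18\sum g_{Y}(R_{Y}(\tilde{U}_{\alpha},\eta^{\gamma}\partial_{y^{\gamma}})\cdot,\cdot)\epsilon\epsilon$ plus $O(x')$, and the second-order horizontal part of $\tau^{2}\Delta$ assembles into $T^{2}\mathcal{H}$ together with the base twisting curvature $T^{2}R_{B}'$, which for the Levi-Civita connection under the identification $\ice\Lambda^{+}M|_{\partial M}\simeq\phi^{\star}\Lambda Y\otimes\Lambda(\partial M/Y)$ is right Clifford multiplication by $R_{Y}$; the $\epsilon\iota$ and $\iota\iota$ combinations rescale to $0$ or to $O(x')$ terms. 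The radial part $-\big(\partial_{x}+\tfrac{kf}{2x}+\tfrac{k}{x}\mathbf{N}\big)^{2}$ together with the $\slashed{\partial}_{\Lambda(\partial M/Y)}$ cross term, after $\Pi_{\mathcal{H}}$, reproduces the operator $P_{\alpha(\mathbf{N}),\beta(\mathbf{N})}$ of \cite{ice} acting on $\ice\Lambda^{\pm}M$ through the identification with $\ice\Lambda\partial M$; these involve only $x'$, $s$ and $\mathbf{N}$ and are untouched by the base rescaling. Following the convention of Lemma~\ref{normaloperator}, $t\partial_{t}$ lifts near $\backface$ to $\tfrac12 T\partial_{T}$, which is also unchanged by the rescaling, and this yields the stated leading operator $\tfrac12 T\partial_{T}+T^{2}\big(P_{\alpha(\mathbf{N}),\beta(\mathbf{N})}+\mathcal{H}+R_{B}'+E_{\mathcal{H}}'\big)$.

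The endomorphism $E_{\mathcal{H}}'$ collects the surviving zeroth-order pieces of the rescaled lift: the curvature terms of the vertical Bismut-type connection restricted to $\ker\Delta_{\partial M/Y}$ and the $\mathcal{S}^{\phi},\mathcal{R}^{\phi}$ endomorphisms that remain after $\Pi_{\mathcal{H}}$. To see it has the claimed form I would use that $d+\delta$, hence $\Delta$, commutes with the chirality operator $\Gamma=i^{\bullet}\star$: for any differential operator $P$ preserving $\{\iota(\partial_{x})\omega=0\}$ and commuting with $\Gamma$, the endomorphism $E:=\Pi_{\mathcal{H}}P\Pi_{\mathcal{H}}$ of the kernel bundle satisfies $E(\omega\pm\Gamma\omega)=(E\omega)\pm\Gamma(E\omega)$, so it is block-diagonal and has the same action on $\ice\Lambda^{\pm}M$ under the identifications with $\ice\Lambda\partial M$, precisely as described just before the statement. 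Finally I would track orders: each discarded term carries a positive integer power of $x'$ with coefficients that are smooth in $(x',T,\eta,s,y',z,z')$, so the remainder is a differential operator commuting with $x'$ whose coefficients are $O(x')$.

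The main obstacle I anticipate is twofold. First, making precise that $\Pi_{\mathcal{H}}$ may be commuted through the lifted, rescaled operator with error genuinely $O(x')$: this requires the uniform fibrewise spectral gap from the Witt condition so that $\Pi_{\mathcal{H}}$ and its base derivatives are fibrewise smoothing and so that the a priori singular vertical terms (carrying negative powers of $x'$ from $x^{-k}$ and from the $(x')^{-1}\epsilon(\tilde{U}_{\alpha})$ of the rescaling) are annihilated rather than merely reduced in order. Second, the correct bookkeeping of which pieces of the rescaled $\mathcal{R}^{\phi}$-terms enter the honest curvature correction inside $\mathcal{H}$ and $R_{B}'$ and which enter $E_{\mathcal{H}}'$ --- the same split that occurs in the proof of Lemma~\ref{diracsquarelift1} --- together with the verification that the residual endomorphism commutes with $\Gamma$, so that the block structure claimed for $E_{\mathcal{H}}'$ actually holds.
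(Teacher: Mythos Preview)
Your overall strategy matches the paper's: lift to the $\backface$ coordinates, apply the base Getzler rescaling, use $\Pi_{\mathcal{H}}\slashed{\partial}_{\Lambda(\partial M/Y)}=0$ to kill the singular vertical terms, and package the surviving base-curvature endomorphisms into $\mathcal{H}+R_{B}'+E_{\mathcal{H}}'$. The paper organises the computation slightly differently, writing $\Delta=(d+\delta)^{2}$ and grouping into horizontal--horizontal, vertical--vertical, horizontal--vertical and $\partial_{x}$ blocks, but this is a matter of taste.

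There is, however, a genuine computational gap in your radial analysis. You write the radial part as $-\bigl(\partial_{x}+\tfrac{kf}{2x}+\tfrac{k}{x}\mathbf{N}\bigr)^{2}$ and say that together with the cross term $-kx^{-k-1}\cl(\partial_{x})\slashed{\partial}_{\Lambda(\partial M/Y)}$ it reproduces $P_{\alpha(\mathbf{N}),\beta(\mathbf{N})}$. Both pieces of this are wrong. First, the cross term is annihilated by $\Pi_{\mathcal{H}}$, so it contributes nothing. Second, your scalar square gives first-order coefficient $-\tfrac{kf+2k\mathbf{N}}{x}\partial_{x}$, not the $-\tfrac{kf}{x}\partial_{x}$ required for $P_{\alpha,\beta}$. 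The point you are missing is that the ``$\mathbf{N}$'' term in $d+\delta$ comes through $\cl(\partial_{x})\rho(0)$, and $\cl(\partial_{x})$ \emph{exchanges} the summands $\ice\Lambda\partial M$ and $dx\wedge\ice\Lambda\partial M$; the endomorphism acts as $k\mathbf{N}$ on the first and as $k(f-\mathbf{N})$ on the second (see the discussion after \eqref{hodgederhamexplicit2}). What one must square is the Clifford-valued operator
\[
T\cl(\partial_{x})\partial_{s}+Ts^{-1}\cl(\partial_{x})\bigl(\tfrac{kf}{2}-\rho(0)\bigr),
\]
and because $\cl(\partial_{x})$ intertwines the two eigenvalues of $\rho(0)$, the square is block-diagonal with entries $P_{\alpha,\beta}$ and $P_{\alpha,\gamma}$ rather than a single conjugated scalar Laplacian. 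This is exactly how the paper obtains the radial contribution, and it is also why the same operator $P_{\alpha(\mathbf{N}),\beta(\mathbf{N})}$ appears on both $\ice\Lambda^{\pm}M$ under the identification with $\ice\Lambda\partial M$ (because $\gamma(f-\mathbf{N})=\beta(\mathbf{N})$). Once you fix this step the rest of your outline goes through as in the paper.
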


\begin{proof}
    Fix a point $y$ in the base, take and orthonormal frame $U_{\alpha}$ defined by radially parallel transporting an orthonormal coordinate vector field at $y$ in normal coordinates. Then $U_{\gamma}$ are equal to coordinate vector fields to second order at $y$ and $\nabla_{u_{\alpha}}U_{\beta}=0$ at $y$. Take the lift of these vector fields to horizontal vector fields $\tilde{U}_{\alpha}$ on $\partial M$ and extend to and local orthonormal frame with $\partial_{x}$ and a local orthonormal set of vertical vector fields $(x')^{-k}V_{i}$.

    We write the Laplacian as the square of the Dirac operator associated to the Levi-Civita connection.
    
    First we consider the terms contributed by 
    \begin{align}
        \left(\tau\sum_{\alpha}\cl(\tilde{U}_{\alpha})\nabla_{\tilde{U}_{\alpha}}\right)\left(\tau\sum_{\beta}\cl(\tilde{U}_{\beta})\nabla_{\tilde{U}_{\beta}}\right).
    \end{align}
    Using that $\nabla$ is a Clifford connection and commuting $\nabla$ with $\cl$, this reduces to
    \begin{align}
        \begin{split}
        (\tau&\nabla_{\tilde{U}_{\gamma}})^{2}+\frac{\tau^{2}}{2}\sum_{i\alpha}\cl(\tilde{U}_{\alpha})\cl(\tilde{U}_{\gamma})[\nabla_{\tilde{U}_{\gamma}},\nabla_{\tilde{U}_{\alpha}}] \\
        &-\frac{\tau^{2}}{2}\sum_{\alpha\gamma}\cl(\tilde{U}_{\gamma})\cl(\nabla_{\tilde{U}_{\gamma}}\tilde{U}_{\alpha})\nabla_{\tilde{U}_{\alpha}}+\cl(\tilde{U}_{\alpha})\cl(\nabla_{\tilde{U}_{\alpha}}\tilde{U}_{\gamma})\nabla_{\tilde{U}_{\gamma}}.
        \end{split}
    \end{align}
    For the first term we have, the action of the lift of $\tau\nabla_{\tilde{U}_{\gamma}}$ near 
$\backface$ is given by
     \begin{align}
        \begin{split}
        \tau\nabla_{\tilde{U}_{\gamma}}&=T\sigma(\tilde{U}_{\gamma})(x'\eta)+Tx'V'_{\gamma}+T(x')^{k}s^{k-1}B_{y} \\
        &+\frac{\tilde{T}}{4}x'\sum_{jl}\left[g_{\partial M/Y}([\tilde{U}_{\gamma},V_{j}],V_{l})-\phi_{Y}^{\star}g_{Y}(\mathcal{S}^{\phi}(V_{j},V_{l}),\tilde{U}_{\gamma})\right](x'\eta,z)\cl(x^{-k}V_{j}^{y})\cl(x^{-k}V_{l}^{y})  \\
        &-\frac{T}{8}(x')^{2}\sum_{\alpha\beta l}g_{Y}(R_{Y}(\tilde{U}_{\gamma},\eta^{l}\partial_{y^{l}}),\tilde{U}_{\alpha},\tilde{U}_{\beta})(x'\eta)\cl(\tilde{U}_{\alpha})\cl(\tilde{U}_{\beta})+O((x')^{k})
        \end{split}
    \end{align}
    for some endomorphism error term with at most Clifford degree 2. By the asymptotics of the connection, locally the difference $\nabla_{V'_{\gamma}}$ and $V_{\gamma}'$ is an endomorphism of the form $A+(x')^{k}B$ where $A$ has no horizontal Clifford factors. Thus the term $x'V'_{\gamma}$ and the term in the second line contribute terms which are $O((x')^{2})$ in the square of the above term after rescaling so we have
    \begin{align}
         \delta_{(x')\backface}(\tau\nabla_{\tilde{U}_{\gamma}})^{2}\delta_{(x')\backface}^{-1}=T^{2}\mathcal{H}+O((x')^{2}).
    \end{align}
    for the error term is a second order differential operator which commutes with $x'$ and has coefficients which are $O((x')^{2})$.

    Since the horizontal vectors are equal to a coordinate frame to second order at $y$ the commutator term is equal to the curvature with error $\nabla_{W}$ where $W$ is a smooth vector field whose horizontal part vanishes to second order at $y$. So by the asymptotics of the connection and the usual proof of the Lichnerowicz formula this becomes
    \begin{align}
        \begin{split}
        &\frac{\tau^{2}}{2}\sum_{i\alpha}\cl(\tilde{U}_{\alpha})\cl(\tilde{U}_{\gamma})[\nabla_{\tilde{U}_{\gamma}},\nabla_{\tilde{U}_{\alpha}}]=\frac{\tau^{2}}{4}S_{b,y}+\frac{1}{4}\sum_{\alpha\beta\gamma\delta}g_{Y}(R(U_{\alpha},U_{\beta})U_{\gamma},U_{\delta})\cl_{R}(U_{\gamma})\cl_{R}(U_{\delta}) \\
        &+\frac{\tau^{2}}{8}\sum_{ij\alpha\gamma}\cl(\tilde{U}_{\alpha})\cl(\tilde{U}_{\gamma})g(R(\tilde{U}_{\alpha},\tilde{U}_{\gamma})x^{-k}V_{i},x^{-k}V_{j})(\cl(x^{-k}V_{i})\cl(x^{-k}V_{j})+\cl_{R}(x^{-k}V_{i})\cl_{R}(x^{-k}V_{j})) \\
        &+\frac{\tau^{2}}{2}\sum_{\alpha\gamma}\cl(\tilde{U}_{\alpha})\cl(\tilde{U}_{\gamma})\nabla_{W_{\alpha\gamma}}
        \end{split}
    \end{align}
    where $S_{b,y}$ is the scalar curvature of the base. On rescaling, the scalar curvature term is $O((x')^{2})$ while for the other two terms the $\tau^{2}\cl(\tilde{U}_{\alpha})\cl(\tilde{U}_{\gamma})$ terms rescale to $T^{2}\epsilon(\tilde{U}_{\alpha})\epsilon(\tilde{U}_{\alpha\gamma})$ and the curvature term in the first line rescales to $R_{B}'$. For the curvature term in the second line, since the curvature coefficients are smooth and the term commutes with $\Gamma$ and maps sections of fixed fibre degree forms to itself so after acting on it by $\Pi_{\mathcal{H}}\cdot\Pi_{\mathcal{H}}$ this term contributes an endomorphism of form $E_{\mathcal{H}}$. Finally since the horizontal part $W_{\alpha\gamma}$ vanishes to second order in $y$, the rescaling of the lift of this term is given by
    \begin{align}
        \Pi_{\mathcal{H}}\sum_{i\alpha}\epsilon(\tilde{U}_{\alpha})\epsilon(\tilde{U}_{\gamma})\nabla_{\mathbf{v}W_{\alpha\gamma}}^{\partial M}\Pi_{\mathcal{H}}.
    \end{align}
    Since the $\mathbf{v}W_{\alpha\gamma}$ are vertical vector fields which are globally defined in the fibre above $y$, the differential operator $\nabla_{V}^{\partial M}$ is also globally defined on the fibre and maps sections of fixed fibre degree to itself and commutes with $\star$, this term is of the form $E_{\mathcal{H}}$.

    For the $\tau^{2}\cl(\tilde{U}_{\gamma})\cl(\nabla_{\tilde{U}_{\gamma}}\tilde{U}_{\alpha})\nabla_{\tilde{U}_{\alpha}}$ terms, by the asymptotics of the connection, the horizontal part of $\nabla_{\tilde{U}_{\gamma}}$ contributes a term which is $O((x')^{k})$ after rescaling which leaves the horizontal parts for which we have
    \begin{align}
        \frac{\tau^{2}}{2}\sum_{\alpha\gamma}\cl(\tilde{U}_{\gamma})\cl(\nabla_{\tilde{U}_{\gamma}}\tilde{U}_{\alpha})\nabla_{\tilde{U}_{\alpha}}=\frac{\tau^{2}}{2}\sum_{\alpha\gamma\delta}\cl(\tilde{U}_{\gamma})\cl(\tilde{U}_{\delta})g(\nabla_{\tilde{U}_{\gamma}}\tilde{U}_{\alpha},\tilde{U}_{\delta})\nabla_{\tilde{U}_{\alpha}}.
    \end{align}
    For $\gamma=\alpha$, these terms rescale to be $O(x')$ and for the cross terms we have
    \begin{align}
        \begin{split}
            \sum_{\alpha\gamma\delta}\cl(\tilde{U}_{\gamma})\cl(\tilde{U}_{\delta})g(\nabla_{\tilde{U}_{\gamma}}\tilde{U}_{\alpha},\tilde{U}_{\delta})\nabla_{\tilde{U}_{\alpha}}&=\sum_{\alpha(\gamma<\delta)}\cl(\tilde{U}_{\gamma})\cl(\tilde{U}_{\delta})(g(\nabla_{\tilde{U}_{\gamma}}\tilde{U}_{\alpha},\tilde{U}_{\delta})-g(\nabla_{\tilde{U}_{\delta}}\tilde{U}_{\alpha},\tilde{U}_{\gamma}))\nabla_{\tilde{U}_{\alpha}} \\
            &=-\sum_{\alpha(\gamma<\delta)}\cl(\tilde{U}_{\gamma})\cl(\tilde{U}_{\delta})g([\tilde{U}_{\gamma},\tilde{U}_{\delta}],\tilde{U}_{\alpha})\nabla_{\tilde{U}_{\alpha}}.
        \end{split}
    \end{align}
    Since $g([\tilde{U}_{\gamma},\tilde{U}_{\delta}],\tilde{U}_{\alpha})=g_{Y}([U_{\gamma},U_{\delta}],U_{\alpha})$ and the $\tilde{U}_{\gamma}$ are equal to a coordinate frame to second order at $y$, this vanishes to second order in $y$. Thus the lift of the above term multiplied by $\tau^{2}$ is $O(x')$ after rescaling.
    
    Next we consider the terms contributed by
    \begin{align}
        \left(\tau\sum_{i}\cl(x^{-k}V_{i})\nabla_{x^{-k}V_{i}}\right)^{2}.
    \end{align}
    Here we have
    \begin{align}\label{hodgederhamverticalpart}
        \begin{split}
        \sum_{i}\cl(x^{-k}V_{i})\nabla_{x^{-k}V_{i}}&=T(x')^{-k+1}s^{-k}\slashed{\partial}_{Z,y'}+Ts^{-1}\operatorname{cl}(\partial_{x})(\frac{kf}{2}-\rho(0)) \\
        &\quad-\frac{T}{2}x'\sum_{\alpha}k(\tilde{U}_{\alpha})((x')\eta)\operatorname{cl}(\tilde{U}_{\alpha})  \\
        &\quad-\frac{\tilde{T}}{8}(x')^{k+1}s^{k}\sum_{i\alpha\beta}\operatorname{cl}(V_{i})g_{\partial M/Y}(\mathcal{R}^{\phi}(\tilde{U},\tilde{U}_{3}),V)(y,z)\operatorname{cl}(\tilde{U}_{\alpha})\operatorname{cl}(\tilde{U}_{\beta}) \\
        &\quad+\sum_{i\beta\gamma}O((x')^{k+2})\cl(x^{-k}_{i})\cl(\tilde{U}_{\beta})\cl(\tilde{U}_{\gamma}). \\
        \end{split}
    \end{align}
    So in the square of this expression, the terms which contain $\slashed{\partial}_{Z,y'}$ are of the form
    \begin{align}
        T^{2}(x')^{-2k+2}s^{-k}\slashed{\partial}_{Z,y'}^{2}+x^{-k+1}\slashed{\partial}_{Z,y'}P+P'\slashed{\partial}_{Z,y'}.
    \end{align}
    Since on each fibre image of $\slashed{\partial}_{Z,y'}$ is the orthogonal complement to the kernel all these terms vanish inside $\Pi_{\mathcal{K}}$. The $\cl(\partial_{x})$ term anticommutes with every other term and preserves the fibre harmonic forms so this term contributes $-T^{2}s^{2}\frac{k^{2}f^{2}}{4}$. Finally all other terms rescale to terms which are $O((x')^{2})$.

    Next we have the mixed terms
    \begin{align}
        \left[\tau\sum_{i}\cl(\tilde{U}_{\alpha})\nabla_{\tilde{U}_{\alpha}},\tau\sum_{i}\cl(x^{-k}V_{i})\nabla_{x^{-k}V_{i}}\right].
    \end{align}
    We write the first term as $\tau D$. As in the previous case the terms of the form $\tau D\slashed{\partial}_{Z,y}$ and $\slashed{\partial}_{Z,y}\tau D$ vanish inside $\Pi_{\mathcal{H}}$ so we only need to consider the other terms in \eqref{hodgederhamverticalpart}. Since $\cl(\partial_{x})$ anticommutes with $\tau D$ all terms containing it cancel. Next, for the $k(\tilde{U}_{\alpha})\cl(\tilde{U}_{\alpha})$ term we have
    \begin{align}
        \begin{split}
        Dk(\tilde{U}_{\alpha})\cl(\tilde{U}_{\alpha})+k(\tilde{U}_{\alpha})\cl(\tilde{U}_{\alpha})D&=\sum_{\alpha}(\cl(\tilde{U}_{\gamma})\cl(\tilde{U}_{\alpha})+\cl(\tilde{U}_{\alpha})\cl(\tilde{U}_{\gamma}))k(\tilde{U}_{\alpha})\nabla_{\tilde{U}_{\gamma}} \\
        &+\sum_{i}\cl(\tilde{U}_{\gamma})\nabla_{\tilde{U}_{\gamma}}(k(\tilde{U}_{\alpha})\cl(\tilde{U}_{\alpha})).
        \end{split}
    \end{align}
    If $\gamma\neq\alpha$, first summand vanishes otherwise we are left with $-\nabla_{\tilde{U}_{\alpha}}$ which after multiplying by $\tau x'=T(x')^{2}$ is $O(x')$ after rescaling. For the last term, after multiplying by $\tau x'$, the terms of the form $\cl(U)\cl(V)$ are $O(x')$ while those of the form $\cl(U_{\gamma})\cl(U_{\beta})$ rescale to act by $\epsilon(\omega)\otimes_{\operatorname{Id}_{\mathcal{H}}}$ for some horizontal 2-form $\omega$. It is clear that the remaining terms in \eqref{hodgederhamverticalpart} contribute terms which are at least $O(x')$ after rescaling. Note that $\sum_{\alpha}k(\tilde{U}_{\alpha})cl(\tilde{U}_{\alpha})=\cl(k^{\musSharp})$ where $k$ is the mean curvature one-form, so this endomorphism is globally defined on the fibre.

    Finally we come to the terms contributed by $\tau\cl(\partial_{x})\nabla_{\partial_{x}}$. In the local frame, lift is given by $T\cl(\partial_{x})\partial_{x}$ whose square is $-T^{2}\partial_{s}^{2}$. Since $T\cl(\partial_{x})\partial_{x}$ anticommutes with all the terms in $\cl(\tilde{U}_{\gamma})\tau\nabla_{\tilde{U}_{\gamma}}$ up to the terms which are $O((x')^{k})$, summing over these terms and rescaling leaves something which is at least $O(x')$.

    For the mixed terms with $\tau\sum_{i}\cl(x^{-k}V_{i})\nabla_{x^{-k}V_{i}}$, the terms containing $\slashed{\partial}_{Z,y}$ vanish on the fibre harmonic forms while $T\cl(\partial_{x})\partial_{s}$ anticommutes with every term up to those which are $O((x')^{k})$ except for $Ts^{-1}\operatorname{cl}(\partial_{x})(\frac{kf}{2}-\rho(0))$ so in all the $\tau\cl(\partial_{x})\nabla_{\partial_{x}}$ together with the only non-vanishing term from the square of the vertical terms contributes
    \begin{align}
        \left(T\cl(\partial_{x})\partial_{s}+Ts^{-1}\cl(\partial_{x})\left(\frac{kf}{2}-\rho(0)\right)\right)^{2}.
    \end{align}
    But this is equal to
    \begin{align}
        \begin{bmatrix}
        P_{\alpha(\mathbf{N}),\beta(\mathbf{N}))}+\Delta_{\eta} & 0\\
        0 & P_{\alpha(\mathbf{N}),\gamma(\mathbf{N})}+\Delta_{\eta}
    \end{bmatrix}
    \end{align}
    with respect to he decomoposition $\ice\Lambda M=\ice\Lambda\partial M\oplus dx\wedge\ice\Lambda\partial M$. As for the case of an isolated singularity, this is then equal to  $P_{\alpha(\mathbf{N}),\beta(\mathbf{N}))}$ acting on $\ice\Lambda^{\pm}M$ under their identification with $\ice\Lambda\partial M$.
\end{proof}

Now we can show that the contribution to the $t\to 0$ limit of the supertrace of the heat kernel from $\backface$ vanishes.

\begin{proposition}\label{hodgederhambackfacesupertrace}
    At $\backface$ we have $\operatorname{str}_{\ice\Lambda M}(H|_{\diag^{d}})|_{\backface^{d}}x^{1+kf}\operatorname{dVol}_{Y}\operatorname{dVol}_{Z}=0$.
\end{proposition}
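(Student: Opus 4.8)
The strategy is the non-isolated analogue of the argument already given for the isolated case: show that the leading coefficient of the heat kernel at $\backface$, restricted to the lifted diagonal, is an endomorphism of the fibre harmonic bundle whose pointwise supertrace with respect to the signature grading vanishes identically, so that the contribution of $\backface$ to the constant term in the small-time supertrace expansion (the term $\dashint_0^\infty\int_{\partial M}u_{0,\backface^h}\operatorname{dVol}_Y\operatorname{dVol}_{Z_y}\frac{dT}{T}$ from Lemma \ref{lemma 3.3} and the $\backface$-extension discussed after it) is zero.

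\textbf{Step 1: Identify the relevant coefficient.} Using the index set $\inf\mathcal{E}(\backface)=-1-b-kf$ for the heat kernel of the signature operator from \cite{ice}, the coefficient contributing to the constant term after multiplication by the volume b-density $x^{kf+1}\frac{dx\operatorname{dVol}_Y\operatorname{dVol}_{Z_y}dT}{xT}$ is the leading coefficient of $\operatorname{str}_{\ice\Lambda M}(H|_{\diag^d})$ at $\backface^d$, which is the restriction to $s=1$, $z=z'$, $\eta=0$ of $\operatorname{str}_{\ice\Lambda M}$ applied to the rescaled normal operator at $\backface$. Since the heat kernel vanishes to infinite order away from the fibre harmonic forms, the normal operator is the heat kernel of the rescaled operator $\Pi_{\mathcal{H}}t(\partial_t+\Delta)\Pi_{\mathcal{H}}$ computed in the preceding Lemma, namely $\tfrac12 T\partial_T+T^2(P_{\alpha(\mathbf{N}),\beta(\mathbf{N})}+\mathcal{H}+R_B'+E_{\mathcal{H}}')$.

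\textbf{Step 2: Analyse the grading action.} The key observation, exactly as in the isolated case, is that every operator appearing in the rescaled normal operator preserves the decomposition of the fibre harmonic bundle by fibre degree and has the \emph{same} action on $\ice\Lambda^+M$ and $\ice\Lambda^-M$ under their common identification with $\ice\Lambda\partial M$. For $P_{\alpha(\mathbf{N}),\beta(\mathbf{N})}$ this is the content of the computation at the end of the preceding Lemma (and the identity $\gamma(f-i)=\beta(i)$ used in the isolated case). For $R_B'$, which after rescaling is right Clifford multiplication by the curvature of the base, and for $\mathcal{H}$, which involves only the base variables and horizontal exterior multiplication, the action is tensorial in the base factor and acts as the identity on the $\mathcal{H}$-factor, hence commutes with $\star$ on the link; and $E_{\mathcal{H}}'$ was shown in the preceding Lemma to be an endomorphism of the kernel bundle of the type described there, i.e. mapping fixed fibre degree to itself and commuting with $\star$ (equivalently $\Gamma$). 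Since the chirality operator intertwines $\ice\Lambda^+M$ and $\ice\Lambda^-M$, an operator commuting with $\Gamma$ that preserves fibre degree has equal diagonal blocks under the identification, so its heat kernel does too, and therefore its $\ice\Lambda M$-supertrace restricted to the diagonal vanishes.

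\textbf{Step 3: Conclude.} Combining Steps 1 and 2, $\operatorname{str}_{\ice\Lambda M}(H|_{\diag^d})|_{\backface^d}=0$ as an endomorphism-valued function, hence the displayed b-density vanishes. The main obstacle is Step 2: one must be careful that the rescaling at $\backface$ does not introduce any term with nontrivial Clifford degree in the vertical directions that would break the evenness of the action under $\Gamma$ — in particular that the terms involving $\slashed{\partial}_{Z,y}$ genuinely drop out on fibre harmonic forms (they map to the orthogonal complement of the kernel) and that the curvature term $\tfrac{\tau^2}{8}\sum\cl(\tilde U_\alpha)\cl(\tilde U_\gamma)g(R(\tilde U_\alpha,\tilde U_\gamma)x^{-k}V_i,x^{-k}V_j)(\cl(x^{-k}V_i)\cl(x^{-k}V_j)+\cl_R(x^{-k}V_i)\cl_R(x^{-k}V_j))$, although it carries two vertical Clifford factors, still commutes with $\star$ and preserves fibre degree after sandwiching by $\Pi_{\mathcal{H}}$, so it falls into the class $E_{\mathcal{H}}'$. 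Once that bookkeeping is in place the vanishing of the supertrace is immediate, precisely as in the isolated-singularity computation carried out just above Proposition \ref{hodgederhambackfacesupertrace}.
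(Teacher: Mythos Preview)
Your proposal is correct and follows essentially the same approach as the paper: use the preceding lemma to identify the rescaled normal operator at $\backface$ as $\tfrac12 T\partial_T+T^2(P_{\alpha(\mathbf N),\beta(\mathbf N)}+\mathcal{H}+R_B'+E_{\mathcal{H}}')$, observe that each piece acts identically on $\ice\Lambda^{\pm}M$ under their common identification with $\ice\Lambda\partial M$, and conclude that the pointwise supertrace of the resulting heat kernel restricted to the diagonal vanishes. The paper is slightly more explicit in writing the solution as the product $H_{\alpha,\beta}(s,1,T^2)K_{\mathcal{H},R_B'}(T^2,\eta,0)K_{E_{\mathcal{H}}'}(z,z',T^2)$ before drawing the same conclusion, but the argument is the same.
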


\begin{proof}
    By the previous lemma the restriction of $u=(x')^{1+b+kf}\delta_{(x')\backface}H|_{\backface}$ exists and is equal at $y$ to the solution to the equation
    \begin{align}
        \left(\frac{1}{2}T\partial_{T}+T^{2}(P_{\alpha(\mathbf{N}),\beta(\mathbf{N}))}+\mathcal{H}+R_{B}'+E_{\mathcal{H}}')\right)u=0
    \end{align}
    with 
    \begin{align}
             \int_{\beta^{-1}(y)} ud\eta=\delta_{s=1}\delta_{z=z'}\otimes\operatorname{Id}_{\mathcal{H}}.
    \end{align}
    The solution acting on $\ice\Lambda\partial M\simeq{}\ice\Lambda^{\pm}$ is given by
    \begin{align}
        u=H_{\alpha,\beta}(s,1,T^{2})K_{\mathcal{H},R'_{B}}(T^{2},\eta,0)K_{E'_{\mathcal{H}}}(z,z',T^{2})
    \end{align}
    where $K_{E'_{\mathcal{H}}}$ is the kernel of $e^{-tE'_{\mathcal{H}}}$. Since $E'_{\mathcal{H}}$ has the same action on both $\ice\Lambda^{\pm}$ under their identification with $\ice\Lambda\partial M$ so does $K_{E'_{\mathcal{H}}}$ and thus so does $u$. Hence the pointwise supertrace of the restriction of $u$ to $s=1,z=z',\eta=0$ vanishes.

    In particular, this implies that $x^{1+kf}\operatorname{str}_{\ice\Lambda M}(H|_{\diag^{d}})|_{\backface^{d}}=0$ so the contribution from $\backface$ vanishes.
\end{proof}

\begin{theorem}\label{nonisolatedspindiract0limit2}
    Let $(M,g)$ be a Witt incomplete cusp edge space with $g$ product type near the boundary. The $t\to 0$ limit of the supertrace of the heat kernel of the Hodge Laplacian $\Delta$ is given by
    \begin{align}
        \lim_{t\to 0}\operatorname{str}(e^{-t\Delta})=\int_{M}\mathcal{L}(M)-\int_{Y}\mathcal{L}(Y)\tilde{\eta}(\slashed{\partial}_{\Lambda(\partial M/Y)})
    \end{align}
    where $\tilde{\eta}(\slashed{\partial}_{\Lambda(\partial M/Y)})$ is the Bismut-Cheeger eta form for the induced family of operators $\slashed{\partial}_{\Lambda(\partial M/Y)}$ on $\partial M$.
\end{theorem}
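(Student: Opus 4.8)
\textbf{Proof proposal for Theorem \ref{nonisolatedspindiract0limit2}.}

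The plan is to apply the McKean--Singer / pushforward machinery of Section \ref{pushforward} to the heat kernel of the Hodge Laplacian $\Delta$ on a Witt incomplete cusp edge space, constructed in \cite{ice}, and to show that the only surviving contributions to $\lim_{t\to 0}\operatorname{Str}(e^{-t\Delta})$ come from $\timeface$ and $\frontface$. By the McKean--Singer argument (which applies since $\Delta$ is essentially self-adjoint by \cite{ice} and $e^{-t\Delta}$ is trace class), $\operatorname{Str}(e^{-t\Delta})$ is the pushforward to $[0,\infty)_{t}$ of the pointwise supertrace of the restriction of the heat kernel to $\diag_{h}\subset M^{2}_{\operatorname{heat}}$, and its constant term as $t\to 0$ is given by the extended version of Lemma \ref{lemma 3.3} with $\mathcal{E}(\backface^{d})=\mathcal{E}(\frontface^{d})=\mathbb{N}$. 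Thus the constant term $c_{0}$ decomposes into a $\timeface$-contribution, a $\frontface$-contribution, and a $\backface$-contribution, and I must identify the first two and kill the third.

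First I would record that the $\timeface$ contribution is the standard local index density: away from the boundary the heat kernel has the usual Minakshisundaram--Pleijel expansion, and the Getzler rescaling argument for the signature operator (identical to the closed-manifold case since $\timeface$ lives over the interior diagonal) gives $\int_{M}\mathcal{L}(M)$, with the integral convergent by the bound in Lemma \ref{lemma 3.3} for $0\le i\le f$ together with the fact that $\mathcal{L}(M)$ is a polyhomogeneous form whose leading boundary behaviour is controlled; this is exactly the ``interior contribution is the same as in the isolated case'' step. Next, the $\frontface$ contribution: by Proposition \ref{hodgederhamfrontfacesupertrace}, which verifies that the Hodge--de Rham operator satisfies assumptions \ref{assumption1}, \ref{assumption2}, \ref{assumption3}, \ref{assumption4} with $E_{B}=\Lambda Y$, $E_{Z}=\Lambda(\partial M/Y)$, the rescaled heat kernel at $\frontface$ has the explicit leading supertrace \eqref{leadingsupertracesignature}. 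Feeding this into the $b_{j}$/$c_{0}$ formulas of Lemma \ref{lemma 3.3} (the $k(b+1)$-coefficient at $\frontface^{h}$) and substituting $t=\tilde T^{2}$, exactly as in the proof of Theorem \ref{nonisolatedspindiract0limit}, the $t$-dependent part of the integrand is recognised as the Bismut--Cheeger eta-form integrand for the vertical family $\slashed{\partial}_{\Lambda(\partial M/Y)}$, twisted by $\mathcal{L}(R_{B})=\operatorname{str}'_{\Lambda Y}(\exp(-R'_{B}))$ normalised appropriately; after the rescaling identities $[(2\pi i t)^{-b/2}\mathcal{L}(tR_{B})\omega]_{b}=[\mathcal{L}(R_{B}/2\pi i)\delta_{2\pi i t}\omega]_{b}$ and $\delta_{2\pi it}((\slashed{\partial}+c(T)/4)e^{-t\mathbb{B}^{2}})=\delta_{2\pi i}((\slashed{\partial}+c(T)/(4t))e^{-\mathbb{B}_{t}^{2}})$ this integrates to $-\int_{Y}\mathcal{L}(Y)\tilde\eta(\slashed{\partial}_{\Lambda(\partial M/Y)})$, the $\mathcal{L}(Y)$ arising from $\operatorname{str}'_{\Lambda Y}(\exp(-R'_{B}))=2^{\lceil b/2\rceil}\det^{1/2}\cosh(R_{B}/2)$ combined with the $\mathcal{L}(\tilde T^{2}R_{B})$ prefactor.

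The main obstacle, and the step requiring genuine work, is showing the $\backface$ contribution vanishes; this is Proposition \ref{hodgederhambackfacesupertrace}. In the Witt case the heat kernel of $\Delta$ has non-trivial asymptotics at $\backface$ (unlike for the invertible-boundary-family Dirac operators), with index set $\inf\mathcal{E}(\backface)=-1-b-kf$, concentrated on fibre harmonic forms via $\Pi_{\mathcal{H}}$. I would run a Getzler rescaling $\delta_{(x'),\backface}$ in the base directions at $\backface$ on the operator $\Pi_{\mathcal{H}}t(\partial_{t}+\Delta)\Pi_{\mathcal{H}}$; the key lemma (proved in the excerpt) is that the rescaled limit is $\tfrac12 T\partial_{T}+T^{2}(P_{\alpha(\mathbf N),\beta(\mathbf N)}+\mathcal{H}+R'_{B}+E'_{\mathcal{H}})$, where $P_{\alpha(\mathbf N),\beta(\mathbf N)}$ acts identically on $\ice\Lambda^{\pm}M$ under their common identification with $\ice\Lambda\partial M$, $R'_{B}$ acts only on the base exterior factor, and $E'_{\mathcal{H}}$ is an endomorphism of the kernel bundle of ``diagonal'' type, i.e. preserving fibre degree and commuting with $\Gamma$, hence also acting identically on $\ice\Lambda^{\pm}M$. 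The solution of the rescaled model heat equation then factors as $H_{\alpha,\beta}(s,1,T^{2})\,K_{\mathcal{H},R'_{B}}(T^{2},\eta,0)\,K_{E'_{\mathcal{H}}}(z,z',T^{2})$, all three factors acting identically on $\ice\Lambda^{+}M$ and $\ice\Lambda^{-}M$, so the pointwise supertrace of its restriction to $s=1$, $z=z'$, $\eta=0$ is zero. The delicate points inside this argument are: (i) verifying that every term dropped in the rescaling is genuinely $O(x')$ (handled by the orthonormal-frame-from-normal-coordinates choice making structure coefficients vanish to appropriate order at $y$, plus the $x^{-k+1}\slashed{\partial}_{Z,y}P$-type terms dying on $\Pi_{\mathcal H}$); and (ii) ensuring $E'_{\mathcal H}$ really is globally defined on the fibre and $\Gamma$-commuting, so the $\pm$-symmetry persists. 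Granting the rescaled-operator lemma from the excerpt, the vanishing of the $\backface$ contribution in the $c_{0}$ formula (the $\dashint_{0}^{\infty}\int_{\partial M}u_{0,\backface^{h}}\operatorname{dVol}_{\partial M}\tfrac{dT}{T}$ term) follows, and assembling the three pieces gives the stated formula. Finally, to remove the product-type hypothesis on $g$ away from the boundary one would invoke the transgression argument of the $\mathcal{L}$-genus analogous to the $\hat{\mathcal A}$ Lemma preceding Theorem \ref{spindiracindex} — but the theorem as stated already assumes $g$ is product type near the boundary, so this is only needed if one wants the global statement.
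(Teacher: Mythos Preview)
Your proposal is correct and follows essentially the same approach as the paper: decompose the $t\to 0$ constant term via the pushforward Lemma \ref{lemma 3.3} into $\timeface$, $\frontface$, and $\backface$ contributions, identify the first as $\int_{M}\mathcal{L}(M)$ by the standard local argument, obtain $-\int_{Y}\mathcal{L}(Y)\tilde\eta(\slashed{\partial}_{\Lambda(\partial M/Y)})$ at $\frontface$ via Proposition \ref{hodgederhamfrontfacesupertrace} and the same rescaling identities used for Theorem \ref{nonisolatedspindiract0limit}, and kill the $\backface$ term by Proposition \ref{hodgederhambackfacesupertrace}. The paper's own proof is a one-line appeal to exactly these ingredients, so your write-up is in fact more explicit than theirs.
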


\begin{proof}
    The contribution from the interior is standard while the contribution from the boundary follows from Lemmas \ref{hodgederhamfrontfacesupertrace} and \ref{hodgederhambackfacesupertrace}.
\end{proof}

Since the heat kernel $e^{-t\Delta}$ is a self-adjoint compact operator on $L^{2}(M,g)$, we have that $\ker(\Delta))$ is finite dimensional so the proof of the McKean-Singer formula also goes through in this case to give us
\begin{align}
    \lim_{t\to 0}\operatorname{str}(e^{-t\Delta})=\operatorname{dim}(\ker(\Delta^{+}))-\operatorname{dim}(\ker(\Delta^{-})).
\end{align}
Now we assume that $n=\dim(M)$ is divisible by $4$ and can use the standard argument to show that this only has non-zero contribution in middle degree equal to the signature of the intersection form on $\ker(\Delta_{\frac{n}{2}})$. If $\omega_{i}$ is a basis of the kernel of $\Delta$ in degree $j<\frac{n}{2}$ then $\omega_{j}\pm\Gamma\omega_{j}$ is a basis for $\dim(\ker(\Delta^{\pm}))$ away from middle degree so the dimensions cancel leaving $\operatorname{dim}(\ker(\Delta^{+}_{\frac{n}{2}}))-\operatorname{dim}(\ker(\Delta^{-}_{\frac{n}{2}}))$. If $\omega^{\pm}_{i}\in\operatorname{dim}(\ker(\Delta^{\pm}_{\frac{n}{2}}))$ are an orthonormal basis then
\begin{align}
    \int_{M}\omega^{\pm}\wedge\omega^{\pm}=\pm\int_{M}\omega^{\pm}\wedge\star\omega^{\pm}=\pm 1.
\end{align}
The space of $L^{2}$ harmonic forms, also known as the Hodge cohomology, is defined as
\begin{align}
    \mathcal{H}^{\star}_{L^{2}}(M,g)=\{\alpha\in L^{2}\Omega^{\star}(M,g)|d\omega=\delta\omega=0\}.
\end{align}
We can consider the signature form on the middle degree $L^{2}$ harmonic forms and we get the following result.

\begin{proposition}
    Let $(M,g)$ be a Witt incomplete cusp edge space with product type metric. Then the signature on the middle degree $L^{2}$ harmonic forms is 
    \begin{align}\label{harmonicsignatureformula}
        \operatorname{sgn}_{\mathcal{H}_{L^{2}}}(M,g)=\int_{M}\mathcal{L}(M)-\int_{Y}\mathcal{L}(Y)\tilde{\eta}(\slashed{\partial}_{Z}).
    \end{align}
\end{proposition}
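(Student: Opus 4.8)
The plan is to assemble the proposition from the Hodge-theoretic identification of $\operatorname{Str}(e^{-t\Delta})$ with a middle-degree signature — already sketched in the paragraphs preceding the statement — together with the heat-kernel computation of Theorem~\ref{nonisolatedspindiract0limit2}. The starting point is that, since $\Delta$ is essentially self-adjoint on the Witt incomplete cusp edge space $(M,g)$ with $k\geq 3$ (as established in \cite{ice}) and its eigenforms vanish to infinite order at $\partial M$, the identity $\langle\Delta\alpha,\alpha\rangle=\lVert d\alpha\rVert_{L^2}^{2}+\lVert\delta\alpha\rVert_{L^2}^{2}$ holds for every $\alpha$ in the unique self-adjoint domain with no boundary contribution; hence $\ker\Delta=\ker(d+\delta)$ on that domain, and this kernel is exactly the space $\mathcal{H}^{\star}_{L^2}(M,g)$ of $L^2$ harmonic forms, graded by form degree since $\Delta$ commutes with the degree operator. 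Because $e^{-t\Delta}$ is a compact self-adjoint (indeed trace class) operator, $\ker\Delta$ is finite dimensional, so the McKean--Singer argument applies and yields $\lim_{t\to 0}\operatorname{Str}(e^{-t\Delta})=\dim(\ker\Delta^{+})-\dim(\ker\Delta^{-})$, where $\pm$ refers to the $\pm1$-eigenspaces of the chirality operator $\Gamma$.

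Next I would carry out the standard reduction to middle degree. The chirality operator $\Gamma$ (equal to $i^{j(n-j)}\star$ on $j$-forms) satisfies $\Gamma^{2}=\operatorname{Id}$, commutes with $\Delta$, and — since $\star$ is an $L^2$-isometry — maps $\mathcal{H}^{j}_{L^2}$ isometrically onto $\mathcal{H}^{n-j}_{L^2}$; for $j\neq n/2$ it therefore interchanges the $+$ and $-$ eigenspaces on $\mathcal{H}^{j}_{L^2}\oplus\mathcal{H}^{n-j}_{L^2}$, so these degrees contribute equally to $\dim\ker\Delta^{+}$ and $\dim\ker\Delta^{-}$ and cancel, leaving $\dim(\ker\Delta^{+})-\dim(\ker\Delta^{-})=\dim(\ker\Delta^{+}_{n/2})-\dim(\ker\Delta^{-}_{n/2})$. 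When $n=\dim M$ is divisible by $4$, $\Gamma$ restricts to $\star$ on middle-degree forms, the cup-product pairing $(\alpha,\beta)\mapsto\int_{M}\alpha\wedge\beta=\langle\alpha,\star\beta\rangle_{L^2}$ on $\mathcal{H}^{n/2}_{L^2}$ is symmetric and converges by Cauchy--Schwarz (as $\lVert\star\beta\rVert_{L^2}=\lVert\beta\rVert_{L^2}<\infty$), and $\ker\Delta^{\pm}_{n/2}$ are its $\pm1$-eigenspaces; hence the signature of this pairing is exactly $\dim(\ker\Delta^{+}_{n/2})-\dim(\ker\Delta^{-}_{n/2})$, and $\operatorname{sgn}_{\mathcal{H}_{L^2}}(M,g)=\lim_{t\to 0}\operatorname{Str}(e^{-t\Delta})$.

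Finally I would invoke Theorem~\ref{nonisolatedspindiract0limit2}, which computes this limit for a product-type metric as $\int_{M}\mathcal{L}(M)-\int_{Y}\mathcal{L}(Y)\tilde{\eta}(\slashed{\partial}_{\Lambda(\partial M/Y)})$; writing $\slashed{\partial}_{Z}$ for the induced family $\slashed{\partial}_{\Lambda(\partial M/Y)}$ on $\partial M$ then gives \eqref{harmonicsignatureformula}. The only real subtlety, and the step I would treat most carefully, is the first one: checking that on the self-adjoint domain the Green's-type identity has no boundary term and that $\mathcal{H}^{\star}_{L^2}(M,g)$ — defined distributionally by $d\alpha=\delta\alpha=0$ — coincides with $\ker\Delta$ on that domain. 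This rests on the regularity of harmonic forms at $\partial M$ coming from the heat-kernel construction of \cite{ice}, namely that they lie in the domain of $d+\delta$ and decay fast enough at the singular stratum to kill the boundary term in Stokes' theorem; once this is in hand the remainder is the classical Hodge--signature argument.
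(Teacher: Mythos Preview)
Your proposal is correct and follows essentially the same route as the paper: McKean--Singer for the essentially self-adjoint Hodge Laplacian, the standard cancellation of contributions away from middle degree via $\omega\mapsto\omega\pm\Gamma\omega$, identification of $\dim\ker\Delta^{+}_{n/2}-\dim\ker\Delta^{-}_{n/2}$ with the signature of the intersection pairing on middle-degree $L^{2}$ harmonic forms, and then the invocation of Theorem~\ref{nonisolatedspindiract0limit2}. Your treatment of the identification $\ker\Delta=\mathcal{H}^{\star}_{L^{2}}(M,g)$ and the absence of a boundary term in the Green's identity is in fact more explicit than the paper's, which essentially takes this for granted from the heat-kernel construction and regularity results of \cite{ice}.
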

We note that, as shown in \cite{ice}, the Hodge cohomology is isomorphic to the $L^{2}$ cohomology
which is defined to be the cohomology of the chain complex
\begin{align}
    \ldots\to L^{2}_{d}\Omega^{p-1}\to L^{2}_{d}\Omega^{p}\to L^{2}_{d}\Omega^{p+1}\to\ldots
\end{align}
where
\begin{align}
    L^{2}_{d}\Omega^{p}=\mathcal{D}_{\max}(d_{p})=\{\omega\in L^{2}\Omega^{p}|d\omega\in L^{2}_{d}\Omega^{p+1}\}.
\end{align}
The spaces $L^{2}\Omega^{\star}(M,g)$ is an invariant of the quasi-isometry class of a metric $g$, so in particular the $L^{2}$-cohomology of $(M,g)$ and $(M,g_{0})$ are the same for any product type metric $g_{0}$ associated to $g$.

From \cite{grieserlesch}[Proposition 2.3.2], since the Hodge Laplacian on an incomplete cusp edge is self-adjoint we have that $\mathcal{D}_{\max}(d)=\mathcal{D}_{\min}(d)$ and $\mathcal{D}_{\max}(\delta)=\mathcal{D}_{\min}(\delta)$. In particular, for all $\omega\in  L^{2}_{d}\Omega^{\frac{n}{2}-1}$ and $\eta\in L^{2}_{\frac{n}{2}}\Omega^{p}$ we have
\begin{align}
    \int_{M}d\omega\wedge\eta=\pm\int_{M}\omega\wedge d\eta.
\end{align}
Thus we see that the intersection form is well defined on the middle degree $L^{2}$ cohomology and the $L^{2}$-signature is the same for quasi-isometric metrics.

From \cite{ice}, the $L^{2}$ cohomology satisfies the Hodge-Kodaira decomposition so on a space with product type metric, the $L^{2}$-signature is equal to the signature on the Hodge cohomology. By the same calculation as in Theorem \ref{twistedtrangression} the integral \eqref{harmonicsignatureformula} is equal for an incomplete cusp edge metric and an associated product type metric so we arrive at the following $L^{2}$-signature formula.
\begin{theorem}\label{signature}
    Let $(M,g)$ be a Witt incomplete cusp edge space with $k\geq 3$. Then $L^{2}$-signature is given by
    \begin{align}
        \operatorname{sgn}_{L^{2}}(M,g)=\int_{M}\mathcal{L}(M)-\int_{Y}\mathcal{L}(Y)\tilde{\eta}(\slashed{\partial}_{\Lambda(\partial M/Y)}).
    \end{align}
\end{theorem}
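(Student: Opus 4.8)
The plan is to assemble the $L^2$-signature formula from the heat-kernel machinery already in place, essentially as a corollary of the preceding results rather than a fresh computation. First I would invoke the construction of the heat kernel for the Hodge Laplacian on Witt incomplete cusp edge spaces from \cite{ice}, together with the fact, also from \cite{ice}, that this Laplacian is essentially self-adjoint with discrete spectrum and that its heat operator $e^{-t\Delta}$ is trace class. This lets me run the McKean--Singer argument for the signature operator: for $n=\dim M$ divisible by $4$ and the chirality grading $\Gamma = i^{k(n-k)}\star$, one has $\operatorname{sgn}_{\mathcal{H}_{L^2}}(M,g) = \lim_{t\to 0}\operatorname{Str}(e^{-t\Delta})$, using the standard pairing-off of $L^2$ harmonic forms $\omega_j \pm \Gamma\omega_j$ below middle degree so only the middle-degree contribution survives.

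Next I would compute the short-time limit of the supertrace using the pushforward formula of Section~\ref{pushforward}, in the extended form accounting for nontrivial asymptotics at $\backface^d$. There are three potential contributions: the interior $\timeface$ term, which gives the standard $\int_M \mathcal{L}(M)$ as in the closed case; the $\frontface$ term; and the $\backface$ term. For $\frontface$ I would cite Proposition~\ref{hodgederhamfrontfacesupertrace}, which identifies the restriction of the rescaled pointwise supertrace to $\frontface_h$ as the eta-form integrand for the family $\slashed{\partial}_{\Lambda(\partial M/Y)}$ (here the exterior bundle satisfies Assumptions~\ref{assumption1}--\ref{assumption3} with $\rho(0) = \frac{k}{x}\mathbf{N} + \frac{kf}{2x}$, and Assumption~\ref{assumption4} with $E_B = \Lambda Y$, $E_Z = \Lambda(\partial M/Y)$); integrating against $\frac{d\tilde T}{\tilde T}$ via Lemma~\ref{lemma 3.3} and substituting $t=\tilde T^2$ produces exactly $-\int_Y \mathcal{L}(Y)\tilde\eta(\slashed{\partial}_{\Lambda(\partial M/Y)})$, just as in Theorem~\ref{nonisolatedspindiract0limit2}. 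For $\backface$ I would cite Proposition~\ref{hodgederhambackfacesupertrace}: the Getzler rescaling $\delta_{(x'),\backface}$ at $\backface$ shows the rescaled normal operator acting on fibre-harmonic forms factors as $H_{\alpha,\beta}(s,1,T^2)\,K_{\mathcal{H},R'_B}(T^2,\eta,0)\,K_{E'_{\mathcal{H}}}(z,z',T^2)$, and since $E'_{\mathcal{H}}$ (and hence $K_{E'_{\mathcal{H}}}$) has the same action on $\ice\Lambda^+ M$ and $\ice\Lambda^- M$ under the identification with $\ice\Lambda\partial M$, the pointwise supertrace vanishes; this kills the $\backface$ contribution entirely. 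Combining the three gives $\lim_{t\to 0}\operatorname{Str}(e^{-t\Delta}) = \int_M \mathcal{L}(M) - \int_Y \mathcal{L}(Y)\tilde\eta(\slashed{\partial}_{\Lambda(\partial M/Y)})$.

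Then I would upgrade from the signature on middle-degree $L^2$ harmonic forms to the full $L^2$-signature. Here I would use, from \cite{ice}, that the Hodge cohomology is isomorphic to the $L^2$-cohomology, that the latter is a quasi-isometry invariant (so $(M,g)$ and the associated product-type metric $g_0$ have the same $L^2$-cohomology), and the Hodge--Kodaira decomposition for $L^2$-cohomology, which identifies the intersection form on middle-degree $L^2$-cohomology with that on middle-degree Hodge cohomology. This reduces $\operatorname{sgn}_{L^2}(M,g)$ to $\operatorname{sgn}_{\mathcal{H}_{L^2}}(M,g_0)$. Finally, to pass from $g_0$ back to $g$ on the right-hand side, I would repeat the transgression argument of Theorem~\ref{twistedtrangression}: the difference of $\mathcal{L}$-forms for $g$ and $g_0$ is $dTP(\nabla,\nabla^0)$, and since the difference tensor $\theta = \nabla - \nabla^0$ is $O(x^{k-1})$ and thus restricts to zero on $\partial M$, the transgression form pulls back to zero on $\partial M$, so $\int_M \mathcal{L}_1(M) = \int_M \mathcal{L}_0(M)$; the eta-form term is unaffected since the boundary data agree. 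This yields the stated formula.

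The main obstacle I anticipate is not any single hard estimate—those are all packaged in the cited propositions—but rather carefully verifying that the exterior bundle genuinely satisfies all four assumptions with the identifications claimed, and that the $\backface$ rescaling argument is legitimate despite the boundary family having nontrivial kernel (the Hodge cohomology of $Z$). The delicate point is that the nontriviality of the kernel means the heat kernel has genuine asymptotic content at $\backface$, so the vanishing of its supertrace contribution is a real computation relying on the symmetry between $\ice\Lambda^+$ and $\ice\Lambda^-$; I would want to make sure the Mehler-type factor $H_{\alpha,\beta}$ together with the endomorphism part $K_{E'_{\mathcal{H}}}$ really do act identically on the two chirality summands after the identification with $\ice\Lambda\partial M$, including checking that $P_{\alpha(\mathbf N),\beta(\mathbf N)}$ matches $P_{\alpha(\mathbf N),\gamma(\mathbf N)}$ under $\mathbf N \mapsto f-\mathbf N$ via the relation $\gamma(f-i)=\beta(i)$ noted in the non-isolated discussion. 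Everything else is bookkeeping.
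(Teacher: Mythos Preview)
Your proposal is correct and follows essentially the same argument as the paper: assemble the McKean--Singer supertrace from the $\timeface$, $\frontface$, and $\backface$ contributions via Propositions~\ref{hodgederhamfrontfacesupertrace} and~\ref{hodgederhambackfacesupertrace} and Theorem~\ref{nonisolatedspindiract0limit2}, then pass to the $L^{2}$-signature using the Hodge--Kodaira decomposition and quasi-isometry invariance from \cite{ice}, and finally use the transgression argument to replace $g_{0}$ by $g$. The delicate points you flag (verification of the assumptions for the exterior bundle, and the $\gamma(f-i)=\beta(i)$ symmetry ensuring the $\backface$ supertrace vanishes) are exactly the ones the paper checks.
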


\section*{Appendix}

\renewcommand{\thesubsection}{\Alph{subsection}}

\subsection{Clifford modules, spin representations and the supertrace}

We use the following convention for the Clifford algebra
\begin{align}
    \operatorname{cl}(V_{i})\operatorname{cl}(V_{j})+\operatorname{cl}(V_{j})\operatorname{cl}(V_{i})=-2g(V_{i},V_{j}).
\end{align}

We define the volume element by
\begin{align}
        \gamma_{n}&=e_{1}\ldots e_{n}   
\end{align}
For $n=2k$, it satisfies $(i^{k}\Gamma)^{2}=1$. For each $k$, the isomorphism $\mathbb{C}l(2k)\simeq \operatorname{End}(\mathbb{C}^{2^{k}})$ gives an irreducible Clifford module $\mathcal{S}_{2k}$ with $\operatorname{dim}(\mathcal{S}_{2k})=2^{k}$ whose even and odd subspaces $\mathcal{S}^{\pm}$ with $\operatorname{dim}(\mathcal{S})=2^{k-1}$ are the $\pm 1$ eigenspaces of $\Gamma_{2k}$. This is by definition the \textbf{spinor module} and is the unique irreducible Clifford module up to isomorphism. Every Clifford module $E$ is isomorphic to $\mathcal{S}\otimes W$ for some $\mathbb{Z}_{2}$-graded vector space $W$ where the Clifford action on $W$ is trivial.

For any $n$ we have an isomorphism $\mathbb{C}l(n-1)\simeq \mathbb{C}l(n)^{+}$ which, taking $\mathbb{R}^{n}=\mathbb{R}\oplus\mathbb{R}^{n-1}$ where the first summand is spanned by $e_{0}$ and the second by the oriented basis $e_{1},\ldots,e_{n-1}$ is given by
\begin{align}
    e_{i}\mapsto -e_{0}e_{i}.
\end{align}
Since $\mathbb{C}l(2k)^{+}$ can be identified with the even endomorphisms of $\mathcal{S}_{2k}$ which are $\operatorname{End}(\mathcal{S}^{+}_{2k})\oplus\operatorname{End}(\mathcal{S}^{-}_{2k})$, the restrictions of $\mathbb{C}l(2k-1)$ to $\mathcal{S}^{+}_{2k}$ and $\mathcal{S}^{-}_{2k}$ are isomorphic to the two irreducible modules of $\mathbb{C}l(2k-1)\simeq \operatorname{End}(\mathbb{C}^{2^{k}})\oplus \operatorname{End}(\mathbb{C}^{2^{k}})$. Note that under this isomorphism, the volume element $\gamma_{2k-1}$ is mapped to $-e_{0}e_{1}\ldots e_{2k-1}e_{2k}$ which is minus the volume element for $\mathbb{R}\oplus\mathbb{R}^{n-1}$ with the standard orientation. For $n=2k-1$, we define the \textbf{spinor module} to be the restriction of $\mathbb{C}l(2k-1)$ to $\mathcal{S}^{+}_{2k}$. Note that for odd dimensions, our choice of representation to define as the spinor module may differ from other sources which use the other representation and leads to a difference in sign in the trace below.

 We define the \textbf{spinor representation} to be the restriction of $\mathcal{S}$ to $\operatorname{Spin}(n)$. Thus for $n=2k$, this is the sum of two irreducible representations $\mathcal{S}^{+}\oplus\mathcal{S}^{-}$ and for $n=2k-1$, this itself is irreducible.

For a $\mathbb{Z}_{2}$-graded vector space $E$ with involution $R$ given by $R|_{E^{\pm}}=\pm 1$ the \textbf{supertrace} of $A\in \operatorname{End}(E)$ is defined $\operatorname{str}_{E}(A)=\operatorname{tr}(RA)$. We will use the notation $\operatorname{str}$, without subscript, to denote the supertrace on the spinor representations $\mathcal{S}$ which satisfies
\begin{align}
        \operatorname{str}(\gamma_{2k})&=(-2i)^{k}.
\end{align}
Every element not proportional to $\gamma$ can be written as a supercommutator so the supertrace vanishes on them. 

The symbol map $\sigma\colon \mathbb{C}l(n)\to\Lambda\mathbb{R}^{n}\otimes\mathbb{C}$ is defined to be the isomorphism of $\mathbb{Z}_{2}$-graded vector spaces given by $\sigma(e_{I})=e_{i_{1}}\wedge\ldots\wedge e_{i_{j}}$.
 We define the Berezin integral $T\colon\Lambda \mathbb{R}^{n}\to\mathbb{R}$ to be the linear map such that $T(e_{1}\wedge\ldots\wedge e_{n})=1$ and $0$ on any element not of degree $n$. Thus in terms of the symbol map, for $n=2k$ we have
\begin{align}\label{supertracespinor}
    \operatorname{str}(A)=(-2i)^{k}T\circ\sigma(A).
\end{align}
For $n=2k-1$ and the spinor representation we have
\begin{align}\label{tensorproductaction}
    \operatorname{tr}(A)=-\frac{1}{2}(-2i)^{k}T\circ\sigma(A)
\end{align}
with the opposite sign for the other representation. 

We have $\mathbb{C}l(n+m)\simeq\mathbb{C}l(n)\hat{\otimes} \mathbb{C}l(m)$ where $\hat{\otimes}$ denotes the $\mathbb{Z}_{2}$-graded tensor product. For $n+m=2k-1$ with $n$ odd, the tensor product $\mathcal{S}_{n}\otimes \mathcal{S}_{m}$ with the action of   $\mathbb{C}l(n)\hat{\otimes} \mathbb{C}l(m)$  given on homogeneous elements $t\in \mathcal{S}_{m}$ by
\begin{align}
    (\alpha\hat{\otimes}\beta)(s\otimes t)=(\alpha s)\otimes (-1)^{|t||\alpha|}(\beta t)
\end{align}
defines a $\mathbb{C}l(n)\hat{\otimes} \mathbb{C}l(m)$ module structure such that the trace of $\gamma_{n}\hat{\otimes}\gamma_{m}$ is
\begin{align}
        \operatorname{tr}(\gamma_{n}\hat{\otimes}\gamma_{m})=-\frac{1}{2}(-2i)^{k}
\end{align}
Under the isomorphism $\mathbb{C}l(n+m)\simeq\mathbb{C}l(n)\hat{\otimes} \mathbb{C}l(m)$ which identifies $\gamma_{n+m}$ with $\gamma_{n}\otimes\gamma_{m}$, it follows that $\mathcal{S}_{n}\otimes\mathcal{S}_{m}\simeq \mathcal{S}_{n+m}$ as Clifford modules.

We define partial symbol maps $\sigma_{n,1}\colon \mathbb{C}l(n+m)\to\Lambda\mathbb{C}^{n}\hat{\otimes}\mathbb{C}l(m)$ and $\sigma_{m,2}\colon\mathbb{C}l(n+m)\to\mathbb{C}l(n)\hat{\otimes}\Lambda\mathbb{C}^{m}$ given by the composition of $\mathbb{C}l(n+m)\simeq\mathbb{C}l(n)\hat{\otimes} \mathbb{C}l(m)$ with the symbol map in the first factor and second factor. These maps commute and compose to give the total symbol map $\sigma$.  We then have for $A\in\mathbb{C}l(n+m)$ and $n$ odd
\begin{align}\label{supertraceseplitevenfibre}
        \operatorname{tr}(A)=-\frac{1}{2}(-2i)^{\frac{n+1}{2}}T(\operatorname{str}_{m}(\sigma_{n,1}(A))).
\end{align}
Similarly, for $n$ even we have
\begin{align}\label{supertraceseplitoddfibre}
        \operatorname{tr}(A)=(-2i)^{\frac{n}{2}}T(\operatorname{tr}_{m}(\sigma_{n,1}(A))) 
\end{align}
where, we use $\operatorname{tr}_{n},\operatorname{str}_{n}$ to denote the supertrace of elements of $\mathbb{C}l(m)$ on $\mathcal{S}$ extended in the obvious way to $\Lambda\mathbb{C}^{n}\hat{\otimes}\mathbb{C}l(m)$. 

Finally let $E$ be a $\mathbb{C}l(2k)$-module with involution $R_{E}$. The involution $R_{E}$ commutes with $\mathbb{C}l(2k)^{+}$ hence with $\Gamma$ and anticommutes with $\mathbb{C}l(2k)^{-}$, thus $R'_{E}=\Gamma R_{E}$ commutes with $\mathbb{C}l(2k)$. If $E=\mathcal{S}\otimes W$ then $R'_{E}$ is exactly the involution for the $\mathbb{Z}_{2}$-grading on $W$.

The relative supertrace $\operatorname{str}'_{E}$ is defined on $A\in \operatorname{End}_{\mathbb{C}l(2k)}(E)$ by $\operatorname{str}'_{E}(A)=2^{-k}\operatorname{tr}(R'_{E}A)$. If $E=\mathcal{S}\otimes W$ then for $\alpha\otimes A\in\mathbb{C}l(2k)\otimes  \operatorname{End}_{\mathbb{C}l(2k)}(E)$
\begin{align}
    \operatorname{str}_{E}(\alpha\otimes A)=(-2i)^{k}T\circ\sigma(\alpha)\operatorname{str}'_{E}(A).
\end{align}
And similar to the spinor modules, for $\alpha\in\mathbb{C}l(2k)^{-}$ which does not contain $e_{0}$
\begin{align}
     \operatorname{str}_{E}(e_{0}\alpha\otimes A)=-2\operatorname{tr}_{E^{+}}(\alpha\otimes A).
\end{align}

\subsection{Triple space \texorpdfstring{$M^{3}_{\ce}$}{M3}}

We construct the triple space to study the composition of polyhomogeneous distributions on the double space. First we need some facts about b-fibrations and blowups.

We say two p-submanifolds $W,Y\subset M$ \textbf{intersect cleanly} if $W\cap Y$ is a p-submanifold and for all $p\in W\cap Y$ we have $T_{p}W\cap T_{p}Y= T_{p}(W\cap Y)$.

A map $f\colon X\to Y$ between manifolds with corners is a \textbf{b-map} if for each boundary hypersurface $H\subset Y$, for any boundary defining function $\rho_{H}$ we have 
\begin{align}
    f^{\star}\rho_{H}=a\prod_{H_{i}'\in\mathcal{M}(X)}\rho_{H_{i}}^{e(H_{i}',H)}
\end{align}
for some non-negative integers $e(H'_{i},H)$ and non-vanishing smooth function $a$. The function $e$ is called the exponent matrix of $f$. Given a b-map, for any boundary face $F$ of $X$, there is a unique boundary face $\overline{f}(F)$ of $Y$ (possibly $Y$ itself) such that $f(F^{\circ})\subset\overline{f}(F)^{\circ}$. A b-map is called a \textbf{b-fibration} if
\begin{enumerate}
    \item for all faces $F$ of $X$ we have $\operatorname{codim}( F)\leq\operatorname{codim}(\overline{f}(F))$,
    \item the restriction of $f$ to $F^{\circ}\to\overline{f}(F)^{\circ}$ is a fibration. 
\end{enumerate}

The importance of b-fibrations is due to Melrose's pushforward theorem which describes the pushforwards of polyhomogeneous densities for b-fibrations.

Now we construct the cusp edge triple space and its projections to the cusp edge double space which closely follows \cite{grieserhunsicker1}. The first step is to construct the edge triple space as in \cite{mazzeo} which we briefly describe. Since the construction is symmetric in the three variables, we will only describe one of the projections. We start with the regular triple space $M^{3}$ and will consider the projection $\pi_{CR}\colon M^{3}\to M^{2}$ given by $\pi_{CR}(\zeta_{1},\zeta_{2},\zeta_{3})=(\zeta_{2},\zeta_{3})$.

For each of the $3$ codimension $2$ faces, there is the corresponding fibre diagonal
\begin{align}
    \begin{split}
    S_{LC}&=\{(\zeta_{1}.\zeta_{2}\,\zeta_{3})\in\partial M\times\partial M\times M|\phi_{Y}(\zeta_{1})=\phi_{Y}(\zeta_{2})\} \\
    S_{LR}&=\{(\zeta_{1}.\zeta_{2}\,\zeta_{3})\in\partial M\times M\times \partial M|\phi_{Y}(\zeta_{1})=\phi_{Y}(\zeta_{3})\} \\
    S_{CR}&=\{(\zeta_{1}.\zeta_{2}\,\zeta_{3})\in M\times\partial M\times \partial M|\phi_{Y}(\zeta_{2})=\phi_{Y}(\zeta_{3})\} 
    \end{split}
\end{align}
and there is the fibre diagonal at the corner which is the intersection of these submanifolds
\begin{align}
    T=\{(\zeta_{1}.\zeta_{2}\,\zeta_{3})\in (\partial M)^{3}|\phi_{Y}(\zeta_{1})=\phi_{Y}(\zeta_{2})=\phi_{Y}(\zeta_{3})\}.
\end{align}
These are all p-submanifolds and the edge triple space is constructed by first blowing $T$ up radially. The closure of the lifts of the interiors of the $S_{\cdot,\cdot}$ which we denote $S_{\cdot,\cdot}$ are p-submanifolds which disjoint from each other so we can blow them up radially in any order and we obtain the edge triple space
\begin{align}
    M^{3}_{e}=[M;T;\tilde{S}_{LC};\tilde{S}_{LR};\tilde{S}_{CR}].
\end{align}
We denote the face obtained by the first blowup by $\cornerbackface$ and the faces obtained by the other 3 blowups by $\edgebackface_{\cdot,\cdot}$. To obtain the projection $\pi_{CR}^{3}\colon M^{3}_{e}\to M^{2}_{e}$ we use the facts
\begin{enumerate}
    \item If $T\subset S$ are p-submanifolds then $[M;S;T]\simeq [M;T;S]$ are naturally diffeomorphic.
    \item If $\beta\colon [X;Y]\to X$ is the blowdown map for the blowup of a boundary p-submanifold  and $f\colon X\to Z$ a b-fibration the $\beta\circ f$ is a b-fibration.
\end{enumerate}
We the have the following sequence of maps 
\begin{align}
    M^{3}_{e}\to [M;T;\tilde{S}_{CR}]\simeq [M;S_{CR};\tilde{T}]\to [M;S_{CR}]=M\times M^{2}_{e}\to M^{2}_{e}.
\end{align}
This is a composition of blow down maps and the last map is a b-fibration. Moreover, this map is the unique smooth extension from the interior of the projection $M^{3}\to M^{2}$. The same holds for the other two projections.

For the cusp edge triple space, fix a boundary defining function $x$ for $\partial M$. Recall for the double space in the neighbourhood $\mathcal{U}\times\mathcal{U}$ of the corner (where $\tilde{U}=\partial M\times [0,\epsilon)$ is a collar neighbourhood of $\partial M\subset M$) defined by
\begin{align}
    U=\{(x,p,x,q)\in\mathcal{U}\times\mathcal{U}|\phi_{Y}(p)=\phi_{Y}(q))\}.    
\end{align}
We have the preimage of this space under each of the three projections defined in a neighbouhood of each of the three codimension 2 faces which we denote $U_{\cdot,\cdot}$. We have $U_{LC}\cap \partial M\times\partial M\times M=S_{LC}\subset U_{LC}$ and similarly for the other 2 and the intersection of all three with the corner is $T$. Thus each of the $U_{\cdot,\cdot}$ are not p-submanifolds of $M^{3}$ but the closure of the lifts of the interiors $\tilde{U}_{\cdot,\cdot}$ are p-submanifolds in $M^{3}_{e}$.

Now we define $V=\tilde{U}_{LC}\cap\tilde{U}_{LR}\cap\tilde{U}_{CR}\cap\cornerbackface$ which is a boundary p-submanifold contained in $\cornerbackface$. Note that $V'=\tilde{U}_{LC}\cap\tilde{U}_{LR}\cap\tilde{U}_{CR}$ defines an interior extension of $V$ so we can take the quasihomogeneous blow up $[M^{3}_{e};V']_{k-1}$. We denote the new face by $\cornerfrontface$. From now on, we will denote the closures of the lifts of the interiors of a submanifold $Y$ by the same symbol to avoid overloading the notation.

Next we have the intersection of the lifts of each the the $\tilde{U}_{\cdot,\cdot}$ with $\cornerbackface$ which we denote by $W_{\cdot,\cdot}$ and the corresponding interior extension defined by $\tilde{U}_{\cdot,\cdot}$ by $W'_{\cdot,\cdot}$. Now we take the quasihomogeneous blowup $[[M^{3}_{e};V']_{k-1};W_{LC}';W_{LR}';W_{CR}']_{k-1}$ which we can take in order since the $W_{\cdot,\cdot}$ are disjoint. We denote the new faces by $\corneredgeface_{\cdot,\cdot}$.

Finally we have the intersections of the lifts of the $\tilde{U}_{\cdot,\cdot}$ with the $\edgebackface$ which we denote by $Y_{\cdot,\cdot}$ and their interior extensions by $Y_{\cdot,\cdot}'$. The cusp edge double space is then defined to be
\begin{align}
    M^{3}_{\ce}=[[[M^{3}_{e};V']_{k-1};W_{LC}';W_{LR}';W_{CR}']_{k-1};Y_{LC}';Y_{LR}';Y_{CR}']_{k-1}.
\end{align}
We denote the new faces by $\edgefrontface_{\cdot,\cdot}$.

To obtain the projection $\pi_{CR}^{3}\colon M^{3}_{\ce}\to M^{2}_{\ce}$ and show that it is a b-fibration we use the following facts in addition to those above.
\begin{enumerate}
    \item If $T\subset S$ are p-submanifolds then $[[M;S]_{a};T]_{a}\simeq [M;T;S]$ are naturally diffeomorphic.
    \item If $Y=T\cap S$ intersect cleanly then $[[[M;T]_{a};Y]_{b};S]_{b}\simeq [[[M;S]_{b};Y]_{a};T]_{a}$ where $T,S,Y$ denote themselves or their lifts under respective blowdown maps wherever it makes sense.
\end{enumerate}
First, since the $W_{CR}$ and $Y_{CR}$ are disjoint from the rest of the $W_{\cdot,\cdot}$and $Y_{\cdot,\cdot}$ we can move them to the front the blowups and obtain the blowdown
\begin{align}
    M^{3}_{\ce}=[[[M^{3}_{e};V']_{k-1};W_{CR}';;Y_{CR}';W_{LC}';W_{LR}';Y_{LC}';Y_{LR}']_{k-1}\to[[[M^{3}_{e};V']_{k-1};W_{CR}';Y_{CR}']_{k-1}.
\end{align}
Similarly, the faces $\edgebackface_{\cdot,\cdot}$ so we can move the blowups which products the faces in $M^{3}_{e}$ to the end as well so we have
\begin{align}
    [[[M^{3}_{e};V']_{k-1};W_{CR}';Y_{CR}']_{k-1}\simeq[[[M;T;\tilde{S}_{CR}];V';W_{CR}';Y_{CR}']_{k-1}\tilde{S}_{LR};\tilde{S}_{LC}].
\end{align}
And as described for the edge triple space projection, we have $[M;T;\tilde{S}_{CR}]\simeq [M;S_{CR};\tilde{T}]\simeq[M\times M^{2}_{e};\tilde{T}]$ so we have
\begin{align}
   [[[M;T;\tilde{S}_{CR}];V';W_{CR}';Y_{CR}']_{k-1}\tilde{S}_{LR};\tilde{S}_{LC}]\to [[M\times M^{2}_{e};\tilde{T}];V';W_{CR}';Y_{CR}']_{k-1}.
\end{align}
Since $V\subset W_{CR}$ and is disjoint from $Y_{CR}$ we can also move this to the end.
\begin{align}
    \begin{split}
    [[M\times M^{2}_{e};\tilde{T}];V';W_{CR}';Y_{CR}']_{k-1}\simeq &[[M\times M^{2}_{e};\tilde{T}];W_{CR}';Y_{CR}';V']_{k-1} \\
    &\quad\to[[M\times M^{2}_{e};\tilde{T}];W_{CR}';Y_{CR}']_{k-1}.
    \end{split}
\end{align}
Finally, $\tilde{T}$ and $Y_{CR}$ intersect cleanly at $Z\subset M\times M^{2}_{e}$ which lifts to $W'_{CR}$ under the blowdown map $[M\times M^{2}_{e},\tilde{Y}]\to M\times M^{2}_{e}$ so we have
\begin{align}
    [[M\times M^{2}_{e};\tilde{T}];W_{CR}';Y_{CR}']_{k-1}\simeq[[M\times M^{2}_{e};Y_{CR}]_{k-1};\tilde{Z};\tilde{Y}]\to[M\times M^{2}_{e};Y_{CR}]_{k-1}.
\end{align}
The last term here is equal to $M\times M^{2}_{\ce}$ so finally composing this with the b-fibration $M\times M^{2}_{\ce}\to M^{2}_{\ce }$ this defines the projection $\pi^{3}_{CR}\colon M^{3}_{\ce}\to M^{2}_{\ce}$ which is a composition of blowdown maps and a b-fibration hence is itself a b-fibration. Again, this is the unique smooth extension from the interior of the projection $M^{3}\to M^{2}$ and we can do the same for the other projections.

Now that we have constructed the cusp edge triple space and the b-fibrations $\pi_{\cdot,\cdot}^{3}\colon M^{3}_{\ce}\to M^{2}_{\ce}$ which smoothly extend the usual projections $\pi_{\cdot,\cdot}\colon M^{3}\to M^{2}$ from the interior, we can study the composition of cusp edge pseudodifferential operators.

Let us first recall a few definitions and theorems that we will need. For a boundary hypersurface $F\subset M$ on a manifold with corners, we define an \textbf{index set} $\mathcal{E}(F)$ to be a subset of $\mathbb{C}\times\mathbb{N}$ such that
\begin{enumerate}
    \item for all $C\in\mathbb{R}$, $(\{\operatorname{Re}(z)<C\}\times\mathbb{N})\cap\mathcal{E}(F)$ is a finite set
    \item if $(z,p)\in\mathcal{E}(F)$ then $(z,q)\in \mathcal{E}(F)$ for all $0\leq q\leq p$
    \item if $(z,p)\in\mathcal{E}(F)$ then $(z+1,p)\in\mathcal{E}(F)$.
\end{enumerate}
A collection of index sets $\mathcal{E}$ is called an \textbf{index family}.

We define the infimum of the empty index set to be $\infty$ and otherwise
\begin{align}
    \inf\mathcal{E}(F)=\inf\{\operatorname{Re}(z)|(z,p)\in\mathcal{E}(F)\}.
\end{align}
We define the \textbf{extended union} of index sets $\mathcal{E}$ and $\mathcal{F}$ by
\begin{align}
    \mathcal{E}\overline{\cup}\mathcal{F}=\mathcal{E}\cup\mathcal{F}\cup \{(z,p+p'+1)|(z,p)\in\mathcal{E}, (z,p')\in\mathcal{F}\}.
\end{align}
The sum of two index sets is defined to be the empty set if at least on of them is empty and otherwise
\begin{align}
    \mathcal{E}(F)+\mathcal{F}(F)=\{(z+z',p+p')|(z,p)\in\mathcal{E}(F),(z',p')\in\mathcal{F}(F)\}.
\end{align}
We define a function $f$ to be polyhomogeneous with index family $\mathcal{E}$ inductively on the condimension of the boundary faces as follows. For a boundary hypersurface $F$, let $\mathcal{E}_{F}$ be the index family on $F$ given by the index sets for the boundary hypersurfaces with non-empty intersection with $F$ and $\rho_{F}$ a boundary defining function for $F$. Then $f$ is polyhomogeneous with index family $\mathcal{E}$ at $F$ if there exists functions $a_{z,p}$ such that
\begin{align}
    f\sim\sum_{(z,p)\in\mathcal{E}(F)}a_{z,p}\rho_{F}^{z}\log^{p}\rho_{F}
\end{align}
where $a_{z,p}$ are polyhomogeneous conormal on $F$ with index family $\mathcal{E}(F)$ (where we identify $a_{z,p}$ with a function in a collar neighbourhood of $F$ using $\rho_{F}$) and the asymptotic sum means that for all $N$, the error
\begin{align}
    E_{N}=f-\sum_{\substack{(z,p)\in\mathcal{E}(F)\\ \operatorname{Re}(z)<N}}a_{z,p}\rho_{F}^{z}\log^{p}\rho_{F}
\end{align}
in the collar neighbourhood of $F$ satisfies $\rho^{M}\rho^{-N}_{F}\mathcal{V}_{b}^{k}E$ is bounded for sufficiently large $M$ where $\rho$ is the product of all boundary hypersurfaces with non-empty intersection with $F$.

Let $Y\subset M$ be an embedded submanifold, then a distribution $u$ is a \textbf{classical conormal distribution} of order $m$ at $Y$ if it is smooth away from $Y$ and locally about any point on $Y$ in coordinates $y_{i},z_{j}$ such that $Y$ is given by $z_{j}=0$  we have
\begin{align}
    u=\int e^{iz\zeta}\sigma(y,\zeta)d\zeta
\end{align}
where $\sigma(y,\zeta)$ is a classical symbol (in $\zeta$) of order $m'=m+\frac{1}{4}\dim(M)-\frac{1}{2}\operatorname{codim} (Y)$. As with the classical pseudodifferential calculus, this condition is coordinate invariant and in coordinates symbols admit unique expansions as asymptotic sums in homogeneous symbols with leading order term giving an invariantly defined fibre density $\sigma_{m}(u)$ on the $N^{\star}Y$ called the \textbf{principal symbol}. We can extend this definition to distributions conormal to an interior p-submanifold $Y$ and polyhomogeneous with index set $\mathcal{E}$ by requiring that the coefficients in the asymptotics expansion be conormal at $F\cap Y$ to order $m+\frac{1}{4}$ and an analogous condition on the remainder.

Next, we have the pullback and pushforward theorems. Let $f\colon M\to N$ be a b-map and $\mathcal{E}$ an index set on $N$, we define the pullback index set $f^{\#}(\mathcal{E})$ to be the index set on $M$ given on a boundary hypersurface $G\subset M$ to be $\mathbb{N}$ if $e_{f}(G,H)=0$ for all boundary hypersurfaces $H\subset N$ and otherwise
\begin{align}
    f^{\#}(\mathcal{E})(G)=\left\{(S,Z)|\exists \{(z_{H},p_{H})\in \mathcal{E}(H) : e_{f}(F,G)\neq 0)\}\text{ with } S=\sum_{H}e(G,H)z_{H}, Z=\sum_{H}p_{H}\right\}.
\end{align}
\begin{theorem}[Pullback theorem]
    Let $f:M\to N$ be a b-map between manifolds with corners, $Y\subset N$ an interior p-submanifold. If $f$ is transversal to $Y$ (for all $p\in f^{-1}(Y)$, $^{b}f_{\star}(^{b}T_{p}M)+^{b}T_{f(p)}Y=^{b}T_{f(p))}N$), then the pullback of smooth functions extends to a continuous map
    \begin{align}
        f^{\star}\colon I^{m,\mathcal{E}}(N,Y)\to I^{m-\frac{1}{4}(\dim(M)-\dim(N)),f^{\#}\mathcal{E}}(M,f^{-1}(Y)).
    \end{align}
\end{theorem}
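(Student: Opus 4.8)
The plan is to prove the statement in two layers: the \emph{interior} layer, which is the classical pullback theorem for conormal distributions under a transversal map, and the \emph{boundary} layer, which is the special case of Melrose's pullback theorem governing how polyhomogeneous expansions transform under a b-map. First I would establish the geometric preliminaries. Since $Y$ is an interior p-submanifold and $f$ is a b-map transversal to $Y$ in the b-tangent sense, a computation with b-tangent bundles shows that $f^{-1}(Y)$ is an interior p-submanifold of $M$ with $\operatorname{codim} f^{-1}(Y) = \operatorname{codim} Y =: c$, that $f^{-1}(Y)$ meets every boundary hypersurface of $M$ cleanly, and that $f$ restricts to a b-map $f^{-1}(Y)\to Y$. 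Consequently, near any $p\in f^{-1}(Y)$ one can choose adapted coordinates in the following order: boundary defining functions $x_1,\dots,x_k$ for the faces of $M$ through $p$, then functions $z_1,\dots,z_c$ cutting out $f^{-1}(Y)$, then coordinates $y$ along $f^{-1}(Y)$; and downstairs $x'_H, y', z'$ with $Y=\{z'=0\}$. Transversality lets us take $z_j = f^{\#} z'_j$ for $j=1,\dots,c$, because the b-differentials $df^{\#} dz'_j$ are independent along $f^{-1}(Y)$; and the b-map condition gives $f^{\#}x'_H = a_H\prod_i x_i^{e(G_i,H)}$ with $a_H$ smooth and nonvanishing, $f^{\#}y'$ smooth.

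For the conormal layer, I would work away from $\partial M$ and write $u\in I^{m,\mathcal{E}}(N,Y)$ locally as $u = \int e^{i z'\cdot\zeta}\,\sigma(y',\zeta)\,d\zeta$ with $\sigma$ a classical symbol of order $m' = m + \frac14\dim N - \frac12 c$. In the adapted coordinates $f^{\#}u = \int e^{i z\cdot\zeta}\,\sigma(f^{\#}y',\zeta)\,d\zeta$, which is conormal to $f^{-1}(Y)$ with a symbol of the \emph{same} order $m'$ in the same number $c$ of frequency variables (this is exactly where transversality is used). Because the ambient dimension has changed from $\dim N$ to $\dim M$ while $c$ has not, the conormal order downstairs is $m' - \frac14\dim M + \frac12 c = m - \frac14(\dim M - \dim N)$, the claimed shift. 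Continuity in the Fréchet topology on symbols, hence on conormal distributions, is routine from the chain rule and the nonvanishing of $|dz_j|$ near $f^{-1}(Y)$.

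For the boundary layer, I would substitute the expansion $u \sim \sum_{(w,q)\in\mathcal{E}(H)} a_{w,q}\,\rho_H^{w}(\log\rho_H)^{q}$, with $a_{w,q}$ again polyhomogeneous at the remaining faces and conormal at their intersections with $Y$, into $f^{\#}$, using $(f^{\#}\rho_H)^{w} = a_H^{w}\prod_i \rho_{G_i}^{e(G_i,H)w}$ and $\log f^{\#}\rho_H = \log a_H + \sum_i e(G_i,H)\log\rho_{G_i}$. Collecting monomials, a term $\rho_G^{S}(\log\rho_G)^{Z}$ at a face $G$ arises precisely by choosing $(w_H,q_H)\in\mathcal{E}(H)$ for each $H$ with $e(G,H)\neq 0$, with $S = \sum_H e(G,H)w_H$ and $Z$ bounded in terms of $\sum_H q_H$ (the $\log a_H$ factors and collisions of exponents only increase log powers by finite amounts). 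This is exactly the recipe defining $f^{\#}\mathcal{E}(G)$, with the extended-union bookkeeping built in; local finiteness holds because only finitely many $w_H$ with real part below a given bound occur. The remainder estimates pull back because $f^{\#}$ carries a conormal-polyhomogeneous remainder of a given order at $H$ to one of the corresponding order at the $G_i$. One then runs an induction on the codimension of the boundary faces of $M$, peeling off one hypersurface at a time as in the proof of Melrose's pushforward/pullback theorem, applying the inductive hypothesis to the coefficients $a_{w,q}$, which live on lower-codimension faces and are conormal at their intersection with $f^{-1}(Y)$.

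The main obstacle is to carry out the conormal reduction and the polyhomogeneous reduction \emph{together}, uniformly: one needs the local coordinates to put $f^{-1}(Y)$ in good position with respect to all boundary faces at once, and one needs the ``mixed'' estimates for distributions simultaneously conormal at an interior p-submanifold and polyhomogeneous at the boundary, along with the compatibility of the two expansions along $\overline{f^{-1}(Y)}\cap\partial M$. This is handled by the ordering of the coordinate choice above --- boundary defining functions first, then $Y$-transverse directions --- which makes the symbol coefficients in the oscillatory-integral representation themselves polyhomogeneous in the $\rho_{G_i}$ with index family $f^{\#}\mathcal{E}$, and by the b-level transversality, which guarantees that $f^{\#}z'$ can be completed to adapted coordinates without disturbing the b-normal form of the $f^{\#}x'_H$. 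With the normal form and the mixed calculus in place, the continuity statement reduces to bookkeeping of seminorm bounds.
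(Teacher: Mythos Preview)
The paper does not supply a proof of this statement: the Pullback theorem is stated in the appendix as a known tool (alongside the Pushforward theorem) and then invoked, with no argument given. So there is nothing to compare against on the paper's side.

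Your sketch is a reasonable outline of the standard proof. The two-layer decomposition --- classical transversal pullback of conormal distributions in the interior, Melrose's b-map pullback of polyhomogeneous expansions at the boundary --- is the right architecture, and your order computation for the conormal part is correct. The main point you correctly flag as the obstacle, namely handling the conormal and polyhomogeneous structures simultaneously via adapted coordinates with boundary defining functions chosen first, is indeed where the care is needed. One small gap: you should verify that $f^{-1}(Y)$ is an \emph{interior} p-submanifold, i.e.\ not contained in any boundary face, which follows from $Y$ being interior and $f$ being a b-map (so $f$ maps interior to interior), but deserves a sentence. Otherwise the outline would expand to a complete proof along the lines found in Melrose's unpublished notes or in the secondary literature on b-calculus.
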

Given a map $f\colon M\to N$ between compact manifolds with corners and  a distributional (b-)density $u$ on $M$, the pushforward of $u$ is the distributional (b-)density on $N$ defined by $f_{\star}u(\varphi)=u(\varphi^{\star}u)$ for a compactly supported smooth function $\varphi$. 

\begin{theorem}[Pushforward theorem]
    Let $f\colon M\to N$ be a b-fibration between compact manifolds with corners and $W,Y\subset M$ interior p-submanifolds which intersect transversally and $f$ restricts to a diffeomorphism of $W,Y$ with $N$. 
    
    If $u\in I^{m,\mathcal{E}}(M,W)$ and $v\in I^{n,\mathcal{F}}(M,Y)$ and $\mu_{b}(M)$ a non-vanishing b-density and for all boundary hypersurfaces $F$ with $\overline{f}(F)=N$ we have $\inf \mathcal{E}(F)+\inf \mathcal{F}(F)>0$ then $f_{\star}(uv\mu_{b}(M))$ exists and is in $I^{m+n+\frac{1}{4}\dim(N),\mathcal{G}}(N,f(W\cap Y))$ with $\mathcal{G}=f_{\#}(\mathcal{E}+\mathcal{F})$ where
    \begin{align}
    f_{\#}(\mathcal{E})(H)=\overline{\bigcup}_{G:e(G,H)>0}\{(\frac{z}{e(G,H)},p)|(z,p)\in\mathcal{G}\}.
\end{align}
\end{theorem}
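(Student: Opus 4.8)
The statement is the Pushforward Theorem for a b-fibration $f\colon M\to N$ of compact manifolds with corners, applied to a product $uv\,\mu_b(M)$ of two conormal (paired Lagrangian) distributions. My plan is to follow Melrose's standard argument, but to handle the \emph{product} $uv$ by resolving the two submanifolds $W,Y$ simultaneously. First I would reduce to a normal form. Since $W$ and $Y$ intersect transversally and $f$ restricts to a diffeomorphism of each with $N$, near a point of $W\cap Y$ I can choose adapted coordinates on $M$ in which $N$-coordinates $w$ are the base variables, $W=\{z=0\}$, $Y=\{z'=0\}$ with $z,z'$ independent fibre normal variables, and the boundary hypersurfaces are coordinate hyperplanes; the b-fibration condition means $f$ is, in these coordinates, a projection composed with monomial boundary maps. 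In this chart $u=\int e^{iz\zeta}\sigma(w,z',\zeta)\,d\zeta$ and similarly for $v$, so $uv$ has a product symbol and the pushforward becomes an oscillatory integral in the fibre variables together with a fibre integration of the (log-)polyhomogeneous factors in the boundary defining functions.

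The core of the proof is then two largely independent pieces. \emph{(i) The conormal/Lagrangian part:} on the interior of the relevant fibres, $f_\star(uv)$ is the fibre integral of a product of two transversal conormal distributions. Because $W\pitchfork Y$ and each maps diffeomorphically to $N$, after the coordinate reduction the fibre integration $\int dz\,dz'$ against $e^{iz\zeta+iz'\zeta'}$ produces, by stationary phase / the standard conormal-pushforward lemma (essentially integration in the $\zeta,\zeta'$ variables), a classical conormal distribution on $N$ along $f(W\cap Y)$ whose order is $m+n+\tfrac14\dim N$; this is exactly the order bookkeeping from Hörmander's composition of Lagrangians specialised to the transversal case. \emph{(ii) The polyhomogeneous boundary part:} this is Melrose's Pushforward Theorem in its original form, for which I would invoke the statement already recalled in the appendix (the Pushforward theorem as displayed), together with its companion that identifies $f_\#(\mathcal E)$ via the extended union over boundary hypersurfaces $G$ with $e(G,H)>0$. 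The positivity hypothesis $\inf\mathcal E(F)+\inf\mathcal F(F)>0$ for all $F$ with $\overline f(F)=N$ is precisely what guarantees absolute convergence of the fibre integral transverse to such faces, so the pushforward exists; the appearance of $\log$ terms (the extended union, rather than ordinary union) comes from resonances where two exponents in the pulled-back expansions coincide after division by the exponent matrix entries.

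To combine (i) and (ii) I would argue that the two structures are compatible: the conormal singularity along $W\cap Y$ is transverse to all boundary hypersurfaces on which integration takes place, so the $\zeta$-integration and the boundary asymptotic expansion can be carried out in either order. Concretely, expand $u$ and $v$ in their boundary polyhomogeneous expansions with conormal coefficients along $W\cap Y$; for each pair of terms the pushforward is the product of a scalar b-integral (handled by the classical 1-dimensional $\int_0^\epsilon \rho^{a}\log^p\rho\,\tfrac{d\rho}{\rho}$ computation, which converges by the positivity assumption and produces at most one extra log) and a fibrewise conormal pushforward (handled by (i)); then resum, checking that the asymptotic sums match the prescribed index family $\mathcal G=f_\#(\mathcal E+\mathcal F)$ and that the orders add correctly. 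A partition of unity reduces the global statement to these local charts, and away from $W\cup Y\cup\partial M$ everything is smooth so the pushforward is a smooth (b-)density there.

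\textbf{Main obstacle.} The delicate point is the interaction of the two phenomena exactly at points where $f(W\cap Y)$ meets a boundary hypersurface $F$ with $\overline f(F)=N$: there one has simultaneously a conormal singularity and a boundary asymptotic expansion, and one must verify that the fibre integration does not destroy the conormality (i.e. that the symbol estimates are uniform up to $F$) and that the log-powers produced by boundary resonances are correctly bounded by those prescribed in the extended union. This is handled by working on the simultaneous blow-up where both $W$ and $Y$ (and $W\cap Y$) are resolved to p-submanifolds transverse to the fibres, but keeping track of the conormal order through the blow-up is the step where the careful coordinate computation cannot be avoided. Everything else is a bookkeeping exercise built on the pullback and pushforward theorems already quoted in the appendix.
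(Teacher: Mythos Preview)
The paper does not prove this theorem; it is stated in the appendix as a standard tool (Melrose's pushforward theorem, combined with the usual conormal composition lemma) and then invoked in the proofs of the two subsequent propositions on the action and composition of cusp-edge operators. So there is no proof in the paper to compare your proposal against.

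Your outline is a reasonable sketch of how one would prove the result from scratch, and the two pieces you isolate (the classical pushforward of polyhomogeneous b-densities under a b-fibration, and the composition of transversal conormal distributions under fibre integration) are indeed the two ingredients. But for the purposes of this paper the theorem is treated as background: the intended ``proof'' is simply a citation to the literature (e.g.\ Melrose's \emph{Calculus of conormal distributions on manifolds with corners} for the polyhomogeneous pushforward, together with H\"ormander's Lagrangian composition for the conormal part). If you are writing this up for the paper, a one-line reference is all that is expected.
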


We define the \textbf{large cusp edge calculus} of pseudodifferential operators $\Psi_{\ce}^{\star,\mathcal{E}}(M;E,F)$ with index set $\mathcal{E}$ between sections of vector bundles $E$ and $F$ by
\begin{align}
    \Psi_{\ce}^{\star}(M;E,F)=I^{\star,\mathcal{E}}(M^{2}_{\ce},\diag_{\ce};\operatorname{HOM}(E,F)\otimes\Omega_{R,\ce})
\end{align}
where $\Omega_{R,ce}:=\rho^{-k(b+1)}\beta^{\star}_{R}\Omega(M)$. If $A\in\Psi_{\ce}^{\star,\mathcal{E}}(M;E,F)$ and $f\in\mathcal{A}^{\mathcal{F}}(M;F)$ polyhomogeneous on $M$ with index set $\mathcal{F}$, fixing any non-vanishing density $\mu(M)$ on $M$ and defining $\mu_{L,R}=\beta^{\star}_{L,R}\mu$ and writing the kernel of $A$ as $K_{A}\rho^{-k(b+1)}_{\frontface}\mu_{R}$, we define the action of $A$ by the formula
\begin{align}
    Af\mu(M)=(\beta_{L})_{\star}(K_{A}\rho_{\frontface}^{-k(b+1)}\beta^{\star}_{R}f\mu_{R}\mu_{L}).
\end{align}
Note that since we have the canonical identification $\beta^{\star}_{R}\Omega(M)\otimes\beta^{\star}_{L}\Omega(M)\simeq\beta^{\star}\Omega(M^{2})$, we can identify $\mu_{R}\mu_{L}$ with the pullback $\beta^{\star}(\mu(M^{2}))$ of some non-vanishing density $\mu(M^{2})$ on $M^{2}$ and moreover, using projective coordinates at $\backface,\frontface$, we see that
\begin{align}
    \beta^{\star}(\mu(M^{2}))=\rho^{k(b+1)}_{\frontface}\rho^{b+1}_{\backface}\mu(M^{2}_{\ce})
\end{align}
for some non-vanishing density $\mu(M^{2}_{\ce})$. Note that this definition is independent on the choice of $\mu(M)$ and reduces to the usual action for an operator with kernel given by the pullback of a smooth kernel on $M^{2}$.

We can use the pullback and pushforward theorem to determine when $Af$ is well define and the index set of the resulting polyhomogeneous function
\begin{proposition}
    Let $A\in\Psi_{\ce}^{\star,\mathcal{E}}(M;E,F)$ and $f\in\mathcal{A}^{\mathcal{F}}(M;F)$. If the index set $\mathcal{E}$ and $\mathcal{F}$ satisfy
    \begin{align}\label{actiononfunctioncondition1}
        \inf \mathcal{E}(\rightface)+\inf \mathcal{F}(\partial M)>-1
    \end{align}
    then $Af\in\mathcal{A}^{\mathcal{G}}(M;E)$ where
    \begin{align}
        \mathcal{G}=(\mathcal{E}(\leftface)+\mathbb{N})\overline{\cup}(\mathcal{E}(\backface)+\mathcal{F}(\partial M)+b+1)\overline{\cup}(\mathcal{E}(\frontface)+\mathcal{F}(\partial M)).
    \end{align}
\end{proposition}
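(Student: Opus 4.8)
The plan is to reduce the statement about the action $Af$ to a direct application of the pullback and pushforward theorems on the cusp edge double space $M^{2}_{\ce}$, exactly in the spirit of Melrose's calculus. Recall that by definition $Af\,\mu(M)=(\beta_{L})_{\star}(K_{A}\rho_{\frontface}^{-k(b+1)}\beta^{\star}_{R}f\,\mu_{R}\mu_{L})$, where $K_{A}$ is conormal to $\diag_{\ce}$ with index family $\mathcal{E}$ at the boundary hypersurfaces and $\beta_{L}\colon M^{2}_{\ce}\to M$ is the left blowdown composed with projection. The first step is to compute the pullback $\beta^{\star}_{R}f$: since $\beta_{R}$ is a b-fibration and $f\in\mathcal{A}^{\mathcal{F}}(M;F)$ is polyhomogeneous with index set $\mathcal{F}(\partial M)$ at $\partial M$, the pullback theorem (together with the explicit form of the exponent matrix of $\beta_{R}$, which one reads off from the projective coordinates at $\backface,\frontface$) gives that $\beta^{\star}_{R}f$ is polyhomogeneous on $M^{2}_{\ce}$ with index set $\mathcal{F}(\partial M)$ at $\rightface$, at $\backface$, and at $\frontface$, and empty (i.e.\ $\mathbb{N}$/smooth) at $\leftface$ since $\beta_{R}$ does not map $\leftface$ into $\partial M$. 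One must track here that the quasi-homogeneous blowup of order $k-1$ producing $\frontface$ and the radial blowup producing $\backface$ each introduce an exponent $1$ or $(k-1)$ on the relevant defining function, but since $\beta_{R}^{\star}x'$ is, up to units, $\rho_{\rightface}\rho_{\backface}\rho_{\frontface}$ (with appropriate powers), the net index set contribution at $\frontface$ and $\backface$ is again $\mathcal{F}(\partial M)$ after the change of variable; this is the bookkeeping the proposition encodes.

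The second step is to assemble the full integrand $u:=K_{A}\rho_{\frontface}^{-k(b+1)}\beta^{\star}_{R}f\,\mu_{R}\mu_{L}$ as a conormal b-density on $M^{2}_{\ce}$ relative to $\diag_{\ce}$, and to identify its index family. Using the identification $\mu_{R}\mu_{L}=\beta^{\star}\mu(M^{2})=\rho_{\frontface}^{k(b+1)}\rho_{\backface}^{b+1}\mu(M^{2}_{\ce})$ recorded just above the statement, the factor $\rho_{\frontface}^{-k(b+1)}$ is exactly cancelled, and one picks up an extra $\rho_{\backface}^{b+1}$; hence as a b-density $u$ has index set $\mathcal{E}(\leftface)+\mathbb{N}$ at $\leftface$, $\mathcal{E}(\backface)+\mathcal{F}(\partial M)+b+1$ at $\backface$, and $\mathcal{E}(\frontface)+\mathcal{F}(\partial M)$ at $\frontface$, while at $\rightface$ it has index set $\mathcal{E}(\rightface)+\mathcal{F}(\partial M)$. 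Now apply the pushforward theorem to $\beta_{L}\colon M^{2}_{\ce}\to M$: this is a b-fibration, the lifted diagonal $\diag_{\ce}$ is an interior p-submanifold mapped diffeomorphically onto $M$, and $\beta_{L}$ maps exactly the face $\rightface$ into the interior — that is, $\overline{\beta_{L}}(\rightface)=M$. Therefore the integrability hypothesis of the pushforward theorem is the single condition $\inf(\text{index set of }u\text{ at }\rightface)>-1$, which is precisely $\inf\mathcal{E}(\rightface)+\inf\mathcal{F}(\partial M)>-1$, i.e.\ \eqref{actiononfunctioncondition1}. Under this hypothesis, $\beta_{L,\star}(u)$ exists, is polyhomogeneous conormal on $M$ with no conormal singularity (the diagonal gets pushed forward to all of $M$, so the conormal order combines with $\tfrac14\dim M$ to give a smooth density in the interior), and its index family is $\beta_{L,\#}$ applied to the index family of $u$ at the remaining boundary hypersurfaces $\leftface,\backface,\frontface$. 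Since $\beta_{L}$ has exponent $1$ for $\leftface\mapsto\partial M$ and for $\backface\mapsto\partial M$ and for $\frontface\mapsto\partial M$ (again read from projective coordinates: $\beta_{L}^{\star}x$ is a unit times $\rho_{\leftface}\rho_{\backface}\rho_{\frontface}$), $\beta_{L,\#}$ is just the extended union of the three index sets, giving
\begin{align}
    \mathcal{G}=(\mathcal{E}(\leftface)+\mathbb{N})\,\overline{\cup}\,(\mathcal{E}(\backface)+\mathcal{F}(\partial M)+b+1)\,\overline{\cup}\,(\mathcal{E}(\frontface)+\mathcal{F}(\partial M)),
\end{align}
which is the claimed index family. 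Finally one notes the construction is independent of the auxiliary choice of $\mu(M)$, as remarked in the text, so the statement is intrinsic.

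The main obstacle, and the only part requiring genuine care rather than citation, is step one together with the exponent-matrix bookkeeping in step two: one must verify precisely which boundary hypersurfaces of $M^{2}_{\ce}$ the maps $\beta_{R}$ and $\beta_{L}$ send into $\partial M\subset M$ and with what exponents, because the quasi-homogeneous order-$(k-1)$ blowup producing $\frontface$ could a priori rescale the index set at $\frontface$ by a factor of $k-1$. The resolution is that the weighted density factor $\Omega_{R,\ce}=\rho_{\frontface}^{-k(b+1)}\beta^{\star}_{R}\Omega(M)$ was built into the definition of the calculus exactly so that $\beta_{R}^{\star}x'$ and $\beta_{L}^{\star}x$ are, up to nonvanishing smooth functions, products of boundary defining functions each to the first power in the variables that matter for the index-set computation; equivalently, the relevant entries of the exponent matrices of $\beta_{L}$ and $\beta_{R}$ are all $1$. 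Once this is pinned down — most cleanly by writing out the projective coordinates \eqref{projectivedouble} near $\frontface$ and the analogous ones near $\backface$ — the pullback theorem and pushforward theorem apply verbatim and the proposition follows. I would also double-check that $W$-type transversality hypotheses in the pushforward theorem are vacuous here, since only the single interior p-submanifold $\diag_{\ce}$ is involved and it meets every boundary hypersurface cleanly by construction of the calculus.
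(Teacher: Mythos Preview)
Your proposal is correct and follows essentially the same route as the paper: compute $\beta_R^\star f$ via the pullback theorem, absorb the density factors using $\mu_R\mu_L=\rho_{\frontface}^{k(b+1)}\rho_{\backface}^{b+1}\mu(M^2_{\ce})$, and then apply the pushforward theorem along the b-fibration $\beta_L$, reading off $\mathcal{G}$ as the extended union over $\leftface,\backface,\frontface$. The only discrepancy is a harmless $\pm 1$ bookkeeping slip between density and b-density conventions (you state the index set at $\rightface$ ``as a b-density'' is $\mathcal{E}(\rightface)+\mathcal{F}(\partial M)$ and then impose $>-1$, whereas the paper converts fully to a b-density, getting $\mathcal{E}(\rightface)+\mathcal{F}(\partial M)+1$ and imposing $>0$); both yield the same integrability condition and the same $\mathcal{G}$.
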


\begin{proof}
    By definition and the above discussion, we have
    \begin{align}
    Af\rho_{M}\mu_{b}(M)=(\beta_{L})_{\star}(K_{A}\rho_{\frontface}^{-k(b+1)}\beta^{\star}_{R}f\rho^{k(b+1)}_{\frontface}\rho^{b+1}_{\backface}\rho_{M^{2}_{\ce}}\mu_{b}(M^{2}_{\ce})).
\end{align}
    where we use the notation $\rho_{\cdot}$ to denote the product of all boundary defining functions and $\mu_{b}(\cdot)$ the resulting non-vanishing b-density.

    By the pullback theorem, $\beta^{\star}_{R}f$ has index set $f^{\#}(\mathcal{F})$ which is given by
    \begin{align}
        \begin{split}
             f^{\#}(\mathcal{F})(\leftface)&=\mathbb{N} \\
             f^{\#}(\mathcal{F})(\backface)=f^{\#}(\mathcal{F})(\frontface)&=f^{\#}(\mathcal{F})(\rightface)=\mathcal{F}(\partial M).
        \end{split}
    \end{align}
    Since $\beta_{L}$ is a b-fibration and $\beta^{\star}_{R}f$ is smooth in the interior so the other conditions of the pushforward theorem are trivially satisfied. The index set of the distribution $K_{A}\beta^{\star}_{R}f\rho^{b+1}_{\backface}\rho_{M^{2}_{\ce}}\mu_{b}(M^{2}_{\ce})$ as a b-density at $\rightface$ is given by
    \begin{align}\label{actiononfunctioncondition2}
        \mathcal{E}(\rightface)+f^{\#}\mathcal{F}(\rightface)+1=\mathcal{E}(\rightface)+\mathcal{F}(\rightface)(\partial M)+1.
    \end{align}
    So by the pushforward theorem, the pushforward exists if \eqref{actiononfunctioncondition2} is $>0$ which is exactly the condition \eqref{actiononfunctioncondition1}. Moreover defining $w(\backface)=(b+1)+1$ and $w(\cdot)=1$ for the other faces, the index set $\mathcal{G}'$ of the pushforward is given by 
    \begin{align}
        \begin{split}
        f_{\#}(\mathcal{E}+f^{\#}\mathcal{F}+w)(\partial M)=&(\mathcal{E}(\leftface)+f^{\#}\mathcal{F}(\leftface)+1)\overline{\cup}(\mathcal{E}(\backface)+f^{\#}\mathcal{F}(\backface)+(b+1)+1) \\
        &\overline{\cup}(\mathcal{E}(\frontface)+f^{\#}\mathcal{F}(\frontface)+1).
        \end{split}
    \end{align}
    The index set $\mathcal{G}=\mathcal{G}-1$ which gives us the result.
\end{proof}
If $\mathcal{G}$ had different index sets at the two components of $\backface$ then we would get a factor of $(\mathcal{E}(\backface_{i})+\mathcal{F}(\partial M)+b+1$ for each component.

We now discuss the composition of cusp edge pseudodifferential operators. We define the composition $C=A\circ B$ of two operators $A\in \Psi^{r_{1},\mathcal{E}}(M;F,G)$ and $B\in\Psi^{r_{e},\mathcal{F}}(M;E,F)$ to be the operator with integral kernel defined by the formula
\begin{align}
    K_{C}\rho_{ff}^{-k(b+1)}\mu_{R}\mu_{L}=(\pi^{3}_{LR})_{\star}((\pi^{3}_{LC})^{\star}(K_{A}\rho_{\frontface}^{-k(b+1)}\mu_{R})(\pi^{3}_{CR})^{\star}(K_{B}\rho_{\frontface}^{-k(b+1)}\mu_{R})(\pi^{3}_{LR})^{\star}\mu_{L}).
\end{align}
As above, we use the identification of density bundles which gives
\begin{align}
    \mu_{R}\mu_{L}=\rho^{k(b+1)}_{\frontface}\rho^{b+1}_{\backface}\rho\mu_{b}(M^{2}_{\ce}).
\end{align}
Similarly on the triple space, denoting the projection $M^{3}\to M^{2}$ by $\pi_{\cdot,\cdot}$ so that $\pi^{3}_{\cdot,\cdot}=\beta^{(3)}\circ\pi_{\cdot,\cdot}$ we have the canonical identifications
\begin{align}
    \begin{split}
    (\pi^{3}_{LC})^{\star}\Omega_{R}\otimes(\pi^{3}_{CR})^{\star}\Omega_{R}\otimes(\pi^{3}_{LR})^{\star}\Omega_{L}&\simeq(\beta^{(3)})^{\star}(\pi^{3}_{LC})^{\star}\Omega_{R}\otimes(\pi^{3}_{CR})^{\star}\Omega_{R}\otimes(\pi^{3}_{LR})^{\star}\Omega_{L}) \\
    &\simeq (\beta^{(3)})^{\star}\Omega(M^{3}).
    \end{split}
\end{align}
Under these identifications, we have that $(\pi^{3}_{LC})^{\star}\mu_{R}(\pi^{3}_{CR})^{\star}\mu_{R}(\pi^{3}_{LR})^{\star}\mu_{LR}=(\beta^{(3)})^{\star}\mu(M^{3})$ for some smooth non-vanishing density $\mu(M^{3})$. In coordinates, this is given by $adxdydz,dx'dy'dz'dx''dy''dz''$ for some smooth function $a$ and we can use projective coordinates at each face on $M^{3}_{\ce}$ to determine the order of vanishing of the lift if this density at each face. We will do the $LC$ face as the others are the same.

At $\cornerbackface$, we have the projective coordinates
\begin{align}
    s_{1}=\frac{x'}{x},s_{2}=\frac{x''}{x},x,\eta_{1}=\frac{y'-y}{x},\eta_{2}=\frac{y''-y}{x},y,z,z',z''.
\end{align}
So the density lifts to $ax^{2(b+1)}dxdydzds_{1}d\eta_{2}dz'ds_{2}d\eta_{2}dz'$ and similarly in other projective coordinates at $\cornerfrontface$.

Similarly at $\cornerfrontface$, we have the projective coordinates
\begin{align}
    \tilde{s}_{1}=\frac{x'-x}{x^{k}},\tilde{s}_{2}=\frac{x''-x}{x^{k}},x,\tilde{\eta_{1}}=\frac{y'-y}{x^{k}},\tilde{\eta}_{2}=\frac{y''-y}{x^{k}},y,z,z',z''.
\end{align}
So the density lifts to $ax^{2k(b+1)}dxdydzd\tilde{s}_{1}d\tilde{\eta}_{2}dz'd\tilde{s}_{2}d\tilde{\eta}_{2}dz'$. 

At $\corneredgeface_{LC}$ we can use the coordinates
\begin{align}
    S_{1}=\frac{s_{1}-s_{2}}{x^{k-1}},s_{2},E_{1}=\frac{\eta_{1}-\eta_{2}}{x^{k-1}},\eta_{2},x,y,z,z',z''
\end{align}
and we get $$ax^{(k+1)(b+1)}dxdydzdS_{1}dE_{1}dz'ds_{2}d\eta_{2}dz'$$.

At $\edgebackface$ and $\edgefrontface$ we can use the same coordinates as those on the double space with extra coordinates $x'',y'',z''$ as parameters, so overall we get
\begin{align}
    (\beta^{(3)})^{\star}(\mu(M^{3}))=\rho_{\cornerfrontface}^{2k(b+1)}\rho_{\cornerbackface}^{2(b+1)}\rho_{\corneredgeface}^{(k+1)(b+1)}\rho_{\edgefrontface}^{k(b+1)}\rho_{\edgebackface}^{b+1}\mu(M^{3}_{\ce})
\end{align}
where $\mu(M^{3}_{\ce})$ is some smooth non-vanishing density and we write $\rho_{F}$ for the product the three boundary defining functions when $F=\corneredgeface_{\cdot,\cdot},\edgebackface_{\cdot,\cdot},\edgefrontface_{\cdot,\cdot}$.

Next for the pullbacks of $\rho_{\frontface}$ we have
\begin{align}
    \begin{split}
        (\pi^{3}_{LC})^{\star}\rho_{\frontface}&=b\rho_{\cornerfrontface_{LC}}\rho_{\corneredgeface_{LC}}\rho_{\edgefrontface_{LC}} \\
        (\pi^{3}_{CR})^{\star}\rho_{\frontface}&=c\rho_{\cornerfrontface_{CR}}\rho_{\corneredgeface_{CR}}\rho_{\edgefrontface_{CR}}
    \end{split}
\end{align}
for some non-vanishing smooth functions $b,c$. Putting this together we have the following for the composition of cusp edge pseudodifferential operators. 

\begin{proposition}
    Let $A\in \Psi^{r_{1},\mathcal{E}}_{\ce}(M;F,G)$ and $B\in \Psi^{r_{2},\mathcal{F}}_{\ce}(M;E,F)$. If the index sets satisfy
    \begin{align}
        \inf \mathcal{E}(\rightface)+\inf \mathcal{F}(\leftface)>-1
    \end{align}
    then the composition $C=A\circ B\in\Psi_{\ce}^{r_{1}+r_{2},\mathcal{G}}(M;E;G)$ with index set $\mathcal{G}$ given by
    \begin{align}
        \begin{split}
            \mathcal{G}(\frontface)&=(\mathcal{E}(\frontface)+\mathcal{F}(\frontface))\overline{\cup}(\mathcal{E}(\leftface)+\mathcal{F}(\rightface)+k(b+1))\overline{\cup}(\mathcal{E}(\backface)+\mathcal{F}(\backface)+(k+1)(b+1)) \\
            \mathcal{G}(\backface)&=(\mathcal{E}(\backface)+\mathcal{F}(\backface)+b+1)\overline{\cup}(\mathcal{E}(\frontface)+\mathcal{F}(\backface))\overline{\cup}(\mathcal{E}(\backface)+\mathcal{F}(\frontface)) \\
            & \quad \quad \quad\overline{\cup}(\mathcal{E}(\leftface)+\mathcal{F}(\rightface)) \\
            \mathcal{G}(\leftface)&=(\mathcal{E}(\backface)+\mathcal{F}(\leftface)+b+1)\overline{\cup}(\mathcal{E}(\frontface)+\mathcal{F}(\leftface))\overline{\cup}(\mathcal{E}(\leftface)+\mathbb{N}) \\
            \mathcal{G}(\rightface)&= (\mathcal{E}(\rightface)+\mathcal{F}(\backface)+b+1)\overline{\cup}(\mathcal{E}(\rightface)+\mathcal{F}(\frontface)+k(b+1))\overline{\cup}(\mathcal{F}(\rightface)+\mathbb{N}).
        \end{split}
    \end{align}
\end{proposition}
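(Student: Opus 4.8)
The plan is to realise the composition via the triple space $M^{3}_{\ce}$ and the b-fibrations $\pi^{3}_{\cdot,\cdot}\colon M^{3}_{\ce}\to M^{2}_{\ce}$ constructed above, and then to propagate index sets and conormal orders through the pullback and pushforward theorems applied to the defining formula
\begin{align*}
    K_{C}\rho_{\frontface}^{-k(b+1)}\mu_{R}\mu_{L}=(\pi^{3}_{LR})_{\star}\Big((\pi^{3}_{LC})^{\star}(K_{A}\rho_{\frontface}^{-k(b+1)}\mu_{R})\,(\pi^{3}_{CR})^{\star}(K_{B}\rho_{\frontface}^{-k(b+1)}\mu_{R})\,(\pi^{3}_{LR})^{\star}\mu_{L}\Big).
\end{align*}
First I would record the exponent matrices of the three projections. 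Since each $\pi^{3}_{\cdot,\cdot}$ was exhibited as a composition of blow-down maps followed by a b-fibration, it is a b-fibration, and the integer $e_{\pi}(F,H)$ for a boundary hypersurface $F$ of $M^{3}_{\ce}$ and $H$ of $M^{2}_{\ce}$ is read off from the projective coordinates listed on the faces $\cornerbackface$, $\cornerfrontface$, $\corneredgeface_{\cdot,\cdot}$, $\edgebackface_{\cdot,\cdot}$, $\edgefrontface_{\cdot,\cdot}$ and the side faces. For $\pi^{3}_{LC}$ one finds, e.g., $\cornerfrontface,\corneredgeface_{LC},\edgefrontface_{LC}\mapsto\frontface$ and $\cornerbackface,\edgebackface_{LC}\mapsto\backface$, with the order-$(k-1)$ blow-ups producing the nontrivial multiplicities already visible in the density identity computed in the appendix, while $\edgefrontface_{CR},\edgebackface_{CR}$ map into side faces or the interior; similarly for $\pi^{3}_{CR}$ with the $LC$- and $CR$-faces interchanged, and $\pi^{3}_{LR}$ is the b-fibration that collapses the middle slot.

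Next I would apply the pullback theorem: $(\pi^{3}_{LC})^{\star}K_{A}$ and $(\pi^{3}_{CR})^{\star}K_{B}$ are conormal of the appropriately shifted orders to the lifts of the diagonals $\pi_{LC}^{-1}(\diag)$ and $\pi_{CR}^{-1}(\diag)$, with index sets $(\pi^{3}_{LC})^{\#}\mathcal{E}$ and $(\pi^{3}_{CR})^{\#}\mathcal{F}$ computed from the exponent matrices. These two lifted diagonals intersect transversally in the interior and each maps diffeomorphically onto $\diag_{\ce}$ under $\pi^{3}_{LR}$, so their product is conormal; combining this with the density computation $(\beta^{(3)})^{\star}\mu(M^{3})=\rho_{\cornerfrontface}^{2k(b+1)}\rho_{\cornerbackface}^{2(b+1)}\rho_{\corneredgeface}^{(k+1)(b+1)}\rho_{\edgefrontface}^{k(b+1)}\rho_{\edgebackface}^{b+1}\mu(M^{3}_{\ce})$ and the pull-back formulas for $\rho_{\frontface}$ from the appendix yields a polyhomogeneous conormal b-density on $M^{3}_{\ce}$ whose index set at each face is the sum of the three contributions. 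Then the pushforward theorem for $\pi^{3}_{LR}$ applies: it is a b-fibration restricting to diffeomorphisms of both lifted diagonals onto $\diag_{\ce}$, so $(\pi^{3}_{LR})_{\star}$ of that density is conormal of order $r_{1}+r_{2}$ on $M^{2}_{\ce}$ and its index set is $(\pi^{3}_{LR})_{\#}$ of the summed index set on $M^{3}_{\ce}$; dividing out the leftover density factor $\rho_{\frontface}^{-k(b+1)}\mu_{R}\mu_{L}$ and applying the extended-union rule $f_{\#}$ then produces exactly the four displayed formulas for $\mathcal{G}(\frontface),\mathcal{G}(\backface),\mathcal{G}(\leftface),\mathcal{G}(\rightface)$.

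The hypothesis enters only through the convergence condition of the pushforward theorem: for every boundary hypersurface $F$ of $M^{3}_{\ce}$ with $\overline{\pi^{3}_{LR}}(F)=M^{2}_{\ce}$ — the faces where the middle point approaches $\partial M$ while the outer points stay interior, namely (the lifts of) $\edgebackface_{LC}$ and the components of $\cornerbackface$ meeting it — the infimum of the combined index set must be positive. Tracking the relevant density exponents there, this requirement is precisely $\inf\mathcal{E}(\rightface)+\inf\mathcal{F}(\leftface)+1>0$, which is the stated condition (this is the analogue of the middle-slot integrability already seen in the preceding proposition on the action on functions).

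I expect the main obstacle to be the bookkeeping in the second step: pinning down, for $\pi^{3}_{LC}$ and $\pi^{3}_{CR}$, the image face and the multiplicity $e_{\pi}(F,H)$ for every boundary hypersurface of $M^{3}_{\ce}$ — especially separating the front-type faces $\cornerfrontface$, $\corneredgeface_{\cdot,\cdot}$, $\edgefrontface_{\cdot,\cdot}$ which carry the order-$(k-1)$ weights — and then verifying that, after the density factors are accounted for, each extended union in $\mathcal{G}$ matches term by term. This is a finite combinatorial computation of the same nature as the density calculation carried out in the appendix, to be done face by face using the projective coordinates recorded there; no analytic input beyond the pullback and pushforward theorems is needed.
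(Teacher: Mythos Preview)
Your proposal is correct and follows essentially the same route as the paper: pull back the two kernels to $M^{3}_{\ce}$ via $\pi^{3}_{LC}$ and $\pi^{3}_{CR}$, multiply, absorb the density identity $(\beta^{(3)})^{\star}\mu(M^{3})=\rho_{\cornerfrontface}^{2k(b+1)}\rho_{\cornerbackface}^{2(b+1)}\rho_{\corneredgeface}^{(k+1)(b+1)}\rho_{\edgefrontface}^{k(b+1)}\rho_{\edgebackface}^{b+1}\mu(M^{3}_{\ce})$ and the pulled-back $\rho_{\frontface}^{-k(b+1)}$ factors, and push forward by the b-fibration $\pi^{3}_{LR}$.

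One small correction: the face on which the convergence hypothesis of the pushforward theorem is tested is not $\edgebackface_{LC}$ (that face involves both the left and centre variables at the boundary and does \emph{not} map to the interior of $M^{2}_{\ce}$ under $\pi^{3}_{LR}$). The relevant face is the lift of $M\times\partial M\times M$, i.e.\ where only the middle variable hits the boundary; the paper denotes it simply $LR$. On that face $\pi^{3}_{LC}$ lands in $\rightface$ and $\pi^{3}_{CR}$ lands in $\leftface$, so the combined index set there has infimum $\inf\mathcal{E}(\rightface)+\inf\mathcal{F}(\leftface)+1$, giving exactly the stated condition.
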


\begin{proof}
We have
\begin{align}
    \begin{split}
       (\pi^{3}_{LC})^{\star}(&K_{A}\rho_{\frontface}^{-k(b+1)}\mu_{R})(\pi^{3}_{CR})^{\star}(K_{B}\rho_{\frontface}^{-k(b+1)}\mu_{R})(\pi^{3}_{LR})^{\star}\mu_{L} \\
       &=(\pi^{3}_{LC})^{\star}(K_{A})(\pi^{3}_{RC})^{\star}(K_{B})a\rho_{\corneredgeface_{LC}}^{b+1}\rho_{\corneredgeface_{CR}}^{b+1}\rho_{\corneredgeface_{LR}}^{(k+1)(b+1)}\rho_{\cornerbackface}^{2(b+1)}\rho_{\edgebackface}^{b+1}\rho_{\edgefrontface}^{k(b+1)}\rho\mu_{b}(M^{3}_{\ce})
    \end{split}
\end{align}
for some non-vanishing smooth function $a$. By the pullback theorem, 
\begin{align}\label{triplespacecompositiondistribution}
    \begin{split}
(\pi^{3}_{LC})^{\star}(K_{A})&\in I^{r_{1}-\frac{n}{4},(\pi^{3}_{LC})^{\#}(\mathcal{E})}(M^{3}_{\ce},(\pi^{3}_{LC})^{-1}(\diag_{\ce})) \\
(\pi^{3}_{CR})^{\star}(K_{B})&\in I^{r_{2}-\frac{n}{4},(\pi^{3}_{LC})^{\#}(\mathcal{F})}(M^{3}_{\ce},(\pi^{3}_{CR})^{-1}(\diag_{\ce})).
    \end{split}
\end{align}
The two lifts of the diagonals $(\pi^{3}_{LC})^{-1}(\diag_{\ce})$ and $(\pi^{3}_{CR})^{-1}(\diag_{\ce})$ satisfy the conditions of the pushforward theorem and the image of their intersection under $\pi^{3}_{LR}$ is $\diag_{\ce}$. The index sets of the distribution \eqref{triplespacecompositiondistribution} as a b-density which we denote by $\mathcal{L}$ are given by
\begin{align}
    \begin{split}
        \mathcal{L}(\cornerfrontface)&=(\pi^{3}_{LC})^{\#}(\mathcal{E})(\cornerfrontface)+(\pi^{3}_{LC})^{\#}(\mathcal{F})(\cornerfrontface)+1 \\
        &=\mathcal{E}(\frontface)+\mathcal{F}(\frontface)+1 \\
        \mathcal{L}(\cornerbackface)&=\mathcal{E}(\backface)+\mathcal{F}(\backface)+2(b+1)+1 \\
        \mathcal{L}(\corneredgeface_{LC})&=\mathcal{E}(\frontface)+\mathcal{F}(\backface)+b+1+1 \\
        \mathcal{L}(\corneredgeface_{CR})&=\mathcal{E}(\backface)+\mathcal{F}(\frontface)+b+1+1 \\
        \mathcal{L}(\corneredgeface_{LR})&=\mathcal{E}(\backface)+\mathcal{F}(\backface)+(k+1)(b+1)+1 \\
        \mathcal{L}(\edgefrontface_{LC})&=\mathcal{E}(\frontface)+\mathcal{F}(\leftface)+1 \\
        \mathcal{L}(\edgefrontface_{CR})&=\mathcal{E}(\rightface)+\mathcal{F}(\frontface)+1 \\
        \mathcal{L}(\edgefrontface_{LR})&=\mathcal{E}(\leftface)+\mathcal{F}(\rightface)+k(b+1)+1 \\
        \mathcal{L}(\edgebackface_{LC})&=\mathcal{E}(\backface)+\mathcal{F}(\leftface)+b+1+1 \\
        \mathcal{L}(\edgebackface_{CR})&=\mathcal{E}(\rightface)+\mathcal{F}(\backface)+b+1+1 \\
        \mathcal{L}(\edgebackface_{LR})&=\mathcal{E}(\leftface)+\mathcal{F}(\rightface)+b+1+1 \\
        \mathcal{L}(LC)&=\mathcal{F}(\rightface)+\mathbb{N}+1 \\
        \mathcal{L}(CR)&=\mathcal{E}(\leftface)+\mathbb{N}+1.
    \end{split}
\end{align}
where we denote by $LC$ are $CR$ the boundary hypersurfaces on $M^{3}_{\ce}$ which are the closure of the lifts of the interior of $M^{2}_{\ce}$ by the maps $\pi^{3}_{LC}$ and $\pi^{3}_{CR}$.

Then by the pushforward theorem, the pushforward of \eqref{triplespacecompositiondistribution} exists whenever
\begin{align}
    \inf\mathcal{L}(LR)=\inf\mathcal{E}(\rightface)+\inf\mathcal{F}(\leftface)+1>0
\end{align}
and is a distribution on $M^{2}_{\ce}$ conormal at $\diag_{\ce}$ of order $r_{1}-\frac{n}{4}+r_{2}-\frac{n}{4}+\frac{2n}{4}=r_{1}+r_{3}$ with index sets
\begin{align}
    \begin{split}
        (\pi^{3}_{LR})_{\#}(\mathcal{L})(\frontface)&=\mathcal{L}(\cornerfrontface)\overline{\cap}\mathcal{L}(\corneredgeface_{LR})\overline{\cap}\mathcal{L}(\edgefrontface_{LR}) \\
        (\pi^{3}_{LR})_{\#}(\mathcal{L})(\backface)&=\mathcal{L}(\cornerbackface)\overline{\cap}\mathcal{L}(\corneredgeface_{LC})\overline{\cap}\mathcal{L}(\corneredgeface_{CR})\overline{\cap}\mathcal{L}(\edgebackface_{LR}) \\
        (\pi^{3}_{LR})_{\#}(\mathcal{L})(\leftface)&=\mathcal{L}(\edgebackface_{LC})\overline{\cap}\mathcal{L}(\edgefrontface_{LC})\overline{\cap}\mathcal{L}(RC) \\
        (\pi^{3}_{LR})_{\#}(\mathcal{L})(\rightface)&=\mathcal{L}(\edgebackface_{RC})\overline{\cap}\mathcal{L}(\edgefrontface_{RC})\overline{\cap}\mathcal{L}(LC)
    \end{split}
\end{align}
Finally, the index set of $K_{C}$ is given by $\mathcal{G}(\backface)= (\pi^{3}_{LR})_{\#}(\mathcal{L})(\backface)-(b+1+1)$ and $\mathcal{G}(\cdot)=(\pi^{3}_{LR})_{\#}(\mathcal{L})(\cdot)-1$ for the other faces which gives us the result. 
\end{proof}
The small cusp edge pseudodifferential calculus is given by the cusp edge pseudodifferential operators with index set $\mathcal{E}(\frontface)=\mathbb{N}$ and empty at all other faces. The composition properties of the small cusp edge pseudodifferential calculus immmediately follow from the above proposition. We refer to \cite{grieserhunsicker1} for the proof that the symbol map and the normal operator map are algebra homomorphisms which can be adapted to this setting.


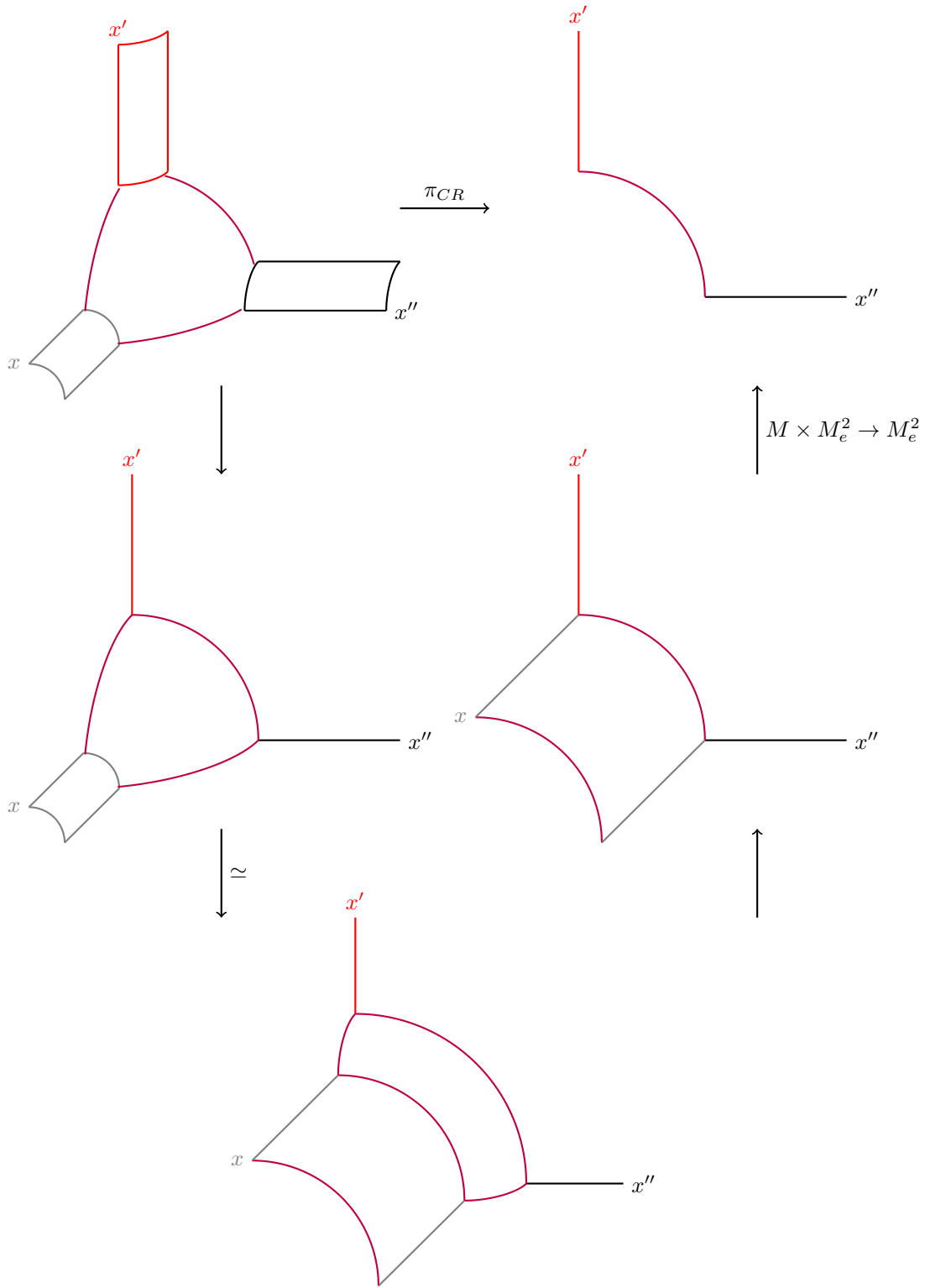
\begin{figure}[H]
\centering
    \begin{tikzpicture}[scale=1.4,rotate around x=90,rotate around z=90,z={(0,0,-1)},y={(0,-1,0)}] 



        \draw[gray, thick] ({sqrt(2)},0,-4.6+5) -- (3,0,-4.6+5) node[pos=1,left] {$x$};
        \draw[gray, thick] ({sqrt(2)},0.4,-5+5) -- (3,0.4,-5+5) ;
        \draw[domain=0:90, smooth, variable=\x, gray, thick] plot ({sqrt(2)}, {0.4*cos(\x)}, {0.4*sin(\x)}); 
        \draw[domain=0:90, smooth, variable=\x, gray, thick] plot ({3}, {0.4*cos(\x)}, {0.4*sin(\x)}); 
        
        \draw[black, thick] (0.4,{sqrt(2)},-5+5) -- (0.4,3,-5+5) node[pos=1,right] {$x''$};
        \draw[black, thick] (0,{sqrt(2)},-4.6+5) -- (0,3,-4.6+5) ;
        \draw[domain=0:90, smooth, variable=\x, black, thick] plot ({0.4*cos(\x)}, {sqrt(2)}, {0.4*sin(\x)}); 
         \draw[domain=0:90, smooth, variable=\x, black, thick] plot ({0.4*cos(\x)}, {3}, {0.4*sin(\x)}); 
        
        \draw[red, thick] (0.4,0,{sqrt(2)}) -- (0.4,0,-2+5) node[pos=1,above] {$x'$};
        \draw[red, thick] (0,0.4,{sqrt(2)}) -- (0,0.4,-2+5);
        \draw[domain=0:90, smooth, variable=\x, red, thick] plot ({0.4*cos(\x)}, {0.4*sin(\x)}, {sqrt(2)}); 
         \draw[domain=0:90, smooth, variable=\x, red, thick] plot ({0.4*cos(\x)}, {0.4*sin(\x)}, {-2+5}); 
        \draw[domain=15:75, smooth, variable=\x, purple, thick] plot ({sqrt(2)*cos(\x)}, {sqrt(2)*(sin(\x)}, {0}); 
        \draw[domain=15:75, smooth, variable=\x, purple, thick] plot ({sqrt(2)*cos(\x)}, {0}, {(sqrt(2)*(sin(\x))}); 
        \draw[domain=15:75, smooth, variable=\x, purple, thick] plot ({0}, {sqrt(2)*cos(\x)}, {(sqrt(2)*(sin(\x))}); 



        \draw[black, thick] (0,{5+sqrt(2)},0) -- (0,8,0) node[pos=1,right] {$x''$};
        \draw[red, thick] (0,{5)},{sqrt(2)}) -- (0,5,3) node[pos=1,above] {$x'$};
         \draw[domain=0:90, smooth, variable=\x, purple, thick] plot ({0}, {5+sqrt(2)*cos(\x)}, {(sqrt(2)*(sin(\x))}); 


         \draw[gray, thick] ({sqrt(2)},0,-4.6) -- (3,0,-4.6) node[pos=1,left] {$x$};
        \draw[gray, thick] ({sqrt(2)},0.4,-5) -- (3,0.4,-5) ;
        \draw[domain=0:90, smooth, variable=\x, gray, thick] plot ({sqrt(2)}, {0.4*cos(\x)}, {0.4*sin(\x)-5}); 
        \draw[domain=0:90, smooth, variable=\x, gray, thick] plot ({3}, {0.4*cos(\x)}, {0.4*sin(\x)-5}); 
        
        \draw[black, thick] (0,{sqrt(2)},-5) -- (0,3,-5) node[pos=1,right] {$x''$};
        
        \draw[red, thick] (0,0,{sqrt(2)-5}) -- (0,0,-2) node[pos=1,above] {$x'$};

        \draw[domain=15:90, smooth, variable=\x, purple, thick] plot ({sqrt(2)*cos(\x)}, {sqrt(2)*(sin(\x)}, {-5}); 
        \draw[domain=15:90, smooth, variable=\x, purple, thick] plot ({sqrt(2)*cos(\x)}, {0}, {(sqrt(2)*(sin(\x))-5}); 
        \draw[domain=0:90, smooth, variable=\x, purple, thick] plot ({0}, {sqrt(2)*cos(\x)}, {(sqrt(2)*(sin(\x))-5});



         \draw[black, thick] (0,{5+sqrt(2)},-5) -- (0,8,0-5) node[pos=1,right] {$x''$};
        \draw[red, thick] (0,{5},{sqrt(2)-5}) -- (0,5,3-5) node[pos=1,above] {$x'$};
         \draw[domain=0:90, smooth, variable=\x, purple, thick] plot ({0}, {5+sqrt(2)*cos(\x)}, {(sqrt(2)*(sin(\x))-5}); 
         \draw[domain=0:90, smooth, variable=\x, purple, thick] plot ({3}, {5+sqrt(2)*cos(\x)}, {(sqrt(2)*(sin(\x))-5});
          \draw[gray, thick] (0,5,{sqrt(2)-5}) -- (3,5,{sqrt(2)-5}) node[pos=1,left] {$x$};
        \draw[gray, thick] (0,{5+sqrt(2)},-5) -- (3,{5+sqrt(2)},-5) ;


         \draw[black, thick] (0,{5+sqrt(2)+0.5-2.5},-10) -- (0,8-2.5,-10) node[pos=1,right] {$x''$};
        \draw[red, thick] (0,{5-2.5},{sqrt(2)-10+0.5}) -- (0,5-2.5,3-10) node[pos=1,above] {$x'$};
         \draw[domain=0:90, smooth, variable=\x, purple, thick] plot ({0.5}, {-2.5+5+sqrt(2)*cos(\x)}, {(sqrt(2)*(sin(\x))-10}); 
          \draw[domain=0:90, smooth, variable=\x, purple, thick] plot ({0}, {-2.5+5+(0.5+sqrt(2))*cos(\x)}, {((0.5+sqrt(2))*(sin(\x))-10}); 
         \draw[domain=0:90, smooth, variable=\x, purple, thick] plot ({3}, {-2.5+5+sqrt(2)*cos(\x)}, {(sqrt(2)*(sin(\x))-10});
         \draw[domain=0:90, smooth, variable=\x, purple, thick] plot ({0.5*cos(\x)}, {-2.5+5}, {(sqrt(2)-10+0.5*sin(\x)});
         \draw[domain=0:90, smooth, variable=\x, purple, thick] plot ({0.5*cos(\x)}, {-2.5+5+sqrt(2)+0.5*sin(\x)}, {-10});
          \draw[gray, thick] (0.5,{-2.5+5},{sqrt(2)-10}) -- (3,{-2.5+5},{sqrt(2)-10}) node[pos=1,left] {$x$};
        \draw[gray, thick] (0.5,{-2.5+5+sqrt(2)},-10) -- (3,{-2.5+5+sqrt(2)},-10) ;

        \draw[->,black,thick] (0,3,1) -- (0,4,1) node[pos=0.5,above] {$\pi_{CR}$};
        \draw[->,black,thick] (0,1,-1) -- (0,1,-2) node[pos=0.5,right] {${}$};
        \draw[->,black,thick] (0,7,-2) -- (0,7,-1) node[pos=0.5,right] {$M\times M^{2}_{e}\to M^{2}_{e}$};
        \draw[->,black,thick] (0,1,-6) -- (0,1,-7) node[pos=0.5,right] {$\simeq$};
        \draw[->,black,thick] (0,7,-7) -- (0,7,-6) node[pos=0.5,right] {${}$};

    \end{tikzpicture}
    \caption{The sequence of blow-downs, projections and identifications used to define the projection $\pi_{CR}$ from the edge triple space to the edge double space. The projections from the cusp edge triple space to the cusp edge double space are similar but more complicated.} \label{fig:blowuptriple}
\end{figure}


\begin{figure}[H]
\centering
    \begin{tikzpicture}[scale=1.4,rotate around x=90,rotate around z=90,z={(0,0,-1)},y={(0,-1,0)}] 
        \draw[gray, thick] (0,0,0) -- (3,0,0) node[pos=1,left] {$x$};
        \draw[black, thick] (0,0,0) -- (0,3,0) node[pos=1,right] {$x''$};
        \draw[red, thick] (0,0,0) -- (0,0,3) node[pos=1,above] {$x'$};


    \draw[->,black,thick] (0,3,1) -- (0,4,1) node[pos=0.5,above] {$\pi_{CR}$};



        \draw[black, thick] (0,5,0) -- (0,8,0) node[pos=1,right] {$x''$};
        \draw[red, thick] (0,5,0) -- (0,5,3) node[pos=1,above] {$x'$};


        \draw[gray, thick] ({sqrt(2)},0,-4.6) -- (3,0,-4.6) node[pos=1,left] {$x$};
        \draw[gray, thick] ({sqrt(2)},0.4,-5) -- (3,0.4,-5) ;
        \draw[domain=0:90, smooth, variable=\x, gray, thick] plot ({sqrt(2)}, {0.4*cos(\x)}, {-5+0.4*sin(\x)}); 
        \draw[domain=0:90, smooth, variable=\x, gray, thick] plot ({3}, {0.4*cos(\x)}, {-5+0.4*sin(\x)}); 
        
        \draw[black, thick] (0.4,{sqrt(2)},-5) -- (0.4,3,-5) node[pos=1,right] {$x''$};
        \draw[black, thick] (0,{sqrt(2)},-4.6) -- (0,3,-4.6) ;
        \draw[domain=0:90, smooth, variable=\x, black, thick] plot ({0.4*cos(\x)}, {sqrt(2)}, {-5+0.4*sin(\x)}); 
         \draw[domain=0:90, smooth, variable=\x, black, thick] plot ({0.4*cos(\x)}, {3}, {-5+0.4*sin(\x)}); 
        
        \draw[red, thick] (0.4,0,{sqrt(2)-5}) -- (0.4,0,-2) node[pos=1,above] {$x'$};
        \draw[red, thick] (0,0.4,{sqrt(2)-5}) -- (0,0.4,-2);
        \draw[domain=0:90, smooth, variable=\x, red, thick] plot ({0.4*cos(\x)}, {0.4*sin(\x)}, {sqrt(2)-5}); 
         \draw[domain=0:90, smooth, variable=\x, red, thick] plot ({0.4*cos(\x)}, {0.4*sin(\x)}, {-2}); 
        \draw[domain=15:75, smooth, variable=\x, purple, thick] plot ({sqrt(2)*cos(\x)}, {sqrt(2)*(sin(\x)}, {-5}); 
        \draw[domain=15:75, smooth, variable=\x, purple, thick] plot ({sqrt(2)*cos(\x)}, {0}, {(sqrt(2)*(sin(\x))-5}); 
        \draw[domain=15:75, smooth, variable=\x, purple, thick] plot ({0}, {sqrt(2)*cos(\x)}, {(sqrt(2)*(sin(\x))-5}); 

        \draw[->,black,thick] (0,3,-3.5) -- (0,4,-3.5) node[pos=0.5,above] {$\pi_{e,CR}$};



        \draw[black, thick] (0,{5+sqrt(2)},-5) -- (0,8,-5) node[pos=1,right] {$x''$};
        \draw[red, thick] (0,{5)},{-5+sqrt(2)}) -- (0,5,-2) node[pos=1,above] {$x'$};
         \draw[domain=0:90, smooth, variable=\x, purple, thick] plot ({0}, {5+sqrt(2)*cos(\x)}, {(sqrt(2)*(sin(\x))-5}); 

        \draw[domain=15:40, smooth, variable=\x, purple, thick] plot ({sqrt(2)*cos(\x)}, {sqrt(2)*(sin(\x)}, {-5-5}); 
        \draw[domain=50:75, smooth, variable=\x, purple, thick] plot ({sqrt(2)*cos(\x)}, {sqrt(2)*(sin(\x)}, {-5-5}); 
        \draw[domain=15:40, smooth, variable=\x, purple, thick] plot ({sqrt(2)*cos(\x)}, {0}, {(sqrt(2)*(sin(\x))-5-5}); 
        \draw[domain=50:75, smooth, variable=\x, purple, thick] plot ({sqrt(2)*cos(\x)}, {0}, {(sqrt(2)*(sin(\x))-5-5}); 
        \draw[domain=15:40, smooth, variable=\x, purple, thick] plot ({0}, {sqrt(2)*cos(\x)}, {(sqrt(2)*(sin(\x))-5-5}); 
        \draw[domain=50:75, smooth, variable=\x, purple, thick] plot ({0}, {sqrt(2)*cos(\x)}, {(sqrt(2)*(sin(\x))-5-5}); 

        \draw[domain=0:360, smooth, variable=\x, red, thick] plot ({sqrt(2)*cos(45)*cos(35)+0.2*cos(45)*cos(45)*cos(\x)-0.3*sin(45)*sin(\x)}, {sqrt(2)*(sin(45)*cos(35)+0.2*sin(45)*cos(45)*cos(\x)+0.3*cos(45)*sin(\x)}, {-10+sqrt(2)*sin(35)-0.3*sin(45)*cos(\x)}); 
        

        
  \draw[domain=-45:135, smooth, variable=\x, purple, thick] plot ( {sqrt(2)*cos(45)+0.1*cos(\x)},{(sqrt(2)*(sin(45))+0.1*sin(\x)}, {-5-5}); 
        \draw[domain=0:23, smooth, variable=\x, purple, thick] plot ( {sqrt(2)*cos(45)*cos(\x)+0.1*cos(-45)}, {(sqrt(2)*(sin(45))*cos(\x)+0.1*sin(-45)}, {sqrt(2)*sin(\x)-5-5});
        \draw[domain=46:76, smooth, variable=\x, purple, thick] plot ( {sqrt(2)*cos(45)*cos(\x)+0.1*cos(-45)}, {(sqrt(2)*(sin(45))*cos(\x)+0.1*sin(-45)}, {sqrt(2)*sin(\x)-5-5});
        \draw[domain=0:24, smooth, variable=\x, purple, thick] plot ( {sqrt(2)*cos(45)*cos(\x)+0.1*cos(135)}, {(sqrt(2)*(sin(45))*cos(\x)+0.1*sin(135)}, {sqrt(2)*sin(\x)-5-5});
        \draw[domain=45:76, smooth, variable=\x, purple, thick] plot ( {sqrt(2)*cos(45)*cos(\x)+0.1*cos(135)}, {(sqrt(2)*(sin(45))*cos(\x)+0.1*sin(135)}, {sqrt(2)*sin(\x)-5-5});


        \draw[domain=-45:135, smooth, variable=\x, purple, thick] plot ( {sqrt(2)*cos(45)+0.1*cos(\x)},{0}, {(sqrt(2)*(sin(45))+0.1*sin(\x)-5-5}); 
        \draw[domain=0:20, smooth, variable=\x, purple, thick] plot ( {sqrt(2)*cos(45)*cos(\x)+0.1*cos(-45)}, {sqrt(2)*sin(\x)}, {(sqrt(2)*(sin(45))*cos(\x)+0.1*sin(-45)-5-5});
        \draw[domain=48:77, smooth, variable=\x, purple, thick] plot ( {sqrt(2)*cos(45)*cos(\x)+0.1*cos(-45)}, {sqrt(2)*sin(\x)}, {(sqrt(2)*(sin(45))*cos(\x)+0.1*sin(-45)-5-5});
        \draw[domain=0:23, smooth, variable=\x, purple, thick] plot ( {sqrt(2)*cos(45)*cos(\x)+0.1*cos(135)}, {sqrt(2)*sin(\x)}, {(sqrt(2)*(sin(45))*cos(\x)+0.1*sin(135)-5-5});
         \draw[domain=52:77, smooth, variable=\x, purple, thick] plot ( {sqrt(2)*cos(45)*cos(\x)+0.1*cos(135)}, {sqrt(2)*sin(\x)}, {(sqrt(2)*(sin(45))*cos(\x)+0.1*sin(135)-5-5});

        \draw[domain=-45:135, smooth, variable=\x, purple, thick] plot ({0}, {sqrt(2)*cos(45)+0.1*cos(\x)}, {(sqrt(2)*(sin(45))+0.1*sin(\x)-5-5}); 
        \draw[domain=0:20, smooth, variable=\x, purple, thick] plot ({sqrt(2)*sin(\x)}, {sqrt(2)*cos(45)*cos(\x)+0.1*cos(-45)}, {(sqrt(2)*(sin(45))*cos(\x)+0.1*sin(-45)-5-5});
        \draw[domain=46:75, smooth, variable=\x, purple, thick] plot ({sqrt(2)*sin(\x)}, {sqrt(2)*cos(45)*cos(\x)+0.1*cos(-45)}, {(sqrt(2)*(sin(45))*cos(\x)+0.1*sin(-45)-5-5});
        \draw[domain=0:25, smooth, variable=\x, purple, thick] plot ({sqrt(2)*sin(\x)}, {sqrt(2)*cos(45)*cos(\x)+0.1*cos(135)}, {(sqrt(2)*(sin(45))*cos(\x)+0.1*sin(135)-5-5});
        \draw[domain=48:75, smooth, variable=\x, purple, thick] plot ({sqrt(2)*sin(\x)}, {sqrt(2)*cos(45)*cos(\x)+0.1*cos(135)}, {(sqrt(2)*(sin(45))*cos(\x)+0.1*sin(135)-5-5});

        \draw[gray, thick] ({sqrt(2)},0,-4.6-5) -- (3,0,-4.6-5) node[pos=1,left] {$x$};
        \draw[gray, thick] ({sqrt(2)},0.4,-5-5) -- (3,0.4,-5-5) ;
        \draw[domain=0:30, smooth, variable=\x, gray, thick] plot ({sqrt(2)}, {0.4*cos(\x)}, {-5-5+0.4*sin(\x)}); 
        \draw[domain=60:90, smooth, variable=\x, gray, thick] plot ({sqrt(2)}, {0.4*cos(\x)}, {-5-5+0.4*sin(\x)}); 
        \draw[domain=60:90, smooth, variable=\x, gray, thick] plot ({sqrt(2)}, {0.4*cos(\x)}, {-5-5+0.4*sin(\x)}); 
        \draw[domain=0:30, smooth, variable=\x, gray, thick] plot ({3}, {0.4*cos(\x)}, {-5-5+0.4*sin(\x)}); 
        \draw[domain=60:90, smooth, variable=\x, gray, thick] plot ({3}, {0.4*cos(\x)}, {-5-5+0.4*sin(\x)}); 
        \draw[gray, thick] ({sqrt(2)},{0.4*cos(30)},{-5-5+0.4*sin(30)}) -- (3,{0.4*cos(30)},{-5-5+0.4*sin(30)}) ;
        \draw[gray, thick] ({sqrt(2)},{0.4*cos(60)},{-5-5+0.4*sin(60)}) -- (3,{0.4*cos(60)},{-5-5+0.4*sin(60)}) ;

        \draw[domain=-50:140, smooth, variable=\x, gray, thick] plot ({sqrt(2)}, {0.4*cos(45)+0.1*cos(\x)}, {-5-5+0.4*sin(45)+0.1*sin(\x)}); 
        \draw[domain=-50:140, smooth, variable=\x, gray, thick] plot ({3}, {0.4*cos(45)+0.1*cos(\x)}, {-5-5+0.4*sin(45)+0.1*sin(\x)});

        \draw[black, thick] (0.4,{sqrt(2)},-5-5) -- (0.4,3,-5-5) node[pos=1,right] {$x''$};
        \draw[black, thick] (0,{sqrt(2)},-4.6-5) -- (0,3,-4.6-5) ;
        \draw[domain=0:29, smooth, variable=\x, black, thick] plot ({0.4*cos(\x)}, {sqrt(2)}, {-5-5+0.4*sin(\x)});
        \draw[domain=61:90, smooth, variable=\x, black, thick] plot ({0.4*cos(\x)}, {sqrt(2)}, {-5-5+0.4*sin(\x)});
         \draw[domain=0:29, smooth, variable=\x, black, thick] plot ({0.4*cos(\x)}, {3}, {-5-5+0.4*sin(\x)}); 
         \draw[domain=61:90, smooth, variable=\x, black, thick] plot ({0.4*cos(\x)}, {3}, {-5-5+0.4*sin(\x)}); 
         \draw[black, thick] ({0.4*cos(30)},{sqrt(2)},{-5-5+0.4*sin(30)}) -- ({0.4*cos(30)},3,{-5-5+0.4*sin(30)}) ;
        \draw[black, thick] ({0.4*cos(60)},{sqrt(2)},{-5-5+0.4*sin(60)}) -- ({0.4*cos(60)},3,{-5-5+0.4*sin(60)}) ;

        \draw[domain=-50:140, smooth, variable=\x, black, thick] plot ({0.4*cos(45)+0.1*cos(\x)},{sqrt(2)}, {-5-5+0.4*sin(45)+0.1*sin(\x)}); 
        \draw[domain=-50:140, smooth, variable=\x, black, thick] plot ({0.4*cos(45)+0.1*cos(\x)}, {3}, {-5-5+0.4*sin(45)+0.1*sin(\x)}); 
        
        \draw[red, thick] (0.4,0,{sqrt(2)-5-5}) -- (0.4,0,-2-5) node[pos=1,above] {$x'$};
        \draw[red, thick] (0,0.4,{sqrt(2)-5-5}) -- (0,0.4,-2-5);
        \draw[domain=0:27, smooth, variable=\x, red, thick] plot ({0.4*cos(\x)}, {0.4*sin(\x)}, {sqrt(2)-5-5}); 
        \draw[domain=63:90, smooth, variable=\x, red, thick] plot ({0.4*cos(\x)}, {0.4*sin(\x)}, {sqrt(2)-5-5}); 
         \draw[domain=0:30, smooth, variable=\x, red, thick] plot ({0.4*cos(\x)}, {0.4*sin(\x)}, {-2-5}); 
         \draw[domain=60:90, smooth, variable=\x, red, thick] plot ({0.4*cos(\x)}, {0.4*sin(\x)}, {-2-5}); 
         \draw[red, thick] ({0.4*cos(30)},{0.4*sin(30)},{-5-5+sqrt(2)}) -- ({0.4*cos(30)},{0.4*sin(30)},{-5-5+3}) ;
        \draw[red, thick] ({0.4*cos(60)},{0.4*sin(60)},{-5-5+sqrt(2)}) -- ({0.4*cos(60)},{0.4*sin(60)},{-5-5+3}) ;

        \draw[domain=-45:135, smooth, variable=\x, red, thick] plot ({0.4*cos(45)+0.1*cos(\x)},{0.4*sin(45)+0.1*sin(\x)}, {-5-5+sqrt(2)}); 
        \draw[domain=-45:135, smooth, variable=\x, red, thick] plot ({0.4*cos(45)+0.1*cos(\x)}, {0.4*sin(45)+0.1*sin(\x)}, {-5-5+3});


        \draw[->,black,thick] (0,3,1.-8.5) -- (0,4,1.-8.5) node[pos=0.5,below] {$\pi_{\ce,CR}$};



        \draw[black, thick] (0,{5+sqrt(2)},-10) -- (0,8,-10) node[pos=1,right] {$x''$};
        \draw[red, thick] (0,5,{-10+sqrt(2)}) -- (0,5,-7) node[pos=1,above] {$x'$};
         \draw[domain=0:37, smooth, variable=\x, purple, thick] plot ({0}, {5+sqrt(2)*cos(\x)}, {(sqrt(2)*(sin(\x))-10}); 
         \draw[domain=53:90, smooth, variable=\x, purple, thick] plot ({0}, {5+sqrt(2)*cos(\x)}, {(sqrt(2)*(sin(\x))-10}); 
         \draw[domain=-45:135, smooth, variable=\x, red, thick] plot ({0}, {5+sqrt(2)*cos(45)+0.2*cos(\x)}, {(sqrt(2)*(sin(45))-10+0.2*sin(\x)}); 


        \draw[->,black,thick] (0,1,-2) -- (0,1,-1) node[pos=0.5,right] {$\beta^{3}_{e}$};
        \draw[->,black,thick] (0,7,-2) -- (0,7,-1) node[pos=0.5,right] {$\beta^{2}_{e}$};
        \draw[->,black,thick] (0,1,-7) -- (0,1,-6) node[pos=0.5,right] {$\beta^{3}_{\ce\to e}$};
        \draw[->,black,thick] (0,7,-7) -- (0,7,-6) node[pos=0.5,right] {$\beta^{2}_{\ce\to e}$};

    \end{tikzpicture}
    \caption{The first row shows is the unblownup triple space with the $\pi_{CR}$ projection to the unblown-up double space. The second row shows that edge triple space and its projection $\pi_{e,CR}$ to the edge double space. The final row shows that cusp edge triple space and its projection $\pi_{\ce,CR}$ to the cusp edge double space.} \label{fig:blowuptriple2}
\end{figure}
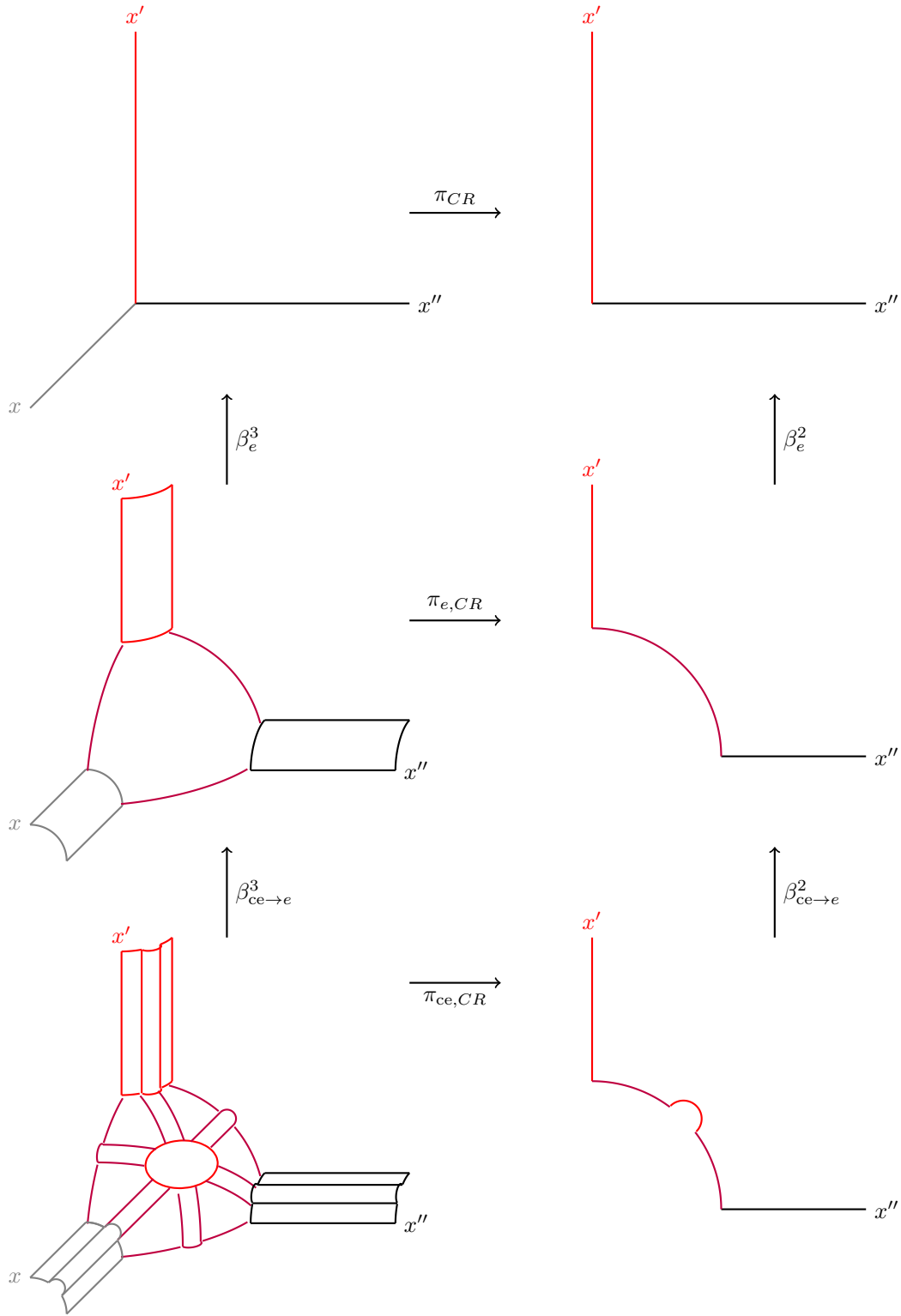

\clearpage


\bibliographystyle{alpha}
\bibliography{bibfinal}

\begin{thebibliography}{JMMV14}

\bibitem[AGR16]{iedge}
Pierre Albin and Jesse Gell-Redman.
\newblock The index of {D}irac operators on incomplete edge spaces.
\newblock {\em SIGMA Symmetry Integrability Geom. Methods Appl.}, 12:Paper No. 089, 45, 2016.

\bibitem[AGR23]{pseudo}
Pierre Albin and Jesse Gell-Redman.
\newblock The index formula for families of {D}irac type operators on pseudomanifolds.
\newblock {\em J. Differential Geom.}, 125(2):207--343, 2023.

\bibitem[AL13]{atiyahlebrun}
Michael Atiyah and Claude Lebrun.
\newblock Curvature, cones and characteristic numbers.
\newblock {\em Math. Proc. Cambridge Philos. Soc.}, 155(1):13--37, 2013.

\bibitem[ALMP12]{almpsignature}
Pierre Albin, \'{E}ric Leichtnam, Rafe Mazzeo, and Paolo Piazza.
\newblock The signature package on {W}itt spaces.
\newblock {\em Ann. Sci. \'{E}c. Norm. Sup\'{e}r. (4)}, 45(2):241--310, 2012.

\bibitem[ALMP18]{almphodge}
Pierre Albin, Eric Leichtnam, Rafe Mazzeo, and Paolo Piazza.
\newblock Hodge theory on {C}heeger spaces.
\newblock {\em J. Reine Angew. Math.}, 744:29--102, 2018.

\bibitem[APS75]{aps1}
M.~F. Atiyah, V.~K. Patodi, and I.~M. Singer.
\newblock Spectral asymmetry and {R}iemannian geometry. {I}.
\newblock {\em Math. Proc. Cambridge Philos. Soc.}, 77:43--69, 1975.

\bibitem[AR09]{albinrochonindex}
Pierre Albin and Fr\'{e}d\'{e}ric Rochon.
\newblock Families index for manifolds with hyperbolic cusp singularities.
\newblock {\em Int. Math. Res. Not. IMRN}, (4):625--697, 2009.

\bibitem[BC89]{BismutCheegerEta}
Jean-Michel Bismut and Jeff Cheeger.
\newblock {$\eta$}-invariants and their adiabatic limits.
\newblock {\em J. Amer. Math. Soc.}, 2(1):33--70, 1989.

\bibitem[BC90a]{BismutCheeger90a}
Jean-Michel Bismut and Jeff Cheeger.
\newblock Families index for manifolds with boundary, superconnections, and cones. {I}. {F}amilies of manifolds with boundary and {D}irac operators.
\newblock {\em J. Funct. Anal.}, 89(2):313--363, 1990.

\bibitem[BC90b]{BismutCheeger90b}
Jean-Michel Bismut and Jeff Cheeger.
\newblock Families index for manifolds with boundary, superconnections and cones. {II}. {T}he {C}hern character.
\newblock {\em J. Funct. Anal.}, 90(2):306--354, 1990.

\bibitem[BDV15]{vertmanmapping}
Eric Bahuaud, Emily~B. Dryden, and Boris Vertman.
\newblock Mapping properties of the heat operator on edge manifolds.
\newblock {\em Math. Nachr.}, 288(2-3):126--157, 2015.

\bibitem[BGV04]{bgv}
Nicole Berline, Ezra Getzler, and Mich\`ele Vergne.
\newblock {\em Heat kernels and {D}irac operators}.
\newblock Grundlehren Text Editions. Springer-Verlag, Berlin, 2004.
\newblock Corrected reprint of the 1992 original.

\bibitem[BS87]{bruningseeleyresolvent2}
Jochen Br\"{u}ning and Robert Seeley.
\newblock The resolvent expansion for second order regular singular operators.
\newblock {\em J. Funct. Anal.}, 73(2):369--429, 1987.

\bibitem[BS91]{bruningseeleyresolventcone}
J.~Br\"{u}ning and R.~Seeley.
\newblock The expansion of the resolvent near a singular stratum of conical type.
\newblock {\em J. Funct. Anal.}, 95(2):255--290, 1991.

\bibitem[Che79]{cheegerspectralindex}
Jeff Cheeger.
\newblock On the spectral geometry of spaces with cone-like singularities.
\newblock {\em Proc. Nat. Acad. Sci. U.S.A.}, 76(5):2103--2106, 1979.

\bibitem[Che83a]{cheegerhodge}
Jeff Cheeger.
\newblock Hodge theory of complex cones.
\newblock In {\em Analysis and topology on singular spaces, {II}, {III} ({L}uminy, 1981)}, volume 101-102 of {\em Ast\'{e}risque}, pages 118--134. Soc. Math. France, Paris, 1983.

\bibitem[Che83b]{cheegerspectral}
Jeff Cheeger.
\newblock Spectral geometry of singular {R}iemannian spaces.
\newblock {\em J. Differential Geom.}, 18(4):575--657, 1983.

\bibitem[Cho85]{chou}
Arthur~Weichung Chou.
\newblock The {D}irac operator on spaces with conical singularities and positive scalar curvatures.
\newblock {\em Trans. Amer. Math. Soc.}, 289(1):1--40, 1985.

\bibitem[Cho89]{chou2}
Arthur~W. Chou.
\newblock Criteria for selfadjointness of the {D}irac operator on pseudomanifolds.
\newblock {\em Proc. Amer. Math. Soc.}, 106(4):1107--1116, 1989.

\bibitem[Dai91]{daiadiabatic}
Xianzhe Dai.
\newblock Adiabatic limits, nonmultiplicativity of signature, and {L}eray spectral sequence.
\newblock {\em J. Amer. Math. Soc.}, 4(2):265--321, 1991.

\bibitem[Gaf55]{gaffney}
Matthew~P. Gaffney.
\newblock Hilbert space methods in the theory of harmonic integrals.
\newblock {\em Trans. Amer. Math. Soc.}, 78:426--444, 1955.

\bibitem[Get86]{getzler86}
Ezra Getzler.
\newblock A short proof of the local {A}tiyah-{S}inger index theorem.
\newblock {\em Topology}, 25(1):111--117, 1986.

\bibitem[GH09]{grieserhunsicker1}
Daniel Grieser and Eug\'{e}nie Hunsicker.
\newblock Pseudodifferential operator calculus for generalized {$\mathbb Q$}-rank 1 locally symmetric spaces. {I}.
\newblock {\em J. Funct. Anal.}, 257(12):3748--3801, 2009.

\bibitem[GH14]{grieserhunsicker2}
D.~Grieser and E.~Hunsicker.
\newblock A parametrix construction for the {L}aplacian on {$\mathbb Q$}-rank 1 locally symmetric spaces.
\newblock In {\em Fourier analysis}, Trends Math., pages 149--186. Birkh\"{a}user/Springer, Cham, 2014.

\bibitem[GL02]{grieserlesch}
Daniel Grieser and Matthias Lesch.
\newblock On the {$L^2$}-{S}tokes theorem and {H}odge theory for singular algebraic varieties.
\newblock {\em Math. Nachr.}, 246/247:68--82, 2002.

\bibitem[GRS19]{ice}
Jesse Gell-Redman and Jan Swoboda.
\newblock Spectral and {H}odge theory of ``{W}itt'' incomplete cusp edge spaces.
\newblock {\em Comment. Math. Helv.}, 94(4):701--765, 2019.

\bibitem[HHM04]{hhmgravitational}
Tam\'{a}s Hausel, Eugenie Hunsicker, and Rafe Mazzeo.
\newblock Hodge cohomology of gravitational instantons.
\newblock {\em Duke Math. J.}, 122(3):485--548, 2004.

\bibitem[JMMV14]{jmmv}
Lizhen Ji, Rafe Mazzeo, Werner M\"{u}ller, and Andras Vasy.
\newblock Spectral theory for the {W}eil-{P}etersson {L}aplacian on the {R}iemann moduli space.
\newblock {\em Comment. Math. Helv.}, 89(4):867--894, 2014.

\bibitem[KM16]{krainermendoza}
Thomas Krainer and Gerardo~A. Mendoza.
\newblock Boundary value problems for first order elliptic wedge operators.
\newblock {\em Amer. J. Math.}, 138(3):585--656, 2016.

\bibitem[Les97]{leschcones}
Matthias Lesch.
\newblock {\em Operators of {F}uchs type, conical singularities, and asymptotic methods}, volume 136 of {\em Teubner-Texte zur Mathematik [Teubner Texts in Mathematics]}.
\newblock B. G. Teubner Verlagsgesellschaft mbH, Stuttgart, 1997.

\bibitem[M\"11]{mullerhodge}
J\"{o}rn M\"{u}ller.
\newblock A {H}odge-type theorem for manifolds with fibered cusp metrics.
\newblock {\em Geom. Funct. Anal.}, 21(2):443--482, 2011.

\bibitem[Maz91]{mazzeo}
Rafe Mazzeo.
\newblock Elliptic theory of differential edge operators. {I}.
\newblock {\em Comm. Partial Differential Equations}, 16(10):1615--1664, 1991.

\bibitem[Mel]{melroseDA}
Richard~B. Melrose.
\newblock {\em Differential {A}nalysis on {M}anifolds with {Corners}}.
\newblock available at \url{https://math.mit.edu/~rbm/book.html/}.

\bibitem[Mel93]{melrose}
Richard~B. Melrose.
\newblock {\em The {A}tiyah-{P}atodi-{S}inger index theorem}, volume~4 of {\em Research Notes in Mathematics}.
\newblock A K Peters, Ltd., Wellesley, MA, 1993.

\bibitem[MM98]{melrosemazzeo}
Rafe Mazzeo and Richard~B. Melrose.
\newblock Pseudodifferential operators on manifolds with fibred boundaries.
\newblock volume~2, pages 833--866. 1998.
\newblock Mikio Sato: a great Japanese mathematician of the twentieth century.

\bibitem[Moo99]{cone}
Edith~A. Mooers.
\newblock Heat kernel asymptotics on manifolds with conic singularities.
\newblock {\em J. Anal. Math.}, 78:1--36, 1999.

\bibitem[MV12]{mazzeovertman}
Rafe Mazzeo and Boris Vertman.
\newblock Analytic torsion on manifolds with edges.
\newblock {\em Adv. Math.}, 231(2):1000--1040, 2012.

\bibitem[PV19]{piazzavertman}
Paolo Piazza and Boris Vertman.
\newblock Eta and rho invariants on manifolds with edges.
\newblock {\em Ann. Inst. Fourier (Grenoble)}, 69(5):1955--2035, 2019.

\bibitem[Vai01]{vaillant}
Boris Vaillant.
\newblock {\em Index- and spectral theory for manifolds with generalized fibred cusps}, volume 344 of {\em Bonner Mathematische Schriften [Bonn Mathematical Publications]}.
\newblock Universit\"{a}t Bonn, Mathematisches Institut, Bonn, 2001.
\newblock Dissertation, Rheinische Friedrich-Wilhelms-Universit\"{a}t Bonn, Bonn, 2001.

\bibitem[Ver15]{vertmanalgebraic}
Boris Vertman.
\newblock Heat-trace asymptotics for edge {L}aplacians with algebraic boundary conditions.
\newblock {\em J. Anal. Math.}, 125:285--318, 2015.

\bibitem[Ver16]{vertmanbiharmonic}
Boris Vertman.
\newblock The biharmonic heat operator on edge manifolds and non-linear fourth order equations.
\newblock {\em Manuscripta Math.}, 149(1-2):179--203, 2016.

\bibitem[Wol85]{wolpert}
Scott Wolpert.
\newblock On the {W}eil-{P}etersson geometry of the moduli space of curves.
\newblock {\em Amer. J. Math.}, 107(4):969--997, 1985.

\bibitem[Yam04]{yamada}
Sumio Yamada.
\newblock On the geometry of {W}eil-{P}etersson completion of {T}eichm\"{u}ller spaces.
\newblock {\em Math. Res. Lett.}, 11(2-3):327--344, 2004.

\end{thebibliography}

\end{document}